\pgfplotsset{compat=1.16}
\newcommand{\Z}{\mathbb{Z}}
\newcommand{\R}{\mathbb{R}}
\newcommand{\C}{\mathbb{C}}
\renewcommand{\AA}{\mathcal{A}}
\newcommand{\Sc}{\mathcal{S}}
\newcommand{\GL}{\mathrm{GL}}
\newcommand{\SL}{\mathrm{SL}}
\newcommand{\SO}{\mathrm{SO}}
\newcommand{\Sp}{\mathrm{Sp}}
\newcommand{\Jz}{\mathrm{Jz}}
\newcommand{\Rep}{\mathrm{Rep}}
\newcommand{\Data}{\mathrm{Data}}
\newcommand{\gp}{\mathrm{gp}}
\newcommand{\temp}{\mathrm{temp}}
\newcommand{\Temp}{\mathrm{Temp}}
\newcommand{\Ind}{\mathrm{Ind}}
\newcommand{\Jac}{\mathrm{Jac}}
\newcommand{\scusp}{\mathrm{scusp}}
\newcommand{\soc}{\mathrm{soc}}
\newcommand{\good}{\mathrm{good}}
\newcommand{\bad}{\mathrm{bad}}
\newcommand{\ugly}{\mathrm{ugly}}
\newcommand{\Seg}{\mathrm{Seg}}
\def\m{\mathfrak{m}}
\def\n{\mathfrak{n}}
\def\soc{{\rm soc}}
\def\X{{\rm X}}
\def\MS{{\rm Mult}}
\newcommand{\pair}[1]{\left\langle #1 \right\rangle}
\newcommand{\Cusp}{\mathscr{C}}
\newcommand{\Rr}{\mathscr{R}}
\newcommand{\resp}{resp.~}
\DeclareMathOperator{\trans}{trans}
\theoremstyle{definition} 
\newtheorem{defi}{Definition}[subsection] 
\theoremstyle{plain} 
\newtheorem{prop}[defi]{Proposition} 
\newtheorem{lem}[defi]{Lemma} 
\newtheorem{thm}[defi]{Theorem} 
\newtheorem{cor}[defi]{Corollary} 
\newtheorem{hypo}[defi]{Hypothesis} 
\newtheorem{ques}[defi]{Question} 
\theoremstyle{remark}
\newtheorem{rem}[defi]{Remark} 
\newtheorem{ex}[defi]{Example} 
\newcommand{\NN}{\mathbb{N}}
\DeclareMathOperator{\Irr}{Irr}
\DeclareMathOperator{\card}{card}
\DeclareMathOperator{\Mult}{Mult}
\DeclareMathOperator{\Symm}{Symm}
\DeclareMathOperator{\USymm}{\mathfrak{S}ymm}
\DeclareMathOperator{\USeg}{\mathfrak{S}eg}
\DeclareMathOperator{\AD}{AD}
\DeclareMathOperator{\ADd}{\mathbf{AD}}
\title[An algorithm for Aubert--Zelevinsky duality]{An algorithm for Aubert--Zelevinsky duality à la M{\oe}glin--Waldspurger}
\author{Thomas Lanard}
\address{
Laboratoire de Math\'ematiques de Versailles, 
UVSQ, 
CNRS, 
Universit\'e Paris-Saclay,
78035 Versailles, France}
\email{thomas.lanard@uvsq.fr}
\author{Alberto Mínguez}
\address{Departamento de \'Algebra and Instituto de Matem\'aticas (IMUS), Universidad de Sevilla, C/ Tarfia s/n, 41012 Sevilla \newline
University of Vienna, 
Fakult{\"a}t f{\"u}r Mathematik,
Oskar-Morgenstern-Platz 1,
1090 Wien
}
\email{alberto.minguez@univie.ac.at}
\begin{document}

\begin{abstract}
  Let $F$ be a locally compact non-Archimedean field of characteristic $0$, and let $G$ be either the split special orthogonal group $\SO_{2n+1}(F)$ or the symplectic group $\Sp_{2n}(F)$. The goal of this paper is to give an explicit description of the Aubert--Zelevinsky duality for $G$ in terms of Langlands parameters. We present a new algorithm, inspired by the M{\oe}glin--Waldspurger algorithm for $\GL_n(F)$, which computes the dual Langlands data in a recursive and combinatorial way. Our method is simple enough to be carried out by hand and provides a practical tool for explicit computations. Interestingly, the algorithm was discovered with the help of machine learning tools, guiding us toward patterns that led to its formulation.
\end{abstract}

\maketitle

\tableofcontents

\section{Introduction}\label{sec:intro}
\subsection{} 
Let $F$ be a locally compact non-Archimedean field. In 1980, A.~Zelevinsky \cite{Z} introduced an involution of the Grothendieck group of (smooth complex) finite-length representations of $\GL_n(F)$, for $n \geq 1$. Notably, this involution maps the trivial representation to the Steinberg representation and fixes every supercuspidal representation. He conjectured that this involution preserves irreducibility.

Inspired by the Alvis--Curtis duality \cite{A1, A2, Curtis}, S.-I. Kato \cite{Kato} introduced an involution on the Grothendieck group of finite-length Iwahori-fixed representations of a split connected reductive group defined over $F$. Using properties of the functor of invariants under an Iwahori subgroup \cite{Borel}, Kato was able to prove that his involution preserves irreducibility, up to a sign.

Some years later, in 1995, A.-M. Aubert \cite{Au} showed that Kato's construction can be generalized to define an involution on the Grothendieck group of finite-length representations of any connected reductive group defined over $F$ (see Paragraph \ref{sub:Aubert}), and proved that her involution preserves irreducibility, up to a sign. In the case of $\GL_n(F)$, it coincides with Zelevinsky's involution up to a sign, thereby confirming Zelevinsky's conjecture. Using the theory of coefficient systems on the Bruhat--Tits building, P. Schneider and U.~Stuhler \cite{SS} similarly defined a duality and proved that it preserves irreducibility and, at the level of the Grothendieck groups, coincides with Aubert's involution up to the contragredient. Another approach to this duality can be found in the work of J.~Bernstein, R.~Bezrukavnikov, and D.~Kazhdan \cite{BBK}.

For simplicity, when restricted to the set of irreducible representations of a connected reductive group $G$ defined over $F$, in this article, this involution will be referred to as  \emph{the Aubert--Zelevinsky duality} and will be denoted $\tau \mapsto \widehat{\tau}$.

\subsection{} 
One of the most exciting accomplishments in recent years in Number Theory has been the proof of the Local Langlands Correspondence for quasi-split classical groups, namely unitary, symplectic, and orthogonal groups. In the case when the characteristic of $F$ is $0$, using the twisted trace formula and an inductive process known as endoscopy, J. Arthur \cite{Art} established a natural bijection between the set of isomorphism classes of irreducible representations of quasi-split orthogonal and symplectic groups and the set of so-called Langlands data, thereby proving the Local Langlands Correspondence for these groups. It should be noted that these results are conditional (only) on the validity of the twisted weighted fundamental lemma; see \cite{AGIKMS} and the discussion in \textsection 0.4 therein for details.

A natural question then arises: can the Aubert--Zelevinsky duality be explicitly described in terms of  Langlands data? In other words, given the Langlands data of an irreducible representation $\tau$ of a classical group $G$, what are the Langlands data of $\widehat{\tau}$?

\subsection{} \label{sub:goodalgo}
The answer to this question is known for $\GL_n(F)$. In 1986, C.~M{\oe}glin and J.-L.~Waldspurger \cite{MW2} studied the Zelevinsky involution and developed an algorithm for computing the Langlands data of $\widehat{\tau}$ for every irreducible representation $\tau$ of $\GL_n(F)$. For $\GL_n(F)$, Zelevinsky established that the set $\Irr^\GL$ of isomorphism classes of irreducible representations of $\GL_n(F)$, for $n \geq 0$, is parametrized by $\Mult$, a set consisting of certain combinatorial objects known as multisegments (see Section \ref{sec:GL} for a precise definition). We denote this correspondence by $\m \mapsto L(\m)$. Given an irreducible representation $\tau$ of $\GL_n(F)$ with corresponding multisegment $\m$, M{\oe}glin and Waldspurger provided an algorithm to compute the multisegment $\m^t \in \Mult$ such that $\widehat{\tau} = L(\m^t)$.

The algorithm goes as follows. Given a multisegment $\m = \Delta_1 + \dots + \Delta_N$, one constructs a segment $\Delta'_1$ using certain ends of the segments $\Delta_i$ (see Paragraph \ref{sub:MW} for more precisions), and then forms a multisegment $\m_{(1)}$ by removing these ends from $\m$. They then proved that $\m^t = \Delta'_1 + (\m_{(1)})^t$, which allows  to compute $\m^t$ inductively.

\subsection{}\label{sub:MWidea}
As this will be relevant in the discussion that follows, let us briefly outline how M{\oe}glin--Waldspurger proved that their algorithm determines the Zelevinsky dual on the representation side. The category of finite-length representations of $\GL_n(F)$, for $n \geq 0$, has a monoidal structure given by (normalized) parabolic induction, denoted by $\pi_1 \times \pi_2$. 
A key consequence of the commutativity of the Zelevinsky duality with the parabolic induction functor is that if $\tau$ is a subrepresentation of $\pi_1 \times \pi_2$, then $\widehat{\tau}$ is a subrepresentation of $\widehat{\pi_2} \times \widehat{\pi_1}$.

Given $\m \in \Mult$ and $\tau = L(\m)$, where $\tau$ is non-supercuspidal, M{\oe}glin and Waldspurger showed that there exist a supercuspidal representation $\rho_1$ and a representation $\tau_1 \in \Irr^\GL$ such that:
\begin{enumerate}
\item $\tau$ is the unique subrepresentation of $\rho_1 \times \tau_1$.
\item One can compute the multisegment $\m_1$ corresponding to $\tau_1$.
\item Conversely, given $\m_1 \in \Mult$ and a supercuspidal representation $\rho_1$, one can compute the unique multisegment $\m \in \Mult$ such that $L(\m) \hookrightarrow L(\m_1) \times \rho_1$.
\end{enumerate}
By induction, their algorithm computes the multisegment $\m_1^t$ corresponding to $\widehat{\tau_1}$, and the final step is to verify that the candidate $\m^t$ satisfies $L(\m^t) \hookrightarrow L(\m_1^t) \times \rho_1$, which they confirmed.

\subsection{}\label{sub:badalgo}
It is important to note that Properties (1), (2) and (3) above allow the computation of $\m \mapsto \m^t$ even without knowing the M{\oe}glin--Waldspurger's algorithm. Indeed, given $\m \in \Mult$, we construct $\m_1$ as described above. For $\m_1$, there also exists a supercuspidal representation $\rho_2$ and a multisegment $\m_2$ such that $L(\m_1)$ is the unique subrepresentation of $\rho_2 \times L(\m_2)$, and this process continues iteratively. The first step of induction occurs when $L(\m_i)$ is supercuspidal. For supercuspidal representations, the involution acts as the identity, so $\m_i^t = \m_i$. Going backwards, from $\m_i^t$, one can reconstruct $\m_{i-1}^t$ as the unique multisegment satisfying $L(\m_{i-1}^t) \hookrightarrow L(\m_i^t) \times \rho_i$. By continuing this process, one can recover $\m^t$.

\subsection{} Let us return to the case of classical groups. Let $G = G_n$ be either the split special orthogonal group $\SO_{2n+1}(F)$ or the symplectic group $\Sp_{2n}(F)$ of rank $n$, where $F$ is a non-Archimedean local field of characteristic $0$. The Langlands classification asserts that any irreducible representation $\pi$ of $G_n$ is the unique irreducible subrepresentation of a standard module. We denote this by $\pi = L(\m, \pi_{\mathrm{temp}})$, where $\m$ is a negative multisegment and $\pi_{\mathrm{temp}}$ is an irreducible tempered representation (see Section \ref{sub:lang}). Tempered irreducible representations are themselves classified by local Langlands parameters $\phi : W_F \times \SL_2(\C) \rightarrow \GL_n(\C)$ together with some character $\eta$ of the component group of $\phi$ (see Section \ref{sub:endo}). We write $\pi_{\mathrm{temp}} = \pi(\phi, \eta)$. Altogether, the irreducible representation $\pi$ is uniquely determined by the triple $(\m, \phi, \eta)$, which we refer to as the ``Langlands data'' of $\pi$; and we write $\pi \simeq L(\m;\pi(\phi,\eta))$.

\subsection{} 
 H.~Atobe and the second-named author described an algorithm to compute the Aubert--Zelevinsky duality, similar to the one in Paragraph \ref{sub:badalgo}, building on previous work by Jantzen \cite{J-dual} and Atobe \cite{At2}. Let us provide more details.

For $G = G_n$, if $\pi$ (\resp $\tau_i$) is a smooth representation of $G_{n_0}$ (\resp $\GL_{d_i}(F)$), with $d_1 + \cdots + d_r + n_0 = n$, it is customary to denote
\[
\tau_1 \times \dots \times \tau_r \rtimes \pi 
\]
 the normalized parabolically induced representation of $\tau_1 \boxtimes \dots \boxtimes \tau_r \boxtimes \pi$ from the standard parabolic subgroup $P$ of $G_n$ with Levi subgroup isomorphic to $\GL_{d_1}(F) \times \dots \times \GL_{d_r}(F) \times G_{n_0}$.

Given an irreducible representation $\pi$ of $G_n$ and a supercuspidal \emph{non-self-dual} representation $\rho$ of $\GL_d(F)$, there exist a unique $k \geq 0$ and a unique irreducible representation $\pi_0$ of $G_{n_0}$, with $n = dk + n_0$, such that:
\begin{itemize}
\item $\pi$ is the unique irreducible subrepresentation of 
\begin{equation}\label{eq:subr}
\underbrace{\rho \times \dots \times \rho}_{k \text{ times}} \rtimes \pi_0.
\end{equation}
\item $k$ is maximal, in the sense that for every irreducible representation $\pi'_0$ of $G_{n_0 - d}$, $\pi_0$ is not a subrepresentation of $\rho \rtimes \pi'_0$.
\end{itemize}
We call $\pi_0$ the \emph{highest $\rho$-derivative} of $\pi$ and denote it by $D_\rho^{\max}(\pi)$. Note that, when $\rho$ is self-dual, a representation of the form \eqref{eq:subr} may have several irreducible subrepresentations, and there is no simple way to distinguish them.

Furthermore, Atobe and the second-named author provided an explicit formula for the Langlands data of $D_\rho^{\max}(\pi)$ in terms of those of $\pi$. Conversely, they found a formula to explicitly determine the Langlands data of $\pi$ in terms of those of $D_\rho^{\max}(\pi)$, just as M{\oe}glin--Waldspurger did for linear groups in Paragraph \ref{sub:MWidea}.

Since the Aubert--Zelevinsky duality commutes with Jacquet functors, we have:
\begin{equation*}
\widehat{D_{\rho}^{\max}(\pi)} = D_{\rho^\vee}^{\max}(\widehat{\pi}),
\end{equation*}
where $\rho^\vee$ denotes the contragredient of $\rho$. The algorithm in \cite{AM} follows the same lines as the one in Paragraph \ref{sub:badalgo}: given an irreducible representation $\pi$ of $G_n$, if there exists a supercuspidal non-self-dual representation $\rho$ of $\GL_d(F)$ such that $D_\rho^{\max}(\pi) \neq \pi$, then, by induction, we can compute the Langlands data of $\widehat{D_{\rho}^{\max}(\pi)} = D_{\rho^\vee}^{\max}(\widehat{\pi})$, which allows us to compute the Langlands data of $\widehat{\pi}$.

The issue, however, lies in the fact that one must assume $\rho$ is not self-dual, making the first step of the induction too complicated to handle, and the algorithm becomes more intricate (see Section \ref{sec:Gn} for further details). While the algorithm can be implemented on a computer, unlike the M{\oe}glin--Waldspurger algorithm, it is not practical for computing the Aubert--Zelevinsky dual of a representation by hand, as the formulas for the derivatives are rather involved (they are recalled in Section \ref{sec:expder}).

\subsection{} The goal of this article is to provide a new algorithm for the case of split special odd orthogonal groups or symplectic groups, in the spirit of the M{\oe}glin--Waldspurger algorithm for $\GL_n(F)$. Given the Langlands data $(\m,\phi,\eta)$ attached to a representation $\pi$ of $G_n$, our algorithm produces $\AD(\m,\phi,\eta)$, the Langlands data associated with the representation $\widehat{\pi}$. The algorithm is described in Section \ref{sec:defalgo}. Here, we will only mention three key points:

\begin{itemize}
  \item First, given Langlands data $(\m,\phi,\eta)$ attached to a representation $\pi$ of $G_n$, the algorithm constructs new Langlands data $(\m_1,\phi_1,\eta_1)$ and $(\m^{\#},\phi^{\#},\eta^{\#})$. We then prove that the operator $\AD$ is given recursively by $\AD(\m,\phi,\eta) = (\m_1,\phi_1,\eta_1) + \AD(\m^{\#},\phi^{\#},\eta^{\#})$.
  \item As in the case of $\GL_n$, Langlands data for classical groups admit a decomposition along supercuspidal lines. This is known as the Jantzen decomposition (see Section \ref{sub:Jantzen}). The Aubert--Zelevinsky dual $\AD(\m, \phi, \eta)$ can then be computed line by line, by computing $\AD(\m_\rho, \phi_\rho, \eta_\rho)$ for each supercuspidal representation $\rho$. However, in the case of classical groups, we must distinguish between three different cases. Let $\rho \in \Cusp^{\GL}$ be a supercuspidal representation of a general linear group, and let $\mathbb{Z}_\rho :=\{\rho |\cdot|^n : n \in \Z\}$ denote its \textit{supercuspidal line} (see Paragraph \ref{sub:lines}). We fix a supercuspidal representation $\sigma$ of $G_n$. Then

\begin{enumerate}
  \item We say that $\rho$ is \emph{ugly} if $\mathbb{Z}_\rho \neq \mathbb{Z}_{\rho^\vee}$.
  \item  We say that $\rho$ is \emph{good} if $\mathbb{Z}_\rho = \mathbb{Z}_{\rho^\vee}$ and $\rho' \rtimes \sigma$ is reducible for some $\rho' \in \Z_\rho$.
  \item  We say that $\rho$ is \emph{bad} if $\mathbb{Z}_\rho = \mathbb{Z}_{\rho^\vee}$ and $\rho' \rtimes \sigma$ is irreducible for all $\rho' \in \Z_\rho$.
\end{enumerate}
This classification is independent of the choice of $\sigma$. Accordingly, we define three distinct versions of the duality operator $\AD$, corresponding to the good, the bad and the ugly cases respectively.

\item Finally, regarding the proof: once we have a tentative algorithm, in order to prove that it corresponds to the Aubert--Zelevinsky duality, one \emph{only} needs to check, as in the case of general linear groups, that the algorithm commutes with derivatives. However, this is not as straightforward as one might expect, due to the complexity of the derivative formulas.
In order to carry out these computations, it was useful to introduce a simplified notation for derivatives; this streamlined notation turned out to be interesting in its own right, as it also provides a unified treatment of the positive and negative cases considered in \cite{AM} (see Section \ref{sec:expder}).

The proof is provided in great detail in Sections \ref{sec:proofgood}, \ref{sec:proofbad}, and \ref{sec:proofugly}.
\end{itemize}

\subsection{} Let us give an example to illustrate how the algorithm works in practice. Here, we treat the case of a cuspidal representation $\rho$ of good parity. We will not provide all the details, but just a glimpse into how the process unfolds. Let $\rho \in \Cusp^{\GL}$ be of good parity and take $\pi \in \Irr^{G}$ with Langlands data $(\m,\phi,\eta)$ where $\m=[-3, -1]_\rho + [-2, 0]_\rho + [-2, -2]_\rho + [-1, 0]_\rho$, $\phi = \rho \boxtimes S_3$, where $S_a$ is the unique irreducible algebraic representation of $\SL_2(\mathbb{C})$ of dimension $a$, and $\eta(\rho \boxtimes S_3)=1$.

The first step of the algorithm is to construct a symmetric multisegment with signs. This is done by sending each segment $\Delta \in \m$ to $\Delta + \Delta^{\vee}$, and each $\rho \boxtimes S_a$ to the segment $[-(a-1)/2, (a-1)/2]_\rho$ (see Section \ref{sec:symmetrization}). This is almost the transfer to $\GL_n$, but the centered segments carry signs, so that we can distinguish representations in the same $L$-packet (recall that the Aubert--Zelevinsky duality does not preserve $L$-packets). In this example, the resulting symmetric multisegment is:
 \[
  \mathfrak{s}=[-3, -1]_\rho + [1,3]_\rho +[-2, 0]_\rho +[0,2]_\rho + [-2, -2]_\rho + [2, 2]_\rho + [-1, 0]_\rho + [0,1]_\rho + [-1,1]_\rho
 \]
 with $[-1, 1]_\rho$ having sign $+1$. We then order these segments according to an order $\preceq$ defined in Section \ref{sec:labelseg}. This yields the following diagram:

      \begin{figure}[h!]
        \centering
        \begin{tikzpicture}[scale=0.75]
                  \foreach \x in {-3,-2,-1,0,1,2,3} {
                \draw[dashed, gray, very thin] (\x,-5) -- (\x,-0.2);
                \node[black] at (\x,0) {\x};
            }
            \fill[black] (2,-0.5) circle (1pt);
            \draw[black] (1,-1) -- (3,-1);
            \draw[black] (0,-1.5) -- (1,-1.5);
            \draw[black] (0,-2) -- (2,-2);
            \draw[black] (-1,-2.5) -- (1,-2.5);
            \node at (1.2,-2.4) {\tiny $+$};
            \draw[black] (-1,-3) -- (0,-3);
            \fill[black] (-2,-3.5) circle (1pt);
            \draw[black] (-2,-4) -- (0,-4);
            \draw[black] (-3,-4.5) -- (-1,-4.5);
          \end{tikzpicture}
      \end{figure}

We now define a sequence of segments $\Delta_1 \succeq \Delta_2 \succeq \cdots \succeq \Delta_l$ as follows (see Section \ref{sec:algogood} for full details). The segment $\Delta_1$ is the largest segment with maximal end. Then recursively, $\Delta_j$ is the largest segment such that $\Delta_j \prec \Delta_{j-1}$, $e(\Delta_j) = e(\Delta_{j-1}) - 1$, and if the segments have signs, those signs are opposite. In our example, we have $l=5$, $\Delta_1 = [1,3]_\rho$, $\Delta_2 = [0,2]_\rho$, $\Delta_3 = [-1,1]_\rho$, $\Delta_4 = [-1,0]_\rho$ and $\Delta_5 = [-3,-1]_\rho$. The ends of these segments are removed, as well as the beginnings of their duals. These removed ends form the first segment of the dual; the removed beginnings form the symmetric counterpart. The process is then repeated on the remaining symmetric multisegment. In our example, this gives:

\begin{figure}[h!]
        \centering
        \begin{tikzpicture}[scale=0.79]
                  \foreach \x in {-3,-2,-1,0,1,2,3} {
                \draw[dashed, gray, very thin] (\x,-5) -- (\x,-0.2);
                \node[black] at (\x,0) {\x};
            }
            \fill[black] (2,-0.5) circle (1pt);
            \draw[black] (1,-1) -- (3,-1);
            \draw[black] (0,-1.5) -- (1,-1.5);
            \draw[black] (0,-2) -- (2,-2);
            \draw[black] (-1,-2.5) -- (1,-2.5);
            \node at (1.2,-2.4) {\tiny $+$};
            \draw[black] (-1,-3) -- (0,-3);
            \fill[black] (-2,-3.5) circle (1pt);
            \draw[black] (-2,-4) -- (0,-4);
            \draw[black] (-3,-4.5) -- (-1,-4.5);

             \draw[green, line width=5pt, opacity=0.5] (3,-1) -- (2,-2) -- (1,-2.5) -- (0,-3) -- (-1,-4.5);
            \draw[green, line width=5pt, opacity=0.5] (1,-1) -- (0,-2) -- (-1,-2.5)-- (-2,-4)-- (-3,-4.5);
          \end{tikzpicture}
      \end{figure}

So the first part of the predicted dual is $[-3,1]_\rho + [-1,3]_\rho$. We then continue with the remaining multisegment:
 \[
  \mathfrak{s}=[2, 2]_\rho + [2, 2]_\rho + [0, 1]_\rho + [1, 1]_\rho  + [0, 0]_\rho + [-1, -1]_\rho + [-2, -2]_\rho + [-1, 0]_\rho + [-2, -2]_\rho
 \]
 with $[0,0]_\rho$ having sign $+1$. 
 It may happen that the two green paths above ``meet in the middle''. In such cases, the result is a centered segment, which corresponds to an element in the tempered part. This occurs in the current multisegment, where we obtain:

\begin{figure}[h!]
        \centering
        \begin{tikzpicture}[scale=0.79]
                  \foreach \x in {-2,-1,0,1,2} {
                \draw[dashed, gray, very thin] (\x,-5) -- (\x,-0.2);
                \node[black] at (\x,0) {\x};
            }
            \fill[black] (2,-0.5) circle (1pt);
            \fill[black] (2,-1) circle (1pt);
            \fill[black] (1,-1.5) circle (1pt);
            \draw[black] (0,-2) -- (1,-2);
            \fill[black] (0,-2.5) circle (1pt);
            \node at (0.2,-2.4) {\tiny $+$};
            \fill[black] (-1,-3) circle (1pt);
            \draw[black] (-1,-3.5) -- (0,-3.5);
            \fill[black] (-2,-4) circle (1pt);
            \fill[black] (-2,-4.5) circle (1pt);

            \draw[green, line width=5pt, opacity=0.5] (2,-0.5) -- (1,-1.5) -- (0,-2.5) -- (-1,-3) -- (-2,-4) ;
          \end{tikzpicture}
      \end{figure}

The resulting segment is $[-2,2]_\rho$ with sign $+1$, and the remaining multisegment becomes
\[
[2, 2]_\rho + [0, 1]_\rho + [-1, 0]_\rho + [-2, -2]_\rho.
\]
We repeat the process until nothing remains. Here, one more step is required where $[-2,0]_\rho + [0,2]_\rho$ is produced. At the end, the symmetric multisegment is:
\[
[-3,1]_\rho + [-1,3]_\rho + [-2,2]_\rho + [-2,0]_\rho + [0,2]_\rho.
\]
Unsymmetrizing this gives the Langlands data of $\hat{\pi}$ as $L(\m; \pi(\phi, \eta))$ with $\m=[-3,1]_\rho + [-2,0]_\rho$, $\phi = \rho \boxtimes S_5$ and $\eta(\rho \boxtimes S_5)=1$.

\subsection{} Our algorithm generalizes Atobe's algorithm for the so-called ladder representations \cite{AtobeLadder}. For representations of Arthur type, Atobe also provides an algorithm in terms of his parametrization \cite{AtobePackets}. Ruben La describes the Iwahori--Matsumoto dual of tempered representations with real infinitesimal character in \cite{La}.

\subsection{} All algorithms described in this paper are implemented in Python and SageMath, and are publicly available on GitHub\footnote{\url{https://github.com/ThomasLanard/aubert-zelevinsky-duality}}.

\subsection{} 
This duality has numerous interesting applications to the Langlands program. We will briefly mention two of them.

One notable feature of the Aubert--Zelevinsky duality is that it does not preserve temperedness. In Arthur's local classification, the first step beyond tempered representations involves considering the Aubert--Zelevinsky dual of tempered representations. One first constructs $A$-packets and proves the local theorems when the $A$-parameters are co-tempered, that is, when they are Aubert-dual to tempered $L$-parameters. This is possible provided we understand how Aubert duality interacts with the local classification and the local intertwining relation. See \cite[\S 7]{Art}, \cite{AGIKMS}.

The Aubert--Zelevinsky duality is also a very useful tool in studying the wavefront set, a fundamental invariant of admissible representations, arising from the Harish-Chandra--Howe local character expansion. Significant progress has been made in recent years \cite{CMBO,CMBO2,CMBO3,HLLS,La,W3}, and we expect that our algorithm will be valuable in solving some of the conjectures in the mentioned papers.

However, to be transparent, we did not begin this research with the primary goal of exploring applications of the Aubert--Zelevinsky duality. Our initial aim was to investigate the potential uses of deep learning in the Langlands program. Inspired by the work of \cite{Wil}, we sought an excuse to determine whether deep learning could help reveal patterns in problems within our field. The difficult-to-use algorithm in \cite{AM} for computing the Aubert--Zelevinsky duality gave us the perfect opportunity. A discussion of how we applied deep learning and developed our new algorithm can be found in Appendix \ref{ann:com}. For us, it is remarkable and somewhat paradoxical that we used an AI to create an algorithm that feels more intuitive and human.

\subsection{}  
We now briefly discuss some issues that are not addressed in this article. The first concerns the use of symplectic and split special odd orthogonal groups. What about other (quasi-split) classical groups? With the endoscopic classification being (almost) complete \cite{Art, AGIKMS, Mok}, it is natural to ask whether similar results can be expected for even orthogonal or unitary groups. We believe so, and the proof should be similar. We focused on symplectic and split odd orthogonal groups because the work in \cite{At2, AM} deals exclusively with these groups. Now that the results in \cite{At} have been extended to all quasi-split classical groups (see \cite[Appendix C]{AGIKMS}), and given that \cite{J-dual} is written for all classical groups, the extension to quasi-split classical groups should not pose significant challenges. (A much more challenging task would be to extend our results to the case where $F$ has positive characteristic.)

Secondly, one of the goals of M{\oe}glin--Waldspurger's algorithm was to demonstrate that Zelevinsky's involution has a geometric interpretation, as he had conjectured \cite{Z2}. We aim to explore this problem in future work.

\subsection{} The contents of this paper are as follows. In Section~\ref{sec:Preliminaries}, we recall some general results on the representation theory of $p$-adic groups and the Aubert--Zelevinsky involution. Section~\ref{sec:GL} reviews the representation theory of $\GL_n(F)$, including the Zelevinsky classification and the M{\oe}glin--Waldspurger algorithm. In Section~\ref{sec:Gn}, we present the representation theory of classical groups and the associated local Langlands data. Section~\ref{sec:defalgo} introduces the definition of our algorithm $\AD$. In Section~\ref{sec:ADdef}, we verify that this definition is well-posed, particularly in the good parity case. Section~\ref{sec:impprop} establishes several important properties of $\AD$. Section~\ref{sec:AM} recalls the theory of derivatives, while Section~\ref{sec:expder} provides explicit formulas for these derivatives. Section~\ref{sec:proofgood} concludes the proof that the algorithm $\AD$ indeed realizes the Aubert--Zelevinsky involution in the good parity case. The bad and ugly cases are treated in Sections~\ref{sec:proofbad} and~\ref{sec:proofugly}, respectively. Finally, Appendix~\ref{ann:com} explains how machine learning was used to discover this algorithm.

\subsection*{Acknowledgment}
We would like to thank Hiraku Atobe for stimulating discussions about this work. In particular, this article would not have been possible without his Sage implementation of the algorithm from \cite{AM}. We gratefully acknowledge the hospitality of Kyoto University, where part of this work was developed in Spring 2023. In particular, we would like to thank Atsushi Ichino for kindly inviting both of us. 

A. Mínguez was partially funded by the Principal Investigator project PAT-4832423 of the Austrian Science Fund (FWF) and Proyecto PID2024-156912NB-I00 financiado por MICIU/ AEI / 10.13039/501100011033 / FEDER, UE. T. Lanard was partially funded by the PEPS JCJC 2023 of INSMI (CNRS).

\section{Preliminaries}\label{sec:Preliminaries}

Throughout this article, we fix a non-Archimedean, locally compact field $F$ of characteristic zero, with normalized absolute value $|\cdot|$. 

\subsection{} 
Let $G$ be the group of $F$-points of a connected reductive group defined over $F$, equipped with its usual topology. We will only consider smooth complex representations of $G$, meaning representations on $\C$-vector spaces where the stabilizer of each vector is an open subgroup of $G$. Henceforth, by ``representation'' we will always mean ``smooth complex representation''. We write $\Rep(G)$ for the category of finite-length representations of $G$ and denote by $\Irr(G)$ the set of equivalence classes of irreducible objects in $\Rep(G)$. For $\pi, \pi' \in \Rep(G)$, we write $\pi \hookrightarrow \pi'$ (\resp $\pi \twoheadrightarrow \pi'$) to indicate the existence of an injective (\resp surjective) morphism from $\pi$ to $\pi'$. For any $\pi \in \Rep(G)$, we denote by $\pi^\vee$ the contragredient of $\pi$. 

Let $ \pi \in \Rep(G) $. The \textit{socle} of $ \pi $ is the largest semisimple subrepresentation of $ \pi $ and is denoted $\soc(\pi)$. We say that $\pi$ is socle irreducible (SI) if $\soc(\pi)$ is irreducible and occurs with multiplicity one in $\pi$.

Fix a minimal $F$-parabolic subgroup $P_0$ of $G$. A parabolic subgroup $P$ of $G$ is called \emph{standard} if it contains $P_0$. From now on, $P$ will always denote a standard parabolic subgroup of $G$, with an implicit standard Levi decomposition $P = MU$. Let $\Sigma$ denote the set of roots of $G$ with respect to $P_0$, and let $\Delta$ be a basis of $\Sigma$. For any subset $\Theta \subset \Delta$, let $P_\Theta$ denote the standard parabolic subgroup of $G$ corresponding to $\Theta$, and let $M_\Theta$ be the corresponding standard Levi subgroup. Finally, let $W$ denote the Weyl group of $G$.

Let $\tau$ be a representation of $M$, viewed as a representation of $P$ where $U$ acts trivially. We denote by $\Ind_P^G \tau$ the representation of $G$ parabolically induced from $\tau$, using normalized induction. We treat $\Ind_P^G$ as a functor, whose left adjoint is the \emph{Jacquet functor} with respect to $P$, denoted by $\Jac_P^G$.

An irreducible representation $\pi$ of $G$ is called \emph{supercuspidal} if it is not a composition factor of any representation of the form $\Ind_P^G(\tau)$, where $P$ is a proper parabolic subgroup of $G$ and $\tau$ is a representation of $M$. We denote by $\Cusp(G)$ the subset of $\Irr(G)$ consisting of supercuspidal representations. We also let $\Irr_\temp(G)$ be the set of equivalence classes of irreducible tempered representations of $G$.

Let $\Rr(G)$ denote the Grothendieck group of $\Rep(G)$. The canonical map from the objects of $\Rep(G)$ to $\Rr(G)$ is denoted by $\pi \mapsto [\pi]$. Both the induction and Jacquet functors, $\Ind_P^G$ and $\Jac_P^G$, are exact and preserve finite-length representations. Therefore, they induce morphisms of $\Z$-modules:
\begin{align*}
\Ind_P^G & : \Rr(M) \longrightarrow \Rr(G), \\
\Jac_P^G & : \Rr(G) \longrightarrow \Rr(M).
\end{align*}

\subsection{The Aubert--Zelevinsky involution}\label{sub:Aubert} 
Define:
\begin{align*}
D_G \colon \Rr(G) & \longrightarrow \Rr(G) \\
\pi &\mapsto \sum_{P} (-1)^{\dim A_M} \Ind_P^G(\Jac_P^G(\pi)),
\end{align*}
where $P $ runs over all standard parabolic subgroups of $G$ and $A_M$ is the maximal split torus of the center of $M$. A.-M.~Aubert \cite{Au} showed that if $\pi$ is irreducible, there exists a sign $\epsilon \in \{\pm 1\}$ such that $\widehat{\pi} := \epsilon \cdot D_G(\pi)$ is also an irreducible representation. We refer to the map:
\begin{align*}
\Irr(G) & \longrightarrow \Irr(G) \\
\pi & \mapsto \widehat{\pi}
\end{align*}
as the \emph{Aubert--Zelevinsky duality}.

This duality satisfies the following important properties:
\begin{enumerate}
\item The map $\pi \mapsto \widehat{\pi}$ is an involution of $\Irr(G)$ \cite[Th{\'e}or{\`e}me 1.7 (3)]{Au}.
\item If $\pi \in \Cusp(G)$, then $\widehat{\pi} = \pi$ \cite[Th{\'e}or{\`e}me 1.7 (4)]{Au}.
\item Let $\Theta \subset \Delta$, and consider the standard parabolic subgroup $P = P_\Theta$ with Levi decomposition $P = MN$. Let $w_0$ be the longest element in the set $\{w \in W \mid w^{-1}(\Theta) > 0\}$, and let $P'$ be the standard parabolic with Levi subgroup $M' = w_0^{-1}(M)$. Then, we have (\textit{cf.} \cite[Th{\'e}or{\`e}me 1.7 (2)]{Au}):
\begin{equation}\label{eq:Jacquet}
\Jac_P^G \circ D_G = {\rm Ad}(w_0) \circ D_{M'} \circ \Jac_{P'}^G.
\end{equation}
\end{enumerate}

\section{Representation theory of $\GL_n(F)$}\label{sec:GL}
\subsection{} 
The representation theory of the groups $\GL_n(F)$, for $n \geq 0$, plays a particularly important role
in the general theory of representations of $p$-adic groups.
Bernstein and Zelevinsky studied this extensively in their fundamental work in the 1970s \cite{BZ,Z}, where they emphasized the benefits of considering all $n$'s together.

We define $\Irr^\GL \coloneqq \bigcup_{n \geq 0} \Irr(\GL_n(F))$, and let $\Cusp^\GL \subset \Irr^\GL$ denote the subset of supercuspidal representations. Furthermore, we set $\Rr^\GL \coloneqq \bigoplus_{n \geq 0} \Rr(\GL_n(F))$.

Let $d_1, \dots, d_r$ be positive integers, and for each $1 \leq i \leq r$, let $\tau_i \in \Rep(\GL_{d_i}(F))$. The normalized parabolically induced representation is customarily denoted by
\[
\tau_1 \times \dots \times \tau_r \coloneqq \Ind_{P}^{\GL_{d_1 + \dots + d_r}(F)} (\tau_1 \boxtimes \dots \boxtimes \tau_r).
\]
This operation induces a $\Z$-graded commutative algebra structure on $\Rr^\GL$. If $\tau_1 = \dots = \tau_r = \tau$, we simplify the notation to $\tau^r = \tau \times \dots \times \tau$ ($r$ times).

The Jacquet functor for $\GL_m(F)$, along the maximal standard parabolic subgroup $P_{(d, m - d)}$, with Levi decomposition $\GL_d(F) \times \GL_{m - d}(F)$, is simply denoted by $\Jac_{(d, m - d)} = \Jac_{P_{(d, m - d)}}^{\GL_m(F)}$.

For any $\pi \in \Rep(\GL_n(F))$, we call $n$ its degree and denote it $\deg(\pi)$. Moreover, if $\chi$ is  a character of $F^\times$, we denote the representation obtained by twisting $\pi$ by $\chi \circ \det$ as $\pi \chi$. 

A supercuspidal representation is unitary if and only if its central character is unitary. Therefore, given any supercuspidal representation $\rho$, there exists a unique $x \geq 0$ such that $\rho_u:=\rho | \cdot|^x$ is unitary. We call $\rho_u$ the unitarization of $\rho$.
\subsection{} 
A \emph{segment} $\Delta$ is a finite, non-empty subset of $\Cusp^\GL$ of the form
\begin{equation}
\Delta = \{\rho |\cdot|^x, \rho |\cdot|^{x-1}, \dots, \rho |\cdot|^y\},
\end{equation}
where $\rho \in \Cusp^\GL$, and $x, y \in \R$ with $x - y \in \Z$ and $x \leq y$. We denote such a segment by $\left[x,y\right]_\rho$. Thus, $\left[x,y\right]_\rho = \left[x',y'\right]_{\rho'}$ if and only if $\rho|\cdot|^x = \rho'|\cdot|^{x'}$ and $\rho|\cdot|^y = \rho'|\cdot|^{y'}$. Hence, one can assume, when needed, that $\rho$ is unitary. Notice that our notation differs from \cite{AM}:  our segments are increasing, while in \cite{AM} they are decreasing.

We denote by $\Seg$ the set of all segments. Let $\rho \in \Cusp^\GL$ be unitary. We denote by $b(\left[x,y\right]_\rho)=x$ the beginning of the segment $\left[x,y\right]_\rho$ and by $e(\left[x,y\right]_\rho)=y$ its end. The size of a segment $\Delta$, also called its length, is denoted by $l(\Delta)$. We denote by $\deg(\Delta)$ its degree, that is $\deg(\rho)l(\Delta)$. The determinant of $\Delta$ is defined by $\det(\Delta)=\prod_{i=x}^{y} \det(\rho|\cdot|^i)$.
If $\Delta =\left[x,y\right]_\rho$, we call $\frac{x+y}{2}\in \R$ the \emph{center} of $\Delta$ and denote it $c(\Delta)$.
We denote by $\Seg^{>0}$ (\resp $\Seg^{0}$ or $\Seg^{<0}$) the subset of $\Seg$ composed of segments $\Delta$ such that $c(\Delta)>0$ (\resp $c(\Delta)=0$ or $c(\Delta)<0$). We say that $\Delta$ is a \emph{centered} segment if $c(\Delta)=0$.

We define the following operations on a segment $\Delta = [x, y]_\rho$:
\begin{align*}
\Delta^-    &= [x, y - 1]_\rho,\quad {}^-\Delta = [x + 1, y]_\rho, \\
\Delta^+    &= [x, y + 1]_\rho,\quad {}^+\Delta = [x - 1, y]_\rho,\\
\Delta^\vee &= [-y, -x]_{\rho^\vee}.
\end{align*}

Let $\Delta = [x, y]_\rho$ and $\Delta' = [x', y']_{\rho'}$ be two segments. We say that $\Delta$ and $\Delta'$ are \emph{linked} if $\Delta \cup \Delta'$ forms a segment, but neither $\Delta \subset \Delta'$ nor $\Delta' \subset \Delta$.

If $\Delta$ and $\Delta'$ are linked and $\rho'|\cdot|^{y'} = \rho|\cdot|^{y+j}$ with $j > 0$, we say that $\Delta$ \emph{precedes} $\Delta'$.
Thus, if $\Delta$ and $\Delta'$ are linked, then either $\Delta$ precedes $ \Delta'$ or $\Delta'$ precedes $\Delta$, but not both.

\subsection{}\label{sub:Jacseg} For any segment $\Delta = \left[x,y\right]_\rho$, we define:
\begin{eqnarray*}
Z(\Delta) & = & \soc(\rho|\cdot|^x \times \rho|\cdot|^{x+1} \times \dots \times \rho|\cdot|^y), \\
L(\Delta) & = &  \soc(\rho|\cdot|^y \times \rho|\cdot|^{y-1} \times \dots \times \rho|\cdot|^x).
\end{eqnarray*}

Zelevinsky proved that $Z(\Delta)$ and $L(\Delta)$ are irreducible representations. Moreover, $L(\Delta)$ is an essentially discrete series representation, and all essentially discrete series representations are of this form \cite[Theorem 9.3]{Z}.

If $\Delta = [x, y]_\rho$ with $x = y + 1$, we set by convention $Z(\Delta) = L(\Delta)$ to be the trivial representation of the trivial group $\GL_0(F)$.

\subsection{Classification} \label{sub:multi}
Given a set $\X$, write $\NN(\X)$ for the commutative semigroup of maps from
$\X$ to $\NN$ with finite support.

A \textit{multisegment} is a multiset of segments, that is, an element in $\Mult := \NN(\Seg)$. We will see a multisegment $\m$ as a finite sum $\m=\Delta_1+ \dots + \Delta_N$, with $\Delta_i \in \Seg$. We denote the multiplicity of a segment $\Delta$ in $\m$ by $m_{\m}(\Delta)$. By linearity one extends the definition of contragredient, length, degree and determinant from segments to multisegments.
 If $\m=\Delta_1+ \dots + \Delta_N \in \Mult$, we define the support of $\m$ to be $\cup\Delta_i\subset\Cusp^\GL$. We say $\m$  is positive (\resp negative) if $c(\Delta_i)>0$ (\resp $c(\Delta_i)<0$), for all $1\leq i \leq N$.
 We denote by $\Mult^{\clubsuit}$ the subset of $\Mult$ made of segments in $\Seg^{\clubsuit}$ where $\clubsuit$ is one of the following symbols: ${>0}$, $0$ or $<0$. The natural map $\Mult \longrightarrow \Mult^{>0} \times \Mult^{0}\times \Mult^{<0}$ will be denoted $\m \mapsto (\m^{>0}, \m^{0}, \m^{<0})$.

A sequence of segments $(\Delta_1, \dots, \Delta_N)$ is said to be \textit{arranged} if, for every $1 \leq i < j \leq N$, $\Delta_i$ does not precede $\Delta_j$. This is the case in particular if $c(\Delta_1) \geq c(\Delta_2) \geq \dots \geq  c(\Delta_N)$. If $\m \in \MS$ and $(\Delta_1, \dots, \Delta_N)$ is an arranged sequence of segments such that $\m = \Delta_1 + \dots + \Delta_N$, we say that $(\Delta_1, \dots, \Delta_N)$ is an \emph{arranged form} of $\m$.

For any arranged form $(\Delta_1, \Delta_2, \ldots, \Delta_N)$ of a multisegment $\m$, we define
\[
\tilde{\lambda}(\m) = L(\Delta_N) \times \cdots \times L(\Delta_2) \times L(\Delta_1).
\]

Zelevinsky showed that $\tilde{\lambda}(\m)$ is SI and that, up to isomorphism, it does not depend on the choice of the arranged form of $\m$. Furthermore, if one defines
\[
L(\m) : = \soc(\tilde{\lambda}(\m)) \in \Irr^\GL,
\]
the map $\Mult \to \Irr^\GL$ given by 
\begin{equation} \label{eq:class}
\m \mapsto L(\m)
\end{equation}
is a bijection, providing a classification of the set $\Irr^\GL$ in terms of multisegments.

\subsection{Reduction to supercuspidal lines} \label{sub:lines}

The equivalence relation on \( \Cusp^\GL \) generated by \( \rho \sim \rho |\cdot| \) partitions \( \Cusp^\GL \) into equivalence classes, each of which is called a \emph{supercuspidal line}. The supercuspidal line containing \( \rho \in \Cusp^\GL \) is thus given by  
\[
\Z_\rho := \{\rho |\cdot|^n : n \in \Z\}.
\]  
We denote the set of all such equivalence classes by \( \Cusp^\GL/\sim \).

Let $\rho \in \Cusp^\GL$, and denote by $\Mult_\rho$ the submonoid of multisegments supported in $\Z_\rho$, and by $\Irr^\GL_\rho$ the image of $\Mult_\rho$ under the map \eqref{eq:class}, so that $\Irr^\GL_\rho$ consists of irreducible representations with supercuspidal support in $\Z_\rho$. Again, we denote by $\Mult_\rho^{\clubsuit}=\Mult_\rho \cap \Mult^{\clubsuit}$ where $\clubsuit$ is one of the following symbols: ${>0}$, $0$ or $<0$.

There is a natural map 
\begin{eqnarray}\label{eq:reduction}
\Mult&\longrightarrow& \Mult_\rho \\
\m &\mapsto &\m_\rho, \notag
\end{eqnarray}
where $\m_\rho$ is the sum of all segments in $\m$ with support in $\Z_\rho$. It induces a natural decomposition:
\[L(\m) =\bigtimes_{\rho \in \Cusp^\GL/\sim} L(\m_\rho).\]

Thus, any $\pi \in \Irr^\GL$ can be written uniquely (up to permutation) as $\pi = \pi_1 \times \dots \times \pi_r$, where $\pi_i \in \Irr^\GL_{\rho_i}$ with $\Z_{\rho_i} \neq \Z_{\rho_j}$ for $i \neq j$. In practice, this allows us to reduce questions about $\Irr^\GL$ to $\Irr^\GL_\rho$.

\subsection{The M{\oe}glin--Waldspurger algorithm} \label{sub:MW}
Given $\m \in \Mult$, there exists a unique $\m^t \in \Mult$ such that $L(\m^t) \simeq \widehat{L(\m)}$. The map $\m \mapsto \m^t$ factors through \eqref{eq:reduction}, that is,
\[
(\m^t)_\rho = (\m_\rho)^t.
\]

This allows us to fix $\rho \in \Cusp^\GL$ for the remainder of this subsection. The combinatorial description of the map
\begin{align*}
    \Mult_\rho &\longrightarrow \Mult_\rho \\
    \m &\mapsto \m^t
\end{align*}
was provided by M{\oe}glin and Waldspurger \cite{MW2}, which we recall here.

First, we need to fix an order on the set of segments supported in $\Z_\rho$. 
We define that $[x_1, y_1]_\rho < [x_2, y_2]_\rho$ if either $x_1 < x_2$, or $x_1 = x_2$ and $y_1 > y_2$.

\begin{rem}
  This is not the classical lexicographical order on segments. This order has the advantage that if $[x_1, y_1]_\rho$ and $[x_2, y_2]_\rho$ are two segments with $y_2 = y_1 - 1$, then $[x_2, y_2]_\rho \le [x_1, y_1]_\rho$ if and only if $[x_2, y_2]_\rho$ precedes $[x_1, y_1]_\rho$.
\end{rem}

Let $0 \neq \m \in \Mult_\rho$. Let $\rho_u$ be the unitarization of $\rho$. Let $y_{\max}$ be the maximum of the ends $e(\Delta)$ of the segments $\Delta \in \m$. Let $\Delta_1 \in \m$ be the largest segment (with respect to the above order) such that $e(\Delta_1) = y_{\max}$. We then define recursively a sequence $\Delta_1 \geq \dots \geq \Delta_l$, where $\Delta_j$ is the largest segment in $\m$ such that $\Delta_j \le \Delta_{j-1}$ and $e(\Delta_j) = e(\Delta_{j-1}) - 1$.

Define $\Delta'_1(\m) = [e(\Delta_l),e(\Delta_1)]_{\rho_u}$, and
\[
\m_{(1)} = \m + \sum_{j=1}^l (\Delta_{j}^{-} - \Delta_{j}).
\]

The map $\m \mapsto \m^t$ is defined recursively by $0^t = 0$ and
\[
\m^t = (\m_{(1)})^t + \Delta'_1, \ \ \text{if } \m \neq 0.
\]

\section{Representation theory of classical groups}\label{sec:Gn}
Throughout this paper, we denote by \( G_n \) either the split special orthogonal group \( \SO_{2n+1}(F) \) or the symplectic group \( \Sp_{2n}(F) \), both of rank \( n \), and maintain this choice throughout.  
\subsection{} 
We define  
\[
\Irr^G \coloneqq \bigcup_{n \geq 0} \Irr(G_n), \quad \text{and} \quad \Rr^G \coloneqq \bigoplus_{n \geq 0} \Rr(G_n),
\]  
where the union and direct sum are taken over groups of the fixed type. Similarly, let \( \Cusp^G \subset \Irr^G \) denote the subset of supercuspidal representations of \( G_n \) for all \( n \geq 0 \).

\par

Let $P$ be a standard parabolic subgroup of $G_n$ with a Levi subgroup isomorphic to
$\GL_{d_1}(F) \times \dots \times \GL_{d_r}(F) \times G_{n_0}$. 
Let $\pi \in \Rep(G_{n_0})$ and $\tau_i \in \Rep(\GL_{d_i}(F))$ for $1 \leq i \leq r$. 
As in the introduction, we denote the normalized parabolically induced representation by
\[
\tau_1 \times \dots \times \tau_r \rtimes \pi 
\coloneqq 
\Ind_{P}^{G_n}(\tau_1 \boxtimes \dots \boxtimes \tau_r \boxtimes \pi).
\]

\par

If $\pi \in \Irr^G$, there exist $\rho_1, \dots, \rho_r \in \Cusp^\GL$ and $\sigma \in \Cusp^G$ such that $\pi$ is a subrepresentation of $\rho_1 \times \dots \times \rho_r \rtimes \sigma$. The set 
\[
\scusp(\pi) \coloneqq \{\rho_1, \dots, \rho_r, \rho_1^\vee, \dots, \rho_r^\vee, \sigma\}
\]
is uniquely determined by $\pi$ and is called the \emph{supercuspidal support} of $\pi$. For $\sigma \in \Cusp^G$, we set $\Irr_\sigma \coloneqq \{\pi\in\Irr^G \;|\; \sigma \in \scusp(\pi)\}$.

\subsection{The Langlands Subrepresentation Theorem}\label{sub:lang}
Let $\m = \Delta_1 + \dots + \Delta_r$ be a negative multisegment, and let $\pi_\temp \in \Irr^G$ be tempered. 
A parabolically induced representation of the form
\[
\tilde{\lambda}(\m) \rtimes \pi_\temp 
\]
is called a \emph{standard module}.

The Langlands classification asserts that any standard module has an irreducible socle, 
and that any irreducible representation $\pi  \in \Irr^G$ is the unique irreducible subrepresentation (Langlands subrepresentation) of a standard module $\tilde{\lambda}(\m) \rtimes \pi_\temp$, which is unique up to isomorphism. In this case, we write $\pi = L(\Delta_1 + \dots + \Delta_r; \pi_\temp)$.
For more details, see \cite{K}.

\subsection{The endoscopic classification}\label{sub:endo}
The Langlands subrepresentation theorem reduces the classification of irreducible representations of $G_n$ to the case of tempered irreducible representations. These representations, in turn, are classified by Local Langlands parameters, which we briefly recall in this section \cite{Art}.

Let $W_F$ denote the Weil group of $F$. A homomorphism 
\[
\phi \colon W_F \times \SL_2(\C) \rightarrow \GL_n(\C)
\]
is called an \emph{$L$-parameter for $\GL_n(F)$} if it satisfies the following conditions:
\begin{itemize}
    \item $\phi(W_F)$ consists of semisimple elements;
    \item $\phi|_{W_F}$ is smooth, meaning it has an open kernel;
    \item $\phi|_{\SL_2(\C)}$ is algebraic.
\end{itemize}

Any irreducible representation of $W_F \times \SL_2(\C)$ has the form $\rho \boxtimes S_a$, where  $\rho$ is an irreducible representation of $W_F$ and $S_a$ is the unique irreducible algebraic representation of $\SL_2(\C)$ of dimension $a$. For simplicity, we often write $\rho = \rho \boxtimes S_1$. For a given $L$-parameter $\phi$, the multiplicity of $\rho \boxtimes S_a$ in $\phi$ is denoted by $m_\phi(\rho \boxtimes S_a)$. 

The local Langlands correspondence for $\GL_d(F)$ establishes a canonical bijection  between the set of irreducible unitary supercuspidal representations of $\GL_d(F)$ and the set of irreducible $d$-dimensional representations of $W_F$ with bounded image. These sets are identified, and we use the symbol $\rho$ to refer to their elements.

We say that $\phi$ is an \emph{$L$-parameter for $\SO_{2n+1}(F)$} if it is an $L$-parameter for $\GL_{2n}(F)$ of symplectic type, i.e., 
\[
\phi \colon W_F \times \SL_2(\C) \rightarrow \Sp_{2n}(\C).
\]
Similarly, $\phi$ is called an \emph{$L$-parameter for $\Sp_{2n}(F)$} if it is an $L$-parameter for $\GL_{2n+1}(F)$ of orthogonal type with trivial determinant, i.e.,
\[
\phi \colon W_F \times \SL_2(\C) \rightarrow \SO_{2n+1}(\C).
\]

For $G_n = \SO_{2n+1}(F)$ (respectively, $G_n = \Sp_{2n}(F)$), let $\Phi(G_n)$ denote the set of $\widehat{G_n}$-conjugacy classes of $L$-parameters for $G_n$ with bounded image, where $\widehat{G_n} = \Sp_{2n}(\C)$ (respectively, $\widehat{G_n} = \SO_{2n+1}(\C)$). We say that $\phi \in \Phi(G_n)$ is \emph{of good parity} if $\phi$ is a sum of irreducible self-dual representations of the same type as $\phi$.

Let $\Phi_\gp(G_n)$ denote the subset of $\Phi(G_n)$ consisting of $L$-parameters of good parity. We define the sets 
\[
\Phi(G) \coloneqq \bigcup_{n \geq 0} \Phi(G_n) \quad \text{and} \quad \Phi_\gp(G) \coloneqq \bigcup_{n \geq 0} \Phi_\gp(G_n).
\]

For each $\phi \in \Phi(G)$, a \emph{component group} $\Sc_\phi$ is attached, which is defined as follows. We express $\phi$ as a direct sum:
\begin{equation}\label{eq:deco}
\phi = \bigoplus_{i=1}^{r} \phi_i \oplus \phi' \oplus \phi^{\prime \vee},
\end{equation}
where $\phi_1, \dots, \phi_r$ are irreducible self-dual representations of the same type as $\phi$, and $\phi'$ is a sum of irreducible representations that are not of the same type as $\phi$. We then denote 
\[
\phi_\gp = \bigoplus_{i=1}^{r} \phi_i.
\] 
and define the \emph{enhanced component group} $\AA_\phi$ as
\[
\AA_\phi \coloneqq \bigoplus_{i=1}^{r} (\Z/2\Z) \alpha_{\phi_i}.
\]
Thus, $\AA_\phi$ is a free $\Z/2\Z$-module of rank $r$, with a basis $\{\alpha_{\phi_i}\}$ corresponding to the irreducible components $\{\phi_i\}$ of good parity. The element 
\[
z_\phi \coloneqq \sum_{i=1}^{r} \alpha_{\phi_i}
\]
is called the \emph{central element} of $\AA_\phi$. 

The \emph{component group} $\Sc_\phi$ is defined as the quotient of $\AA_\phi$ by the subgroup generated by $\alpha_{\phi_i} + \alpha_{\phi_{i'}}$ whenever $\phi_i \cong \phi_{i'}$.

Let $\widehat{\Sc_\phi}$ and $\widehat{\AA_\phi}$ denote the Pontryagin duals of $\Sc_\phi$ and $\AA_\phi$, respectively. Through the canonical surjection $\AA_\phi \twoheadrightarrow \Sc_\phi$, we may regard $\widehat{\Sc_\phi}$ as a subgroup of $\widehat{\AA_\phi}$. For any element $\eta \in \widehat{\AA_\phi}$, we write $\eta(\alpha_{\phi_i}) = \eta(\phi_i)$. The map $\phi \mapsto \phi_\gp$, from $\Phi(G)$ to $\Phi_\gp(G)$, induces canonical isomorphisms:
\begin{equation}\label{eq:canon}
\AA_\phi \simeq \AA_{\phi_\gp}, \qquad \Sc_\phi \simeq \Sc_{\phi_\gp}, \qquad \widehat{\Sc_\phi} \simeq \widehat{\Sc_{\phi_\gp}}.
\end{equation}

To each \(\phi \in \Phi(G_n)\), one can associate a subset \(\Pi_\phi \subseteq \Irr_\temp(G_n)\), called the \emph{\(L\)-packet} for \(G_n\) attached to \(\phi\), such that:  
\[
\Irr_\temp(G_n) = \bigsqcup_{\phi \in \Phi(G_n)} \Pi_\phi.
\]
Furthermore, there is a canonical injection  
\[
\Pi_\phi \to \widehat{\Sc_\phi}, \quad \pi \mapsto \pair{\cdot, \pi}_\phi,
\]
which satisfies certain endoscopic identities and has image  
\[
\widehat{\Sc_\phi}^+ := \{\eta \in \widehat{\Sc_\phi} \mid \eta(z_\phi) = 1\}.
\]
For further details, see \cite[Theorem 2.2.1]{Art} and \cite{Moe3}.  When $\pi \in \Pi_\phi$ corresponds to $\eta \in \widehat{\Sc_\phi}^+$, we write $\pi = \pi(\phi, \eta)$. 

A useful property of this classification is the following: if we have a decomposition as in \eqref{eq:deco}:
\begin{equation*}
\phi =\phi' \oplus \phi_\gp \oplus  \phi^{\prime \vee},
\end{equation*}
then, for all \(\eta \in \widehat{\Sc_\phi}^+\), we have  
\begin{equation}\label{eq:irre}
\pi(\phi, \eta) \simeq \pi_{\phi'} \rtimes \pi(\phi_\gp, \eta).
\end{equation}  
See  \cite[8.11]{X2} for a more general result.

\subsection{Local Langlands data} We denote by $\Temp(G_n)$  the set of pairs $(\phi, \eta)$ such that $\phi \in \Phi(G_n)$ and $\eta \in \widehat{\Sc_\phi}$. Similarly, we denote by $\Temp^+(G_n)$  the set of pairs $(\phi, \eta)$ such that $\phi \in \Phi(G_n)$ and $\eta \in \widehat{\Sc_\phi}^+$. 

Again, it will be convenient  to work with all $G_n$'s together so we let:
\[
\Temp(G)= \bigsqcup_{n\geq 0}\Temp(G_n), \qquad  \Temp^+(G)= \bigsqcup_{n\geq0}\Temp^+(G_n).
\]
The set of Langlands data is then defined by:
\[
\Data(G) =  \Mult^{<0} \times \Temp(G), \qquad  \Data^+(G) =  \Mult^{<0} \times \Temp^+(G)
\]
It follows from \ref{sub:lang} and \ref{sub:endo} that for every irreducible representation $\pi\in \Irr^G$, there exists a unique triple $(\m; \phi, \eta) \in \Data^+(G)$ such that:
\[
\pi \simeq L(\m; \pi(\phi, \eta)).
\]
We refer to $(\m; \phi, \eta)$ as the \emph{Langlands data} of $\pi$.

\subsection{The Jantzen Decomposition}\label{sub:Jantzen}

We now aim to establish a line decomposition for classical groups, analogous to the one for \(\GL_n\) discussed in Paragraph \ref{sub:lines}. This is achieved by the Jantzen decomposition, which we recall in this paragraph. However, in the case of classical groups, we must distinguish between three different cases.

We begin with the parameters. For $ \phi = \bigoplus_{i \in I} \rho_i \boxtimes S_{a_i} $ an  $L$-parameter, we define $ \pi_\phi =L(\m_\phi) \in \Irr^\GL$ by
\[
\m_\phi = \sum_{i \in I} [-\frac{a_i-1}{2},\frac{a_i-1}{2}]_{\rho_i}.
\]
We define $\scusp(\phi)$ the support of the parameter $\phi$ as the support of $\m_\phi$. The line decomposition in Paragraph \ref{sub:lines} induces, for every  $\rho \in \Cusp^\GL$, a natural map $\phi \mapsto \phi_\rho$ so that 
\[
\m_{\phi_\rho} = (\m_\phi)_\rho
\]

Let \(\rho \in \Cusp^\GL\). Assume that \(\Z_\rho = \mathbb{Z}_{\rho^\vee}\). We denote by \(\Temp_\rho(G)\) (\resp \(\Temp^+_\rho(G)\)) the subset of \(\Temp(G)\) (\resp \(\Temp^+(G)\)) consisting of parameters \((\phi, \eta)\) satisfying \(\scusp(\phi) \subset \Z_\rho\) and let:
\begin{align*}
\Data_\rho(G) =  \Mult_\rho^{<0}  \times \Temp_\rho(G), \qquad  \Data_\rho^+(G) =  \Mult_\rho^{<0} \times \Temp_\rho^+(G)
\end{align*}
the set of Langlands parameters in the $\rho$-line. 
\begin{rem}\label{rem:nolines} Consider the natural projection
\begin{eqnarray*}
\Data(G) &\longrightarrow &\Data_\rho(G) \\
y=(\m; \phi, \eta) &\mapsto &y_\rho:=(\m_\rho; \phi_\rho, \eta_{| \Sc_{\phi_\rho}}).
\end{eqnarray*}
Observe that,  if $\phi_\rho\in\Phi_\gp(G)$, the image of $\Data^+(G)$ is not necessarily contained in $\Data^+_\rho(G)$.
\end{rem}

When \(\Z_\rho \neq \mathbb{Z}_{\rho^\vee}\), the above definition of \(\Temp_\rho(G)\) is unsuitable, as it would not give a subset of \(\Temp(G)\). In this case, we define \(\Temp_\rho(G)\) (\resp \(\Temp^+_\rho(G)\)) as the subset of \(\Temp(G)\) (\resp \(\Temp^+(G)\)) consisting of parameters \((\phi, \eta)\) with \(\scusp(\phi) \subset \Z_\rho \cup \mathbb{Z}_{\rho'^\vee}\), and let again:
\begin{align*}
\Data_\rho(G) =  \Mult_\rho^{<0} \times  \Mult_{\rho^\vee}^{<0} \times \Temp_\rho(G), \\  \Data_\rho^+(G) =  \Mult_\rho^{<0} \times \Mult_{\rho^\vee}^{<0} \times \Temp_\rho^+(G)
\end{align*}
the set of Langlands parameters in the $\rho$-line and denote by $y \mapsto y_\rho$ the projection map from $\Data(G)$ to $\Data_\rho(G)$.

We say that two supercuspidals $\rho,\rho'\in \Cusp^\GL$ are \emph{line equivalent}, and we denote it by $\rho \sim' \rho'$, if $\rho \sim \rho'$ or $\rho \sim \rho'^{\vee}$; or equivalently $\rho \sim' \rho'$, if $\mathbb{Z}_\rho \cup \mathbb{Z}_{\rho^\vee} = \mathbb{Z}_{\rho'} \cup \mathbb{Z}_{\rho'^\vee}$.

We have a natural bijection
\begin{align*}
\Data(G) &\simeq  \oplus_{\rho \in \Cusp^\GL/\sim'} \Data_\rho(G) \\
y &\mapsto y_\rho
\end{align*}
Remark \ref{rem:nolines} implies there is no such decomposition for $\Data^+(G)$.

 The discussion above makes the following definitions natural.
 \begin{defi}\label{def:good/bad/ugly}
Let $\rho \in \Cusp^{\GL}$. We write $\rho = \rho_u |\cdot|^x$ with $\rho_u$ unitary and $x \in \R$.
\begin{enumerate}
  \item We say that $\rho$ is \emph{ugly} if  $\rho_u$ is not self-dual or $x \notin (1/2)\Z$ (that is $\mathbb{Z}_\rho \neq \mathbb{Z}_{\rho^\vee}$).
  \item  We say that $\rho$ is \emph{good} if  $\rho_u$ is self-dual and
  \begin{itemize}
    \item If $\rho_u$ is of the same type as $G$ then $x \in \Z$.
    \item If $\rho_u$ is of the opposite type as $G$ then $x \in (1/2)\Z \setminus \Z$.
  \end{itemize}
  \item  We say that $\rho$ is \emph{bad} if  $\rho_u$ is self-dual and
  \begin{itemize}
    \item If $\rho_u$ is of the same type as $G$ then $x \in (1/2)\Z \setminus \Z$.
    \item If $\rho_u$ is of the opposite type as $G$ then $x \in \Z$.
  \end{itemize}
\end{enumerate}

\begin{rem}
  Let $\rho \in \Cusp^{\GL}$. By \cite[Remark 5.1]{AM} we see that the following conditions are equivalent: 
  \begin{enumerate}
    \item $\rho$ is good (\resp bad)
\item
For every $\pi(\phi, \eta)$ with $\phi \in \Phi_\gp(G)$ and $\eta \in \widehat{\Sc_\phi}$, there exists $m \in \Z$ (\resp\ $m\in \frac{1}{2}\Z \setminus \Z$) such that $\rho|\cdot|^{m} \rtimes \pi(\phi, \eta)$ is reducible.
\item
For some $\pi(\phi, \eta)$ with $\phi \in \Phi_\gp(G)$ and $\eta \in \widehat{\Sc_\phi}$, there exists $m \in \Z$  (\resp\ $m\in \frac{1}{2}\Z \setminus \Z$) such that $\rho|\cdot|^{m} \rtimes \pi(\phi, \eta)$ is reducible.
\end{enumerate}
\end{rem}
\end{defi}

We denote by $\Cusp^{\good}$ (\resp $\Cusp^{\bad}$, \resp $\Cusp^{\ugly}$) a set of representatives of good (\resp bad, \resp ugly) representations under the line equivalence relation $\sim'$.

\begin{defi}\label{def:parity}
Let $\sigma \in \Cusp^G$ and let $\pi \in \Irr_\sigma$.
\begin{enumerate}
\item
If 
\[
\scusp(\pi) \subset \left( \bigcup_{\rho \in \Cusp^{\good}} \Z_\rho \right) \cup \{\sigma\}, 
\]
we say that $\pi$ is \emph{of good parity}. 
We write  $\Irr_\sigma^\good$ for the set of such representations.

For any  multisegment $\m$, we denote:
\[\m_\gp:=\sum_{\rho \in \Cusp^{\good}}\m_\rho\]

\item 
If $\scusp(\pi) \subset \Z_\rho \cup \{\sigma\}$ for some bad representation $\rho$, 
we say that $\pi$ is \emph{of bad parity} (or of $\rho$-bad parity if we want to specify $\rho$). 
We write  $\Irr_\sigma^{\rho-\bad}$ for the set of such representations.

\item 
If $\scusp(\pi) \subset (\Z_\rho \cup \Z_{\rho^\vee}) \cup \{\sigma\}$ for some ugly representation $\rho$, 
we say that $\pi$ is \emph{ugly} (or $\rho$-ugly if we want to specify $\rho$). 
We write  $\Irr_\sigma^{\rho-\ugly}$ for the set of such representations.
\end{enumerate}
\end{defi}

Let $\pi \in \Irr_\sigma$. Jantzen \cite{J-dec} defines the representations 
$\pi^\good \in \Irr_\sigma^\good$, $\pi^{\rho-\bad} \in \Irr_\sigma^{\rho-\bad}$, 
and $\pi^{\rho-\ugly} \in \Irr_\sigma^{\rho-\ugly}$ as follows: 
\begin{itemize}
\item 
$\pi^\good$ is the unique representation in $\Irr_\sigma^\good$ 
such that $\pi \hookrightarrow \tau \rtimes \pi^\good$, where no good supercuspidal representations appear in $\scusp(\tau)$.
\item 
If $\rho$ is a bad supercuspidal representation, 
then $\pi^{\rho-\bad}$ is the unique representation in $\Irr_\sigma^{\rho-\bad}$ 
such that $\pi \hookrightarrow \tau \rtimes \pi^{\rho-\bad}$, with $\scusp(\tau) \cap \Z_\rho = \emptyset$.
\item 
If $\rho$ is an ugly supercuspidal representation, 
then $\pi^{\rho-\ugly}$ is the unique representation in $\Irr_\sigma^{\rho-\ugly}$ 
such that $\pi \hookrightarrow \tau \rtimes \pi^{\rho-\ugly}$, with $\scusp(\tau) \cap (\Z_\rho \cup \Z_{\rho^\vee}) = \emptyset$.
\end{itemize}

\begin{thm}\label{thm:Jantzen}
The map
\begin{align*}
\Jz \colon \Irr_\sigma &\longrightarrow 
\Irr_\sigma^\good 
\sqcup \left( \bigsqcup_{\rho \in \Cusp^{\bad}} \Irr_\sigma^{\rho-\bad} \right)  
\sqcup \left( \bigsqcup_{\rho \in \Cusp^{\ugly}} \Irr_\sigma^{\rho-\ugly} \right), \\
\pi &\longmapsto \left( \pi^\good, \{\pi^{\rho-\bad}\}_\rho, \{\pi^{\rho-\ugly}\}_\rho \right)
\end{align*}
is a bijection. It commutes with the Aubert--Zelevinsky duality in the following sense:
$$\Jz(\widehat{\pi})=\left( \widehat{\pi^\good}, \{\widehat{\pi^{\rho-\bad}}\}_\rho, \{ \widehat{\pi^{\rho-\ugly}}\}_\rho \right).$$
Furthermore, let $y=(\phi_\sigma,\eta_\sigma)$ and $(\m; \phi,\eta)$ denote the Langlands data of $\sigma$ and $\pi$, respectively. Then, we have:\begin{itemize}
\item The Langlands data of $ \pi^\good$ are $y^\good:=(\m_\gp;\phi_\gp,\eta)$.
\item The Langlands data of $\pi^{\rho-\bad}$ are $y^{\rho-\bad}:=(\m_{\rho}; \phi_\rho+\phi_\sigma,\eta_\sigma)$.
\item The Langlands data of  $\pi^{\rho-\ugly}$ are $y^{\rho-\ugly}:=(\m_{\rho}+\m_{\rho^\vee}; \phi_\rho+\phi_{\rho^\vee}+\phi_\sigma,\eta_\sigma)$.
\end{itemize}
\end{thm}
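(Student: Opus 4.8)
The plan is to reduce everything to the existence/uniqueness statements that have already been recalled and to the compatibility of the Jantzen construction with supercuspidal support, parabolic induction, and the Langlands classification. First I would establish that the map $\Jz$ is well-defined: given $\pi \in \Irr_\sigma$, one must check that $\pi^{\good}$, $\pi^{\rho-\bad}$ and $\pi^{\rho-\ugly}$ exist and are unique. This is precisely the content of Jantzen's decomposition \cite{J-dec}, together with the observation that an irreducible representation of a classical group embeds in $\tau_1 \times \cdots \times \tau_s \rtimes \sigma$ with the $\tau_i$ irreducible supercuspidals and that one can group the factors according to the line-equivalence classes $\sim'$ of their supercuspidal supports, using that induction from a Levi is independent (up to the relevant intertwining) of the order of the $\GL$-factors and that factors supported on distinct $\sim'$-classes do not interact. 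Bijectivity of $\Jz$ then follows by constructing the inverse: given a tuple $(\pi^{\good}, \{\pi^{\rho-\bad}\}, \{\pi^{\rho-\ugly}\})$, one forms the (essentially unique) irreducible subrepresentation of the iterated induction that recombines all the lines; uniqueness is guaranteed because the supercuspidal supports on different $\sim'$-classes are disjoint, so no ambiguity in the socle arises. This is the part that is genuinely ``Jantzen's theorem'' and I would simply cite it, spelling out only the line-by-line independence.

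Next I would prove the compatibility with the Aubert--Zelevinsky duality. The key input is property \eqref{eq:Jacquet}, or more precisely its consequence that $\widehat{(\cdot)}$ commutes with Jacquet functors, hence with taking the ``$\rho$-part'' of the supercuspidal support in the sense used to define $\pi^{\good}$ etc. Concretely: if $\pi \hookrightarrow \tau \rtimes \pi^{\good}$ with $\scusp(\tau)$ containing no good supercuspidal, then applying $\widehat{(\cdot)}$ and using that it sends a subrepresentation of $\tau \rtimes \pi^{\good}$ to a subrepresentation of $\widehat{\tau} \rtimes \widehat{\pi^{\good}}$ (the $\GL$-duality $\widehat{\tau}$ again has no good supercuspidal in its support, since $\scusp$ is duality-invariant and the good/bad/ugly trichotomy is invariant under $\rho \mapsto \rho^\vee$), one gets $\widehat{\pi} \hookrightarrow \widehat{\tau} \rtimes \widehat{\pi^{\good}}$; by the uniqueness characterizing $(\widehat{\pi})^{\good}$ this forces $(\widehat{\pi})^{\good} = \widehat{\pi^{\good}}$. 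The same argument applies verbatim to the bad and ugly components, giving the displayed formula for $\Jz(\widehat{\pi})$.

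Finally I would compute the Langlands data of the three components. For $\pi^{\good}$: writing $(\m;\phi,\eta)$ for $\pi$, one decomposes $\m = \m_{\gp} + (\m - \m_{\gp})$ and $\phi = \phi_{\gp} \oplus \phi' \oplus \phi'^{\vee}$ as in \eqref{eq:deco}; using \eqref{eq:irre}, $\pi(\phi,\eta) \simeq \pi_{\phi'} \rtimes \pi(\phi_{\gp},\eta)$, and pulling the non-good $\GL$-parts of the standard module out of the Langlands subrepresentation (they commute past the good part since they lie on different lines, by the transitivity of induction and the Langlands classification) identifies $\pi$ as a subrepresentation of $\tau \rtimes L(\m_{\gp};\pi(\phi_{\gp},\eta))$ with $\tau$ carrying only non-good supercuspidals; minimality/uniqueness then gives $\pi^{\good} = L(\m_{\gp};\pi(\phi_{\gp},\eta))$, i.e.\ Langlands data $(\m_{\gp};\phi_{\gp},\eta)$, where we implicitly use the isomorphism $\widehat{\Sc_\phi} \simeq \widehat{\Sc_{\phi_{\gp}}}$ of \eqref{eq:canon} to view $\eta$ on the smaller component group. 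For $\pi^{\rho-\bad}$ and $\pi^{\rho-\ugly}$ the same bookkeeping applies, except that here one absorbs \emph{all} lines other than $\Z_\rho$ (resp.\ $\Z_\rho \cup \Z_{\rho^\vee}$) — including the good part — into the inducing $\GL$-factor, and what remains on the $\rho$-line must be combined with the base point $\sigma$; this is why the parameter becomes $\phi_\rho + \phi_\sigma$ (resp.\ $\phi_\rho + \phi_{\rho^\vee} + \phi_\sigma$) and the character becomes $\eta_\sigma$ rather than $\eta$, the point being that a bad or ugly $\rho$ contributes nothing to the component group, so the only surviving component-group data is that of $\sigma$ itself.

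The main obstacle I anticipate is bookkeeping rather than conceptual: one has to be careful that ``pulling $\GL$-factors on different lines past each other and past the classical base point'' really is justified — this requires knowing that the relevant parabolically induced representations are irreducible when the supports lie on distinct $\sim'$-classes (so that socle computations are unambiguous), which is exactly what lets the Jantzen decomposition be a \emph{bijection} and not merely a well-defined map, and is also what makes \eqref{eq:irre} available. Making the identification of component-group characters precise across the isomorphisms \eqref{eq:canon} in the bad/ugly cases — where one must check $\eta$ restricted to the (trivial) contribution of $\rho$ is vacuous and the genuine data is $\eta_\sigma$ — is the one place where I would slow down and argue carefully.
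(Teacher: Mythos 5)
Your treatment of the Langlands-data description — the only part the paper actually proves rather than cites — is essentially the same as the paper's: decompose $\m$ and $\phi$ along the good/bad/ugly lines, split off the non-good $\GL$-factor using \eqref{eq:irre} together with Zelevinsky's commutation results, and conclude from uniqueness of the Langlands subrepresentation; both you and the paper cite Jantzen's Theorem 9.3 for bijectivity. The paper additionally invokes \cite[Theorem 9.3.(8)]{J-dec} to reduce the bad and ugly bullets to the tempered case $\m=\emptyset$ before running the computation, which shortens the bookkeeping you describe but is otherwise the same manipulation.

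Where your sketch has a genuine gap is in the Aubert--Zelevinsky compatibility. You assert that $\pi\hookrightarrow\tau\rtimes\pi^{\good}$ forces $\widehat\pi\hookrightarrow\widehat\tau\rtimes\widehat{\pi^{\good}}$, citing \eqref{eq:Jacquet}. But \eqref{eq:Jacquet} is a statement about Jacquet functors at the level of the Grothendieck group; it does not by itself imply that the duality carries the socle of a parabolically induced representation to the socle of the dual induced representation. That socle-level statement is a nontrivial result in its own right — the $\GL_n$ analogue (with the factor swap $\widehat\tau\hookrightarrow\widehat{\pi_2}\times\widehat{\pi_1}$ recalled in Paragraph \ref{sub:MWidea}) is one of M{\oe}glin--Waldspurger's lemmas, and for classical groups it is precisely part of what Jantzen proves. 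As written your argument assumes a fact of comparable depth to the compatibility you are trying to establish, with no justification offered beyond pointing at \eqref{eq:Jacquet}. The paper sidesteps this by citing \cite[Theorem 9.3]{J-dec} for the duality compatibility as well; you should either do likewise, or supply a separate proof of the socle-preservation claim.
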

\begin{proof}
The theorem is due to C. Jantzen; see \cite[Theorem 9.3]{J-dec}. The only point that needs some explanation is the description of the map $\Jz$ in terms of Langlands data. It is probably well-known, but for the convenience of the reader, we provide some details.
Let's start with the first bullet point. We write $\phi =\phi' \oplus \phi_\gp \oplus  \phi^{\prime \vee}$. By the Langlands classification, $\pi$ is the socle of $\tilde{\lambda}(\m) \rtimes \pi(\phi,\eta)$, which is isomorphic, by \cite[9.7]{Z} and \eqref{eq:irre}  to
\[ \tilde{\lambda}(\m -  \m_\gp) \times \tilde{\lambda}(\m_\gp)  \times \pi_{\phi'} \rtimes \pi(\phi_\gp,\eta)
\]
which is equivalent by \cite[9.7]{Z} to 
\[ \tilde{\lambda}(\m -  \m_\gp)   \times \pi_{\phi'} \times \tilde{\lambda}(\m_\gp) \rtimes \pi(\phi_\gp,\eta).
\]
We deduce that $\pi$ is the socle of
\[ \soc(\tilde{\lambda}(\m -  \m_\gp)   \times \pi_{\phi'}) \rtimes \soc( \tilde{\lambda}(\m_\gp) \rtimes \pi(\phi_\gp,\eta)),
\]
which, by the Langlands subrepresentation theorem, proves the first claim. The proofs of the second and third bullet points follow a similar approach. By \cite[Theorem 9.3.(8)]{J-dec}, we can assume that $\m= \emptyset$. Then, for the second bullet, $\pi(\phi,\eta)$ is isomorphic  by \eqref{eq:irre} to $ \pi_{\frac{1}{2}\phi_\rho}\rtimes\pi_{\phi -  \phi_\rho}$, which embeds into 
$ \pi_{\frac{1}{2}\phi_\rho}\times \tau \rtimes\pi(\phi_\sigma,\eta_\sigma)$, for some $\tau$  with $\scusp(\tau) \cap \Z_\rho = \emptyset$, so isomorphic by \cite[8.6]{Z} to $\tau \times \pi_{\frac{1}{2}\phi_\rho}\rtimes\pi(\phi_\sigma,\eta_\sigma)$. As  $ \pi_{\frac{1}{2}\phi_\rho}\rtimes\pi(\phi_\sigma,\eta_\sigma)$ is irreducible by \eqref{eq:irre} and isomorphic to $\pi(\phi_\rho+\phi_\sigma,\eta_\sigma)$ the result follows. The third bullet is proved in exactly the same manner.
\end{proof}

A consequence of the explicit description of the Jantzen decomposition is that the map $y \mapsto y_\rho$ factors through $\Jz$. In other words:
\begin{cor}\label{cor:lines}
Let $\pi \in \Irr_\sigma$, with Langlands data $y$.  Let $\rho\in \Cusp^\GL$. Then:
\begin{itemize}
\item If $\rho$ is good then, $y_\rho=(y^\good)_\rho$.
\item If $\rho$ is bad then, $y_\rho=(y^{\rho-\bad})_\rho$.
\item If $\rho$ is ugly then, $y_\rho=(y^{\rho-\ugly})_\rho$.
\end{itemize}
\end{cor}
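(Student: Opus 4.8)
The plan is to show that Corollary~\ref{cor:lines} is a direct consequence of Theorem~\ref{thm:Jantzen}, specifically of the explicit formulas for $y^\good$, $y^{\rho-\bad}$ and $y^{\rho-\ugly}$ in terms of Langlands data, together with the compatibility of the projection $y\mapsto y_\rho$ with the passage $\phi\mapsto\phi_\rho$ and $\m\mapsto\m_\rho$. Let me treat the three cases separately since they are essentially parallel.

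\emph{The good case.} Write the Langlands data of $\pi$ as $y=(\m;\phi,\eta)$ and decompose $\phi=\phi'\oplus\phi_\gp\oplus\phi'^\vee$ as in \eqref{eq:deco}. By Theorem~\ref{thm:Jantzen}, $y^\good=(\m_\gp;\phi_\gp,\eta)$. Now I apply the projection to the $\rho$-line for a \emph{good} $\rho$. The key observation is that $(\m_\gp)_\rho=\m_\rho$: indeed $\m_\gp=\sum_{\rho'\in\Cusp^\good}\m_{\rho'}$ by definition, and since $\rho$ is good, the $\rho$-line component of this sum is exactly $\m_\rho$ (the only term in $\Z_\rho$ is $\m_\rho$ itself; all other $\m_{\rho'}$ are supported away from $\Z_\rho$). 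Similarly $(\phi_\gp)_\rho=\phi_\rho$, because the non-good-parity part $\phi'\oplus\phi'^\vee$ contributes no components in $\Z_\rho$ when $\rho$ is good — concretely $\m_{\phi'}$ and $\m_{\phi'^\vee}$ have support in lines $\Z_{\rho''}$ for $\rho''$ either bad or ugly, hence disjoint from $\Z_\rho$. Finally the character component is unchanged: since $\phi_\rho$ only involves the good-parity constituents of $\phi$ that lie in $\Z_\rho$, which are already constituents of $\phi_\gp$, we have $\eta_{|\Sc_{\phi_\rho}}$ computed from $y$ agrees with $\eta_{|\Sc_{(\phi_\gp)_\rho}}$ computed from $y^\good$, using the canonical identification \eqref{eq:canon} applied to $\phi$ and $\phi_\gp$. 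Assembling these three facts and unwinding the definition of $y_\rho$ from Remark~\ref{rem:nolines} gives $y_\rho=(\m_\rho;\phi_\rho,\eta_{|\Sc_{\phi_\rho}})=(y^\good)_\rho$.

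\emph{The bad and ugly cases.} These run along the same lines using the other two formulas of Theorem~\ref{thm:Jantzen}. For $\rho$ bad, $y^{\rho-\bad}=(\m_\rho;\phi_\rho+\phi_\sigma,\eta_\sigma)$. Projecting to the $\rho$-line: the multisegment part of $y^{\rho-\bad}$ is already supported in $\Z_\rho$, so its $\rho$-line projection is $\m_\rho$, matching the multisegment part of $y_\rho$. For the parameter, $(\phi_\rho+\phi_\sigma)_\rho=\phi_\rho+(\phi_\sigma)_\rho=\phi_\rho$ since $\sigma\in\Cusp^G$ has parameter $\phi_\sigma$ supported away from $\Z_\rho$ whenever $\rho$ is bad (bad means $\rho'\rtimes\sigma$ is irreducible for all $\rho'\in\Z_\rho$, equivalently no constituent of $\phi_\sigma$ lies in $\Z_\rho$). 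For the character: on the one hand $(y^{\rho-\bad})_\rho$ restricts $\eta_\sigma$ to $\Sc_{(\phi_\rho+\phi_\sigma)_\rho}=\Sc_{\phi_\rho}$; on the other hand $y_\rho$ restricts $\eta$ to $\Sc_{\phi_\rho}$. These agree because the constituents of $\phi$ in $\Z_\rho$ (of good parity relative to the $\rho$-line, i.e. defining $\Sc_{\phi_\rho}$) together with the relation between $\eta$ and $\eta_\sigma$ coming from \eqref{eq:irre} force equality on $\Sc_{\phi_\rho}$ — this is exactly the content encoded in the proof of the second bullet of Theorem~\ref{thm:Jantzen}, where $\pi(\phi,\eta)$ is built from $\pi(\phi_\sigma,\eta_\sigma)$ by the irreducible induction \eqref{eq:irre}, so the $\Z_\rho$-part of the character data is literally $\eta_\sigma$ restricted appropriately. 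The ugly case is identical with $\Z_\rho\cup\Z_{\rho^\vee}$ replacing $\Z_\rho$ and the two-factor multisegment $\m_\rho+\m_{\rho^\vee}$; here note that $\phi_\rho$ and $\phi_{\rho^\vee}$ have no good-parity (self-dual) constituents at all, so $\Sc_{(\phi_\rho+\phi_{\rho^\vee}+\phi_\sigma)_\rho}=\Sc_{(\phi_\sigma)_{\rho}}$, which again matches on both sides.

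\emph{Main obstacle.} The bookkeeping for the multisegment and parameter parts is entirely routine — it amounts to observing that the ``extra'' data discarded by $\Jz$ (namely $\m-\m_\gp$, or $\phi'$, or the off-line parts of $\phi_\sigma$) lives in supercuspidal lines disjoint from $\Z_\rho$ (resp.\ $\Z_\rho\cup\Z_{\rho^\vee}$) for the relevant type of $\rho$. The one point requiring genuine care is the character component $\eta$: one must check that restricting $\eta$ (the character attached to the full $\pi$) to $\Sc_{\phi_\rho}$ yields the same thing whether one first passes to the line and then restricts, or first applies $\Jz$ and then passes to the line. This follows from the canonical isomorphisms \eqref{eq:canon} being compatible with the line decomposition of the component group — i.e.\ that $\Sc_\phi$ decomposes as a product over supercuspidal lines with $\Sc_{\phi_\rho}$ as the $\rho$-factor, and $\eta$ decomposes accordingly — a fact that is implicit in the construction of \eqref{eq:deco}–\eqref{eq:canon} and in the proof of Theorem~\ref{thm:Jantzen} via \eqref{eq:irre}. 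I would state this compatibility explicitly as the crux of the argument and then note that the three bullet points fall out by inspection.
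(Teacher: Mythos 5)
Your proposal is correct and takes the only available route: the corollary is presented in the paper as an immediate consequence of the explicit formulas in Theorem~\ref{thm:Jantzen}, and you have filled in the bookkeeping that makes it immediate. The key observations you rely on — that each supercuspidal line $\Z_{\rho'}$ consists entirely of good, entirely of bad, or entirely of ugly representations, and that for $\rho$ good the constituents of $\phi$ supported in $\Z_\rho$ are precisely the good-parity ones — are correct and suffice.

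One simplification worth recording: in the bad and ugly cases, the care you take with the character component is unnecessary, because $\Sc_{\phi_\rho}$ (resp.\ $\Sc_{\phi_\rho + \phi_{\rho^\vee}}$) is the \emph{trivial} group. Indeed, for $\rho$ bad, every constituent of $\phi$ with support in $\Z_\rho$ is self-dual of the wrong type (the half-integer shift forces the opposite parity of $a$), and for $\rho$ ugly no constituent supported in $\Z_\rho \cup \Z_{\rho^\vee}$ is self-dual at all (since $\Z_\rho \neq \Z_{\rho^\vee}$); in both cases there are no good-parity constituents, so $\AA_{\phi_\rho}=0$ and the restricted characters on both sides are vacuously equal. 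This makes the invocation of \eqref{eq:irre} and of "the content encoded in the proof of the second bullet" superfluous. In the good case the issue is genuine but resolved exactly as you say, by $\phi_\rho = (\phi_\gp)_\rho$ and the compatibility of the canonical isomorphisms \eqref{eq:canon} with the line-by-line decomposition of the component group.
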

\begin{rem}
One could define \emph{$\rho-\good$ representations} and obtain a decomposition similar to that in Theorem \ref{thm:Jantzen}, replacing $\good$ with $\rho-\good$. However, Remark \ref{rem:nolines} implies that such a definition would not allow for a natural  description —in terms of Langlands data— of the corresponding map. Furthermore, the first bullet point of Corollary \ref{cor:lines} would no longer hold if one replaces  $\good$ with $\rho-\good$. For this reason, following \cite{AM}, we have chosen to define only $\good$ representations and not $\rho-\good$ representations.
\end{rem}

\subsection{Symmetrization}
\label{sec:symmetrization}

Our algorithm to compute the Aubert--Zelevinsky involution is similar to the M{\oe}glin--Waldspurger algorithm and will use symmetrical multisegments with signs instead of Langlands data.

\bigskip

We will define a transfer map that sends the elements of $\Data(G)$ to symmetrical multisegments. The tempered representations will be sent to centered segments. As these representations come with a sign, we need to define the notion of a centered segment with a sign. A centered segment with a sign is a pair $(\Delta,\varepsilon)$ where $\Delta \in \Seg^{0}$ and $\varepsilon \in \{-1,1\}$. Let $\Mult^\varepsilon$ be the multiset composed of centered segments with signs and non centered segments, that is formally $\Mult^\varepsilon$ is the set of functions $\Seg^{< 0} \cup \Seg^{>0} \cup (\Seg^{0} \times \{-1,1\}) \to \NN$, with finite support. For $\mathfrak{s} \in \Mult^\varepsilon$, we call the underlying multisegment of $\mathfrak{s}$ the multisegment of $\Mult$ obtained by forgetting all the signs. We will usually write $\mathfrak{s} \in \Mult^\varepsilon$ as $\mathfrak{s}=(\m,\varepsilon)$, where $\m \in \Mult$ is the underlying multisegment of $\mathfrak{s}$ and, if $\Delta_i \in \m^0$, $\varepsilon(\Delta_i) \in \{-1,1\}$ is the sign of $\Delta_i$.

Let $\Symm \subseteq \Mult$ be the set of symmetrical multisegments, that is $\Symm = \{ \m \in \Mult, \m^{\vee}=\m\}$. We also define $\Symm^{\varepsilon} \subseteq \Mult^\varepsilon$ to be the subset of multisegments with signs of elements whose underlying multisegment is in $\Symm$. 

\bigskip

We have a natural transfer map

\[
  \trans : \Data(G) \to \Symm^{\varepsilon}
\]
defined as follows.
Let $(\n; \phi, \eta) \in \Data(G)$. Then $\trans(\n; \phi, \eta)=(\m,\varepsilon)$ where
\[
  \m := \sum_{\Delta \in \n} {(\Delta + \Delta^{\vee})} + \sum_{\rho \boxtimes S_a \in \phi} \left[\frac{-a+1}{2},\frac{a-1}{2}\right]_\rho
\]
and if $\rho \boxtimes S_a \in \phi$ then
\[
  \varepsilon (\left[\frac{-a+1}{2},\frac{a-1}{2}\right]_\rho):= \eta(\rho \boxtimes S_a).
\]

The map \(\trans\) is injective, and its image can be described as follows. Let \(\Symm^\varepsilon(G)\) be the subset  of \(\Symm^\varepsilon\) consisting of all elements \(\mathfrak{s} \in \Symm^\varepsilon\) satisfying the following conditions for every pair of signed centered segments $(\Delta,\varepsilon), (\Delta',\varepsilon') \in \mathfrak{s}$:
\begin{enumerate}
  \item If $\Delta=\Delta'$ then $\varepsilon=\varepsilon'$.
  \item If $\Delta$ is supported in $\Z_\rho$ with $\rho$ bad or ugly, then $\varepsilon=1$.
  \item If $\Delta$ is supported in $\Z_\rho$ with $\rho$ bad, then the multiplicity of $(\Delta,\varepsilon)$ in $\mathfrak{s}$ is even.
\end{enumerate}
We then define the following subsets of \(\Symm^\varepsilon(G)\):
\begin{itemize}
  \item \(\Symm^{\varepsilon,1}(G)\), the elements \((\m,\varepsilon)\) with
        \(\det(\m) = 1\);
  \item \(\Symm^{\varepsilon,+}(G)\), the elements for which the product of the
        signs of the centered segments equals \(1\);
  \item \(\Symm^{\varepsilon,+,1}(G) := \Symm^{\varepsilon,1}(G) \cap
        \Symm^{\varepsilon,+}(G)\).
\end{itemize}

The image of \(\Data(G)\) under \(\trans\) is \(\Symm^{\varepsilon,1}(G)\), so
\(\trans\) restricts to a bijection
\[
  \trans : \Data(G) \overset{\sim}{\to} \Symm^{\varepsilon,1}(G),
\]
which sends \(\Data^+(G)\) to \(\Symm^{\varepsilon,+,1}(G)\). Consequently, for every irreducible representation \(\pi \in \Irr^G\) there is a unique element
\((\m,\varepsilon) \in \Symm^{\varepsilon,+,1}(G)\) such that \(\pi \simeq L(\trans^{-1}(\m,\varepsilon))\). We call \((\m,\varepsilon)\) the \emph{symmetrical Langlands data} of \(\pi\) and write \(\pi = L(\m,\varepsilon)\).

\bigskip

Let $\rho \in \Cusp^{\GL}$. If $\rho$ is good or bad (\resp ugly), we define $\Symm_{\rho}^{\varepsilon}(G)$ to be the subset of $\Symm^{\varepsilon}(G)$ of elements with underlying multiset in $\Mult_\rho$ (\resp $\Mult_\rho \times \Mult_{\rho^{\vee}}$). This gives us a natural decomposition
\[
  \Symm^{\varepsilon}(G)\simeq  \oplus_{\rho \in \Cusp^\GL/\sim'} \Symm_{\rho}^{\varepsilon}(G).
\]

The restriction of $\trans$ to $\Data_\rho(G)$ gives us a map
\[
  \trans_\rho : \Data_\rho(G) \to \Symm_{\rho}^{\varepsilon}(G)
\]
making the following diagram commute:

\[\xymatrix@C=3cm{
  \Data(G) \ar[d]^{\trans} \ar[r]^-{y\mapsto y_\rho} & \Data_\rho(G) \ar[d]^{\trans_\rho} \\
  \Symm^{\varepsilon}(G) \ar[r]^-{(\tilde{\m},\tilde{\varepsilon}) \mapsto (\tilde{\m}_{\rho},\tilde{\varepsilon}_{|\tilde{\m}_{\rho}})} & \Symm_{\rho}^{\varepsilon}(G) }.
\]

The sets $\Symm_\rho$, $\Symm_\rho^{\varepsilon}$ and $\Symm_\rho^{\varepsilon}(G)$ are endowed with an order $\le$ coming from the order $\le$ on $\Mult_\rho$ defined in Section \ref{sub:MW}.

\section{Definition of the algorithm}
\label{sec:defalgo}

In this section we define a map $\ADd : \Data(G) \to \Data(G)$. We will prove later that $\ADd$ is in fact an involution and that for $y \in \Data^{+}(G)$ we have $\widehat{\pi(y)} = \pi(\ADd(y))$.

\bigskip

Using the bijection $\trans : \Data(G) \overset{\sim}{\to} \Symm^{\varepsilon,1}(G)$, it is enough to define $\AD : \Symm^\varepsilon(G) \to \Symm^\varepsilon(G)$, preserving the determinant, and set $\ADd = \trans^{-1} \circ \AD \circ \trans$. The definition of $\AD$ is the content of this section. Using $\Symm^\varepsilon(G) =\oplus_{\rho \in \Cusp^\GL/\sim'} \Symm^\varepsilon_\rho(G)$, we will define for each  $\rho \in \Cusp^\GL/\sim'$ a map
\[
  \AD_{\rho} : \Symm^\varepsilon_\rho(G) \to \Symm^\varepsilon_\rho(G),
\]
and $\AD = \oplus_{\rho \in \Cusp^\GL/\sim'} \AD_{\rho}$.

\begin{rem}
  We will show in Proposition \ref{prop:ADsigns} that for each $\rho \in \Cusp^\GL$, $\AD_\rho$ induces a map $\AD_{\rho} : \Symm_{\rho}^{\varepsilon,+}(G) \to \Symm_{\rho}^{\varepsilon,+}(G)$ and $\AD_{\rho} : \Symm_{\rho}^{\varepsilon}(G) \setminus \Symm_{\rho}^{\varepsilon,+}(G) \to \Symm_{\rho}^{\varepsilon}(G) \setminus \Symm_{\rho}^{\varepsilon,+}(G)$. This implies that $\ADd$ induces a map $\ADd : \Data^{+}(G) \to \Data^{+}(G)$.
\end{rem}

In the next subsections, we define $\AD_\rho$ when $\rho$ is ugly (Subsection \ref{sec:defugly}), $\rho$ is bad (Subsection \ref{sec:defbad}) and $\rho$ is good (Subsection \ref{sec:defgood}).

\subsection{The ugly case} \label{sec:defugly}
Let $\rho \in \Cusp^{\GL}$ be ugly and $\rho_u$ its unitarization. By definition of $\Symm^{\varepsilon}_\rho(G)$, all the signs are trivial. So we can identify an element of $\Symm^{\varepsilon}_\rho(G)$ with its underlying multisegment. Moreover, this multisegment is necessarily of determinant 1. Let $\m \in \Symm^{\varepsilon}_\rho(G)$ and we want to define $\AD_\rho(\m) \in \Symm^{\varepsilon}_\rho(G)$. The definition is essentially the M{\oe}glin--Waldspurger algorithm (see Remark \ref{rem:MWGL} below).

\bigskip

Let $e_{\max,\rho}$ be the maximum of the ends $e(\Delta)$ of the segments $\Delta \in \m$ supported in $\Z_\rho$. Let $\Delta_{1}$ be the biggest segment of $\m$ supported in $\Z_\rho$ such that $e(\Delta_1) = e_{\max,\rho}$. We then define recursively a sequence $\Delta_1 \geq \dots \geq \Delta_l$, where $\Delta_j$ is the biggest segment of $\m$ supported in $\Z_\rho$ such that $\Delta_j \le \Delta_{j-1}$ and $e(\Delta_j)=e(\Delta_{j-1})-1$. We call the sequence $\Delta_1,\cdots,\Delta_l$ the \emph{the initial sequence in the algorithm}.

From this sequence, we define
\[
  \m_1 := [e(\Delta_{l}),e(\Delta_{1})]_{\rho_u} + [-e(\Delta_{1}),-e(\Delta_{l})]_{\rho_u^{\vee}}
\]
and
\[
  \m^{\#} = \m + \sum_{i=1}^l (\Delta_{i}^{-} - \Delta_{i} + {}^{-}\Delta_{i}^{\vee} - \Delta_{i}^{\vee}).
\]

\begin{defi}
  When $\rho$ is ugly, we define $\AD_\rho : \Symm_\rho^\varepsilon(G) \to \Symm_\rho^\varepsilon(G)$ inductively by
  \[
  \AD_\rho(\m)=\m_1 + \AD_\rho (\m^{\#}).
\] 
\end{defi}

\begin{rem}
From the construction of $\AD_\rho$, it is clear that $\AD_\rho(\m)$ is a symmetrical multisegment, hence $\AD_\rho : \Symm_\rho^\varepsilon(G) \to \Symm_\rho^\varepsilon(G)$ is a well defined map.
\end{rem}

\begin{rem}\label{rem:MWGL}
  Let us write $\m = \m_\rho + \m_{\rho^{\vee}}$ with $\m_\rho \in \Mult_\rho$ and $\m_{\rho^{\vee}} \in \Mult_{\rho^{\vee}}$. It is clear from the definition that $\AD_\rho(\m)=\m_\rho^t+(\m_\rho^t)^{\vee}$, where $\m_\rho^t$ is the M{\oe}glin--Waldspurger dual of $\m_\rho$ (see Section \ref{sub:MW}).
\end{rem}

  \begin{ex}
  Let $\rho$ be ugly. Let $\pi:=L(\n;\pi(\phi,\eta))$ with $\n=[-3,-1]_{\rho} + [-2,-1]_{\rho} + [-2,0]_{\rho}$ and $\pi(\phi,\eta)$ trivial. We associate to $\pi$ the symmetric multisegment $\m \in \Symm$ defined by $\m=[-3,-1]_{\rho} + [-2,-1]_{\rho} + [-2,0]_{\rho} + [1,3]_{\rho^{\vee}} + [1,2]_{\rho^{\vee}} + [0,2]_{\rho^{\vee}}$. 

  In the diagram below, we represent the multisegment $\m$ ordered by $\le$. The solid lines are the segments supported on $\Z_{\rho^{\vee}}$ and the dotted lines are the segments on $\Z_{\rho}$. The thick black lines indicate the initial sequence of segments $\Delta_1, \dots, \Delta_l$. The portions highlighted in green mark the parts of $\m$ that are extracted to form the segment $\m_1$; the remaining parts will constitute the multisegment $\m^{\#}$ after this first step.

  \begin{figure}[h!]
    \centering
    \begin{tikzpicture}[scale=0.65]
  
      \begin{scope}[shift={(0,0)}]
        \foreach \x in {-3,-2,-1,0,1,2,3} {
            \draw[dashed, gray, very thin] (\x,-3.5) -- (\x,-0.2);
            \node[black] at (\x,0) {\x};
        }
        \draw[black, thick] (1,-0.5) -- (2,-0.5);
        \draw[black, ultra thick] (1,-1) -- (3,-1);
        \draw[black, ultra thick] (0,-1.5) -- (2,-1.5);
        \draw[black, thick, dashed] (-2,-2) -- (0,-2);
        \draw[black, thick, dashed] (-3,-2.5) -- (-1,-2.5);
        \draw[black, thick, dashed] (-2,-3) -- (-1,-3);

        \draw[green, line width=5pt, opacity=0.5] (3,-1) -- (2,-1.5);
        \draw[green, line width=5pt, opacity=0.5] (-2,-2) -- (-3,-2.5);

      \end{scope}
  
      \begin{scope}[shift={(7,0)}]
        \foreach \x in {-2,-1,0,1,2} {
            \draw[dashed, gray, very thin] (\x,-3.5) -- (\x,-0.2);
            \node[black] at (\x,0) {\x};
        }
        \draw[black, ultra thick] (1,-0.5) -- (2,-0.5);
        \draw[black, thick] (1,-1) -- (2,-1);
        \draw[black, ultra thick] (0,-1.5) -- (1,-1.5);
        \draw[black, thick, dashed] (-1,-2) -- (0,-2);
        \draw[black, thick, dashed] (-2,-2.5) -- (-1,-2.5);
        \draw[black, thick, dashed] (-2,-3) -- (-1,-3);

        \draw[green, line width=5pt, opacity=0.5] (2,-0.5) -- (1,-1.5);
        \draw[green, line width=5pt, opacity=0.5] (-1,-2) -- (-2,-3);
      \end{scope}

      \begin{scope}[shift={(7,-5)}]
         \foreach \x in {-2,-1,0,1,2} {
            \draw[dashed, gray, very thin] (\x,-3.5) -- (\x,-0.2);
            \node[black] at (\x,0) {\x};
        }
        
        \fill[black] (1,-0.5) circle (1pt);
        \draw[black, ultra thick] (1,-1) -- (2,-1);
        \fill[black] (0,-1.5) circle (1pt);
        \fill[black] (0,-2) circle (1pt);
        \draw[black, thick, dashed] (-2,-2.5) -- (-1,-2.5);
        \fill[black] (-1,-3) circle (1pt);

        \fill[green, opacity=0.5] (2,-1) circle[radius=5pt];
        \fill[green, opacity=0.5] (-2,-2.5) circle[radius=5pt];

      \end{scope}

      \begin{scope}[shift={(2,-5)}]
        \foreach \x in {-1,0,1} {
            \draw[dashed, gray, very thin] (\x,-3.5) -- (\x,-0.2);
            \node[black] at (\x,0) {\x};
        }
        \fill[black] (1,-0.5) circle (2pt);
        \fill[black] (1,-1) circle (1pt);
        \fill[black] (0,-1.5) circle (2pt);
        \fill[black] (0,-2) circle (1pt);
        \fill[black] (-1,-2.5) circle (1pt);
        \fill[black] (-1,-3) circle (1pt);

        \draw[green, line width=5pt, opacity=0.5] (1,-0.5) -- (0,-1.5);
        \draw[green, line width=5pt, opacity=0.5] (0,-2) -- (-1,-3);
      \end{scope}

      \begin{scope}[shift={(-2,-5)}]
        \foreach \x in {-1,0,1} {
            \draw[dashed, gray, very thin] (\x,-1.5) -- (\x,-0.2);
            \node[black] at (\x,0) {\x};
        }
        \fill[black] (1,-0.5) circle (2pt);
        \fill[black] (-1,-1) circle (1pt);

        \fill[green, opacity=0.5] (1,-0.5) circle[radius=5pt];
        \fill[green, opacity=0.5] (-1,-1) circle[radius=5pt];
      \end{scope}
  
      \draw[->, thick] (3.7,-1.75) -- (4.3,-1.75);
      \draw[->, thick] (7,-4) -- (7,-4.5);
      \draw[->, thick] (4.25,-6.75) -- (3.75,-6.75);
      \draw[->, thick] (0.25,-6.75) -- (-0.25,-6.75);

    \end{tikzpicture}

  \end{figure}
  
  The algorithm gives (in five steps) that $\AD_\rho(\m)=([-3,-2]_\rho+[2,3]_{\rho^{\vee}})+([-2,-1]_\rho+[1,2]_{\rho^{\vee}}) + ([-2,-2]_\rho+[2,2]_{\rho^{\vee}})+([-1,0]_\rho+[0,1]_{\rho^{\vee}})+([-1,-1]_\rho+[1,1]_{\rho^{\vee}})$. Thus $\hat{\pi}=L([-3,-2]_\rho+[-2,-1]_\rho + [-2,-2]_\rho + [-1,0]_\rho + [-1,-1]_\rho; \pi(\phi,\eta))$ with $\pi(\phi,\eta)$ trivial.
  \end{ex}

  \begin{ex}
  With the notation and coloring as above. Let $\pi:=L(\n;\pi(\phi,\eta))$ with $\n=[-2,1]_{\rho}$ and $\pi(\phi,\eta)$ trivial. We associate to $\pi$ the symmetric multisegment $\m \in \Symm$ defined by $\m=[-2,1]_\rho+[-1,2]_{\rho^{\vee}}$. 
  
  \begin{figure}[h!]
    \centering
    \begin{tikzpicture}[scale=1]
  
      \begin{scope}[shift={(0,0)}]
        \foreach \x in {-2,-1,0,1,2} {
            \draw[dashed, gray, very thin] (\x,-1.5) -- (\x,-0.2);
            \node[black] at (\x,0) {\x};
        }
        \draw[black, ultra thick] (-1,-0.5) -- (2,-0.5);
        \draw[black, thick, dashed] (-2,-1) -- (1,-1);
        \fill[green, opacity=0.5] (2,-0.5) circle[radius=5pt];
        \fill[green, opacity=0.5] (-2,-1) circle[radius=5pt];
      \end{scope}
  
      \begin{scope}[shift={(5,0)}]
        \foreach \x in {-1,0,1} {
            \draw[dashed, gray, very thin] (\x,-1.5) -- (\x,-0.2);
            \node[black] at (\x,0) {\x};
        }
        \draw[black, ultra thick] (-1,-0.5) -- (1,-0.5);
        \draw[black, thick, dashed] (-1,-1) -- (1,-1);
        \fill[green, opacity=0.5] (1,-0.5) circle[radius=5pt];
        \fill[green, opacity=0.5] (-1,-1) circle[radius=5pt];
      \end{scope}

      \begin{scope}[shift={(0,-2.5)}]
        \foreach \x in {-1,0,1} {
            \draw[dashed, gray, very thin] (\x,-1.5) -- (\x,-0.2);
            \node[black] at (\x,0) {\x};
        }
        \fill[black] (-1,-0.5) circle (2pt);
        \fill[black] (1,-1) circle (1pt);
        \fill[green, opacity=0.5] (-1,-0.5) circle[radius=5pt];
        \fill[green, opacity=0.5] (1,-1) circle[radius=5pt];
      \end{scope}

      \begin{scope}[shift={(5,-2.5)}]
        \foreach \x in {-1,0,1} {
            \draw[dashed, gray, very thin] (\x,-1.5) -- (\x,-0.2);
            \node[black] at (\x,0) {\x};
        }
        \draw[black, ultra thick] (-1,-0.5) -- (0,-0.5);
        \draw[black, thick, dashed] (0,-1) -- (1,-1);
        \fill[green, opacity=0.5] (0,-0.5) circle[radius=5pt];
        \fill[green, opacity=0.5] (0,-1) circle[radius=5pt];
      \end{scope}
  
      \draw[->, thick] (2.5,-0.75) -- (3,-0.75);
      \draw[->, thick] (5,-1.7) -- (5,-2.1);
      \draw[->, thick] (3,-3.15) -- (2.5,-3.15);

    \end{tikzpicture}

  \end{figure}
  
  The algorithm gives (in four steps) that $\AD_\rho(\m)=([-2,-2]_\rho+[2,2]_{\rho^{\vee}})+([-1,-1]_\rho+[1,1]_{\rho^{\vee}}) + ([0,0]_\rho+[0,0]_{\rho^{\vee}})+([1,1]_\rho+[-1,-1]_{\rho^{\vee}})$. Thus $\hat{\pi}=L([-2,-2]_\rho+[-1,-1]_\rho+[-1,-1]_{\rho^{\vee}}; \pi(\rho \boxtimes S_1 + \rho^{\vee} \boxtimes S_1, 1))$.
  \end{ex}

\subsection{The bad case}\label{sec:defbad}

Let $\rho \in \Cusp^{\GL}$ be bad. Since $\rho$ is bad, all the signs of the elements in $\Symm^{\varepsilon}_\rho(G)$ are trivial. Thus, we can identify an element of $\Symm^{\varepsilon}_\rho(G)$ with its underlying multisegment, which is of determinant 1.

Let $\mathfrak{m} \in \Symm^{\varepsilon}_\rho(G)$. We want to define $\AD_\rho(\mathfrak{m}) \in \Symm^{\varepsilon}_\rho(G)$. The definition will be similar to the ugly case. The difference here is that all the segments in $\mathfrak{m}$ lie on the same line $\Z_\rho = \Z_{\rho^{\vee}}$. In particular, it may happen that for some $\Delta \in \mathfrak{m}$, we have $\Delta^{\vee} \le \Delta$.

We impose a condition (see (3) below) that must be satisfied so that both $\Delta$ and $\Delta^{\vee}$ appear in the initial sequence of the algorithm.

\bigskip

Let $e_{\max}$ be the biggest coefficient of the segments of $\m$ (hence the biggest end). Let $\Delta_{1}$ be the biggest segment of $\m$ such that $e(\Delta_1) = e_{\max}$. We then define recursively a sequence $\Delta_1 \geq \dots \geq \Delta_l$, where $\Delta_j$ is the biggest segment of $\m$, if it exists, satisfying
\begin{enumerate}
\item $\Delta_j \le \Delta_{j-1}$;
\item $e(\Delta_j)=e(\Delta_{j-1})-1$;
\item If there exists $i < j$ such that $\Delta_j^{\vee}=\Delta_i$ then $m_{\m}(\Delta_j) \ge 2$.
\end{enumerate} 
We call the sequence $\Delta_1,\cdots,\Delta_l$ the \emph{the initial sequence in the algorithm}.

From this sequence, we define
\[
  \m_1 := [e(\Delta_{l}),e(\Delta_{1})]_{\rho_u} + [-e(\Delta_{1}),-e(\Delta_{l})]_{\rho_u}
\]
and
\[
  \m^{\#} = \m + \sum_{i=1}^l (\Delta_{i}^{-} - \Delta_{i} + {}^{-}\Delta_{i}^{\vee} - \Delta_{i}^{\vee}).
\]

\begin{rem}
Condition (3) ensures that $\m^{\#}$ is a well-defined multisegment. Indeed, if $\Delta_i = \Delta_j^{\vee}$ for some $i \neq j$, then $m_{\m}(\Delta_i) \ge 2$, so we can suppress $\Delta_i$ twice from $\m$.
\end{rem}

\begin{defi}
  When $\rho$ is bad, we define $\AD_\rho : \Symm_\rho^\varepsilon(G) \to \Symm_\rho^\varepsilon(G)$ inductively by
  \[
    \AD_\rho(\m)=\m_1 + \AD_\rho (\m^{\#}).
  \]
\end{defi}

\begin{rem}
From the construction it is clear that $\AD_\rho(\m)$ is a symmetrical multisegment. It is also very easy to see that all the centered segments in $\m_1$ and $\m^{\#}$ have  even multiplicity. Therefore, the image of $\AD_\rho$ is indeed in $\Symm_\rho^\varepsilon(G)$ and $\AD_\rho$ is well defined.
\end{rem}

\begin{ex}\label{ex:badpar01}
Let $\rho$ be of bad parity and unitary. Let $\pi:=L(\n;\pi(\phi,\eta))$ with $\n=[-1,0]_\rho$ and $\pi(\phi,\eta)$ trivial. We associate to $\pi$ the symmetric multisegment $\m \in \Symm$ defined by $\m=[-1,0]_\rho+[0,1]_\rho$. 

In the diagram below, we represent the multisegment $\m$ ordered by $\le$. The thick black lines indicate the initial sequence of segments $\Delta_1, \dots, \Delta_l$. The portions highlighted in green mark the parts of $\m$ that are extracted to form the segment $\m_1$; the remaining parts will constitute the multisegment $\m^{\#}$ after this first step.

\begin{figure}[h!]
  \centering
  \begin{tikzpicture}[scale=1]

    \begin{scope}[shift={(0,0)}]
      \foreach \x in {-1,0,1} {
          \draw[dashed, gray, very thin] (\x,-1.5) -- (\x,-0.2);
          \node[black] at (\x,0) {\x};
      }
      \draw[black, ultra thick] (0,-0.5) -- (1,-0.5);
      \draw[black, thick] (-1,-1) -- (0,-1);
      \fill[green, opacity=0.5] (1,-0.5) circle[radius=5pt];
      \fill[green, opacity=0.5] (-1,-1) circle[radius=5pt];
    \end{scope}

    \begin{scope}[shift={(5,0)}]
      \foreach \x in {-1,0,1} {
          \draw[dashed, gray, very thin] (\x,-1.5) -- (\x,-0.2);
          \node[black] at (\x,0) {\x};
      }
      \fill[black] (0,-0.5) circle (2pt);
      \fill[black] (0,-1) circle (1pt);
      \fill[green, opacity=0.5] (0,-0.5) circle[radius=5pt];
      \fill[green, opacity=0.5] (0,-1) circle[radius=5pt];
    \end{scope}

    \draw[->, thick] (2,-0.75) -- (2.5,-0.75);

  \end{tikzpicture}
\end{figure}

The algorithm gives (in two steps) that $\AD_\rho(\m)=([-1,-1]_\rho+[1,1]_\rho)+([0,0]_\rho+[0,0]_\rho)$. Thus $\hat{\pi}=L([-1,-1]_\rho; \pi(\rho \boxtimes S_1 + \rho \boxtimes S_1, 1))$.
\end{ex}

\begin{ex} \label{ex:badpar0101}
  With the notation and coloring as above, we consider now the case $\pi:=L(\n; \pi(\phi,\eta))$ with $\n=[-1,0]_\rho + [-1,0]_\rho$ and $\pi(\phi,\eta)$ trivial. The symmetric multisegment $\m \in \Symm$ is now $\m=[-1,0]_\rho+[-1,0]_\rho+[0,1]_\rho+[0,1]_\rho$. 
  
  \begin{figure}[h!]
    \centering
    \begin{tikzpicture}[scale=1]
  
      \begin{scope}[shift={(0,0)}]
        \foreach \x in {-1,0,1} {
            \draw[dashed, gray, very thin] (\x,-2.5) -- (\x,-0.2);
            \node[black] at (\x,0) {\x};
        }
        \draw[black, ultra thick] (0,-0.5) -- (1,-0.5);
        \draw[black, thick] (0,-1) -- (1,-1);
        \draw[black, ultra thick] (-1,-1.5) -- (0,-1.5);
        \draw[black, thick] (-1,-2) -- (0,-2);

        \draw[green, line width=5pt, opacity=0.5] (1,-0.5) -- (0,-1.5);
        \draw[green, line width=5pt, opacity=0.5] (0,-1) -- (-1,-2);
      \end{scope}
  
      \begin{scope}[shift={(5,0)}]
        \foreach \x in {-1,0,1} {
            \draw[dashed, gray, very thin] (\x,-2.5) -- (\x,-0.2);
            \node[black] at (\x,0) {\x};
        }
        \fill[black] (1,-0.5) circle (2pt);
        \fill[black] (0,-1) circle (2pt);
        \fill[black] (0,-1.5) circle (1pt);
        \fill[black] (-1,-2) circle (1pt);
        \draw[green, line width=5pt, opacity=0.5] (1,-0.5) -- (0,-1);
        \draw[green, line width=5pt, opacity=0.5] (0,-1.5) -- (-1,-2);
      \end{scope}
  
      \draw[->, thick] (2,-1.25) -- (2.5,-1.25);
  
    \end{tikzpicture}
  \end{figure}
  
  The algorithm gives (in two steps) that $\AD_\rho(\m)=([-1,0]_\rho+[0,1]_\rho)+([-1,0]_\rho+[0,1]_\rho)$. Thus, $\hat{\pi}=\pi$.
  \end{ex}

\subsection{The good case}\label{sec:defgood}

Let $\rho \in \Cusp^{\GL}$ be good and $(\m,\varepsilon) \in \Symm^{\varepsilon}_\rho$. Let $\rho_{u}$ be the unitarization of $\rho$. The definition of $\AD_\rho$ will also resemble the definition in the bad case; however, there are some differences:
\begin{itemize}
  \item The signs play a role here.
  \item A pair of segments $(\Delta,\Delta^{\vee})$ can appear in the initial sequence of the algorithm without any condition on the multiplicity. In this case, in $\m^{\#}$ the segment $\Delta$ will be replaced by ${}^{-}\Delta^{-}$.
  \item The parity of the multiplicity of the centered segments also plays a role.
\end{itemize}

To make these conditions more transparent, we introduce a new set $\USymm_\rho^{\varepsilon}$ equipped with an order $\preceq$, in which the definition of $\AD_\rho$ closely resembles the previous cases. We describe it in the following paragraph.

\subsubsection{The set $\USymm^{\varepsilon}$ and the order $\preceq$}
\label{sec:labelseg}

The idea of the set $\Symm^\epsilon(G)$ arises from the structure of maximal parabolic subgroups of $G_n$, which are of the form
\[
\left(
\begin{array}{c|c|c}
\mathrm{GL}_{n_1}(F) & * & * \\ \hline
0 & G_{n_0} & * \\ \hline
0 & 0 & \mathrm{GL}_{n_1}(F)
\end{array}
\right),
\]
where the bottom-right block $\mathrm{GL}_{n_1}(F)$ is a copy of the top-left one.

When dealing with a tempered representation that is not discrete, a centered segment may appear with multiplicity greater than one. Naturally, half of these will be assigned to the upper $\mathrm{GL}_{n_1}(F)$, the other half to the lower $\mathrm{GL}_{n_1}(F)$, and if there is one more left, it will remain in the middle block $G_{n_0}$. This needs to be taken into account when ordering the segments: some centered segments will be assigned to the upper $\mathrm{GL}_{n_1}(F)$, others to the lower $\mathrm{GL}_{n_1}(F)$, and some may remain in $G_{n_0}$. To formalize this, we introduce a set $\USymm_\rho^{\epsilon}(G)$, which distinguishes centered segments more carefully. To this end, we formally enrich segments with a label $\clubsuit \in \{\ge 0, =0, \le 0\}$ that encodes their position.

\bigskip

Let $\USeg$ be the set of labeled pairs $(\Delta, \clubsuit)$, where $\Delta \in \Seg$ and $\clubsuit$ satisfies the following conditions:
\begin{itemize}
\item If $c(\Delta) > 0$, then $\clubsuit$ is equal to $\ge 0$;
\item If $c(\Delta) < 0$, then $\clubsuit$ is equal to $\le 0$.
\end{itemize}

In the case $c(\Delta) = 0$, then $\clubsuit$ can be any of the three values. So only centered segments ($c(\Delta) = 0$) carry a nontrivial choice of label; in all other cases the label is determined uniquely and may be omitted. In those cases, we will simply write $\Delta$ instead of $(\Delta, \clubsuit)$. For centered segments, we usually indicate the label by a superscript $\Delta^{\clubsuit}$, \textit{e.g.}, $[-a, a]_{\rho_u}^{\ge 0}$, $[-a, a]_{\rho_u}^{=0}$, or $[-a, a]_{\rho_u}^{\le 0}$.

One can define the contragredient of an element of $\USeg$ in the following way. For $\Delta^{\clubsuit} \in \USeg$, we define $(\Delta^{\clubsuit})^{\vee} \in \USeg$, by
\begin{enumerate}
  \item If $c(\Delta) > 0$, then $(\Delta^{\ge 0})^{\vee} = (\Delta^{\vee})^{\le 0}$.
  \item If $c(\Delta) < 0$, then $(\Delta^{\le 0})^{\vee} = (\Delta^{\vee})^{\ge 0}$.
  \item If $c(\Delta) = 0$ and $\clubsuit$ is $\le 0$ then $(\Delta^{\le 0})^{\vee} = (\Delta^{\vee})^{\ge 0}$.
  \item If $c(\Delta) = 0$ and $\clubsuit$ is $=0$ or $\ge 0$, then $(\Delta^{\clubsuit})^{\vee} = (\Delta^{\vee})^{\clubsuit}$.
\end{enumerate}

There is also a natural involution $\iota$ on $\USeg$ defined by $\iota(\Delta, \clubsuit) = (\Delta^\vee, \iota(\clubsuit))$ where $\iota$ exchanges $\ge 0$ and $\le 0$ and fixes ${=0}$. The contragredient and the involution naturally extend to multisets. Let $\USymm$ denote the multisets in $\USeg$ that are symmetric under the involution, i.e., those satisfying $\iota(\m) = \m$.

\bigskip

There is a natural surjection
\[
  p : \USymm \twoheadrightarrow \Symm
\]
which forgets the labels. This projection has a section
\[
  s : \Symm \to \USymm
\]
where $s(\m)$ equals
\[
\sum_{\Delta \in \m, c(\Delta) \neq 0} \Delta + \sum_{\Delta \in \m, c(\Delta) = 0}  \left\lfloor \frac{m_{\m}(\Delta)}{2} \right\rfloor \Delta^{\le 0}+ \left\lfloor \frac{m_{\m}(\Delta)}{2} \right\rfloor \Delta^{\ge 0}+  \left(m_{\m}(\Delta) -2 \left\lfloor \frac{m_{\m}(\Delta)}{2} \right\rfloor\right) \Delta^{=0}.
\]

Let $\Delta_1,\Delta_2 \in \Symm$. We define an order relation $\prec$ on the segments of $\USeg$ supported in $\Z_\rho$ in the following way:
\begin{itemize}
  \item $(\Delta_1,\le 0) \prec (\Delta_2,= 0)$.
  \item $(\Delta_1,= 0) \prec (\Delta_2,\ge 0)$.
  \item If $\clubsuit$ is $\ge 0$ or $\le 0$, then $\Delta_1^{\clubsuit} \preceq \Delta_2^{\clubsuit}$ if and only if $\Delta_1 \le \Delta_2$.
  \item If $\clubsuit$ is $= 0$, then $\Delta_1^{\clubsuit} \preceq \Delta_2^{\clubsuit}$ if and only if $e(\Delta_1) \le e(\Delta_2)$.
\end{itemize}
The transitive closure of these relations defines an order on $\USeg$.

Finally, we add signs to centered segments. Let $\USymm^{\varepsilon}(G)$ be the set of pairs $(\m,\varepsilon)$
with $\m \in \USymm$ and $\varepsilon: \{ \Delta \in \m, c(\Delta)=0 \}  \to \{-1,1\}$. The order $\prec$ on $\USymm$ extends naturally to an order on $\USymm^{\varepsilon}$. The maps $p$ and $s$ give maps (fixing $\varepsilon$) $p : \USymm^{\varepsilon}(G) \to \Symm^{\varepsilon}(G)$ and $s : \Symm^{\varepsilon}(G) \to \USymm^{\varepsilon}(G)$.

\subsubsection{The algorithm}
\label{sec:algogood}
Now that we have defined $\USymm$ and $\preceq$ we can describe $\AD_\rho$.

\bigskip

Let $(\m,\varepsilon) \in \Symm_\rho^{\varepsilon}(G)$ and set $(y,\varepsilon)=s(\m,\varepsilon) \in \USymm^{\varepsilon}(G)$.

Let $e_{\max}$ be the biggest coefficient of the segments of $\m$ (hence the biggest end). The first step of the algorithm is to define a sequence $\Delta_{1} \succeq \cdots \succeq \Delta_{l}$ of segments in $y$. The segment $\Delta_{1}$ is the biggest (for $\preceq$) segment of $y$ such that $e(\Delta_1) = e_{\max}$. We define inductively the other segments. Let $j \ge 1$ and assume that $\Delta_j$ is defined. If $\rho$ is of the same type as $G$, and $\Delta_{j}=[0,0]_{\rho_u}^{\ge 0}$ or $\Delta_{j}=[0,0]_{\rho_u}^{=0}$, then $j = l$ (that is, we stop the process). If $\rho$ is not of the same type as $G$, and $\Delta_{j}=[1/2,1/2]_{\rho_u}$; or $\Delta_{j}=[-1/2,1/2]_{\rho_u}^{\ge 0}$ or $\Delta_{j}=[-1/2,1/2]_{\rho_u}^{=0}$, and $\varepsilon([-1/2,1/2]_{\rho_u})=-1$, then $j = l$. Otherwise, $\Delta_{j+1}$ is (if it exists) the biggest segment of $y$ such that:
\begin{itemize}
    \item $\Delta_{j+1} \preceq \Delta_j$;
    \item $e(\Delta_{j+1}) = e(\Delta_{j})-1$;
    \item if $c(\Delta_{j+1})= c(\Delta_{j})=0$ then $\varepsilon(\Delta_{j+1})=-\varepsilon(\Delta_{j})$.
\end{itemize}
If such a $\Delta_{j+1}$ does not exist then $j=l$. Again, we call the sequence $\Delta_1,\cdots,\Delta_l$ the \emph{the initial sequence in the algorithm}.

\bigskip

From this sequence, we will define $(\m_1,\varepsilon_1) \in \Symm_\rho^{\varepsilon}(G)$ and $(\m^{\#},\varepsilon^{\#}) \in \Symm_\rho^{\varepsilon}(G)$ so that we can set $\AD_\rho(\m,\varepsilon)=(\m_1,\varepsilon_1) + \AD_\rho (\m^{\#},\varepsilon^{\#})$.

We start by defining a sign $\varepsilon_0 \in \{-1,1\}$. This sign will determine when $\m_1$ is a centered segment.

\begin{defi}\label{def:epsil}
  We define $\varepsilon_0 \in \{-1,1\}$ by $\varepsilon_0 := -1$ if one of the following conditions is satisfied:
  \begin{itemize}
    \item $\rho_u$ is of the same type as $G$ and $\Delta_{l}=[0,0]_{\rho_u}^{\ge 0}$ or $\Delta_{l}=[0,0]_{\rho_u}^{=0}$;
    \item $\rho_u$ is not of the same type as $G$ and $\Delta_{l}=[1/2,1/2]_{\rho_u}$; or $\Delta_{l}=[-1/2,1/2]_{\rho_u}^{\ge 0}$ or $\Delta_{l}=[-1/2,1/2]_{\rho_u}^{=0}$ with $\varepsilon([-1/2,1/2]_{\rho_u})=-1$.
  \end{itemize}
  Otherwise, $\varepsilon_0 := 1$.
\end{defi}

\bigskip

\textbf{The pair $(\m_1,\varepsilon_1)$:}

\begin{enumerate}
  \item If $\varepsilon_0 = 1$. Then
          \[
              \m_1 := [e(\Delta_{l}),e(\Delta_{1})]_{\rho_u} + [-e(\Delta_{1}),-e(\Delta_{l})]_{\rho_u}
          \]
  \begin{rem}
  We will show in Lemma \ref{lem:ADnotcentered} that $e(\Delta_{1}) + e(\Delta_{l}) \neq 0$ and thus the segments in $\m_1$ are not centered segments.
\end{rem}
    \item If $\varepsilon_0 = -1$. Then
          \[
              \m_1 := [-e(\Delta_{1}),e(\Delta_{1})]_{\rho_u}.
          \]

          Since $\m_1$ is centered, we need to define its sign. Let $n_0$ be the number of centered segments in $\m$, that is $n_0 = \card \{\Delta \in \m, c(\Delta)=0\}$.

          \begin{itemize}
            \item If $\rho_u$ is of the same type as $G$ then
            \[
              \varepsilon_1(\m_1):=(-1)^{n_0 + 1} \varepsilon([0,0]_{\rho_u}).
          \]
          \item If $\rho_u$ is not of the same type as $G$ then
            \[
              \varepsilon_1(\m_1):=(-1)^{n_0}.
          \]
          \end{itemize}
\end{enumerate}

\textbf{The pair $(\m^{\#},\varepsilon^{\#})$:}

To construct $\m^{\#}$ we will remove the end of the segments $\Delta_1,\cdots,\Delta_l$ of $\m$ and the beginning of the segments $\Delta^{\vee}_1,\cdots,\Delta^{\vee}_l$. As there can be multiplicities in $y$, we need to be precise on which segments we modify. Let us write $y = \Lambda_1 + \cdots + \Lambda_k$, with $\Lambda_1 \succeq \cdots \succeq \Lambda_k$.

From $\Delta_1,\cdots,\Delta_l$ we construct two sequences $i_1,\cdots,i_l$ and $i'_1,\cdots,i'_l$. Let $1 \le j \le l$ and define 
\[
  i_j := \min \{ i \in \{1,…,k\}, \Lambda_i= \Delta_j \}
\]
and
\[
  i'_j := \min \{ i \in \{1,…,k\}, \Lambda_i= \Delta^{\vee}_j \}.
\]

We define $\m^{\#} = \Lambda^{\#}_1 + \Lambda^{\#}_2 + \cdots + \Lambda^{\#}_k$ (with the $\Lambda^{\#}_i$ possibly empty) by
\[
    \Lambda^{\#}_i = \left\{
    \begin{array}{ll}
        p(\Lambda_i)           & \text{if } i \notin \{ i_1, \cdots,i_l \} \text{ and } i \notin \{ i'_1,…,i'_l \} \\
        p(\Lambda_i)^{-}       & \text{if } i \in \{ i_1,…,i_l \} \text{ and } i \notin \{ i'_1,…,i'_l \}    \\
        {}^{-}p(\Lambda_i)     & \text{if } i \notin \{ i_1,…,i_l \} \text{ and } i \in \{ i'_1,…,i'_l \}    \\
        {}^{-}p(\Lambda_i)^{-} & \text{if } i \in \{ i_1,…,i_l \} \text{ and } i \in \{ i'_1,…,i'_l \}
    \end{array}
    \right.
\]

We are left to define the signs of the centered segments of $\m^{\#}$. Let $\Lambda^{\#}_i$ be a centered segment of $\m^{\#}$ supported in $\Z_\rho$.
\begin{itemize}
  \item If there exists $1 \le j \le l$ such that $\Lambda^{\#}_i=\Lambda^{\#}_{i_j}$ and $c(\Delta_j)=0$; then $\varepsilon^\#(\Lambda^{\#}_i)=\varepsilon_0 * \varepsilon(\Delta_j)$.
  \item If there exists $1 \le j \le l$ such that $\Lambda^{\#}_i=\Lambda^{\#}_{i_j}$, $c(\Delta_j)=1/2$ and $\Lambda^{\#}_i \notin \m$; then $\varepsilon^\#(\Lambda^{\#}_i)=\varepsilon_0$.
  \item If there exists $1 \le j \le l$ such that $\Lambda^{\#}_i=\Lambda^{\#}_{i_j}$, $c(\Delta_j)=1/2$ and $\Lambda^{\#}_i \in \m$; then $\varepsilon^\#(\Lambda^{\#}_i)=\varepsilon_0 * (-1) * \varepsilon(\Lambda^{\#}_i)$.
  \item Otherwise, $\varepsilon^\#(\Lambda^{\#}_i)=\varepsilon_0 * \varepsilon(\Lambda^{\#}_i)$.
\end{itemize}

It is often convenient to see $\m^{\#}$ as a modification of $\m$ where some segments $\Delta\in \m$ have been replaced by ${}^{-}\Delta$, $\Delta^{-}$ or ${}^{-}\Delta^{-}$. To describe these modifications, when $\Lambda_i^{\#} \neq p(\Lambda_i)$, we will say that the algorithm \emph{suppresses} $\Lambda_i$ and \emph{creates} $\Lambda_i^{\#}$.

\begin{defi}
  When $\rho$ is good, we define $\AD_\rho : \Symm_\rho^\varepsilon(G) \to \Symm_\rho^\varepsilon(G)$ inductively by
  \[
      \AD_\rho(\m,\varepsilon)=(\m_1,\varepsilon_1)+\AD_\rho(\m^{\#},\varepsilon^{\#})
  \]
\end{defi}

\begin{rem}\label{rem:defAD}
  Unlike the ugly and bad case, it is not clear here that $\AD_\rho$ is well defined. At this stage, it is a map $\Symm_\rho^\varepsilon(G) \to \Symm_\rho^\varepsilon$. To ensure that its image actually lies in $\Symm_\rho^\varepsilon(G)$, we need to verify that two centered segments which are equal have the same sign. This is established in the next section in Proposition~\ref{prop:ADdef}.
\end{rem}

\begin{rem}
By definition, $\AD_{\rho}$ preserves the support, and hence the determinant. We will also show that it preserves the product of the signs in Proposition \ref{prop:ADsigns}.
\end{rem}

\begin{ex}
  Let $\rho$ be of good parity and unitary. Let $\pi:=L(\n;\pi(\phi,\eta))$ with $\n=[-1,0]_{\rho}$ and $\pi(\phi,\eta)$ trivial. We associate to $\pi$ the symmetric multisegment $\m \in \Symm$ defined by $\m=[-1,0]_\rho+[0,1]_\rho$. 
  
  In the diagram below, we represent the multisegment $\m$ ordered by $\le$. 
  We colored in red the segments with label $\ge 0$ and in blue those with $\le 0$. The portions highlighted in green mark the parts of $\m$ that are extracted to form the segment $\m_1$; the remaining parts will constitute the multisegment $\m^{\#}$ after this first step.
  
  \begin{figure}[h!]
    \centering
    \begin{tikzpicture}[scale=1]
  
      \begin{scope}[shift={(0,0)}]
        \foreach \x in {-1,0,1} {
            \draw[dashed, gray, very thin] (\x,-1.5) -- (\x,-0.2);
            \node[black] at (\x,0) {\x};
        }
        \draw[red, ultra thick] (0,-0.5) -- (1,-0.5);
        \draw[blue, ultra thick] (-1,-1) -- (0,-1);
        \draw[green, line width=5pt, opacity=0.5] (1,-0.5) -- (0,-1);
        \draw[green, line width=5pt, opacity=0.5] (0,-0.5) -- (-1,-1);
      \end{scope}
    \end{tikzpicture}
  \end{figure}

  The algorithm gives (in one step) that $\AD_\rho(\m)=[-1,0]_\rho + [0,1]_\rho$. Thus $\hat{\pi}=\pi$ (remark the difference with the bad parity case in Example \ref{ex:badpar01}).
  \end{ex}

\begin{ex}
      With the notation and coloring as above, we consider now the case $\pi:=L(\n;\pi(\phi,\eta))$ with $\n=[-2,-2]_\rho$, $\phi=\rho \boxtimes S_1 + \rho \boxtimes S_1 + \rho \boxtimes S_3$, $\eta(\rho \boxtimes S_1)=-1$ and $\eta(\rho \boxtimes S_3)=1$. The symmetric multisegment $(\m,\varepsilon) \in \Symm^\varepsilon$ is $\m=[-2,-2]_\rho+[0,0]_\rho+[0,0]_\rho+[-1,1]_\rho+[2,2]_\rho$ with $\varepsilon([0,0]_\rho)=-1$ and $\varepsilon([-1,1]_\rho)=1$.

      \begin{figure}[h!]
        \centering
        \begin{tikzpicture}[scale=1]
      
          \begin{scope}[shift={(0,0)}]
            \foreach \x in {-2,-1,0,1,2} {
                \draw[dashed, gray, very thin] (\x,-3) -- (\x,-0.2);
                \node[black] at (\x,0) {\x};
            }
            \fill[red, ultra thick] (2,-0.5) circle (2pt);
            \fill[red] (0,-1) circle (1.5pt);
            \node at (0.2,-0.9) {\tiny $-$};
            \draw[black, ultra thick] (-1,-1.5) -- (1,-1.5);
            \node at (1.2,-1.4) {\tiny $+$};

            \fill[blue] (0,-2) circle (2pt);
            \node at (0.2,-1.9) {\tiny $-$};
            \fill[blue] (-2,-2.5) circle (1.5pt);
            \draw[green, line width=5pt, opacity=0.5] (2,-0.5) -- (1,-1.5)-- (0,-2);
            \draw[green, line width=5pt, opacity=0.5] (0,-1) -- (-1,-1.5)-- (-2,-2.5);
          \end{scope}
  
          \begin{scope}[shift={(6,0)}]
            \foreach \x in {-2,-1,0,1,2} {
                \draw[dashed, gray, very thin] (\x,-2.5) -- (\x,-0.2);
                \node[black] at (\x,0) {\x};
            }
            \fill[black] (0,-1.5) circle (2pt);
            \node at (0.2,-1.4) {\tiny $+$};
            \fill[green, opacity=0.5] (0,-1.5) circle[radius=5pt];
  
          \end{scope}
  
        \draw[->, thick] (2.7,-1.25) -- (3.3,-1.25);
        \end{tikzpicture}
      \end{figure}

      The algorithm gives (in two steps) that $\AD_\rho(\m,\varepsilon)=(\m',\varepsilon')$ with $\m'=([-2,0]_\rho + [0,2]_\rho) +[0,0]_\rho$ and $\varepsilon'([0,0]_\rho)=1$. Thus $\hat{\pi}=L([-2,0]_\rho;\pi(\rho \boxtimes S_1,\eta'))$ with $\eta'(\rho \boxtimes S_1)=1$.
      \end{ex}

  \begin{ex}
    With the notation and coloring as above, we consider now the case $(\m,\varepsilon) \in \Symm^\varepsilon$ with $\m=[-2,-2]_\rho+[0,0]_\rho+[-1,1]_\rho+[2,2]_\rho$, $\varepsilon([0,0]_\rho)=-1$ and $\varepsilon([-1,1]_\rho)=1$.

    \begin{figure}[h!]
      \centering
      \begin{tikzpicture}[scale=1]
    
        \begin{scope}[shift={(0,0)}]
          \foreach \x in {-2,-1,0,1,2} {
              \draw[dashed, gray, very thin] (\x,-2.5) -- (\x,-0.2);
              \node[black] at (\x,0) {\x};
          }
          \fill[red, ultra thick] (2,-0.5) circle (2pt);
          \draw[black, ultra thick] (-1,-1) -- (1,-1);
          \node at (1.2,-0.9) {\tiny $+$};
          \fill[black, ultra thick] (0,-1.5) circle (2pt);
          \node at (0.2,-1.4) {\tiny $-$};
          \fill[blue] (-2,-2) circle (1.5pt);
          \draw[green, line width=5pt, opacity=0.5] (2,-0.5) -- (1,-1)-- (0,-1.5)-- (-1,-1)-- (-2,-2);
        \end{scope}

        \begin{scope}[shift={(6,0)}]
          \foreach \x in {-2,-1,0,1,2} {
              \draw[dashed, gray, very thin] (\x,-2.5) -- (\x,-0.2);
              \node[black] at (\x,0) {\x};
          }
          \fill[black] (0,-1) circle (2pt);
          \node at (0.2,-0.9) {\tiny $-$};
          \fill[green, opacity=0.5] (0,-1) circle[radius=5pt];

        \end{scope}

      \draw[->, thick] (2.7,-1.25) -- (3.3,-1.25);
      \end{tikzpicture}
    \end{figure}

    The algorithm gives (in two steps) that $\AD_\rho(\m,\varepsilon)=(\m',\varepsilon')$ with $\m'=[-2,2]_\rho + [0,0]_\rho$, $\varepsilon'([-2,2]_\rho)=1$ and $\varepsilon'([0,0]_\rho)=-1$. In this example, we see that the algorithm also works when the product of the signs is $-1$, and that it preserves this product (the general proof of this property is given in Section~\ref{sec:signpreserving}).
    \end{ex}

\begin{rem}
  For $\GL_n(F)$, Knight and Zelevinsky observed in \cite{KZ} that for a given multisegment $\m$, the number of segments in $\m^t$ that contain a given segment $[i,j]$ is equal to the capacity of the graph whose vertices are pairs $(\Delta, x)$ where $\Delta \in \m$ and $x \in \Delta \cap [i,j]$ and the edges connect $(\Delta, x)$ and $(\Delta', x+1)$ if $\Delta$ precedes $\Delta'$.

  A naive transposition of this result in our setting doesn't work as shown by the following example. Let $\rho$ be of good parity and consider $\pi:=L(\n;\pi(\phi,\eta))$ with $\n=[-3,-3]_\rho$, $\phi=\rho \boxtimes S_3 +\rho \boxtimes S_3 +\rho \boxtimes S_3 +\rho \boxtimes S_5 +\rho \boxtimes S_5 +\rho \boxtimes S_5 +\rho \boxtimes S_7 +\rho \boxtimes S_7 $, $\eta(\rho \boxtimes S_3)=1$, $\eta(\rho \boxtimes S_5)=-1$ and $\eta(\rho \boxtimes S_7)=1$. The corresponding labelled symmetric multisegment is $(y,\varepsilon) \in \USymm^\varepsilon(G)$ given by
  \begin{align*}
    y&=[3, 3]_\rho + [-1, 1]^{\ge 0}_\rho + [-2, 2]^{\ge 0}_\rho + [-3, 3]^{\ge 0}_\rho\\
&+ [-2, 2]^{= 0}_\rho + [-1, 1]^{= 0}_\rho\\
&+ [-1, 1]^{\le 0}_\rho + [-2, 2,]^{\le 0}_\rho + [-3, -3]_\rho + [-3, 3]^{\le 0}_\rho
  \end{align*}
with $\varepsilon([-1, 1]_\rho) = 1$, $\varepsilon([-2, 2]_\rho) = -1$ and $\varepsilon([-3, 3]_\rho) = 1$.

Applying $\AD$, we get that $\hat{\pi}=L(\n';\pi(\phi',\eta'))$ with $\n'=[-3, -1]_\rho + [-3, -2]_\rho + [-3, -3]_\rho + [-2, -2]_\rho + [-2, -2]_\rho + [-1, -1]_\rho + [-1, -1]_\rho + [-1, -1]_\rho + [-1, -1]_\rho + [-1, -1]_\rho$, $\phi'=\rho \boxtimes S_1 + \rho \boxtimes S_1 + \rho \boxtimes S_1+ \rho \boxtimes S_1+ \rho \boxtimes S_1+ \rho \boxtimes S_1+ \rho \boxtimes S_3+ \rho \boxtimes S_5$, $\eta'(\rho \boxtimes S_1) = -1$, $\eta'(\rho \boxtimes S_3) = 1$ and $\eta'(\rho \boxtimes S_5) = -1$.

Now let us consider the segment $[i,j]=[-3,-1]_\rho$. Its multiplicity in the dual is 1. However, the capacity of the graph is 2 as there are two paths $([-1,1]^{=0}_\rho,1) \to ([-2,2]_\rho^{\ge 0},2) \to ([3,3]_\rho,3)$ and $([-1,1]_\rho^{\le 0},1) \to ([-2,2]_\rho^{=0},2) \to ([-3,3]_\rho^{\ge 0},3)$.
\end{rem}

\subsection{Main Theorem}

The above constructions define for all $\rho \in \Cusp^{\GL}$ a map $\AD_\rho : \Symm_\rho^\varepsilon(G) \to \Symm_\rho^\varepsilon(G)$. As explained at the beginning of the section, we get a map $\AD : \Symm^\varepsilon(G) \to \Symm^\varepsilon(G)$ by $\AD = \oplus_{\rho \in \Cusp^\GL/\sim'} \AD_{\rho}$; and a map $\ADd : \Data(G) \to \Data(G)$ defined by $\ADd = \trans^{-1} \circ \AD \circ \trans$. The main theorem of this paper (proved in the following sections) is:

\begin{thm}\label{thm:MAIN}
  Let $\pi \in \Irr^G$ with Langlands data $(\m;\phi,\eta)$. Then
  \[
    \hat{\pi} \simeq L(\ADd(\m;\phi,\eta)).
  \]
\end{thm}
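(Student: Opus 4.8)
The strategy follows the template outlined in Paragraph \ref{sub:MWidea} for $\GL_n$, transported to $G_n$ via the derivative theory of Atobe and the second-named author recalled in Sections \ref{sec:AM}--\ref{sec:expder}. Since the Jantzen decomposition (Theorem \ref{thm:Jantzen}) is compatible with the Aubert--Zelevinsky duality, and since $\AD = \oplus_\rho \AD_\rho$ respects the line decomposition of $\Symm^\varepsilon(G)$, it suffices to prove the theorem line by line: for each $\rho \in \Cusp^\GL/\!\sim'$, we must show that $\widehat{\pi(y)} = \pi(\ADd_\rho(y))$ for all $y \in \Data_\rho^+(G)$, treating the good, bad and ugly cases separately. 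The ugly case follows almost immediately from Remark \ref{rem:MWGL}: there, $\AD_\rho$ is literally the M{\oe}glin--Waldspurger dual applied to the $\GL$-part, and an ugly irreducible representation of $G_n$ is induced from a $\GL_n$-representation whose dual on the classical group side is governed by \cite{MW2}. The substantive work is the good and bad cases, handled in Sections \ref{sec:proofgood} and \ref{sec:proofbad}.

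\textbf{The core argument (good case).}
Fix a good $\rho$ and argue by induction on the degree. Given $(\m,\varepsilon) \in \Symm_\rho^{\varepsilon,+}(G)$ with associated representation $\pi$, one chooses a supercuspidal $\rho' \in \Z_\rho$ appearing as an ``extremity'' of the data — concretely, $\rho_u|\cdot|^{e_{\max}}$, where $e_{\max}$ is the largest end appearing — and considers the highest $\rho'$-derivative $D_{\rho'}^{\max}(\pi)$. The key identities are: (i) an explicit formula expressing the symmetrical Langlands data of $D_{\rho'}^{\max}(\pi)$ in terms of $(\m,\varepsilon)$ — this is precisely what the explicit derivative formulas of Section \ref{sec:expder} provide, reformulated through the transfer map $\trans$; (ii) a converse reconstruction formula recovering $(\m,\varepsilon)$ from the data of $D_{\rho'}^{\max}(\pi)$ together with $\rho'$ and the multiplicity $k$; and (iii) the compatibility $\widehat{D_{\rho'}^{\max}(\pi)} = D_{\rho'^\vee}^{\max}(\widehat{\pi})$ (here $\rho'$ need not be self-dual, so this step is legitimate). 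Since $D_{\rho'^\vee}^{\max}$ strictly decreases degree, the inductive hypothesis gives the symmetrical Langlands data of $\widehat{D_{\rho'}^{\max}(\pi)}$; applying the reconstruction formula (iii) on the dual side then yields a candidate for the data of $\widehat{\pi}$. The heart of the proof is the verification that this candidate coincides with $\AD_\rho(\m,\varepsilon)$ as defined in Section \ref{sec:algogood} — i.e., that the algorithm's first-step output $(\m_1,\varepsilon_1)$ and residual $(\m^\#,\varepsilon^\#)$ are precisely what the derivative-and-reconstruct procedure produces. This is a matching of two combinatorial recipes: the sequence $\Delta_1 \succeq \cdots \succeq \Delta_l$ extracting ends (and beginnings of duals) on one side, versus the iterated derivative data on the other. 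The simplified notation for derivatives introduced in Section \ref{sec:expder} is what makes this comparison tractable; one also needs the well-definedness results (Proposition \ref{prop:ADdef}, Proposition \ref{prop:ADsigns}) ensuring $\AD_\rho$ lands in $\Symm_\rho^{\varepsilon,+}(G)$ so that the inductive statement is well-posed. The sign bookkeeping — the rôles of $\varepsilon_0$, the parity $n_0$ of centered segments, and the opposite-sign condition in the chain $\Delta_{j+1} \preceq \Delta_j$ — must be shown to mirror exactly how the derivative formulas track the characters $\eta$ of component groups; this is where the formalism of $\USymm^\varepsilon(G)$ and the labels $\clubsuit$ earns its keep, since it disambiguates which copy of a centered segment ``belongs'' to which block of the parabolic.

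\textbf{The bad case and the main obstacle.}
In the bad case, $\rho' \rtimes \sigma$ is always irreducible, so there is no non-self-dual $\rho' \in \Z_\rho$ available, and one cannot run the derivative induction directly on the $\rho$-line. The resolution, as in \cite{AM}, is to realize a bad-parity representation through a single irreducible induction from a related good-parity or $\GL$-situation (the formalism of \cite{At2,AM} that reduces the bad case to the good one up to a half-integral shift), transport the already-proven good-case result, and check that $\AD_\rho$ in the bad case — with its multiplicity-$\ge 2$ condition (3) in the initial sequence and the evenness of all centered-segment multiplicities — is the correct shadow of that reduction. Condition (3) and the evenness constraints are exactly the combinatorial trace of the fact that bad-parity data come in ``doubled'' pairs $(\Delta,\Delta^\vee)$ on the same line.

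I expect the main obstacle to be step (iii)'s combinatorial matching in the good case: proving that \emph{one step} of the algorithm (extract the chain $\Delta_1,\dots,\Delta_l$, form $\m_1$ and $\m^\#$) equals \emph{one step} of ``take highest $\rho'$-derivative, dualize by induction, reconstruct.'' The derivative formulas of Section \ref{sec:expder} are piecewise (the positive, negative and tempered parts of the Langlands data interact differently, and the character $\eta$ enters through reducibility points), so the verification splits into many cases according to where $\Delta_1$ and $\Delta_l$ sit relative to $0$, whether $\m_1$ is centered ($\varepsilon_0 = -1$) or not, and whether centered segments of various signs are consumed. Establishing that the unified notation collapses all these cases into the single clean recursion $\AD_\rho(\m,\varepsilon) = (\m_1,\varepsilon_1) + \AD_\rho(\m^\#,\varepsilon^\#)$ — and in particular that the ``meeting in the middle'' phenomenon producing a centered segment in the dual corresponds exactly to a derivative that lands in the tempered part — is the crux, and is presumably why Sections \ref{sec:proofgood}--\ref{sec:proofugly} are long.
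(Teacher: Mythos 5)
Your plan correctly identifies the overall architecture — Jantzen decomposition to reduce to lines, the ugly case from M{\oe}glin--Waldspurger, and an induction via derivatives using $\widehat{D_\rho^{\max}(\pi)} = D_{\rho^\vee}^{\max}(\widehat{\pi})$ — and you are right that the crux is matching one step of $\AD$ against one step of derive-dualize-reconstruct. But several of your concrete choices diverge from the paper in ways that matter, and one subtlety you omit would sink the induction as stated.

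\textbf{The choice of derivative is wrong.} You propose to differentiate at $\rho_u|\cdot|^{e_{\max}}$, the largest end. The paper deliberately does the opposite: it chooses $y_0$ to be the \emph{most negative} half-integer $\neq -e_{\max}$ at which $(\m,\varepsilon)$ is not reduced. The reason is spelled out at the start of Section \ref{sec:startegyproof}: the derivative at the smallest coefficient modifies the initial sequence $\Delta_1,\dots,\Delta_l$ of the algorithm only near its tail, so the comparison between $\AD\circ D$ and $D\circ\AD$ is tractable. The derivative at $e_{\max}$ (or $-e_{\max}$) wipes out the head of the initial sequence, and these two cases (Sections \ref{sec:deremax}, \ref{sec:deremaxm}) are the hardest in the whole argument, handled only when no other non-reduced point exists. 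Starting the induction there would make every step maximally painful.

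\textbf{Your description of the bad case is factually incorrect.} You write that for bad $\rho$ ``there is no non-self-dual $\rho' \in \Z_\rho$ available, and one cannot run the derivative induction directly on the $\rho$-line,'' concluding that one must reduce the bad case to the good case. Both claims are false: $\rho_u|\cdot|^x$ is non-self-dual for every $x\neq 0$, so derivatives are available exactly as in the good case, and Section \ref{sec:proofbad} runs the same derivative induction directly. In fact the bad case is the \emph{easier} one, because $\Symm_\rho^\varepsilon(G)$ carries no signs and the base case (reduced data) is just $\m = n[0,0]_{\rho_u}$ with $n$ even, fixed by $\AD$. What is genuinely delicate about bad parity is only the multiplicity-$\geq 2$ condition in the initial sequence (your observation there is correct), not the availability of derivatives.

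\textbf{You omit a real technical obstruction in the good case.} The natural inductive statement is: for $(\m,\varepsilon) \in \Symm^{\varepsilon,+}_\rho(G)$ (the ``positive'' sector giving actual representations), $\widehat{L(\m,\varepsilon)} = L(\AD(\m,\varepsilon))$. But the recursion $\AD(\m,\varepsilon) = (\m_1,\varepsilon_1) + \AD(\m^{\#},\varepsilon^{\#})$ need not keep $(\m^{\#},\varepsilon^{\#})$ in $\Symm^{\varepsilon,+}_\rho(G)$ — the product of signs $S(\m^{\#},\varepsilon^{\#})$ may flip to $-1$, and then the inductive hypothesis, which speaks of representations, simply does not apply. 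The paper's fix is the strengthened Theorem \ref{thm:ADdualinduction}: one carries along an auxiliary good self-dual $\rho_0$ outside $\mathscr{C}$ and a buffer multisegment $\m_0 = [0,0]_{\rho_0}$ with $\varepsilon_0 = -1$, which restores positivity and allows Lemma \ref{lem:ADcommDer} to be invoked for sign $-1$ data after projecting back to the $\rho$-line; Proposition \ref{prop:ADsigns} (that $\AD_\rho$ preserves $S$) is what makes this bookkeeping consistent. Without this device, the induction as you sketch it does not close.
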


\section{Well-definedness of the algorithm in the good parity case}
\label{sec:ADdef}

In this section, we verify that, in the good parity case, the algorithm is well-defined. Let $\rho \in \Cusp^{\GL}$ be of good parity and $\rho_u$ its unitarization. As explained in Remark~\ref{rem:defAD}, at this stage $\AD_\rho$ is a map $\AD_\rho : \Symm_\rho^\varepsilon(G) \to \Symm_\rho^\varepsilon$. To ensure that the image lies in $\Symm_\rho^\varepsilon(G)$ we need to verify that any two equal centered segments are assigned the same sign. Establishing this compatibility is the main goal of this section.

\bigskip

Let $(\m,\varepsilon) \in \Symm^{\varepsilon}(G)$. Let $\Delta_1,\cdots,\Delta_l$ be the initial sequence in the algorithm for $(\m,\varepsilon)$ and $\Delta'_1,\cdots,\Delta'_{l'}$ be the initial sequence in the algorithm for $(\m^{\#},\varepsilon^{\#})$. Let $\varepsilon_0$ be the sign in $(\m,\varepsilon)$ and $\varepsilon'_0$ the sign in $(\m^{\#},\varepsilon^{\#})$ (see Definition \ref{def:epsil}).

\begin{lem}
  \label{lem:paritycent}
  Suppose that there exists $j \ge 1$ such that $j<l$, $c(\Delta_{j+1})=0$ and $c(\Delta_{j})=1/2$. Then $m_{\m^{\#}}(p(\Delta_{j+1}))$ is even.
\end{lem}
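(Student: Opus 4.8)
The strategy is a direct count of multiplicities, keeping track of exactly which segments of $y = s(\m,\varepsilon)$ get modified when $\m^{\#}$ is formed. \emph{Step 1 (shape of $\Delta_j,\Delta_{j+1}$).} Working in the unitary coordinate $\rho_u$ — which is self-dual, $\rho$ being good — the hypothesis $c(\Delta_j)=1/2$ forces $\Delta_j = [x,1-x]_{\rho_u}$ for some $x\le 0$ (an integer when $\rho_u$ is of the same type as $G$, a half-integer $\le -1/2$ otherwise; the value $x=1/2$ is impossible, since $[1/2,1/2]_{\rho_u}$ is a stopping configuration and would force $j=l$). As $e(\Delta_{j+1}) = e(\Delta_j)-1 = -x$ and $c(\Delta_{j+1})=0$, the underlying segment of $\Delta_{j+1}$ is $C := [x,-x]_{\rho_u}$. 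The label of $\Delta_j$ is necessarily $\ge 0$ (positive centre), and a direct comparison in the order $\preceq$ gives $C^{\ge 0}\succ \Delta_j$; hence $\Delta_{j+1}$ carries the label $\le 0$ or $=0$, and $C^{\ge 0}$ never occurs in the initial sequence. This last fact is the structural key.

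\emph{Step 2 (symmetry and the relevant segments).} Since $[x,1-x]_{\rho_u}^{\vee} = [x-1,-x]_{\rho_u}$, the symmetry $\m=\m^{\vee}$ yields $m_{\m}([x,1-x]_{\rho_u}) = m_{\m}([x-1,-x]_{\rho_u})$; in particular $D := [x-1,-x]_{\rho_u}$ occurs in $\m$ because $A := [x,1-x]_{\rho_u} = p(\Delta_j)$ does. I would then list the only segments $\Lambda$ of $y$ for which $\Lambda^{\#}=C$ is possible: an unmodified copy of $C$; a copy of $A^{\ge 0}$ shortened on the right; a copy of $D^{\le 0}$ shortened on the left; and a copy of $B^{=0}$, with $B:=[x-1,1-x]_{\rho_u}$, shortened on both sides. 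Using that the initial sequence has strictly decreasing ends — so each labelled segment appears in it at most once, and only the minimal-index copy in $y$ of any segment is ever modified (through the families $\{i_j\}$, $\{i'_j\}$, or both) — one computes each of these contributions; each is $0$ or $1$.

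\emph{Step 3 (two cases and the tally).} The count splits according to $\Delta_{j+1}$. If $\Delta_{j+1}=C^{=0}$ then $m_{\m}(C)$ is odd, and the tally gives $m_{\m^{\#}}(C) = m_{\m}(C)+1$, hence even. If $\Delta_{j+1}=C^{\le 0}$ then $m_{\m}(C)$ must be \emph{even}: were $C^{=0}$ present in $y$, it would satisfy $C^{=0}\preceq\Delta_j$ and $C^{=0}\succ C^{\le 0}$ (and it has the required end, while no strictly larger admissible candidate is centred, by Step 1), so the algorithm would have selected $C^{=0}$ rather than $C^{\le 0}$; in this case the tally gives $m_{\m^{\#}}(C) = m_{\m}(C)$, again even.

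The main obstacle is the bookkeeping in Steps 2–3: one must treat correctly the interaction of the two index families $\{i_j\}$ and $\{i'_j\}$ (a segment whose minimal-index copy lies in both is shortened on \emph{both} sides, and then does not contribute a copy of $C$), and one must run the case $\rho_u$ of opposite type as $G$ separately, where the segments $[\pm 1/2,1/2]_{\rho_u}$ and the associated stopping rules intervene — although, since the signs $\varepsilon_0$ and $\varepsilon$ play no role in a multiplicity count, that case proceeds in parallel to the main one.
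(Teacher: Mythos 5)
Your plan is correct and follows essentially the same route as the paper's proof: you pin down $\Delta_j=[-a,a+1]_{\rho_u}$ so that the algorithm, by shortening $\Lambda_{i_j}$ on the right and $\Lambda_{i'_j}$ on the left, creates two new copies of $C=[-a,a]_{\rho_u}$, and you then split on the label of $\Delta_{j+1}$ — which controls the parity of $m_{\m}(C)$ — to count whether one copy ($C^{=0}$) or two copies ($C^{\le 0}$ together with its dual $C^{\ge 0}$) are suppressed, giving an even total in each case. The paper's proof is this same two-case tally, just stated more tersely.
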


\begin{proof}
  Let $a \in \frac{1}{2}\Z$ such that $p(\Delta_{j+1})=[-a,a]_{\rho_u}$ and $\Delta_{j}=[-a,a+1]_{\rho_u}$. If $m_{\m}([-a,a]_{\rho_u})$ is odd, then $\Delta_{j+1} = [-a,a]_{\rho_u}^{=0}$. The algorithm suppresses one $[-a,a]_{\rho_u}$ (namely, $\Lambda_{i_{j+1}}$) and creates two new ones from $\Lambda_{i_j}$ and $\Lambda^{\vee}_{i_j}$. Thus $m_{\m^{\#}}([-a,a]_{\rho_u})$ is even. If $m_{\m}([-a,a]_{\rho_u})$ is even, then $\Delta_{j+1} = [-a,a]_{\rho_u}^{\le 0}$. The algorithm suppresses two $[-a,a]_{\rho_u}$ (namely, $\Lambda_{i_{j+1}}$ and $\Lambda_{i'_{j+1}}$) and creates two new ones from $\Lambda_{i_j}$ and $\Lambda^{\vee}_{i_j}$. Thus $m_{\m^{\#}}([-a,a]_{\rho_u})$ is also even. 
\end{proof}

\begin{lem}
  \label{lem:precalgoj}
  Let us assume that $e(\Delta_1)=e(\Delta'_1)$. Suppose that there exists $j \ge 1$ such that $j<l$, $j<l'$, $\Delta_{j+1}' \prec \Delta_j$, $c(\Delta_{j})\neq 0$ and $c(\Delta_{j+1})=0$. Then $\Delta'_{j+1} \preceq \Delta_{j+1}$.
\end{lem}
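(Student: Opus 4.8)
The plan is to recast the statement as a maximality claim for the $\m^{\#}$-algorithm and then to exploit the symmetry $\m^{\vee}=\m$ together with the maximality that defines $\Delta_{j+1}$ inside $s(\m)$. First I would record the normalizations: both initial sequences start at the common maximal end and the end decreases by exactly one at each step, so $e(\Delta_i)=e(\Delta'_i)=e(\Delta_1)-(i-1)$ whenever both are defined; set $a:=e(\Delta_{j+1})=e(\Delta'_{j+1})$, so $e(\Delta_j)=a+1$ and, since $c(\Delta_{j+1})=0$, $\Delta_{j+1}=[-a,a]_{\rho_u}^{\clubsuit}$ with $a\ge 0$ and $b(\Delta_{j+1})=-a$. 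Next one checks that $\Delta_j$ is $\ge 0$-labelled with $b(\Delta_j)\ge -a$ (equivalently $c(\Delta_j)\ge \tfrac12$): were $\Delta_j$ $\le 0$-labelled, then $\Delta_{j+1}\preceq\Delta_j$ would force $\Delta_{j+1}$ to be $\le 0$-labelled too and hence $[-a,a]\le\Delta_j$ in the M{\oe}glin--Waldspurger order, contradicting $b(\Delta_j)\le-(a+1)$. Observe also that $c(\Delta_j)=\tfrac12$ precisely when $b(\Delta_j)=-a$.

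The core reduction is that it suffices to prove: every segment $\Lambda$ of $s(\m^{\#})$ with $e(\Lambda)=a$ and $\Lambda\prec\Delta_j$ satisfies $\Lambda\preceq\Delta_{j+1}$; indeed $\Delta'_{j+1}$ is one such segment by hypothesis. Among segments of end $a$, the order $\preceq$ is pinned down by the pair (beginning, label): the $\le 0$- and $=0$-labelled (centered) ones are all $\prec\Delta_j$, while a $\ge 0$-labelled $[x,a]$ is $\prec\Delta_j$ precisely when $x<b(\Delta_j)$. The maximality defining $\Delta_{j+1}$ yields the structural fact that $\m$ contains \textbf{no} segment $[x,a]$ with $-a<x<b(\Delta_j)$, and it also determines $\clubsuit$, the parity of $m_{\m}([-a,a])$, and whether $b(\Delta_j)=-a$ or $b(\Delta_j)>-a$.

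One then runs through the candidates $\Lambda$. A $\ge 0$-labelled $[x,a]$ with $x>-a$ occurring in $s(\m^{\#})$ is either inherited from $\m$, in which case the structural fact gives $x\ge b(\Delta_j)$ and so $[x,a]\succeq\Delta_j$ (excluded), or created by the first step of the algorithm; in the latter case, matching ends, $\Lambda$ is $p(\Delta_j)^{-}$ or ${}^{-}p(\Delta_j)^{-}$ (beginning $\ge b(\Delta_j)$, again excluded), or ${}^{-}(\Delta_i^{\vee})$ for an index $i$ with $b(\Delta_i)=-a$, and then, using that $\Delta_i^{\vee}\in\m$ by symmetry, the structural fact shows ${}^{-}(\Delta_i^{\vee})$ either lies strictly above $\Delta_j$ (excluded) or, for $i\in\{j,j+1\}$, is a copy of the centered $[-a,a]$ (or a shorter piece of end $a-1$ that is irrelevant), or is $\preceq[-a,a]^{\le 0}$ (harmless). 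A $\le 0$-labelled $[x,a]$ with $x<-a$ is automatically $\preceq[-a,a]^{\le 0}\preceq\Delta_{j+1}$. It then remains only to bound the centered $[-a,a]$-copies of $s(\m^{\#})$: compute $m_{\m^{\#}}([-a,a])$ and read the labels off via the section $s$. If $b(\Delta_j)>-a$ one is in the case $\clubsuit={\ge 0}$ and every $[-a,a]$-copy of $s(\m^{\#})$ is $\preceq[-a,a]^{\ge 0}=\Delta_{j+1}$; if $b(\Delta_j)=-a$ then $c(\Delta_j)=\tfrac12$, so Lemma \ref{lem:paritycent} gives $m_{\m^{\#}}([-a,a])$ even, whence $s(\m^{\#})$ carries only $[-a,a]^{\le 0}$ (which is $\preceq\Delta_{j+1}$) and $[-a,a]^{\ge 0}$ (which is $\not\prec\Delta_j$, hence excluded by the hypothesis). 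This exhausts the candidates.

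The main obstacle is precisely this last point: the order $\preceq$ on centered segments is governed by the label that $s$ attaches, i.e. by the parity of $m_{\m^{\#}}([-a,a])$, and this parity can flip when one passes from $\m$ to $\m^{\#}$; one must show it moves in the right direction, so that no inherited or created copy of $[-a,a]$ is promoted past $\Delta_{j+1}$. This is exactly the phenomenon isolated in Lemma \ref{lem:paritycent}, used here together with the identity $c(\Delta_j)=\tfrac12\Leftrightarrow b(\Delta_j)=-a$ and with the hypothesis $\Delta'_{j+1}\prec\Delta_j$ to discard the residual $\ge 0$-labelled candidates. Beyond that, the proof is a somewhat lengthy but routine enumeration of created segments together with bookkeeping inside the total order on segments of fixed end $a$.
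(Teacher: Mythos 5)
Your proof is correct and takes essentially the same approach as the paper: the crux is Lemma~\ref{lem:paritycent} to control the label of the centered $[-a,a]_{\rho_u}$ copies when $b(\Delta_j)=-a$, together with the structural/maximality observation to handle $b(\Delta_j)>-a$. Your enumeration is somewhat more exhaustive (you explicitly rule out $c(\Delta'_{j+1})>0$, which the paper leaves implicit), at the cost of a couple of small misdescriptions that do not affect the argument — for instance, ${}^{-}(\Delta_{j+1}^{\vee})$ in the $\Delta_{j+1}=[-a,a]^{\le 0}$ case is $[-a+1,a]_{\rho_u}$, excluded because it lies above $\Delta_j$ rather than because it is centered or of end $a-1$, and the claim $\clubsuit={\ge 0}$ when $b(\Delta_j)>-a$ presupposes $m_{\m}([-a,a]_{\rho_u})\ge 2$ (when $m_{\m}([-a,a]_{\rho_u})=1$ one has $\clubsuit={=0}$ and the centered case is vacuous since $m_{\m^{\#}}([-a,a]_{\rho_u})=0$).
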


\begin{proof}
  If $b_{\rho_u}(\Delta'_{j+1}) < b_{\rho_u}(\Delta_{j+1})$ then $c(\Delta'_{j+1}) < 0$ and $\Delta'_{j+1} \preceq \Delta_{j+1}$. We assume now that $b_{\rho_u}(\Delta'_{j+1}) = b_{\rho_u}(\Delta_{j+1})$. Let $a \in \frac{1}{2}\Z$ such that $p(\Delta_{j+1})=p(\Delta'_{j+1})=[-a,a]_{\rho_u}$. Notice that, since $\Delta_{j+1}' \prec \Delta_j$, $c(\Delta_{j})\ge 0$ and thus $c(\Delta_{j})> 0$. If $b_{\rho_u}(\Delta_j) > -a$, then $\Delta_{i_j}^{\#} \neq [-a,a]_{\rho_u}$ and since $p(\Delta'_{j+1})=[-a,a]_{\rho_u}$ we get that $m_{\m}([-a,a]_{\rho_u})>1$. Thus $\Delta_{j+1}=[-a,a]_{\rho_u}^{\ge 0}$ and $\Delta'_{j+1} \preceq \Delta_{j+1}$.
  
  We can now assume that $b_{\rho_u}(\Delta_j) = -a$, that is $\Delta_j=[-a,a+1]_{\rho_u}$. By Lemma \ref{lem:paritycent}, $m_{\m^{\#}}([-a,a]_{\rho_u})$ is even. Then $\Delta'_{j+1} = [-a,a]_{\rho_u}^{\le 0}$ and $\Delta'_{j+1} \preceq \Delta_{j+1}$.
\end{proof}

\begin{lem}
  \label{lem:precalgocent}
  Let us assume that $e(\Delta_1)=e(\Delta'_1)$. Suppose that there exists $j \ge 1$ such that $j\le l$, $j<l'$ and $c(\Delta_{j})= 0$. We also assume that for all $i \le j$, $\Delta'_i \preceq \Delta_i$. Then $c(\Delta'_{j+1})\neq 0$.
\end{lem}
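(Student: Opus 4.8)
The plan is to argue by contradiction: suppose $c(\Delta'_{j+1}) = 0$, and derive a contradiction with the hypothesis that $c(\Delta_j) = 0$ while the algorithm for $(\m,\varepsilon)$ stopped (or continued) consistently with the running assumption $\Delta'_i \preceq \Delta_i$ for all $i \le j$. The key observation is that $e(\Delta'_{j+1}) = e(\Delta'_j) - 1$ and, since we are assuming inductively $\Delta'_j \preceq \Delta_j$ together with $e(\Delta_1) = e(\Delta'_1)$, a straightforward downward induction on the ends gives $e(\Delta'_i) = e(\Delta'_1) - (i-1) = e(\Delta_1) - (i-1) = e(\Delta_i)$ for all $i \le j$, hence $e(\Delta'_{j+1}) = e(\Delta_j) - 1$. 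Since $c(\Delta_j) = 0$, writing $\Delta_j = [-a,a]_{\rho_u}^{\clubsuit}$ we get $e(\Delta'_{j+1}) = a - 1$, so if also $c(\Delta'_{j+1}) = 0$ then $p(\Delta'_{j+1}) = [-(a-1), a-1]_{\rho_u}$.

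**Main step.** I would then analyze what segments of the form $[-(a-1),a-1]_{\rho_u}$ can possibly occur in $\m^{\#}$, the multisegment on which the primed algorithm is run. These arise in two ways: either the segment $[-(a-1),a-1]_{\rho_u}$ was already present in $\m$ and untouched by the first step, or it was \emph{created} by the first step from a segment $\Delta_i$ (or its dual) with $c(\Delta_i) = 1/2$, i.e.\ $\Delta_i = [-(a-1), a]_{\rho_u}$, via the operation $\Delta \mapsto {}^-\Delta$ or $\Delta \mapsto {}^-\Delta^-$ — but for a centered output we need precisely the pair $(\Lambda_{i_j}, \Lambda_{i_j}^\vee)$ with $c(\Delta_j) = 1/2$, which is excluded since we are in the case $c(\Delta_j) = 0$. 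The point is that, because $c(\Delta_j) = 0$, the hypothesis of Lemma~\ref{lem:paritycent} cannot be invoked to produce a centered segment one step below $\Delta_j$ in the primed run; rather, one must show that any centered segment $[-(a-1),a-1]_{\rho_u}$ available in $\m^{\#}$ is \emph{not} $\preceq \Delta'_j$ in the order $\preceq$, or is not a legal successor because of the sign-alternation condition $\varepsilon^\#(\Delta'_{j+1}) = -\varepsilon^\#(\Delta'_j)$ together with the stopping rules in Definition~\ref{def:epsil}. Concretely: since $\Delta'_j \preceq \Delta_j = [-a,a]_{\rho_u}^{\clubsuit}$ and $e(\Delta'_j) = a$, the only centered segments $\preceq \Delta'_j$ with end $a$ are themselves of the form $[-a,a]_{\rho_u}^{\clubsuit'}$, and I would trace through the order relations $(\Lambda,\le 0) \prec (\Lambda,=0) \prec (\Lambda,\ge 0)$ to pin down $\clubsuit$ and $\clubsuit'$, then check that the only candidate centered successor $[-(a-1),a-1]_{\rho_u}$ would be strictly larger than $\Delta'_j$ unless it carries label $\le 0$, at which point the parity/multiplicity bookkeeping of the first step (as in the proof of Lemma~\ref{lem:paritycent}, but now with $c(\Delta_j) = 0$ so that no extra centered copy is created at level $a-1$) forces it to be absent from $s(\m^\#)$ at label $\le 0$.

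**The main obstacle.** The delicate point will be the interaction between the labels $\clubsuit \in \{\ge 0, =0, \le 0\}$ assigned by the section $s$ and the way the first step of the algorithm modifies multiplicities of centered segments: one must carefully track whether $m_{\m^\#}([-(a-1),a-1]_{\rho_u})$ is even or odd, because this determines which labels are present in $s(\m^\#)$, and hence whether a centered segment is available as $\Delta'_{j+1}$ at all and with which label. I expect the argument to split into subcases according to the value of $c(\Delta_j) = 0$'s label $\clubsuit$ and according to whether $[-(a-1),a-1]_{\rho_u}$ pre-existed in $\m$ or not; in each subcase one shows either that the candidate centered segment is strictly $\succ \Delta'_j$ (violating the requirement $\Delta'_{j+1} \preceq \Delta'_j$) or that it fails the sign-opposition constraint, contradicting the assumption that $c(\Delta'_{j+1}) = 0$. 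The cleanest packaging is probably to prove first, as an auxiliary claim, that $\Delta'_{j+1} \preceq \Delta_{j+1}$ would follow (by the same reasoning as Lemma~\ref{lem:precalgoj}) \emph{if} $\Delta_{j+1}$ existed with $c(\Delta_{j+1}) = 0$; but since $c(\Delta_j) = 0$ the algorithm for $(\m,\varepsilon)$ with a same-type $\rho_u$ either stops at $\Delta_j = [0,0]_{\rho_u}$ or continues only to a strictly negative-center segment, and transporting this constraint across $\Delta'_i \preceq \Delta_i$ rules out $c(\Delta'_{j+1}) = 0$.
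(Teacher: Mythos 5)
The paper's proof hinges on a trick you do not use: it takes $j$ \emph{minimal} among the indices with $c(\Delta_j)=0$ and $c(\Delta'_{j+1})=0$, which gives $j=1$ or $c(\Delta_{j-1})\neq 0$. It then looks at the sign of $\Delta'_j$ (not $\Delta'_{j+1}$), observing that $\Delta'_j$ is itself centered with $p(\Delta'_j)=p(\Delta_j)$, and splits into two cases: either $p(\Delta'_j)\neq\Delta_{i_{j-1}}^{\#}$ (or $j=1$), in which case the sign formula for $\varepsilon^{\#}$ gives $\varepsilon^{\#}(\Delta'_j)=\varepsilon^{\#}(\Delta'_{j+1})$, contradicting alternation; or $p(\Delta'_j)=\Delta_{i_{j-1}}^{\#}$, which forces $c(\Delta_{j-1})=1/2$, so Lemma~\ref{lem:paritycent} applies with indices $(j-1,j)$ and gives that $m_{\m^{\#}}(p(\Delta_j))$ is even, contradicting the label $=0$ that $\Delta'_j$ must carry. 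Your outline never identifies this dichotomy or the minimality device.

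Beyond that, two of your claims are wrong as stated. First, you assert that a centered segment with end one lower than $e(\Delta_j)$ can only be created from a pair $(\Lambda_{i_j},\Lambda_{i_j}^{\vee})$ with $c(\Delta_j)=1/2$ and hence is excluded here; this is false. When $\Delta_j$ is centered with label $\ge 0$ or $=0$, the segment $\Delta_j$ equals its own dual in $\USeg$, so $i_j\in\{i'_1,\dots,i'_l\}$ and $\Lambda_{i_j}^{\#}={}^{-}p(\Delta_j)^{-}$, which is again centered one level down. So centered creation is \emph{not} excluded by $c(\Delta_j)=0$. Second, you say Lemma~\ref{lem:paritycent} ``cannot be invoked'' because $c(\Delta_j)=0$, but in fact the paper invokes it precisely in this situation with shifted indices (using $c(\Delta_{j-1})=1/2$ and $c(\Delta_j)=0$). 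Your closing heuristic that ``the algorithm either stops at $[0,0]$ or continues only to a strictly negative-center segment'' also only applies to $\Delta_j=[0,0]$, not to a general centered $\Delta_j$, and cannot substitute for the sign-and-parity bookkeeping that the paper carries out. As written, the argument does not close.
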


\begin{proof}
  We prove the result by contradiction. Assume that $c(\Delta'_{j+1}) = 0$. We may assume that $j$ is minimal among the indices such that $c(\Delta_{j})= 0$ and $c(\Delta'_{j+1}) = 0$.

  Since $\Delta_{j+1}' \prec \Delta'_{j} \preceq \Delta_j$ we get that $\Delta'_{j}$ is also centered. By minimality of $j$, $j=1$ or $c(\Delta_{j-1}) \neq 0$. Let $a \in \frac{1}{2}\Z$ such that $p(\Delta_j)=[-a-1,a+1]_{\rho_u}$. If $j=1$ or $p(\Delta'_{j}) \neq \Delta_{i_{j-1}}^{\#}$, then $\varepsilon^{\#}([-a-1,a+1]_{\rho_u})=\varepsilon_0 \varepsilon([-a-1,a+1]_{\rho_u})$ and $\varepsilon^{\#}([-a,a]_{\rho_u})=\varepsilon_0 \varepsilon([-a-1,a+1]_{\rho_u})$ contradicting the fact that $\Delta'_{j+1}$ follows $\Delta'_j$ in the algorithm and thus $\varepsilon^{\#}([-a-1,a+1]_{\rho_u})= - \varepsilon^{\#}([-a,a]_{\rho_u})$.

      Thus $j> 1$ and $p(\Delta'_{j}) = \Delta_{i_{j-1}}^{\#}$. Since $c(\Delta_{j-1}) \neq 0$, we get that $\Delta_{j-1} = [-a-1,a+2]_{\rho_u}$. Now $\Delta'_j \prec \Delta_{j-1}$, thus the label of $\Delta'_j$ is either $=0$ or $\le 0$. And $\Delta'_{j+1} \prec \Delta'_j$, so the label of $\Delta'_j$ is $=0$. But by Lemma \ref{lem:paritycent}, $m_{\m^{\#}}([-a-1,a+1]_{\rho_u})$ is even, which contradicts the fact that $\Delta'_j = [-a-1,a+1]_{\rho_u}^{=0}$.
\end{proof}

\begin{lem}
  \label{lem:sizem1}
  Suppose that $e(\Delta_1)=e(\Delta'_1)$. Then
  \begin{enumerate}
      \item If $\varepsilon_0 = 1$ then $l' \le l$;
      \item For all $j \le \min\{l,l'\}$, $\Delta'_j \preceq \Delta_j$.
  \end{enumerate}
\end{lem}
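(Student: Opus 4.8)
The plan is to prove both statements simultaneously by induction on $j$, using the previous three lemmas as the combinatorial engine. The base case $j=1$ is immediate: by hypothesis $e(\Delta_1)=e(\Delta'_1)=e_{\max}$ (note that $e_{\max}$ for $\m^{\#}$ equals $e_{\max}$ for $\m$ unless the top segment was shortened on the right, but the hypothesis $e(\Delta_1)=e(\Delta'_1)$ is exactly what we assume), and $\Delta_1$ (resp. $\Delta'_1$) is the $\preceq$-largest segment of $y$ (resp. $y^{\#}$) with that end; since passing from $y$ to $y^{\#}$ only shortens segments and never lengthens them, the $\preceq$-largest segment with end $e_{\max}$ in $y^{\#}$ is $\preceq$ the one in $y$, giving $\Delta'_1 \preceq \Delta_1$. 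For the inductive step, assume $\Delta'_i \preceq \Delta_i$ for all $i \le j$ with $j < \min\{l,l'\}$; I want $\Delta'_{j+1}\preceq \Delta_{j+1}$, and in the $\varepsilon_0 = 1$ case I also want to rule out the situation $l < l'$ that would occur if the algorithm for $\m$ stopped before that for $\m^{\#}$.

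The core of the inductive step is a case analysis on the sign of $c(\Delta_j)$. When $c(\Delta_j)\neq 0$: the candidate $\Delta'_{j+1}$ must satisfy $e(\Delta'_{j+1}) = e(\Delta'_j) - 1 \le e(\Delta_j) - 1 = e(\Delta_{j+1})$ (using that $\Delta'_j\preceq\Delta_j$ forces $e(\Delta'_j)\le e(\Delta_j)$), and one compares beginnings; if $b(\Delta'_{j+1}) < b(\Delta_{j+1})$ we are done, and if $b(\Delta'_{j+1}) = b(\Delta_{j+1})$ we either conclude directly (when $c(\Delta_{j+1})\neq 0$, matching both centers and ends) or invoke Lemma \ref{lem:precalgoj} precisely in the delicate subcase $c(\Delta_{j+1}) = 0$. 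When $c(\Delta_j) = 0$: Lemma \ref{lem:precalgocent} tells us $c(\Delta'_{j+1})\neq 0$, which pins down its label to be forced by its center; combined with $e(\Delta'_{j+1}) \le e(\Delta_{j+1})$ and the definition of $\preceq$ on non-centered versus centered segments (recall $(\Delta,\le 0)\prec(\Delta',=0)$ and $(\Delta,=0)\prec(\Delta',\ge 0)$), one checks $\Delta'_{j+1}\preceq\Delta_{j+1}$ by comparing the $\clubsuit$-labels through the transitive closure. The sign-alternation conditions in the definition of the initial sequence need to be tracked here: since by induction the signs along $\Delta_1,\dots,\Delta_j$ and $\Delta'_1,\dots,\Delta'_j$ are correlated through $\varepsilon^{\#} = \varepsilon_0 * \varepsilon$ on the relevant centered segments, the alternation constraint for $\Delta'_{j+1}$ is compatible with that for $\Delta_{j+1}$.

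For part (1), suppose $\varepsilon_0 = 1$; I must show the algorithm for $\m^{\#}$ does not run longer than that for $\m$, i.e. $l'\le l$. Since $\varepsilon_0 = 1$, the stopping of the $\m$-algorithm at step $l$ is \emph{not} caused by reaching a segment of the form $[0,0]_{\rho_u}^{\ge 0/=0}$ (same type) or $[1/2,1/2]_{\rho_u}$ or $[-1/2,1/2]_{\rho_u}^{\ge 0/=0}$ with sign $-1$ (opposite type); rather, it stops because no segment $\Delta_{l+1}$ with $e(\Delta_{l+1}) = e(\Delta_l) - 1$ and the required label/sign compatibility exists in $y$. By part (2) applied at $j = \min\{l,l'\}$, if $l' > l$ then $\Delta'_l \preceq \Delta_l$, and I would argue that a valid continuation $\Delta'_{l+1}$ in $y^{\#}$ at end $e(\Delta'_l) - 1$ would, after "un-shortening", produce a valid continuation in $y$ at end $e(\Delta_l)-1$ — contradicting maximality of $l$. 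Making this last pull-back rigorous is the main obstacle: one has to carefully match each $\Lambda_i^{\#}$ back to the $\Lambda_i$ it came from (using the indices $i_j, i'_j$ from Section \ref{sec:algogood}) and check that shortening on the right/left does not create a spurious candidate in $y^{\#}$ that has no counterpart in $y$, particularly around centered segments where labels and the parity conditions of Lemma \ref{lem:paritycent} come into play. I expect this bookkeeping — rather than any single conceptual difficulty — to be where the real work lies, and it is why Lemmas \ref{lem:paritycent}–\ref{lem:precalgocent} were isolated first.
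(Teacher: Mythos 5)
Your base case contains a genuine and load-bearing error, and it propagates through the inductive step. The order $\le$ on segments from Section~\ref{sub:MW} is $[x_1,y_1]<[x_2,y_2]$ iff $x_1<x_2$, or $x_1=x_2$ and $y_1>y_2$. Under this order, shortening a segment — on \emph{either} end — makes it \emph{larger}: ${}^{-}\Delta$ raises the beginning, and $\Delta^{-}$ keeps the beginning while lowering the end, so both move strictly up. Your claim that ``passing from $y$ to $y^{\#}$ only shortens segments and never lengthens them, [so] the $\preceq$-largest segment with end $e_{\max}$ in $y^{\#}$ is $\preceq$ the one in $y$'' is therefore backwards. Concretely, if some $\Lambda_{i'_j}$ ends at $e_{\max}$, then ${}^{-}\Lambda_{i'_j}$ still ends at $e_{\max}$ but is now $\preceq$-\emph{larger}, so the $\preceq$-max of the $e_{\max}$-pool in $y^{\#}$ could in principle exceed $\Delta_1$. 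The paper's proof of the base case is precisely devoted to ruling this out: it identifies the only dangerous configuration as $p(\Delta'_1)={}^{-}p(\Lambda_i)$ with $p(\Lambda_i)=p(\Lambda_{i_1})$, and kills it using the minimality in the definition of $i'_j$ (which would force $i'_j=i_1$, contradicting $i\neq i_1$). This is not bookkeeping you can defer; it is the content of the lemma.

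The same inversion undermines the inductive step. You write that $\Delta'_j\preceq\Delta_j$ ``forces $e(\Delta'_j)\le e(\Delta_j)$'' (the order does not compare ends first; here the equality $e(\Delta'_j)=e(\Delta_j)=e_{\max}-j+1$ happens to hold for a different reason, namely both sequences decrease ends by exactly one per step), and you then split only into $b(\Delta'_{j+1})<b(\Delta_{j+1})$ and $b(\Delta'_{j+1})=b(\Delta_{j+1})$, never addressing $b(\Delta'_{j+1})>b(\Delta_{j+1})$, which is exactly the case one must rule out. The paper's inductive step is a four-way split on whether $p(\Delta'_{j+1})$ equals $p(\Lambda_i)$, ${}^{-}p(\Lambda_i)$, $p(\Lambda_i)^{-}$, or ${}^{-}p(\Lambda_i)^{-}$, each branch appealing to one of Lemmas~\ref{lem:paritycent}, \ref{lem:precalgoj}, \ref{lem:precalgocent} to exclude the bad ordering; your proposal collapses this to a comparison of coordinates that does not actually close the argument. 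Finally, the paper packages part~(1) more cleanly than your pull-back plan: the induction statement is phrased as ``if $\Delta_k$ avoids the stopping segments for all $k\le\min\{j,l\}$, then $l\ge j$ and $\Delta'_j\preceq\Delta_j$'', so that when $\varepsilon_0=1$ the hypothesis is vacuously satisfied up to $j=l'$ and $l\ge l'$ drops out — this avoids the ``un-shortening'' step you correctly flag as the hard part but do not carry out.
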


\begin{proof}
We prove by induction on $j \le l'$ the following result: if, for all $k \le \min\{j,l\}$, $\Delta_k$ is not equal to any of  $[0,0]_{\rho_u}^{=0}$, $[0,0]_{\rho_u}^{\ge 0}$, $[1/2,1/2]_{\rho_u}$, $[-1/2,1/2]_{\rho_u}^{\ge 0}$ with $\varepsilon([-1/2,1/2]_{\rho_u})=-1$ or $[-1/2,1/2]_{\rho_u}^{0}$ with $\varepsilon([-1/2,1/2]_{\rho_u})=-1$, then necessarily $l \ge j$ and $\Delta'_j \preceq \Delta_j$.

  For $j=1$, we have $l\ge 1$. Moreover, $e(\Delta'_1)=e_{\max}$, and the segments of $\m^{\#}$ ending in $e_{\max}$ are the $\Lambda_i^{\#}$ with $i \neq i_1$ and $e(\Lambda_i)=e_{\max}$. Thus, there exists $i \neq i_1$ such that $p(\Delta'_1)=\Lambda^{\#}_{i}$ (that is $p(\Lambda_{i})$ or ${}^{-}p(\Lambda_{i})$). Moreover, by definition of $i_1$, we have $i > i_1$ and thus $\Lambda_{i} \prec \Lambda_{i_1}$. If both $\Delta_1$ and $\Delta'_1$ are centered, then necessarily, $m_{\m}(p(\Delta_1))>1$ and therefore the label of $\Delta_1$ is $\ge 0$, which implies that $\Delta'_1 \preceq \Delta_1$. We may assume that one of $\Delta_1$ and $\Delta'_1$ is not centered. In this case, as $e(\Delta'_1)=e(\Delta_1)$, we have that $\Delta'_1 \preceq \Delta_1$ if and only if $b_{\rho_u}(\Delta'_1) \le b_{\rho_u}(\Delta_1)$. Since $\Lambda_{i} \prec \Lambda_{i_1}$, it follows  that $b_{\rho_u}(\Lambda_{i_1}) \le b_{\rho_u}(\Delta_1)$. And $p(\Delta'_1)=p(\Lambda_{i})$ or ${}^{-}p(\Lambda_{i})$ thus $b_{\rho_u}(\Delta'_1) = b_{\rho_u}(\Lambda_{i})$ or $b_{\rho_u}(\Lambda_{i}) + 1$. Hence, the only possible case where $\Delta'_1 \preceq \Delta_1$ might not hold would be when  $b_{\rho_u}(\Delta'_1) = b_{\rho_u}(\Lambda_{i}) + 1$ (that is $p(\Delta'_1)={}^{-}p(\Lambda_{i})$) and $p(\Lambda_{i})=p(\Lambda_{i_1})$. This implies that $i=i'_j$ for some $j$. But from the definition of $i'_j$ we should have $i'_j=i_1$ which contradicts $i\neq i_1$. 

  Now, let us assume the result for $j$ and prove it for $j+1$. We assume that $\Delta_j$ is not equal to any of $[0,0]_{\rho_u}^{=0}$, $[0,0]_{\rho_u}^{\ge 0}$, $[1/2,1/2]_{\rho_u}$, $[-1/2,1/2]_{\rho_u}^{\ge 0}$ with $\varepsilon([-1/2,1/2]_{\rho_u})=-1$ or $[-1/2,1/2]_{\rho_u}^{=0}$ with $\varepsilon([-1/2,1/2]_{\rho_u})=-1$. First, let us show that $l \ge j+1$. We have that $\Delta'_{j+1} \prec \Delta'_{j} \preceq \Delta_j$ (the last inequality follows from the induction hypothesis). There exists an index $i$ such that $p(\Delta_{j+1}') = \Lambda_i^{\#}$, and $p(\Delta_{j+1}')$ is one of the following segments: $p(\Lambda_i)$, $p(\Lambda_i)^{-}$, ${}^{-}p(\Lambda_i)$ or ${}^{-}p(\Lambda_i)^{-}$.
  \begin{itemize}
      \item If $p(\Delta_{j+1}') = p(\Lambda_i)$. Then $\Lambda_i$ is a segment ending in $e_{\max}-j$. First, suppose that $\Delta_{j+1}'$ is not a centered segment. Then $p(\Delta_{j+1}') = p(\Lambda_i)$ implies that $\Delta_{j+1}' = \Lambda_i$. Thus $\Lambda_{i} \prec \Lambda_j$ and $\Lambda_i$ satisfies the condition to be in the initial sequence (but may not be maximal) giving us $l \ge j+1$. Moreover, by maximality of $\Delta_{j+1}$ we get that $\Delta'_{j+1}=\Lambda_i \prec \Delta_{j+1}$. Now suppose that $\Delta_{j+1}'$ is centered. By Lemma \ref{lem:precalgocent} $\Delta_{j}$ is not centered. Since $\Delta'_{j+1} \preceq \Delta_j$ we get that $c(\Delta_j) > 0$. If the label of $\Lambda_i$ is $\le 0$ or $=0$ then $\Lambda_i \prec \Delta_j$. And if $\Lambda_i =[-a,a]_{\rho_u}^{\ge 0}$, then $\Lambda'_i:=[-a,a]_{\rho_u}^{\le 0} \in y$ and $\Lambda'_i \prec \Delta_j$. In both cases, $\Lambda_i$ or $\Lambda'_i$ satisfies the condition to be in the initial sequence and $l \ge j+1$. Let $\Lambda$ be either $\Lambda_i$ or $\Lambda'_i$ such that $\Lambda \prec \Delta_j$. By maximality $\Lambda \prec \Delta_{j+1}$. The segment $\Lambda$ is centered, so $c(\Delta_{j+1}) \ge 0$. If $c(\Delta_{j+1}) > 0$ then $\Delta'_{j+1} \preceq \Delta_{j+1}$. And if $c(\Delta_{j+1}) = 0$, Lemma \ref{lem:precalgoj} gives us that $\Delta'_{j+1} \preceq \Delta_{j+1}$.

      \item If $p(\Delta_{j+1}') = {}^{-}p(\Lambda_i)$. Then $\Lambda_i$ is a segment ending in $e_{\max}-j$. Moreover, as $p(\Delta_{j+1}') = {}^{-}p(\Lambda_i)$, we get that $\Lambda_{i} \prec \Delta_{j+1}'$. Thus $\Lambda_{i} \prec \Delta_{j}$. Here, it is impossible to have both $\Lambda_{i}$ and $\Delta_j$ centered. Indeed, if $p(\Delta_j)=[-a-1,a+1]_{\rho_u}$ and $p(\Lambda_i) = [-a,a]_{\rho_u}$, then $\Delta_{j+1}'=[-a + 1,a]_{\rho_u}$ and  in that case, $\Delta_{j+1}' \prec \Delta_j$ does not hold. Hence, $l \ge j+1$ and $\Lambda_i \preceq \Delta_{j+1}$. Since $\Lambda_i^{\#} = {}^{-}p(\Lambda_i)$, $\Lambda_i \neq \Lambda_{i_{j+1}}$. Thus $b_{\rho_u}(\Lambda_i) < b_{\rho_u}(\Delta_{j+1})$ and $b_{\rho_u}(\Delta_{j+1}') \le b_{\rho_u}(\Delta_{j+1})$. If $b_{\rho_u}(\Delta_{j+1}') < b_{\rho_u}(\Delta_{j+1})$ or $\Delta_{j+1}$ is not centered, then $\Delta_{j+1}' \preceq \Delta_{j+1}$. We assume now that $b_{\rho_u}(\Delta_{j+1}') = b_{\rho_u}(\Delta_{j+1})$ and $\Delta_{j+1}$ is centered. Let $a \in \frac{1}{2}\Z$ such that $p(\Delta_{j+1})=[-a,a]_{\rho_u}$. Hence, $\Lambda_i=[-a-1,a]_{\rho_u}$ and $\Delta_{j}=\Lambda^{\vee}_i=[-a,a+1]_{\rho_u}$. Since, $\Delta_{j+1} \preceq \Delta_j$, we get that $\Delta_{j+1}=[-a,a]^{=0}_{\rho_u}$ or $\Delta_{j+1}=[-a,a]^{\le 0}_{\rho_u}$; and similarly for $\Delta'_{j+1}$. But if  $\Delta_{j+1}=[-a,a]^{\le 0}_{\rho_u}$ then $m_{\m^{\#}}([-a,a]_{\rho_u})$ is even and $\Delta_{j+1}'=[-a,a]^{\le 0}_{\rho_u}$. In all cases, $\Delta_{j+1}' \preceq \Delta_{j+1}$.

      \item If $p(\Delta_{j+1}') = p(\Lambda_i)^{-}$. Hence, $i = i_j$ and $\Lambda_i = \Delta_j$. Moreover, as $\Lambda_i^{\#}=p(\Lambda_i)^{-}$ we cannot have that $\Lambda_i$ is a centered segment with label $\ge 0$ or $=0$.
      
      We first show that $\Delta_{j+1}'$ is a centered segment. From $p(\Delta_{j+1}') = p(\Delta_j)^{-}$ we get that $c(\Delta_{j+1}')=c(\Delta_j)+1$. Hence, if $c(\Delta_{j+1}')>0$, $\Delta_{j+1}'=\Delta_j^{-}$ and $\Delta_j \prec \Delta_{j+1}'$. Similarly, if $c(\Delta_{j}) < 0$ we get a contradiction. If $c(\Delta_{j})=0$, then $c(\Delta_{j+1}')=1/2$ and $\Delta_{j} \prec \Delta_{j+1}'$. Thus $c(\Delta_{j+1}')=0$. Let $a\in (1/2)\Z$ such that $p(\Delta_{j+1}')=[-a,a]_{\rho_u}$ and $\Lambda_i = \Delta_j = [-a,a+1]$. Since, $\Delta_j = [-a,a+1]_{\rho_u}$, we get that $l \ge j+1$ as $[-a-1,a]_{\rho_u} \prec [-a,a+1]_{\rho_u}$. But $\Delta_{i_j}^{\#}=p(\Delta_j)^{-}$ so $\Delta_{j+1} \neq [-a-1,a]_{\rho_u}$. Hence, $p(\Delta_{j+1})=[-a,a]_{\rho_u}$. As  $\Delta_{j+1}' \prec \Delta_j = [-a,a+1]_{\rho_u}$, we get that $\Delta_{j+1}' = [-a,a]^{=0}_{\rho_u}$ or $[-a,a]^{\le 0}_{\rho_u}$. But, $m_{\m^{\#}}([-a,a]_{\rho_u})$ is even by Lemma \ref{lem:paritycent}, so $\Delta_{j+1}' = [-a,a]_{\rho_u}^{\le 0}$ and we get that $\Delta_{j+1}' \prec \Delta_{j+1}$.

      \item If $p(\Delta_{j+1}') = {}^{-}p(\Lambda_i)^{-}$. Again, $i = i_j$ and $\Lambda_i = \Delta_j$, and if $\Delta \in \USeg$ is a segment such that $c(\Delta) \neq 0$, then $\Delta \prec {}^{-}\Delta^{-}$. Thus $c(\Delta_{j+1}')=c(\Delta_j)=0$. This contradicts Lemma \ref{lem:precalgocent}.
  \end{itemize}
\end{proof}

\begin{lem}
  \label{lem:starconditionAD}
  Suppose that $e(\Delta_1)=e(\Delta'_1)$. If $\varepsilon'_0 = -1$ then $\varepsilon_0 = -1$.
\end{lem}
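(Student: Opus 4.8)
The plan is to prove the statement by contradiction: assume $\varepsilon'_0 = -1$ but $\varepsilon_0 = 1$, and derive $\varepsilon_0 = -1$. First I would invoke Lemma~\ref{lem:sizem1}: since $e(\Delta_1) = e(\Delta'_1)$ and $\varepsilon_0 = 1$, we get $l' \le l$ and $\Delta'_j \preceq \Delta_j$ for every $j \le l'$; as the end decreases by one at each step of either initial sequence, this gives $e(\Delta_{l'}) = e(\Delta'_{l'})$. Meanwhile $\varepsilon'_0 = -1$ means, by Definition~\ref{def:epsil} applied to $(\m^{\#},\varepsilon^{\#})$, that $\Delta'_{l'}$ is one of the distinguished final segments: $[0,0]_{\rho_u}^{\ge 0}$ or $[0,0]_{\rho_u}^{=0}$ if $\rho_u$ has the same type as $G$, and $[1/2,1/2]_{\rho_u}$, $[-1/2,1/2]_{\rho_u}^{\ge 0}$ or $[-1/2,1/2]_{\rho_u}^{=0}$ (the latter two with $\varepsilon^{\#}([-1/2,1/2]_{\rho_u}) = -1$) otherwise. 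In particular $e(\Delta_{l'}) = e(\Delta'_{l'}) \in \{0,1/2\}$. The heart of the matter is to show that $\Delta_{l'}$ is \emph{itself} a final segment: once this is known, the inductive construction of the unprimed initial sequence must stop at step $l'$, forcing $l = l'$ and hence $\varepsilon_0 = -1$ by Definition~\ref{def:epsil}, the desired contradiction.

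To pin down $\Delta_{l'}$ I would combine $\Delta'_{l'} \preceq \Delta_{l'}$, $e(\Delta_{l'}) = e(\Delta'_{l'})$ and the explicit description of $\preceq$ on $\USeg$ (the $\le 0$-labelled segments form the bottom block, then the $=0$-labelled, then the $\ge 0$-labelled; within a block one uses the M{\oe}glin--Waldspurger order, resp.~the order by end for the $=0$-block). In the same-type case this forces $\Delta_{l'} \in \{[0,0]_{\rho_u}^{=0},[0,0]_{\rho_u}^{\ge 0}\}$, both final. When $\rho_u$ is not of the same type and $\Delta'_{l'} = [1/2,1/2]_{\rho_u}$, it forces $\Delta_{l'} = [1/2,1/2]_{\rho_u}$, again final. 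And when $\rho_u$ is not of the same type and $\Delta'_{l'}$ is $[-1/2,1/2]_{\rho_u}^{\ge 0}$ or $[-1/2,1/2]_{\rho_u}^{=0}$, it forces $\Delta_{l'}\in\{[1/2,1/2]_{\rho_u}\}\cup\{[-1/2,1/2]_{\rho_u}^{\ge 0},[-1/2,1/2]_{\rho_u}^{=0}\}$: in the first subcase $\Delta_{l'}$ is final, and in the remaining ones $\Delta_{l'}$ is the centered segment $[-1/2,1/2]_{\rho_u}$, which is a final segment exactly when $\varepsilon([-1/2,1/2]_{\rho_u}) = -1$. So everything reduces to establishing this sign equality.

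The hard part, and the main obstacle, is precisely this last point. Here the plan is again to argue by contradiction: assume $\varepsilon([-1/2,1/2]_{\rho_u}) = +1$ and track the centered segment $[-1/2,1/2]_{\rho_u}$ through the passage $\m \rightsquigarrow \m^{\#}$. The copy of $[-1/2,1/2]_{\rho_u}$ used at step $l'$ is $\iota$-self-dual as a labelled segment (by cases (3),(4) of the definition of $(\cdot)^\vee$ on $\USeg$ together with self-duality of $\rho_u$), so $i_{l'} = i'_{l'}$ and that copy is turned into the empty segment in $\m^{\#}$. Every other copy of $[-1/2,1/2]_{\rho_u}$ in $\m$ survives unchanged into $\m^{\#}$ with sign $\varepsilon_0\cdot\varepsilon([-1/2,1/2]_{\rho_u}) = +1$, because a copy of $[-1/2,1/2]_{\rho_u}$ can enter the unprimed initial sequence (as some $\Delta_j$ or $\Delta_j^\vee$) only at the step with end $1/2$, i.e.~at $j = l'$. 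Finally, new copies of $[-1/2,1/2]_{\rho_u}$ in $\m^{\#}$ can only be produced from $\Delta_{l'-1}$; and Lemma~\ref{lem:precalgocent} (with $j = l'-1$, using $c(\Delta'_{l'}) = 0$ and $\Delta'_i \preceq \Delta_i$ for $i \le l'-1$) forbids $\Delta_{l'-1}$ from being centered, so $\Delta_{l'-1}$ is one of $[1/2,3/2]_{\rho_u}$, $[3/2,3/2]_{\rho_u}$ — which create no such copy — or, only if $\Delta_{l'} = [-1/2,1/2]_{\rho_u}^{=0}$, also $[-1/2,3/2]_{\rho_u}$ — for which $c(\Delta_{l'-1}) = 1/2$, $c(\Delta_{l'}) = 0$, so Lemma~\ref{lem:paritycent} makes $m_{\m^{\#}}([-1/2,1/2]_{\rho_u})$ even. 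A short case check then shows that in each alternative the multiplicity and the (necessarily $+1$) signs of the copies of $[-1/2,1/2]_{\rho_u}$ surviving in $\m^{\#}$ are incompatible with $\Delta'_{l'}$ being the $\ge 0$- or $=0$-labelled copy carrying sign $-1$: either $s(\m^{\#})$ contains no such labelled copy, or it carries sign $+1$. This contradiction yields $\varepsilon([-1/2,1/2]_{\rho_u}) = -1$.

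Putting the pieces together: in all cases $\Delta_{l'}$ is a final segment, so the unprimed initial sequence stops at step $l'$, giving $l = l'$ and thus $\varepsilon_0 = -1$ by Definition~\ref{def:epsil}, contradicting $\varepsilon_0 = 1$. Hence $\varepsilon_0 = -1$, as claimed. I expect the delicate step to be the third paragraph — controlling the behaviour of the sign of the centered segment $[-1/2,1/2]_{\rho_u}$ across $\m \rightsquigarrow \m^{\#}$ — which is exactly where Lemmas~\ref{lem:paritycent} and~\ref{lem:precalgocent} are used in an essential way.
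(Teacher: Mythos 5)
Your overall strategy mirrors the paper's: assume $\varepsilon_0 = 1$, invoke Lemma~\ref{lem:sizem1} to get $l' \le l$ and $\Delta'_j \preceq \Delta_j$, do a case analysis on what $\Delta'_{l'}$ is, and for the troublesome case $\Delta_{l'} = [-1/2,1/2]^{\ge 0}$ or $[-1/2,1/2]^{=0}$ use Lemmas~\ref{lem:precalgocent} and~\ref{lem:paritycent} to constrain $\Delta_{l'-1}$. Paragraphs 1 and 2 of your argument are sound.

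The gap is in the final step of paragraph~3, in the subcase $\Delta_{l'-1} = [-1/2,3/2]_{\rho_u}$. You claim that the copies of $[-1/2,1/2]_{\rho_u}$ surviving in $\m^{\#}$ ``necessarily'' carry sign $+1$, so that $\m^{\#}$ is incompatible with $\Delta'_{l'}$ being a $\ge 0$- or $=0$-labelled copy with sign $-1$. But this is precisely the case where the sign \emph{flips}. Since $\Delta_{l'-1} = [-1/2,3/2]_{\rho_u}$ has $c(\Delta_{l'-1}) = 1/2$ and $\Lambda^{\#}_{i_{l'-1}} = [-1/2,1/2]_{\rho_u}$ which (under the hypothesis $\Delta_{l'}$ has label $=0$) already lies in $\m$, the relevant rule from the construction of $\varepsilon^{\#}$ is the third bullet, $\varepsilon^{\#}(\Lambda^{\#}_{i_{l'-1}}) = \varepsilon_0 \cdot (-1) \cdot \varepsilon(\Lambda^{\#}_{i_{l'-1}}) = 1 \cdot (-1) \cdot 1 = -1$, not $\varepsilon_0 \cdot \varepsilon([-1/2,1/2]_{\rho_u}) = +1$. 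So the sign condition $\varepsilon^{\#}([-1/2,1/2]_{\rho_u}) = -1$ is in fact \emph{satisfied}, and since $m_{\m^{\#}}([-1/2,1/2]_{\rho_u})$ is even (your correct application of Lemma~\ref{lem:paritycent}), $s(\m^{\#})$ does contain a $\ge 0$-labelled copy with sign $-1$. Neither branch of your concluding dichotomy (``no such labelled copy'' or ``carries sign $+1$'') holds, and no sign-based contradiction appears.

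The actual contradiction in this subcase, as in the paper's proof, is combinatorial rather than sign-theoretic: evenness of $m_{\m^{\#}}([-1/2,1/2]_{\rho_u})$ forces $\Delta'_{l'} = [-1/2,1/2]_{\rho_u}^{\ge 0}$, and then one must look at $\Delta'_{l'-1}$. It must satisfy $\Delta'_{l'} \prec \Delta'_{l'-1} \preceq \Delta_{l'-1} = [-1/2,3/2]_{\rho_u}$; but $[-1/2,3/2]_{\rho_u} \prec [-1/2,1/2]_{\rho_u}^{\ge 0} = \Delta'_{l'}$ under the order (both have label $\ge 0$ and the same beginning, with the first having larger end), so no such $\Delta'_{l'-1}$ exists. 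You would need to replace your sign argument in this last subcase by this ordering argument to close the proof.
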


\begin{proof}
  Suppose that $\varepsilon'_0 = -1$. Then one of the following conditions is satisfied.
\begin{itemize}
  \item If $\rho$ is of the same type as $G$ and $\Delta'_{l'}=[0,0]_{\rho_u}^{\ge 0}$ or $\Delta'_{l'}=[0,0]_{\rho_u}^{=0}$. Then, by Lemma \ref{lem:sizem1}, $l' \le l$ and $\Delta'_{l'} \preceq \Delta_{l'}$. Since $e(\Delta_{l'})=0$, we get that $\Delta_{l'}=[0,0]_{\rho_u}^{\ge 0}$ or $\Delta_{l'}=[0,0]_{\rho_u}^{=0}$. Hence $l=l'$ and $\varepsilon_0 = -1$.
  \item If $\rho$ is not of the same type as $G$ and $\Delta'_{l'}=[1/2,1/2]_{\rho_u}$. By Lemma \ref{lem:sizem1}, we get that $\Delta'_l \preceq \Delta_l$ and as $e(\Delta_l)=1/2$, we get that $\Delta_l=[1/2,1/2]_{\rho_u}$. Hence $\varepsilon_0 = -1$.
  \item If $\rho$ is not of the same type as $G$ and $\Delta'_{l'}=[-1/2,1/2]_{\rho_u}^{\ge 0}$ or $\Delta'_{l'}=[-1/2,1/2]_{\rho_u}^{=0}$ and $\varepsilon^{\#}([-1/2,1/2]_{\rho_u})=-1$. By Lemma \ref{lem:sizem1}, $\Delta'_{l'} \preceq \Delta_{l'}$, so $\Delta_{l'}=[1/2,1/2]_{\rho_u}$ or $\Delta_{l'}=[-1/2,1/2]_{\rho_u}^{\ge 0}$ or $\Delta_{l'}=[-1/2,1/2]_{\rho_u}^{=0}$. If $\Delta_{l'}=[1/2,1/2]_{\rho_u}$ then $\varepsilon_0 = -1$. We now show the other possibilities lead to contradictions. Suppose that $\Delta_{l'}=[-1/2,1/2]_{\rho_u}^{\ge 0}$ or $\Delta_{l'}=[-1/2,1/2]_{\rho_u}^{=0}$. If $l'=1$, then $-1=\varepsilon^{\#}([-1/2,1/2]_{\rho_u})=\varepsilon_0\varepsilon([-1/2,1/2]_{\rho_u})$ which is impossible. We assume now that $l' > 1$. By Lemma \ref{lem:precalgocent}, $c(\Delta_{l'-1}) \neq 0$. By the formula for $\varepsilon^{\#}$, if $c(\Delta_{l'-1}) \neq 1$, then $-1=\varepsilon^{\#}([-1/2,1/2]_{\rho_u})=\varepsilon_0\varepsilon([-1/2,1/2]_{\rho_u})$ which is impossible. Thus $c(\Delta_{l'-1}) = 1/2$ and $\Delta_{l'-1}=[-1/2,3/2]_{\rho_u}$. By Lemma \ref{lem:paritycent}, $m_{\m^{\#}}([-1/2,1/2]_{\rho_u})$ is even, and $\Delta'_{l'}=[-1/2,1/2]_{\rho_u}^{\ge 0}$. From Lemma \ref{lem:sizem1}, we get that $\Delta'_{l'-1}$ satisfies $\Delta'_{l'}=[-1/2,1/2]_{\rho_u}^{\ge 0} \prec \Delta'_{l'-1} \prec  \Delta_{l'-1}=[-1/2,3/2]_{\rho_u}$ which is impossible.
\end{itemize}
\end{proof}

The following proposition generalizes \cite[II.2.2.]{MW2} to our setting.
\begin{prop}
  \label{prop:lengthmax}
  Let $\eta_1 \in \m_1$ be the segment ending in $e_{\max}$. Then $\eta_1$ is the longest among the segments $\Delta$ of $\AD_{\rho}(\m,\varepsilon)$ such that $e(\Delta)=e_{\max}$.
\end{prop}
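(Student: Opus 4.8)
The plan is to follow the structure of the original Mœglin--Waldspurger argument \cite[II.2.2]{MW2}, adapting it to the symmetrized setting with labels and signs. The statement asserts that among all segments $\Delta$ of $\AD_\rho(\m,\varepsilon)$ ending in $e_{\max}$, the one contributed by $\m_1$ (namely $\eta_1 = [e(\Delta_l), e(\Delta_1)]_{\rho_u}$ if $\varepsilon_0 = 1$, or $\eta_1 = [-e(\Delta_1), e(\Delta_1)]_{\rho_u}$ if $\varepsilon_0 = -1$, in both cases with $e(\eta_1) = e(\Delta_1) = e_{\max}$) is the longest. Since $\AD_\rho$ is defined recursively by $\AD_\rho(\m,\varepsilon) = (\m_1,\varepsilon_1) + \AD_\rho(\m^\#,\varepsilon^\#)$, the segments of $\AD_\rho(\m,\varepsilon)$ ending in $e_{\max}$ are $\eta_1$ together with the segments of $\AD_\rho(\m^\#,\varepsilon^\#)$ ending in $e_{\max}$. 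So the claim reduces to showing that every segment of $\AD_\rho(\m^\#,\varepsilon^\#)$ ending in $e_{\max}$ is no longer than $\eta_1$.

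First I would unwind the recursion on $\m^\#$: the segments of $\AD_\rho(\m^\#,\varepsilon^\#)$ ending in $e_{\max}$ are produced, step by step, from initial sequences $\Delta_1', \dots, \Delta_{l'}'$ of the successive multisegments appearing in the recursion, each having its first term ending in its own maximal end. A segment of $\AD_\rho(\m^\#,\varepsilon^\#)$ ending in $e_{\max}$ can only arise from a step whose maximal end is $e_{\max} = e(\Delta_1)$; call the initial sequence at that step $\Delta_1', \dots, \Delta_{l'}'$, so $e(\Delta_1') = e(\Delta_1)$. The contributed segment has end $e_{\max}$ and length controlled by $l'$ (its beginning is $e(\Delta_{l'}')$ in the $\varepsilon_0' = 1$ case, or $-e(\Delta_1')$ in the $\varepsilon_0' = -1$ case). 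The key point is then Lemma \ref{lem:sizem1}: under the hypothesis $e(\Delta_1) = e(\Delta_1')$, part (2) gives $\Delta_j' \preceq \Delta_j$ for all $j \le \min\{l,l'\}$, and part (1) gives $l' \le l$ whenever $\varepsilon_0 = 1$. I would also invoke Lemma \ref{lem:starconditionAD}: if $\varepsilon_0' = -1$ then $\varepsilon_0 = -1$, equivalently $\varepsilon_0 = 1$ forces $\varepsilon_0' = 1$. Combining these, one treats the cases: (a) $\varepsilon_0 = 1$: then $\varepsilon_0' = 1$ as well, $l' \le l$, and $b(\eta_1) = e(\Delta_l) \le e(\Delta_{l'}')$ is precisely what Lemma \ref{lem:sizem1}(2) delivers at index $l'$ (together with the end-decrease property $e(\Delta_j) = e_{\max} - j + 1$), so $\eta_1$ is at least as long as the segment from that step. (b) $\varepsilon_0 = -1$: then $\eta_1 = [-e_{\max}, e_{\max}]_{\rho_u}$ is the longest possible segment with end $e_{\max}$ supported in $\Z_\rho$ that can ever be produced, since any produced segment ending in $e_{\max}$ has beginning $\ge -e_{\max}$; this is immediate.

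Before applying those lemmas I would need to verify their hypotheses apply when the later step's maximal end equals $e_{\max}$. The subtle point is that the $\m^\#$ at intermediate stages of the recursion may have maximal end strictly less than $e_{\max}$ for several steps before returning to $e_{\max}$, or may simply never return to it; in all such cases no segment ending in $e_{\max}$ is produced and there is nothing to check. When the maximal end does equal $e_{\max}$ at some later stage, one applies the lemmas to that stage versus the original $(\m,\varepsilon)$ — but the lemmas as stated compare $(\m,\varepsilon)$ with $(\m^\#,\varepsilon^\#)$, a single step. The honest way to handle this is to observe that passing from $\m$ to $\m^\#$ does not increase the maximal end and, more importantly, does not increase the length of any segment ending in the (common) maximal end when that end is preserved: this is exactly the content obtained by iterating Lemma \ref{lem:sizem1} along the chain of steps that keep $e_{\max}$ as the maximal end, using at each step that $e(\Delta_1^{(\text{step})}) = e_{\max}$. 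The main obstacle, then, is the bookkeeping needed to chain Lemmas \ref{lem:sizem1} and \ref{lem:starconditionAD} through an arbitrary number of recursion steps while tracking which steps actually have maximal end $e_{\max}$; the individual step estimates are already packaged in those lemmas, so the argument is essentially an induction on the number of steps, with the base case being the single-step comparison handled directly by Lemma \ref{lem:sizem1} and the final display $\m_1 := [e(\Delta_l), e(\Delta_1)]_{\rho_u} + \dots$ (or its centered analogue).
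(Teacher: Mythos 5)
Your proposal is correct and follows essentially the same route as the paper's proof: reduce to the one-step comparison $l(\eta'_1) \le l(\eta_1)$ via the recursion $\AD_\rho(\m,\varepsilon) = (\m_1,\varepsilon_1) + \AD_\rho(\m^\#,\varepsilon^\#)$, then invoke Lemmas~\ref{lem:sizem1} and~\ref{lem:starconditionAD}. The paper packages the chaining you describe as a clean induction (the Proposition applied to $\m^\#$, of strictly smaller length), which avoids the bookkeeping you flag as the main obstacle; note also that passing from $\m$ to $\m^\#$ never increases any coefficient, so the maximal end is weakly decreasing along the recursion and cannot ``return'' to $e_{\max}$ after dropping below it, removing one of the two cases you contemplate.
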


\begin{proof}
We prove the result by induction. By definition $\AD_{\rho}(\m,\varepsilon)=(\m_1,\varepsilon_1)+\AD_{\rho}(\m^{\#},\varepsilon^{\#})$. If $\m^{\#}=0$ then we are done. If not, let us write $\AD_{\rho}(\m^{\#},\varepsilon^{\#})=(\m'_1,\varepsilon'_1)+\AD_{\rho}(\m'^{\#},\varepsilon'^{\#})$. Let $e_{\max}'$ be the maximum of the coefficients of $\m^{\#}$. If $e_{\max}'<e_{\max}$ then no segments of $\AD(\m^{\#},\varepsilon^{\#})$ contain $e_{\max}$ and we are done. Hence we can assume that $e_{\max}'=e_{\max}$. Let $\eta'_1$ be the segment of $\m'_1$ ending in $e_{\max}$. By the induction hypothesis, every segment of $\AD_{\rho}(\m^{\#},\varepsilon^{\#})$ ending in $e_{\max}$ has length smaller than $l(\eta'_1)$. We are left to prove that $l(\eta'_1) \le l(\eta_1)$. Let $\Delta_1,\cdots,\Delta_l$ be the initial sequence in the algorithm for $(\m,\varepsilon)$ and $\Delta'_1,\cdots,\Delta'_{l'}$ be the initial sequence in the algorithm for $(\m^{\#},\varepsilon^{\#})$. If $\varepsilon_0 = -1$ then $l(\eta_1)=2l-1$, and if not $l(\eta_1)=l$. Similarly, $l(\eta'_1)=2l'-1$ if $\varepsilon'_0 = -1$, and $l(\eta'_1)=l'$ otherwise. Thus $l(\eta'_1) \le l(\eta_1)$ follows from Lemma \ref{lem:sizem1} and Lemma \ref{lem:starconditionAD}.
\end{proof}

\begin{prop}
  \label{prop:ADdef}
  The algorithm is well-defined. That is, all the centered segments with the same end have the same sign.
\end{prop}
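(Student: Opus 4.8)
The plan is to prove, by induction on $\deg(\m)$, that $\AD_\rho$ maps $\Symm_\rho^\varepsilon(G)$ into $\Symm_\rho^\varepsilon(G)$, using the recursion $\AD_\rho(\m,\varepsilon) = (\m_1,\varepsilon_1) + \AD_\rho(\m^\#,\varepsilon^\#)$. Since $\deg(\m^\#) = \deg(\m) - 2l\deg(\rho_u) < \deg(\m)$, the induction hypothesis applies to $(\m^\#,\varepsilon^\#)$ — once one has checked the routine but necessary point that the prescription for $\varepsilon^\#$ assigns a single sign to each centered segment of $\m^\#$; here one uses that the ends $e(\Delta_1) > \dots > e(\Delta_l)$ of the initial sequence are strictly decreasing, so the index $j$ appearing in each clause of the $\varepsilon^\#$-recipe is forced by the centered segment in question, together with Lemma~\ref{lem:paritycent} to treat the passage from a center-$1/2$ segment to a centered one. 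Granting this, $\AD_\rho(\m^\#,\varepsilon^\#) \in \Symm_\rho^\varepsilon(G)$, and it remains to show that adjoining $(\m_1,\varepsilon_1)$ creates no sign conflict. If $\varepsilon_0 = 1$ there is nothing to do, as $\m_1$ then contains no centered segment.

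So suppose $\varepsilon_0 = -1$, so $\m_1 = [-a,a]_{\rho_u}$ with $a = e(\Delta_1) = e_{\max}$, carrying the sign $\varepsilon_1([-a,a]_{\rho_u})$ from Definition~\ref{def:epsil}; we must match it with the sign of $[-a,a]_{\rho_u}$ in $\AD_\rho(\m^\#,\varepsilon^\#)$ whenever the latter occurs. The key is Proposition~\ref{prop:lengthmax}: applied to $\m$, it makes $[-a,a]_{\rho_u}$ a longest segment of $\AD_\rho(\m,\varepsilon)$ with end $a$; and if $[-a,a]_{\rho_u}$ occurs in $\AD_\rho(\m^\#,\varepsilon^\#)$ then $e^\#_{\max} = a$ (the maximal end can only decrease under $\m \mapsto \m^\#$), so applied to $\m^\#$ it makes the top segment $\eta_1^\#$ of $(\m^\#)_1$ with end $a$ a longest segment of $\AD_\rho(\m^\#,\varepsilon^\#)$ with end $a$. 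Since $\m^\#$ is symmetric with maximal end $a$, all its segments have length at most $2a+1$, and the two statements together force $\eta_1^\# = [-a,a]_{\rho_u}$; that is, $\varepsilon_0^\# = -1$ and $(\m^\#)_1 = \m_1$. By the induction hypothesis the sign of $[-a,a]_{\rho_u}$ throughout $\AD_\rho(\m^\#,\varepsilon^\#)$ equals $\varepsilon_1^\#([-a,a]_{\rho_u})$, so it is enough to prove $\varepsilon_1([-a,a]_{\rho_u}) = \varepsilon_1^\#([-a,a]_{\rho_u})$.

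This last identity is the heart of the matter and, I expect, the main obstacle. Since $\varepsilon_0 = \varepsilon_0^\# = -1$, Definition~\ref{def:epsil} pins down the shapes of $\Delta_l$ and $\Delta_{l^\#}^\#$ (end $0$ if $\rho_u$ is of the same type as $G$, end $1/2$ otherwise), and comparing ends — using also Lemma~\ref{lem:sizem1} — gives $l = l^\#$. One is then left to compare the two formulas for $\varepsilon_1$ in Definition~\ref{def:epsil}, which depend only on the parity of $n_0 = \card\{\Delta\in\m : c(\Delta)=0\}$ and, in the same-type case, on the sign $\varepsilon([0,0]_{\rho_u})$. The plan is to track how the passage $\m \rightsquigarrow \m^\#$ — which replaces each $\Delta_j$ by $\Delta_j^-$ or ${}^-\Delta_j^-$ and each $\Delta_j^\vee$ by ${}^-\Delta_j^\vee$ — alters these two data: a centered segment of $\m$ is destroyed exactly where a $\Delta_j$ with $c(\Delta_j) = 0$ is modified, a centered segment of $\m^\#$ is created exactly where a $\Delta_j$ with $c(\Delta_j) = 1/2$ is modified, Lemma~\ref{lem:paritycent} shows the latter happen an even number of times away from the bottom of the sequence, and the bottom is controlled since $\Delta_l$ is exceptional. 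Carrying out this bookkeeping — simultaneously following the sign at $[0,0]_{\rho_u}$ through the $\varepsilon^\#$-recipe in the same-type case — should show that the change in the parity of $n_0$ is exactly cancelled by the change in the sign factor, so that the two formulas for $\varepsilon_1$ agree and the induction closes.
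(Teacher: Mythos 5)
Your proposal follows the same overall strategy as the paper: induction, identify the centered segment $\m_1$ with the first piece $\m'_1$ of $\AD_\rho(\m^\#,\varepsilon^\#)$ via Proposition~\ref{prop:lengthmax}, and reduce to proving $\varepsilon_1(\m_1) = \varepsilon'_1(\m'_1)$. That identification step is correct and matches the paper's.

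The gap is in the last paragraph, which is where the entire mathematical content lies. You explicitly write that the bookkeeping ``should show'' the identity — but you do not carry it out, and the sketch you give of how it would go does not hold up to inspection. In particular, you claim that Lemma~\ref{lem:paritycent} ``shows the latter [center-$1/2$ modifications] happen an even number of times away from the bottom of the sequence.'' Lemma~\ref{lem:paritycent} says nothing about counting such transitions; it is a statement about the parity of the \emph{multiplicity} $m_{\m^\#}(p(\Delta_{j+1}))$ when a center-$1/2$ segment precedes a centered one, and it is used by the paper for an entirely different purpose: to rule out $c(\Delta_{l-1}) = 1/2$ (via a contradiction also involving Lemma~\ref{lem:sizem1}), which is needed to pin down $\varepsilon^\#([0,0]_{\rho_u}) = -\varepsilon([0,0]_{\rho_u})$. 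The proof you would need here has two separate parity facts to establish in the same-type case — one about the count $n_0$ of centered segments and one about the sign at $[0,0]_{\rho_u}$ — and the two must cancel; the paper's argument uses Lemma~\ref{lem:precalgocent} (to show $\Delta_{l-1}$ is not centered) and the Lemma~\ref{lem:paritycent}/\ref{lem:sizem1} contradiction (to exclude $c(\Delta_{l-1}) = 1/2$), neither of which appears in your plan. In the opposite-type case the paper instead shows, via the proof of Lemma~\ref{lem:starconditionAD} and Lemma~\ref{lem:sequencesingle}, that $\Delta_l = [1/2,1/2]_{\rho_u}$ forces the whole initial sequence to consist of singleton segments, so $n_0^\# = n_0$ — a different mechanism which your sketch does not isolate. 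So: right framework, but the crux of the proposition is asserted, not proved.

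A smaller point: you deduce $l = l^\#$ from ``comparing ends — using also Lemma~\ref{lem:sizem1}.'' Lemma~\ref{lem:sizem1}(1) only gives $l' \le l$ under the hypothesis $\varepsilon_0 = 1$, which is the opposite of your standing assumption $\varepsilon_0 = -1$. The equality $l = l^\#$ is recoverable (for instance from $\eta'_1 = [-a,a]_{\rho_u}$ together with Lemma~\ref{lem:ADnotcentered}, which forces $\varepsilon_0^\# = -1$ as well), but the route you cite does not apply directly; and in any case the paper's proof never needs $l = l^\#$ — it works directly with the two formulas for $\varepsilon_1$, which makes the argument cleaner than what you sketch.
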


\begin{proof}
  We prove the result by induction. We can assume that $\m_1$ is centered and that there is another $\m_1$ in $\AD(\m^{\#},\varepsilon^{\#})$. If we write $\AD(\m^{\#},\varepsilon^{\#})=(\m'_1,\varepsilon'_1)+\AD(\m'^{\#},\varepsilon'^{\#})$, then Proposition \ref{prop:lengthmax} tells us that $\m'_1 = \m_1$. We are left to prove that $\varepsilon_1(\m_1)=\varepsilon'_1(\m'_1)$. Let $n_0 = \card \{\Delta \in \m, c(\Delta)=0\}$ and $n^{\#}_0 = \card \{\Delta \in \m^{\#}, c(\Delta)=0\}$.

  \begin{itemize}
    \item If $\rho$ is of the same type as $G$. Then $\varepsilon_1(\m_1):=(-1)^{n_0 + 1} \varepsilon([0,0]_{\rho_u})$ and $\varepsilon'_1(\m'_1):=(-1)^{n^{\#}_0 + 1} \varepsilon^{\#}([0,0]_{\rho_u})$. After applying the algorithm, only one $[0,0]_{\rho_u}$ is suppressed and any centered segments created appear in pairs. Hence, $(-1)^{n_0 + 1}=(-1)^{n^{\#}_0}$. By Lemma \ref{lem:precalgocent}, $\Delta_{l-1}$ cannot be centered. Moreover, $c(\Delta_{l-1}) \neq 1/2$ because, by Lemma \ref{lem:paritycent}, this would imply that $\Delta'_{l}=[0,0]_{\rho_u}^{\ge 0}$, but this contradicts Lemma \ref{lem:sizem1} that says that $\Delta'_{l}=[0,0]_{\rho_u}^{\ge 0} \preceq \Delta'_{l-1} \prec \Delta_{l-1}=[0,1]_{\rho_u}$. Thus $c(\Delta_{l-1}) \neq 1/2$ and $\varepsilon^{\#}([0,0]_{\rho_u})=(-1) \varepsilon([0,0]_{\rho_u})$. Finally, we get that $(-1)^{n_0 + 1} \varepsilon([0,0]_{\rho_u}) = (-1)^{n^{\#}_0 + 1} \varepsilon^{\#}([0,0]_{\rho_u})$ and that $\varepsilon_1(\m_1)=\varepsilon'_1(\m'_1)$.
    \item If $\rho$ is not of the same type as $G$. By definition, $\varepsilon_1(\m_1):=(-1)^{n_0 }$ and $\varepsilon'_1(\m'_1):=(-1)^{n^{\#}_0}$. The proof of Lemma \ref{lem:starconditionAD}, tells us that $\Delta_{l}=[1/2,1/2]_{\rho_u}$. Now Lemma \ref{lem:sequencesingle} gives us that $\Delta_j = [e_{\max}-j+1,e_{\max}-j+1]_{\rho_u}$. Thus $n^{\#}_0=n_0$ and $\varepsilon_1(\m_1)=\varepsilon'_1(\m'_1)$.
  \end{itemize}
\end{proof}

\section{Important properties}
\label{sec:impprop}
In this section, we will study some properties of the recursive maps:
 \begin{align*}
      \AD_\rho: \Symm_\rho^\varepsilon(G) &\longrightarrow \Symm_\rho^\varepsilon(G)\\
      (\m,\varepsilon) &\mapsto (\m_1,\varepsilon_1)+\AD_\rho(\m^{\#},\varepsilon^{\#}).
  \end{align*}
\subsection{Maximality of the length of $\m_1$}

We fix $\rho \in \Cusp^{\GL}$. In this first paragraph we prove that $\m_1$ has the longest length among the segments ending in $e_{\max}$.

\begin{rem}
When $\rho$ is of good parity, this is proved in Proposition \ref{prop:lengthmax}.  In this case, the crucial lemma is Lemma \ref{lem:sizem1}. This lemma is false in the bad parity case (with respect to the order $\le$). Indeed,  we have seen in Example \ref{ex:badpar0101} that, if $\m=[-1,0]_\rho+[-1,0]_\rho+[0,1]_\rho+[0,1]_\rho$, then $\Delta_1 = [0,1]_\rho$ and $\Delta'_1=[1,1]_\rho$. Thus, $\Delta'_1 \nleq \Delta_1$.
\end{rem}

Let $\rho \in \Cusp^{\GL}$ be of bad parity. Let $\m \in \Symm_\rho^\varepsilon(G)$, and  denote by $\Delta_1,\cdots,\Delta_l$ the initial sequence in the algorithm. We also denote by $\Delta'_1,\cdots,\Delta'_{l'}$ the initial sequence in the algorithm for $\m^{\#}$.

Let $i_0 = \min \{i \in \{1,\cdots,l\}: \exists j \in \{1,\cdots,l\}, \Delta_i^{\vee}=\Delta_j \}$, and $j_0 = \max \{j \in \{1,\cdots,l\}: \exists i \in \{1,\cdots,l\}, \Delta_j^{\vee}=\Delta_i \}$

\begin{lem}
  \label{lem:duauxalgo}
  \begin{enumerate}
    \item $\Delta_{i_0}^{\vee}=\Delta_{j_0}$.
    \item Let $i,j \in \{1,\cdots,l\}$. Then, $\Delta_i^{\vee}=\Delta_j$ if and only if $i_0 \le i \le j_0$ and $i+j=i_0 + j_0$.
    \item For all $i \in \{i_0,\cdots,j_0\}$, $l(\Delta_i)=l(\Delta_{i_0})$.
  \end{enumerate}
\end{lem}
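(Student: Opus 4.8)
The three assertions are about the "palindromic" structure of the initial sequence $\Delta_1 \succeq \cdots \succeq \Delta_l$ in the bad parity case: the set of indices $i$ for which $\Delta_i^\vee$ also occurs in the sequence forms a contiguous block $\{i_0,\dots,j_0\}$, the pairing $i \mapsto j$ with $\Delta_i^\vee = \Delta_j$ is the reflection $i \mapsto i_0+j_0-i$, and all segments in this block have a common length. The key structural facts I will use are: (a) consecutive members of the sequence have ends decreasing by exactly one, so $e(\Delta_i) = e(\Delta_1) - (i-1)$ for all $i$; (b) passing to the contragredient reverses ends, i.e. $e(\Delta^\vee) = -b(\Delta)$ and $b(\Delta^\vee) = -e(\Delta)$; and (c) the ordering $\le$ on segments with the same end is by \emph{larger} beginning, which translates under $\vee$ into a control on beginnings of the dual segments. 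Condition (3) in the definition of the bad-parity initial sequence — that if $\Delta_j^\vee = \Delta_i$ with $i<j$ then $m_{\m}(\Delta_j) \ge 2$ — will be needed only to guarantee such $\Delta_j$ can legitimately appear.

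For part (1), I would first establish the general shape: if $\Delta_i^\vee = \Delta_j$ with, say, $i \le j$, then because $e(\Delta_i) = e(\Delta_1) - (i-1)$ and $e(\Delta_j) = e(\Delta_1) - (j-1)$, while $e(\Delta_j) = e(\Delta_i^\vee) = -b(\Delta_i)$, one reads off $b(\Delta_i) + e(\Delta_i) = 2e(\Delta_1) - (i+j) + 2$, i.e. $2c(\Delta_i)l(\rho_u)$-type relation pins down $i+j$ in terms of the center of $\Delta_i$. Dualizing the relation $\Delta_i^\vee = \Delta_j$ gives $\Delta_j^\vee = \Delta_i$, so the pairing is an involution on the set $S := \{i : \Delta_i^\vee \in \{\Delta_1,\dots,\Delta_l\}\}$. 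Since each $\Delta_i$ with $i \in S$ satisfies the same affine relation $i + (\text{its partner}) = i_0 + j_0$ once we know the constant is the same for all — which follows because all paired segments have the same center (a segment and its dual have opposite centers, and a fixed point of $\vee$ is centered, but there are no centered segments here since we are on a bad line, so in fact every paired segment $\Delta_i$ has $\Delta_j \ne \Delta_i$ and they are genuinely swapped) — I get that $i \mapsto i_0+j_0-i$ is the pairing and that $S$ is stable under this reflection. Then $i_0 = \min S$ pairs with $\max S = j_0$, giving (1); this simultaneously hands me part (2) once I also show $S$ is an \emph{interval}, i.e. contains every $i$ with $i_0 \le i \le j_0$. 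That convexity is the crux: given $i_0 \le i \le j_0$, I must exhibit the segment $\Delta_{i_0+j_0-i}$ and argue that $\Delta_i$ is actually its contragredient, using that the ends march down by one on both "halves" of the reflection and that the beginnings, controlled by maximality in the $\le$ order, are forced to match up. This is where I expect the real work, and where condition (3) and the maximality in each step of the recursive choice of $\Delta_{j+1}$ must be combined carefully.

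For part (3), once (1) and (2) are in hand, fix $i$ with $i_0 \le i \le j_0$ and let $j = i_0 + j_0 - i$ be its partner, so $\Delta_i^\vee = \Delta_j$. Then $l(\Delta_i) = l(\Delta_i^\vee) = l(\Delta_j)$ automatically; to get that this common value is independent of $i$ I would induct along the reflection, comparing $\Delta_i$ with $\Delta_{i+1}$ (both in $\{i_0,\dots,j_0\}$ when $i < j_0$). Their ends differ by one, and their partners $\Delta_{i_0+j_0-i}$ and $\Delta_{i_0+j_0-i-1}$ likewise have ends differing by one; using $l(\Delta_i) = e(\Delta_i) - b(\Delta_i) + 1$ and the relation $b(\Delta_i) = -e(\Delta_j)$ derived above, the length $l(\Delta_i) = e(\Delta_i) + e(\Delta_{i_0+j_0-i})$-style formula shows $l(\Delta_i)$ is constant on the block, because as $i$ increases by one the first end drops by one and the partner's end rises by one. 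So (3) follows formally from the arithmetic already set up for (1) and (2). The only subtlety is making sure no exceptional stopping condition of the bad-parity algorithm truncates the sequence inside the block $\{i_0,\dots,j_0\}$, which I would rule out by noting that the stopping conditions there concern only multiplicities via condition (3), and condition (3) has been arranged to hold precisely for the paired indices.

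\textbf{Main obstacle.} The genuinely non-formal point is the convexity of $S$ in part (2): showing that the reflection-paired indices fill an entire interval, rather than a union of symmetric pieces. I expect to handle it by a minimal-counterexample argument — take the largest $i < j_0$ with $i \in S$ but $i+1 \notin S$ and $i+1 \le j_0$, and derive a contradiction with the maximality built into the recursive choice of $\Delta_{i+1}$ (the segment that \emph{should} be the partner of $\Delta_{i_0+j_0-i-1}$ is available in $\m$, satisfies the three conditions, and is $\preceq$-larger than whatever was actually chosen, forcing it to have been chosen). This mirrors, in the bad-parity setting with condition (3), the argument of \cite[II.2.2]{MW2}.
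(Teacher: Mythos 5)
Your plan identifies the right structural ingredients (ends decrease by one along the sequence, the order $\le$ controls beginnings, and dualizing flips centers), but the organization is backwards relative to the short route, and this leaves a genuine gap. You place the burden on proving directly that $S := \{i : \exists j,\ \Delta_i^\vee = \Delta_j\}$ is an interval (your ``convexity'' step) and you treat (3) as a consequence of (2). The paper reverses this. Because $\Delta_{k+1} < \Delta_k$ and $e(\Delta_{k+1}) = e(\Delta_k) - 1$, the definition of the order forces $b(\Delta_{k+1}) < b(\Delta_k)$, hence $b(\Delta_{k+1}) \le b(\Delta_k) - 1$, so $l(\Delta_{k+1}) \ge l(\Delta_k)$: the lengths are weakly increasing along the initial sequence. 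Combined with $l(\Delta_{j_0}) = l(\Delta_{i_0}^\vee) = l(\Delta_{i_0})$ from (1), this gives (3) at once. Then every $\Delta_i$ for $i_0 \le i \le j_0$ is pinned down (both its end and its length are determined), and a one-line computation of its contragredient shows $\Delta_i^\vee = \Delta_{i_0 + j_0 - i}$, yielding both directions of (2). The interval structure of $S$ is thus a free corollary of the length monotonicity. The ``genuinely non-formal point'' you flag, which you propose to handle by an unspecified minimal-counterexample argument, therefore never arises; as written, your plan leaves that step unresolved.

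Two further errors in your reasoning for (1). Your parenthetical asserts that ``all paired segments have the same center''; in fact paired segments have \emph{opposite} centers, $c(\Delta_i) = -c(\Delta_j)$, and different pairs have different magnitudes. What you actually need is the strict decrease $c(\Delta_{k+1}) < c(\Delta_k)$ along the sequence (again immediate from $b(\Delta_{k+1}) < b(\Delta_k)$ and $e(\Delta_{k+1}) < e(\Delta_k)$), which makes the pairing order-reversing and forces $i_0 = \min S$ to be paired with $j_0 = \max S$; the constant $i_0 + j_0$ in (2) cannot be read off from the pairing alone and is only pinned down after (3) is known. Also, your claim that ``there are no centered segments here since we are on a bad line'' is incorrect: centered segments certainly occur on bad lines, e.g.\ $[-2,2]_{\rho_u}$ with $\rho_u$ unitary self-dual of opposite type to $G$. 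This slip does not sink (1), because a centered $\Delta_i$ is a fixed point of the pairing and the center monotonicity still identifies $\min S$ with $\max S$; but the argument as you wrote it rests on a false premise.
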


\begin{proof}
  If $\Delta_i^{\vee}=\Delta_j$, then $c(\Delta_i)=-c(\Delta_j)$. Moreover, for every $k$, $\Delta_{k+1} \le \Delta_k$ and $e(\Delta_{k+1}) = e(\Delta_k) - 1$ implies that $c(\Delta_{k+1}) < c(\Delta_k)$. This proves $(1)$. To get $(2)$ and $(3)$, note that for all $k$, $l(\Delta_{k+1}) \ge l(\Delta_k)$ and $l(\Delta_{j_0})=l(\Delta_{i_0}^{\vee})=l(\Delta_{i_0})$. Thus, for all $i \in \{i_0,\cdots,j_0\}$, $l(\Delta_i)=l(\Delta_{i_0})$. This also proves $(3)$ as $e(\Delta_i)=e(\Delta_{i_0}) - i + i_0$.
\end{proof}

\begin{lem}
  \label{lem:duauxmdieze}
Suppose that $e(\Delta_1)=e(\Delta'_1)$, $l' \ge i_0$ and ${}^{+}\Delta'_{i_0}=\Delta_{i_0}$. Then $l' \ge j_0$, and for all $i_0 \le i \le j_0$, ${}^{+}\Delta'_{i}=\Delta_{i}$.
\end{lem}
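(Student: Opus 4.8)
The plan is to argue by induction on $i$, running from $i=i_0$ up to $i=j_0$, maintaining the invariant that ${}^{+}\Delta'_i = \Delta_i$ (and in particular that $l' \ge i$). The base case $i=i_0$ is the hypothesis. For the inductive step, assume ${}^{+}\Delta'_i = \Delta_i$ with $i_0 \le i < j_0$; we must produce $\Delta'_{i+1}$, show it exists (so $l' \ge i+1$), and show ${}^{+}\Delta'_{i+1} = \Delta_{i+1}$. The key structural facts we will exploit are those of Lemma \ref{lem:duauxalgo}: for all $i_0 \le i \le j_0$ one has $l(\Delta_i) = l(\Delta_{i_0})$ — call this common length $L$ — so the segments $\Delta_{i_0}, \dots, \Delta_{j_0}$ all have the same size and their ends decrease by $1$ at each step; moreover by part (2) the involution $\vee$ restricted to these indices is the reflection $i \mapsto i_0+j_0-i$, which forces these segments to be centered or symmetrically placed about the relevant point. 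Since ${}^{+}\Delta'_i = \Delta_i$, the segment $\Delta'_i$ has the same end as $\Delta_i$ but its beginning is one larger, hence $l(\Delta'_i) = L-1$.

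The heart of the matter is to understand what the algorithm does to $\Delta_i$ when passing from $\m$ to $\m^{\#}$, and then to compare with the recursive step for $\m^{\#}$. Because $\Delta_i^{\vee} = \Delta_{i_0+j_0-i}$ is again in the initial sequence, the segment $\Delta_i$ (more precisely the relevant $\Lambda$ in the multiset realizing it) gets modified on \emph{both} ends: once as some $\Delta_j$ in the sequence (removing the end) and once as some $\Delta_{j'}^{\vee}$ (removing the beginning), so the corresponding piece of $\m^{\#}$ is ${}^{-}p(\Lambda_i)^{-}$ — wait, one must be careful whether $\Delta_i$ and $\Delta_i^\vee$ are realized by the same $\Lambda$ or by distinct ones; this is exactly where condition (3) in the definition of the bad-parity initial sequence (multiplicity $\ge 2$) enters, guaranteeing $\m^{\#}$ is well-defined. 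In any event, the upshot should be that $\Delta_i$ contributes to $\m^{\#}$ a shortened, re-centered segment, and we want to track that the segment $\Delta'_{i+1}$ chosen by the algorithm for $\m^{\#}$ is precisely the one with beginning ${}^{+}$ of $\Delta_{i+1}$ and end $e(\Delta_{i+1})$. Concretely: $\Delta_{i+1}$ has end $e(\Delta_i)-1$; the candidate ${}^{+}\Delta_{i+1}$ has that same end, beginning $b(\Delta_{i+1})+1$, hence length $L-1 = l(\Delta'_i)$, and since $\Delta_{i+1} \le \Delta_i$ we need to verify ${}^{+}\Delta_{i+1} \le \Delta'_i = {}^{+}\Delta_i$ — which follows because prepending ${}^{+}$ to both sides of $\Delta_{i+1}\le\Delta_i$ shifts both beginnings by $-1$ and so preserves the order (using the definition $[x_1,y_1]<[x_2,y_2]$ iff $x_1<x_2$ or ($x_1=x_2$ and $y_1>y_2$), which is translation-invariant), and the end condition $e(\,{}^{+}\Delta_{i+1}) = e(\Delta'_i)-1$ holds. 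One then checks that this candidate is actually the \emph{biggest} admissible segment of $\m^{\#}$, by pairing any strictly larger admissible candidate against a corresponding segment of $\m$ that would have been chosen instead of $\Delta_{i+1}$, contradicting maximality of $\Delta_{i+1}$ in the original sequence.

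The main obstacle I expect is precisely this maximality comparison: one must rule out that $\m^{\#}$ contains, among its segments ending in $e(\Delta_i)-1$, some segment strictly $\preceq$-bigger (or $\le$-bigger, in the bad case) than ${}^{+}\Delta_{i+1}$ that still satisfies the running constraints ($\le \Delta'_i$, end decreasing by one, and the multiplicity/duality condition (3)). The way to handle this is to trace any such putative segment $\Xi$ of $\m^{\#}$ back to its origin in $\m$: it is either unmodified ($\Xi = p(\Lambda)$), or shortened on one or both ends by the action of the $\Delta_k$'s or $\Delta_k^\vee$'s. In each case one produces from $\Xi$ a segment of $\m$ ending in $e(\Delta_i)$, $\preceq \Delta_i$, with the right length, which would then be a legal — and larger — choice for $\Delta_{i+1}$ in the original run, contradicting its maximality; the delicate sub-case is when $\Xi = {}^{-}p(\Lambda)$ or ${}^{-}p(\Lambda)^{-}$ with $\Lambda$ already among the $\Delta_k$'s, where one needs Lemma \ref{lem:duauxalgo}(2)--(3) and the hypothesis ${}^{+}\Delta'_{i_0}=\Delta_{i_0}$ to control that the ``$+$-shift'' propagates coherently rather than creating a shorter or mis-centered segment. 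Finally, iterating to $i=j_0$ gives $l' \ge j_0$ and ${}^{+}\Delta'_i = \Delta_i$ for all $i_0 \le i \le j_0$, as claimed.
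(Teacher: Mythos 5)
Your plan --- induction on $i$ from $i_0$ to $j_0$, exploiting the explicit shifted form $\Delta_i = [b(\Delta_{i_0})-(i-i_0),\,e(\Delta_{i_0})-(i-i_0)]_{\rho_u}$ from Lemma~\ref{lem:duauxalgo} --- is exactly what the paper means by a ``direct computation,'' so the approach matches. But the maximality step you single out as the main obstacle is actually the easy part. If $\Xi$ is any candidate for $\Delta'_{i+1}$, then $e(\Xi) = e(\Delta_{i+1}) < e(\Delta'_i)$ together with $\Xi \le \Delta'_i = {}^{-}\Delta_i$ forces, straight from the definition of $\le$, that $b(\Xi) < b(\Delta'_i) = b(\Delta_i)+1$, hence $b(\Xi) \le b(\Delta_i) = b({}^{-}\Delta_{i+1})$, i.e.\ $\Xi \le {}^{-}\Delta_{i+1}$. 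No trace-back to $\m$ is needed.

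The part that \emph{does} require work is condition~(3) of the bad-parity algorithm, which your sketch names but never checks, and without it the induction does not close. It is triggered precisely when $i \ge (i_0+j_0)/2$: using the shift formula one gets $({}^{-}\Delta_{i+1})^{\vee} = {}^{-}\Delta_{i_0+j_0-i}$, which by the inductive hypothesis equals $\Delta'_{i_0+j_0-i}$ with $i_0+j_0-i \le i$, so one must verify $m_{\m^{\#}}({}^{-}\Delta_{i+1}) \ge 2$. This holds because the shift structure gives both $\Delta_i^{-} = {}^{-}\Delta_{i+1}$ and ${}^{-}\Delta_{i_0+j_0-i-1}^{\vee} = {}^{-}\Delta_{i+1}$, two contributions to $\m^{\#}$ (and ${}^{-}\Delta_{i+1}$ is never subtracted as some $\Delta_k$ or $\Delta_k^{\vee}$: it has length $L-1$, while the candidate $k$'s lie in $\{i_0,\dots,j_0\}$ and so have length $L$). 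There is also a conceptual slip upstream: in the bad case $\m^{\#} = \m + \sum_k(\Delta_k^{-} - \Delta_k + {}^{-}\Delta_k^{\vee} - \Delta_k^{\vee})$ removes \emph{two distinct copies} of each $\Delta_i$ with $i \in \{i_0,\dots,j_0\}$ and adds $\Delta_i^{-}$ and ${}^{-}\Delta_i$ separately; the doubly shortened ${}^{-}p(\Lambda_i)^{-}$ you write down never appears (that is the good-parity mechanism). You half-catch this (``one must be careful'') but leave it unresolved, which is why your trace-back taxonomy for $\Xi$ includes a case that doesn't occur. Finally, mind the $+/-$ direction: ${}^{+}\Delta'_i = \Delta_i$ means $\Delta'_i = {}^{-}\Delta_i$, so the candidate is ${}^{-}\Delta_{i+1}$ with beginning $b(\Delta_{i+1})+1$, not ${}^{+}\Delta_{i+1}$.
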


\begin{proof}
  This follows easily from a direct computation, since by Lemma \ref{lem:duauxalgo}, for $i \in \{i_0,\cdots,j_0\}$ we have $\Delta_i = [ b(\Delta_{i_0})- i + i_0, e(\Delta_{i_0})- i + i_0]_{\rho_u}$.
\end{proof}

\begin{prop}
  \label{prop:longueurbad}
  Suppose that $ e(\Delta_1) = e(\Delta'_1) $. Then the following holds:
  \begin{enumerate}
    \item $ l' \le l $.
    \item For all $ i \le l' $, we have $ {}^{+}\Delta'_{i} \le \Delta_{i} $.
    \item If there exists $ i \le l' $ such that $ {}^{+}\Delta'_{i} = \Delta_{i} $, then $ i_0 \le i \le j_0 $, $ l' \ge j_0 $, and for all $ i_0 \le j \le j_0 $, we have $ {}^{+}\Delta'_{j} = \Delta_{j} $.
  \end{enumerate}
\end{prop}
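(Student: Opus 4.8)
The three assertions are the bad-parity analogue of Lemma~\ref{lem:sizem1}, and the proof will follow the same inductive strategy, carried out simultaneously for all three statements by induction on the index $i$. The comparison is made not between $\Delta'_i$ and $\Delta_i$ directly, but between ${}^{+}\Delta'_i$ and $\Delta_i$; the shift ${}^{+}(\cdot)$ compensates for the fact that passing from $\m$ to $\m^{\#}$ removes a beginning (as well as an end) from each dualized segment $\Delta_j^\vee$ appearing in the initial sequence, and in the bad case the two lines $\Z_\rho$ and $\Z_{\rho^\vee}$ coincide. I would set up the induction so that the inductive hypothesis at stage $i$ gives $l \ge i$ together with ${}^{+}\Delta'_i \le \Delta_i$, and then produce $\Delta'_{i+1}$.

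\textbf{Key steps.} First, the base case $i=1$: since $e(\Delta'_1) = e(\Delta_1) = e_{\max}$, the segment $\Delta'_1$ ends at $e_{\max}$ and is of the form $\Lambda_i^{\#}$ for some $\Lambda_i$ with $i \ne i_1$, hence $\Lambda_i \prec \Lambda_{i_1}$ (here $\le$ rather than $\preceq$, since we are in the bad case), which gives $b(\Lambda_i) \le b(\Delta_1)$; as $p(\Delta'_1)$ is $p(\Lambda_i)$ or ${}^{-}p(\Lambda_i)$, one checks $b({}^{+}\Delta'_1) \le b(\Delta_1)$, so ${}^{+}\Delta'_1 \le \Delta_1$ — exactly as in the first paragraph of the proof of Lemma~\ref{lem:sizem1}, but with the $\preceq/\le$ dichotomy simplified away. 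Second, the inductive step: assuming ${}^{+}\Delta'_i \le \Delta_i$, I would argue that $\Delta'_{i+1} \prec \Delta'_i$ ends at $e_{\max}-i = e(\Delta_i)-1$, write $p(\Delta'_{i+1}) = \Lambda_m^{\#}$, and split into the four cases according to whether $\Lambda_m^{\#}$ is $p(\Lambda_m)$, $p(\Lambda_m)^{-}$, ${}^{-}p(\Lambda_m)$, or ${}^{-}p(\Lambda_m)^{-}$. In each case one exhibits a segment of $\m$ (either $\Lambda_m$, or its dual $\Lambda_m^\vee$, or $\Lambda_m^{\vee}$ shifted) that is $\preceq \Delta_i$, ends at $e(\Delta_i)-1$, and, in the bad case, satisfies condition~(3) in the definition of the initial sequence — this last point is where the hypotheses $l' \ge i_0$, ${}^{+}\Delta'_{i_0}=\Delta_{i_0}$ and Lemmas~\ref{lem:duauxalgo} and~\ref{lem:duauxmdieze} are invoked to control the multiplicities, so that the candidate really does appear as $\Delta_{i+1}$ (possibly non-maximal), giving $l \ge i+1$; maximality of $\Delta_{i+1}$ then yields ${}^{+}\Delta'_{i+1} \le \Delta_{i+1}$. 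Assertion~(3) then drops out: if equality ${}^{+}\Delta'_i = \Delta_i$ holds for some $i$, comparing centers forces $c(\Delta_i) = c({}^{+}\Delta'_i)$, and since the $\Delta'_i$ came from dualizing, this can only happen in the self-dual band $i_0 \le i \le j_0$; Lemma~\ref{lem:duauxmdieze} propagates the equality over the whole band and gives $l' \ge j_0$.

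\textbf{Main obstacle.} The delicate point, and the place where the bad case genuinely differs from the good case, is the bookkeeping of multiplicities forced by condition~(3) in Section~\ref{sec:defbad}: a pair $(\Delta, \Delta^\vee)$ enters the initial sequence only when $m_{\m}(\Delta) \ge 2$, and the definition of $\m^{\#}$ then removes $\Delta$ twice. Consequently, when one of the four cases above produces a candidate segment $\Lambda$ with $\Lambda^\vee$ already among $\Delta_1,\dots,\Delta_i$, one must verify $m_{\m}(\Lambda) \ge 2$ before concluding that $\Lambda$ is eligible to be $\Delta_{i+1}$ — and this is precisely what Lemma~\ref{lem:duauxmdieze} (together with the hypothesis ${}^{+}\Delta'_{i_0}=\Delta_{i_0}$) is designed to supply, by showing the shifted duals ${}^{+}\Delta'_j$ track the $\Delta_j$ across the self-dual band. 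I expect the case analysis to be longer than in Lemma~\ref{lem:sizem1} because the failure of $\Delta'_1 \le \Delta_1$ (seen in Example~\ref{ex:badpar0101}) means one cannot shortcut any of the four cases, but each individual verification is a direct computation with beginnings, ends and centers of the explicit segments $\Delta_i = [b(\Delta_{i_0}) - i + i_0,\, e(\Delta_{i_0}) - i + i_0]_{\rho_u}$ furnished by Lemma~\ref{lem:duauxalgo}.
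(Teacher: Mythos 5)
Your overall strategy matches the paper's: a simultaneous induction on $k$, a base case at $k=1$, a case split on the form of $\Delta'_{k+1}$ as a (possibly shifted) segment of $\m$, Lemmas~\ref{lem:duauxalgo} and~\ref{lem:duauxmdieze} controlling the self-dual band, and condition~(3) from Section~\ref{sec:defbad} correctly identified as the main subtlety. Two points would not survive being written out. First, a small one: in the bad case $\m^{\#} = \m + \sum_i(\Delta_i^- - \Delta_i + {}^{-}\Delta_i^{\vee} - \Delta_i^{\vee})$, so a segment of $\m^{\#}$ is of the form $\Delta$, $\Delta^-$, or ${}^{-}\Delta$ for some $\Delta \in \m$; the doubly-shifted case ${}^{-}\Delta^{-}$ of the good-parity algorithm never occurs here, and the paper accordingly splits into three cases, not four.

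Second, and more substantively, assertion~(3) does not simply ``drop out'' after (1) and (2): the paper carries all three claims as the inductive hypothesis, and this is load-bearing. The preliminary estimate $\Delta'_{k+1} \le \Delta_k$ in the inductive step requires knowing whether the equality ${}^{+}\Delta'_k = \Delta_k$ has already landed $k = j_0$; in that case one uses (3) at earlier indices to show ${}^{-}\Delta_{i_0} \notin \m$, hence $m_{\m^{\#}}(\Delta_k^-) = 1$, so that condition~(3) of Section~\ref{sec:defbad} excludes $\Delta'_{k+1} = \Delta_k^-$. The multiplicity-one subcase ($\Delta^\vee = \Delta_i$ with $m_\m(\Delta)=1$) likewise invokes (3) at the earlier index $i$ to reach a contradiction. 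Finally, your after-the-fact derivation of (3) is circular as stated: once ${}^{+}\Delta'_i = \Delta_i$, the two sides automatically share the same center, so ``comparing centers'' gives nothing new. The actual argument is that ${}^{+}\Delta'_{k+1} = \Delta_{k+1}$ forces $\Delta'_{k+1} = {}^{-}\Delta_{k+1}$, one then shows by a multiplicity contradiction that $\Delta_{k+1}^{\vee}$ must appear among the $\Delta_i$, and only then does Lemma~\ref{lem:duauxalgo}(2) place $k+1 \in \{i_0,\dots,j_0\}$.
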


\begin{proof}
  We prove by induction on $ k \le l' $ the following properties:
  \begin{enumerate}
    \item $ k \le l $,
    \item $ {}^{+}\Delta'_{k} \le \Delta_{k} $,
    \item If $ {}^{+}\Delta'_{k} = \Delta_{k} $, then $ i_0 \le k \le j_0 $, $ l' \ge j_0 $, and for all $ i_0 \le i \le j_0 $, we have $ {}^{+}\Delta'_{i} = \Delta_{i} $.
  \end{enumerate}

  We start with the base case $ k = 1 $. Clearly, $ 1 \le l $. Suppose that $ i_0 = 1 $. If $ \Delta_1 = [e_{\max}, e_{\max}] $, then $ \Delta'_1 \le \Delta_1 $ holds trivially. Otherwise, since $ \Delta_1^{\vee} = \Delta_{j_0} $, the largest segment in $ \m^{\#} $ ending in $ e_{\max} $ is $ {}^{-}\Delta_1 $. Hence, $ \Delta'_1 = {}^{-}\Delta_1 $, which proves property (2). Property (3) then follows from Lemma \ref{lem:duauxmdieze}.

  Now assume $ i_0 \neq 1 $. By definition of the algorithm, $ \Delta'_1 = \Delta $ or $ {}^{-}\Delta $ for some $ \Delta \in \m $. Since $ \Delta_1 $ is the largest segment ending in $ e_{\max} $, we have $ \Delta \le \Delta_1 $. If $ \Delta'_1 \notin \m $, then $ \Delta \ne \Delta_1 $, as $ i_0 \ne 1 $, so $ \Delta < \Delta_1 $. Therefore, $ \Delta'_1 \le \Delta_1 $, as required.

  Now let $ k < l' $, and assume the result holds for all $ k' \le k $. We want to prove it for $ k+1 $.

  \begin{itemize}
    \item Suppose $ {}^{+}\Delta'_k = \Delta_k $ and $ k < j_0 $. Then $ k+1 \le j_0 $, and $ l \ge j_0 \ge k+1 $. Moreover, by the induction hypothesis (3), $ {}^{+}\Delta'_{k+1} = \Delta_{k+1} $, so (3) also holds for $ k+1 $. This completes the step.

    \item Suppose $ \Delta'_k \le \Delta_k $, or $ {}^{+}\Delta'_k = \Delta_k $ with $ k = j_0 $.

    We first show that $ \Delta'_{k+1} \le \Delta_k $. If $ \Delta'_k \le \Delta_k $, then $ \Delta'_{k+1} \le \Delta'_k \le \Delta_k $ immediately. Suppose instead $ {}^{+}\Delta'_k = \Delta_k $ and $ k = j_0 $. By the induction hypothesis, $ {}^{+}\Delta'_{i_0} = \Delta_{i_0} $. If $ i_0 = 1 $, then $ \Delta_{i_0} $ is the largest segment ending in $ e_{\max} $, so $ {}^{-}\Delta_{i_0} \notin \m $. If $ i_0 > 1 $, then $ \Delta'_{i_0 - 1} \le \Delta_{i_0 - 1} $, so $ {}^{-}\Delta_{i_0} = \Delta'_{i_0} \le \Delta_{i_0 - 1} $. Hence, $ {}^{-}\Delta_{i_0} \notin \m $, since otherwise it would contradict the definition of $ \Delta_{i_0} $. In all cases, $ {}^{-}\Delta_{i_0} \notin \m $.
Therefore, $ \Delta_k^{-} = \Delta_{j_0}^{-} = ({}^{-}\Delta_{i_0})^{\vee} \notin \m $. This implies $ m_{\m^{\#}}(\Delta_k^{-}) = 1 $. But $ (\Delta_k^{-})^{\vee} = {}^{-}\Delta_{i_0} = \Delta'_{i_0} $, so by definition of the algorithm, $ \Delta'_{k+1} \ne \Delta_k^{-} $. Hence, $ \Delta'_{k+1} \le \Delta_k $, as required.

    By construction of $ \m^{\#} $, $ \Delta'_{k+1} = \Delta $, $ {}^{-}\Delta $, or $ \Delta^{-} $ for some $ \Delta \in \m $. If $ \Delta'_{k+1}, {}^{+}\Delta'_{k+1} \notin \m $ and $ \Delta'_{k+1} = \Delta^{-} $, then $ \Delta = \Delta_k $, contradicting $ \Delta'_{k+1} \le \Delta_k $. Thus, $ \Delta'_{k+1} = \Delta $ or $ {}^{-}\Delta $, and in both cases $ \Delta \le \Delta'_{k+1} \le \Delta_k $.

    Suppose $ \Delta^{\vee} \notin \{\Delta_i \mid i \le k\} $ or $ m_{\m}(\Delta) \ge 2 $. Then $ l \ge k+1 $ and $ \Delta \le \Delta_{k+1} $. Consequently, $ {}^{+}\Delta'_{k+1} \le \Delta \le \Delta_{k+1} $. If $ {}^{+}\Delta'_{k+1} = \Delta_{k+1} $, and if there exists $ i $ such that $ \Delta_{k+1}^{\vee} = \Delta_i $, then by Lemma \ref{lem:duauxalgo}, $ k+1 \in \{i_0, \ldots, j_0\} $. Also, since $ {}^{-}\Delta_{k+1} \le \Delta_k $, we have $ l(\Delta_{k+1}) \ne l(\Delta_k) $, so $ k \notin \{i_0, \ldots, j_0\} $, and hence $ k+1 = i_0 $. Property (3) then follows from Lemma \ref{lem:duauxmdieze}.

    It remains to show that such $ i $ exists. If $ {}^{-}\Delta_{k+1} \notin \m $, then by the construction of $ \m^{\#} $, there exists $ i $ such that $ \Delta_{k+1}^{\vee} = \Delta_i $, and we are done. If instead $ {}^{-}\Delta_{k+1} \in \m $, then since $ {}^{-}\Delta_{k+1} = \Delta'_{k+1} \le \Delta_k $, there exists $ i \le k $ such that $ ({}^{-}\Delta_{k+1})^{\vee} = \Delta_i $, and $ m_{\m}({}^{-}\Delta_{k+1}) = 1 $.

    By the induction hypothesis, $ {}^{+}\Delta'_i \le \Delta_i $. If $ {}^{+}\Delta'_i = \Delta_i $, then $ i \in \{i_0, \ldots, j_0\} $, and there exists $ j $ such that $ \Delta_i^{\vee} = \Delta_j $, so $ \Delta_j = {}^{-}\Delta_{k+1} $, contradicting the uniqueness of $ \Delta_{k+1} $ as the segment ending at $ e(\Delta_{k+1}) $. Therefore, $ \Delta'_i \le \Delta_i $. From $ \Delta'_{k+1} \le \Delta'_i \le \Delta_i = (\Delta'_{k+1})^{\vee} $, it follows that $ \Delta'_i = \Delta_i $, and hence $ (\Delta'_{k+1})^{\vee} = \Delta'_i $. Thus, $ m_{\m^{\#}}(\Delta'_{k+1}) \ge 2 $, contradicting $ m_{\m}(\Delta'_{k+1}) = 1 $.

    Finally, suppose $ \Delta^{\vee} = \Delta_i $ for some $ i \le k $ and $ m_{\m}(\Delta) = 1 $. Then $ \Delta \le \Delta'_{k+1} \le \Delta_k \le \Delta_i = \Delta^{\vee} $, so $ l(\Delta) = l(\Delta'_{k+1}) $, hence $ \Delta = \Delta'_{k+1} $. By the induction hypothesis, $ {}^{+}\Delta'_i \le \Delta_i $. If $ \Delta'_i \le \Delta_i $, then from $ \Delta \le \Delta'_{k+1} \le \Delta'_i \le \Delta_i = \Delta^{\vee} $, we obtain $ \Delta'_i = \Delta_i $, so $ (\Delta'_{k+1})^{\vee} = \Delta'_i $, and $ m_{\m^{\#}}(\Delta'_{k+1}) \ge 2 $, again contradicting $ m_{\m}(\Delta) = 1 $. If $ {}^{+}\Delta'_i = \Delta_i $, then $ i \in \{i_0, \ldots, j_0\} $, and by Lemma \ref{lem:duauxalgo}, $ \Delta = \Delta_{i_0 + j_0 - i} $. Since $ \Delta = \Delta'_{k+1} $, we have $ k+1 = i_0 + j_0 - i \in \{i_0, \ldots, j_0\} $, so $ \Delta_{k+1} = \Delta'_{k+1} $, contradicting Lemma \ref{lem:duauxmdieze}.
  \end{itemize}
\end{proof}

We conclude that the same result holds for any $\rho \in \Cusp^{\GL} $.

\begin{prop}
  Let $ \rho \in \Cusp^{\GL} $ and $ (\m,\varepsilon) \in \Symm_\rho^\varepsilon $. Let $ (\m_1,\varepsilon_1) $ be the multisegment produced by $ \AD_\rho(\m,\varepsilon) $, and let $ \eta_1 \in \m_1 $ be the segment ending in $ e_{\max} $. Then $ \eta_1 $ is the longest among the segments $ \Delta $ of $ \AD_\rho(\m,\varepsilon) $ such that $ e(\Delta) = e_{\max} $.
\end{prop}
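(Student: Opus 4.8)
The plan is to argue case by case according to whether $\rho$ is good, bad or ugly, matching the case division in the definition of $\AD_\rho$. When $\rho$ is of good parity the statement is exactly Proposition~\ref{prop:lengthmax}, so there is nothing new to prove. When $\rho$ is ugly I would invoke Remark~\ref{rem:MWGL}: writing $\m=\m_\rho+\m_{\rho^\vee}$ one has $\AD_\rho(\m)=\m_\rho^t+(\m_\rho^t)^\vee$ with $\m_\rho^t$ the M{\oe}glin--Waldspurger dual of $\m_\rho$, and unwinding the recursion identifies $\eta_1$ with the segment $\Delta'_1(\m_\rho)$ produced by the first step of the M{\oe}glin--Waldspurger algorithm on $\m_\rho$, which ends in $e_{\max}=e_{\max,\rho}$. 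Since $\Z_\rho\neq\Z_{\rho^\vee}$, the segments of $(\m_\rho^t)^\vee$ are supported on $\Z_{\rho^\vee}$ and none of them ends in $e_{\max}$; hence the segments of $\AD_\rho(\m)$ ending in $e_{\max}$ are exactly those of $\m_\rho^t$ ending there, and the claim becomes \cite[II.2.2]{MW2}.

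The substantive case is bad parity, and here I would reproduce the induction used to prove Proposition~\ref{prop:lengthmax}, with Proposition~\ref{prop:longueurbad} taking the place of Lemma~\ref{lem:sizem1}. Induct on $\deg(\m)$. Let $\Delta_1\succeq\cdots\succeq\Delta_l$ be the initial sequence and write $\AD_\rho(\m,\varepsilon)=(\m_1,\varepsilon_1)+\AD_\rho(\m^{\#},\varepsilon^{\#})$. In the bad case $\m_1=[e(\Delta_l),e(\Delta_1)]_{\rho_u}+[-e(\Delta_1),-e(\Delta_l)]_{\rho_u}$; symmetry of $\m$ forces $-e(\Delta_l)\le e(\Delta_1)=e_{\max}$, so $\eta_1=[e(\Delta_l),e(\Delta_1)]_{\rho_u}$, and since $e(\Delta_j)=e(\Delta_{j-1})-1$ we get $l(\eta_1)=l$. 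If $\m^{\#}=0$ we are done. Otherwise every coefficient of $\m^{\#}$ is still $\le e_{\max}$, so either the largest coefficient $e'_{\max}$ of $\m^{\#}$ satisfies $e'_{\max}<e_{\max}$ — in which case no segment of $\AD_\rho(\m^{\#},\varepsilon^{\#})$ ends in $e_{\max}$ and we are done — or $e'_{\max}=e_{\max}$. In the latter case the initial sequence $\Delta'_1\succeq\cdots\succeq\Delta'_{l'}$ of $\m^{\#}$ has $e(\Delta'_1)=e_{\max}=e(\Delta_1)$, so Proposition~\ref{prop:longueurbad}(1) gives $l'\le l$; writing $\AD_\rho(\m^{\#},\varepsilon^{\#})=(\m'_1,\varepsilon'_1)+\AD_\rho(\m'^{\#},\varepsilon'^{\#})$ and letting $\eta'_1\in\m'_1$ be the segment ending in $e_{\max}$, we get $l(\eta'_1)=l'\le l=l(\eta_1)$. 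Every segment of $\AD_\rho(\m,\varepsilon)$ with end $e_{\max}$ occurs either in $\m_1$, where it equals $\eta_1$ and so has length $l(\eta_1)$, or in $\AD_\rho(\m^{\#},\varepsilon^{\#})$, where by the inductive hypothesis it has length at most $l(\eta'_1)\le l(\eta_1)$; this completes the induction.

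I do not expect a real obstacle: in the bad case $\m_1$ never degenerates to a single signed centered segment, so $l(\eta_1)=l$ on the nose and no analogue of the $\varepsilon_0$-bookkeeping of Lemma~\ref{lem:starconditionAD} is needed. The only serious input is the inequality $l'\le l$, i.e. Proposition~\ref{prop:longueurbad}(1), which is the heart of the matter and is already established; the single point deserving care is that when $e'_{\max}=e_{\max}$ one does have $e(\Delta'_1)=e(\Delta_1)$, so that Proposition~\ref{prop:longueurbad} applies — this is immediate, since the first term of any initial sequence ends in the largest coefficient.
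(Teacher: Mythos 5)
Your proof is correct and takes essentially the same route as the paper, which disposes of the statement by three one-line citations (Proposition~\ref{prop:lengthmax} for good parity, Remark~\ref{rem:MWGL} plus \cite[II.2.2]{MW2} for ugly, and Proposition~\ref{prop:longueurbad} for bad parity). The only substantive addition on your side is that you actually carry out the induction for the bad case, mirroring the proof of Proposition~\ref{prop:lengthmax}: you identify $l(\eta_1)=l$ from the difference-of-ends formula, invoke Proposition~\ref{prop:longueurbad}(1) to get $l'\le l$, and note correctly that, since $\m_1$ in the bad case always consists of the pair $[e(\Delta_l),e_{\max}]_{\rho_u}+[-e_{\max},-e(\Delta_l)]_{\rho_u}$ and never degenerates to a single centered segment, the $\varepsilon_0$-adjustment of Lemma~\ref{lem:starconditionAD} is genuinely unnecessary; the implicit step that $\AD_\rho$ does not increase the maximal coefficient (needed when $e'_{\max}<e_{\max}$) is the same implicit step used in the paper's proof of Proposition~\ref{prop:lengthmax} and is harmless. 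The ugly-case unwinding via $\AD_\rho(\m)=\m_\rho^t+(\m_\rho^t)^\vee$ and disjointness of the lines $\Z_\rho,\Z_{\rho^\vee}$ is likewise exactly what the paper's citation encodes. No gaps.
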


\begin{proof}
  When $ \rho $ is ugly, by Remark \ref{rem:MWGL}, $ \AD_\rho $ is the M{\oe}glin--Waldspurger algorithm, and this is proved in \cite[II.2.2.]{MW2}. When $ \rho $ is good, this is covered by Proposition \ref{prop:lengthmax}. The case where $ \rho $ is bad follows from Proposition \ref{prop:longueurbad}.
\end{proof}

\subsection{Centered segments in the good parity case}
\label{sec:e0cent}

In this section, we show that in the good parity case, $\varepsilon_0$ characterizes whether $\m_1$ contains a centered segment.

\bigskip

We assume that $\rho$ is good.

\begin{lem}
  \label{lem:ADnotcentered}
  If $\varepsilon_0 = 1$, then $e(\Delta_{1}) + e(\Delta_{l}) \neq 0$.
\end{lem}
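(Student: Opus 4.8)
The plan is to argue by contradiction. Assume $\varepsilon_0=1$ and $e(\Delta_1)+e(\Delta_l)=0$; writing $e_{\max}=e(\Delta_1)$ and using $e(\Delta_j)=e_{\max}-(j-1)$, this forces $e(\Delta_l)=-e_{\max}$ and $l=2e_{\max}+1$. First I would dispose of the case $e_{\max}=0$: when $\rho$ is not of the same type as $G$ all ends are non-integers, so it cannot occur, and when $\rho$ is of the same type, symmetry of $\m$ forces every segment to be $[0,0]_{\rho_u}$ (any end-$0$ segment other than $[0,0]_{\rho_u}$ has negative center, hence is the contragredient of one of positive end), so $\Delta_1=\Delta_l\in\{[0,0]^{\ge 0}_{\rho_u},[0,0]^{=0}_{\rho_u}\}$ and $\varepsilon_0=-1$, against hypothesis. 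Thus from here $e_{\max}>0$, and I work with $y=s(\m,\varepsilon)\in\USymm^{\varepsilon}$, which is $\iota$-symmetric by construction.

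Step 1: identify the extreme segments. Since $e(\Delta_l)=-e_{\max}<0$, the segment $\Delta_l$ is non-centered with $b(\Delta_l)\le -e_{\max}$, so $\iota(\Delta_l)\in y$ has support $[e_{\max},-b(\Delta_l)]$ and end $-b(\Delta_l)\ge e_{\max}$; as $e_{\max}$ is the maximal end occurring in $y$, this forces $\Delta_l=[-e_{\max},-e_{\max}]^{\le 0}_{\rho_u}$. Applying $\iota$ again, $[e_{\max},e_{\max}]^{\ge 0}_{\rho_u}\in y$; since $\preceq$ ranks every $\ge 0$-labelled segment above all $=0$- and $\le 0$-labelled ones, and those among themselves by increasing beginning, this is the $\preceq$-largest segment of $y$ with end $e_{\max}$, whence $\Delta_1=[e_{\max},e_{\max}]^{\ge 0}_{\rho_u}$.

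Step 2: a descending induction along the chain, using only $\Delta_{j+1}\preceq\Delta_j$ and $e(\Delta_{j+1})=e(\Delta_j)-1$. The fact that $\Delta_l$ is the singleton $[-e_{\max},-e_{\max}]_{\rho_u}$ propagates: each $\Delta_{l-s}$ of negative end $s-e_{\max}$ must be the $\le 0$-labelled singleton $[s-e_{\max},s-e_{\max}]_{\rho_u}$, since any longer $\le 0$-labelled segment with the same end would be $\preceq$-smaller. When the end first becomes non-negative I would check the "pivot" is forced: in the same-type case $\Delta_m=[0,0]^{\le 0}_{\rho_u}$ with $m=e_{\max}+1$ (the options $[0,0]^{\ge 0}_{\rho_u},[0,0]^{=0}_{\rho_u}$ are excluded because the algorithm would then stop at $m<l$, and every other $\le 0$-labelled end-$0$ segment fails the $\preceq$-comparison with $\Delta_{m+1}=[-1,-1]^{\le 0}_{\rho_u}$); in the opposite-type case $\Delta_{m'}$ with $m'=e_{\max}+\tfrac12$ must be $[-\tfrac12,\tfrac12]^{\ge 0}_{\rho_u}$ or $[-\tfrac12,\tfrac12]^{=0}_{\rho_u}$ with sign $+1$, because it is not $\le 0$-labelled (same comparison) and is not one of the stopping segments $[\tfrac12,\tfrac12]_{\rho_u}$ or $[-\tfrac12,\tfrac12]^{\ge 0/=0}_{\rho_u}$ with sign $-1$ (else the algorithm stops at $m'<l$).

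Step 3: push the structure back up to $\Delta_1$. Each lower singleton $[-a,-a]^{\le 0}_{\rho_u}$ with $0<a\le e_{\max}$ lies in $y$, hence so does its $\iota$-image $[a,a]^{\ge 0}_{\rho_u}$, a non-centered segment which is $\preceq$-maximal among segments of end $a$. At the pivot it is \emph{not} the chain segment of that end (for the same type, $\Delta_m=[0,0]^{\le 0}_{\rho_u}\ne[0,0]^{\ge 0}_{\rho_u}$, the latter also in $y$ since then $m_{\m}([0,0]_{\rho_u})\ge 2$); and one observes that whenever $[a,a]^{\ge 0}_{\rho_u}\in y$ fails to be the chain segment $\Delta_{j}$ of end $a$, the only possible obstruction — no sign condition intervenes, everything here being non-centered — is $[a,a]^{\ge 0}_{\rho_u}\not\preceq\Delta_{j-1}$, which is equivalent to $\Delta_{j-1}\ne[a+1,a+1]^{\ge 0}_{\rho_u}$. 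Iterating up the chain from the pivot to the top yields $\Delta_1\ne[e_{\max},e_{\max}]^{\ge 0}_{\rho_u}$, contradicting Step 1. The main difficulty I foresee is the bookkeeping in Steps 2–3: tracking the forced label at each end value, confirming that the algorithm's sign conditions are never triggered above the pivot, and carrying the integer and half-integer cases in parallel; each individual step only uses the descriptions of $\le$ and $\preceq$ recalled in Section~\ref{sec:defgood}.
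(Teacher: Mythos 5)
Your proof is correct, and the backbone is the same as the paper's: force $\Delta_l=[-e_{\max},-e_{\max}]^{\le 0}_{\rho_u}$, propagate singletons up the bottom half of the chain, observe that their $\iota$-images lie in $y$, and derive a contradiction with the stopping condition at the centre. Where you diverge is in the closing move. The paper also computes the \emph{top} half of the chain explicitly by maximality — $\Delta_1=[e_{\max},e_{\max}]^{\ge 0}_{\rho_u},\dots,\Delta_{e_{\max}}=[1,1]^{\ge 0}_{\rho_u}$ (same-type case) — and then gets the contradiction at the pivot: since $[0,0]^{\ge 0}_{\rho_u}\preceq[1,1]^{\ge 0}_{\rho_u}$ and $[0,0]^{\ge 0}_{\rho_u}\in y$ (because $[0,0]^{\le 0}_{\rho_u}\in y$ forces $m_{\m}([0,0]_{\rho_u})\ge 2$), maximality would pick $[0,0]^{\ge 0}_{\rho_u}$, making the algorithm stop early and giving $\varepsilon_0=-1$. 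Your Step 3 avoids writing down the top chain: it propagates the obstruction from the pivot upward, showing inductively $\Delta_{m-1}\ne[1,1]^{\ge 0}_{\rho_u}$, then $\Delta_{m-2}\ne[2,2]^{\ge 0}_{\rho_u}$, \dots, until $\Delta_1\ne[e_{\max},e_{\max}]^{\ge 0}_{\rho_u}$, contradicting your Step 1. The two closings are logically equivalent and rest on the same facts; yours trades the explicit top-chain computation for an inductive ``only possible obstruction'' argument. Two small points of imprecision worth fixing in a final write-up: (i) in Step 2, ``any longer $\le 0$-labelled segment with the same end would be $\preceq$-smaller'' does not by itself force the singleton — what you actually need, and implicitly use, is that a longer $\Delta_{l-s}$ would violate $\Delta_{l-s+1}\preceq\Delta_{l-s}$; (ii) in Step 3, the parenthetical ``no sign condition intervenes, everything here being non-centered'' is not literally true at the base case $a=0$ in the same-type case, since $[0,0]_{\rho_u}$ is centered and $\Delta_{m-1}$ could a priori be $[-1,1]_{\rho_u}$ — but that case already gives $\Delta_{m-1}\ne[1,1]^{\ge 0}_{\rho_u}$, so the conclusion of the iteration step still holds.
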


\begin{proof}
  Suppose that $\varepsilon_0 = 1$. Necessarily, $e(\Delta_1) > 0$. Let $e_{\max} = e(\Delta_1)$ be the maximum of the coefficients of $\m$. Suppose that $e(\Delta_{1}) + e(\Delta_{l}) = 0$. Then $-e_{\max} \le b_{\rho_u}(\Delta_{l}) \le e(\Delta_{l}) = -e_{\max}$, so $\Delta_l = [-e_{\max}, -e_{\max}]_{\rho_u}$. 

  If $-e_{\max} + 1 < 0$, the only segment $\Delta$ such that $e(\Delta) = -e_{\max} + 1$ and $\Delta_l \prec \Delta$ is $[-e_{\max} + 1, -e_{\max} + 1]_{\rho_u}$. Let $i \ge 0$ be the largest integer such that $-e_{\max} + i < 0$. By induction, for $0 \le j \le i$, we have $\Delta_{l-j} = [-e_{\max} + j, -e_{\max} + j]_{\rho_u}$. For $0 \le j \le i$, the segment $\Delta^{\vee}_{l-j} = [e_{\max} - j, e_{\max} - j]_{\rho_u}$ is in  $\m$, and by maximality in the algorithm, $\Delta_1 = [e_{\max}, e_{\max}]_{\rho_u}, \ldots, \Delta_i = [e_{\max} - i, e_{\max} - i]_{\rho_u}$. 

  If $\rho$ is not of the same type as $G$, then $-e_{\max} \in \frac{1}{2}\Z \setminus \Z$ and $-e_{\max} + i = -\frac{1}{2}$. This is a contradiction since the algorithm stops at $[\frac{1}{2}, \frac{1}{2}]_{\rho_u}$. 

  If $\rho$ is of the same type as $G$, then $-e_{\max} \in \Z$ and $-e_{\max} + i = -1$. Hence $\Delta_{l-i} = [-1, -1]_{\rho_u}$ and $\Delta_{l-i-1} = [0, 0]_{\rho_u}^{\le 0}$ (had it been $[0, 0]_{\rho_u}^{=0}$ or $[0, 0]_{\rho_u}^{\ge 0}$, the algorithm would have stopped there).  However, we have shown that $\Delta_{l-i-2} = [1, 1]_{\rho_u}$. This contradicts the maximality of $\Delta_{l-i-1}$, since $[0, 0]_{\rho_u}^{\ge 0} \prec [1, 1]_{\rho_u}$.
\end{proof}

Also, when $\Delta_l = [0,0]_{\rho_u}^{\ge 0}$ or $[1/2,1/2]_{\rho_u}$, the sequence is very specific, as shown in the lemma below.

\begin{lem}
  \label{lem:sequencesingle}
  Suppose that $\Delta_l = [0,0]_{\rho_u}^{\ge 0}$ or $[1/2,1/2]_{\rho_u}$. Then, for all $j \in \{1, \ldots, l\}$, we have $\Delta_j = [e_{\max} - j + 1, e_{\max} - j + 1]_{\rho_u}$.
\end{lem}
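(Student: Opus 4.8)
The plan is to prove the statement by downward induction on $j$, from $j=l$ to $j=1$, showing at each step that $\Delta_j$ is the \emph{singleton} segment $[e_{\max}-j+1,\,e_{\max}-j+1]_{\rho_u}$. Observe first that, by construction of the initial sequence, $e(\Delta_j)=e_{\max}-j+1$ for every $j$, so the only content of the claim is that the beginning of $\Delta_j$ coincides with its end. The base case $j=l$ is immediate: the hypothesis presents $\Delta_l$ as $[0,0]_{\rho_u}^{\ge 0}$ or $[1/2,1/2]_{\rho_u}$, which is a singleton, and $e(\Delta_l)=e_{\max}-l+1$ pins it down as $[e_{\max}-l+1,e_{\max}-l+1]_{\rho_u}$.

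For the inductive step I would fix $1\le j<l$, assume $\Delta_{j+1}=[m,m]_{\rho_u}$ with $m:=e(\Delta_{j+1})=e_{\max}-j$, and deduce that $\Delta_j=[m+1,m+1]_{\rho_u}$. The first point to record is that $m\ge 0$: since $e$ is strictly decreasing along the sequence and $e(\Delta_l)\in\{0,1/2\}$, we have $m\ge e(\Delta_l)\ge 0$, and $m=0$ can occur only when $j+1=l$, in which case $\Delta_{j+1}=\Delta_l=[0,0]_{\rho_u}^{\ge 0}$ by hypothesis. Hence, in every case the labeled segment $\Delta_{j+1}$ carries the label ${\ge 0}$: either $m>0$, so $[m,m]_{\rho_u}$ is non-centered and its label is necessarily ${\ge 0}$, or $m=0$ and $\Delta_{j+1}=[0,0]_{\rho_u}^{\ge 0}$ explicitly.

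Next I would invoke the two defining relations $\Delta_{j+1}\preceq\Delta_j$ and $e(\Delta_j)=m+1$. Since the ends differ, $\Delta_{j+1}\prec\Delta_j$. By the definition of the order $\preceq$ in Section~\ref{sec:labelseg}, any segment with label ${\le 0}$ or ${=0}$ precedes any segment with label ${\ge 0}$; as $\Delta_{j+1}$ is ${\ge 0}$-labeled, $\Delta_j$ must be ${\ge 0}$-labeled as well. For two ${\ge 0}$-labeled segments $\preceq$ restricts to the order $\le$ on the underlying segments, so $[m,m]_{\rho_u}<\Delta_j$. Writing $\Delta_j=[b,m+1]_{\rho_u}$, the definition of $\le$ forces $m<b$ (the alternative $m=b$ with $m>m+1$ being absurd); since $b-m$ is a positive integer and $b\le m+1$, we get $b=m+1$. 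Therefore $\Delta_j=[m+1,m+1]_{\rho_u}=[e_{\max}-j+1,e_{\max}-j+1]_{\rho_u}$, closing the induction.

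The argument is essentially bookkeeping, and I expect the only delicate point to be the handling of the three labels $\{\ge 0,=0,\le 0\}$ on centered segments: one has to check that the hypothesis on $\Delta_l$ is precisely what guarantees that every $\Delta_{j+1}$ entering the recursion is a ${\ge 0}$-labeled singleton — so that, in particular, one never meets $[0,0]_{\rho_u}^{=0}$ or a genuinely centered segment, where the comparison via $\preceq$ would no longer constrain the beginning of $\Delta_j$. Once this label bookkeeping is settled, the implication $[m,m]_{\rho_u}\prec\Delta_j \Rightarrow b(\Delta_j)=m+1$ is immediate from the definitions of $\preceq$ and $\le$.
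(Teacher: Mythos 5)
Your proof is correct and takes essentially the same approach as the paper: the key observation is that if $[x,x]^{\ge 0}_{\rho_u}$ (with $x\ge 0$) precedes a segment ending at $x+1$ under $\prec$, that segment must be $[x+1,x+1]^{\ge 0}_{\rho_u}$, and the result follows by descending induction starting from $\Delta_l$. You simply spell out in full detail what the paper leaves as a one-line observation — including the label bookkeeping and the integrality argument — but the underlying argument is identical.
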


\begin{proof}
  Let $x \ge 0$, and let $\Delta = [x, x]^{\ge 0}_{\rho_u}$. Then the only segment $\Delta' \in \USeg$ such that $e(\Delta') = x + 1$ and $\Delta \prec \Delta'$ is $\Delta' = [x + 1, x + 1]^{\ge 0}_{\rho_u}$. The result follows easily from this observation.
\end{proof}

\subsection{Sign preserving property in the good parity case}
\label{sec:signpreserving}

We assume that $\rho$ is good. In this section, we show that $\AD_\rho$ preserves the product of the signs.

\bigskip

Let $(\m, \varepsilon) \in \Symm_\rho^{\varepsilon}(G)$. By convention, if $\Delta \in \m$ and $c(\Delta) \neq 0$, we define $\varepsilon(\Delta) = 1$. We define the product of the signs in $(\m, \varepsilon)$ by 
\[
  S(\m, \varepsilon) := \prod_{\Delta \in \m} \varepsilon(\Delta).
\]
Our goal is to prove that 
\[
  S(\AD_\rho(\m, \varepsilon)) = S(\m, \varepsilon).
\]

\begin{lem}
  \label{lem:signm1md}
  We have 
  \[
    S(\m_1, \varepsilon_1) \cdot S(\m^{\#}, \varepsilon^{\#}) = S(\m, \varepsilon).
  \]
\end{lem}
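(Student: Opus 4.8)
The plan is to track the sign $\varepsilon_0$ and the centered segments carefully through one step of the algorithm. Write $n_0 = \card\{\Delta \in \m, c(\Delta) = 0\}$ and $n_0^{\#} = \card\{\Delta \in \m^{\#}, c(\Delta) = 0\}$. The key observation — already used in the proof of Proposition~\ref{prop:ADdef} — is that when we pass from $\m$ to $\m^{\#}$, a centered segment of $\m$ can disappear only when some $\Delta_j$ in the initial sequence is centered (i.e.\ $c(\Delta_j) = 0$), in which case exactly one centered segment is suppressed and turned into a non-centered one via $\Lambda_{i_j} \mapsto {}^{-}p(\Lambda_{i_j})^{-}$ or ${}^{-}p(\Lambda_{i_j})^{-}$; meanwhile a centered segment of $\m^{\#}$ that is \emph{not} in $\m$ can be created only from a $\Delta_j$ with $c(\Delta_j) = 1/2$, and then it is created together with its dual, so centered segments are \emph{created in pairs}. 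Consequently, letting $c_{=0}$ be the number of indices $j \le l$ with $c(\Delta_j) = 0$, one has $n_0^{\#} \equiv n_0 - c_{=0} \pmod 2$ (the pairwise-created ones do not change the parity). Since $\Delta_1 \succ \Delta_2 \succ \cdots$ have strictly decreasing centers once a center becomes $\le 0$, the value of $c_{=0}$ is $1$ if some $\Delta_j$ is centered and $0$ otherwise; and by Lemma~\ref{lem:precalgocent} two consecutive $\Delta_j$'s cannot both be centered, so in fact $c_{=0} \in \{0,1\}$ precisely according to whether the unique centered segment in the sequence (if any) occurs.

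**The two-case split on $\varepsilon_0$.** First I would handle $\varepsilon_0 = 1$. Then $\m_1$ consists of the two non-centered segments $[e(\Delta_l), e(\Delta_1)]_{\rho_u}$ and $[-e(\Delta_1), -e(\Delta_l)]_{\rho_u}$, so $S(\m_1, \varepsilon_1) = 1$ by convention, and the claim reduces to $S(\m^{\#}, \varepsilon^{\#}) = S(\m, \varepsilon)$. Here $\varepsilon_0 = 1$ means that each centered $\Lambda_i^{\#}$ of $\m^{\#}$ gets sign $\varepsilon_0 * \varepsilon(\Lambda_i^{\#}) = \varepsilon(\Lambda_i^{\#})$ or, in the $c(\Delta_j) = 1/2$ case, a sign that I must check cancels in pairs: when a pair of centered segments is created from $\Delta_j$ and $\Delta_j^{\vee}$ with $c(\Delta_j) = 1/2$, the two new signs are either both $\varepsilon_0$ (if the segment was not already in $\m$) or both $\varepsilon_0 * (-1) * \varepsilon(\Lambda_i^{\#})$, in either case their \emph{product} over the pair is $1$; and since $\varepsilon_0 = 1$ there is no centered $\Delta_j$ (Lemma~\ref{lem:precalgocent} combined with the fact that a centered $\Delta_j$ being final would force $\varepsilon_0 = -1$, and a non-final centered $\Delta_j$ forces $c(\Delta_{j+1}) = 1/2 > 0$, contradicting $\Delta_{j+1} \preceq \Delta_j$ together with $\Delta_j$ centered having label forced compatibly — here one uses the ordering $\prec$ on $\USeg$). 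Hence no centered segment of $\m$ is destroyed, the created ones come in sign-cancelling pairs, and $S(\m^{\#}, \varepsilon^{\#}) = S(\m, \varepsilon)$, as required.

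**The case $\varepsilon_0 = -1$.** Now $\m_1$ is the single centered segment $[-e(\Delta_1), e(\Delta_1)]_{\rho_u}$, with sign $\varepsilon_1(\m_1) = (-1)^{n_0 + 1} \varepsilon([0,0]_{\rho_u})$ if $\rho_u$ has the same type as $G$, and $\varepsilon_1(\m_1) = (-1)^{n_0}$ otherwise. In the first subcase, $\varepsilon_0 = -1$ forces $\Delta_l = [0,0]_{\rho_u}^{\ge 0}$ or $[0,0]_{\rho_u}^{=0}$, so exactly one centered segment (namely $[0,0]_{\rho_u}$) is destroyed; combined with the pair-creation observation, $n_0^{\#} \equiv n_0 - 1 \pmod 2$. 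One then computes, using the $\varepsilon^{\#}$-formula at $\Lambda_{i_l}^{\#}$ and the analysis of $\varepsilon^{\#}([0,0]_{\rho_u})$ from the proof of Proposition~\ref{prop:ADdef} (where $c(\Delta_{l-1}) \ne 1/2$, so $\varepsilon^{\#}([0,0]_{\rho_u}) = (-1)\varepsilon([0,0]_{\rho_u})$ when $[0,0]_{\rho_u}$ survives into $\m^{\#}$ — but here it does not survive, it is the destroyed one, so I instead track all \emph{other} centered segments, each of which gets multiplied by $\varepsilon_0 = -1$), that $S(\m^{\#}, \varepsilon^{\#}) = (-1)^{n_0^{\#}} S(\m, \varepsilon) / \varepsilon([0,0]_{\rho_u})$ up to the precise sign bookkeeping; multiplying by $\varepsilon_1(\m_1) = (-1)^{n_0+1}\varepsilon([0,0]_{\rho_u})$ and using $n_0^{\#} \equiv n_0 - 1$ yields $S(\m)$. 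In the second subcase ($\rho_u$ opposite type), by the proof of Lemma~\ref{lem:starconditionAD} together with Lemma~\ref{lem:sequencesingle} the whole initial sequence consists of singletons $\Delta_j = [e_{\max} - j + 1, e_{\max} - j + 1]_{\rho_u}$, hence \emph{no} centered segment is destroyed or created, so $n_0^{\#} = n_0$, $S(\m^{\#},\varepsilon^{\#})$ differs from $S(\m,\varepsilon)$ exactly by the factor $(-1)^{\#\{\text{centered segments of }\m\}} = (-1)^{n_0}$ coming from the universal multiplication by $\varepsilon_0 = -1$ of each centered sign, and $\varepsilon_1(\m_1) = (-1)^{n_0}$ cancels it.

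**Main obstacle.** The delicate point is the exact sign computation in the $\varepsilon_0 = -1$, same-type subcase: one must reconcile the three different clauses of the $\varepsilon^{\#}$-definition (the $c(\Delta_j) = 0$ clause, the two $c(\Delta_j) = 1/2$ clauses, and the default) and confirm that, after destroying $[0,0]_{\rho_u}$ and multiplying every other centered sign by $\varepsilon_0$, the product picks up exactly the factor $(-1)^{n_0^{\#}+1}\varepsilon([0,0]_{\rho_u})^{-1}$ needed to match. This is the step where Lemmas~\ref{lem:precalgocent}, \ref{lem:paritycent}, and \ref{lem:sizem1} are used to rule out the degenerate configurations ($\Delta_{l-1}$ centered, or $c(\Delta_{l-1}) = 1/2$) that would otherwise spoil the count — these are precisely the exclusions already extracted in the proof of Proposition~\ref{prop:ADdef}, so the work is in assembling them rather than proving anything genuinely new.
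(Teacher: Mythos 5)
Your proposal for the case $\varepsilon_0 = 1$ contains a genuine error. You claim that ``since $\varepsilon_0 = 1$ there is no centered $\Delta_j$,'' and attempt to justify this by arguing that a non-final centered $\Delta_j$ would force $c(\Delta_{j+1}) = 1/2 > 0$. This is backwards: from the definition of $\preceq$ on $\USeg$, if $\Delta_j$ is centered with label $\ge 0$, then $\Delta_{j+1}$ (having end $e(\Delta_j)-1$ and satisfying $\Delta_{j+1} \preceq \Delta_j$) must have label $=0$ or $\le 0$, hence $c(\Delta_{j+1}) \le 0$, not $1/2$. In fact, a chain $\Delta_a = [-a',a']^{\ge 0}_{\rho_u} \succ \Delta_{a+1} = [-a'+1,a'-1]^{=0}_{\rho_u} \succ \cdots$ of consecutive centered segments is perfectly compatible with $\varepsilon_0 = 1$, as long as the sequence does not terminate at $[0,0]^{\ge 0}_{\rho_u}$, $[0,0]^{=0}_{\rho_u}$, or the $[\pm 1/2]$ exceptions. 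The paper's proof has an entire bullet devoted to exactly this scenario ($\varepsilon_0=1$ with centered segments of label $\ge 0$ or $=0$ among $\Delta_1,\ldots,\Delta_l$), and it is the most delicate part of the argument — the sign contributions from $\Delta_a,\ldots,\Delta_b$ must be tracked one by one before cancellation. Your proposal skips this entirely, so the core of the lemma in the $\varepsilon_0 = 1$ case is missing.

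There is a secondary gap in your $\varepsilon_0 = -1$, opposite-type subcase. You invoke Lemma~\ref{lem:sequencesingle} to conclude that the entire initial sequence consists of singletons, but that lemma only applies when $\Delta_l = [1/2,1/2]_{\rho_u}$ (or $[0,0]^{\ge 0}_{\rho_u}$). The other triggers for $\varepsilon_0 = -1$ in the opposite-type case — $\Delta_l = [-1/2,1/2]^{\ge 0}_{\rho_u}$ or $[-1/2,1/2]^{=0}_{\rho_u}$ with $\varepsilon([-1/2,1/2]_{\rho_u}) = -1$ — are not covered by Lemma~\ref{lem:sequencesingle}, and for them the sequence need not be singletons. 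The paper organizes the case split differently: it separates $\Delta_l = [1/2,1/2]_{\rho_u}$ (where singletons apply) from all remaining $\varepsilon_0 = -1$ cases (handled uniformly, irrespective of type), and your same-type-versus-opposite-type split does not line up with which lemmas are actually available. Finally, your parity bookkeeping $n_0^{\#} \equiv n_0 - c_{=0}$ implicitly assumes each centered $\Delta_j$ suppresses exactly one centered segment, but a centered $\Delta_j$ with label $\le 0$ (via $i_j \ne i'_j$) suppresses two; this happens to be sign-neutral, but it needs to be said rather than glossed over.
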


\begin{proof}
  Let $\rho_u$ be the unitarization of $\rho$.
    \begin{itemize}
      \item Assume that $\varepsilon_0 = 1$ and in $\Delta_{1},\cdots,\Delta_{l}$ there is no centered segment with label $\ge 0$ or $=0$. By Lemma \ref{lem:ADnotcentered}, $\m_1$ is not a centered segment, so $S(\m_1,\varepsilon_1)=1$. So we need to show that $S(\m,\varepsilon)S(\m^{\#},\varepsilon^{\#})=1$. Since $\varepsilon_0=1$, in $\m^{\#}$ the segments that changed signs are the $\Lambda_{i_j}^{\#}$ which are centered. The only possible segment creating a centered segment would be a $\Lambda_{i_j}$ such that $c(\Lambda_{i_j})=1/2$. If there is no such $\Lambda_{i_j}$ then there is no change in the signs and thus $S(\m,\varepsilon)=S(\m^{\#},\varepsilon^{\#})$. If such a segment exists, then $\varepsilon^{\#}(\Lambda_{i_j}^{\#})=1$ if $\Lambda_{i_j}^{\#} \notin \m$ and in this case no sign change; or $\varepsilon^{\#}(\Lambda_{i_j}^{\#})=-\varepsilon(\Lambda_{i_j}^{\#})$ if $\Lambda_{i_j}^{\#} \in \m$. But if $\Lambda_{i_j}^{\#} \in \m$, necessarily $m_{\m}(\Lambda_{i_j}^{\#})$ is even (if not $[-e(\Lambda_{i_j}^{\#}),e(\Lambda_{i_j}^{\#})]_{\rho_u}^{=0}$ would follow $\Lambda_{i_j}$ in the algorithm). Two segments $\Lambda_{i_j}^{\#}$  are created in $\m^{\#}$ so the multiplicity stays even and the product of the signs remains $1$. Hence, $S(\m,\varepsilon)=S(\m^{\#},\varepsilon^{\#})$.

      \item Assume that  $\varepsilon_0 = 1$ and that in $\Delta_{1},\cdots,\Delta_{l}$ there are centered segments with label $\ge 0$ or $=0$. By definition of the order, all these segments are consecutive in $\Delta_{1},\cdots,\Delta_{l}$. There exist two integers $a,b$ with $1 \le a \le b \le l$ such that these segments are $\Delta_{a},\cdots,\Delta_{b}$. The only possible segment with label $\ge 0$ is $\Delta_{a}$. As before, $\m_1$ is not a centered segment, so $S(\m_1,\varepsilon_1)=1$ and $\varepsilon_0=1$. 
      The new centered segment in $\m^{\#}$ is created by $\Lambda_{i_a},\cdots,\Lambda_{i_b}$, and possibly $\Lambda_{i_{a-1}}$ if $c(\Lambda_{i_{a-1}}) = 1/2$. This gives the following change to $S(\m,\varepsilon)S(\m^{\#},\varepsilon^{\#})$. If $a=1$ or $c(\Lambda_{i_{a-1}}) \neq 1$, then the multiplicity of $\Lambda_{i_a}$ is decreased by one, hence multiplying $S(\m,\varepsilon)S(\m^{\#},\varepsilon^{\#})$ by $\varepsilon(\Lambda_{i_a})$. If $a > 1$ and $c(\Lambda_{i_{a-1}}) = 1/2$, then two segments $\Lambda_{i_a}$ are created (from $\Lambda_{i_a} + \Lambda^{\vee}_{i_a}$) and, then, one is suppressed, which also flips the sign of $S(\m,\varepsilon)S(\m^{\#},\varepsilon^{\#})$ by $\varepsilon(\Lambda_{i_a})$. With the hypotheses on $\Lambda_{i_{a-1}}$, necessarily $\Delta_{a}$ has label $\le 0$, thus $m_{\m}(p(\Delta_{a}))$ is odd. Hence, there is in $\m^{\#}$ an even number of $p(\Delta_{a})$ and the change of their signs does not affect $S(\m,\varepsilon)S(\m^{\#},\varepsilon^{\#})$. In both cases, $S(\m,\varepsilon)S(\m^{\#},\varepsilon^{\#})$ is multiplied by $\varepsilon(\Lambda_{i_a})$. For $ a + 1 \le m \le b$, the multiplicity of $\Lambda_{i_m}$ is unchanged, but $\varepsilon^{\#}(\Lambda_{i_m})=\varepsilon_0 * \varepsilon(\Lambda_{i_{m-1}})=-\varepsilon(\Lambda_{i_m})$, so $S(\m,\varepsilon)S(\m^{\#},\varepsilon^{\#})$ is multiplied by $(-1)^{m_{\m}(p(\Delta_{m}))}$. Finally, the multiplicity of $\Lambda_{i_b}^{\#}$ is increased by one, and the signs are changed to $\varepsilon(\Delta_{b})$, that is $S(\m,\varepsilon)S(\m^{\#},\varepsilon^{\#})$ is multiplied by $\varepsilon(\Lambda_{i_b}^{\#})^{m_{\m}(\Lambda_{i_b}^{\#})} * \varepsilon(\Delta_{b})^{m_{\m}(\Lambda_{i_b}^{\#}) + 1}$. At the end we get
      \[
        S(\m,\varepsilon)S(\m^{\#},\varepsilon^{\#}) = \varepsilon(\Delta_{a}) * \prod_{m=a+1}^{b} (-1)^{m_{\m}(p(\Delta_{m}))} * \varepsilon(\Lambda_{i_b}^{\#})^{m_{\m}(\Lambda_{i_b}^{\#})} * \varepsilon(\Delta_{b})^{m_{\m}(\Lambda_{i_b}^{\#}) + 1}
      \]
      Since, for $a+1 \le m \le b$, $\Delta_{m}$ has label $=0$, we have that $m_{\m}(p(\Delta_{m}))$ is odd and $(-1)^{m_{\m}(p(\Delta_{m}))} = -1$. Also by definition of the algorithm, $\varepsilon(\Delta_{m-1})=-\varepsilon(\Delta_{m})$. Hence, $\varepsilon(\Delta_{b}) = (-1)^{b-a} \varepsilon(\Delta_{a})$. We get
      \[
        S(\m,\varepsilon)S(\m^{\#},\varepsilon^{\#}) = \varepsilon(\Lambda_{i_b}^{\#})^{m_{\m}(\Lambda_{i_b}^{\#})} * \varepsilon(\Delta_{b})^{m_{\m}(\Lambda_{i_b}^{\#})}.
      \]
      If $m_{\m}(\Lambda_{i_b}^{\#})$ is even then $S(\m,\varepsilon)S(\m^{\#},\varepsilon^{\#})=1$ and we are done. And if $m_{\m}(\Lambda_{i_b}^{\#})$ is odd, as $\Lambda_{i_b}^{\#}$ is not in the sequence $\Delta_{1},\cdots,\Delta_{l}$ it means that $\varepsilon(\Lambda_{i_b}^{\#}) = \varepsilon(\Lambda_{i_b})$, and $S(\m,\varepsilon)S(\m^{\#},\varepsilon^{\#}) = 1$.

      \item Assume that  $\varepsilon_0 = -1$ and $\Delta_l=[1/2,1/2]_{\rho_u}$. By Lemma \ref{lem:sequencesingle}, for all $ j \in \{1,\cdots,l\}$, $\Delta_j = [e_{\max}-j+1,e_{\max}-j+1]_{\rho_u}$ and the algorithm just suppresses these segments and their symmetric counterparts. Since $\varepsilon_0 = -1$, all the centered segments change signs, so $S(\m,\varepsilon)S(\m^{\#},\varepsilon^{\#})=(-1)^{n_0}=\varepsilon_1(\m_1)$.

      \item Assume that  $\varepsilon_0 = -1$ and $\Delta_l \neq [1/2,1/2]_{\rho_u}$. Then either $\rho$ is of the same type as $G$ and $\Delta_{l} = [0,0]_{\rho_u}^{\ge 0}$ or $[0,0]_{\rho_u}^{=0}$; or $\rho$ is not of the same type as $G$ and $\Delta_{l} = [-1/2,1/2]_{\rho_u}^{\ge 0}$ or $[-1/2,1/2]_{\rho_u}^{=0}$ with $\varepsilon([-1/2,1/2]_{\rho_u})=-1$. Now, $\m_1$ is a centered segment, and $\varepsilon_0=-1$. Following the notation of the previous case, let $a$ be the smallest integer such that $\Delta_{a}$ is centered (and we would have $b=l$). Let $n_1$ be the number of centered segments $\Delta$ in $\m$ with $e_{\rho}(\Delta) \ge e_{\rho}(\Delta_{a})$. For $a \le j \le l-1$, $\varepsilon^{\#}(\Lambda_{i_a}^{\#})=\varepsilon(\Lambda_{i_a}^{\#})$. If $a > 1$ and $c(\Lambda_{i_{a-1}})= 1/2$, two segments $p(\Delta_{a})$ are created with the same sign. In all  cases, one $\Delta_{a}$ is suppressed, which flips the sign of $S(\m,\varepsilon)S(\m^{\#},\varepsilon^{\#})$ by $\varepsilon(\Delta_{a})$. Now all the other segments change sign. If $a = 1$ or $c(\Lambda_{i_{a-1}}) \neq 1$, there is $n_1 - 1$ such segments (as one $\Delta_{a}$ has been suppressed), thus the sign of $S(\m,\varepsilon)S(\m^{\#},\varepsilon^{\#})$ is changed by $(-1)^{n_1 - 1}$. If $a > 1$ and $c(\Lambda_{i_{a-1}}) = 1/2$, the number of segments that change signs are $n_1 - m_{\m}(p(\Delta_{a}))$. But $m_{\m}(p(\Delta_{a}))$ is odd. Thus $S(\m,\varepsilon)S(\m^{\#},\varepsilon^{\#})$ changes by $(-1)^{n_1 - 1}$. In all cases, we get
    \[
      S(\m,\varepsilon)S(\m^{\#},\varepsilon^{\#}) = (-1)^{n_1 + 1}\varepsilon(\Delta_{a}).
    \]
    The centered segments of $\m$ with $e(\Delta) < e(\Delta_{a})$ are the $\Lambda_{i_j}^{\#}$ with $a \le j \le l-1$. Thus $n_0 = n_1 + \sum_{j=a}^{l-1} m_{\m}(\Lambda_{i_j}^{\#})$. But $m_{\m}(\Lambda_{i_j}^{\#})$ is odd, thus $(-1)^{n_1 + 1} = (-1)^{n_0 + 1} * (-1)^{l-a}$. Also, $\varepsilon(\Delta_{a}) = -\varepsilon(\Delta_{a + 1})= \cdots = (-1)^{l-a}\varepsilon(\Delta_{l})$. Hence
  \[
      S(\m,\varepsilon)S(\m^{\#},\varepsilon^{\#}) = (-1)^{n_0 + 1}\varepsilon(p(\Delta_{l}))=\varepsilon_1(\m_1).
    \]
    \end{itemize}     
\end{proof}

\begin{prop}
  \label{prop:ADsigns}
  Let $(\m,\varepsilon) \in \Symm_\rho^{\varepsilon}(G)$. Then $S(\AD_\rho(\m,\varepsilon))=S(\m,\varepsilon)$.
\end{prop}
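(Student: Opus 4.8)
The plan is to deduce the statement from Lemma~\ref{lem:signm1md} by a straightforward induction on the degree of the underlying multisegment, with essentially no new combinatorial input.

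First I would record the elementary observation that the sign product $S$ is multiplicative for the additive structure on $\Symm_\rho^{\varepsilon}(G)$: directly from the definition $S(\m,\varepsilon)=\prod_{\Delta\in\m}\varepsilon(\Delta)$ (with the convention $\varepsilon(\Delta)=1$ whenever $c(\Delta)\neq 0$), one has $S(\mathfrak{a}+\mathfrak{b})=S(\mathfrak{a})\cdot S(\mathfrak{b})$ for any $\mathfrak{a},\mathfrak{b}\in\Symm_\rho^{\varepsilon}(G)$. The induction is then set up on $\deg(\m)$. The base case $\m=0$ is trivial, both sides being the empty product $1$. For the inductive step, suppose $\m\neq 0$ and write $\AD_\rho(\m,\varepsilon)=(\m_1,\varepsilon_1)+\AD_\rho(\m^{\#},\varepsilon^{\#})$ as in the recursive definition. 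Since $\m^{\#}$ is obtained from $\m$ by removing the ends of the (nonempty) initial sequence $\Delta_1,\dots,\Delta_l$ together with the beginnings of their duals, we have $\deg(\m^{\#})<\deg(\m)$; hence the inductive hypothesis applies to $(\m^{\#},\varepsilon^{\#})$ and yields $S(\AD_\rho(\m^{\#},\varepsilon^{\#}))=S(\m^{\#},\varepsilon^{\#})$.

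Combining the multiplicativity of $S$, the inductive hypothesis, and Lemma~\ref{lem:signm1md}, one then gets
\[
S(\AD_\rho(\m,\varepsilon))=S(\m_1,\varepsilon_1)\cdot S(\AD_\rho(\m^{\#},\varepsilon^{\#}))=S(\m_1,\varepsilon_1)\cdot S(\m^{\#},\varepsilon^{\#})=S(\m,\varepsilon),
\]
which closes the induction.

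There is essentially no obstacle left at this stage: all of the delicate work — the case analysis according to the value of $\varepsilon_0$, the parity arguments on the multiplicities of centered segments, and the bookkeeping of which segments change sign in passing from $\m$ to $\m^{\#}$ — has already been carried out in the proof of Lemma~\ref{lem:signm1md}. The only points to check are that $S$ is genuinely multiplicative and that the recursion terminates (because $\deg$ strictly decreases), both of which are immediate, so the proposition follows formally.
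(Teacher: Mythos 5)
Your proposal is correct and follows essentially the same argument as the paper: an induction on the recursive expansion $\AD_\rho(\m,\varepsilon)=(\m_1,\varepsilon_1)+\AD_\rho(\m^{\#},\varepsilon^{\#})$, using multiplicativity of $S$ over sums and Lemma~\ref{lem:signm1md} to close the step. You spell out the multiplicativity of $S$, the base case, and the strict decrease of degree more explicitly than the paper does, but the mathematical content is identical.
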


\begin{proof}
  We prove the result by induction. We have that
  \begin{align*}
    S(\AD_\rho(\m,\varepsilon)) &=  S(\m_1,
  \varepsilon_1)S(\AD_\rho (\m^{\#},\varepsilon^{\#}))  \\
  &= S(\m_1, \varepsilon_1)S(\m^{\#},\varepsilon^{\#}) \text{ (using the induction hypothesis).}
  \end{align*}
The result follows from Lemma \ref{lem:signm1md}.
\end{proof}

\section{The theory of derivatives and the Atobe--Mínguez algorithm}\label{sec:AM}
We now recall the theory of derivatives as presented in \cite{AM}. It will be the main tool to prove Theorem \ref{thm:MAIN}.
Let $d>0$ be an integer. Throughout this section, we fix $\rho \in \Cusp(\GL_d(F))$.

\subsection{}
We first treat the case of general linear groups. For $\tau \in \Rep(\GL_n(F))$, we define the semisimple representations $L_\rho^{(k)}(\tau)$ and $R_\rho^{(k)}(\tau)$ of $\GL_{n-dk}(F)$ by the equations:
\begin{align*}
\left[\Jac_{(dk,n-dk)}(\tau)\right] &= \rho^k \boxtimes L_{\rho}^{(k)}(\tau) + \sum_i \tau_i \boxtimes \sigma_i, \\
\left[\Jac_{(n-dk,dk)}(\tau)\right] &= R_{\rho}^{(k)}(\tau) \boxtimes \rho^k + \sum_i \sigma'_i \boxtimes \tau'_i,
\end{align*}
where $\tau_i$ and $\tau'_i$ are irreducible representations of $\GL_{dk}(F)$ that are not isomorphic to $\rho^k$. We refer to $L_\rho^{(k)}(\tau)$ (respectively, $R_\rho^{(k)}(\tau)$) as the \emph{$k$-th left $\rho$-derivative} (respectively, the \emph{$k$-th right $\rho$-derivative}) of $\tau$.

If $L_{\rho}^{(k)}(\tau) \neq 0$ but $L_{\rho}^{(k+1)}(\tau) = 0$, we say that $L_{\rho}^{(k)}(\tau)$ is the \emph{highest left $\rho$-derivative}. The \emph{highest right $\rho$-derivative} is defined in the same way using $R_{\rho}^{(k)}(\tau)$.

When $L_{\rho}^{(1)}(\tau) = 0$ (respectively, $R_{\rho}^{(1)}(\tau) = 0$), we say that $\tau$ is \emph{left $\rho$-reduced} (respectively, \emph{right $\rho$-reduced}).

\subsection{}
We now proceed to the case of $G_n$. Let $k \geq 0$, and let $P_{dk}$ denote the standard parabolic subgroup of $G_n$ with Levi subgroup isomorphic to $\GL_{dk}(F) \times G_{n-dk}$. For $\Pi \in \Rep(G_n)$, we define a semisimple representation $D_\rho^{(k)}(\Pi)$ of $G_{n-dk}$ by:
\[
\left[\Jac_{P_{dk}}^{G_n}(\Pi)\right] 
= \rho^k \boxtimes D_{\rho}^{(k)}(\Pi) + \sum_i \tau_i \boxtimes \Pi_i,
\]
where $\tau_i$ is an irreducible representation of $\GL_{dk}(F)$ which is not isomorphic to $\rho^k$. We call $D_\rho^{(k)}(\Pi)$ the \emph{$k$-th $\rho$-derivative} of $\Pi$.

If $D_{\rho}^{(k)}(\Pi) \neq 0$ but $D_{\rho}^{(k+1)}(\Pi) = 0$, we say that $D_{\rho}^{(k)}(\Pi)$ is the \emph{highest $\rho$-derivative}.
When $D_{\rho}^{(1)}(\Pi) = 0$, we say that $\Pi$ is \emph{$\rho$-reduced}.

\subsection{} 
Now assume that $\rho$ is not self-dual. Then, for all $\pi \in \Irr^G$, the highest $\rho$-derivative $D_\rho^{(k)}(\pi)$ is irreducible, and $\soc(\rho^r \rtimes \pi)$ is irreducible for all $r \geq 0$.  We define
\[
S_\rho^{(r)}(\pi) = \soc(\rho^r \rtimes \pi).
\]
One has that $D_\rho^{(r)}\circ S_\rho^{(r)}(\pi) = \pi$ and $S_\rho^{(r)} \circ D_\rho^{(r)}(\pi) = \pi$, if $D_\rho^{(r)}(\pi) \neq 0$. For more details, see \cite[\textsection 3]{AM}.

\subsection{}\label{other-derivatives}
In this paragraph, we assume that $\rho \in \Cusp(\GL_d(F))$ is self-dual. In this case, $\rho$-derivatives are not yet well understood. One of the ideas in \cite{AM} is to circumvent this issue by using alternative derivatives.

Let $\Pi \in \Rep(G_n)$. We define the \emph{$L([-1,0]_\rho)$-derivative} $D_{L([-1,0]_\rho)}^{(k)}(\Pi)$ and the \emph{$Z([0,1]_\rho)$-derivative} $D_{Z([0,1]_\rho)}^{(k)}(\Pi)$ as the semisimple representations of $G_{n-2dk}$ satisfying
\[
\left[\Jac_{P_{2dk}}^{G_n}(\Pi)\right] = 
L([-1,0]_\rho)^k \boxtimes D_{L([-1,0]_\rho)}^{(k)}(\Pi)
+ Z([0,1]_\rho)^k \boxtimes D_{Z([0,1]_\rho)}^{(k)}(\Pi)
+ \sum_i \tau_i \boxtimes \pi_i,
\]
where $\tau_i \in \Irr(\GL_{2dk}(F))$ such that 
$\tau_i \not\cong L([-1,0]_\rho)^k, Z([0,1]_\rho)^k$.

As before, we define the notions of \emph{highest $L([-1,0]_\rho)$-derivatives} (\resp \emph{highest $Z([0,1]_\rho)$-derivatives}) and the property of being \emph{$L([-1,0]_\rho)$-reduced} (\resp \emph{$Z([0,1]_\rho)$-reduced}).

If $\Pi \in \Irr(G_n)$ is $\rho|\cdot|^{-1}$-reduced (\resp $\rho|\cdot|^1$-reduced), then the highest $L([-1,0]_\rho)$-derivative $D_{L([-1,0]_\rho)}^{(k)}(\Pi)$ (\resp the highest $Z([0,1]_\rho)$-derivative $D_{Z([0,1]_\rho)}^{(k)}(\Pi)$) is irreducible. Similar definitions apply for $\GL_n(F)$.

For an irreducible representation $\pi$ of $G_n$ which is $\rho|\cdot|^{-1}$-reduced (\resp $\rho|\cdot|^1$-reduced), we also define:
\begin{align*}
S_{L([-1,0]_\rho)}^{(r)}(\pi) \coloneqq \soc(L([-1,0]_\rho)^r \rtimes \pi)\\
\text{(resp. } S_{Z([0,1]_\rho)}^{(r)}(\pi) \coloneqq \soc(Z([0,1]_\rho)^r \rtimes \pi) \text{ )}
\end{align*}
They are irreducible representations and we have:
$D_{L([-1,0]_\rho)}^{(r)}\circ S_{L([-1,0]_\rho)}^{(r)}(\pi) = \pi$ and $S_{L([-1,0]_\rho)}^{(r)} \circ D_{L([-1,0]_\rho)}^{(r)}(\pi) = \pi$, if $D_{L([-1,0]_\rho)}^{(r)}(\pi) \neq 0$, and similarly for $Z([0,1]_\rho)$.

\subsection{} The derivatives are compatible with the Aubert--Zelevinsky dual in the following sense.

\begin{prop}[{\cite[Prop. 3.9]{AM}}]
  \label{prop:derivativedual}
  Let $\pi \in \Irr^{G}$ and $\rho \in \Cusp^{\GL}$.
  \begin{enumerate}
    \item If $D_\rho^{(k)}$ is the highest $\rho$-derivative, then
\[
  \widehat{D_{\rho}^{(k)}(\pi)}=D_{\rho^{\vee}}^{(k)}(\hat{\pi}).
\]
\item If $\rho$ is self-dual, $\pi$ is $\rho|\cdot|^{-1}$-reduced and $D_{L([-1,0]_\rho)}^{(k)}(\pi)$ is the highest $L([-1,0]_\rho)$-derivative, then
\[
  \widehat{D_{L([-1,0]_\rho)}^{(k)}(\pi)}=D_{Z([0,1]_\rho)}^{(k)}(\hat{\pi}).
\]
  \end{enumerate}
\end{prop}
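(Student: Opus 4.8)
The statement to prove is Proposition~\ref{prop:derivativedual}, the compatibility of the (highest) $\rho$-derivative and of the $L([-1,0]_\rho)/Z([0,1]_\rho)$-derivatives with the Aubert--Zelevinsky duality. The engine of the whole argument is the commutation formula \eqref{eq:Jacquet} between $D_G$ and Jacquet functors, combined with the fact (recalled in \ref{sub:Aubert}) that $\widehat{\pi}=\pm D_G(\pi)$ is irreducible whenever $\pi$ is. The plan is to first establish a general statement at the level of Grothendieck groups comparing $\Jac_{P_{dk}}^{G_n}\circ D_{G_n}$ with $D_{M}\circ\Jac_{P'}^{G_n}$, where $M\simeq \GL_{dk}(F)\times G_{n-dk}$, and then to read off the $\rho^k\boxtimes(-)$-isotypic part on both sides. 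Since $D_M$ acts on $\Rr(\GL_{dk}(F))\otimes\Rr(G_{n-dk})$ as $D_{\GL_{dk}(F)}\otimes D_{G_{n-dk}}$, and since Aubert--Zelevinsky duality on $\GL_{dk}(F)$ fixes $\rho^k$ up to sign (it sends the supercuspidal $\rho$ to itself, hence $\rho^k$ to $\pm\rho^k$; more precisely $\widehat{\rho^k}=\rho^k$ as $\rho^k=Z(\Delta_1)\times\cdots$ with trivial multisegment combinatorics, or directly because $\rho^k = L(k[0,0]_\rho^{\GL})$ which is AZ-self-dual), the $\rho^k$-isotypic component of the left-hand side picks out $\pm\widehat{D_\rho^{(k)}(\pi)}$ while that of the right-hand side picks out, after the analogous computation on $\widehat\pi$, the term $\rho^{k}\boxtimes(\text{something})$ in $[\Jac_{P_{dk}}^{G_n}(\widehat\pi)]$, i.e.\ $D_{\rho^\vee}^{(k)}(\widehat\pi)$ — the $\vee$ appearing because the longest Weyl element $w_0$ in \eqref{eq:Jacquet} conjugates the $\GL_{dk}$-block to its ``opposite'', turning $\rho$ into $\rho^\vee$.

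\textbf{Key steps, in order.} First I would record the precise form of \eqref{eq:Jacquet} for the parabolic $P=P_{dk}$: identify the relevant $w_0$, note $M'=w_0^{-1}(M)$ is again conjugate to $\GL_{dk}(F)\times G_{n-dk}$, and note that $\mathrm{Ad}(w_0)$ composed with this identification sends a representation $\sigma\boxtimes\pi_0$ of the Levi to $\sigma^\vee\boxtimes\pi_0$ (this is the standard fact that the ``cross'' automorphism of the $\GL$-block of a classical-group Levi is the contragredient; it is exactly the point behind $\scusp(\pi)$ being $\vee$-stable as recalled in \S\ref{sec:Gn}). Second, apply \eqref{eq:Jacquet} to $\pi$ and extract the $\rho^k\boxtimes(-)$-component; on the left this is (up to the global sign $\epsilon$ relating $D_{G_n}$ and $\widehat{\cdot}$) the class $[\widehat{D_\rho^{(k)}(\pi)}]$, using that the highest-derivative hypothesis guarantees $D_\rho^{(k)}(\pi)$ is the genuine ``$\rho^k$-leading term'' and, by Proposition stated in \S\ref{other-derivatives} region (the $\rho$ not self-dual facts, or the irreducibility statements), that it is irreducible so its AZ-dual makes sense as a single irreducible class. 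Third, on the right-hand side the $\mathrm{Ad}(w_0)\circ D_{M'}$ turns this into the coefficient of $\rho^k\boxtimes(-)$ — no, the coefficient of $(\rho^\vee)^k\boxtimes(-)$ — inside $D_{M'}\circ\Jac_{P'}^{G_n}(\pi)$; unwinding $D_{M'}=D_{\GL_{dk}}\otimes D_{G_{n-dk}}$ and using $\widehat{\rho^k}=\rho^k$ (so $D_{\GL_{dk}}(\rho^k)=\pm\rho^k$), together with $\Jac_{P'}\simeq\Jac_{P_{dk}}$ up to the same cross-identification, this coefficient is $\pm[D_{G_{n-dk}}\text{ applied to the }\rho^{\vee,k}\text{-leading term of }\Jac_{P_{dk}}^{G_n}(\pi)]$. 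Fourth, match signs: both global signs (the one for $\pi\in\Irr(G_n)$ and the one for the derivative representation) and the $\GL$-side sign $\pm1$ from $\widehat{\rho^k}$ must be tracked; since everything in sight is irreducible and the identity must hold as an identity of \emph{irreducible} representations (not just in the Grothendieck group), positivity forces all signs to cancel, yielding $\widehat{D_\rho^{(k)}(\pi)}=D_{\rho^\vee}^{(k)}(\widehat\pi)$. Finally, for part (2) the argument is formally identical: one replaces ``$\rho^k$'' by ``$L([-1,0]_\rho)^k$'' throughout, uses that $\widehat{L([-1,0]_\rho)}=Z([0,1]_\rho)$ on the $\GL_{2d}(F)$-side (this is the basic instance of Zelevinsky/Aubert duality on $\GL$, sending the segment representation $L$ to the segment representation $Z$, here combined with the contragredient coming from $w_0$ which sends the segment $[-1,0]_\rho$ to $[0,1]_{\rho^\vee}$ and then self-duality of $\rho$ brings it back to $[0,1]_\rho$), and uses the irreducibility of the highest $L([-1,0]_\rho)$-derivative of a $\rho|\cdot|^{-1}$-reduced $\pi$ recalled in \S\ref{other-derivatives} to make sense of $\widehat{D_{L([-1,0]_\rho)}^{(k)}(\pi)}$.

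\textbf{Main obstacle.} The delicate point is not the Grothendieck-group bookkeeping but the passage from an equality in $\Rr(G_{n-dk})$ to an equality of irreducible representations, and the concomitant sign tracking. Concretely: after extracting the $\rho^k$-isotypic (resp. $L([-1,0]_\rho)^k$-isotypic) component one gets an a priori only \emph{virtual} identity $\epsilon\cdot[\widehat{D_\rho^{(k)}(\pi)}] = \epsilon'\cdot[D_{\rho^\vee}^{(k)}(\widehat\pi)]$ with signs $\epsilon,\epsilon'\in\{\pm1\}$ that depend on the representations involved; one must argue that $D_\rho^{(k)}(\pi)$ is genuinely irreducible (so that the left side is $\pm$ an irreducible) — this is where the highest-derivative hypothesis and the cited results of \cite{AM} are essential — and that $D_{\rho^\vee}^{(k)}(\widehat\pi)$ is too, so that comparing two $\pm$-irreducibles forces the signs to agree and the plus sign to be chosen. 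A further subtlety is that $\Jac_{P_{dk}}^{G_n}(\widehat\pi)$ need not have $D_{\rho^\vee}^{(k)}(\widehat\pi)$ as its \emph{highest} $\rho^\vee$-derivative a priori; one must check the highest-derivative indices match on the two sides, which again follows from the fact that $D_{G}$ is an involution up to sign and that derivatives commute with it at the level of ``leading exponents'' — essentially a compatibility of the $w_0$-twist with the notion of being $\rho$-reduced. I would isolate these irreducibility-and-sign points as a preliminary lemma before doing the (short) Jacquet-functor computation above.
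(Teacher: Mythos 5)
The paper does not supply a proof of this proposition; it quotes it verbatim from \cite[Prop.\ 3.9]{AM} and treats it as an input, so there is no in-paper argument to compare against. Your reconstruction --- Aubert's commutation formula \eqref{eq:Jacquet} for $P=P_{dk}$, the contragredient twist on the $\GL_{dk}$-block coming from $\mathrm{Ad}(w_0)$, factoring $D_M=D_{\GL_{dk}}\otimes D_{G_{n-dk}}$, the $\GL$-side identities $\widehat{\rho^k}=\rho^k$ and $\widehat{L([-1,0]_\rho)}=Z([-1,0]_\rho)$ (whose $w_0$-contragredient is $Z([0,1]_\rho)$ for $\rho$ self-dual), and extraction of the $\rho^k$- resp.\ $L([-1,0]_\rho)^k$-isotypic component --- is the standard route to such a compatibility and is consistent with how it is established in \cite{AM}.

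One point worth tightening in your sign discussion: in part~(1) you cannot in general invoke irreducibility of $D_\rho^{(k)}(\pi)$ when $\rho$ is self-dual, since the paper (\S\ref{other-derivatives}) only guarantees irreducibility of highest $\rho$-derivatives for $\rho$ not self-dual, and of the $L([-1,0]_\rho)$- and $Z([0,1]_\rho)$-derivatives under a reducedness hypothesis. The cleaner way to match signs is to observe that $\epsilon$ in $\widehat\pi=\epsilon\,D_G(\pi)$ equals $(-1)^{\dim A_M}$ for $M$ the Levi carrying the supercuspidal support of $\pi$, hence depends only on $\scusp(\pi)$; since every constituent of $D_\rho^{(k)}(\pi)$ has the same supercuspidal support, the sign is uniform, $\widehat{\cdot}$ extends unambiguously to the class $[D_\rho^{(k)}(\pi)]$, and comparing two semisimple (hence effective) classes in $\Rr(G_{n-dk})$ forces the global signs to agree without appealing to irreducibility.
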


\subsection{} 
We now recall the Atobe--Mínguez algorithm for computing the Aubert--Zelevinsky dual of an irreducible representation $\pi$.

Assume that the dual $\hat\pi_0$ can be computed for all irreducible representations of $G_{n_0}$, where $n_0 < n$.
Let $\pi$ be an irreducible representation of $G_n$.

\begin{enumerate}
\item
If there exists a $\rho \in \Cusp^\GL$ such that $\rho$ is not self-dual, and $D_{\rho}^{(k)}(\pi)$ is the highest $\rho$-derivative with $k \geq 1$, 
then
\[
\hat\pi = S_{\rho^\vee}^{(k)}\left( \widehat{D_{\rho}^{(k)}(\pi)} \right).
\]

\item
Otherwise, if $\pi$ is not tempered, 
there exists a self-dual $\rho \in \Cusp^\GL$ such that 
$D_{L([-1,0]_\rho)}^{(k)}(\pi)$ is the highest $L([-1,0]_\rho)$-derivative with $k \geq 1$. 
In this case, we have
\[
\hat\pi = S_{Z([0,1]_\rho)}^{(k)}\left( \widehat{D_{L([-1,0]_\rho)}^{(k)}(\pi)} \right).
\]

\item
If neither of the above cases applies, then $\hat\pi$ is explicitly computed  (see \cite[Proposition 5.4]{AM}).
\end{enumerate}

\section{Explicit formulas for the derivatives}
\label{sec:expder}

In Section \ref{sec:AM}, we recalled the definition of derivatives. In this section, we provide explicit formulas for computing these derivatives. Such formulas are given in \cite{AM} in terms of Langlands data. Here, we instead work with our symmetric Langlands data (the space \( \Symm_{\rho}^{\varepsilon}(G) \)), which simplifies the formulas and unifies the treatment of the negative and positive cases, handled respectively in Sections 6 and 7 of \cite{AM}.

Accordingly, we define an operator \( D_{\rho} \) on \( \Symm_{\rho}^{\varepsilon}(G) \) such that, for \( \pi = L(\m, \varepsilon) \), the highest derivative satisfies \( D_{\rho}^{(k)}(\pi) = L(D_{\rho}(\m, \varepsilon)) \). This operator will play a crucial role in the proof of our main theorem. However, readers interested only in the overall strategy of the proof may skip this section and proceed directly to Sections \ref{sec:reducedseg} and \ref{sec:startegyproof}.

\subsection{Best matching functions}
\label{sec:bestmatching}
Following \cite[§5.3]{LapidMinguez}, we introduce best matching functions.

\bigskip

Let $X$ and $Y$ be finite sets, and let $\leadsto$ be a relation between elements of $Y$ and $X$.
We are interested in injective functions
$f: X \rightarrow Y$ such that $f(x) \leadsto x$ for all $x \in X$.
Any such function will be called a \emph{$\leadsto$-matching function} (or simply a \emph{matching function} when the relation $\leadsto$ is understood from context).

According to Hall's criterion, such a function $f$ exists if and only if, for every subset $A \subset X$, the following inequality holds:
\begin{equation} \label{eq: HallCriterion}
\#\{y \in Y : y \leadsto x \text{ for some } x \in A\} \ge \# A.
\end{equation}

In some cases, it is possible to construct such a function $f$ explicitly.
Assume $X$ and $Y$ are totally ordered by relations $\le_X$ and $\le_Y$, respectively.
One natural approach is to define $f$ recursively, starting from the largest element of $X$ and proceeding to the smallest, using the following rule:
\begin{equation} \label{eq: def f}
f(x) = \min\{y \in Y \setminus f(X_{>x}) : y \leadsto x\},
\end{equation}
where $X_{>x} := \{x' \in X : x' > x\}$. For this definition to be valid, we must ensure that, for each $x \in X$, there exists some $y \notin f(X_{>x})$ such that $y \leadsto x$.
Clearly, this requires additional assumptions about the relation $\leadsto$.

To this end, we introduce the following property.
We say that the relation $\leadsto$ is \emph{traversable} if for all $x_1, x_2 \in X$ and $y_1, y_2 \in Y$ with $x_1 \ge_X x_2$ and $y_1 \ge_Y y_2$, the following implication holds:
\begin{equation} \label{eq: specrelation}
y_1 \leadsto x_1,\ y_2 \leadsto x_1,\ \text{and } y_2 \leadsto x_2\ \Rightarrow\ y_1 \leadsto x_2.
\end{equation}

More generally, even when Hall's criterion is not satisfied, we can still speak of $\leadsto$-\emph{matchings} (or simply matchings, if $\leadsto$ is clear from context) between $X$ and $Y$. By this, we mean injective functions $f$ from a subset of $X$ to $Y$ such that $f(x) \leadsto x$ for all $x$ in the domain of $f$.
We view such a function as a relation between $X$ and $Y$.

Mimicking the earlier construction, if $\leadsto$ is \emph{traversable}, we define the \emph{best} $\leadsto$-matching between $X$ and $Y$: for this, we recursively define the domain $X^0 \subseteq X$ and the function $f$ on $X^0$ by
\begin{multline*}
x \in X^0 \iff \exists y \in Y \setminus f(X^0 \cap X_{>x}) \text{ such that } y \leadsto x, \\
\text{in which case we set } f(x) = \min\{y \in Y \setminus f(X^0 \cap X_{>x}) : y \leadsto x\}.
\end{multline*}

We set $Y^0:=f(X^0)$, $X^c := X \setminus X^0$ and $Y^c := Y \setminus Y^0$. Finally, for $x \in X^0$, we will say that $x$ \emph{protects} $f(x) \in Y^0$.

\subsection{The good parity case}
\label{sec:dergood}
Here, we give the formulas for computing the highest $\rho$-derivatives, for $\rho$ of good parity, in terms of symmetrical Langlands data.

\bigskip

Let $\rho \in \Cusp^{\GL}$ be of good parity. We write $\rho = \rho_u |\cdot|^{x}$ with $\rho_u$ unitary and $x \in (1/2)\Z$. Let $(\m,\varepsilon) \in \Symm_\rho^{\varepsilon}(G)$. We assume that $x \neq 0$.

By convention, when $x=1/2$, we set $[-x + 1, x - 1]_{\rho_u} = 0$, $m_{\m}([-x + 1, x - 1]_{\rho_u})=1$ and $\varepsilon([-x + 1, x - 1]_{\rho_u})=1$. Let $t = \card \{ [-x,x-1]_{\rho_u} \in \m \} = \card \{ [-x +1,x]_{\rho_u} \in \m\}$.

\begin{defi}
  We say that $(*)$ is satisfied if the four following conditions are satisfied: 
  \begin{enumerate}
    \item $x > 0$;
    \item $m_{\m}([-x,x]_{\rho_u})\neq 0$;
    \item $m_{\m}([-x + 1,x-1]_{\rho_u})\neq 0$;
    \item $\varepsilon([-x,x]_{\rho_u}) \varepsilon([-x + 1,x-1]_{\rho_u})=(-1)^{t+1}$.
  \end{enumerate}
\end{defi}

We write $\m = \Delta_1 + \cdots + \Delta_r$. If $(*)$ is satisfied, fix $i_0$ and $j_0$ such that $\Delta_{i_0}=[-x,x]_{\rho_u}$ and $\Delta_{j_0} = [-x +1,x - 1]_{\rho_u}$; otherwise let $i_0=j_0 = -1$. Let $A_{\rho_u|\cdot|^{x}} = \{ i \in \{1,\cdots,r\}, e(\Delta_i)=x\} \setminus \{i_0\}$ and $A_{\rho_u|\cdot|^{x-1}} = \{ i \in \{1,\cdots,r\}, e(\Delta_i)=x - 1\} \setminus \{j_0\}$. 

We define the traversable relation $\leadsto$ between $A_{\rho_u|\cdot|^{x}}$ and $A_{\rho_u|\cdot|^{x-1}}$ by 
\[
    i \in A_{\rho_u|\cdot|^{x}} \leadsto j \in A_{\rho_u|\cdot|^{x-1}} \Leftrightarrow \Delta_j \le \Delta_i.
\]

Let $A_{\rho_u|\cdot|^{x}}^c$ be given by the best matching function (see Section \ref{sec:bestmatching}). If $A_{\rho_u|\cdot|^{x}}^c = \{i_1,\cdots,i_k \}$ then we fix $j_1,\cdots, j_k \in \{1,\cdots,r\}$ such that $\Delta_{j_a} = \Delta_{i_a}^{\vee}$ and $j_a \neq j_b$ if $a\neq b$. We denote by $A_{\rho_u|\cdot|^{x}}^{c,\vee}$ the set $A_{\rho_u|\cdot|^{x}}^{c,\vee} = \{j_1,\cdots,j_k \}$. The ``naive'' highest $\rho$-derivative of $(\m,\varepsilon)$ would be $(\m',\varepsilon')$ defined as follows (the actual derivative will result from a slight adjustment of this pair). Let $\m'=\sum_{i=1}^r \Delta_i'$ where
\[
    \Delta_i' = \left\{
    \begin{array}{ll}
        \Delta_i^{-}           & \text{if } i \in A_{\rho_u|\cdot|^{x}}^c \text{ and } \Delta_i \neq [-x,x]_{\rho_u} \\
        {}^{-}\Delta_i      & \text{if } i \in A_{\rho_u|\cdot|^{x}}^{c,\vee} \text{ and } \Delta_i \neq [-x,x]_{\rho_u}   \\
        {}^{-}\Delta_i^{-}     & \text{if } i \in A_{\rho_u|\cdot|^{x}}^c \text{ and } \Delta_i = [-x,x]_{\rho_u}   \\
        \Delta_i & \text{otherwise }
    \end{array}
    \right.
\]

We also need to define the signs of the centered segments of $\m'$. Let $\Delta \in \m'$ such that $c(\Delta)=0$. If $\Delta \in \m$, we define $\varepsilon'(\Delta)=\varepsilon(\Delta)$ and if not, then necessarily $\Delta=[-x+1,x-1]_{\rho_u}$ and we define $\varepsilon'([-x+1,x-1]_{\rho_u})=(-1)^{t}\varepsilon([-x,x]_{\rho_u})$.

\bigskip

Finally we can define the operator $D_{\rho_u|\cdot|^{x}}$ on $\Symm_\rho^{\varepsilon}(G)$. Let $c = \card \{ i \in A_{\rho_u|\cdot|^{x}}^c, \Delta_i= [-x,x]_{\rho_u}\}$.

\begin{defi}
  \label{def:der}
  Let $(\m,\varepsilon) \in \Symm_\rho^{\varepsilon}(G)$. We define $D_{\rho_u|\cdot|^{x}}(\m,\varepsilon) \in \Symm_\rho^{\varepsilon}(G)$ by $D_{\rho_u|\cdot|^{x}}(\m,\varepsilon)=(\m_x,\varepsilon_x)$ where
  \begin{enumerate}
    \item If $(*)$ is not satisfied, $c$ is odd and $t \ge 1$. Then
    \[
      \m_x = \m' - [-x + 1,x]_{\rho_u} - [-x,x-1]_{\rho_u} + [-x,x]_{\rho_u} + [-x + 1, x - 1]_{\rho_u}.
    \] 
    and $\varepsilon_x([-x,x]_{\rho_u})=\varepsilon([-x,x]_{\rho_u})$, and for a centered segment $\Delta \in \m_x$ different from $[-x,x]_{\rho_u}$, $\varepsilon_x(\Delta)=\varepsilon'(\Delta)$.
    \item If $(*)$ is satisfied and $c$ is odd. Then 
    \[
      \m_x = \m' - [-x,x]_{\rho_u} - [-x+1,x-1]_{\rho_u} + [-x + 1,x]_{\rho_u} + [-x,x-1]_{\rho_u}.
    \] 
    and for a centered segment $\Delta \in \m_x$, $\varepsilon_x(\Delta)=\varepsilon'(\Delta)$.
    \item Otherwise, $\m_x = \m'$ and $\varepsilon_x=\varepsilon'$.
    \end{enumerate}
\end{defi}

\begin{rem}
  The cases $(1)$ and $(2)$ can be seen as a correction of some ``mistake'' made when transforming the segments. In $(1)$ a segment $[-x,x]_{\rho_u}$ has been changed into $[-x + 1, x - 1]_{\rho_u}$ but it should have been that $[-x + 1,x]_{\rho_u} + [-x,x-1]_{\rho_u}$ is changed into $[-x + 1,x-1]_{\rho_u} + [-x+1,x-1]_{\rho_u}$. In $(2)$ one segment $[-x,x]_{\rho_u}$ has been changed into $[-x + 1, x - 1]_{\rho_u}$ and another $[-x,x]_{\rho_u}$ was unchanged but the two $[-x,x]_{\rho_u}$ should have been changed into $[-x + 1,x]_{\rho_u} + [-x,x-1]_{\rho_u}$.
\end{rem}

Using $\Symm^\varepsilon(G) =\oplus_{\rho \in \Cusp^\GL/\sim'} \Symm^\varepsilon_\rho(G)$, we can extend $D_{\rho_u|\cdot|^{x}}: \Symm_\rho^\varepsilon(G) \to \Symm_\rho^\varepsilon(G)$ to an operator $D_{\rho_u|\cdot|^{x}}: \Symm^\varepsilon(G) \to \Symm^\varepsilon(G)$ by making it act as the identity on each $\Symm_{\rho'}^\varepsilon(G)$ with $\rho' \nsim' \rho$.

\begin{prop}
  \label{prop:derpi}
  Let $\rho=\rho_u|\cdot|^{x} \in \Cusp^{\GL}$ be of good parity with $\rho_u$ unitary and $x \neq 0$. Let $\pi = L(\m,\varepsilon) \in \Irr^{G}$ with symmetrical Langlands data $(\m,\varepsilon) \in \Symm^{\varepsilon}(G)$. Then $D_{\rho}^{(k)} (\pi) = L(D_{\rho}(\m,\varepsilon))$, where $D_{\rho}^{(k)}$ is the highest $\rho$-derivative.
\end{prop}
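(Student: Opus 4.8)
\textbf{Proof proposal for Proposition \ref{prop:derpi}.}

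The plan is to reduce the statement to the explicit formulas for highest $\rho$-derivatives in terms of Langlands data proved in \cite[§6 and §7]{AM}, via the transfer bijection $\trans$. First I would translate the hypothesis: write $\rho = \rho_u|\cdot|^x$ with $x > 0$ (the case $x<0$ will be handled by a symmetric argument using $R_\rho^{(k)}$ and the MVW involution, exactly as in \cite{AM}, or simply observed to be vacuous for the highest \emph{left} $\rho$-derivative when $x<0$). Starting from $\pi = L(\m,\varepsilon)$, one recovers the classical Langlands data $(\n;\phi,\eta) = \trans^{-1}(\m,\varepsilon)$: the non-centered segments of $\m$ are exactly the pairs $\Delta + \Delta^\vee$ coming from $\n$, and the centered segments $[-\frac{a-1}{2},\frac{a-1}{2}]_{\rho_u}$ record the summands $\rho_u\boxtimes S_a$ of $\phi$ together with $\eta$. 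Under this dictionary, the set $A_{\rho_u|\cdot|^x}$ of segments of $\m$ ending at $x$ (minus the exceptional index $i_0$) corresponds precisely to the segments of $\n$ ending at $x$ together with ``half'' of the parameter $\rho_u\boxtimes S_{2x+1}$; similarly for $A_{\rho_u|\cdot|^{x-1}}$ and $\rho_u\boxtimes S_{2x-1}$.

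The core step is then a \emph{dictionary comparison}: I would check that the relation $\leadsto$ of Section \ref{sec:dergood} (defined by $\Delta_j\le\Delta_i$) matches the relation used in \cite{AM} to define the best matching between the segments/parameter contributions at levels $x$ and $x-1$, and hence that the sets $A^c_{\rho_u|\cdot|^x}$ and $A^{c,\vee}_{\rho_u|\cdot|^x}$ produced by the best matching function agree with the combinatorial data governing the highest $\rho$-derivative there. Granting this, the operation $\Delta_i\mapsto\Delta_i^-$ on $A^c$, $\Delta_i\mapsto{}^-\Delta_i$ on $A^{c,\vee}$, and $\Delta_i\mapsto{}^-\Delta_i^-$ on the exceptional $[-x,x]_{\rho_u}$ is exactly the effect on symmetric multisegments of removing $\rho$ from the Langlands/parameter data as in \cite{AM}; the sign assignments $\varepsilon'$ on the newly created centered segment $[-x+1,x-1]_{\rho_u}$ must be matched to the change of the character $\eta$ on $\rho_u\boxtimes S_{2x-1}$, which is where the factor $(-1)^t$ and the parameter $t = \card\{[-x,x-1]_{\rho_u}\in\m\}$ enter. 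The condition $(*)$ and the parity of $c$ correspond to the special reducibility situations (the ``mistake'' corrections described in the remark after Definition \ref{def:der}), which in \cite{AM} are precisely the cases where the naive derivative has to be adjusted by replacing $[-x+1,x]_{\rho_u}+[-x,x-1]_{\rho_u}$ by $[-x,x]_{\rho_u}+[-x+1,x-1]_{\rho_u}$ (or vice versa). I would verify these three cases one at a time against \cite[Theorems 6.?, 7.?]{AM}.

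Finally, since $D_\rho(\m,\varepsilon)$ is again an element of $\Symm_\rho^\varepsilon(G)$ (the centered-segment sign compatibility being checked directly from the formulas), applying $\trans^{-1}$ to $D_\rho(\m,\varepsilon)$ yields Langlands data which, by the above identification, coincides with those of $D_\rho^{(k)}(\pi)$; irreducibility of $D_\rho^{(k)}(\pi)$ in the good-parity self-dual case is part of the cited results in \cite{AM}. The main obstacle I anticipate is \emph{not} any single calculation but the careful bookkeeping of signs: reconciling our superscript labels $\ge 0,=0,\le 0$ and the sign function $\varepsilon$ with the character $\eta$ of the component group through the derivative, especially tracking how $\eta(\rho_u\boxtimes S_{2x+1})$ and $\eta(\rho_u\boxtimes S_{2x-1})$ change and how the parity $t$ mediates between them. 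A secondary subtlety is ensuring that the best matching function of Section \ref{sec:bestmatching}, which proceeds from the largest element downward in the order $\le$, genuinely reproduces the matching in \cite{AM}, which may a priori be phrased with a different but equivalent ordering convention; this requires invoking the traversability of $\leadsto$ and the remark in Section \ref{sub:MW} relating $\le$ to the ``precedes'' relation.
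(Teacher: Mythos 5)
Your proposal is essentially the same approach as the paper's proof, which consists of a single sentence citing \cite[Prop.~6.1]{AM} for $x<0$ and \cite[Thm.~7.1]{AM} for $x>0$; what you propose is to spell out the translation that the paper leaves implicit (that Definition \ref{def:der} on $\Symm_\rho^\varepsilon(G)$ is the faithful transcription of the Langlands-data formulas of \cite{AM} under $\trans$). That verification is indeed the content and it is tedious-but-routine exactly in the ways you flag (sign bookkeeping through $t$, and matching of the two best-matching orders). Two small corrections to your plan: the case $x<0$ is neither handled by an MVW/contragredient trick here nor vacuous — $D^{(k)}_\rho(\pi)$ for $\rho=\rho_u|\cdot|^x$ with $x<0$ is a perfectly nontrivial derivative and \cite[Prop.~6.1]{AM} gives a direct formula for it; correspondingly, in Definition \ref{def:der} the condition $(*)$ simply cannot hold when $x<0$ (its first clause is $x>0$), so the $x<0$ case collapses to case (3) and its comparison with \cite[Prop.~6.1]{AM} is the easier of the two. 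Your reference to the relationship between $\le$ and ``precedes'' from the $\GL$ section is the right ingredient for checking the matching functions agree, and the exceptional indices $i_0,j_0$ do encode the signed centered-segment contribution you identify.
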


\begin{proof}
  When $x<0$ the formula is given by \cite[Prop. 6.1.]{AM} and when $x>0$ by \cite[Thm. 7.1.]{AM}.
\end{proof}

\subsection{The bad parity case}
\label{sec:derbad}
Similarly to the previous section, we give formulas for the highest $\rho$-derivative when $\rho$ is of bad parity.

\bigskip

Let $\rho  = \rho_u |\cdot|^{x} \in \Cusp^{\GL}$ be of bad parity with $\rho_u$ unitary and $x \neq 0$. Let $(\m,\varepsilon) \in \Symm_\rho^{\varepsilon}(G)$. Let $t = \card \{ [-x,x-1]_{\rho_u} \in \m \} = \card \{ [-x +1,x]_{\rho_u} \in \m\}$. We write $\m = \Delta_1 + \cdots + \Delta_r$. Let $A_{\rho_u|\cdot|^{x}} = \{ i \in \{1,\cdots,r\}, e(\Delta_i)=x\}$ and $A_{\rho_u|\cdot|^{x-1}} = \{ i \in \{1,\cdots,r\}, e(\Delta_i)=x - 1\}$.

The relation $\leadsto$ between $A_{\rho_u|\cdot|^{x}}$ and $A_{\rho_u|\cdot|^{x-1}}$ is similar to the one for the good parity with the exception that ``a segment cannot protect its own symmetric''. This can only happen for the segments $[-x,x-1]_{\rho_u}$ and $[-x+1,x]_{\rho_u}$ and we have an issue if $t$ is odd. So if $t$ is odd we fix two indices $i_0$ and $j_0$ such that $\Delta_{i_0} = [-x +1,x]_{\rho_u}$ and $\Delta_{j_0} =[-x,x-1]_{\rho_u}$; otherwise set $i_0=j_0=-1$. We define the traversable relation $\leadsto$ between $A_{\rho_u|\cdot|^{x}}$ and $A_{\rho_u|\cdot|^{x-1}}$ by 
\[
    i \in A_{\rho|\cdot|^{x}} \leadsto j \in A_{\rho|\cdot|^{x-1}} \Leftrightarrow \Delta_j \le \Delta_i \text{ and } (i,j) \neq (i_0,j_0).
\]

Let $A_{\rho_u|\cdot|^{x}}^c$ be given by the best matching function (see Section \ref{sec:bestmatching}). If $A_{\rho_u|\cdot|^{x}}^c = \{i_1,\cdots,i_k \}$ then we fix $j_1,\cdots, j_k \in \{1,\cdots,r\}$ such that $\Delta_{j_a} = \Delta_{i_a}^{\vee}$ and $j_a \neq j_b$ if $a\neq b$. We denote by $A_{\rho_u|\cdot|^{x}}^{c,\vee}$ the set $A_{\rho_u|\cdot|^{x}}^{c,\vee} = \{j_1,\cdots,j_k \}$.
Let $\m'=\sum_{i=1}^r \Delta_i'$ where
\[
    \Delta_i' = \left\{
    \begin{array}{ll}
        \Delta_i^{-}           & \text{if } i \in A_{\rho_u|\cdot|^{x}}^c \text{ and } \Delta_i \neq [-x,x]_{\rho_u} \\
        {}^{-}\Delta_i      & \text{if } i \in A_{\rho_u|\cdot|^{x}}^{c,\vee} \text{ and } \Delta_i \neq [-x,x]_{\rho_u}   \\
        {}^{-}\Delta_i^{-}     & \text{if } i \in A_{\rho_u|\cdot|^{x}}^c \text{ and } \Delta_i = [-x,x]_{\rho_u}   \\
        \Delta_i & \text{otherwise }
    \end{array}
    \right.
\]

Finally we can define the operator $D_{\rho_u|\cdot|^{x}}$ on $\Symm_\rho^{\varepsilon}(G)$. Let $c = \card \{ i \in A_{\rho_u|\cdot|^{x}}^c, \Delta_i= [-x,x]_{\rho_u}\}$.

\begin{defi}
  \label{def:derbap}
  Let $\m \in \Symm_\rho^{\varepsilon}(G)$. We define $D_{\rho_u|\cdot|^{x}}(\m) \in \Symm_\rho^{\varepsilon}(G)$ by $D_{\rho_u|\cdot|^{x}}(\m)=\m_x$ where
  \begin{enumerate}
    \item If $c$ is odd. Then
    \[
      \m_x = \m' - [-x + 1,x-1]_{\rho_u} - [-x,x]_{\rho_u} + [-x,x-1]_{\rho_u} + [-x + 1, x]_{\rho_u}.
    \] 
    \item Otherwise, $\m_x = \m'$.
    \end{enumerate}
\end{defi}

\begin{rem}
  Case $(1)$ can be seen as a correction of some ``mistake'' made when transforming the segments. The segments $[-x,x]_{\rho_u} + [-x,x]_{\rho_u}$ have been changed into $[-x + 1, x - 1]_{\rho_u}+ [-x,x]_{\rho_u}$ but they should have been changed into $[-x + 1,x]_{\rho_u} + [-x,x-1]_{\rho_u}$.
\end{rem}

We extend $D_{\rho_u|\cdot|^{x}}: \Symm_\rho^\varepsilon(G) \to \Symm_\rho^\varepsilon(G)$ to an operator $D_{\rho_u|\cdot|^{x}}: \Symm^\varepsilon(G) \to \Symm^\varepsilon(G)$ by making it act as the identity on each $\Symm_{\rho'}^\varepsilon(G)$ with $\rho' \nsim' \rho$.

\begin{prop}
  \label{prop:derpibad}
  Let $\rho=\rho_u|\cdot|^{x} \in \Cusp^{\GL}$ be of bad parity with $\rho_u$ unitary and $x \neq 0$. Let $\pi = L(\m,\varepsilon) \in \Irr^{G}$ with symmetrical Langlands data $(\m,\varepsilon) \in \Symm^{\varepsilon}(G)$. Then $D_{\rho}^{(k)} (\pi) = L(D_{\rho}(\m,\varepsilon))$, where $D_{\rho}^{(k)}$ is the highest $\rho$-derivative.
\end{prop}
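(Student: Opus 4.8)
The plan is to reduce this to the already-proven formulas of \cite{AM} via the dictionary between symmetrical Langlands data and ordinary Langlands data, exactly as was done for the good parity case in Proposition~\ref{prop:derpi}. Recall that for $\rho$ of bad parity all signs in $\Symm_\rho^\varepsilon(G)$ are trivial, so an element $(\m,\varepsilon)$ is just a symmetric multisegment $\m \in \Symm_\rho$, and under the transfer bijection $\trans$ it corresponds to a pair $(\n;\phi,\eta) \in \Data_\rho(G)$ where $\m = \sum_{\Delta \in \n}(\Delta + \Delta^\vee) + \sum_{\rho\boxtimes S_a \in \phi}[\tfrac{-a+1}{2},\tfrac{a-1}{2}]_\rho$. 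The highest $\rho$-derivative $D_\rho^{(k)}(\pi)$ for $\pi = L(\n;\pi(\phi,\eta))$ of $\rho$-bad parity is computed in \cite{AM}: since $\rho$ is of bad parity, no tempered reducibility occurs on the $\Z_\rho$-line, so $\phi$ plays no role and the derivative is governed entirely by the combinatorics of $\n$ (and $\n^\vee$, which is determined by $\n$ since we work after symmetrization). The two cases in \cite{AM}, $x<0$ (Section~6 of \cite{AM}) and $x>0$ (Section~7 of \cite{AM}), correspond to whether the relevant segments of $\n$ have negative or positive center; our symmetric formulation packages both at once because $\m$ contains both $\Delta$ and $\Delta^\vee$.

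First I would unwind Definition~\ref{def:derbap} into the language of \cite{AM}: the set $A_{\rho_u|\cdot|^x}$ of indices of segments of $\m$ ending at $x$ (and its analogue at $x-1$), the traversable relation $\leadsto$ with the single forbidden pair $(i_0,j_0)$ that encodes ``a segment cannot protect its own symmetric'', and the best-matching function of Section~\ref{sec:bestmatching}. The point is that $\m$ being symmetric means $A_{\rho_u|\cdot|^x}$ naturally splits into the part with $e(\Delta)=x$ coming from $\n$ and the dual part $e(\Delta^\vee)=x$ coming from $\n^\vee$; the best matching on the symmetric side restricts, on each half, to the best matching appearing in \cite[§6]{AM} (resp. \cite[§7]{AM}), and the forbidden pair is precisely the bookkeeping needed so that the segment $[-x,x-1]_{\rho_u}$ and its symmetric $[-x+1,x]_{\rho_u}$ are not matched to each other when $t$ is odd — which is exactly the phenomenon handled in \cite{AM} by the parity-of-$t$ discussion. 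Then $A^c_{\rho_u|\cdot|^x}$, the set of unmatched indices, and its dual $A^{c,\vee}_{\rho_u|\cdot|^x}$, produce exactly the segments that get shortened ($\Delta^-$, ${}^-\Delta$, or ${}^-\Delta^-$ for the central segment $[-x,x]_{\rho_u}$), matching the prescription for $\m'$. Finally the correction in case $(1)$, triggered when $c = \card\{i \in A^c : \Delta_i = [-x,x]_{\rho_u}\}$ is odd — so that an odd number of central segments were shortened to ${}^-\Delta^- = [-x+1,x-1]_{\rho_u}$ — matches the ``mistake correction'' appearing in \cite{AM}, replacing one pair $[-x,x]_{\rho_u} + [-x+1,x-1]_{\rho_u}$ (one unchanged central segment plus one over-shortened one) by $[-x,x-1]_{\rho_u} + [-x+1,x]_{\rho_u}$.

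I expect the main obstacle to be the careful matching of conventions between the two papers: \cite{AM} uses decreasing segments while our segments are increasing, so $b(\cdot)$ and $e(\cdot)$ are swapped, and the Langlands data there are split into positive and negative parts treated in two separate sections, whereas our single symmetric formula must be shown to be the union of those two treatments under $\trans$. Concretely, one must check that the order $\le$ on $\Seg_\rho$ used in our best-matching relation corresponds, under contragredient, to the order used on the positive side of \cite{AM}, and that the role of $\phi$ in the \cite{AM} formulas genuinely disappears in the bad case — which follows because for $\rho$ of bad parity the socle $S_\rho^{(r)}$ and $D_\rho^{(r)}$ are controlled purely by the general-linear combinatorics (there is no $\eta$ to track, the central element constraint being vacuous here). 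Once the dictionary is set up, the equality $D_\rho^{(k)}(\pi) = L(D_\rho(\m,\varepsilon))$ is a direct translation of \cite[§6--7]{AM}, and no new representation-theoretic input is needed beyond Proposition~\ref{prop:derivativedual} being unnecessary at this stage.
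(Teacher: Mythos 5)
Your proposal is correct and takes essentially the same approach as the paper: the paper's actual proof is a one-line citation, invoking \cite[Prop.~6.1]{AM} when $x<0$ and \cite[Thm.~7.4]{AM} when $x>0$, which is precisely the reduction-to-\cite{AM} strategy you describe. Your extended commentary on unwinding the best-matching combinatorics and the ``mistake correction'' is a faithful (if more verbose) account of why that citation is legitimate, but it is not a different argument.
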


\begin{proof}
  When $x<0$ the formula is given by \cite[Prop. 6.1.]{AM} and when $x>0$ by \cite[Thm. 7.4.]{AM}.
\end{proof}

\subsection{The derivative of a sum}
The proof of the main theorem will rely on computing $D_\rho(\AD(\m,\varepsilon))$. The definition of $\AD$ being recursive, $\AD(\m,\varepsilon)=(\m_1,\varepsilon_1) + \AD(\m^{\#},\varepsilon^{\#})$, we need to relate the derivative of a multisegment of the form $(\m_1,\varepsilon_1)+(\m',\varepsilon')$ to $D_\rho(\m_1,\varepsilon_1)$ and $D_\rho(\m',\varepsilon')$. In general, the derivative of a sum does not behave nicely. However, in our setting, $(\m_1,\varepsilon_1)$, the first multisegment produced by \( \AD \), satisfies certain favorable properties that make this comparison possible.

\bigskip

Let $\rho \in \Cusp^{\GL}$. We have recalled in Sections \ref{sec:dergood} and \ref{sec:derbad} explicit formulas for the highest derivative $D^{(k)}_\rho$ on $\Symm^\varepsilon(G)$. Using \cite[Prop. 6.1.]{AM}, \cite[Thm. 7.1.]{AM} and \cite[Thm. 7.4.]{AM} we also get an explicit formula for $S^{(1)}_\rho$ on $\Symm^\varepsilon(G)$.

Let $(\m,\varepsilon) \in \Symm^\varepsilon(G)$. We define $D_\rho^{\max - 1}(\m,\varepsilon)$ by $D_\rho^{\max - 1}(\m,\varepsilon)=(\m,\varepsilon)$ if $(\m,\varepsilon)$ is $\rho$-reduced; and $D_\rho^{\max - 1}(\m,\varepsilon) = S^{(1)}_\rho \circ D_\rho(\m,\varepsilon)$ otherwise.

\begin{lem}
  \label{lem:dersum}
  Let $\rho =\rho_u |\cdot|^{x}\in \Cusp^{\GL}$ with $\rho_u$ self-dual unitary and $x \neq 0$. Let $(\m,\varepsilon) \in \Symm^{\varepsilon}(G)$. Let $(a,b) \in (x\Z)^2$ such that $a \le b$. If $\rho$ is good and $a+b=0$, let $\m_1 = [a,b]_{\rho_u}$ and $\varepsilon_1 \in \{-1,1\}$. Otherwise, let $\m_1 = [a,b]_{\rho_u} + [-b,-a]_{\rho_u}$. We assume that $(\m_1,\varepsilon_1)+(\m,\varepsilon) \in \Symm^{\varepsilon}(G)$.
  \begin{enumerate}
    \item If $b\neq x,x-1$ and $a \neq -x,-x+1$, then $D_{\rho_u|\cdot|^{x}}((\m_1,\varepsilon_1)+(\m,\varepsilon))=(\m_1,\varepsilon_1)+D_{\rho_u|\cdot|^{x}}(\m,\varepsilon)$.
    \item If $b=x$, $a \neq -x,-x+1$ and there is no segment $\Delta \in \m$ supported in $\Z_\rho$ such that $e(\Delta)=x-1$ and $\Delta \le [a,b]_{\rho_u}$, then $D_{\rho_u|\cdot|^{x}}((\m_1,\varepsilon_1)+(\m,\varepsilon))=D_{\rho_u|\cdot|^{x}}(\m_1,\varepsilon_1)+D_{\rho_u|\cdot|^{x}}(\m,\varepsilon)$.
    \item If $b=x-1$, $a \neq -x,-x+1$ and for all $\Delta \in \m$ supported in $\Z_\rho$ such that $e(\Delta)=x$ then $[a,b]_{\rho_u} \le \Delta$, then $D_{\rho_u|\cdot|^{x}}((\m_1,\varepsilon_1)+(\m,\varepsilon))=(\m_1,\varepsilon_1)+D_{\rho_u|\cdot|^{x}}^{\max - 1}(\m,\varepsilon)$.
  \end{enumerate}
\end{lem}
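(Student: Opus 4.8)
The plan is to prove all three statements by carefully tracking what happens to the best matching function and to the auxiliary data ($t$, $c$, the sets $A_{\rho_u|\cdot|^x}$, $A_{\rho_u|\cdot|^{x-1}}$, and the condition $(*)$) when one passes from $(\m,\varepsilon)$ to $(\m_1,\varepsilon_1)+(\m,\varepsilon)$. The key observation is that the segments of $\m_1$ are $[a,b]_{\rho_u}$ and (unless we are in the good centered case) $[-b,-a]_{\rho_u}$, and their ends are $b$ and $-a$. In case (1), the hypotheses $b\neq x,x-1$ and $a\neq -x,-x+1$ mean neither of these ends equals $x$ or $x-1$ and neither of their duals' ends equals $x$ or $x-1$; hence the segments of $\m_1$ lie in none of the sets $A_{\rho_u|\cdot|^x}$, $A_{\rho_u|\cdot|^{x-1}}$, and they play no role in the relation $\leadsto$, in $t$, in $c$, or in condition $(*)$. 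So adding $\m_1$ does not alter the best matching function on $\m$ at all, the segments of $\m_1$ are left untouched by the rule defining $\m'$ (they fall into the ``otherwise'' case), and the possible corrections in Definition~\ref{def:der}/\ref{def:derbap} are unchanged. One should also check that $\m_1$ contributes no centered segment whose sign could interfere — in the good centered case $\m_1=[a,b]_{\rho_u}$ with $a+b=0$, but then $b\neq x$ combined with $a+b=0$ forces $b\neq -x+1$ etc., and in any event its sign $\varepsilon_1$ is simply carried along. This gives $D_{\rho_u|\cdot|^x}((\m_1,\varepsilon_1)+(\m,\varepsilon))=(\m_1,\varepsilon_1)+D_{\rho_u|\cdot|^x}(\m,\varepsilon)$.

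For case (2), $b=x$, so $[a,b]_{\rho_u}=[a,x]_{\rho_u}\in A_{\rho_u|\cdot|^x}$ (its dual $[-x,-a]_{\rho_u}$ has end $-a\neq x,x-1$ by hypothesis, so contributes nothing). The crucial input is the extra hypothesis that no segment $\Delta\in\m$ supported on $\Z_\rho$ with $e(\Delta)=x-1$ satisfies $\Delta\le[a,x]_{\rho_u}$; equivalently, $[a,x]_{\rho_u}$ cannot be matched (protected) by anything in $A_{\rho_u|\cdot|^{x-1}}$ coming from $\m$. Since the best matching function is built recursively from the largest element of $A_{\rho_u|\cdot|^x}$ downward, and $[a,x]_{\rho_u}$ is not related to any element of $A_{\rho_u|\cdot|^{x-1}}$, it is forced into $A^c_{\rho_u|\cdot|^x}$; moreover it is never chosen as a value $f(i)$ for other $i$ because it is not in the target set $A_{\rho_u|\cdot|^{x-1}}$. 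Hence inserting $[a,x]_{\rho_u}$ leaves the matching on the rest of $\m$ exactly as before, and the new contribution to $\m'$ is exactly $[a,x]_{\rho_u}^-=[a,x-1]_{\rho_u}$ together with the dual move on $[-x,-a]_{\rho_u}$, i.e.\ precisely the segments $D_{\rho_u|\cdot|^x}(\m_1,\varepsilon_1)=[a,x-1]_{\rho_u}+[-x+1,-a]_{\rho_u}$ (one must also verify that $[a,x]_{\rho_u}\neq[-x,x]_{\rho_u}$ here, which holds since $a\neq -x$, so no ${}^-\Delta^-$ correction occurs, and that adding $[a,x]_{\rho_u}$ does not flip the parity of $c$ since it is not a centered segment). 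So the derivative of the sum splits as $D_{\rho_u|\cdot|^x}(\m_1,\varepsilon_1)+D_{\rho_u|\cdot|^x}(\m,\varepsilon)$.

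For case (3), $b=x-1$, so $[a,x-1]_{\rho_u}\in A_{\rho_u|\cdot|^{x-1}}$ and the hypothesis says every $\Delta\in\m$ with $e(\Delta)=x$ satisfies $[a,x-1]_{\rho_u}\le\Delta$, i.e.\ $\Delta\leadsto[a,x-1]_{\rho_u}$ for every $\Delta\in A_{\rho_u|\cdot|^x}$. Consequently $[a,x-1]_{\rho_u}$ is available as a protector for whichever element of $A_{\rho_u|\cdot|^x}$ is being processed last when the recursion reaches it; the effect of adding one extra available target with this maximal relatedness is exactly to ``save'' one element of $A^c_{\rho_u|\cdot|^x}$ — that is, the new best matching enlarges $A^0$ by one, equivalently $D_{\rho_u|\cdot|^x}$ on $\m$ gets replaced by $S_\rho^{(1)}\circ D_{\rho_u|\cdot|^x}(\m,\varepsilon)=D_{\rho_u|\cdot|^x}^{\max-1}(\m,\varepsilon)$, while $[a,x-1]_{\rho_u}$ itself (now protected) and its dual are left unchanged, contributing $(\m_1,\varepsilon_1)$ intact. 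The traversability property \eqref{eq: specrelation}, together with the fact that $[a,x-1]_{\rho_u}$ dominates every element of $A_{\rho_u|\cdot|^x}$ in $\leadsto$, is what makes ``inserting a universally-related target'' equivalent to ``applying $S^{(1)}_\rho$''; one reconciles this with the explicit formula for $S^{(1)}_\rho$ on $\Symm^\varepsilon(G)$ obtained from \cite[Prop. 6.1., Thm. 7.1., Thm. 7.4.]{AM}. Again one must check the centered-segment bookkeeping: $[a,x-1]_{\rho_u}$ has center $(a+x-1)/2$, and if this is $0$ one separately verifies the sign assignment $\varepsilon'([-x+1,x-1]_{\rho_u})=(-1)^t\varepsilon([-x,x]_{\rho_u})$ is unaffected, using that $\m_1$'s centered piece already carries its own sign $\varepsilon_1$.

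The main obstacle will be case (3): unlike (1) and (2), where the added segments are essentially inert, here the inserted target genuinely perturbs the recursive matching, and one must show that this perturbation amounts to precisely one application of $S_\rho^{(1)}$ — neither more nor less. This requires comparing the two runs of the best-matching recursion step by step and invoking traversability to show the domain $A^0$ grows by exactly one element; keeping track of the interaction with the parity-of-$c$ corrections and with condition $(*)$ in the good case (which can itself be turned on or off when one adds $[-x,x]_{\rho_u}$-type segments, though the hypothesis $a\neq-x$ is designed precisely to avoid that) is the delicate part. The positive and negative cases of \cite{AM} are unified here, so one can treat $x>0$ and $x<0$ simultaneously in the $\Symm^\varepsilon(G)$ formalism, which keeps the argument to a single case analysis rather than two.
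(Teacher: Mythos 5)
The paper's own proof of this lemma is the single sentence ``It follows directly from the formula of $D_{\rho|\cdot|^{x}}$,'' so your proposal is in the same spirit — you are simply spelling out the verification the paper leaves implicit. For parts (1) and (2) the plan is correct: you correctly identify that the excluded endpoints $a\neq -x,-x+1$, $b\neq x,x-1$ (resp.\ $b=x$) keep the segments of $\m_1$ out of $A_{\rho_u|\cdot|^{x-1}}$ and prevent them from being $[-x,x]_{\rho_u}$, $[-x,x-1]_{\rho_u}$, or $[-x+1,x]_{\rho_u}$, so $t$, $c$, the condition $(*)$, and the matching on $\m$ are unchanged, and the new segments either sit inertly (case 1) or land directly in $A^c_{\rho_u|\cdot|^x}$ with an empty protector set (case 2).

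For part (3), however, your justification has a genuine gap. You attribute the crucial step — that inserting the universal protector $[a,x-1]_{\rho_u}$ into $X = A_{\rho_u|\cdot|^{x-1}}$ shrinks $Y^c = A^c_{\rho_u|\cdot|^x}$ by exactly one — to traversability \eqref{eq: specrelation}. But traversability alone does not yield this. Consider abstractly $X=\{x_1\}$, $Y=\{y_1<y_2\}$ with $y_1\leadsto x_1$ but $y_2\not\leadsto x_1$, and insert $x_0>x_1$ universally related to $Y$: the recursion assigns $f(x_0)=y_1$, then $x_1$ has no remaining candidate, and $|Y^c|$ stays at $1$. This relation is vacuously traversable, so traversability does not exclude it. What actually rules this configuration out is a stronger, order-theoretic property specific to $\leadsto$ on segments: because $i\leadsto j$ iff $\Delta_j\le\Delta_i$ for a total order $\le$, the relation is simultaneously an up-set in $Y$ and a down-set in $X$ (a ``staircase'' relation). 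In particular $y_1\leadsto x_1$ and $y_2>y_1$ force $y_2\leadsto x_1$, eliminating the counter-configuration. Your step-by-step comparison of the two runs of the recursion should invoke this monotonicity (not just traversability) to show the domain $X^0$ gains exactly one element and that the saved element of $Y^c$ matches the segment extended by $S^{(1)}_{\rho}$. With that substitution the plan is sound; the remaining bookkeeping on $c$ and on $(*)$ is as you indicate, and the excluded endpoints again prevent $[a,x-1]_{\rho_u}$ from being centered or from being the $j_0$-excluded segment.
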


\begin{proof}
It follows directly from the formula of $D_{\rho|\cdot|^{x}}$.
\end{proof}

We will also need an analogous formula with $D_{Z([0,1]_{\rho_u})}$. An explicit formula for the $Z([0,1]_{\rho_u})$-derivative can be found in \cite[Algorithm A.4.]{AtobeSocle}. As before, for $(\m,\varepsilon) \in \Symm^\varepsilon(G)$ we define $D_{Z([0,1]_{\rho_u})}^{\max - 1}(\m,\varepsilon)$ by $D_{Z([0,1]_{\rho_u})}^{\max - 1}(\m,\varepsilon)=(\m,\varepsilon)$ if $(\m,\varepsilon)$ is $Z([0,1]_{\rho_u})$-reduced; and $D_\rho^{\max - 1}(\m,\varepsilon) = S^{(1)}_{Z([0,1]_{\rho_u})} \circ D_{Z([0,1]_{\rho_u})}(\m,\varepsilon)$ otherwise.

\begin{lem}
\label{lem:dersum01}
Let $\rho = \rho_u |\cdot|^{x} \in \Cusp^{\GL}$ with $\rho_u$ self-dual unitary and $x \in \frac{1}{2}\Z$ be given. Let $(\m,\varepsilon) \in \Symm^{\varepsilon}(G)$. Let $(a,b) \in (x\Z)^2$ with $a \le b$ and $a+b \le 0$. If $\rho$ is good and $a+b=0$, let $\m_1 = [a,b]_{\rho_u}$ and choose $\varepsilon_1 \in \{-1,1\}$. Otherwise, let $\m_1 = [a,b]_{\rho_u} + [-b,-a]_{\rho_u}$. We assume that $(\m_1,\varepsilon_1)+(\m,\varepsilon) \in \Symm^{\varepsilon}(G)$ and that $(\m_1,\varepsilon_1)+(\m,\varepsilon)$ is $\rho_u |\cdot|$-reduced. We also assume that $a \le \min\{b(\Delta), \Delta \in \m\}$ and that for all $\Delta \in \m$ with $b(\Delta)=a$, $l([a,b]) \ge l(\Delta)$.
\begin{enumerate}
\item If $a, b \notin \{0,1,-1\}$, then
\[
D_{Z([0,1]_{\rho_u})}((\m_1,\varepsilon_1)+(\m,\varepsilon)) = (\m_1,\varepsilon_1)+D_{Z([0,1]_{\rho_u})}(\m,\varepsilon).
\]
\item If $b=0$, $a \neq -1,0$ and $(\m,\varepsilon)$ is $1$-reduced, then
\[
D_{Z([0,1]_{\rho_u})}((\m_1,\varepsilon_1)+(\m,\varepsilon)) = (\m_1,\varepsilon_1)+D_{Z([0,1]_{\rho_u})}(\m,\varepsilon).
\]
\item If $b=0$, $a \neq -1,0$ and $(\m,\varepsilon)$ is not $1$-reduced, then
\[
D_{Z([0,1]_{\rho_u})}((\m_1,\varepsilon_1)+(\m,\varepsilon))
= [a,-1]_{\rho_u} + [1,-a]_{\rho_u} + D_{Z([0,1]_{\rho_u})}(D_{\rho_u|\cdot|}(\m,\varepsilon)).
\]
\item If $b=-1$, then
\[
D_{Z([0,1]_{\rho_u})}((\m_1,\varepsilon_1)+(\m,\varepsilon))
= (\m_1,\varepsilon_1)+D_{Z([0,1]_{\rho_u})}^{\max - 1}(\m,\varepsilon).
\]
\end{enumerate}
\end{lem}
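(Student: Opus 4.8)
The plan is to argue exactly as in the proof of Lemma~\ref{lem:dersum}, the only new ingredient being the explicit combinatorial description of the highest $Z([0,1]_{\rho_u})$-derivative on $\Symm^\varepsilon(G)$. Such a description is available: one transcribes \cite[Algorithm A.4.]{AtobeSocle} into the language of $\Symm_\rho^\varepsilon(G)$, just as Sections~\ref{sec:dergood} and~\ref{sec:derbad} do for $D_{\rho_u|\cdot|^{x}}$. As there, the outcome is governed by a best matching function (in the sense of Section~\ref{sec:bestmatching}) between certain segments of the multisegment whose ends lie among $\rho_u|\cdot|^{0},\rho_u|\cdot|^{1}$ and certain segments whose ends lie one step below, followed by a prescribed shortening of the matched segments, of the unmatched segments, and (by symmetry) of their mirror segments, plus a correction term controlled by a parity count. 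The statement then reduces to a step by step comparison of the run of this algorithm on $(\m_1,\varepsilon_1)+(\m,\varepsilon)$ with its run on $(\m,\varepsilon)$, exactly the kind of bookkeeping performed in Lemma~\ref{lem:dersum}.

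Three facts organize this comparison. First, the derivative interacts with a segment only through an endpoint lying in $\{-1,0,1\}$ (the endpoints of the segments $[0,1]_{\rho_u}$ and $[-1,0]_{\rho_u}$ of $Z([0,1]_{\rho_u})$ and its mirror); since $\m_1$ consists of $[a,b]_{\rho_u}$ and $[-b,-a]_{\rho_u}$, it is entirely inert when $a,b\notin\{-1,0,1\}$, and then the best matching for $(\m_1,\varepsilon_1)+(\m,\varepsilon)$ coincides with that of $(\m,\varepsilon)$ and $\m_1$ is left untouched --- this is case~(1). Second, the hypotheses $a\le\min\{b(\Delta):\Delta\in\m\}$ and $l([a,b])\ge l(\Delta)$ for every $\Delta\in\m$ with $b(\Delta)=a$ force $[a,b]_{\rho_u}$ into the smallest slot (for $\le$) among segments sharing its right end, and dually $[-b,-a]_{\rho_u}$ into the largest slot among those sharing its left end; hence the best matching recursion of Section~\ref{sec:bestmatching} leaves it no freedom: each of these two segments either passes through unchanged or is uniformly shortened, to $[a,-1]_{\rho_u}$ and $[1,-a]_{\rho_u}$ respectively. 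Third, the assumption that $(\m_1,\varepsilon_1)+(\m,\varepsilon)$ is $\rho_u|\cdot|$-reduced is used (a) to exclude the value $b=1$, so that cases~(1)--(4) exhaust all possibilities: indeed, by the second fact, if $b=1$ then the end $\rho_u|\cdot|^{1}$ of $[a,1]_{\rho_u}$ is unmatched in the formula for $D_{\rho_u|\cdot|}$ of Section~\ref{sec:dergood}, giving a nonzero $\rho_u|\cdot|$-derivative, a contradiction; and (b), in the remaining cases $b\in\{0,-1\}$, to decide whether the relevant endpoints of $\m_1$ take part in the matching at all.

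Granting these, the four cases become direct verifications. Case~(1) is the first fact. For $b=0$ (cases~(2) and~(3)) the right end $\rho_u|\cdot|^{0}$ of $[a,0]_{\rho_u}$ is relevant, but to enter a copy of $Z([0,1]_{\rho_u})$ it must be matched with a segment whose right end is $\rho_u|\cdot|^{1}$: if $(\m,\varepsilon)$ is $1$-reduced there is no such segment, so by the second fact $\m_1$ is left alone and $D_{Z([0,1]_{\rho_u})}((\m_1,\varepsilon_1)+(\m,\varepsilon))=(\m_1,\varepsilon_1)+D_{Z([0,1]_{\rho_u})}(\m,\varepsilon)$; if $(\m,\varepsilon)$ is not $1$-reduced, the algorithm run on the sum uses $[a,0]_{\rho_u}$ as the matching partner for the segments of $\m$ ending at $\rho_u|\cdot|^{1}$, thereby extracting the full $\rho_u|\cdot|$-derivative of $\m$, shortening $\m_1$ to $[a,-1]_{\rho_u}+[1,-a]_{\rho_u}$ and (using $a\ne-1,0$, which leaves $[a,-1]_{\rho_u}$ and $[1,-a]_{\rho_u}$ with no further match available in $D_{\rho_u|\cdot|}(\m,\varepsilon)$) reducing the remaining work to $D_{Z([0,1]_{\rho_u})}(D_{\rho_u|\cdot|}(\m,\varepsilon))$ --- this is the formula of case~(3). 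For $b=-1$ (case~(4)), $[a,-1]_{\rho_u}$ and $[1,-a]_{\rho_u}$ are visible to the derivative, and the $\rho_u|\cdot|$-reducedness of the sum forces exactly one of the copies of $Z([0,1]_{\rho_u})$ removed by the highest derivative of $\m$ alone to be linked to these segments of $\m_1$; in the sum that copy can no longer be removed without disturbing $\m_1$, so $\m_1$ exits unchanged and one copy fewer is stripped from $\m$, which is $D_{Z([0,1]_{\rho_u})}((\m_1,\varepsilon_1)+(\m,\varepsilon))=(\m_1,\varepsilon_1)+D_{Z([0,1]_{\rho_u})}^{\max-1}(\m,\varepsilon)$. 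Throughout, signs of centered segments are propagated by the rule of Section~\ref{sec:dergood}, and well-definedness of the output in $\Symm^\varepsilon(G)$ follows from the same parity argument used there.

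The step I expect to be the real obstacle is case~(3): one must show not merely that, once everything settles, the leftover $Z([0,1]_{\rho_u})$-derivative is taken of $D_{\rho_u|\cdot|}(\m,\varepsilon)$ rather than of $(\m,\varepsilon)$, but that the order in which the underlying algorithm processes the ends $\rho_u|\cdot|^{1}$ and $\rho_u|\cdot|^{0}$ --- dictated by the order $\le$ on segments together with the best matching recursion --- is exactly the one that first exhausts the $\rho_u|\cdot|$-derivative of $\m$ using $[a,0]_{\rho_u}$ and only then continues. This is where the minimality and maximal length hypotheses on $(a,b)$ are genuinely needed, and where $D_{Z([0,1]_{\rho_u})}$ departs from the single segment derivative handled in Lemma~\ref{lem:dersum}; I would isolate it as a short auxiliary claim relating the best matching attached to $(\m_1,\varepsilon_1)+(\m,\varepsilon)$ to the one attached to $(\m,\varepsilon)$, proved by an induction of the same flavour as Lemma~\ref{lem:sizem1}.
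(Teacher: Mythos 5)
Your strategy is genuinely different from the paper's for cases (2)--(4). The paper does use the combinatorial description of $D_{Z([0,1]_{\rho_u})}$ from \cite[Algorithm A.4]{AtobeSocle} for case~(1), as you do; but for the remaining three cases it explicitly declines to pursue the combinatorics (``although one could also use \cite[Algorithm A.4]{AtobeSocle}, we instead provide a simple proof using basic facts from representation theory'') and instead proves a small auxiliary statement (Lemma~\ref{lem:simple}) about socle irreducibility of $Z([0,1]_{\rho_u}) \times \tau \rtimes \sigma$, and combines it with the commutativity result \cite[Thm.~0.1]{BLM} (which lets one interchange $L([a,b]_{\rho_u})$ with $Z([0,1]_{\rho_u})^k$ when $b=0$, and $L([a,b]_{\rho_u}+[1,1]_{\rho_u})$ with $Z([0,1]_{\rho_u})^k$ when $b=0$ but $\m$ is not $\rho_u|\cdot|$-reduced) and, for case~(4), with the length-two decomposition of $L([a,-1]_{\rho_u})\times Z([0,1]_{\rho_u})$. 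The representation-theoretic route buys one a short, uniform argument that avoids having to trace the $Z([0,1])$-matching step by step; your combinatorial route, if carried out, buys a purely explicit formula-level verification at the price of much heavier bookkeeping.

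As a blind proposal your sketch is plausible but not watertight. In case~(2) you argue that $(\m,\varepsilon)$ being $1$-reduced means ``there is no segment ending at $\rho_u|\cdot|^1$''; this is false --- being $1$-reduced means the best matching for $D_1$ is complete (every segment ending at $\rho_u|\cdot|^1$ has a partner ending at $\rho_u|\cdot|^0$), not that no such segment exists. The correct reason $[a,0]_{\rho_u}$ stays inert here is that, since the matching for $D_1$ inside $\m$ alone is already saturated, adding $[a,0]_{\rho_u}$ (which sits at the smallest slot for $\le$ among segments ending at $\rho_u|\cdot|^0$) cannot create a new $Z([0,1])$ pair; this is exactly what the paper's representation-theoretic argument encapsulates cleanly. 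More importantly, case~(3) --- which you correctly identify as the crux --- is left as an ``auxiliary claim'' without any argument; showing that in the run of \cite[Algorithm A.4]{AtobeSocle} on the sum the $\rho_u|\cdot|$-derivative of $\m$ is extracted first and fully, before the remaining $Z([0,1])$ pairs are stripped, is precisely the hard combinatorial fact, and the hypotheses $a\le\min\{b(\Delta)\}$ and maximal length of $[a,b]_{\rho_u}$ are used nontrivially there. Until that claim is proved, the proposal has a genuine gap.
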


\begin{proof}
Part (1) follows immediately from \cite[Algorithm A.4]{AtobeSocle}. Let us focus on the other three parts. Although one could also use \cite[Algorithm A.4]{AtobeSocle}, we instead provide a simple proof using basic facts from representation theory. We will repeatedly use the following simple lemma:

\begin{lem}\label{lem:simple}
Let $\tau \in \Irr^\GL$, $\sigma \in \Irr^G$ such that:
\begin{enumerate}
\item The induced representation $\tau \rtimes \sigma$ is SI.
\item $\tau$ is left-$\rho_u |\cdot|$-reduced and $\sigma$ is $\rho_u |\cdot|^{x}$-reduced.
\item $\tau$ is left-$Z([0,1]_{\rho_u})$-reduced and $\sigma$ is $Z([0,1]_{\rho_u})$-reduced.
\end{enumerate}
Then,
\[
Z([0,1]_{\rho_u}) \times \tau \rtimes \sigma
\]
is SI.
\end{lem}

\begin{proof}
The conditions imply that $Z([0,1]_{\rho_u}) \otimes \soc(\tau \rtimes \sigma)$ appears with multiplicity $1$ in
\[
\Jac_P^G(Z([0,1]_{\rho_u}) \times \tau \rtimes \sigma)
\]
for some parabolic subgroup $P$ of a certain $G$. This implies the claim.
\end{proof}

Denote by $\pi$ (\resp $\pi'$) the representation associated to $(\m,\varepsilon)$ (\resp $(\m_1,\varepsilon_1)+(\m,\varepsilon)$). Then, by our hypotheses, $\pi'$ is the socle of $L([a,b]_{\rho_u})\rtimes \pi$.

Assume first that $b=0$ and $\pi$ is $\rho_u |\cdot|$-reduced. If we denote by $k$ the integer such that $D_{Z([0,1]_{\rho_u})}(\pi)=D^{(k)}_{Z([0,1]_{\rho_u})}(\pi)$, we have
\[
L([a,b]_{\rho_u}) \times Z([0,1]_{\rho_u})^k \rtimes D_{Z([0,1]_{\rho_u})} (\pi)
\]
is SI. Indeed, as $b=0$, by \cite[Thm. 0.1]{BLM}, it is isomorphic to
\[
Z([0,1]_{\rho_u})^k \times L([a,b]_{\rho_u}) \rtimes D_{Z([0,1]_{\rho_u})} (\pi),
\]
and the claim follows from Lemma \ref{lem:simple}. As $\pi'$ is clearly in the socle, we deduce
\[
D_{Z([0,1]_{\rho_u})} (\pi')
= \soc(L([a,b]_{\rho_u}) \rtimes D_{Z([0,1]_{\rho_u})} (\pi)).
\]
This proves (2).

Now assume that $b=0$ but $\pi$ is not $\rho_u |\cdot|$-reduced. Then, by \cite[Thm. 0.1]{BLM}, we have
\begin{align}
L([a,b]_{\rho_u}+[1,1]_{\rho_u}) \times Z([0,1]_{\rho_u})^k \rtimes D_{Z([0,1]_{\rho_u})} D_{\rho_u|\cdot|} (\pi)
& \simeq \label{eq:nuevo} \\
Z([0,1]_{\rho_u})^k \times L([a,b]_{\rho_u}+[1,1]_{\rho_u}) \rtimes D_{Z([0,1]_{\rho_u})} D_{\rho_u|\cdot|} (\pi). \notag
\end{align}
This representation embeds into
\[
Z([0,1]_{\rho_u})^{k+1} \times L([a,b-1]_{\rho_u}) \rtimes D_{Z([0,1]_{\rho_u})} D_{\rho_u|\cdot|} (\pi).
\]
Lemma \ref{lem:simple} implies that this induced is SI and so is \eqref{eq:nuevo}. But again this socle is isomorphic to $\pi'$. As proving it requires some more work, let's give some details. First see that, as $\pi'$ embeds in $ L([a,b]_{\rho_u}) \times \rho_u|\cdot| \rtimes D_{\rho_u|\cdot|} (\pi)$, it embeds in $\sigma_0 \rtimes D_{\rho_u|\cdot|} (\pi)$ for some subquotient $\sigma_0$ of $ L([a,b]_{\rho_u}) \times \rho_u|\cdot| $. But $\pi'$ is $\rho_u |\cdot|$-reduced, so we deduce that $\sigma_0 \simeq L([a,b]_{\rho_u}+[1,1]_{\rho_u})$. Therefore,
\begin{align*}
\soc(L([a,b]_{\rho_u}+[1,1]_{\rho_u}) \times Z([0,1]_{\rho_u})^k \rtimes D_{Z([0,1]_{\rho_u})} D_{\rho_u|\cdot|} (\pi))
& \simeq \\
\soc(L([a,b]_{\rho_u}+[1,1]_{\rho_u}) \times \soc(Z([0,1]_{\rho_u})^k \rtimes D_{Z([0,1]_{\rho_u})} D_{\rho_u|\cdot|} (\pi)))
& \simeq \\
\soc(L([a,b]_{\rho_u}+[1,1]_{\rho_u}) \times D_{\rho_u|\cdot|} (\pi)),
\end{align*}
which contains $\pi'$ and is thus isomorphic to it. This proves (3).

Finally, assume $b=-1$. Then $\pi'$ is a subrepresentation of
\[
L([a,b]_{\rho_u}) \times Z([0,1]_{\rho_u})^k \rtimes D_{Z([0,1]_{\rho_u})} (\pi).
\]
Here, $L([a,b]_{\rho_u}) \times Z([0,1]_{\rho_u})^k$ is not irreducible but has length $2$, with composition factors $\soc(Z([0,1]_{\rho_u}) \times L([a,b]_{\rho_u})) \times Z([0,1]_{\rho_u})^{k-1}$ and $\soc(L([a,b]_{\rho_u}) \times Z([0,1]_{\rho_u})) \times Z([0,1]_{\rho_u})^{k-1}$. By our assumption on $\m_1$, we must have that $\pi'$ is a subrepresentation of
\begin{align*}
\soc(L([a,b]_{\rho_u}) \times Z([0,1]_{\rho_u})) \times Z([0,1]_{\rho_u})^{k-1} \rtimes D_{Z([0,1]_{\rho_u})} (\pi)
& \simeq \\
Z([0,1]_{\rho_u})^{k-1} \times \soc(L([a,b]_{\rho_u}) \times Z([0,1]_{\rho_u})) \rtimes D_{Z([0,1]_{\rho_u})} (\pi).
\end{align*}
By Lemma \ref{lem:simple}, this induced representation is SI, and as before, this proves (4).
\end{proof}

\section{Proof in the good parity case}
\label{sec:proofgood}

The goal of this section is to prove Theorem~\ref{thm:MAIN} in the case of an irreducible representation of good parity. To do so, we will work with symmetric Langlands data throughout this section. More precisely, we will establish the following equivalent formulation of the theorem.

\begin{thm}
  \label{thm:ADAubertDual}
  Let $\pi \in \Irr^{G}$ be of good parity with symmetrical Langlands data $\pi \simeq L(\m,\varepsilon)$. Then we have
  \[
    \hat{\pi} \simeq L(\AD(\m,\varepsilon)).
  \]
\end{thm}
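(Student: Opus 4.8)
The plan is to mimic the Mœglin--Waldspurger argument recalled in Paragraphs~\ref{sub:MWidea} and~\ref{sub:badalgo}: prove the statement by strong induction on $\deg(\pi)$, using the theory of derivatives as the inductive lever. Since the Aubert--Zelevinsky duality commutes with the Jantzen decomposition (Theorem~\ref{thm:Jantzen}) and $\AD$ respects the line decomposition of $\Symm^\varepsilon(G)$, it suffices to treat $\pi$ of good parity supported on a single line $\Z_\rho$, which is exactly the setting of $\AD_\rho$. We fix such a $\rho = \rho_u|\cdot|^x$ and distinguish the usual two cases of the Atobe--Mínguez algorithm (Section~\ref{sec:AM}): either $\pi$ admits a nonzero highest $\rho|\cdot|^x$-derivative for some $x \neq 0$, or $\pi$ is $\rho|\cdot|^x$-reduced for all $x \neq 0$ (the ``co-reduced'' case, where the dual is given directly).

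\textbf{The inductive step via derivatives.} Suppose $D^{(k)}_{\rho'}(\pi) \neq 0$ with $\rho' = \rho_u|\cdot|^{x}$, $x \neq 0$, and write $\pi_0 = D^{(k)}_{\rho'}(\pi)$, so $\pi = S^{(k)}_{?}(\pi_0)$ in the appropriate sense. By Proposition~\ref{prop:derivativedual}, $\widehat{\pi_0} = D^{(k)}_{\rho'^\vee}(\widehat\pi)$ (or the $Z([0,1]_{\rho_u})$-derivative of $\widehat\pi$ when $\rho'$ is self-dual and one uses the $L([-1,0]_{\rho_u})$-derivative). By the induction hypothesis, $\widehat{\pi_0} \simeq L(\AD(\m_0,\varepsilon_0))$ where $(\m_0,\varepsilon_0)$ is the symmetrical Langlands data of $\pi_0$. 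The key point to verify is the \emph{commutation identity}
\[
  \AD \circ D_{\rho_u|\cdot|^{x}} = D_{\rho_u|\cdot|^{-x}} \circ \AD
  \qquad\text{on }\Symm^\varepsilon_\rho(G),
\]
together with the analogous identity relating the $\rho_u|\cdot|^{x}$-operator and the $Z([0,1]_{\rho_u})$-operator in the self-dual case. Here Proposition~\ref{prop:derpi} (and Proposition~\ref{prop:derpibad} for bad parity, used for the other sections) identifies $D^{(k)}_{\rho'}(\pi)$ with $L(D_{\rho_u|\cdot|^{x}}(\m,\varepsilon))$ on the level of symmetrical data. Given this commutation, one recovers $(\m,\varepsilon)$ from $(\m_0,\varepsilon_0)$ by the socle/co-derivative step $S^{(k)}$, and the corresponding combinatorial ``inverse'' operation on $\Symm^\varepsilon_\rho(G)$ is obtained from Lemma~\ref{lem:dersum} and Lemma~\ref{lem:dersum01}: the first multisegment $(\m_1,\varepsilon_1)$ produced by $\AD$ has precisely the favorable position properties needed for those lemmas to apply, so that $D_{\rho_u|\cdot|^{x}}$ distributes over the sum $(\m_1,\varepsilon_1)+\AD(\m^\#,\varepsilon^\#)$ in a controlled way. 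Unwinding the recursion $\AD(\m,\varepsilon) = (\m_1,\varepsilon_1)+\AD(\m^\#,\varepsilon^\#)$ then forces $L(\AD(\m,\varepsilon)) = \widehat\pi$.

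\textbf{The base / co-reduced case.} When $\pi$ is $\rho|\cdot|^x$-reduced for every $x \neq 0$, the Atobe--Mínguez algorithm computes $\widehat\pi$ directly (case~(3) of Section~\ref{sec:AM}, i.e. \cite[Prop.~5.4]{AM}). Here one checks by hand that the combinatorial output of $\AD_\rho$ applied to the corresponding $(\m,\varepsilon)$ matches the symmetrical Langlands data of that explicit dual. Concretely, $\rho$-reducedness for all nonzero $x$ pins down $(\m,\varepsilon)$ to a short list of ``elementary'' shapes (segments clustered near the center, governed by the tempered part and by $[-1,0]_\rho$-type segments), on which both sides can be compared segment by segment; the sign bookkeeping is handled by Proposition~\ref{prop:ADsigns} (sign-preservation) and Proposition~\ref{prop:ADdef} (well-definedness of signs), which guarantee the output lies in $\Symm^{\varepsilon,+}_\rho(G)$ as it must.

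\textbf{Main obstacle.} The hard part will be the commutation identity $\AD \circ D_{\rho_u|\cdot|^{x}} = D_{\rho_u|\cdot|^{-x}} \circ \AD$, and especially its self-dual variant involving $Z([0,1]_{\rho_u})$. Both $\AD_\rho$ and $D_{\rho_u|\cdot|^{x}}$ are defined by intricate best-matching / greedy recursions with delicate sign rules and multiplicity-parity conditions on centered segments, and the two recursions process the multisegment in ``orthogonal'' directions (one peels off ends near $e_{\max}$, the other matches ends at height $x$ versus $x-1$). Making the two commute requires a careful case analysis showing that the segment $(\m_1,\varepsilon_1)$ extracted by the first step of $\AD$ interacts with $D_{\rho_u|\cdot|^{x}}$ exactly as predicted by the three cases of Lemma~\ref{lem:dersum} (resp. the four cases of Lemma~\ref{lem:dersum01}), and that the leftover $(\m^\#,\varepsilon^\#)$ has its derivative compatible with the one of the leftover on the dual side. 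This is where the bulk of Sections~\ref{sec:proofgood} (and, mutatis mutandis, \ref{sec:proofbad} and~\ref{sec:proofugly}) is spent; the uniform notation for derivatives introduced in Section~\ref{sec:expder} is precisely what makes the positive and negative subcases of \cite{AM} collapse into a single argument.
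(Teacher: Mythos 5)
Your strategy matches the paper's overall plan — induction, reduction to a single supercuspidal line, the reduced/co-reduced base case via \cite[Prop.~5.4]{AM}, and the inductive step via the commutation identity $\AD \circ D_{\rho_u|\cdot|^{x}} = D_{\rho_u|\cdot|^{-x}} \circ \AD$ proved through Lemmas~\ref{lem:dersum} and~\ref{lem:dersum01}. You also correctly identify the commutation identity as where the real work lies.

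However, there is a genuine gap in how you plan to carry out the commutation argument. To verify $\AD(D_{y_0}(\m,\varepsilon)) = D_{-y_0}(\AD(\m,\varepsilon))$ for $(\m,\varepsilon)$ of length $N$, one must invoke the same commutation on the shorter multisegment $(\m^\#,\varepsilon^\#)$. But the only available source for that smaller commutation is the induction hypothesis $\widehat{L(\m^\#,\varepsilon^\#)} \simeq L(\AD(\m^\#,\varepsilon^\#))$, together with Proposition~\ref{prop:derivativedual} — and this only makes sense when $(\m^\#,\varepsilon^\#)$ actually lies in $\Symm^{\varepsilon,+}_\rho(G)$, i.e., has sign product $+1$, so that $L(\m^\#,\varepsilon^\#)$ is a representation. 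By Lemma~\ref{lem:signm1md}, $S(\m_1,\varepsilon_1)\cdot S(\m^\#,\varepsilon^\#) = S(\m,\varepsilon)$, and whenever the first step of $\AD$ extracts a centered segment with sign $-1$ the leftover $(\m^\#,\varepsilon^\#)$ will have sign product $-1$ and corresponds to no representation at all. Your appeal to Propositions~\ref{prop:ADsigns} and~\ref{prop:ADdef} ensures the final output of $\AD$ is in $\Symm^{\varepsilon,+}_\rho(G)$, but that says nothing about the intermediate $(\m^\#,\varepsilon^\#)$. The paper resolves this by replacing Theorem~\ref{thm:ADAubertDual} with the strictly stronger Theorem~\ref{thm:ADdualinduction}, which allows an auxiliary short multisegment $\m_0 = [0,0]_{\rho_0}$ supported on a \emph{different} good self-dual line $\Z_{\rho_0}$ to compensate the sign; this extra degree of freedom is used crucially in Lemma~\ref{lem:ADcommDer}, where one glues $(\m_0,\varepsilon_0)$ with $\varepsilon_0 = -1$ onto any $(\m,\varepsilon)$ of negative sign product before invoking the induction hypothesis, and then projects back to the $\Z_\rho$-line. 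Without this (or some equivalent device), your inductive step cannot close: the combinatorial commutation lemma you need for $(\m^\#,\varepsilon^\#)$ has no representation-theoretic source when $S(\m^\#,\varepsilon^\#)=-1$. The choice of induction variable ($\deg(\pi)$ versus the length bound $N$ in the paper) is immaterial; the sign issue is the real obstruction.
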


The strategy to prove Theorem~\ref{thm:ADAubertDual} is as follows. We have already seen in Proposition~\ref{prop:derivativedual} that the derivatives and the Aubert--Zelevinsky involution are compatible. This allows us to prove the theorem by induction on the length of \(\m\), using the fact that derivatives are injective and reduce the length.

The base case is when $\pi$ is $\rho$-reduced for all $\rho$, and also $L([-1,0]_\rho)$-reduced; this case is treated in Section~\ref{sec:reducedseg}. Now suppose that $\pi$ is not $\rho$-reduced for some $\rho$ (the case where it is not $L([-1,0]_\rho)$-reduced is similar). Let \((\m, \varepsilon) \in \Symm^{\varepsilon,+,1}(G)\) such that \(\pi = L(\m, \varepsilon)\). Then, if we can prove that
\[
D_{\rho}(\AD(\m, \varepsilon)) = \AD(D_{\rho^{\vee}}(\m, \varepsilon)),
\]
the result follows by induction, as shown in the following computation:
\begin{align*}
    D_{\rho}(L(\AD(\m,\varepsilon))) & = L(D_{\rho}(\AD(\m,\varepsilon))) \text{ by Lemma \ref{prop:derpi}}\\
 & = L(\AD(D_{\rho^{\vee}}(\m,\varepsilon))) \\
 & = L(D_{\rho^{\vee}}(\m,\varepsilon))^{\widehat{}} \text{ by the induction hypothesis} \\
 & = D_{\rho^{\vee}}(L(\m,\varepsilon))^{\widehat{}} \text{ by Lemma \ref{prop:derpi}} \\
 & = D_{\rho} (\hat{\pi}) \text{ by Proposition \ref{prop:derivativedual}} \\
  \end{align*}
and we conclude by the injectivity of $D_\rho$.

The key point in this argument is the compatibility between the derivative functor and \(\AD\): namely,
\[
D_{\rho}(\AD(\m, \varepsilon)) = \AD(D_{\rho^{\vee}}(\m, \varepsilon)).
\]
Establishing this identity is the main goal of this section.

However, a technical difficulty arises. We would like to prove this identity by induction using the recursive definition $ \AD(\m, \varepsilon) = (\m_1, \varepsilon_1) + \AD(\m^{\#}, \varepsilon^{\#})$, but this recursion is only valid when \((\m, \varepsilon) \in \Symm^{\varepsilon,+,1}(G)\), and unfortunately, \((\m^{\#}, \varepsilon^{\#})\) may not belong to \(\Symm^{\varepsilon,+,1}(G)\). As a result, we are led to prove a slightly modified version of the statement above, which we will explain in detail below.

\bigskip

If $\mathscr{C}=\{\rho_1,\cdots,\rho_r\} \subset \Cusp^{\GL}$ is a finite set composed of good supercuspidals, we denote by $\Symm_{\mathscr{C}}^{\varepsilon}(G) = \bigoplus_{\rho \in \mathscr{C}} \Symm_{\rho}^{\varepsilon}(G)$. 

We fix $\mathscr{C}=\{\rho_1,\cdots,\rho_r\} \subset \Cusp^{\GL}/{\sim'}$ a set composed of good supercuspidals, $\rho_0 \in \Cusp^{\GL} \setminus \mathscr{C}$ be a good self-dual supercuspidal and $\chi$ a character of $F^{\times}$ of order 2. For a multisegment $\m \in \Mult$, we denote by $l(\m)$ the length of $\m$. We will prove the following theorem.

\begin{thm}
  \label{thm:ADdualinduction}
  Let $N \in \NN$. Let $\pi \in \Irr^{G}$ be of good parity with symmetrical Langlands data $(\m',\varepsilon') \in \Symm^{\varepsilon,+,1}(G)$. We assume that there exist $(\m,\varepsilon) \in \Symm_{\mathscr{C}}^{\varepsilon}(G)$ and $(\m_0,\varepsilon_0) \in \Symm_{\{\rho_0,\chi \rho_0 \}}^{\varepsilon}(G)$ such that 
  \begin{enumerate}
    \item $(\m',\varepsilon')=(\m,\varepsilon) + (\m_0,\varepsilon_0)$;
    \item $l(\m) \le N$;
    \item $l(\m_0) \le 1$.
  \end{enumerate}
  Then $\hat{\pi} \simeq L(\AD(\m',\varepsilon'))$.
\end{thm}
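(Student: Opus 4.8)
The plan is to prove Theorem~\ref{thm:ADdualinduction} by induction on $N$, and within each value of $N$ by induction on $l(\m_0) \in \{0,1\}$ and then on some auxiliary complexity of $(\m,\varepsilon)$. The base case is when $\pi$ is reduced with respect to all the relevant derivatives (all $\rho$-derivatives for $\rho\in\mathscr{C}\cup\{\rho_0\}$, and the $L([-1,0]_{\rho_0})$-derivative); this is exactly the reduced case handled in Section~\ref{sec:reducedseg}, so we may invoke it directly. For the inductive step, we split according to whether $\pi$ fails to be $\rho$-reduced for some $\rho\in\mathscr{C}$, fails to be $\rho_0$-reduced (or $L([-1,0]_{\rho_0})$-reduced). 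Since $\AD = \oplus_{\rho} \AD_\rho$ acts line by line and $D_\rho$ only affects the $\rho$-line (extended by the identity elsewhere, as recalled in Sections~\ref{sec:dergood}--\ref{sec:derbad}), in each case it suffices to establish the key commutation identity
\[
D_{\rho}\bigl(\AD(\m',\varepsilon')\bigr) = \AD\bigl(D_{\rho^{\vee}}(\m',\varepsilon')\bigr),
\]
after which the conclusion $\hat\pi \simeq L(\AD(\m',\varepsilon'))$ follows from the chain of equalities displayed just before Theorem~\ref{thm:ADdualinduction}, using Proposition~\ref{prop:derpi}, Proposition~\ref{prop:derivativedual}, injectivity of $D_\rho$, and the outer induction hypothesis applied to the strictly shorter multisegment $D_{\rho^\vee}(\m',\varepsilon')$. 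The point of stating the theorem with the $\mathscr{C}$/$\rho_0$ bookkeeping and with $l(\m_0)\le 1$ is precisely that $(\m^{\#},\varepsilon^{\#})$ produced by one step of $\AD_\rho$ need not lie in $\Symm^{\varepsilon,+}(G)$: however, after moving the first block $(\m_1,\varepsilon_1)$ aside, the remaining datum still decomposes into a $\mathscr{C}$-part of length $\le N-1$ (or less) plus a $\rho_0$-part of length $\le 1$, so the hypotheses are preserved along the recursion and the induction closes.

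The heart of the argument is the proof of the commutation identity, and this is where the recursive structure of $\AD_\rho$ must be unwound carefully. Write $\AD_\rho(\m',\varepsilon') = (\m_1,\varepsilon_1) + \AD_\rho(\m^{\#},\varepsilon^{\#})$. The strategy is to compute $D_\rho$ of the right-hand side using the ``derivative of a sum'' results, Lemma~\ref{lem:dersum} and Lemma~\ref{lem:dersum01}: one must check that $(\m_1,\varepsilon_1)$ — the first segment (or pair of segments, or centered segment) produced by the algorithm — satisfies the combinatorial hypotheses of those lemmas, namely that it is the longest among segments ending at $e_{\max}$ (established in Proposition~\ref{prop:lengthmax} and the subsequent Proposition for all $\rho$), that no segment of $\m^{\#}$ ending at the relevant coefficient is $\preceq$-below it, and the analogous beginning-side conditions. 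Granting this, $D_\rho\bigl((\m_1,\varepsilon_1)+\AD_\rho(\m^{\#},\varepsilon^{\#})\bigr)$ equals either $(\m_1,\varepsilon_1) + D_\rho(\AD_\rho(\m^{\#},\varepsilon^{\#}))$, or $D_\rho(\m_1,\varepsilon_1) + D_\rho(\AD_\rho(\m^{\#},\varepsilon^{\#}))$, or $(\m_1,\varepsilon_1) + D_\rho^{\max-1}(\AD_\rho(\m^{\#},\varepsilon^{\#}))$, depending on where $b(\Delta_1)$ and $e(\Delta_1)$ sit relative to $\pm x, \pm(x-1)$, and on parity conditions like the sign $\varepsilon_0$ and the quantity $c$ appearing in Definitions~\ref{def:der} and~\ref{def:derbap}. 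By the inner induction hypothesis applied to $(\m^{\#},\varepsilon^{\#})$ one rewrites $D_\rho\AD_\rho(\m^{\#},\varepsilon^{\#}) = \AD_\rho D_{\rho^\vee}(\m^{\#},\varepsilon^{\#})$. On the other side, one computes $D_{\rho^\vee}(\m',\varepsilon')$ directly from the explicit derivative formulas and then applies $\AD_\rho$ to it via its recursion; the task is to match the two outputs, i.e.\ to verify that the first block produced by $\AD_\rho D_{\rho^\vee}(\m',\varepsilon')$ is the expected modification of $(\m_1,\varepsilon_1)$ and that the tail coincides with $\AD_\rho D_{\rho^\vee}(\m^{\#},\varepsilon^{\#})$.

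The main obstacle — and where most of the work in Section~\ref{sec:proofgood} will go — is the bookkeeping of signs and of the labels $\ge 0,=0,\le 0$ on centered segments through this comparison. The algorithm's behaviour on centered segments is governed by several delicate case distinctions (the definitions of $\varepsilon_0$, $\varepsilon_1$, and $\varepsilon^{\#}$ in Section~\ref{sec:algogood}), the derivative formulas contain their own parity corrections (cases (1) and (2) of Definition~\ref{def:der}, depending on $c$ and $t$ and on whether condition $(*)$ holds), and the two must be shown to be compatible. Concretely, one has to match the initial sequence $\Delta_1,\dots,\Delta_l$ of the algorithm on $(\m',\varepsilon')$ against the initial sequence after taking $D_{\rho^\vee}$, tracking how removing an end/beginning shifts which segments qualify for the sequence; Lemmas~\ref{lem:paritycent}, \ref{lem:precalgoj}, \ref{lem:precalgocent}, \ref{lem:sizem1}, \ref{lem:starconditionAD} and the sign-preservation Proposition~\ref{prop:ADsigns} are the technical tools for this, but several additional case-specific lemmas will likely be needed, especially to handle the interaction of $(\m_1,\varepsilon_1)$ with the segments near $x$ and $x-1$ and the special situations where the algorithm stops at $[0,0]_{\rho_u}$ or $[1/2,1/2]_{\rho_u}$. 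I expect the number of cases (good vs.\ bad parity of $\rho$; $x>0$ vs.\ $x<0$; the four sub-cases of Lemma~\ref{lem:dersum}; condition $(*)$ holding or not; $c$ even or odd) to be large but each individual case to reduce, after the preparatory lemmas, to a finite check on how the labels and signs transform — so the proof is long but not conceptually deep beyond the setup described here.
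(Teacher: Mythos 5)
Your proposal captures the overall architecture of the paper's proof correctly: induction on $N$ with the reduced case as base (Section~\ref{sec:reducedseg}), establishing for each non-reduced $(\m,\varepsilon)$ a single commutation identity $D_\rho\circ\AD = \AD\circ D_{\rho^\vee}$ (the paper's Lemma~\ref{lem:commDerthm}), using Lemmas~\ref{lem:ADcommDer}--\ref{lem:ADcommSoc} to handle the tail $(\m^{\#},\varepsilon^{\#})$, and invoking Lemmas~\ref{lem:dersum}--\ref{lem:dersum01} to move the first block $(\m_1,\varepsilon_1)$ past the derivative. You also correctly identify the role of the auxiliary $\rho_0$-block: it is a sign-fixer so that the induction hypothesis can be invoked on data that may have total sign $-1$, not a source of new derivatives (so the ``fails to be $\rho_0$-reduced'' branch in your case split never actually occurs, and the secondary induction on $l(\m_0)$ you mention is unnecessary).

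The one genuine gap is that you never specify \emph{which} derivative direction to take, and this choice is not incidental --- it is the key technical move that makes the whole ``match the two initial sequences'' strategy tractable. The paper takes $y_0$ to be the \emph{smallest} $y\in\tfrac12\Z^*$ with $y\neq -e_{\max}$ for which $(\m,\varepsilon)$ is not $y$-reduced, and later (Sections~\ref{sec:deremax}--\ref{sec:deremaxm}) treats $y_0=\pm e_{\max}$ separately. The two constraints are essential for two different reasons. Taking the smallest non-reduced $y$ guarantees that the explicit formula for $D_{y_0}$ only touches segments that end or begin at extreme coefficients, so the segments entering the initial sequence $\Delta_1\succeq\dots\succeq\Delta_l$ are perturbed in a controlled way (Lemmas~\ref{lem:sequencederneg}, \ref{lem:sequenceder01}, \ref{lem:sequencederpos}, etc.\ give an almost term-by-term comparison $\tilde\Delta_i\approx\Delta_i$). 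Avoiding $y_0=-e_{\max}$ is what prevents the derivative from deleting $\Delta_1=[-y_0,-y_0]$ itself, which would cause the post-derivative initial sequence to start in a completely different place and destroy the comparison you propose. Without naming this choice, the proposed scheme of ``compute $D_{\rho^\vee}(\m',\varepsilon')$ directly and match the first block against a modification of $(\m_1,\varepsilon_1)$'' stalls: for a generic $\rho^\vee$ the hypotheses of Lemma~\ref{lem:dersum} on the endpoints of $\m_1$ relative to $x,x-1$ need not hold, and the two initial sequences need not be comparable at all. So while the skeleton of your argument is the paper's, the control parameter that makes the finite case check feasible is missing from the proposal.
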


If we prove \ref{thm:ADdualinduction} for all finite subsets $\mathscr{C} \in \Cusp^{\GL}$ and all $N \in \NN$, then Theorem \ref{thm:ADdualinduction} implies Theorem \ref{thm:ADAubertDual}.

\subsection{The case of a reduced multisegment}
\label{sec:reducedseg}

Let \((\m, \varepsilon) \in \Symm^{\varepsilon}\). Let  $\rho \in \Cusp^{\GL}$ be self-dual. For $x\neq 0$, we say that \((\m, \varepsilon)\) is $\rho|\cdot|^{x}$-reduced if \(D^{(1)}_{\rho|\cdot|^{x}}(\m, \varepsilon) \neq 0\). If \((\m, \varepsilon)\) is $\rho|\cdot|^{-1}$-reduced, we say that \((\m, \varepsilon)\) is $L([-1,0]_\rho)$-reduced if \(D^{(1)}_{L([-1,0]_\rho)}(\m, \varepsilon) \neq 0\). Finally, we say that \((\m, \varepsilon)\) is reduced if for every self-dual $\rho$ and $x \neq 0$, it is $\rho|\cdot|^{x}$-reduced and $L([-1,0]_\rho)$-reduced.

In this section we prove Theorem \ref{thm:ADAubertDual} for reduced multisegments.

\bigskip

Let $\rho \in \Cusp^{\GL}$ be self-dual and of the same type as $G$. Let $n_0, y_0 \in \NN$, with $n_0 \ge 1$. We define $(\m,\varepsilon) \in \Symm_\rho^{\varepsilon}(G)$ by $\m = n_0 [0,0]_\rho + \sum_{y=1}^{y_0}[-y,y]_\rho$ and for all $1 \le y \le y_0$, $\varepsilon([-y,y]_\rho) \varepsilon([-y+1,y-1]_\rho)=-1$ (we do not fix any condition on $\varepsilon([0,0]_\rho)$). 

\begin{lem}
  \label{lem:ADmetemp}
  We have that $\AD_\rho(\m,\varepsilon)=(\m',\varepsilon')$ where
  \begin{enumerate}
      \item If $y_0 = 0$, then $\m'=\m$ and $\varepsilon'([0,0]_\rho)= (-1)^{n_0 + 1}\varepsilon([0,0]_\rho)$.
      \item If $n_0$ is odd and $y_0 \neq 0$, then $(\m',\varepsilon')=(\m,\varepsilon)$. 
      \item If $n_0$ is even and $y_0 \neq 0$, then $\m'= \m - [0,0]_\rho - [-y_0,y_0]_\rho + [-y_0,0]_\rho + [0,y_0]_\rho$ and for $y<y_0$, $\varepsilon'([-y,y]_\rho)=-\varepsilon([-y,y]_\rho)$.
  \end{enumerate}
\end{lem}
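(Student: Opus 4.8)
The plan is to compute $\AD_\rho(\m,\varepsilon)$ directly by unwinding the recursive definition in Section~\ref{sec:algogood}, using the very special shape of $\m$ (a tower of nested centered segments $[-y,y]_\rho$ together with $n_0$ copies of $[0,0]_\rho$). Since $\rho$ is self-dual of the same type as $G$, all the relevant segments are centered, so the labelled version $y = s(\m,\varepsilon)$ assigns label $\ge 0$ to $\lfloor m/2 \rfloor$ copies, label $\le 0$ to $\lfloor m/2 \rfloor$ copies, and label $=0$ to the (at most one) leftover copy, for each multiplicity $m$. In our case each $[-y,y]_\rho$ with $y \ge 1$ has multiplicity one, hence carries label $=0$; while $[0,0]_\rho$ has multiplicity $n_0$, so it splits into $\lfloor n_0/2\rfloor$ copies $[0,0]_\rho^{\ge 0}$, $\lfloor n_0/2\rfloor$ copies $[0,0]_\rho^{\le 0}$, and a leftover $[0,0]_\rho^{=0}$ precisely when $n_0$ is odd.

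First I would identify the initial sequence $\Delta_1 \succeq \cdots \succeq \Delta_l$. With $e_{\max} = y_0$ (assuming $y_0 \ne 0$), the biggest segment ending in $y_0$ is $\Delta_1 = [-y_0,y_0]_\rho^{=0}$; then $\Delta_{j+1}$ is forced to be $[-y_0+j, y_0-j]_\rho^{=0}$ as long as these are available and the alternating-sign condition $\varepsilon(\Delta_{j+1}) = -\varepsilon(\Delta_j)$ holds — which is exactly the hypothesis $\varepsilon([-y,y]_\rho)\varepsilon([-y+1,y-1]_\rho) = -1$. So the sequence runs down through all the $[-y,y]_\rho^{=0}$ for $y = y_0, y_0-1, \dots, 1$, and then the next candidate ending in $0$ is a copy of $[0,0]_\rho$. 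Here the behaviour bifurcates on the parity of $n_0$: if $n_0$ is odd, a leftover $[0,0]_\rho^{=0}$ exists, and since $e(\Delta) = 0$ with $\Delta = [0,0]_\rho^{=0}$ triggers the stopping rule in Section~\ref{sec:algogood} (``$\rho$ of the same type as $G$, $\Delta_j = [0,0]_{\rho_u}^{=0}$, then $j=l$''), the sequence stops there with $\varepsilon_0 = -1$; if $n_0$ is even, there is no $=0$ copy, so the largest available segment ending in $0$ with the right sign and label $\preceq \Delta_l$ is $[0,0]_\rho^{\le 0}$ — but $[0,0]^{\le 0} \prec [0,0]^{=0}$, and one must check whether the alternating-sign condition and the order permit it; in fact the only segments of $y$ ending in $0$ are the $[0,0]_\rho$ copies, and after $\Delta$ at level $y=1$ the ``next'' must end in $0$, so $\Delta_{l}$ becomes $[0,0]_\rho^{\le 0}$ and the algorithm stops there, yielding $\varepsilon_0 = 1$ (since $[0,0]^{\le 0}$ is not one of the $\varepsilon_0 = -1$ triggers).

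Next I would read off $(\m_1,\varepsilon_1)$ and $(\m^\#,\varepsilon^\#)$ from the two cases. When $n_0$ is odd and $y_0 \ne 0$: $\varepsilon_0 = -1$, so $\m_1 = [-y_0,y_0]_\rho$, a centered segment, with sign $\varepsilon_1(\m_1) = (-1)^{n_0+1}\varepsilon([0,0]_\rho) = \varepsilon([0,0]_\rho)$ (as $n_0$ odd); and the algorithm removes the end of each $\Delta_j$ and the beginning of each $\Delta_j^\vee = \Delta_j$, i.e. replaces each $[-y,y]_\rho$ by $[-y+1,y-1]_\rho$ for $y = 1, \dots, y_0$ and removes one $[0,0]_\rho$, so $\m^\#$ is again of the same form with the same $y_0$ (one checks the multiplicities telescope back) — this is exactly what makes $\AD_\rho(\m,\varepsilon) = (\m,\varepsilon)$ consistent with the claimed fixed-point statement, and an induction on $y_0$ (or a direct telescoping argument) closes case (2). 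When $n_0$ is even and $y_0 \ne 0$: $\varepsilon_0 = 1$, so $\m_1 = [e(\Delta_l), e(\Delta_1)]_\rho + [-e(\Delta_1), -e(\Delta_l)]_\rho = [0, y_0]_\rho + [-y_0, 0]_\rho$, and $\m^\#$ replaces each $[-y,y]_\rho$ ($y < y_0$) appearing in the sequence by $[-y+1,y-1]_\rho$ with flipped sign, removes one $[0,0]_\rho$, and for $\Delta_1 = [-y_0,y_0]$ the two removals (end and beginning of its dual) produce ${}^-[-y_0,y_0]^- = [-y_0+1,y_0-1]$; then one recognizes the remaining multisegment, applies the odd-$n_0$ case (2) recursively since the new $n_0$ is odd), and assembles $\m' = \m - [0,0]_\rho - [-y_0,y_0]_\rho + [-y_0,0]_\rho + [0,y_0]_\rho$ with the stated signs. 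Finally, the degenerate case $y_0 = 0$: then $\m = n_0[0,0]_\rho$, $e_{\max} = 0$, the sequence is a single segment $\Delta_1 = [0,0]_\rho^{=0}$ if $n_0$ is odd (so $l=1$, $\varepsilon_0 = -1$, $\m_1 = [0,0]_\rho$, and $\m^\# = (n_0-1)[0,0]_\rho$ with flipped signs, then recurse), or $\Delta_1 = [0,0]_\rho^{\le 0}$ if $n_0$ even (so again $l=1$ but now $\varepsilon_0 = 1$ — wait, $e(\Delta_1) + e(\Delta_l) = 0$, contradicting Lemma~\ref{lem:ADnotcentered}, so in fact $\varepsilon_0 = -1$ must hold here too because the only option is the centered $[0,0]$), giving after $n_0$ steps $\m' = n_0[0,0]_\rho$ with $\varepsilon'([0,0]_\rho) = (-1)^{n_0+1}\varepsilon([0,0]_\rho)$, which is case (1).

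The main obstacle I anticipate is the careful bookkeeping of labels and signs when the initial sequence transitions from the $=0$-labelled tower $[-y,y]_\rho^{=0}$ down to the $[0,0]_\rho$ copies: one must verify that the order $\preceq$ and the alternating-sign rule select exactly the segment claimed, that the stopping rules (Definition~\ref{def:epsil}) fire at the right place, and — crucially for case (3) — that after one step $\m^\#$ is genuinely of the same tower-form with $n_0$ decreased by one but the $y_0$-level segment degraded, so that the induction/recursion on $y_0$ and on $n_0$ can be invoked. It will also be necessary to double-check the sign formulas for $\varepsilon_1(\m_1)$ and $\varepsilon^\#$ against Definition~\ref{def:epsil} and the four-case description of $\varepsilon^\#$ in Section~\ref{sec:algogood}, since the parities of $n_0$ and the number $n_0^\#$ of centered segments in $\m^\#$ interact. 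None of this is conceptually hard, but it is the kind of computation where an off-by-one in a sign exponent would invalidate the conclusion, so I would organize it as: (i) describe the labelled multisegment $s(\m,\varepsilon)$; (ii) describe the initial sequence and $\varepsilon_0$ in each parity case; (iii) compute $(\m_1,\varepsilon_1)$ and $(\m^\#,\varepsilon^\#)$; (iv) conclude by induction on $n_0 + y_0$.
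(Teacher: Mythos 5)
Your overall strategy is the same as the paper's (unwind the recursion, identify the initial sequence explicitly from the labelled multisegment, read off $(\m_1,\varepsilon_1)$ and $(\m^{\#},\varepsilon^{\#})$, conclude by induction), and your treatment of cases (1) and (3) is essentially right. But there is a genuine sign error in case (2), of exactly the off-by-one kind you flag as a risk at the end of your proposal. In Definition~\ref{def:epsil} the quantity $n_0$ is $\card\{\Delta\in\m : c(\Delta)=0\}$, i.e.\ the total number of centered segments in $\m$. In the situation of Lemma~\ref{lem:ADmetemp} every segment of $\m$ is centered, so this quantity is $n_0 + y_0$ in the lemma's notation, not $n_0$. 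Hence the correct value is
\[
\varepsilon_1(\m_1) = (-1)^{\,n_0 + y_0 + 1}\,\varepsilon([0,0]_\rho) = (-1)^{y_0}\varepsilon([0,0]_\rho) = \varepsilon([-y_0,y_0]_\rho)
\]
(using that $n_0$ is odd and the alternating hypothesis $\varepsilon([-y,y]_\rho)\varepsilon([-y+1,y-1]_\rho)=-1$). Your value $\varepsilon([0,0]_\rho)$ is wrong whenever $y_0$ is odd, and carried through it would give the wrong sign on the segment $[-y_0,y_0]_\rho$ in the output, contradicting the fixed-point claim of case (2). (The same substitution happens to give the right answer in your case (1) only because there $y_0=0$ and the two $n_0$'s coincide.)

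A second slip in case (2): you assert that $\m^{\#}$ ``is again of the same form with the same $y_0$''. In fact $\m^{\#} = n_0\,[0,0]_\rho + \sum_{y=1}^{y_0-1}[-y,y]_\rho$, so the tower height drops from $y_0$ to $y_0-1$ (the segment $[-y_0,y_0]_\rho^{=0}$ is replaced by $[-y_0+1,y_0-1]_\rho$, the $[0,0]^{=0}_\rho$ copy is deleted, and a fresh $[0,0]_\rho$ is created from $[-1,1]_\rho^{=0}$, keeping the $[0,0]_\rho$ multiplicity at $n_0$). It is precisely this decrease in $y_0$ that allows the induction to terminate; with your reading the recursion would not obviously make progress. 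Once these two points are repaired your computation aligns with the paper's, which writes out $\m_1$, $\varepsilon_1$, $\m^{\#}$, $\varepsilon^{\#}$ in each of the three cases and concludes by an immediate induction.
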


\begin{proof}
\begin{enumerate}
  \item Suppose $y_0 = 0$. Then $\m_1=[0,0]_\rho$, $\varepsilon_1([0,0]_\rho)=(-1)^{n_0 + 1} \varepsilon([0,0]_\rho)$, $\m^{\#}=(n_0 - 1) [0,0]_\rho $ and $\varepsilon^{\#}([0,0]_\rho)=-\varepsilon([0,0]_\rho)$. We get the result by induction.
  \item Suppose $n_0$ is odd and $y_0 \neq 0$. Then $\m_1=[-y_0,y_0]_\rho$, $\varepsilon_1([-y_0,y_0]_\rho)=\varepsilon([-y_0,y_0]_\rho)$, $\m^{\#}=n_0 [0,0]_\rho + \sum_{y=1}^{y_0-1}[-y,y]_\rho$ and for $y<y_0$, $\varepsilon^{\#}([-y,y]_\rho)=\varepsilon([-y,y]_\rho)$. By induction, we see that $\AD(\m_0,\varepsilon_0)=(\m_0,\varepsilon_0)$. 
  \item Suppose $n_0$ is even and $y_0 \neq 0$. Then $\m_1=[-y_0,0]_\rho + [0,y_0]_\rho$, $\m^{\#}=(n_0 - 1) [0,0]_\rho + \sum_{y=1}^{y_0-1}[-y,y]_\rho$ and for $y<y_0$, $\varepsilon^{\#}([-y,y]_\rho)=-\varepsilon([-y,y]_\rho)$. We get the result using the previous case. 
\end{enumerate}
\end{proof}

Now, let $\rho \in \Cusp^{\GL}$ be self-dual of the opposite type as $G$. Let $y_0 \in (1/2)\NN \setminus \NN$. We define $(\m,\varepsilon) \in \Symm_\rho^{\varepsilon}$ by $\m = \sum_{y=1/2}^{y_0}[-y,y]_\rho$, $\varepsilon([-1/2,1/2]_\rho)=-1$ and for all $1/2 < y \le y_0$, $\varepsilon([-y,y]_\rho) \varepsilon([-y+1,y-1]_\rho)=-1$. 

\begin{lem}
  \label{lem:ADmetempnottype}
  We have that $\AD_{\rho|\cdot|^{1/2}}(\m,\varepsilon)=(\m,\varepsilon)$.
\end{lem}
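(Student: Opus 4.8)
\textbf{Proof plan for Lemma~\ref{lem:ADmetempnottype}.} The plan is to argue by induction on $y_0 \in (1/2)\NN \setminus \NN$, tracing through the good-parity algorithm described in Section~\ref{sec:algogood}. The key observation is that the multisegment $\m = \sum_{y=1/2}^{y_0}[-y,y]_\rho$ consists entirely of centered segments, so under the section $s$ we have $(y,\varepsilon) = s(\m,\varepsilon)$ with each $[-y,y]_\rho$ appearing once and therefore receiving the label $=0$ (its multiplicity is one). Thus in $\USymm^\varepsilon$ the element is $\sum_{y=1/2}^{y_0}[-y,y]_\rho^{=0}$, with $e(\Delta)$ running over $\{1/2, 3/2, \dots, y_0\}$.

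First I would identify the initial sequence. We have $e_{\max} = y_0$, so $\Delta_1 = [-y_0,y_0]_\rho^{=0}$. Since all the segments are centered with label $=0$, the order $\preceq$ restricted to them is governed by $e(\Delta)$, and for centered $=0$ segments the sign-alternation condition $\varepsilon(\Delta_{j+1}) = -\varepsilon(\Delta_j)$ must hold; this is exactly guaranteed by the hypothesis $\varepsilon([-y,y]_\rho)\varepsilon([-y+1,y-1]_\rho) = -1$ for $1/2 < y \le y_0$. Hence the algorithm will successively pick $\Delta_j = [-y_0+j-1,\, y_0-j+1]_\rho^{=0}$, descending one step in the end each time, until it reaches $\Delta_l = [-1/2,1/2]_\rho^{=0}$, at which point—because $\rho$ is not of the same type as $G$ and $\varepsilon([-1/2,1/2]_\rho) = -1$—the stopping rule in Section~\ref{sec:algogood} applies and $l = y_0 + 1/2$. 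Correspondingly $\varepsilon_0 = -1$ by Definition~\ref{def:epsil}.

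Next I would compute $(\m_1,\varepsilon_1)$ and $(\m^\#,\varepsilon^\#)$. Since $\varepsilon_0 = -1$, we get $\m_1 = [-e(\Delta_1),e(\Delta_1)]_\rho = [-y_0,y_0]_\rho$, a centered segment. Its sign, by the ``not of the same type'' branch, is $\varepsilon_1(\m_1) = (-1)^{n_0}$ where $n_0 = \card\{\Delta\in\m, c(\Delta)=0\} = y_0 + 1/2$; so $\varepsilon_1([-y_0,y_0]_\rho) = (-1)^{y_0+1/2}$. As for $\m^\#$: the algorithm suppresses each $\Delta_j$ and each $\Delta_j^\vee = \Delta_j$ (these segments are self-dual, so $i_j = i'_j$ and the ``$\,{}^-p(\Lambda_i)^-$'' case applies), replacing $[-y,y]_\rho$ by ${}^-[-y,y]_\rho^- = [-y+1,y-1]_\rho$ for each $1/2 \le y \le y_0$. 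The term $y = 1/2$ gives $[1/2,-1/2]_\rho = 0$ by convention (an empty segment), and for $y \ge 3/2$ we get $[-y+1,y-1]_\rho$. After collecting, $\m^\# = \sum_{y=1/2}^{y_0-1}[-y,y]_\rho$, and the sign formulas for $\varepsilon^\#$ (the first bullet, $\varepsilon^\#(\Lambda^\#_i) = \varepsilon_0 * \varepsilon(\Delta_j) = -\varepsilon(\Delta_j)$) show that the alternation property and the normalization $\varepsilon^\#([-1/2,1/2]_\rho) = -1$ are preserved, so $(\m^\#,\varepsilon^\#)$ has exactly the same shape as $(\m,\varepsilon)$ but with $y_0$ replaced by $y_0 - 1$.

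Finally, by the inductive hypothesis $\AD_{\rho|\cdot|^{1/2}}(\m^\#,\varepsilon^\#) = (\m^\#,\varepsilon^\#)$ (with the base case $y_0 = 1/2$, where $\m^\# = 0$ and $\m_1 = [-1/2,1/2]_\rho$ with sign $(-1)^1 = -1 = \varepsilon([-1/2,1/2]_\rho)$, giving $\AD(\m,\varepsilon) = (\m,\varepsilon)$ directly). Then
\[
\AD_{\rho|\cdot|^{1/2}}(\m,\varepsilon) = (\m_1,\varepsilon_1) + (\m^\#,\varepsilon^\#) = [-y_0,y_0]_\rho + \sum_{y=1/2}^{y_0-1}[-y,y]_\rho = \m,
\]
and one checks the signs match: $\varepsilon_1([-y_0,y_0]_\rho) = (-1)^{y_0+1/2} = \varepsilon([-y_0,y_0]_\rho)$ (using that the alternation condition forces $\varepsilon([-y_0,y_0]_\rho) = (-1)^{y_0 - 1/2}\varepsilon([-1/2,1/2]_\rho) = (-1)^{y_0-1/2}\cdot(-1) = (-1)^{y_0+1/2}$), while on the lower segments $\varepsilon^\# = \varepsilon$ by the inductive hypothesis. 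Hence $\AD_{\rho|\cdot|^{1/2}}(\m,\varepsilon) = (\m,\varepsilon)$.

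\textbf{Main obstacle.} The delicate point is confirming that the initial sequence really does descend monotonically through all the centered $=0$ segments without getting deflected, and in particular that no non-centered segment could be chosen at some step (there are none here, so this is automatic) and that the labels assigned by $s$ are indeed all $=0$ (which holds precisely because every centered segment has multiplicity one). The other place requiring care is the bookkeeping of the signs $\varepsilon_1$ and $\varepsilon^\#$, since the formulas in Section~\ref{sec:algogood} branch on several cases; I expect the sign computation—rather than the combinatorics of which segments are removed—to be the part that needs to be written out carefully.
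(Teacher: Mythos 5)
Your argument is correct and follows the same recursion as the paper's one-line proof: identify $\m_1 = [-y_0,y_0]_\rho$ and $\m^\# = \sum_{y=1/2}^{y_0-1}[-y,y]_\rho$, then invoke induction. The only slip is attributing the final sign check $\varepsilon^\#([-y,y]_\rho) = \varepsilon([-y,y]_\rho)$ (for $y < y_0$) to the inductive hypothesis---it actually follows from the explicit formula $\varepsilon^\#(\Lambda^\#_{i_j}) = \varepsilon_0\,\varepsilon(\Delta_j) = (-1)\bigl(-\varepsilon([-y,y]_\rho)\bigr) = \varepsilon([-y,y]_\rho)$ via the alternation hypothesis, a computation you do in fact carry out earlier when checking that $(\m^\#,\varepsilon^\#)$ has the required shape.
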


\begin{proof}
Applying the algorithm, we get that $\m_1=[-y_0,y_0]_\rho$ and $\m^{\#}=\sum_{y=1/2}^{y_0-1}[-y,y]_\rho$. The result follows directly by induction.
\end{proof}

We can now prove Theorem \ref{thm:ADAubertDual} for reduced $(\m,\varepsilon)$.

\begin{lem}
  \label{lem:ADdualtemp}
  Let $\pi = L(\m,\varepsilon) \in \Irr^{G}$ be of good parity with $(\m,\varepsilon) \in \Symm^{\varepsilon,+,1}(G)$. If $(\m,\varepsilon)$ is reduced then $\hat{\pi} \simeq L(\AD(\m,\varepsilon))$.
\end{lem}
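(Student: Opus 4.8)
The statement to prove is that for a reduced $(\m,\varepsilon) \in \Symm^{\varepsilon,+}(G)$ of good parity, one has $\hat\pi \simeq L(\AD(\m,\varepsilon))$, where $\pi = L(\m,\varepsilon)$. The plan is to combine the line-decomposition of $\AD$ (it is $\oplus_\rho \AD_\rho$), the Jantzen decomposition of Theorem~\ref{thm:Jantzen} (which is compatible with the Aubert--Zelevinsky duality and with $\trans$), and the explicit shape of a reduced symmetric multisegment on a single good line. First I would reduce to the case where $(\m,\varepsilon)$ is supported on a single supercuspidal line $\Z_\rho$ with $\rho$ good self-dual: since $\AD = \oplus_{\rho}\AD_\rho$, since $\trans$ is compatible with the line projections (the commutative square at the end of Section~\ref{sec:symmetrization}), and since the Aubert--Zelevinsky duality commutes with $\Jz$ and with parabolic induction by \eqref{eq:irre} and \eqref{eq:Jacquet}, it suffices to prove the identity line by line. (The ugly/bad-line contributions of a reduced multisegment are automatically handled separately — but since $\pi$ is assumed of good parity, only good lines occur here.)

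\textbf{Main step: classify reduced data on a good line.} The key structural input is that a reduced $(\m,\varepsilon) \in \Symm_\rho^{\varepsilon}(G)$ must have a very restricted form. Being $\rho|\cdot|^{x}$-reduced for every $x\neq 0$ and $L([-1,0]_\rho)$-reduced forces, by the explicit derivative formulas of Section~\ref{sec:dergood} (Definition~\ref{def:der} and Proposition~\ref{prop:derpi}, plus Proposition~\ref{prop:derivativedual} for the $L([-1,0]_\rho)$/$Z([0,1]_\rho)$ pair), that $\m$ contains no non-centered segment at all and that its centered segments form a ``staircase'' $\m = n_0[0,0]_\rho + \sum_{y=1}^{y_0}[-y,y]_\rho$ (when $\rho_u$ is of the same type as $G$) or $\m = \sum_{y=1/2}^{y_0}[-y,y]_\rho$ (when $\rho_u$ is of the opposite type), with the sign alternation $\varepsilon([-y,y]_\rho)\varepsilon([-y+1,y-1]_\rho) = -1$ and $\varepsilon([-1/2,1/2]_\rho) = -1$ in the opposite-type case. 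Indeed, any non-centered segment ending at some $x$ would give a nonzero $\rho_u|\cdot|^{x}$-derivative, and a sign pattern violating the alternation, or an extra centered segment with end $>0$ other than in the staircase, would produce a nonzero derivative via case (3) of Definition~\ref{def:der}. This is exactly the family of multisegments for which $\AD_\rho$ was computed in Lemmas~\ref{lem:ADmetemp} and \ref{lem:ADmetempnottype}.

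\textbf{Conclusion.} On the representation side, such a reduced $\pi = L(\m,\varepsilon)$ is, by the explicit Jantzen description in Theorem~\ref{thm:Jantzen} and \eqref{eq:irre}, isomorphic to $\pi_{\phi'}\rtimes\pi(\phi_\gp,\eta)$ where $\pi(\phi_\gp,\eta)$ is a tempered representation attached to a parameter of good parity on the $\rho$-line with all $S_a$ of the relevant parities, and $\phi'$ accounts for the other lines; in the single-line reduced case, $\pi$ is in fact \emph{tempered}. For a tempered representation one knows the Aubert--Zelevinsky dual explicitly: $\widehat{\pi(\phi,\eta)}$ is obtained by the ``flip'' of the component-group character, i.e.\ by replacing each $\eta$ on the relevant constituents with the twist by the appropriate sign, and this is precisely what Lemmas~\ref{lem:ADmetemp}(1),(2) and \ref{lem:ADmetempnottype} produce under $\trans$: in case $y_0 = 0$ the sign $\varepsilon([0,0]_\rho)$ is multiplied by $(-1)^{n_0+1}$, which matches the known formula for the dual of the tempered representation $\pi(n_0\cdot(\rho\boxtimes S_1),\eta)$ (the generalized Steinberg / its dual); in the alternating cases $(\m,\varepsilon)$ is a \emph{fixed point} of $\AD_\rho$, which matches the fact that these specific tempered packets are self-dual under Aubert duality. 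So I would verify $\hat\pi \simeq L(\AD(\m,\varepsilon))$ by checking it against the known description of the Aubert dual of tempered representations of classical groups (which can be extracted from \cite[\S 7]{Art} / \cite{AGIKMS}, or directly from the M\oe glin--Waldspurger-type description of the dual of a discrete series), matching term by term with Lemmas~\ref{lem:ADmetemp} and \ref{lem:ADmetempnottype} after applying $\trans^{-1}$.

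\textbf{Expected main obstacle.} The hard part will be the sign bookkeeping: proving that the character $\eta$ of the component group of $\hat\pi$ predicted by the alternating-sign / $(-1)^{n_0+1}$ formulas of Lemmas~\ref{lem:ADmetemp}--\ref{lem:ADmetempnottype} agrees, under $\trans$, with the one given by Arthur's classification for the Aubert dual of a tempered representation — in particular handling correctly the central-element constraint $\eta(z_\phi)=1$ (which is where the hypothesis $(\m,\varepsilon)\in\Symm^{\varepsilon,+}(G)$, i.e.\ product of signs $=1$, enters, via Proposition~\ref{prop:ADsigns}), and the distinction between the same-type and opposite-type cases for $\rho_u$. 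Establishing the structural classification of reduced data is essentially a finite computation with the derivative formulas and should be routine once set up carefully.
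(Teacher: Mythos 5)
Your proposal has a genuine gap in the conclusion step. You claim that "for a tempered representation one knows the Aubert--Zelevinsky dual explicitly: $\widehat{\pi(\phi,\eta)}$ is obtained by the `flip' of the component-group character," and you only cite parts (1) and (2) of Lemma~\ref{lem:ADmetemp}, asserting that "in the alternating cases $(\m,\varepsilon)$ is a fixed point of $\AD_\rho$." But you have dropped case (3) of Lemma~\ref{lem:ADmetemp}: when $n_0$ is even and $y_0 \neq 0$, the output multisegment is $\m - [0,0]_\rho - [-y_0,y_0]_\rho + [-y_0,0]_\rho + [0,y_0]_\rho$, which contains non-centered segments — so the Aubert dual of this tempered $\pi$ is \emph{not} tempered, and is certainly not obtained by changing a component-group character within the same $L$-packet. (This is the well-known phenomenon that the Aubert--Zelevinsky involution does not preserve temperedness, mentioned in the introduction.) Your argument as written has no mechanism to produce these new non-centered segments or to verify the sign changes $\varepsilon'([-y,y]_\rho)=-\varepsilon([-y,y]_\rho)$ in case (3), so the "term-by-term matching" you propose cannot be done from a sign-flip formula.

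The paper's proof routes around this entirely: after decomposing along supercuspidal lines and observing that each line component has the staircase shape, it invokes \cite[Proposition~5.4]{AM}, which explicitly computes $\hat\pi$ for exactly these (not-necessarily-self-dual) tempered representations, including the non-tempered outputs, and then matches that description against the output of Lemmas~\ref{lem:ADmetemp} and \ref{lem:ADmetempnottype}. That is the citation you are missing; the reference to \cite[\S 7]{Art}/\cite{AGIKMS} does not in itself give an explicit formula in Langlands-data form that you could compare with. A secondary point: your line-by-line reduction should be stated with care, since $\Symm^{\varepsilon,+}(G)$ is \emph{not} the direct sum of the $\Symm_{\rho}^{\varepsilon,+}(G)$ (cf.\ Remark~\ref{rem:nolines}), so individual line components need not lie in $\Symm_\rho^{\varepsilon,+}(G)$; the paper's proof sidesteps this by never asserting a per-line sign constraint and simply comparing the two global formulas. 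Your structural classification of reduced data on a good line is correct and is indeed the implicit step behind the paper's phrase "the hypothesis made on $\pi$ imply that each $(\m_{\rho_i},\varepsilon_{\rho_i})$ satisfies the hypotheses of Lemma~\ref{lem:ADmetemp} or~\ref{lem:ADmetempnottype}."
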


\begin{proof}
  We can write $(\m,\varepsilon)=\sum_{\rho_i} (\m_{\rho_i},\varepsilon_{\rho_i})$ such that the $\rho_i \in \Cusp^{\GL}$ are good, $\Z_{\rho_i} \neq \Z_{\rho_j}$ if $i \neq j$, and $(\m_{\rho_i},\varepsilon_{\rho_i}) \in \Symm_{\rho_i}^{\varepsilon}$. By definition, $\AD(\m,\varepsilon)=\sum_{\rho_i} \AD_{\rho_i}(\m_{\rho_i},\varepsilon_{\rho_i})$. The hypothesis made on $\pi$ imply that each $(\m_{\rho_i},\varepsilon_{\rho_i})$ satisfies the hypotheses of Lemma \ref{lem:ADmetemp} or Lemma \ref{lem:ADmetempnottype}. These lemmas explicitly compute $\AD(\m,\varepsilon)$. The representation $\hat\pi$ is explicitly computed in \cite[Prop. 5.4]{AM}. The two results are identical, proving the result.
\end{proof}

Because we want to prove Theorem \ref{thm:ADdualinduction}, we will also need the following lemma.

\begin{lem}
  \label{lem:ADtemprho}
  Let $\pi = L(\m',\varepsilon') \in \Irr^{G}$ be of good parity with $(\m',\varepsilon') \in \Symm^{\varepsilon,+,1}(G)$. We assume that there exist $(\m,\varepsilon) \in \Symm_{\mathscr{C}}^{\varepsilon}(G)$ and $(\m_0,\varepsilon_0) \in \Symm_{\{\rho_0,\chi\rho_0\}}^{\varepsilon}(G)$ such that 
  \begin{enumerate}
    \item $(\m',\varepsilon')=(\m,\varepsilon) + (\m_0,\varepsilon_0)$;
    \item $(\m,\varepsilon)$ is reduced;
    \item $l(\m_0) \le 1$.
  \end{enumerate}
  Then $\hat{\pi} \simeq L(\AD(\m',\varepsilon'))$.
\end{lem}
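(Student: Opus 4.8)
The plan is to reduce Lemma~\ref{lem:ADtemprho} to Lemma~\ref{lem:ADdualtemp} by controlling the single extra segment living on the line $\Z_{\rho_0}$. Write $(\m_0,\varepsilon_0)$ as either $0$ or a single segment $\Delta_0=[a,b]_{\rho_u}$ (plus, when $\rho_0$ is good, a sign if $\Delta_0$ is centered). The key observation is that, since $\AD$ decomposes along supercuspidal lines as $\AD=\oplus_\rho \AD_\rho$, we have $\AD(\m',\varepsilon')=\AD(\m,\varepsilon)+\AD_{\rho_0}(\m_0,\varepsilon_0)$, and one computes $\AD_{\rho_0}$ of a single-segment multisegment directly from the algorithm: if $\Delta_0$ is not centered, $\AD_{\rho_0}([a,b]_{\rho_u}+[-b,-a]_{\rho_u})=[a,b]_{\rho_u}+[-b,-a]_{\rho_u}$ (the initial sequence has length one and $\m^{\#}$ is empty), and if $\Delta_0=[0,0]_{\rho_u}$ with sign $\varepsilon_0$, then $\AD_{\rho_0}$ returns $[0,0]_{\rho_u}$ with the sign flipped according to Definition~\ref{def:epsil} and the $\varepsilon_1$ formula with $n_0=1$. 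So $\AD(\m',\varepsilon')$ is explicit in terms of $\AD(\m,\varepsilon)$, which is itself explicit by Lemmas~\ref{lem:ADmetemp} and~\ref{lem:ADmetempnottype} since $(\m,\varepsilon)$ is reduced.

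On the representation side, the plan is to use the Jantzen/line decomposition together with the fact that $\widehat{\pi}$ decomposes accordingly. By Theorem~\ref{thm:Jantzen} and its compatibility with $\AD$ via $\trans$, it suffices to match $\widehat{\pi^{\good}}$-type data line by line. Since $(\m,\varepsilon)$ is reduced and supported on $\mathscr{C}$, Lemma~\ref{lem:ADdualtemp} already gives $\widehat{L(\m,\varepsilon)}=L(\AD(\m,\varepsilon))$. For the $\rho_0$-line component, I would argue that $\pi=L(\m',\varepsilon')$ is an irreducible subrepresentation of a product $\sigma_0\rtimes L(\m,\varepsilon)$, where $\sigma_0$ is the $\GL$-representation attached to the single segment $\Delta_0$ (via $L(\Delta_0)$ when $\Delta_0$ is non-centered, paired with its dual, or the appropriate tempered piece when $\Delta_0$ is centered). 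Because the Aubert--Zelevinsky duality commutes with parabolic induction up to contragredient and the supercuspidal support of $\sigma_0$ is disjoint from that of $L(\m,\varepsilon)$ (different lines), one gets $\widehat{\pi}=\soc(\widehat{\sigma_0}\rtimes\widehat{L(\m,\varepsilon)})$, and $\widehat{\sigma_0}$ is easily identified (self-dual centered case: the sign flips; non-centered case: it is fixed up to the obvious ladder-type duality, which is the identity on a single segment). Matching this with $\trans^{-1}(\AD(\m',\varepsilon'))$ via the explicit formula above finishes the proof.

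The one technical point that needs care is the centered-segment case when $\rho_0$ is good and $\Delta_0=[0,0]_{\rho_u}$ carries a sign, because then $(\m',\varepsilon')$ having product of signs $+1$ forces the sign of $[0,0]_{\rho_u}$ to interact with the parity count $n_0$ appearing in the $\varepsilon_1$ formula, and the corresponding representation $\sigma_0$ is a genuine tempered constituent rather than an induced-from-$\GL$ piece; here I would invoke \eqref{eq:irre} to write $\pi(\phi,\eta)\simeq \pi_{\phi_{\rho_0}}\rtimes\pi(\phi-\phi_{\rho_0},\eta)$ and use that $\widehat{\,\cdot\,}$ is the identity on the supercuspidal $\sigma_0$ while changing $\eta(\rho_0\boxtimes S_1)$ appropriately. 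I expect this bookkeeping of signs—reconciling Definition~\ref{def:epsil}, the $\varepsilon_1$ formula, and the sign in the explicit description of $\widehat{\pi}$ from \cite[Prop.~5.4]{AM}—to be the main obstacle; once it is dispatched, the non-centered case is routine and the result follows. Since $l(\m_0)\le 1$ there are only finitely many shapes for $\Delta_0$ to consider, so the argument is a short case check bolted onto Lemma~\ref{lem:ADdualtemp}.
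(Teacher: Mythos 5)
There is a real gap, and it comes from not exploiting how restrictive the hypothesis $l(\m_0)\le 1$ actually is. You treat $\m_0$ as "either $0$ or a single segment $\Delta_0=[a,b]_{\rho_u}$" and proceed to handle a non-centered $\Delta_0$ (paired with $\Delta_0^\vee$) as a genuine case. But $(\m_0,\varepsilon_0)\in\Symm_{\rho_0}^\varepsilon(G)$ forces $\m_0$ to be symmetric, so a non-centered $[a,b]$ must come with $[-b,-a]$, giving $l(\m_0)=2(b-a+1)\ge 2$; and a centered $[-a,a]$ has $l=2a+1$, forcing $a=0$. Hence $\m_0$ is either $0$ or $[0,0]_{\rho_0}$ (and the latter only if $\rho_0$ is of the same type as $G$, by the good-parity constraint on endpoints). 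Both of these are reduced multisegments. Since $(\m,\varepsilon)$ is reduced and lives on the lines in $\mathscr{C}$ while $(\m_0,\varepsilon_0)$ lives on $\Z_{\rho_0}$, and the $\rho$-derivative and $L([-1,0]_\rho)$-derivative conditions are tested line by line, the sum $(\m',\varepsilon')$ is itself reduced. The statement then follows in one line by invoking Lemma~\ref{lem:ADdualtemp} directly.

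Your proposed route — computing $\AD_{\rho_0}$ explicitly, embedding $\pi$ into $\sigma_0\rtimes L(\m,\varepsilon)$, using commutation of the duality with parabolic induction, and then a sign audit against \cite[Prop.~5.4]{AM} — is machinery for a situation that the hypotheses simply exclude. Besides treating an impossible case, it re-derives facts that Lemma~\ref{lem:ADdualtemp} already gives once you notice $(\m',\varepsilon')$ is reduced. The "main obstacle" you flag (sign bookkeeping for a centered $[0,0]_{\rho_0}$) is a non-issue for the same reason: that case is already subsumed in Lemma~\ref{lem:ADdualtemp}.
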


\begin{proof}
  The only elements $(\m_0,\varepsilon_0) \in \Symm_{\{\rho_0,\chi\rho_0\}}^{\varepsilon}$ of good parity with $l(\m_0,\varepsilon_0) \le 1$, are $\m_0 = 0$, $\m_0 = [0,0]_{\rho_0}$ and $\m_0 = [0,0]_{\chi\rho_0}$ with $\rho_0$ of the same type as $G$. They are all reduced, so the result follows from Lemma \ref{lem:ADdualtemp}.
\end{proof}

\subsection{The strategy of the proof}
\label{sec:startegyproof}

We explain here the strategy to prove Theorem \ref{thm:ADdualinduction}.

\bigskip

We prove Theorem \ref{thm:ADdualinduction} by induction on $N$. The case $N=0$ is handled by Lemma \ref{lem:ADtemprho}. Let $N \in \NN^{*}$. 

\begin{hypo}
  \label{hyp:recurrence}
We assume that Theorem \ref{thm:ADdualinduction} is true for all $N' < N$.
\end{hypo}
Until the end of the section, we will assume that Hypothesis \ref{hyp:recurrence} is true. We want to prove now that Theorem \ref{thm:ADdualinduction} is true for $N$. To do that, we will prove that the algorithm $\AD$ commutes with derivatives.

\begin{lem}
  \label{lem:commDerthm}
  We assume that for all non-reduced $(\m,\varepsilon) \in \Symm_{\mathscr{C}}^{\varepsilon}(G)$ with $l(\m) = N$, there exist $\rho_i \in \mathscr{C}$, $\rho \in \Cusp^{\GL}$ self-dual and $x \neq 0$ such that $\rho|\cdot|^{x} \in \Z_{\rho_i}$ and we have  either 
  \begin{enumerate}
    \item $(\m,\varepsilon)$ is not $\rho|\cdot|^{x}$-reduced and $\AD_{\rho_i}(D_{\rho|\cdot|^{x}}(\m,\varepsilon))=D_{\rho|\cdot|^{-x}}(\AD_{\rho_i}(\m,\varepsilon))$
    \item or, if it is defined, $(\m,\varepsilon)$ is not $L([-1,0]_\rho)$-reduced and $\AD_{\rho_i}(D_{L([-1,0]_\rho)}(\m,\varepsilon))=D_{Z([0,1]_\rho)}(\AD_{\rho_i}(\m,\varepsilon))$.
  \end{enumerate}
  Then Theorem \ref{thm:ADdualinduction} is true for $N$.
\end{lem}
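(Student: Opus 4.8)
The goal is to reduce Theorem~\ref{thm:ADdualinduction} for the value $N$ to the commutation statements (1) and (2) in the hypothesis. The idea is exactly the strategy outlined in Section~\ref{sec:startegyproof}: one proves the theorem by induction on the length, using that (highest) derivatives are injective and strictly decrease the length. So I would proceed as follows. Let $\pi = L(\m',\varepsilon')$ with $(\m',\varepsilon') = (\m,\varepsilon) + (\m_0,\varepsilon_0)$, $(\m,\varepsilon)\in\Symm_{\mathscr C}^{\varepsilon}(G)$, $(\m_0,\varepsilon_0)\in\Symm_{\rho_0}^{\varepsilon}(G)$, $l(\m)\le N$ and $l(\m_0)\le 1$. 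If $l(\m)<N$ we are done by Hypothesis~\ref{hyp:recurrence}, so assume $l(\m)=N$. If $(\m,\varepsilon)$ is reduced (in the sense of Section~\ref{sec:reducedseg}), then $\hat\pi\simeq L(\AD(\m',\varepsilon'))$ by Lemma~\ref{lem:ADtemprho}. So we may assume $(\m,\varepsilon)$ is not reduced, and then we may apply the assumption of the lemma: there are $\rho_i\in\mathscr C$, a self-dual $\rho$ and $x\neq 0$ with $\rho|\cdot|^{x}\in\Z_{\rho_i}$, such that one of (1) or (2) holds.

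\textbf{The inductive step.} Treat case (1) (the case (2) is handled verbatim, replacing $D_{\rho|\cdot|^{x}}$ by $D_{L([-1,0]_\rho)}$, $D_{\rho|\cdot|^{-x}}$ by $D_{Z([0,1]_\rho)}$, and using the second part of Proposition~\ref{prop:derivativedual}). Set $\nu := \rho|\cdot|^{x}$, so $\nu^{\vee} = \rho|\cdot|^{-x}$ (recall $\rho$ is self-dual), and let $D^{(k)}_{\nu}$ be the highest $\nu$-derivative of $\pi$, with $k\ge 1$ since $(\m,\varepsilon)$ is not $\nu$-reduced. Write the chain of equalities
\begin{align*}
D_{\nu^{\vee}}(L(\AD(\m',\varepsilon')))
  &= L\bigl(D_{\nu^{\vee}}(\AD(\m',\varepsilon'))\bigr) \qquad\text{(Proposition~\ref{prop:derpi})}\\
  &= L\bigl(\AD(D_{\nu}(\m',\varepsilon'))\bigr) \qquad\text{(the assumed commutation)}\\
  &= \widehat{L\bigl(D_{\nu}(\m',\varepsilon')\bigr)} \qquad\text{(inductive hypothesis, see below)}\\
  &= \widehat{D_{\nu}^{(k)}(\pi)} \qquad\text{(Proposition~\ref{prop:derpi})}\\
  &= D_{\nu^{\vee}}^{(k)}(\hat\pi) \qquad\text{(Proposition~\ref{prop:derivativedual})}.
\end{align*}
Here the third equality uses Hypothesis~\ref{hyp:recurrence} applied to the representation $L(D_{\nu}(\m',\varepsilon'))$: one must check that $D_{\nu}(\m',\varepsilon')$ again decomposes as $(\tilde\m,\tilde\varepsilon)+(\m_0,\varepsilon_0)$ with $(\tilde\m,\tilde\varepsilon)\in\Symm_{\mathscr C}^{\varepsilon}(G)$ of length $\le N-1$ — since $\nu\in\Z_{\rho_i}$ with $\rho_i\in\mathscr C$ and $\rho_0\notin\mathscr C$, the $\rho_0$-part $(\m_0,\varepsilon_0)$ is unchanged, $\tilde\m=D_{\nu}(\m,\varepsilon)$ lies in $\Symm_{\mathscr C}^{\varepsilon}(G)$, and $l(\tilde\m)=l(\m)-dk<N$ — and that $D_{\nu}(\m',\varepsilon')$ lies in $\Symm^{\varepsilon,+}(G)$, which follows from Proposition~\ref{prop:ADsigns} together with the fact (Proposition~\ref{prop:derpi}) that it corresponds to the irreducible representation $D^{(k)}_{\nu}(\pi)$. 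Now $D^{(k)}_{\nu^{\vee}}$ is the highest $\nu^{\vee}$-derivative of $\hat\pi$ (by Proposition~\ref{prop:derivativedual}(1), since $D^{(k)}_{\nu}$ is the highest $\nu$-derivative), and the highest derivative is injective on irreducible representations (recall $S^{(k)}_{\nu^{\vee}}\circ D^{(k)}_{\nu^{\vee}} = \mathrm{id}$ on the relevant range from Section~\ref{sec:AM}; more precisely, for self-dual $\rho$ one uses the $L([-1,0]_\rho)$/$Z([0,1]_\rho)$ machinery of Paragraph~\ref{other-derivatives} in case (2)). Therefore the identity $D_{\nu^{\vee}}(L(\AD(\m',\varepsilon'))) = D_{\nu^{\vee}}^{(k)}(\hat\pi)$ forces $L(\AD(\m',\varepsilon'))\simeq\hat\pi$, which is the assertion of Theorem~\ref{thm:ADdualinduction} for $N$.

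\textbf{The main obstacle.} The delicate point is not the formal chain of equalities above — that is essentially bookkeeping — but verifying that the representation-theoretic inputs apply cleanly when $\rho$ is self-dual. Concretely: (a) the highest $\nu$-derivative need not be irreducible for self-dual $\nu$, which is exactly why in case (2) one must switch to the derivatives $D_{L([-1,0]_\rho)}$ and $D_{Z([0,1]_\rho)}$ of Paragraph~\ref{other-derivatives}, whose highest versions \emph{are} irreducible on reduced representations; and (b) one must be careful that $D_{\nu}(\m',\varepsilon')$ (resp.\ $D_{L([-1,0]_\rho)}(\m',\varepsilon')$) still satisfies the hypotheses (1)--(3) of Theorem~\ref{thm:ADdualinduction}, in particular that it still lies in $\Symm^{\varepsilon,+}(G)$ and that the $\rho_0$-component is untouched of length $\le 1$ — this is where the requirement $\rho_0\notin\mathscr C$ and $\rho_i\in\mathscr C$ is used. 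Once these are in place, injectivity of the highest derivative closes the induction. I would organize the write-up so that case (1) is done in full and case (2) is dispatched by the sentence "the argument is identical, using Proposition~\ref{prop:derivativedual}(2) and the $Z([0,1]_\rho)$-derivatives in place of the $\nu^{\vee}$-derivatives."
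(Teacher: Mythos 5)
Your proof is correct and takes essentially the same route as the paper: split into the reduced case (Lemma~\ref{lem:ADtemprho}) and the non-reduced case, then run the same chain of equalities via Proposition~\ref{prop:derpi}, the assumed commutation, Hypothesis~\ref{hyp:recurrence}, and Proposition~\ref{prop:derivativedual}, and close by injectivity of the highest derivative. The extra verification you supply — that $D_{\nu}(\m',\varepsilon')$ still has the shape $(\tilde\m,\tilde\varepsilon)+(\m_0,\varepsilon_0)$ with $l(\tilde\m)<N$, that the $\rho_0$-part is untouched, and that the result stays in $\Symm^{\varepsilon,+}(G)$ — is a legitimate detail the paper leaves implicit, and it is good that you flagged it. One minor slip in the "main obstacle" discussion: you write that in case (1) the highest $\nu$-derivative "need not be irreducible for self-dual $\nu$," but in case (1) $\nu=\rho|\cdot|^{x}$ with $x\neq 0$ is never self-dual (only $\rho=\rho_u$ is), so irreducibility of the highest $\nu$-derivative is not in doubt there; the $L([-1,0]_\rho)/Z([0,1]_\rho)$ machinery is needed precisely because the derivative at the self-dual point $\rho$ itself is ill-behaved, which is why the lemma's case (2) exists. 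This does not affect the correctness of your argument.
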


\begin{proof}
  Let $\pi$, $(\m',\varepsilon')$, $(\m,\varepsilon)$ and $(\m_0,\varepsilon_0)$ as in Theorem \ref{thm:ADdualinduction}. If $(\m,\varepsilon)$ is reduced, Theorem \ref{thm:ADdualinduction} follows from Lemma \ref{lem:ADtemprho}. Hence, we can assume that $(\m,\varepsilon)$ is not reduced. By hypothesis, there exist $\rho_i \in \mathscr{C}$, $\rho \in \Cusp^{\GL}$ self-dual and $x \neq 0$ such that $(\m,\varepsilon)$ is not $\rho|\cdot|^{x}$-reduced and $\AD_{\rho_i}(D_{\rho|\cdot|^{x}}(\m,\varepsilon))=D_{\rho|\cdot|^{-x}}(\AD_{\rho_i}(\m,\varepsilon))$; or $(\m,\varepsilon)$ is not $L([-1,0]_\rho)$-reduced and $\AD_{\rho_i}(D_{L([-1,0]_\rho)}(\m,\varepsilon))=D_{Z([0,1]_\rho)}(\AD_{\rho_i}(\m,\varepsilon))$. We assume that $(\m,\varepsilon)$ is not $\rho|\cdot|^{x}$-reduced and $\AD_{\rho_i}(D_{\rho|\cdot|^{x}}(\m,\varepsilon))=D_{\rho|\cdot|^{-x}}(\AD_{\rho_i}(\m,\varepsilon))$, the other cases being treated similarly. Note that $\AD=\oplus_\rho \AD_\rho$ and similarly for the derivative. Thus we have $\AD(D_{\rho|\cdot|^{x}}(\m',\varepsilon'))=D_{\rho|\cdot|^{-x}}(\AD(\m',\varepsilon'))$. Then we get
  \begin{align*}
    D_{\rho|\cdot|^{-x}}(L(\AD(\m',\varepsilon'))) & = L(D_{\rho|\cdot|^{-x}}(\AD(\m',\varepsilon'))) \text{ by Lemma \ref{prop:derpi}}\\
 & = L(\AD(D_{\rho|\cdot|^{x}}(\m',\varepsilon'))) \\
 & = L(D_{\rho|\cdot|^{x}}(\m',\varepsilon'))^{\widehat{}} \text{ by Hypothesis \ref{hyp:recurrence}} \\
 & = D_{\rho|\cdot|^{x}}(L(\m',\varepsilon'))^{\widehat{}} \text{ by Lemma \ref{prop:derpi}} \\
 & = D_{\rho|\cdot|^{-x}} (\hat{\pi}) \text{ by \cite[Prop. 3.9.]{AM}} \\
  \end{align*}
  
By the injectivity of $D_{\rho|\cdot|^{-x}}$, we get that $\hat{\pi}=L(\AD(\m',\varepsilon'))$.
\end{proof}

Let $(\m,\varepsilon) \in \Symm_{\mathscr{C}}^{\varepsilon}(G)$ non-reduced with $l(\m) = N$. We will show that the hypotheses of Lemma \ref{lem:commDerthm} are satisfied. First note that we have the following result.

\begin{lem}
  \label{lem:ADcommDer}
  For all $(\m,\varepsilon) \in \Symm_{\mathscr{C}}^{\varepsilon}(G)$ such that $l(\m) < N$, for all $\rho_i \in \mathscr{C}$, $\rho \in \mathscr{C}$ self-dual and $x \neq 0$,
  \[
    \AD_{\rho_i}(D_{\rho|\cdot|^{x}}(\m,\varepsilon))=D_{\rho|\cdot|^{-x}}(\AD_{\rho_i}(\m,\varepsilon))
  \]
  and, if it is well-defined,
  \[
    \AD_{\rho_i}(D_{L([-1,0]_\rho)}(\m,\varepsilon))=D_{Z([0,1]_\rho)}(\AD_{\rho_i}(\m,\varepsilon)).
  \]
\end{lem}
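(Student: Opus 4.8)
The statement to be proved is Lemma~\ref{lem:ADcommDer}: for every $(\m,\varepsilon)\in\Symm_{\mathscr{C}}^{\varepsilon}(G)$ with $l(\m)<N$, for every $\rho_i\in\mathscr{C}$, every self-dual $\rho\in\mathscr{C}$ and every $x\neq 0$, the algorithm $\AD_{\rho_i}$ commutes with the derivative operators in the two senses written. The natural strategy is to \emph{bootstrap from Hypothesis~\ref{hyp:recurrence}}: since $l(\m)<N$, the representation $\pi=L(\m,\varepsilon)$ (after adding a suitable trivial or one-segment $\rho_0$-component, so that we land in $\Symm^{\varepsilon,+}(G)$) falls under Theorem~\ref{thm:ADdualinduction} for $N'=l(\m)<N$, which is known by Hypothesis~\ref{hyp:recurrence}. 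Concretely, I would first reduce to the case $(\m,\varepsilon)\in\Symm^{\varepsilon,+}(G)$: by Proposition~\ref{prop:ADsigns} the operator $\AD_{\rho_i}$ preserves the product of signs, and the derivative operators $D_{\rho|\cdot|^x}$, $D_{L([-1,0]_\rho)}$ change the collection of centered segments in a controlled way, so one can always append a single $[0,0]_{\rho_0}$ (with $\rho_0$ of the same type as $G$ and $\rho_0\notin\mathscr{C}$) to fix the sign parity and work inside $\Symm^{\varepsilon,+}(G)$; this added component is inert under all the operators in play because $\rho_0\nsim'\rho_i$ and $\rho_0\nsim'\rho$.

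\textbf{Key steps.} Having placed ourselves in $\Symm^{\varepsilon,+}(G)$ with $l(\m)<N$, let $\pi=L(\m,\varepsilon)$. By Hypothesis~\ref{hyp:recurrence} (Theorem~\ref{thm:ADdualinduction} for $N'=l(\m)$) we have $\hat\pi\simeq L(\AD(\m,\varepsilon))$. Now run the computation of Lemma~\ref{lem:commDerthm} in reverse: apply $D_{\rho|\cdot|^{x}}$ to both sides. On the representation side, Proposition~\ref{prop:derivativedual} gives $\widehat{D_{\rho|\cdot|^{x}}(\pi)}=D_{\rho|\cdot|^{-x}}(\hat\pi)$ when $D_{\rho|\cdot|^{x}}$ is the highest derivative (and similarly the $L([-1,0]_\rho)$/$Z([0,1]_\rho)$ version). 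On the combinatorial side, Proposition~\ref{prop:derpi} (resp.\ Proposition~\ref{prop:derpibad}, but here $\rho\in\mathscr{C}$ is good so Proposition~\ref{prop:derpi} applies, and the $Z([0,1]_\rho)$-derivative formula of \cite[Algorithm A.4]{AtobeSocle} in the $L$-version) identifies the highest derivative of $L(\m,\varepsilon)$ with $L(D_{\rho|\cdot|^{x}}(\m,\varepsilon))$. Since $D_{\rho|\cdot|^x}(\m,\varepsilon)$ and the other derivatives strictly reduce the length, we can again invoke Hypothesis~\ref{hyp:recurrence} to compute $\widehat{L(D_{\rho|\cdot|^{x}}(\m,\varepsilon))}=L(\AD(D_{\rho|\cdot|^{x}}(\m,\varepsilon)))$. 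Chaining these identities,
\[
L(D_{\rho|\cdot|^{-x}}(\AD(\m,\varepsilon)))=D_{\rho|\cdot|^{-x}}(L(\AD(\m,\varepsilon)))=D_{\rho|\cdot|^{-x}}(\hat\pi)=\widehat{D_{\rho|\cdot|^{x}}(\pi)}=L(\AD(D_{\rho|\cdot|^{x}}(\m,\varepsilon))),
\]
and by injectivity of $L$ we conclude $D_{\rho|\cdot|^{-x}}(\AD(\m,\varepsilon))=\AD(D_{\rho|\cdot|^{x}}(\m,\varepsilon))$. Restricting to the $\rho_i$-line (all operators respect the line decomposition $\AD=\oplus\AD_\rho$, $D=\oplus D_\rho$, and $\rho|\cdot|^x\in\Z_{\rho_i}$) gives the stated identity for $\AD_{\rho_i}$. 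The $L([-1,0]_\rho)$-version is identical, using the second item of Proposition~\ref{prop:derivativedual} and the appropriate derivative formula; note that when the operator "is not well-defined" (i.e.\ $(\m,\varepsilon)$ is not $\rho|\cdot|^{-1}$-reduced) there is nothing to prove.

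\textbf{Main obstacle.} The delicate point is the bookkeeping needed to legitimately invoke Hypothesis~\ref{hyp:recurrence} twice: once on $(\m,\varepsilon)$ itself and once on $D_{\rho|\cdot|^x}(\m,\varepsilon)$, each time with the right $\mathscr{C}$ and the right $N'$. The derivative may enlarge the \emph{support} of the multisegment only within $\Z_{\rho_i}$, so $D_{\rho|\cdot|^x}(\m,\varepsilon)$ still lies in $\Symm_{\mathscr{C}}^{\varepsilon}(G)$; and it strictly decreases the length (the highest derivative is nonzero only if $D^{(1)}\neq 0$, and each application removes at least one box), so the length bound for the inductive hypothesis is available. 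The subtlety is that after taking the derivative we may leave $\Symm^{\varepsilon,+}(G)$ — exactly the difficulty flagged in the paragraph before the statement — and we must re-enter it by the same trick of adjoining a controlled $\rho_0$-component, checking via Proposition~\ref{prop:ADsigns} that this does not alter the value of $\AD_{\rho_i}$ on the lines we care about. I expect the genuinely non-formal content to be hidden in verifying that the highest derivative is indeed computed by $D_{\rho|\cdot|^x}$ at each stage (Propositions~\ref{prop:derpi}, \ref{prop:derpibad}) and that the $S$-operators invoked in $D^{\max-1}_{\rho}$, $D^{\max-1}_{Z([0,1]_{\rho_u})}$ (Lemmas~\ref{lem:dersum}, \ref{lem:dersum01}) are compatible with all this — but for this particular lemma, where $l(\m)<N$, all of that is subsumed under the inductive hypothesis, so the proof is essentially a formal diagram chase; the real work of the section comes in the next step, where $l(\m)=N$.
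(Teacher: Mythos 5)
Your proposal is correct and follows essentially the same route as the paper's proof: when $S(\m,\varepsilon)=1$ one applies Hypothesis~\ref{hyp:recurrence} directly, and when $S(\m,\varepsilon)=-1$ one adjoins the $\rho_0$-component $([0,0]_{\rho_0},-1)$ so that $(\m',\varepsilon')=(\m,\varepsilon)+(\m_0,\varepsilon_0)$ satisfies the conditions of Theorem~\ref{thm:ADdualinduction} for some $N'<N$; then Proposition~\ref{prop:derivativedual} together with Propositions~\ref{prop:derpi} and the inductive hypothesis applied once more to $D_{\rho|\cdot|^x}(\m',\varepsilon')$ (which is again in $\Symm^{\varepsilon,+}(G)$, being the symmetric data of an actual irreducible representation, so no second sign-fix is needed) gives the identity after projecting to the $\rho_i$-line. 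The only inaccuracy in your write-up is the suggestion that Lemmas~\ref{lem:dersum}/\ref{lem:dersum01} or the $D^{\max-1}$ operators enter into this particular lemma — they do not; as you yourself observe at the end, for $l(\m)<N$ the argument is a formal chase and the hard combinatorics are deferred to the case $l(\m)=N$.
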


\begin{proof}
  We can assume that $\rho|\cdot|^{x} \in \Z_{\rho_i}$. If $(\m,\varepsilon)\in \Symm^{\varepsilon,+,1}(G)$, by Hypothesis \ref{hyp:recurrence}, $\widehat{L(\m,\varepsilon)}=L(\AD(\m,\varepsilon))$. Thus the result follows from Proposition \ref{prop:derivativedual}. If not, we consider $\m_0=[0,0]_{\chi^{i}\rho_0}$, with $i\in \{0,1\}$ such that $\det(\chi^{i}\rho_0) = \det(\m)$, and $\varepsilon_0(\m_0)=S(\m,\varepsilon)$. Let $(\m',\varepsilon')=(\m,\varepsilon) + (\m_0,\varepsilon_0)$. Our choice of $(\m_0,\varepsilon_0)$ is made such that $(\m',\varepsilon') \in \Symm^{\varepsilon,+,1}(G)$. The element $(\m',\varepsilon')$ satisfies the conditions of Theorem \ref{thm:ADdualinduction}, thus $\widehat{L(\m',\varepsilon')}=L(\AD(\m',\varepsilon'))$. We also get the result from Proposition \ref{prop:derivativedual} and projecting on the line $\Z_{\rho_i}$.
\end{proof}

\begin{lem}
  \label{lem:ADcommSoc}
  For all $(\m,\varepsilon) \in \Symm_{\mathscr{C}}^{\varepsilon}(G)$ such that $l(\m) < N - 2$, for all $\rho_i \in \mathscr{C}$, $\rho \in \mathscr{C}$ self-dual and $x \neq 0$,
  \[
    \AD_{\rho_i}(S_{\rho|\cdot|^{x}}^{(1)}(\m,\varepsilon))=S_{\rho|\cdot|^{-x}}^{(1)}(\AD_{\rho_i}(\m,\varepsilon))
  \]
  and, if it is well-defined,
  \[
    \AD_{\rho_i}(S_{L([-1,0]_\rho)}^{(1)}(\m,\varepsilon))=S_{Z([0,1]_\rho)}^{(1)}(\AD_{\rho_i}(\m,\varepsilon)).
  \]
\end{lem}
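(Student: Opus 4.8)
The plan is to deduce Lemma~\ref{lem:ADcommSoc} from Lemma~\ref{lem:ADcommDer} by exploiting the fact that the socle operators $S^{(1)}$ are inverse (in the relevant sense) to the highest derivatives. Recall from Sections~\ref{sec:AM} and~\ref{other-derivatives} that $D^{(r)}_{\rho|\cdot|^x}\circ S^{(r)}_{\rho|\cdot|^x}$ is the identity and, conversely, $S^{(r)}_{\rho|\cdot|^x}\circ D^{(r)}_{\rho|\cdot|^x}$ is the identity whenever the relevant highest derivative is nonzero; and the analogous statements hold for the $L([-1,0]_\rho)$/$Z([0,1]_\rho)$ pair. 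So the first step is: given $(\m,\varepsilon)\in\Symm_{\mathscr C}^\varepsilon(G)$ with $l(\m)<N-2$, set $(\n,\delta):=S^{(1)}_{\rho|\cdot|^x}(\m,\varepsilon)$, so that $l(\n)=l(\m)+2<N$. Applying Lemma~\ref{lem:ADcommDer} to $(\n,\delta)$ gives
\[
\AD_{\rho_i}\bigl(D_{\rho|\cdot|^x}(\n,\delta)\bigr)=D_{\rho|\cdot|^{-x}}\bigl(\AD_{\rho_i}(\n,\delta)\bigr).
\]
Now $D_{\rho|\cdot|^x}(\n,\delta)=D_{\rho|\cdot|^x}\bigl(S^{(1)}_{\rho|\cdot|^x}(\m,\varepsilon)\bigr)=(\m,\varepsilon)$, since $S^{(1)}$ makes the representation non-$\rho|\cdot|^x$-reduced with highest derivative exactly the inverse of $S^{(1)}$. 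Hence the left-hand side is $\AD_{\rho_i}(\m,\varepsilon)$, and we obtain $\AD_{\rho_i}(\m,\varepsilon)=D_{\rho|\cdot|^{-x}}\bigl(\AD_{\rho_i}(\n,\delta)\bigr)$.

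The second step is to invert this relation. We must show that $\AD_{\rho_i}(\n,\delta)=\AD_{\rho_i}\bigl(S^{(1)}_{\rho|\cdot|^x}(\m,\varepsilon)\bigr)$ is itself of the form $S^{(1)}_{\rho|\cdot|^{-x}}$ applied to something — concretely, that it is $\rho|\cdot|^{-x}$-reduced after one step, equivalently that it equals $S^{(1)}_{\rho|\cdot|^{-x}}\bigl(D_{\rho|\cdot|^{-x}}(\AD_{\rho_i}(\n,\delta))\bigr)=S^{(1)}_{\rho|\cdot|^{-x}}(\AD_{\rho_i}(\m,\varepsilon))$. This is where one uses that $\trans^{-1}(\AD)=\ADd$ sends the Langlands data of $L(\n,\delta)$ to the Langlands data of $\widehat{L(\n,\delta)}$ (valid by Hypothesis~\ref{hyp:recurrence}, possibly after adding a sign-fixing $[0,0]_{\rho_0}$ as in the proof of Lemma~\ref{lem:ADcommDer}); on the representation side, $\widehat{S^{(1)}_{\rho|\cdot|^x}(\pi)}=S^{(1)}_{\rho|\cdot|^{-x}}(\hat\pi)$ because the Aubert--Zelevinsky dual intertwines $\soc(\rho|\cdot|^x\rtimes-)$ with $\soc(\rho|\cdot|^{-x}\rtimes-)$ (it commutes with induction and contragredient, and $(\rho|\cdot|^x)^\vee=\rho^\vee|\cdot|^{-x}$). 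Translating this back through $\trans$ gives exactly $\AD_{\rho_i}\bigl(S^{(1)}_{\rho|\cdot|^x}(\m,\varepsilon)\bigr)=S^{(1)}_{\rho|\cdot|^{-x}}\bigl(\AD_{\rho_i}(\m,\varepsilon)\bigr)$, which is the claim. The $L([-1,0]_\rho)$/$Z([0,1]_\rho)$ case runs identically, using Proposition~\ref{prop:derivativedual}(2) and the fact that $\widehat{L([-1,0]_\rho)}=Z([0,1]_\rho)$ (Knight--Zelevinsky / Aubert duality on $\GL$), so that $S^{(1)}_{L([-1,0]_\rho)}$ is intertwined with $S^{(1)}_{Z([0,1]_\rho)}$ under $\,\widehat{\ }\,$.

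The main obstacle I anticipate is bookkeeping around signs and the passage between $\Symm^{\varepsilon}(G)$ and $\Symm^{\varepsilon,+}(G)$: the recursive identity $\AD=(\m_1,\varepsilon_1)+\AD(\m^{\#},\varepsilon^{\#})$ and Hypothesis~\ref{hyp:recurrence} are phrased for elements with sign-product $+1$, while $S^{(1)}(\m,\varepsilon)$ need not lie in $\Symm^{\varepsilon,+}(G)$. This is handled exactly as in the proof of Lemma~\ref{lem:ADcommDer}: if $S(\n,\delta)=-1$ one adjoins $\m_0=[0,0]_{\rho_0}$ with $\varepsilon_0([0,0]_{\rho_0})=-1$, applies Theorem~\ref{thm:ADdualinduction} to the resulting element (whose $\mathscr C$-part still has length $<N$), and projects back onto the line $\Z_{\rho_i}$; Proposition~\ref{prop:ADsigns} guarantees $\AD_{\rho_i}$ preserves the sign product so the projection is consistent. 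A secondary point requiring care is verifying that $l(\n)=l(\m)+2$ genuinely stays $<N$, which is why the hypothesis is $l(\m)<N-2$ rather than $l(\m)<N$; with that margin the argument closes.
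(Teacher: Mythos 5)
Your ``second step,'' once the misphrased reformulation is stripped away, is the paper's argument and does prove the lemma: apply Hypothesis~\ref{hyp:recurrence} to $(\n,\delta):=S^{(1)}_{\rho|\cdot|^{x}}(\m,\varepsilon)$ (which has $l(\n)=l(\m)+2<N$; this is precisely why the hypothesis is $l(\m)<N-2$) and to $(\m,\varepsilon)$ itself, use the representation-theoretic identity $\widehat{S^{(1)}_{\rho|\cdot|^{x}}(\pi)}=S^{(1)}_{\rho|\cdot|^{-x}}(\widehat{\pi})$, translate back through $\trans$, and handle the sign product by adjoining $[0,0]_{\rho_0}$ and projecting to the $\rho_i$-line exactly as in the proof of Lemma~\ref{lem:ADcommDer}. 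The paper cites \cite[Thm. 31 (4)]{Bern} for this socle-intertwining identity; your appeal to ``commutes with induction and contragredient'' is too quick, since the Aubert--Zelevinsky involution only satisfies the twisted relation \eqref{eq:Jacquet} with Jacquet and induction functors, and extracting the socle statement from that needs an actual argument.

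Your ``step 1,'' however, contains a genuine error: the operator $D_{\rho|\cdot|^{x}}$ on $\Symm^{\varepsilon}(G)$ is the \emph{highest} $\rho|\cdot|^{x}$-derivative (Proposition~\ref{prop:derpi}), not the first, so $D_{\rho|\cdot|^{x}}\bigl(S^{(1)}_{\rho|\cdot|^{x}}(\m,\varepsilon)\bigr)=D_{\rho|\cdot|^{x}}(\m,\varepsilon)$, not $(\m,\varepsilon)$. Indeed, if $\pi=L(\m,\varepsilon)$ has highest $\rho|\cdot|^{x}$-derivative at level $k\ge 1$, then $S^{(1)}_{\rho|\cdot|^{x}}(\pi)$ has highest derivative at level $k+1$, equal to $D^{(k)}_{\rho|\cdot|^{x}}(\pi)$, not $\pi$. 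The deduction $\AD_{\rho_i}(\m,\varepsilon)=D_{\rho|\cdot|^{-x}}(\AD_{\rho_i}(\n,\delta))$ is therefore not what Lemma~\ref{lem:ADcommDer} yields unless $(\m,\varepsilon)$ is already $\rho|\cdot|^{x}$-reduced; for the same reason the claim opening step~2, that $\AD_{\rho_i}(\n,\delta)$ is ``$\rho|\cdot|^{-x}$-reduced after one step,'' is false in general. Neither step~1 nor that reformulation is needed: delete them and present the direct comparison of symmetrical Langlands data as your proof.
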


\begin{proof}
  The proof is similar to the proof of Lemma \ref{lem:ADcommDer} using \cite[Thm. 31 (4)]{Bern}
\end{proof}

The multisegment $(\m,\varepsilon)$ can be written as $(\m,\varepsilon) = \sum_{i=1}^{r} (\m_i,\varepsilon_i)$, with $(\m_i,\varepsilon_i) \in \Symm_{\rho_i}^{\varepsilon}(G)$. If there exists $i \neq j$ such that $\m_i \neq 0$ and $\m_j \neq 0$, then, for all $1 \le i \le r$, $l(\m_i)<N$, and Lemma \ref{lem:ADcommDer} shows that the conditions of Lemma \ref{lem:commDerthm} are satisfied. Hence we can assume that $\mathscr{C}$ is composed of a single supercuspidal (that is $r=1$). We fix $\rho \in \Cusp^{\GL}$ of good parity and assume that $\mathscr{C} = \{\rho\}$. Let $\rho_u$ be the unitarization of $\rho$. Let $(\m,\varepsilon) \in \Symm_\rho^\varepsilon(G)$.

To simplify the notations, until the end of Section \ref{sec:proofgood}, we will write all the segments with respect to $\rho_u$ and we will omit $\rho$ and $\rho_u$ in the notations. That is $\AD := \AD_\rho$, $[x,y]:=[x,y]_{\rho_u}$, $D_x := D_{\rho_u|\cdot|^{x}}$, $D_{Z([0,1])}:=D_{Z([0,1]_{\rho_u})}$ and $D_{L([-1,0])}:=D_{L([-1,0]_{\rho_u})}$. We will also say that $(\m,\varepsilon)$ is $x$-reduced if it is $\rho_u|\cdot|^{x}$-reduced, and similarly for $L([-1,0])$-reduced and $Z([-1,0])$-reduced.

\bigskip

The goal is to find a suitable value of $y_0$ such that \( D_{y_0}(\m, \varepsilon) \) is easy to compute, and the initial sequence of \( D_{y_0}(\m, \varepsilon) \) remains relatively close to that of \( (\m, \varepsilon) \), in order to control the effect of \( \AD \). The simplest way to ensure that \( D_{y_0}(\m, \varepsilon) \) is easy to compute is to choose $y_0$ as the smallest half-integer $y \in (1/2) \Z^*$ such that \( (\m, \varepsilon) \) is not $y$-reduced. However, this choice can significantly alter the initial sequence of \( (\m, \varepsilon) \), especially when \( [-y_0, -y_0] \in \m \). In that case, this segment, which necessarily appears first in the initial sequence of \( (\m, \varepsilon) \), is removed by the derivative operator $D_{y_0}$. To avoid this issue, we instead choose $y_0$ as the smallest half-integer $y \in (1/2) \Z^*$ such that $y \neq -e_{\max}$ and \( (\m, \varepsilon) \) is not $y$-reduced. This provides a good compromise between the simplicity of the derivative's expression and control over the initial sequence.

The proof is divided into several subsections, as explained below:
\begin{enumerate} 
  \item By a direct computation, we deal with the case $e_{\max} \le 1$ in Section \ref{sec:emax1}.
  \item In Section \ref{sec:derneg}, we treat the case where $e_{\max} > 1$ and there exists $y<0$, $y \neq -e_{\max}$ such that $(\m,\varepsilon)$ is not $y$-reduced.
  \item In Section \ref{sec:derl01}, we prove the result when $e_{\max} > 1$, for all $-e_{\max} < y<0$, $(\m,\varepsilon)$ is $y$-reduced, and $(\m,\varepsilon)$ is not $L([-1,0])$-reduced.
  \item The Section \ref{sec:derpos} deals with the case $e_{\max} > 1$, $\rho$ of the same type as $G$, for all $-e_{\max} < y<0$, $(\m,\varepsilon)$ is $y$-reduced, $(\m,\varepsilon)$ is $L([-1,0])$-reduced, and there exists $y>0$ with $y \neq e_{\max}$ such that $(\m,\varepsilon)$ is not $y$-reduced.
  \item The Section \ref{sec:derposnottype} deals with the case $e_{\max} > 1$, $\rho$ not of the same type as $G$, for all $-e_{\max} < y<0$, $(\m,\varepsilon)$ is $y$-reduced, $(\m,\varepsilon)$ is $L([-1,0])$-reduced, and there exists $y>0$ with $y \neq e_{\max}$ such that $(\m,\varepsilon)$ is not $y$-reduced.
  \item In Section \ref{sec:deremax} we assume that $e_{\max} > 1$, for all $y \neq 0,e_{\max}, -e_{\max}$, $(\m,\varepsilon)$ is $y$-reduced, $(\m,\varepsilon)$ is $L([-1,0])$-reduced, and $(\m,\varepsilon)$ is not $e_{\max}$-reduced.
  \item Finally in Section \ref{sec:deremaxm} we assume that $e_{\max} > 1$, for all $y \neq 0, -e_{\max}$, $(\m,\varepsilon)$ is $y$-reduced and $(\m,\varepsilon)$ is $L([-1,0])$-reduced.
\end{enumerate}

\subsection{The case $e_{\max} \le 1$} \label{sec:emax1}

In this section we assume that $e_{\max} \le 1$. The goal is to compute $\AD(\m,\varepsilon)$ explicitly.

\bigskip

We start with the easiest case which is when $\rho$ is not of the same type as $G$. Then $\m$ is of the form
\[
  \m = c [-1/2,1/2] + n ([-1/2,-1/2] + [1/2,1/2])
\]
with $c,n \in \NN$. To simplify notation, set $\varepsilon=\varepsilon([-1/2,1/2])$.

Let $(\m',\varepsilon')=\AD(\m,\varepsilon)$. The maximum of the coefficients of $(\m',\varepsilon')$ is also smaller than $1$, so $(\m',\varepsilon')$ is of the form as above. We denote by $c',n' \in \NN$ and $\varepsilon'\in \{\pm 1\}$ the constants relative to $(\m',\varepsilon')$.

Let $(*)$ be the condition $c \neq 0$ and $\varepsilon= (-1)^{n+1}$.

\begin{prop}
  \label{prop:ADeunnt}  
  The dual $\AD(\m,\varepsilon)=(\m',\varepsilon')$ is given by the following formulas.
  \begin{enumerate}
      \item If $(*)$ is satisfied, then $c'=n+1$, $n'=c-1$ and $\varepsilon' =  (-1)^{c}$.
      \item If $(*)$ is not satisfied, then $c'=n$, $n'=c$ and $\varepsilon' =  (-1)^{c}$.
 \end{enumerate}
\end{prop}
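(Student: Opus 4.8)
We want to compute $\AD(\m,\varepsilon)$ when $\rho$ is not of the same type as $G$ and $e_{\max} \le 1$. Here $\m = c[-1/2,1/2] + n([-1/2,-1/2]+[1/2,1/2])$, so the only centered segment is $[-1/2,1/2]$ with sign $\varepsilon$, and the order $\preceq$ ranks the segments as $[-1/2,-1/2]^n$ (label $\le 0$), then $[-1/2,1/2]^{\le 0}$ with multiplicity $\lfloor c/2\rfloor$, then $[-1/2,1/2]^{=0}$ (present iff $c$ odd), then $[-1/2,1/2]^{\ge 0}$ with multiplicity $\lfloor c/2\rfloor$, then $[1/2,1/2]^n$. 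I would proceed by induction on $l(\m) = c + 2n$, peeling off one step of $\AD$ at a time and tracking how $(c,n,\varepsilon)$ changes, using the good-parity algorithm of Section~\ref{sec:algogood} and the termination rule from Definition~\ref{def:epsil}.

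\textbf{Key steps.} First, identify the initial sequence $\Delta_1 \succeq \cdots \succeq \Delta_l$. Since $e_{\max} = 1/2$, we have $\Delta_1$ = the largest segment ending at $1/2$; the candidates are $[1/2,1/2]$ (if $n\ge1$), $[-1/2,1/2]^{\ge 0}$ (if $c\ge 2$), or $[-1/2,1/2]^{=0}$ (if $c$ odd). Since $\rho$ is not of the same type as $G$, the algorithm stops whenever it reaches $[1/2,1/2]$, or reaches $[-1/2,1/2]^{\ge0}$ or $[-1/2,1/2]^{=0}$ with sign $-1$. So the whole initial sequence has length $l \le 2$: either $l=1$ with $\Delta_1$ one of these stopping segments, or $l=2$ with $\Delta_1$ ending at $1/2$ and $\Delta_2$ ending at $-1/2$, forced to be $[-1/2,-1/2]$ (which requires $n\ge1$ since no centered segment ends at $-1/2$) — but for $\Delta_2$ to follow $\Delta_1$ when $\Delta_1$ is centered we need the sign condition $\varepsilon(\Delta_2) = -\varepsilon(\Delta_1)$ to be vacuous (as $[-1/2,-1/2]$ is not centered). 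I would carefully enumerate these possibilities against the value of $(*)$. The condition $(*)$: $c\ne 0$ and $\varepsilon = (-1)^{n+1}$. Second, in each branch compute $(\m_1,\varepsilon_1)$ via Definition~\ref{def:epsil} and the $(\m_1,\varepsilon_1)$-formulas: when $\varepsilon_0 = 1$, $\m_1 = [1/2,1/2] + [-1/2,-1/2]$ (so it contributes to $n'$); when $\varepsilon_0 = -1$, $\m_1 = [-1/2,1/2]$ with $\varepsilon_1(\m_1) = (-1)^{n_0}$ where $n_0 = $ number of centered segments in $\m$, i.e. $n_0 = c$ — so $\varepsilon_1 = (-1)^c$, matching the claimed formula for $\varepsilon'$. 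Third, compute $(\m^{\#},\varepsilon^{\#})$ using the $\Lambda_i^{\#}$ rules, check it is again of the form $c''[-1/2,1/2] + n''([-1/2,-1/2]+[1/2,1/2])$ with smaller length, determine whether $(*)$ holds for it, and invoke the induction hypothesis (Proposition~\ref{prop:ADeunnt} for the shorter multisegment). Finally, assemble $\AD(\m,\varepsilon) = (\m_1,\varepsilon_1) + \AD(\m^{\#},\varepsilon^{\#})$ and read off $(c',n',\varepsilon')$; verify the bookkeeping gives $c' = n+1, n' = c-1$ in case $(*)$ and $c' = n, n' = c$ otherwise, with $\varepsilon' = (-1)^c$ throughout.

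\textbf{Main obstacle.} The delicate point is the interaction between the stopping rule (which forces $l\le 2$ and hence $\varepsilon_0 = -1$ exactly when the first extracted segment is one of the ``terminal'' segments $[1/2,1/2]$ or $[-1/2,1/2]^{\ge0/=0}$ with sign $-1$) and the precise value of $\varepsilon^{\#}([-1/2,1/2])$ produced by the last two bullets in the sign rules for $\m^{\#}$ (the $c(\Delta_j) = 1/2$ cases, which cannot arise here since $e_{\max} = 1/2 < 1$ so no segment ends at $3/2$, hence no $\Delta_j$ has center $1/2$ — this should simplify matters, but must be checked). The real work is a finite but somewhat fiddly case analysis on the parities of $c$ and $n$ and on whether $\varepsilon = +1$ or $-1$; I expect four to six leaf cases. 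One must be especially careful that when $c$ is odd and $\varepsilon = +1$ (so $(*)$ fails since $\varepsilon \ne (-1)^{n+1}$ iff $n$ is odd — wait, one must also split on $n$'s parity), the algorithm extracts $[-1/2,1/2]^{\ge0}$ first without stopping (sign $+1$), continues, and the $\varepsilon^\#$ sign flips propagate correctly so that the induction closes. I would organize the proof as: (i) reduce to $l\le 2$; (ii) handle $l=1$ (terminal, $\varepsilon_0=-1$); (iii) handle $l=2$ ($\varepsilon_0=+1$); and in each sub-case read off the recursion for $(c,n,\varepsilon)$ and close by induction, checking the base cases $c+2n \le 1$ directly.
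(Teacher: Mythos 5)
Your overall strategy — run one step of the algorithm, read off $(\m_1,\varepsilon_1)$ and $(\m^\#,\varepsilon^\#)$ in terms of $(c,n,\varepsilon)$, and close by induction on $l(\m)$ — is exactly what the paper does. You also correctly identify the only centered segment, correctly compute $\varepsilon_1(\m_1) = (-1)^{n_0} = (-1)^c$ when $\varepsilon_0 = -1$, and correctly note that the $c(\Delta_j)=1/2$ sign rules cannot fire here.

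However, your case decomposition is wrong, and this is where the plan as written would break down. You claim $l\le 2$ with $l=2$ possible, and you organize the proof as ``(ii) $l=1$, $\varepsilon_0=-1$; (iii) $l=2$, $\varepsilon_0=+1$.'' In fact $l=1$ \emph{always}: if $n\ge 1$, the largest segment ending at $1/2$ is $[1/2,1/2]$ (since $[-1/2,1/2]^{\ge 0}\prec[1/2,1/2]$), which is a stopping segment — so the very hypothesis ($n\ge 1$) needed for your $\Delta_2=[-1/2,-1/2]$ to exist forces $\Delta_1=[1/2,1/2]$ and immediate termination. Your subcase (iii) is therefore empty. Conversely, $\varepsilon_0=+1$ \emph{does} occur, but with $l=1$: when $n=0$ and $\varepsilon=+1$, $\Delta_1=[-1/2,1/2]^{\ge 0}$ or $^{=0}$ with sign $+1$ is not a stopping segment, yet no $\Delta_2$ exists (nothing ends at $-1/2$ since $n=0$), so $l=1$ with $\varepsilon_0=+1$ and $\m_1=[1/2,1/2]+[-1/2,-1/2]$. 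This case has no home in your scheme, which ties ``$l=1$'' to ``terminal, $\varepsilon_0=-1$.'' The correct split (and the one the paper uses) is simply: $n\ne 0$ (then $\Delta_1=[1/2,1/2]$, $\varepsilon_0=-1$); $n=0,\varepsilon=-1$ (centered $\Delta_1$ with stopping sign, $\varepsilon_0=-1$); and $n=0,\varepsilon=+1$ (centered $\Delta_1$, $\varepsilon_0=+1$). Once you repair the case split along these lines, the bookkeeping against $(*)$ closes as you anticipate.
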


\begin{proof}
  We write $\m_1$ and $\m^{\#}$ in the different cases and the result follows by an immediate induction.  
    \begin{itemize}
        \item Suppose $n \neq 0$. Then $\Delta_1= [1/2,1/2]$.  Thus $\m_1 = [-1/2,1/2]$, $\varepsilon_1([-1/2,1/2]) = (-1)^{c}$, $\m^{\#}= c [-1/2,1/2] + (n-1) ([-1/2,-1/2] + [1/2,1/2])$ and $\varepsilon^{\#}([-1/2,1/2]) = -\varepsilon([-1/2,1/2])$. 
        \item Suppose $n = 0$ and $\varepsilon=-1$. Then $\Delta_1= [-1/2,1/2]$.  Thus $\m_1 = [-1/2,1/2]$, $\varepsilon_1([-1/2,1/2]) = (-1)^{c}$, $\m^{\#}= (c-1) [-1/2,1/2]$ and $\varepsilon^{\#}([-1/2,1/2]) = -\varepsilon([-1/2,1/2])$.
        \item Suppose $n = 0$ and $\varepsilon=1$. Then $\Delta_1= [-1/2,1/2]$.  Thus $\m_1 = [-1/2,-1/2]+[1/2,1/2]$, $\m^{\#}= (c-1) [-1/2,1/2]$ and $\varepsilon^{\#}([-1/2,1/2]) = \varepsilon([-1/2,1/2])$.
    \end{itemize}
  \end{proof}

  Now, we check that $\AD$ commutes with the derivative.

  \begin{prop}
    \begin{enumerate}
      \item If $n \neq 0$, then $\AD(D_{-1/2}(\m,\varepsilon))=D_{1/2}(\AD(\m,\varepsilon))$.
      \item If $n = 0$, then $\AD(D_{1/2}(\m,\varepsilon))=D_{-1/2}(\AD(\m,\varepsilon))$.
    \end{enumerate}
    
  \end{prop}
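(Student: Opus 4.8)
The statement concerns the ``base case'' where $\rho$ is not of the same type as $G$ and $e_{\max}\le 1$, so that $(\m,\varepsilon) = c\,[-1/2,1/2] + n\,([-1/2,-1/2]+[1/2,1/2])$. In this regime everything is explicit: Proposition~\ref{prop:ADeunnt} already gives a closed formula for $\AD(\m,\varepsilon)$ in terms of $(c,n,\varepsilon)$ and the condition $(*)$, and the highest $\rho_u|\cdot|^{\pm 1/2}$-derivatives of such a multisegment can be read off directly from Definitions~\ref{def:der} and~\ref{def:derbap} (here $\rho$ is good of the non-same type, so we are in the good-parity case, and we only ever have segments with ends in $\{1/2,-1/2\}$, so the derivative formulas degenerate to something very simple). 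The plan is therefore to compute both sides of each identity as explicit functions of the three parameters and check equality, case by case according to whether $(*)$ holds, and for part (2) whether $c=0$.

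\textbf{Part (1): $n\neq 0$.} First I would compute $D_{-1/2}(\m,\varepsilon)$. With $\rho = \rho_u|\cdot|^{-1/2}$, i.e.\ $x = -1/2$, the relevant segments ending at $x=-1/2$ are the $n$ copies of $[-1/2,-1/2]$ plus (via the ``$(*)$'' bookkeeping with $[-x,x] = [-1/2,1/2]$) the interaction with the $c$ copies of $[-1/2,1/2]$. Since there are no segments ending at $x-1 = -3/2$, the best-matching set is forced, and one finds that $D_{-1/2}$ simply removes the end $-1/2$ from as many segments as possible; concretely it turns one $[-1/2,-1/2]$ into nothing and possibly converts a $[-1/2,1/2]$ into $[-1/2,-1/2]$ or leaves the centered segments according to the parity condition $(*)$ — this is exactly the ``$x<0$, $b=x$, $a=-x$'' situation. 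I would record $D_{-1/2}(\m,\varepsilon)$ as a triple $(c'',n'',\varepsilon'')$, then apply Proposition~\ref{prop:ADeunnt} to get $\AD(D_{-1/2}(\m,\varepsilon))$. Separately I would apply Proposition~\ref{prop:ADeunnt} to get $\AD(\m,\varepsilon) = (c',n',\varepsilon')$ and then compute $D_{1/2}(\AD(\m,\varepsilon))$ using the $x = 1/2$ derivative formula (where by convention $[-x+1,x-1] = [1/2,1/2]\!-\!1 = 0$). Comparing the two resulting triples splits into the subcases $(*)$-holds-for-$(\m,\varepsilon)$ versus not; in each I expect a short finite check. The key structural point to verify carefully is that applying $D_{-1/2}$ \emph{before} dualizing corresponds, after dualizing, to applying $D_{1/2}$ — i.e.\ that the parity shifts in $c,n,\varepsilon$ line up. This is essentially forced by Proposition~\ref{prop:ADsigns} (sign-product preservation) together with degree/length bookkeeping, which I would invoke to cut down the casework.

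\textbf{Part (2): $n = 0$.} Here $(\m,\varepsilon) = c\,[-1/2,1/2]$ with $c\ge 1$ (if $c=0$ both sides are trivially $0$), and I would compute $D_{1/2}(\m,\varepsilon)$: the highest $\rho_u|\cdot|^{1/2}$-derivative of $c$ copies of $[-1/2,1/2]$ is $(c-1)\,[-1/2,1/2]$ adjusted by the $(*)$/parity correction in Definition~\ref{def:der}(1)-(2) — effectively $D_{1/2}$ peels $[1/2,1/2]$ off one copy, yielding $[-1/2,-1/2]$ plus $(c-1)$ copies of $[-1/2,1/2]$, with the sign of the surviving centered segments flipped or not depending on $\varepsilon$ and the parity of $c$. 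Then I apply Proposition~\ref{prop:ADeunnt} (now to a multisegment with $n = 1$) to dualize. On the other side, $\AD(\m,\varepsilon)$ is again given by Proposition~\ref{prop:ADeunnt} with $n=0$ (so $(*)$ reduces to $c\neq 0$ and $\varepsilon = -1$), giving either $(c' ,n',\varepsilon') = (1, c-1, (-1)^c)$ or $(0,c,(-1)^c)$, and I compute $D_{-1/2}$ of that. Again I expect the verification to be a handful of numeric cases in $(c\bmod 2, \varepsilon)$.

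\textbf{Main obstacle.} The proof itself is not deep — it is pure bookkeeping — so the real danger is sign and parity errors: the operators $\AD$ and the derivatives each introduce $(-1)^{(\cdot)}$ factors and each can change the multiplicity of the centered segment by $\pm 1$, and one must track all of these consistently through the composition. I would organize the computation so that each of the four relevant quantities ($\AD\circ D_{-1/2}$, $D_{1/2}\circ\AD$ for (1), and $\AD\circ D_{1/2}$, $D_{-1/2}\circ\AD$ for (2)) is written as an explicit triple $(c,n,\varepsilon)$ with the parity condition $(*)$ recorded, and then tabulate the comparison. The sign-preservation Proposition~\ref{prop:ADsigns} and the already-proved compatibility statements in Lemma~\ref{lem:ADcommDer} for smaller length give useful consistency checks, but since $l(\m)$ here need not be smaller than $N$ they cannot simply be quoted; they merely guide the computation. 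I therefore expect the bulk of the written proof to be: (i) an explicit description of $D_{\pm 1/2}$ on multisegments of this shape, stated as a small lemma; and (ii) a case analysis against Proposition~\ref{prop:ADeunnt}.
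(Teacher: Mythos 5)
Your proposal is correct and takes essentially the same approach as the paper. The paper's proof is a one-liner — ``This follows directly from the explicit formulas for the derivatives recalled in Section \ref{sec:expder} and Proposition \ref{prop:ADeunnt}'' — which is precisely the computation you sketch: reduce each of $\AD\circ D_{\mp 1/2}$ and $D_{\pm 1/2}\circ\AD$ to explicit triples $(c,n,\varepsilon)$ using Proposition~\ref{prop:ADeunnt} and Definition~\ref{def:der}, then compare case by case according to whether $(*)$ holds.
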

  
  \begin{proof}
    This follows directly from the explicit formulas for the derivatives recalled in Section \ref{sec:expder} and Proposition \ref{prop:ADeunnt}.
  \end{proof}

Now, let us assume that $\rho$ is of the same type as $G$. Then $\m$ is of the form
\[
  \m = c_0 [0,0] + c_1 [-1,1] + t ([-1,0] + [0,1]) + n ([-1,-1] + [1,1])
\]
with $c_0,c_1,t,n \in \NN$. To simplify notation, set $\varepsilon(0)=\varepsilon([0,0])$ and $\varepsilon(1)=\varepsilon([-1,1])$.

Let $(\m',\varepsilon')=\AD(\m,\varepsilon)$. The maximum of the coefficients of $(\m',\varepsilon')$ is also smaller than $1$, so $(\m',\varepsilon')$ is of the form as above. We denote by $c'_0,c'_1,t',n' \in \NN$ and $\varepsilon'(0),\varepsilon'(1)\in \{\pm 1\}$ the constants relative to $(\m',\varepsilon')$.

Let $(*)$ be the condition $c_0 \neq 0$, $c_1 \neq 0$ and $\varepsilon(0)\varepsilon(1)= (-1)^{t+1}$.

\begin{prop}
  \label{prop:ADeun}
  The dual $\AD(\m,\varepsilon)=(\m',\varepsilon')$ is given by the following formulas.
  \begin{enumerate}
      \item If $n > c_0$, then $c'_0=c_1$, $c'_1=c_0$, $t'=t$, $n'=n-c_0 + c_1$, $\varepsilon'(0) = \varepsilon(1) * (-1)^{c_0 + c_1 + 1}$ and $\varepsilon'(1) = \varepsilon(0) * (-1)^{c_0 + c_1 + 1 }$.
      \item If $n = c_0$, then $c'_0=c_1$, $c'_1=c_0$, $t'=t$, $n'= c_1$, $\varepsilon'(0) = \varepsilon(1) * (-1)^{c_0 + c_1 + 1}$ and $\varepsilon'(1) = \varepsilon(0) * (-1)^{c_0 + c_1 + 1 }$.
    \item If $n < c_0$, $(*)$ is not satisfied and $c_0-n$ is even or $t=0$ . Then $t'=t$, $n'=c_1$, $c'_0=c_0 + c_1 - n $, $c'_1=n$, $\varepsilon'(0)=\varepsilon(0) * (-1)^{c_0 + c_1 + t + 1}$ and $\varepsilon'(1)=\varepsilon(0) * (-1)^{c_0 + c_1 + 1}$.
    \item If $n < c_0$, $(*)$ is not satisfied, $c_0-n$ is odd and $t \neq 0$. Then $t'=t-1$, $n'=c_1$, $c'_0=c_0 + c_1 -n + 1$, $c'_1 = n + 1$, $\varepsilon'(0)=\varepsilon(0) * (-1)^{c_0 + c_1 + t + 1}$ and $\varepsilon'(1)=\varepsilon(0)(-1)^{c_0 + c_1 + 1}$.
    \item If $n < c_0$, $(*)$ is satisfied and $c_0-n$ is even. Then $t'=t+1$, $n'=c_1-1$, $c'_0 = c_0 + c_1 -n - 2$, $c'_1 = n$, $\varepsilon'(0)=\varepsilon(0) * (-1)^{t + c_0 + c_1}$ and $\varepsilon'(1)=\varepsilon(0)(-1)^{c_0 + c_1 + 1}$.
    
    \item If $n < c_0$, $(*)$ is satisfied and $c_0-n$ is odd. Then $t'=t$, $n'=c_1 - 1$, $c'_0 = c_0 + c_1 - n - 1$, $c'_1 = n + 1$, $\varepsilon'(0)=\varepsilon(0) * (-1)^{t + c_0 + c_1}$ and $\varepsilon'(1)=\varepsilon(0)(-1)^{c_0 + c_1 + 1}$.
 \end{enumerate}
\end{prop}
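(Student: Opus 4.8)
We prove the formula by induction on the total multiplicity $c_0+c_1+t+n$ (equivalently on $l(\m)$), using the recursion $\AD_\rho(\m,\varepsilon)=(\m_1,\varepsilon_1)+\AD_\rho(\m^{\#},\varepsilon^{\#})$. The base case (empty multisegment) is trivial. The entire argument is a finite case analysis: since $e_{\max}\le 1$, only the labeled segments $[1,1]$, $[0,1]$, $[-1,1]^{\ge 0}$, $[0,0]^{\ge 0}$, $[0,0]^{=0}$, $[0,0]^{\le 0}$, $[-1,0]$ and $[-1,-1]$ can occur in $y=s(\m,\varepsilon)$, and the order $\preceq$ restricts to $[1,1]\succ[0,1]\succ[-1,1]^{\ge 0}$ among segments ending at $1$, to $[0,0]^{\ge 0}\succ[0,0]^{=0}\succ[0,0]^{\le 0}\succ[-1,0]$ among segments ending at $0$, and to the single segment $[-1,-1]$ ending at $-1$; recall also that the section $s$ splits $c_0$ (resp. $c_1$) copies of the centered segment into $\lfloor c_0/2\rfloor$ copies with label $\ge 0$, $\lfloor c_0/2\rfloor$ with label $\le 0$, and one $=0$-copy precisely when $c_0$ is odd.

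\textbf{One step of the algorithm.} With this in hand I would run the first step of $\AD_\rho$, organized according to which of $n\ge 1$, $t\ge 1$, $c_1\ge 1$, $c_0\ge 1$ hold, together with the relevant parities. Three patterns recur. If $n\ge 1$, the initial sequence begins with $\Delta_1=[1,1]$; its contragredient $[-1,-1]$ gets deleted and, depending on whether $c_0\ge 2$ or $c_0\le 1$, the sequence either stops at $\Delta_2=[0,0]^{\ge 0}$ (giving $\varepsilon_0=-1$, $\m_1=[-1,1]$ with sign $(-1)^{n_0+1}\varepsilon([0,0])$, $n_0=c_0+c_1$, and $(\m^{\#},\varepsilon^{\#})$ of parameters $(c_0-1,c_1,t,n-1)$) or walks down through $[-1,0]$ (giving $\varepsilon_0=1$, $\m_1=[0,1]+[-1,0]$, and $(\m^{\#},\varepsilon^{\#})$ of parameters with $t$ decreased by $1$ and $n$ essentially preserved). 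If $n=0$ but $t\ge 1$, then $\Delta_1=[0,1]$ and the sequence descends through $[-1,0]$, producing $\m_1=[0,1]+[-1,0]$ unless it terminates on a centered segment, in which case Definition~\ref{def:epsil} gives $\varepsilon_0=-1$ and $\m_1=[-1,1]$. If $n=t=0$, the sequence lives entirely on centered segments; the stop conditions at $[0,0]^{\ge 0}$ and $[0,0]^{=0}$ and the value of $\varepsilon_0$ are exactly what governs whether $\m_1$ is $[-1,1]$ or $[0,1]+[-1,0]$, and whether a leftover $[0,0]^{=0}$ survives. In each case I read off that $(\m^{\#},\varepsilon^{\#})$ again has the stated shape and strictly smaller total multiplicity, apply the induction hypothesis, and add $(\m_1,\varepsilon_1)$; the bookkeeping splits naturally along $n>c_0$, $n=c_0$, $n<c_0$, and for $n<c_0$ further along the parity of $c_0-n$ and whether $(*)$ holds, matching the six cases of the statement (with the cases $(*)$ accounting for the ``correction'' terms that create an extra $[0,1]+[-1,0]$ or an extra $[-1,1]$ at the terminal step).

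\textbf{Main obstacle and a sanity check.} The genuinely delicate part is the sign computation: each application of $\AD_\rho$ multiplies all centered signs by $\varepsilon_0$ and then applies the local corrections of the $\varepsilon^{\#}$-rules to the created segments, and one must reconcile the accumulated powers of $-1$ with the closed forms $\varepsilon'(0)=\varepsilon(0)(-1)^{\cdots}$, $\varepsilon'(1)=\varepsilon(0)(-1)^{\cdots}$ in each of the six cases; the parities of $c_0$, $c_1$, $t$ all enter, as does the fact that segments with an odd underlying multiplicity carry a $=0$-label and hence behave differently under deletion. A useful consistency check throughout is Proposition~\ref{prop:ADsigns}: the product $S(\m,\varepsilon)$ of all signs is preserved by $\AD_\rho$, which cross-validates the individual sign formulas case by case. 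Once the first step is pinned down in every case, the conclusion follows immediately from the induction hypothesis applied to $(\m^{\#},\varepsilon^{\#})$.
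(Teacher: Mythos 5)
Your strategy — induction on the total multiplicity via the recursion $\AD_\rho(\m,\varepsilon)=(\m_1,\varepsilon_1)+\AD_\rho(\m^{\#},\varepsilon^{\#})$, with the first step pinned down by a finite case analysis on the labeled segments and their ordering — is exactly the paper's approach, and your reading of the order on $\USeg$ and the section $s$ is correct. A few of the case boundaries in your sketch need correcting before the calculation goes through as stated: in pattern 1 ($n\ge 1$) the stopping dichotomy is $c_0\ge 1$ versus $c_0=0$ (not $c_0\ge 2$ versus $c_0\le 1$), since $c_0=1$ gives $\Delta_2=[0,0]^{=0}$ which also triggers $\varepsilon_0=-1$; moreover if $c_0=0$ and $t=0$ the sequence stops at $\Delta_1=[1,1]$ giving $\m_1=[1,1]+[-1,-1]$, a value you never list. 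Similarly your pattern 3 description omits the possible outputs $\m_1=[0,0]$ (when $c_1=0$) and $\m_1=[1,1]+[-1,-1]$ (when $c_0=0$). These are not obstructions to the method — the paper's proof is precisely the same induction, organized as a twelve-case list of $(\m_1,\varepsilon_1)$ and $(\m^\#,\varepsilon^\#)$ — but you do have to be exact about where the initial sequence stops, since that determines both $\m_1$ and $\varepsilon_0$, and hence the sign flips propagated into $\varepsilon^{\#}$. Your suggestion to cross-check the sign bookkeeping against the invariance of $S(\m,\varepsilon)$ from Proposition~\ref{prop:ADsigns} is a sensible extra safeguard that the paper does not invoke here.
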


\begin{proof}
  We write $\m_1$ and $\m^{\#}$ in the different cases and the result follows from an immediate induction.  
    \begin{itemize}
        \item Suppose $n,c_0 \neq 0$. Then $\Delta_1= [1,1]$ and $\Delta_2 = [0,0]^{=0}$ or $[0,0]^{\ge 0}$.  Thus $\m_1 = [-1,1]$, $\varepsilon_1(0) = \varepsilon(0) * (-1)^{c_0 + c_1 + 1}$, $\m^{\#}= (c_0 - 1) [0,0] + c_1 [-1,1] + t ([-1,0] + [0,1]) + (n - 1) ([-1,-1] + [1,1])$, $\varepsilon^{\#}(0) = -\varepsilon(0)$ and $\varepsilon^{\#}(1) = -\varepsilon(1)$. 
        \item Suppose $n,t \neq 0$ and $c_0 = 0$. Then $\Delta_1= [1,1]$ and $\Delta_2 = [-1,0]$.  Thus $\m_1 = [-1,0] + [0,1]$, $\m^{\#}= c_1 [-1,1] + (t-1) ([-1,0] + [0,1]) + n ([-1,-1] + [1,1])$ and $\varepsilon^{\#}(1) = \varepsilon(1)$.

        \item Suppose $n \neq 0$ and $c_0 ,t = 0$. Then $\Delta_1= [1,1]$.  Thus $\m_1 = [1,1] + [-1,-1]$, $\m^{\#}= (c_1 - 1) [-1,1] + n ([-1,-1] + [1,1])$ and $\varepsilon^{\#}(1) = \varepsilon(1)$.

        \item Suppose $n,c_0 = 0$ and $t\neq 0$. Then $\Delta_1= [0,1]$ and $\Delta_2 = [-1,0]$.  Thus $\m_1 = [-1,0] + [0,1]$, $\m^{\#}= c_1 [-1,1] + (t - 1)([-1,0] + [0,1])$ and $\varepsilon^{\#}(1)=\varepsilon(1)$.
      \item Suppose $n=0$, $t,c_0\neq 0$ and $c_0$ is even. Then $\Delta_1= [0,1]$ and $\Delta_2 = [0,0]^{\le 0}$.  Thus $\m_1 = [-1,0] + [0,1]$, $\m^{\#}= c_0 [0,0] + c_1 [-1,1] + (t - 1)([-1,0] + [0,1])$, $\varepsilon^{\#}(0)=-\varepsilon(0)$ and $\varepsilon^{\#}(1)=\varepsilon(1)$.
      \item Suppose $n=0$, $t\neq 0$ and $c_0$ is odd. Then $\Delta_1= [0,1]$ and $\Delta_2 = [0,0]^{=0}$.  Thus $\m_1 = [-1,1]$, $\varepsilon_1(1)=(-1)^{ c_1}\varepsilon(0)$, $\m^{\#}= (c_0 + 1) [0,0] + c_1 [-1,1] + (t - 1)([-1,0] + [0,1])$, $\varepsilon^{\#}(0)=\varepsilon(0)$ and $\varepsilon^{\#}(1)=-\varepsilon(1)$.
      \item Suppose $n,t,c_1 = 0$. Then $\Delta_1= [0,0]^{\ge 0}$ or $[0,0]^{=0}$. Thus $\m_1 = [0,0]$, $\varepsilon_1(0) = \varepsilon(0) * (-1)^{c_0 + 1}$, $\m^{\#}=(c_0 - 1)[0,0]$ and  $\varepsilon^{\#}(0) = -\varepsilon(0)$. 
      \item Suppose $n,t,c_0 = 0$. Then $\Delta_1= [-1,1]^{\ge 0}$ or $[-1,1]^{=0}$. Thus $\m_1 = [1,1] + [-1,-1]$, $\m^{\#}=[0,0] + (c_1 - 1) [-1,1]$, $\varepsilon^{\#}(0) = \varepsilon(1)$ and $\varepsilon^{\#}(1) = \varepsilon(1)$. 
      \item Suppose $n,t = 0$ $c_0,c_1 \neq 0$ and $\varepsilon(0)=\varepsilon(1)$. Then $\Delta_1= [0,0]^{\ge 0}$ or $[0,0]^{=0}$. Thus $\m_1 = [1,1] + [-1,-1]$, $\m^{\#}=(c_0 + 1)[0,0] + (c_1 - 1) [-1,1]$, $\varepsilon^{\#}(0) = \varepsilon(1)$ and $\varepsilon^{\#}(1) = \varepsilon(1)$. 
      \item Suppose $n,t = 0$, $c_0,c_1 \neq 0$, $\varepsilon(0)\varepsilon(1)=-1$ and $c_0$ is odd. Then $\Delta_1= [0,0]^{\ge 0}$ or $[0,0]^{=0}$ and $\Delta_2 = [0,0]^{=0}$. Thus $\m_1 = [-1,1]$, $\varepsilon_1(1) = \varepsilon(1) * (-1)^{c_1 + 1}$, $\m^{\#}=c_0[0,0] + (c_1 - 1)[-1,1]$, $\varepsilon^{\#}(0) = \varepsilon(0)$ and $\varepsilon^{\#}(1) = \varepsilon(0)$.
      \item Suppose $n,t = 0$, $c_0,c_1 \neq 0$, $\varepsilon(0)\varepsilon(1)=-1$ and $c_0$ is even. Then $\Delta_1= [0,0]^{\ge 0}$ or $[0,0]^{=0}$ and $\Delta_2 = [0,0]^{\le 0}$. Thus $\m_1 = [0,1] + [-1,0]$, $\m^{\#}=(c_0 - 1)[0,0] + (c_1 - 1)[-1,1]$, $\varepsilon^{\#}(0) = -\varepsilon(0)$ and $\varepsilon^{\#}(1) = -\varepsilon(0)$. 

    \end{itemize}
\end{proof}

We also check the commutativity with the derivative.
\begin{prop}
  \begin{enumerate}
    \item If $n \neq 0$, then $\AD(D_{-1}(\m,\varepsilon))=D_{1}(\AD(\m,\varepsilon))$.
    \item If $n = 0$ and $t \neq 0$, then $\AD(D_{L([-1,0])}(\m,\varepsilon))=D_{Z([0,1])}(\AD(\m,\varepsilon))$.
    \item If $n,t = 0$, then $\AD(D_{1}(\m,\varepsilon))=D_{-1}(\AD(\m,\varepsilon))$.
  \end{enumerate}
\end{prop}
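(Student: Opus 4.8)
The plan is to verify each of the three identities by a direct computation on the finitely parametrized family
\[
\m = c_0[0,0] + c_1[-1,1] + t\bigl([-1,0] + [0,1]\bigr) + n\bigl([-1,-1] + [1,1]\bigr)
\]
(all segments over $\rho_u$), using the closed-form description of $\AD$ obtained in Proposition~\ref{prop:ADeun}, the explicit derivative formulas of Section~\ref{sec:dergood} (Definition~\ref{def:der}), and, for the $Z([0,1]_{\rho_u})$-derivative, the algorithm of \cite{AtobeSocle}. Every object in sight is determined by the four nonnegative integers $c_0,c_1,t,n$ together with the two signs $\varepsilon(0)=\varepsilon([0,0])$ and $\varepsilon(1)=\varepsilon([-1,1])$, so each identity becomes an equality between two explicitly computable tuples of these invariants, and the proof reduces to a finite bookkeeping check, organized along the case divisions already appearing in those formulas.

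For part (1), with $n\neq 0$: since no segment of $\m$ ends at $-2$, the set feeding Definition~\ref{def:der} for $D_{-1}=D_{\rho_u|\cdot|^{-1}}$ consists exactly of the $n$ copies of $[-1,-1]$, none of them matchable, so the highest $\rho_u|\cdot|^{-1}$-derivative simply deletes all $n$ copies of $[-1,-1]$ and, symmetrically, of $[1,1]$, leaving the signs unchanged; thus $D_{-1}(\m,\varepsilon)$ is the same datum with $n$ replaced by $0$, and $\AD(D_{-1}(\m,\varepsilon))$ is read off from Proposition~\ref{prop:ADeun} according to whether $c_0=0$, whether $(*)$ holds, and the parity of $c_0$. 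On the other side, $\AD(\m,\varepsilon)$ is given by one of cases (1)--(6) of Proposition~\ref{prop:ADeun} according to whether $n>c_0$, $n=c_0$ or $n<c_0$, and one then applies $D_1=D_{\rho_u|\cdot|^{1}}$, which acts on the segments ending at $1$, namely $[-1,1]$, $[0,1]$ and $[1,1]$. Matching the two outputs --- including the sign prefactors $(-1)^{c_0+c_1+1}$ and the like --- gives the identity. Part (3) is the same with $t=n=0$, so $\m=c_0[0,0]+c_1[-1,1]$, which is $\rho_u|\cdot|^{y}$-reduced for all $y\neq 0,1$ and $L([-1,0]_{\rho_u})$-reduced; both $\AD(D_1(\m,\varepsilon))$ and $D_{-1}(\AD(\m,\varepsilon))$ are evaluated from Proposition~\ref{prop:ADeun} and Definition~\ref{def:der}, the governing dichotomy being whether $(*)$ holds, i.e.\ whether $c_0,c_1\neq 0$ and $\varepsilon(0)\varepsilon(1)=-1$. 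Part (2) has $n=0$, $t\neq 0$, so $\m$ is $\rho_u|\cdot|^{-1}$-reduced; the highest $L([-1,0]_{\rho_u})$-derivative just removes the $t$ copies of $[-1,0]$ (equivalently of $[-1,0]$ and $[0,1]$ in the symmetric datum), leaving the signs unchanged, while $D_{Z([0,1]_{\rho_u})}(\AD(\m,\varepsilon))$ is computed from \cite{AtobeSocle}, and one checks the two coincide. When $\m$ is itself reduced (e.g.\ $\m=c_0[0,0]$, which can only occur in part (3)), all derivatives involved are trivial and the statement is vacuous, so one may assume the relevant derivative is nonzero throughout.

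I expect the only genuine difficulty to be organizational rather than conceptual: one must keep the numerous sub-cases synchronized across both sides of each identity --- the value of $\varepsilon_0$ inside $\AD$, the condition $(*)$ and the parity conditions inside the derivative formulas, and the three ranges $n>c_0$, $n=c_0$, $n<c_0$ --- and track the sign exponents carefully through the compositions. No input beyond Proposition~\ref{prop:ADeun}, the derivative formulas of Section~\ref{sec:expder}, and \cite{AtobeSocle} should be needed; as with the analogous statement preceding it, the result follows directly from these explicit formulas once the cases are matched up.
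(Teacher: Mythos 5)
Your approach matches the paper's: parts (1) and (3) are dispatched by comparing the closed formulas of Proposition~\ref{prop:ADeun} against the derivative formulas of Section~\ref{sec:expder} (noting that $D_{-1}$, resp.\ $D_1$, simply removes the $n$ pairs $[-1,-1]+[1,1]$, resp.\ acts on the segments ending at $1$), and part (2) is reduced to the $Z([0,1]_{\rho_u})$-derivative algorithm of \cite[Prop.~A.2]{AtobeSocle} applied to $\AD(\m,\varepsilon)$, versus $\AD$ of $D_{L([-1,0])}(\m,\varepsilon)=(c_0[0,0]+c_1[-1,1],\varepsilon)$. The paper's own proof carries out exactly this bookkeeping, also only spelling out one of the sub-cases of (2) in full.
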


\begin{proof}
  The cases $(1)$ and $(3)$ follow directly from the formulas in Section \ref{sec:expder} and Proposition \ref{prop:ADeun}. Let us do $(2)$. We assume that $n=0$ and $t \neq 0$. First $D_{L([-1,0])}(\m,\varepsilon) = (c_0 [0,0] + c_1 [-1,1],\varepsilon)$. Thus $\AD(D_{L([-1,0])}(\m,\varepsilon))$ is given by Proposition \ref{prop:ADeun}.

  Now for $D_{Z([0,1])}(\AD(\m,\varepsilon))$, we can compute $\AD(\m,\varepsilon)$ with Proposition \ref{prop:ADeun} and then $D_{Z([0,1])}$ thanks to the formula in \cite[Prop. A.2]{AtobeSocle}. There are four cases in Proposition \ref{prop:ADeun}. We give all the details for the first one. The other cases are treated similarly and the complete details are left to the reader.
  \begin{itemize}
      \item Suppose $c_0$ is even and $(*)$ is not satisfied. By Proposition \ref{prop:ADeun}, $\AD(\m,\varepsilon)=t([-1,0] + [0,1]) + c_1 ( [-1,-1] + [1,1]) + (c_0 + c_1) [0,0]$, with $\varepsilon'(0)=\varepsilon(1) * (-1)^{c_0 + c_1 + 1}$. With the notation of \cite[§A.3]{AtobeSocle}, $s=c_1$, $t=t$, $m=c_0 + c_1$ and $\delta = 0$. We have that $m \equiv s \pmod{2}$ thus $D_{Z([0,1])}(\AD(\m,\varepsilon))$ is given by \cite[Prop. A.2 (4)]{AtobeSocle}.
      
      \begin{itemize}
          \item If $t \equiv 1 \pmod{2}$ and $m > s=0$. Then by \cite[Prop. A.2]{AtobeSocle}, $D_{Z([0,1])}(\AD(\m,\varepsilon)) = (c_0 + c_1) [0,0]$. Since $s=c_1=0$, we are in the case $(3)$ in Proposition \ref{prop:ADeun} and the formulas coincide.
          \item If $t \equiv 1 \pmod{2}$ and $m > s >0$. Then by \cite[Prop. A.2]{AtobeSocle}, $D_{Z([0,1])}(\AD(\m,\varepsilon)) = ([-1,0] + [0,1]) + (c_1 - 1) ( [-1,-1] + [1,1]) + (c_0 + c_1 - 2) [0,0]$. Here $c_1 \neq 0$  (as $s \neq 0$) and $c_0 \neq 0$ (as $m > s$). Since $(*)$ is not satisfied, $\varepsilon(0) * (-1)^{t}=\varepsilon(1)$ and $t$ is odd (as $t \equiv 1 \pmod{2}$), thus $\varepsilon(0)\varepsilon(1)=-1$. We are in the case $(4)$ in Proposition \ref{prop:ADeun} and the formulas coincide.
          \item Otherwise, by \cite[Prop. A.2]{AtobeSocle}, $D_{Z([0,1])}(\AD(\m,\varepsilon))=  c_1 ( [-1,-1] + [1,1]) + (c_0 + c_1) [0,0]$. If $c_1 \neq 0$ and $c_0 \neq 0$ then $m > s > 0$, thus $t \equiv 0 \pmod{2}$. This means that $t$ is even and $\varepsilon(0)=\varepsilon(1)$. Therefore, we are in the case $(3)$ in Proposition \ref{prop:ADeun} and the formulas coincide.
      \end{itemize}
      
      \item The case $c_0$ even and $(*)$ satisfied of Proposition \ref{prop:ADeun} corresponds to the case $(3)$ of \cite[Prop. A.2]{AtobeSocle} ($s=c_1 - 1$, $t=t + 1$, $m=c_0 + c_1 - 2$ and $\delta = 0$). 
      
      \item The case $c_0$ odd and $(*)$ not satisfied of Proposition \ref{prop:ADeun} corresponds to the case $(2)$ of \cite[Prop. A.2]{AtobeSocle} ($s=n_1 $, $t=t - 1$, $m=c_0 + c_1 + 1$ and $\delta = 1$).
  
      \item The case $c_0$ odd and $(*)$ satisfied of Proposition \ref{prop:ADeun} corresponds to the case $(1)$ of \cite[Prop. A.2]{AtobeSocle} ($s=c_1 - 1$, $t=t$, $m=c_0 + c_1 - 1$ and $\delta = 1$).
  \end{itemize}
\end{proof}

\subsection{The negative derivative} \label{sec:derneg}

In this section, we assume that $e_{\max} > 1$ and there exists $y<0$, $y \neq -e_{\max}$ such that $(\m,\varepsilon)$ is not $y$-reduced. 

\bigskip

We define $y_0 \in (1/2)\Z$ to be the smallest $y \in (1/2)\Z$ such that $y \neq -e_{\max}$ and $(\m,\varepsilon)$ is not $y$-reduced. With our hypotheses on $(\m,\varepsilon)$ necessarily $y_0 < 0$. Let us give a more explicit description of $y_0$ using the formula of the derivative recalled in Section \ref{sec:expder}.

If $[e_{\max},e_{\max}] \notin \m$, let $y_1 = e_{\max} + 1$. Otherwise, let $y_1 \in (1/2)\NN^{*}$ be the smallest positive half-integer such that for all $y_1 \le y\le e_{\max}$ with $y-y_1 \in \NN$, $[y,y] \in \m$ and if $y \neq e_{\max }$, $m_{\m}([y,y]) \le m_{\m}([y+1,y+1])$. Then $y_0$ is the minimum of the $e(\Delta)$ for $\Delta \in \m$ such that $\Delta \neq [-y,-y]$ with $y_1 \le y$. Then $D_{y_0}$ removes all the ends of the segments ending in $y_0$ and all the beginnings of the segments starting in $-y_0$, apart possibly, when $y_1 \le -y_0 +1$, for $m_{\m}([y_0-1,y_0-1])$ segments $[-y_0,-y_0]$ and $[y_0,y_0]$.

\bigskip

Let $\Delta_1, \cdots, \Delta_l$ be the initial sequence in the algorithm for $(\m,\varepsilon)$. 

\begin{lem}
    \label{lem:endx0}
    We have that $e(\Delta_{l}) \ge y_0$.
\end{lem}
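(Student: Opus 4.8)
The statement asserts a lower bound $e(\Delta_l)\ge y_0$ on the end of the last segment of the initial sequence in the algorithm, where $y_0<0$ is the smallest half-integer $\neq -e_{\max}$ such that $(\m,\varepsilon)$ is not $y_0$-reduced. The natural approach is to argue by contradiction: suppose $e(\Delta_l)<y_0$. Since the ends $e(\Delta_1),e(\Delta_2),\dots,e(\Delta_l)$ decrease by exactly $1$ at each step (by construction, $e(\Delta_{j+1})=e(\Delta_j)-1$), this means the initial sequence "passes through" the value $y_0$; i.e.\ there is some index $j$ with $e(\Delta_j)=y_0$ and $j<l$, and moreover $e(\Delta_{j+1})=y_0-1$. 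The goal is then to contradict the maximality/stopping condition in the definition of the initial sequence at step $j$, or to contradict the characterization of $y_0$ via $y_1$ recalled just before the lemma.

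\textbf{Key steps.} First I would translate "the algorithm does not stop at $\Delta_j$ with $e(\Delta_j)=y_0$" into combinatorial conditions: $\Delta_j$ is not one of the terminal segments $[0,0]^{\ge 0}$, $[0,0]^{=0}$ (if $\rho$ is of the same type as $G$) or $[1/2,1/2]$, $[-1/2,1/2]^{\ge 0}$, $[-1/2,1/2]^{=0}$ with negative sign (otherwise) — but since $y_0<0$ and $y_0\neq -1/2$ is not forced, one must handle the small cases $y_0=-1/2$ and $y_0=-1$ carefully, or observe they are excluded by the hypothesis $e_{\max}>1$ in combination with the position of $y_0$. Second, and this is the crux, I would use the explicit description of $y_0$ given in the paragraph before the lemma: $y_0$ is the minimum of the $e(\Delta)$ over segments $\Delta\in\m$ with $\Delta\neq[-y,-y]$ for $y_1\le y$; equivalently, $(\m,\varepsilon)$ not being $y_0$-reduced means (via the derivative formula of Section~\ref{sec:expder}) that there is an unmatched segment ending at $y_0$, i.e.\ the best-matching function between segments ending at $y_0$ and segments ending at $y_0-1$ leaves something in $A^c_{\rho_u|\cdot|^{y_0}}$. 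I would then show that the existence of such an unmatched segment ending at $y_0$ guarantees that there is a segment ending at $y_0$, smaller than $\Delta_j$ in $\preceq$, which could serve as $\Delta_{j+1}$ (with the correct sign-alternation condition when centered) — but wait, $\Delta_{j+1}$ should end at $y_0-1$, not $y_0$. So the correct contradiction is: if $e(\Delta_{j+1})=y_0-1$ and $\Delta_{j+1}\preceq\Delta_j$ exists satisfying the algorithm's conditions, then $\Delta_j$ ending at $y_0$ could have been matched to $\Delta_{j+1}$ ending at $y_0-1$ in the best-matching for $D_{y_0}$, contradicting that $\Delta_j$'s end survives in $D_{y_0}$, i.e.\ contradicting that $y_0$ realizes the minimum in the explicit description. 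More precisely: a segment $\Delta$ with $e(\Delta)=y_0$ that participates in the initial sequence and has a successor $\Delta'$ with $e(\Delta')=y_0-1$, $\Delta'\preceq\Delta$, would be matched away by the derivative $D_{y_0}$ (via Hall/best-matching, using that $\Delta'\le\Delta$ witnesses the relation $\leadsto$), contradicting non-$y_0$-reducedness being witnessed precisely at this $y_0$. Hence no such $\Delta_{j+1}$ exists, so $j=l$ and $e(\Delta_l)=y_0\ge y_0$, giving the bound.

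\textbf{Main obstacle.} The delicate part is the interplay between the order $\preceq$ on $\USeg$ (which governs the initial sequence of $\AD$) and the order $\le$ on $\Seg$ (which governs the best-matching relation $\leadsto$ for the derivative $D_{y_0}$), together with the fact that one works on $y=s(\m,\varepsilon)\in\USymm^\varepsilon$ rather than on $\m$ directly, so centered segments are split with labels. One must be careful that a segment ending at $y_0-1$ with center $<0$ always satisfies $\Delta'\preceq\Delta_j$ (which is true since $b$ decreases), and that the sign-alternation condition (third bullet in the definition of the initial sequence) does not obstruct the existence of $\Delta_{j+1}$ when $\Delta_j$ is centered — but since $y_0<0$, $\Delta_j$ centered forces $y_0=0$, excluded, so this subtlety may not actually arise and the argument stays on non-centered segments. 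The remaining care is the boundary case where $y_1\le -y_0+1$, in which $D_{y_0}$ may not remove all segments $[-y_0,-y_0]$; one should check this does not affect segments ending at $y_0$ that lie in the initial sequence, only the symmetric ones ending at $-y_0>0$. I expect the proof to be short once these bookkeeping points are settled, essentially a one-paragraph contradiction argument invoking the explicit description of $y_0$ and the definition of the initial sequence.
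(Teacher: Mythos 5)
Your plan correctly sets up a proof by contradiction and correctly locates the relevant explicit description of $y_0$, but the contradiction you attempt to derive does not actually follow. You argue that if $\Delta_{j+1}$ (with $e(\Delta_{j+1})=y_0-1$ and $\Delta_{j+1}\preceq\Delta_j$) existed, then ``$\Delta_j$ could have been matched to $\Delta_{j+1}$ in the best-matching for $D_{y_0}$, contradicting that $\Delta_j$'s end survives.'' This is not a contradiction: the derivative $D_{y_0}$ being nonzero requires only that \emph{some} segment of $\m$ ending at $y_0$ be unmatched; that $\Delta_j$ itself is protectable (or even actually protected) by $\Delta_{j+1}$ is fully compatible with other segments ending at $y_0$ being left unmatched. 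So the existence of $\Delta_{j+1}$ does not conflict with non-$y_0$-reducedness, and the argument stalls precisely at the step you flag as the ``crux.''

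The paper's proof is shorter and takes a different route. It does not focus on the index $j$ at which $e(\Delta_j)=y_0$; instead it looks directly at $\Delta_l$. By the explicit description of $y_0$ (the minimum of $e(\Delta)$ over segments $\Delta\in\m$ that are \emph{not} of the form $[-y,-y]$ with $y\geq y_1$), any segment of $\m$ with end strictly smaller than $y_0$ is forced to be a singleton $[-y,-y]$ with $y\geq y_1$. So $\Delta_l=[-y,-y]$. Then the definition of $y_1$ guarantees $[y+1,y+1]\in\m$ when $y<e_{\max}$, hence by symmetry $[-y-1,-y-1]\in\m$, and this singleton satisfies all the conditions to be $\Delta_{l+1}$ (it is $\preceq\Delta_l$, ends at $e(\Delta_l)-1$, is not centered, and $\Delta_l$ is not a stopping segment since $-y<0$) — contradicting maximality of $l$. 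So $y=e_{\max}$ and $\Delta_l=[-e_{\max},-e_{\max}]$. This forces $\varepsilon_0=1$ (again, $\Delta_l$ is not a stopping segment), and then Lemma~\ref{lem:ADnotcentered} yields $e(\Delta_1)+e(\Delta_l)\neq 0$, contradicting $e_{\max}+(-e_{\max})=0$. Your proposal never invokes Lemma~\ref{lem:ADnotcentered}, and this boundary case $\Delta_l=[-e_{\max},-e_{\max}]$ is exactly where the statement would fail without it; this is a second genuine gap beyond the failed matching argument.
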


\begin{proof}
    Let us assume by contradiction that $e(\Delta_{l}) < y_0$. By definition of $y_0$, we must have that $\Delta_{l}=[-y,-y]$ with $y_1 \le y$. Now, with the definition of $y_1$, if $y \neq e_{\max }$ then there exists $[-y-1,-y-1] \in \m$ contradicting the fact that $\Delta_{l}$ is the last segment in the algorithm. Thus $\Delta_{l}=[-e_{\max },-e_{\max }]$ contradicting Lemma \ref{lem:ADnotcentered}.
\end{proof}

We fix an ordering \( y = \Lambda_1 + \cdots + \Lambda_k \), with \( \Lambda_1 \succeq \cdots \succeq \Lambda_k \). As in the formula for the derivative recalled in Section~\ref{sec:expder}, let \( A_{y_0} \), \( A_{y_0 - 1} \), and \( A_{y_0}^c \) be the sets defining \( D_{y_0} \) for \( \m \); and similarly, let \( A^{\#}_{y_0} \), \( A^{\#}_{y_0 - 1} \), and \( A^{\#,c}_{y_0} \) be those defining \( D_{y_0} \) for \( \m^{\#} \).

Note that the sets \( A_{y_0} \) and \( A_{y_0 - 1} \) (and likewise for \( \m^{\#} \)) are uniquely determined, whereas the set \( A_{y_0}^c \) is not. Indeed, the multiset of segments \( \Lambda_i \) for \( i \in A_{y_0}^c \) is uniquely determined (these are the segments modified by the derivative), but due to multiplicities, several different indices may correspond to the same segment. 

Given a subset \( A \subseteq \{1, \dots, k\} \), we write \( A^{\#,c}_{y_0} = A \) if the multiset of segments \( \Lambda_i^{\#} \) for \( i \in A \) matches the multiset of segments modified by \( D_{y_0} \) in \( \m^{\#} \).

We start by calculating $D_{y_0}(\m^{\#},\varepsilon^{\#})$.

\begin{lem}
  \label{lem:dermdieze}
  The set $A_{y_0}^{c}$ can be chosen such that if there is a $j \in \{i,\cdots,l\}$ such that $\Delta_j = [-y_0,-y_0]$ then $i'_j \notin A_{y_0}^{c}$. 
  We fix $A_{y_0}^{c}$ satisfying this condition.
  \begin{enumerate}
    \item If $e(\Delta_{l}) \ge y_0 + 2$, then $A_{y_0}^{\#,c} = A_{y_0}^{c}$.
    \item If $e(\Delta_{l}) = y_0$, then $A_{y_0}^{\#,c} = A_{y_0}^{c} \setminus \{i_l\}$.
    \item If $e(\Delta_{l}) = y_0 + 1$ and $\varepsilon_0=1$, then $A_{y_0}^{\#,c} = A_{y_0}^{c} \cup \{i_l\}$.
    \item If $e(\Delta_{l}) = y_0 + 1$ and $\varepsilon_0=-1$ (necessarily $y_0=-1$ or $-1/2$), then $A_{y_0}^{\#,c} = A_{y_0}^{c}$.
  \end{enumerate}
\end{lem}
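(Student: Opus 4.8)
The statement compares the combinatorial data defining the highest $D_{y_0}$-derivative of $\m$ with that of $\m^{\#}$, where $\m^{\#}$ is obtained from $\m$ by the first step of the $\AD$-algorithm applied to the initial sequence $\Delta_1\succeq\cdots\succeq\Delta_l$. The whole point of choosing $y_0$ as the smallest $y\neq -e_{\max}$ with $(\m,\varepsilon)$ not $y$-reduced is that $y_0<0$, hence $y_0$ and $y_0-1$ are genuinely negative and no ``$(*)$-correction'' or centered-segment subtlety from Definition~\ref{def:der} intervenes: the sets $A_{y_0}$, $A_{y_0-1}$ and the matching relation $\leadsto$ are exactly as in the good-parity derivative formula, restricted to segments ending in $y_0$ (resp.\ $y_0-1$). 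So the first thing I would do is record that, for $\m$ and for $\m^{\#}$, the relevant segments are only those ending at $y_0$ or $y_0-1$, and spell out precisely how the $\AD$-step alters a segment $\Lambda_i$ with $e(\Lambda_i)\in\{y_0,y_0-1\}$: such a $\Lambda_i$ is touched by the step iff $i\in\{i_1,\dots,i_l\}$ or $i\in\{i'_1,\dots,i'_l\}$, and because $\Delta_{j}$ runs through ends $e_{\max},e_{\max}-1,\dots,e(\Delta_l)$ consecutively, by Lemma~\ref{lem:endx0} we have $e(\Delta_l)\ge y_0$, so $y_0$ is reached by some $\Delta_j$ in the sequence unless $e(\Delta_l)=y_0+1$. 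This case split on $e(\Delta_l)\in\{y_0,\ y_0+1,\ \ge y_0+2\}$ is the skeleton of the four cases of the lemma.

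The second step is to justify the claim that $A_{y_0}^c$ may be chosen so that whenever $\Delta_j=[-y_0,-y_0]$ for some $j$, the index $i'_j$ (the copy of $\Delta_j^\vee=[y_0,y_0]$ that the algorithm modifies) is \emph{not} in $A_{y_0}^c$. The point is that $A_{y_0}^c$ is only determined as a multiset of segments, and $[y_0,y_0]$ is exactly a segment ending at $y_0$; the freedom in the best-matching construction is precisely a freedom in which representatives among equal segments get chosen. Since $D_{y_0}$ on $\m$ removes ends of segments ending at $y_0$ up to the finitely many exceptions (the $m_{\m}([y_0-1,y_0-1])$ copies of $[y_0,y_0]$ when $y_1\le -y_0+1$), and the algorithm step $i'_j$ already earmarks one copy of $[y_0,y_0]$ for ${}^{-}$-modification, I would argue one can re-select the representatives so that $A_{y_0}^c$ avoids $i'_j$ — provided the counts work out, which they do because the total multiplicity of $[y_0,y_0]$ in $\m$ is large enough to accommodate both the derivative's action and the algorithm's action disjointly. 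After fixing this choice, the comparison of $A_{y_0}^{\#,c}$ with $A_{y_0}^c$ becomes a bookkeeping matter: in $\m^{\#}$, the segments ending at $y_0$ are the old ones ending at $y_0$ minus those $\Lambda_i$ ($i\in\{i_j\}$ with $e(\Delta_j)=y_0$) that got a trailing removal, plus new ones $\Lambda_i^{\#}={}^{-}\Lambda_i$ created from segments $\Lambda_i$ ending at $y_0$ whose beginning was cut — these have end $y_0$ unchanged, and crucially by our choice they are not the $[y_0,y_0]$'s. The matching relation is then tracked through these changes and one reads off: $A_{y_0}^{\#,c}=A_{y_0}^c$ when $y_0$ is not the end of any $\Delta_j$ nor of any created segment's predecessor interaction (case (1), $e(\Delta_l)\ge y_0+2$); one index $i_l$ disappears when $\Delta_l$ itself ends at $y_0$ (case (2)); and one index $i_l$ is added when $\Delta_l$ ends at $y_0+1$ and $\varepsilon_0=1$, because then $\m_1$ contributes a fresh segment $[y_0,e(\Delta_1)]$ whose end is $y_0$... wait, no: in case (3) the new index comes from the fact that $\Delta_l$ ending at $y_0+1$ with $\varepsilon_0=1$ means the algorithm did \emph{not} reach $y_0$, so $\Lambda_{i_l}^{\#}=\Lambda_{i_l}^{-}$ now ends at $y_0$ and enters the matching as an extra protectable segment.

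The third step is the remaining case $e(\Delta_l)=y_0+1$ with $\varepsilon_0=-1$ (case (4)), where one must first check the parenthetical ``necessarily $y_0=-1$ or $-1/2$'': this follows from Definition~\ref{def:epsil}, since $\varepsilon_0=-1$ forces $\Delta_l\in\{[0,0]^{\ge0},[0,0]^{=0},[1/2,1/2],[-1/2,1/2]^{\ge0},[-1/2,1/2]^{=0}\}$, all of which have end $0$ or $1/2$, hence $y_0+1\in\{0,1/2\}$ and $y_0\in\{-1,-1/2\}$. In that situation the algorithm step, by Lemma~\ref{lem:sequencesingle} or the structure near the center, essentially just removes the chain of singletons and their duals without creating any new segment ending at $y_0$, so $A_{y_0}^{\#,c}=A_{y_0}^c$. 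I expect the \textbf{main obstacle} to be the second step: rigorously exhibiting the choice of $A_{y_0}^c$ avoiding all the $i'_j$ with $\Delta_j=[-y_0,-y_0]$, and checking that after this choice the best-matching function for $\m^{\#}$ genuinely produces the asserted $A_{y_0}^{\#,c}$ — this requires care because the matching relation $\leadsto$ depends on the order $\le$ on segments and on multiplicities, and the $\AD$-step both deletes and inserts segments whose relative order must be retraced. The cleanest way to handle it is probably to compute $A_{y_0}^{\#,c}$ directly from the explicit description of which segments of $\m^{\#}$ end at $y_0$ resp.\ $y_0-1$ (using the four-way formula for $\Lambda_i^{\#}$), rather than trying to transport the matching function abstractly; the exceptional-count condition ``$m_{\m}([y_0-1,y_0-1])$ copies spared'' then matches up on both sides by the same count, since the algorithm never touches the segment $[y_0-1,y_0-1]$ itself in these cases.
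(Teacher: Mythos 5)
Your plan follows essentially the same route as the paper: case analysis on $e(\Delta_l)\in\{\ge y_0+2,\ y_0+1,\ y_0\}$, and a direct computation of $A_{y_0}^{\#,c}$ from the explicit description of which segments of $\m^{\#}$ end at $y_0$ or $y_0-1$. You also correctly identify the key simplifying observation (only $[y_0,y_0]$ can be protected by something ending at $y_0-1$), correctly justify the parenthetical in case (4), and correctly diagnose where the difficulty lies. However, the sketch glosses over precisely the parts where the paper has to work, and those gaps are not merely cosmetic.

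In case (2) ($e(\Delta_l)=y_0$), your description ``one index $i_l$ disappears'' is incomplete. What actually happens is two-sided: $\Lambda_{i_l}^{\#}$ moves from $A_{y_0}$ to $A_{y_0-1}^{\#}$, \emph{and} $\Lambda_{i_{l-1}}^{\#}$ (which had end $y_0+1$) enters $A_{y_0}^{\#}$. To conclude $A_{y_0}^{\#,c}=A_{y_0}^{c}\setminus\{i_l\}$ one must show that in the best matching for $\m^{\#}$, the new protector $\Lambda_{i_l}^{\#}$ is matched precisely to the new protectee $\Lambda_{i_{l-1}}^{\#}$ so that $i_{l-1}\notin A_{y_0}^{\#,c}$. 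This requires verifying (a) $\Lambda_{i_l}^{\#}\le\Lambda_{i_{l-1}}^{\#}$, which is not automatic because $\Lambda_{i_l}^{\#}$ could be ${}^{-}\Lambda_{i_l}^{-}$ while $\Lambda_{i_{l-1}}^{\#}=\Lambda_{i_{l-1}}^{-}$, and only a symmetry argument on dual indices rules out the bad configuration; and (b) $\Lambda_{i_l}^{\#}$ is the \emph{smallest} segment in $A_{y_0-1}^{\#}$ below $\Lambda_{i_{l-1}}^{\#}$, which uses the maximality of $\Delta_l$ in the algorithm's initial sequence. One must also separately handle the sub-cases where $\Lambda_{i_l}^{\#}=0$ or $\Lambda_{i_{l-1}}^{\#}=0$.

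In case (3) ($e(\Delta_l)=y_0+1$, $\varepsilon_0=1$), your corrected reasoning is the right idea, but you write $\Lambda_{i_l}^{\#}=\Lambda_{i_l}^{-}$, which ignores the possibility $\Lambda_{i_l}^{\#}={}^{-}\Lambda_{i_l}^{-}$; more importantly, you do not justify $\Lambda_{i_l}^{\#}\neq0$. This needs one to rule out $\Lambda_{i_l}=[y_0+1,y_0+1]$ and $\Lambda_{i_l}=[y_0,y_0+1]$ (both excluded because $\Delta_l$ would then not be terminal in the initial sequence), and, when $y_0=-1/2$, the centered segments $[-1/2,1/2]^{\ge0}$ or $[-1/2,1/2]^{=0}$ (excluded because $\varepsilon_0=1$). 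After that one still has to show $i_l\in A_{y_0}^{\#,c}$, which splits into further sub-cases depending on whether $\Lambda_{i_l}^{\#}=[y_0,y_0]$.

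So the plan is sound and aligned with the paper, but cases (2) and (3) are each missing a concrete argument (forced-matching via minimality, and non-vanishing plus $\leadsto$-unprotectability of $\Lambda_{i_l}^{\#}$) that the paper carries out explicitly and that cannot be waved away as bookkeeping.
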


\begin{proof}
  By definition $A_{y_0}^{\#} = \{i, e(\Lambda_i^{\#})=y_0 \text{ and } \Lambda_i^{\#} \neq 0\}$. If $i \notin \{i_1,\cdots,i_l\}$, then $e(\Lambda_i^{\#})=y_0$ if and only if $e(\Lambda_i)=y_0$. If $i \in \{i_1,\cdots,i_l\}$, then $e(\Lambda_i^{\#})=y_0$ if and only if $e(\Lambda_i)=y_0 + 1$. Thus $A_{y_0}^{\#} = \{i \in A_{y_0}, \Lambda_i^{\#} \neq 0\} \cup \{i \in \{i_1,\cdots,i_l\}, e(\Lambda_i)=y_0 + 1 \text{ and } \Lambda_i^{\#} \neq 0\} \setminus \{i \in \{i_1,\cdots,i_l\}, e(\Lambda_i)=y_0\}$. We have a similar result for $A_{y_0 - 1}^{\#}$. Let $i \notin \{i_1,\cdots,i_l\}$ such that $e(\Lambda_i)=y_0$ and $\Lambda_i^{\#} = 0$. Then $\Lambda_i=[y_0,y_0]$ and $i=i_j'$ for $j$ such that $\Delta_j = [-y_0,-y_0]$. Similarly, if $i \notin \{i_1,\cdots,i_l\}$ such that $e(\Lambda_i)=y_0 - 1$ and $\Lambda_i^{\#} = 0$, $i=i_n'$ for a $n$ such that $\Delta_n = [-y_0 +1,-y_0 + 1]$. Note that if there is a $j$ such that $\Delta_j = [-y_0,-y_0]$, then necessarily $n=j-1$, and $i_j \notin A_{y_0}^{c}$.
  In $(\m,\varepsilon)$ the only possible segments ending in $y_0 - 1$ are the $[y_0 - 1,y_0 - 1]$ and they protect possibly some segments $[y_0,y_0]$. Now, let us remark that if $i \in A_{y_0 - 1}^{\#}$ and $i \notin \{i_1,\cdots,i_l\}$ then $\Lambda_i=[y_0 - 1,y_0 - 1]$ thus $\Lambda_i^{\#}=[y_0 - 1,y_0 - 1]$. Also, if there is an $i \notin \{i_1,\cdots,i_l\}$ such that $\Lambda_i^{\#}=[y_0,y_0]$ and $\Lambda_i \neq [y_0,y_0]$, then $\Lambda_i = [y_0 - 1,y_0]$. Thus $[-y_0, -y_0 + 1]$ is in the sequence of the algorithm, which implies that $[-y_0 + 1,-y_0 + 1] \notin \m$. To deal with the $i \in \{i_1,\cdots,i_l\}$, we will analyse the different cases.
  \begin{itemize}
      \item If $e(\Delta_{l}) > y_0 + 1$. By the paragraph above, if there is a $j$ such that $\Delta_j = [-y_0,-y_0]$, then $A^{\#}_{y_0} = A_{y_0} \setminus \{i_j'\}$; otherwise $A^{\#}_{y_0} = A_{y_0}$. Similarly, if there is a $n$ such that $\Delta_n = [-y_0 +1,-y_0 + 1]$, $A^{\#}_{y_0 - 1} = A_{y_0 - 1} \setminus \{i_n'\}$; otherwise $A^{\#}_{y_0 - 1} = A_{y_0 - 1}$. We have seen that if such a $j$ exists then $i_j \notin A_{y_0}^{c}$. We have also studied the case of the segments $[y_0,y_0]$ and $[y_0-1,y_0-1]$. Hence, we see that $A_{y_0}^{\#,c} = A_{y_0}^{c}$.
      \item If $e(\Delta_{l}) = y_0$. We start with the case where $\Lambda_{i_l}^{\#} \neq 0$ and $\Lambda_{i_{l-1}}^{\#} \neq 0$. This time, if there is a $j$ such that $\Delta_j = [-y_0,-y_0]$, then $A^{\#}_{y_0} = A_{y_0} \cup\{i_{l-1}\} \setminus \{i_l,i_j'\}$; otherwise $A^{\#}_{y_0} = A_{y_0} \cup \{i_{l-1}\} \setminus \{i_l\}$. And if there is a $n$ such that $\Delta_n = [-y_0 +1,-y_0 + 1]$, $A^{\#}_{y_0 - 1} = A_{y_0 - 1} \cup \{i_l\} \setminus \{i_n'\}$; otherwise $A^{\#}_{y_0 - 1} = A_{y_0 - 1 }\cup \{i_l\}$. Now let us show that $\Lambda_{i_l}^{\#} \le \Lambda_{i_{l-1}}^{\#}$. We have $\Lambda_{i_l} \le \Lambda_{i_{l-1}}$, so the only issue would be if $\Lambda_{i_l}^{\#}={}^{-}\Lambda_{i_l}^{-}$, $\Lambda_{i_{l-1}}^{\#}=\Lambda_{i_{l-1}}^{-}$ and $b(\Lambda_{i_l})+1=b(\Lambda_{i_{l-1}})$. But if $\Lambda_{i_l}^{\#}={}^{-}\Lambda_{i_l}^{-}$, it means that there exists an $i$ such that $\Lambda_{i_l}^{\vee}=\Delta_i$. But as $b(\Lambda_{i_l})+1=b(\Lambda_{i_{l-1}})$ and $e(\Lambda_{i_l})+1=e(\Lambda_{i_{l-1}})$ we would have $\Delta_{i+1} = \Lambda_{i_{l-1}}^{\vee}$ contradicting $\Lambda_{i_{l-1}}^{\#}=\Lambda_{i_l}^{-}$. Moreover, $\Lambda_{i_l}^{\#}$ is the smallest segment such that $\Lambda_{i_l}^{\#} \le \Lambda_{i_{l-1}}^{\#}$ because another segment ending in $y_0-1$ smaller than $\Lambda_{i_{l-1}}^{\#}$ but bigger than $\Lambda_{i_l}^{\#}$ would contradict the definition of $\Delta_{l}$ in the algorithm. Thus $i_{l-1} \notin A_{y_0}^{\#,c}$. As before, we get $A_{y_0}^{\#,c} = A_{y_0}^{c} \setminus \{i_l\}$.
      
      If $\Lambda_{i_l}^{\#} = 0$. We show that $\Lambda_{i_{l-1}}^{\#} = 0$. We have two possibilities $\Lambda_{i_l} = [y_0,y_0]$ or $\Lambda_{i_l} = [y_0-1,y_0]$. If $\Lambda_{i_l} = [y_0,y_0]$ then $\Lambda_{i_{l-1}} = [y_0 + 1,y_0 + 1]$ and $\Lambda_{i_{l-1}}^{\#} = 0$. If $\Lambda_{i_l} = [y_0-1,y_0]$, then either $\Lambda_{i_{l-1}} = [y_0 + 1,y_0 + 1]$ (and $\Lambda_{i_{l-1}}^{\#} = 0$) or $\Lambda_{i_{l-1}} = [y_0,y_0 + 1]$, but in this case $\Lambda_{i_{l-1}}^{\vee}$ and $\Lambda_{i_{l}}^{\vee}$ are both in the sequence of the algorithm, thus $\Lambda_{i_{l-1}}^{\#} = 0$. Since $\Lambda_{i_l}^{\#} = 0$ and $\Lambda_{i_{l-1}}^{\#} = 0$, the result is similar to case $(1)$.

      If $\Lambda_{i_l}^{\#} \neq 0$ and $\Lambda_{i_{l-1}}^{\#} = 0$. Then by the definition of the sequence of the algorithm, we can see that $\Lambda_{i_l}$ is the biggest segment ending in $y_0$. Therefore, for all $i \in A_{y_0}^{\#}$, $\Lambda_{i_l}^{\#} \nleq \Lambda_{i}^{\#}$ and $A_{y_0}^{\#,c} = A_{y_0}^{c} \setminus \{i_l\}$.

      \item If $e(\Delta_l) = y_0 + 1$ and $\varepsilon_0 =1$. First, we show that $\Lambda_{i_l}^{\#} \neq 0$. Indeed, $\Lambda_{i_l}$ cannot be $[y_0 + 1,y_0 + 1]$ because it would be followed in the initial sequence of the algorithm by any segments ending in $y_0$ (and it is not $[0,0]^{=0}$, $[0,0]^{\ge 0}$ or $[1/2,1/2]$). If it is $[y_0,y_0 + 1]$ then $[-y_0-1,-y_0]$ is in the initial sequence of the algorithm. Hence $[y_0,y_0] \notin \m$ and $\Delta_l$ would be followed in the initial sequence by any segments ending in $y_0$. The last case to consider is $[-1/2,1/2]^{\ge 0}$ or $[-1/2,1/2]^{=0}$ (if $\Lambda_{i_l} = [-1/2,1/2]^{\le 0}$ then $\Lambda_{i_l}^{\#} \neq 0$). But, if $\varepsilon_0 =1$, these segments would be followed in the initial sequence by any segments ending in $-1/2$. We get that, if there is a $j$ such that $\Delta_j = [-y_0,-y_0]$, then $A^{\#}_{y_0} = A_{y_0} \cup\{i_{l}\} \setminus \{i_j'\}$; otherwise $A^{\#}_{y_0} = A_{y_0} \cup \{i_{l}\}$. And if there is a $n$ such that $\Delta_n = [-y_0 +1,-y_0 + 1]$, $A^{\#}_{y_0 - 1} = A_{y_0 - 1} \setminus \{i_n'\}$; otherwise $A^{\#}_{y_0 - 1} = A_{y_0 - 1 }$. As before, if such a $j$ exists, then also such a $n$ exists, and $i_j \notin A_{y_0}^{c}$.

      Let us show that $i_l \in A^{c,\#}_{y_0}$. If $\Lambda_{i_l}^{\#} \neq [y_0,y_0]$ then we have  $i_l \in A^{c,\#}_{y_0}$ since for all $i \in A^{\#}_{y_0 - 1}$, $\Lambda_i^{\#} = [y_0 - 1, y_0 - 1]$. If $\Lambda_{i_l}^{\#} = [y_0,y_0]$ and $\Lambda_{i_l} = [y_0,y_0 + 1]$, then the only segments of $\m$ ending in $y_0$ are $[y_0,y_0]$ and $i_l \in A^{c,\#}_{y_0}$. If $\Lambda_{i_l}^{\#} = [y_0,y_0]$ and $\Lambda_{i_l} = [y_0- 1,y_0 + 1]$, then $[-y_0- 1,-y_0 + 1]$ is in the sequence of the algorithm, so $[y_0 - 1,y_0 - 1] \notin \m$ and $i_l \in A^{c,\#}_{y_0}$. Finally, we get $A_{y_0}^{\#,c} = A_{y_0}^{c} \cup \{i_l\}$.

      \item If $e(\Delta_l) = y_0 + 1$ and $\varepsilon_0 =-1$. Then $\Lambda_{i_l}^{\#} = 0$. Similarly as before, if there is a $j$ such that $\Delta_j = [-y_0,-y_0]$, then $A^{\#}_{y_0} = A_{y_0} \setminus \{i_j'\}$; otherwise $A^{\#}_{y_0} = A_{y_0}$. If there is a $n$ such that $\Delta_n = [-y_0 +1,-y_0 + 1]$, $A^{\#}_{y_0 - 1} = A_{y_0 - 1} \setminus \{i_n'\}$; otherwise $A^{\#}_{y_0 - 1} = A_{y_0 - 1}$. Hence, we see that $A_{y_0}^{\#,c} = A_{y_0}^{c}$.
  \end{itemize}
\end{proof}

Now, we want to compute the effect of $\AD$ on $D_{y_0}(\m,\varepsilon)$. We denote by $(\tilde{\m},\tilde{\varepsilon}):=D_{y_0}(\m,\varepsilon)$ and by $\tilde{\Delta}_1, \cdots, \tilde{\Delta}_{\tilde{l}}$ the initial sequence in the algorithm for $(\tilde{\m},\tilde{\varepsilon})$.

\begin{lem}
    \label{lem:sequencederneg}
    \begin{enumerate}
        \item If $e(\Delta_{l})=y_0$, then $\tilde{l}=l-1$ and if not $\tilde{l}=l$.
        \item For $1 \le i \le \tilde{l}$, if $b(\Delta_i)=-y_0$ and $\Delta_i \neq [-y_0,-y_0]$ then $\tilde{\Delta}_i={}^{-}\Delta_i$, otherwise $\tilde{\Delta}_i=\Delta_i$.
    \end{enumerate}
\end{lem}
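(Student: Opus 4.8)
The statement compares the initial sequence $\Delta_1 \succeq \cdots \succeq \Delta_l$ of the algorithm for $(\m,\varepsilon)$ with the initial sequence $\tilde\Delta_1 \succeq \cdots \succeq \tilde\Delta_{\tilde l}$ of the algorithm applied to the negative derivative $(\tilde\m,\tilde\varepsilon) = D_{y_0}(\m,\varepsilon)$. The key point is that, by the explicit description of $D_{y_0}$ recalled at the start of Section~\ref{sec:derneg}, passing from $\m$ to $\tilde\m$ only affects segments ending in $y_0$ (their end is removed) and segments starting in $-y_0$ (their beginning is removed), up to the exceptional segments $[y_0-1,y_0-1]$, $[y_0,y_0]$ and $[-y_0,-y_0]$. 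Since $y_0<0$ and, by Lemma~\ref{lem:endx0}, $e(\Delta_l)\ge y_0$, the top of the initial sequence $\Delta_1,\dots,\Delta_i$ with $e(\Delta_i)>y_0$ involves only segments whose ends are $>y_0$; these ends are untouched by $D_{y_0}$. The only modification $D_{y_0}$ performs on such a segment $\Delta_i$ is possibly removing its beginning, which happens exactly when $b(\Delta_i)=-y_0$ (and $\Delta_i\neq[-y_0,-y_0]$, the exceptional case). This already explains clause~(2) for those indices, modulo checking that removing the beginning does not destroy membership in the initial sequence and does not change which segment is selected.

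\textbf{Step 1: the segments $\tilde\Delta_i$ for $e(\Delta_i)>y_0+1$.} I would argue by induction on $i$. For $i=1$: $\tilde\Delta_1$ is the biggest (for $\preceq$) segment of $\tilde\m$ ending in $e_{\max}$. Since $D_{y_0}$ only removes beginnings/ends at level $\pm y_0$ and $e_{\max}>y_0+1$, the set of segments of $\tilde\m$ ending in $e_{\max}$ is obtained from that of $\m$ by replacing each $\Delta$ with $b(\Delta)=-y_0$ (and $\Delta\ne[-y_0,-y_0]$) by ${}^-\Delta$; the order $\preceq$ is such that $\Delta \preceq \Delta'$ iff ${}^-\Delta\preceq{}^-\Delta'$ among segments of the same end, and ${}^-\Delta\preceq\Delta$, so the maximal element is the image of $\Delta_1$, i.e. $\tilde\Delta_1=\Delta_1$ if $b(\Delta_1)\ne-y_0$ and $\tilde\Delta_1={}^-\Delta_1$ otherwise. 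The inductive step is identical: $\tilde\Delta_{i+1}$ is the biggest segment of $\tilde\m$ with $\tilde\Delta_{i+1}\preceq\tilde\Delta_i$, $e(\tilde\Delta_{i+1})=e(\tilde\Delta_i)-1$, opposite sign if both centered — and since the modifications at level $\pm y_0$ do not interfere (all ends here being $>y_0$), the selection in $\tilde\m$ mirrors that in $\m$, giving $\tilde\Delta_{i+1}=\Delta_{i+1}$ or ${}^-\Delta_{i+1}$ according to whether $b(\Delta_{i+1})=-y_0$. One has to be slightly careful that $b(\Delta_i)=-y_0$ forces the label/sign conditions to remain compatible, but this is automatic since taking ${}^-(\cdot)$ changes the center but not the end, and never creates a centered segment from a non-centered one at these levels.

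\textbf{Step 2: behaviour at the bottom, $e(\Delta_j)\in\{y_0,y_0+1\}$, and the length count.} This is where the main obstacle lies and where Lemma~\ref{lem:dermdieze}, together with the analysis of the four cases $e(\Delta_l)\ge y_0+2$, $=y_0$, $=y_0+1$ with $\varepsilon_0=\pm1$, is brought in. If $e(\Delta_l)=y_0$, the segment $\Delta_l$ ends precisely at the level that $D_{y_0}$ truncates, so $\tilde\Delta_l$ would need to end at $y_0-1$; but $\Delta_l^-$ (or whatever $D_{y_0}$ produces from $\Delta_l$) no longer satisfies the chaining condition after $\tilde\Delta_{l-1}$, so the sequence stops at $\tilde l=l-1$. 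This requires checking that $\tilde\Delta_{l-1}$ is still $\Delta_{l-1}$ or ${}^-\Delta_{l-1}$ (covered by Step 1 since $e(\Delta_{l-1})=y_0+1>y_0$ when $e(\Delta_l)=y_0$, wait — here $e(\Delta_{l-1})=y_0+1$, still $>y_0$, so Step 1 applies as long as we phrase it for $e(\Delta_i)\ge y_0+1$) and that no segment of $\tilde\m$ ending in $y_0-1$ can legitimately extend the sequence — which is exactly the content of the "contradiction with the definition of $\Delta_l$" arguments appearing in the proof of Lemma~\ref{lem:dermdieze}. In the cases $e(\Delta_l)\ge y_0+1$ one checks that $\tilde l=l$ and that $\tilde\Delta_l=\Delta_l$ or ${}^-\Delta_l$, again by reducing to whether $b(\Delta_l)=-y_0$; the exceptional segments $[\pm y_0,\pm y_0]$, $[y_0-1,y_0-1]$ surviving the derivative (when $y_1\le -y_0+1$) are handled by the same bookkeeping as in Lemma~\ref{lem:dermdieze}, noting $e(\Delta_l)\ge y_0$ by Lemma~\ref{lem:endx0} rules out $\Delta_l=[-e_{\max},-e_{\max}]$.

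\textbf{Main obstacle.} The delicate point is not the generic part (Step 1, which is essentially order-theoretic), but verifying in Step 2 that the \emph{selection} performed by the algorithm on $\tilde\m$ agrees with the image of the selection on $\m$ exactly at the boundary level $y_0$ — i.e. that no spurious segment of $\tilde\m$ (created by the derivative from a longer segment of $\m$, such as $[y_0-1,y_0]\mapsto[y_0,y_0]$ after truncation, or ${}^-\Delta^-$-type modifications) sneaks into the initial sequence or changes which $\Lambda_i$ is picked when there are multiplicities. This is resolved by the same case analysis and the same "would contradict maximality/definition of $\Delta_l$" arguments that drive the proof of Lemma~\ref{lem:dermdieze}; I would organize the proof of the present lemma to invoke Lemma~\ref{lem:dermdieze} directly for the data $A_{y_0}^{\#,c}$ and then read off the claims on $\tilde\Delta_i$ and $\tilde l$ from it, rather than re-deriving the combinatorics from scratch.
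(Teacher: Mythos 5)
Your high-level picture of the argument is right, but there are two concrete problems with the execution. First, you assert in Step~1 that ${}^-\Delta \preceq \Delta$; the direction is wrong. By the definition of the order, $[x_1,y]<[x_2,y]$ iff $x_1<x_2$, and ${}^-\Delta = [b(\Delta)+1,e(\Delta)]$ has a \emph{larger} beginning, so ${}^-\Delta \succ \Delta$. This is not a cosmetic slip: it hides the actual technical difficulty. When $\tilde\Delta_i = {}^-\Delta_i \succ \Delta_i$, the set of segments $\preceq \tilde\Delta_i$ ending at $e(\Delta_i)-1$ is a \emph{superset} of the set $\preceq \Delta_i$, so a new candidate of the form $[-y_0, e(\Delta_{i+1})]$ could in principle be selected as $\tilde\Delta_{i+1}$ and overtake the expected segment. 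The paper deals with exactly this by a direct argument: it identifies $i_0$ (the first index where $\Delta_{i_0}$ ceases to be the singleton $[e_{\max}-i+1,e_{\max}-i+1]$), observes that $b(\Delta_i)=-y_0$ can occur only for $i\le i_0$, and then rules out the spurious candidate by showing that such a segment would contradict the maximality of $\Delta_{i_0}$ in the initial sequence. Your inductive step ("the selection in $\tilde\m$ mirrors that in $\m$") skips this entirely; it would fail without that check.

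Second, the plan to "invoke Lemma~\ref{lem:dermdieze} directly for the data $A_{y_0}^{\#,c}$ and then read off the claims on $\tilde\Delta_i$ and $\tilde l$" is misdirected. Lemma~\ref{lem:dermdieze} compares the best-matching sets $A_{y_0}^{c}$ and $A_{y_0}^{\#,c}$ for the derivative $D_{y_0}$ applied to $\m$ versus $\m^{\#}$ — the latter being the output of one step of $\AD$. It says nothing about the \emph{initial sequence of the algorithm} applied to $\tilde\m=D_{y_0}(\m)$, which is what the present lemma is about. In the paper's architecture, Lemma~\ref{lem:sequencederneg} and Lemma~\ref{lem:dermdieze} are two independent inputs that are only combined later, in Lemma~\ref{lem:ADderneg}; the former requires its own direct case analysis (singleton chain for $i<i_0$, one possible index with $b(\Delta_{i_0})=-y_0$ which may or may not be the exceptional $[-y_0,-y_0]$, then the tail up to $l$, with the boundary case $e(\Delta_l)=y_0$ giving $\tilde l = l-1$). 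You correctly flagged the boundary level as the main obstacle, but the argument you outline there does not close that obstacle.
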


\begin{proof}
    First, notice that in the algorithm, when $\Delta_i$ is defined then $\Delta_{i+1}$ just depends on the segments smaller than $\Delta_{i}$ which terminates by $e(\Delta_{i})-1$ (together with a sign condition). In particular, if $\Delta_i$ appears in the initial sequence of the algorithm for $\m$ and $\tilde{\m}$ and $e(\Delta_i) \neq y_0 + 1$, then $D_{y_0}$ does not change the previous set of segments, and $\Delta_{i+1}$ is the next segment in the algorithm for both $\m$ and $\tilde{\m}$.

    Note that by definition $e(\Delta_{1})=e_{\max}$ is the maximum of the coefficients of the segments. Since by definition of $y_0$, $\Delta_{1} \neq [-y_0,-y_0]$, after taking the derivative $\Delta_{1}$ does not vanish. Hence $e_{\max}$ is still the maximum of the coefficients in $\tilde{\m}$.

    Let $i_0 = e_{\max } - y_1 + 2$. Then, for all $0<i<i_0$, $\Delta_{i}=[e_{\max }-i+1, e_{\max }-i+1]$. All of these segments belong to $\tilde{\m}$, so $\tilde{l} \ge i_0 - 1$ and for all $0<j<i_0$, $\tilde{\Delta}_{i}=\Delta_{i}=[e_{\max }-i+1, e_{\max }-i+1]$.

    If there is an integer $i$ such that $b(\Delta_i)=-y_0$ then $i \le i_0$. If $i<i_0$, we have treated the case before. Suppose that $b(\Delta_{i_0})=-y_0$. If $\Delta_{i_{0}}=[-y_0,-y_0]$. Then $\Delta_{i_{0} - 1}=[-y_0 + 1,-y_0 +1]$, thus one $\Delta_{i_{0}}$ is not modified by $D_{y_0}$. Hence $\tilde{l} \ge i_0$ and $\tilde{\Delta}_{i_{0}}=\Delta_{i_{0}}$. If $\Delta_{i_{0}}\neq [-y_0,-y_0]$, then $D_{y_0}$ transforms $\Delta_{i_{0}}$ into ${}^{-}\Delta_{i_{0}}$ and we get that $\tilde{\Delta}_{i_0}={}^{-}\Delta_{i_{0}}$.

    In the case that $\tilde{\Delta}_{i_{0}}={}^{-}\Delta_{i_{0}}$ and $l\ge i_0 + 1$, we have $\tilde{\Delta}_{i_{0} + 1}=\Delta_{i_{0} + 1}$. Indeed, $\Delta_{i_{0} + 1}$ is unchanged in $\tilde{\m}$, we still have $\Delta_{i_{0} + 1} \prec {}^{-}\Delta_{i_{0}}$, and the only segment smaller than ${}^{-}\Delta_{i_{0}}$ but not than $\Delta_{i_{0}}$ ending in $e(\Delta_{i_{0} + 1})$ is $[-y_0,e(\Delta_{i_{0} + 1})]$. Then only possibility to have such a segment in $\tilde{\m}$ is to have $e(\Delta_{i_{0} + 1})= -y_0$ and that there is a segment of the form $[-y_0 + 1,-y_0 + 1]$ in $\m$. But then $\Delta_{i_0}=[-y_0,-y_0+1]$ and this contradicts the maximality of $\Delta_{i_0}$ as $[-y_0 + 1,-y_0 + 1] \prec \Delta_{i_0 - 1}$.

    By Lemma \ref{lem:endx0}, $e(\Delta_{i_l}) \ge y_0$, thus $\tilde{l}\ge l-1$ and for all $j_0<i<l$, $\tilde{\Delta}_{i}=\Delta_{i}$. If $e(\Delta_{l}) \neq y_0$, then $\tilde{l}= l$ and $\tilde{\Delta}_{l}=\Delta_{l}$. If $e(\Delta_{l}) = y_0$, then $\Delta_{l} \neq [y_0,y_0]$ or if $\Delta_{l} = [y_0,y_0]$ there is no $[y_0 - 1,y_0 - 1]$. Thus $\Delta_{l}$ is changed by the derivative and $\tilde{l}=l-1$.
\end{proof}

\begin{lem}
  \label{lem:ADderneg}
  \begin{enumerate}
      \item If $e(\Delta_{l}) \ge y_0 + 2$, then $(\tilde{\m}_1,\tilde{\varepsilon}_1)=(\m_{1},\varepsilon_1)$ and $(\tilde{\m}^{\#},\tilde{\varepsilon}^{\#})=D_{y_0}(\m^{\#},\varepsilon^{\#})$.
      \item If $e(\Delta_{l}) = y_0$, then $(\tilde{\m}_1,\tilde{\varepsilon}_1)=D_{-y_0}(\m_{1},\varepsilon_1)$ and $(\tilde{\m}^{\#},\tilde{\varepsilon}^{\#})=D_{y_0}(\m^{\#},\varepsilon^{\#})$.
      \item If $e(\Delta_{l}) = y_0 + 1$ and $\varepsilon_0=1$, then $(\tilde{\m}_1,\tilde{\varepsilon}_1)=(\m_{1},\varepsilon_1)$ and $(\tilde{\m}^{\#},\tilde{\varepsilon}^{\#})=D^{\max - 1}_{y_0}(\m^{\#},\varepsilon^{\#})$.
      \item If $e(\Delta_{l}) = y_0 + 1$ and $\varepsilon_0=-1$, then $(\tilde{\m}_1,\tilde{\varepsilon}_1)=(\m_{1},\varepsilon_1)$ and $(\tilde{\m}^{\#},\tilde{\varepsilon}^{\#})=D_{y_0}(\m^{\#},\varepsilon^{\#})$.
  \end{enumerate}
\end{lem}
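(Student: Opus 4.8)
The plan is to run a case analysis on the value of $e(\Delta_l)$, which by Lemma~\ref{lem:endx0} lies in $\{y_0,\, y_0+1\}\cup\{x:x\ge y_0+2\}$, splitting the middle value according to the sign $\varepsilon_0$ of Definition~\ref{def:epsil}. All four cases are driven by the same two inputs: the description of the initial sequence $\tilde\Delta_1\succeq\cdots\succeq\tilde\Delta_{\tilde l}$ of $(\tilde\m,\tilde\varepsilon)=D_{y_0}(\m,\varepsilon)$ from Lemma~\ref{lem:sequencederneg}, and the comparison of the index set $A_{y_0}^{\#,c}$ governing $D_{y_0}(\m^{\#},\varepsilon^{\#})$ with the index set $A_{y_0}^{c}$ governing $D_{y_0}(\m,\varepsilon)$ from Lemma~\ref{lem:dermdieze}. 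Before starting the cases I would record one simplification: since $y_0<0$, the operator $D_{y_0}$ neither creates nor destroys centered segments (a segment $[b,y_0]$ or $[-y_0,-b]$ whose $D_{y_0}$-modification were centered would force $y_0\ge 1/2$), so the labels produced by the section $s$, the signs of centered segments, and the count $n_0$ are all the same for $\m$ and for $\tilde\m$; this disposes of all sign bookkeeping on the centered part.

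For the pair $(\tilde\m_1,\tilde\varepsilon_1)$ I would use Lemma~\ref{lem:sequencederneg} to get $e(\tilde\Delta_1)=e_{\max}$ in all cases, $e(\tilde\Delta_{\tilde l})=e(\Delta_l)$ when $e(\Delta_l)\ge y_0+1$, and $e(\tilde\Delta_{\tilde l})=e(\Delta_{l-1})=y_0+1$ when $e(\Delta_l)=y_0$; in cases (1), (3), (4) the last segment of the sequence is not touched by $D_{y_0}$ and is a stopping segment after $D_{y_0}$ iff it is one before, so $\tilde\varepsilon_0=\varepsilon_0$ and one reads off $(\tilde\m_1,\tilde\varepsilon_1)=(\m_1,\varepsilon_1)$ directly from the formula for $(\m_1,\varepsilon_1)$ (in case (4) using that $e(\Delta_l)\in\{0,1/2\}$ and $n_0$ are preserved). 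In case (2) one has $\varepsilon_0=1$ since $e(\Delta_l)=y_0<0$, so $\m_1=[y_0,e_{\max}]+[-e_{\max},-y_0]$ (non-centered by Lemma~\ref{lem:ADnotcentered}), and using $y_0\ge -e_{\max}$ one checks that $\tilde\varepsilon_0=1$ and that $\tilde\m_1=[y_0+1,e_{\max}]+[-e_{\max},-y_0-1]$ is non-centered; since $D_{-y_0}$ removes precisely the end $-y_0$ of $[-e_{\max},-y_0]$ and the matching beginning of its dual, this is exactly $D_{-y_0}(\m_1,\varepsilon_1)$.

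For the pair $(\tilde\m^{\#},\tilde\varepsilon^{\#})$ the key observation is that the first step of $\AD$ applied to $(\tilde\m,\tilde\varepsilon)$ modifies the segments $\tilde\Delta_1,\dots,\tilde\Delta_{\tilde l}$ and their duals, which by Lemma~\ref{lem:sequencederneg} are the segments $\Delta_1,\dots,\Delta_{\tilde l}$ up to the left-shift already imposed by $D_{y_0}$ on those beginning at $-y_0$; hence the $\AD$-modifications on $\tilde\m$ are the composite of the $\AD$-modifications on $\m$ with the $D_{y_0}$-modifications. The $D_{y_0}$-corner-moves act at end-level $y_0$ and beginning-level $-y_0$, while the $\AD$-corner-moves act at end-levels in $\{e(\Delta_l),\dots,e_{\max}\}$ and beginning-levels in $\{-e_{\max},\dots,-e(\Delta_l)\}$, so the two families of moves commute, and the only interaction is at the bottom of the $\AD$-sequence. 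This interaction is exactly what Lemma~\ref{lem:dermdieze} records: $A_{y_0}^{\#,c}=A_{y_0}^{c}$ in cases (1) and (4); $A_{y_0}^{\#,c}=A_{y_0}^{c}\setminus\{i_l\}$ in case (2), the segment $\Delta_l$ ending at $y_0$ being consumed by the $\AD$-step; and $A_{y_0}^{\#,c}=A_{y_0}^{c}\cup\{i_l\}$ in case (3), where $\Delta_l$ is first shortened by the $\AD$-step to end at $y_0$, giving $D_{y_0}$ one extra move that is undone by the $S^{(1)}_{y_0}$ inside $D_{y_0}^{\max - 1}$. Assembling the two composites, with the explicit formulas of Section~\ref{sec:expder} and \cite{AtobeSocle} for $D_{y_0}$ and $S^{(1)}_{y_0}$, gives $(\tilde\m^{\#},\tilde\varepsilon^{\#})=D_{y_0}(\m^{\#},\varepsilon^{\#})$ in cases (1), (2), (4) and $D_{y_0}^{\max - 1}(\m^{\#},\varepsilon^{\#})$ in case (3). \textbf{The main obstacle} is the bookkeeping in the presence of multiplicities: one must choose the index sets $A_{y_0}^{c}$, $\{i_j\}$ and $\{i'_j\}$ defining $\m^{\#}$ and $D_{y_0}$ coherently so that the two composites are literally equal as labelled signed multisegments — not merely equal after forgetting signs — which is the reason for the specific choice of $A_{y_0}^{c}$ in Lemma~\ref{lem:dermdieze}, and checking the sign rules for $\tilde\varepsilon^{\#}$ against $\varepsilon^{\#}$ through this identification is the most delicate verification.
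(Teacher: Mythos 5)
Your proposal is correct and follows essentially the same route as the paper: the argument is driven by Lemma~\ref{lem:sequencederneg} (relating the initial sequence of $\tilde\m$ to that of $\m$) and Lemma~\ref{lem:dermdieze} (comparing $A_{y_0}^{\#,c}$ with $A_{y_0}^c$), with the $(\tilde\m_1,\tilde\varepsilon_1)$ identification read off from the first and last terms of the sequence, and $(\tilde\m^{\#},\tilde\varepsilon^{\#})$ obtained by tracking which corner moves are performed on which indexed segments. Your preliminary observation that $D_{y_0}$ with $y_0<0$ preserves centered segments, labels, signs, and $n_0$ is a sound simplification that the paper leaves implicit, and your flagging of the multiplicity/index-set bookkeeping as the delicate point is exactly where the paper's care with $E^{\#}$, $\tilde E^{\#}$, $\tilde E'$ and the specific choice of $A_{y_0}^c$ comes in.
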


\begin{proof}
  Let us start with $\tilde{\m}_1$. We know that by definition $\m_1 = [-e(\Delta_{1}),e(\Delta_{1})]$ or $\m_1 =[e(\Delta_{l}),e(\Delta_{1})] + [-e(\Delta_{1}),-e(\Delta_{l})]=[e(\Delta_{1})-l+1,e(\Delta_{1})] + [-e(\Delta_{1}),-e(\Delta_{1})+l-1]$; and $\tilde{\m}_1 = [-e(\tilde{\Delta}_{1}),e(\tilde{\Delta}_{1})]$ or $\tilde{\m}_1=[e(\tilde{\Delta}_{1})-\tilde{l}+1,e(\tilde{\Delta}_{1})] + [-e(\tilde{\Delta}_{1}),-e(\tilde{\Delta}_{1})+\tilde{l}-1]$.  Lemma \ref{lem:sequencederneg} implies that $e(\Delta_{1})=e(\tilde{\Delta}_{1})$ and $l=\tilde{l}$ unless  $e(\Delta_{l}) = -y_0$ and in this case $\tilde{l}=l-1$. Thus $(\tilde{\m}_1,\tilde{\varepsilon}_1)=(\m_{1},\varepsilon_1)$ unless  $e(\Delta_{l}) = -y_0$ and in this case $\tilde{\m}_1=[y_0+1,e(\Delta_{1})] + [-e(\Delta_{1}),-y_0 - 1]=D_{-y_0}(\m_1)$.

  Now, let us calculate $\tilde{\m}^{\#}$. Let $A_{y_0}^c$ be the set of indices of segments in $\m$ ending in $y_0$ and modified by $D_{y_0}$; and $A_{y_0}^{\#,c}$ be the set of indices of segments in $\m^{\#}$ ending in $y_0$ and modified by $D_{y_0}$. Let $E^{\#}$ be the set of indices of the segments in $\m$ such that the end is modified by $\AD$; and $\tilde{E}^{\#}$ be the same for $\tilde{\m}$. In all these cases, all the symmetrics of these segments are also modified. From Lemma \ref{lem:sequencederneg}, we see that the $\tilde{\Delta}_{i}$ are the same as the $\Delta_{i}$ apart when $b(\Delta_{i})=-y_0$ or $e(\Delta_{l})=y_0$. Hence, if $e(\Delta_{l}) \neq y_0$, $\tilde{E}^{\#} = E^{\#}$, and if $e(\Delta_{l}) = y_0$, $\tilde{E}^{\#} = E^{\#} \setminus \{i_l\}$. If $e(\Delta_{l}) \neq y_0$, we define $\tilde{E}' = A_{y_0}^c$; otherwise $\tilde{E}' = A_{y_0}^c \setminus \{i_l\}$. We get $\tilde{\m}^{\#}$ is obtained from $\m^{\#}$ by suppressing the end of the segments indexed by $\tilde{E}'$ and the beginning of their symmetrics. To compare $\tilde{\m}^{\#}$ and $D_{y_0}(\m^{\#},\varepsilon^{\#})$, we need to compare $\tilde{E}'$ and $A_{y_0}^{\#,c}$.

  \begin{itemize}
      \item If $e(\Delta_{l}) > y_0 + 1$. By Lemma \ref{lem:dermdieze}, $A_{y_0}^{\#,c} = A_{y_0}^{c}$. Thus $A_{y_0}^{\#,c}=\tilde{E}'$, and $(\tilde{\m}^{\#},\tilde{\varepsilon}^{\#})=D_{y_0}(\m^{\#},\varepsilon^{\#})$.
      \item If $e(\Delta_{l}) = y_0$. By Lemma \ref{lem:dermdieze}, $A_{y_0}^{\#,c} = A_{y_0}^{c} \setminus \{i_l\}$. Thus $A_{y_0}^{\#,c}=\tilde{E}'$, and $(\tilde{\m}^{\#},\tilde{\varepsilon}^{\#})=D_{y_0}(\m^{\#},\varepsilon^{\#})$.
      \item If $e(\Delta_l) = y_0 + 1$ and $\varepsilon_0=1$. By Lemma \ref{lem:dermdieze}, $A_{y_0}^{\#,c} = A_{y_0}^{c} \cup \{i_l\}$. Let us show that in $\tilde{\m}^{\#}$, $\tilde{\Lambda}^{\#}_{i_l}$ is modified by $D_{y_0}$. The possible segments of $\tilde{\m}^{\#}$ ending in $y_0-1$ are possibly $[y_0-1,y_0-1]$ or coming from a segment of $\m$ ending in $y_0$ and modified by $D_{y_0}$. If they were a segment $\Lambda \in \m$ such that $e(\Lambda)=y_0$ and $\Lambda^{-} \le \Lambda_{i_l}^{-}$ or ${}^{-}\Lambda^{-} \le \Lambda_{i_l}^{-}$ this would imply that $\Lambda \prec \Lambda_{i_l}$ contradicting the fact that $\Delta_l$ is the last segment in the sequence of the algorithm. The segments $[y_0-1,y_0-1]$ only protect the segments $[y_0,y_0]$. If $\Lambda_{i_l}^{\#} = [y_0,y_0]$ and $\Lambda_{i_l} = [y_0,y_0 + 1]$, then the only segments of $\m$ ending in $y_0$ are $[y_0,y_0]$. If $\Lambda_{i_l}^{\#} = [y_0,y_0]$ and $\Lambda_{i_l} = [y_0- 1,y_0 + 1]$, then $[-y_0- 1,-y_0 + 1]$ is in the sequence of the algorithm, so $[y_0 - 1,y_0 - 1] \notin \m$. 
      
       Thus we get that $D_{y_0}(\m^{\#},\varepsilon^{\#}) = D^{1}_{y_0}(\tilde{\m}^{\#},\tilde{\varepsilon}^{\#})$, or that $(\tilde{\m}^{\#},\tilde{\varepsilon}^{\#})=D^{\max - 1}_{y_0}(\m^{\#},\varepsilon^{\#})$.
      \item If $e(\Delta_l) = y_0 + 1$ and $\varepsilon_0=-1$. Again by Lemma \ref{lem:dermdieze}, $A_{y_0}^{\#,c} = A_{y_0}^{c}$. Thus $(\tilde{\m}^{\#},\tilde{\varepsilon}^{\#})=D_{y_0}(\m^{\#},\varepsilon^{\#})$.
  \end{itemize}
\end{proof}

We can gather all the previous lemmas to obtain the desired equality.

\begin{prop}
    \label{prop:ADcommderneg}
     We have $\AD(D_{y_0}(\m,\varepsilon))=D_{-y_0}(\AD(\m,\varepsilon))$.
\end{prop}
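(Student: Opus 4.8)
The plan is to prove Proposition~\ref{prop:ADcommderneg} by induction on the length $l(\m)$, using the recursive definition $\AD(\m,\varepsilon)=(\m_1,\varepsilon_1)+\AD(\m^\#,\varepsilon^\#)$ together with the comparison between the initial sequences of $(\m,\varepsilon)$ and of $(\tilde\m,\tilde\varepsilon)=D_{y_0}(\m,\varepsilon)$ established in Lemmas~\ref{lem:sequencederneg}, \ref{lem:dermdieze} and \ref{lem:ADderneg}. The point is that all the preparatory lemmas of this subsection are organized exactly around the four cases for $e(\Delta_l)$ (namely $e(\Delta_l)\ge y_0+2$, $e(\Delta_l)=y_0$, $e(\Delta_l)=y_0+1$ with $\varepsilon_0=1$, and $e(\Delta_l)=y_0+1$ with $\varepsilon_0=-1$), and Lemma~\ref{lem:ADderneg} already tells us precisely how $(\m_1,\varepsilon_1)$ and $(\m^\#,\varepsilon^\#)$ transform under $D_{y_0}$ in each case. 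So the bulk of the work has been done; what remains is to feed these into the induction and match them against the right-hand side $D_{-y_0}(\AD(\m,\varepsilon))$.

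The key observation is that $D_{-y_0}$ only modifies segments ending in $-y_0$ or $-y_0+1$, and since $y_0<0$ by hypothesis, $-y_0>0$. We must therefore understand how $D_{-y_0}$ distributes over the sum $(\m_1,\varepsilon_1)+\AD(\m^\#,\varepsilon^\#)$: this is exactly what Lemma~\ref{lem:dersum} is for, provided we check its hypotheses about the position of the segments of $\m_1$ relative to the remaining multisegment. Concretely, I would argue as follows in each case. In case (1), $e(\Delta_1)+e(\Delta_l)\neq 0$ and the ends of the segments of $\m_1$ are $e(\Delta_1)$ and $-e(\Delta_l)$; since $e(\Delta_l)\ge y_0+2$ we have $-e(\Delta_l)\le -y_0-2$, so $\m_1$ is untouched by $D_{-y_0}$ and by Lemma~\ref{lem:dersum}(1) (after checking $e(\Delta_1)\neq -y_0,-y_0+1$, which holds because $e(\Delta_1)=e_{\max}>1\ge -y_0$ when $y_0\le-1$, and is handled separately in the edge cases) we get $D_{-y_0}(\AD(\m,\varepsilon))=(\m_1,\varepsilon_1)+D_{-y_0}(\AD(\m^\#,\varepsilon^\#))$. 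Then by the induction hypothesis applied to $\m^\#$ (which has strictly smaller length, and for which $e(\Delta_1^{(\m^\#)})$ still exceeds the new $y_0'$) we get $D_{-y_0}(\AD(\m^\#,\varepsilon^\#))=\AD(D_{y_0}(\m^\#,\varepsilon^\#))$, and by Lemma~\ref{lem:ADderneg}(1) this equals $\AD(\tilde\m^\#,\tilde\varepsilon^\#)$, while $(\m_1,\varepsilon_1)=(\tilde\m_1,\tilde\varepsilon_1)$; summing gives $\AD(D_{y_0}(\m,\varepsilon))$. Cases (2), (3), (4) are treated the same way but using the respective parts of Lemmas~\ref{lem:dersum} and \ref{lem:ADderneg}: in case (2) the segment of $\m_1$ ending in $-e(\Delta_l)=-y_0$ meets the hypotheses of Lemma~\ref{lem:dersum}(2), giving $D_{-y_0}(\AD(\m,\varepsilon))=D_{-y_0}(\m_1,\varepsilon_1)+D_{-y_0}(\AD(\m^\#,\varepsilon^\#))$, matching $(\tilde\m_1,\tilde\varepsilon_1)=D_{-y_0}(\m_1,\varepsilon_1)$ from Lemma~\ref{lem:ADderneg}(2); in case (3) one invokes Lemma~\ref{lem:dersum}(3) (the $b=x-1$, i.e.\ end $-y_0-1$... here rather end $=-e(\Delta_l)=-y_0-1$ is not quite it, so one uses instead the variant with $D^{\max-1}$) to obtain the $D^{\max-1}_{y_0}(\m^\#,\varepsilon^\#)$ appearing in Lemma~\ref{lem:ADderneg}(3); and case (4) is like case (1).

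The main obstacle I expect is twofold. First, verifying the hypotheses of Lemma~\ref{lem:dersum} in each case requires knowing the precise position (beginning and length) of the segments of $\m_1$ relative to all segments of $\AD(\m^\#,\varepsilon^\#)$ that end in $-y_0$ or $-y_0+1$; here one needs Proposition~\ref{prop:lengthmax}/Proposition~\ref{prop:longueurbad} (maximality of the length of $\m_1$) and the description of $y_0$ in terms of $y_1$ given before Lemma~\ref{lem:endx0} to rule out "bad" configurations, and one must be careful that these positional facts propagate correctly through the recursion so that the induction hypothesis actually applies to $\m^\#$ with the \emph{same} $y_0$ or with a consistent substitute. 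Second, the base case and the small-$e_{\max}$ configurations: when $e_{\max}\le 1$ the identity is covered by Section~\ref{sec:emax1}, so one should begin the induction there; but one must also check that the recursion does not leave the regime "$e_{\max}>1$ and $\exists\, y<0,\ y\neq-e_{\max}$ with $(\m,\varepsilon)$ not $y$-reduced" — if $\m^\#$ falls outside it, one appeals to the other subsections (\ref{sec:derl01}–\ref{sec:deremaxm}) or to Hypothesis~\ref{hyp:recurrence}, and threading this bookkeeping correctly is the delicate part. The sign bookkeeping (tracking $\varepsilon_1$, $\varepsilon^\#$, and the signs produced by $D_{\pm y_0}$) is routine given the explicit formulas in Section~\ref{sec:expder} and Definition~\ref{def:der}, but should be spelled out at least in one representative case.
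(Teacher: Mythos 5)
Your overall plan matches the paper's skeleton: compare $(\tilde\m_1,\tilde\varepsilon_1)$ and $(\tilde\m^\#,\tilde\varepsilon^\#)$ with $(\m_1,\varepsilon_1)$ and a $y_0$-derivative of $(\m^\#,\varepsilon^\#)$ via Lemma~\ref{lem:ADderneg}, then use Lemma~\ref{lem:dersum} to commute $D_{-y_0}$ past the addition of $(\m_1,\varepsilon_1)$. But there is a genuine gap in the mechanism you propose for handling $\m^\#$.

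You want to run an internal induction on $l(\m)$ for Proposition~\ref{prop:ADcommderneg} itself and then apply the ``induction hypothesis'' to $(\m^\#,\varepsilon^\#)$ to obtain $D_{-y_0}(\AD(\m^\#,\varepsilon^\#))=\AD(D_{y_0}(\m^\#,\varepsilon^\#))$. This does not work as stated: the identity you need for $\m^\#$ involves the \emph{fixed} $y_0$ coming from $(\m,\varepsilon)$, but Proposition~\ref{prop:ADcommderneg} applied to $(\m^\#,\varepsilon^\#)$ would only give the commutation for the \emph{minimal} non-reduced negative $y$ of $\m^\#$, which may differ from $y_0$, and $y_0$ may even coincide with $-e_{\max}(\m^\#)$, so that $\m^\#$ is not in the regime of the Proposition at all. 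You notice this difficulty (``one must be careful that these positional facts propagate correctly through the recursion so that the induction hypothesis actually applies to $\m^\#$ with the \emph{same} $y_0$'') but do not resolve it, and the internal induction cannot resolve it. The paper avoids this entirely: it does \emph{not} induct inside Proposition~\ref{prop:ADcommderneg} but invokes Lemma~\ref{lem:ADcommDer}, which gives the commutation $\AD(D_{\rho|\cdot|^x}(\cdot))=D_{\rho|\cdot|^{-x}}(\AD(\cdot))$ for \emph{all} $x\neq 0$ on multisegments of length $<N$. Lemma~\ref{lem:ADcommDer} is derived from the global Hypothesis~\ref{hyp:recurrence} (i.e.\ Theorem~\ref{thm:ADdualinduction} for $N'<N$) together with Proposition~\ref{prop:derivativedual}, i.e.\ from the representation-theoretic fact that derivatives commute with the Aubert--Zelevinsky involution. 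That is the one-line move that makes the recursion close cleanly, and your proposal is missing it.

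Two smaller points. In case $e(\Delta_l)=y_0+1$ with $\varepsilon_0=1$, Lemma~\ref{lem:ADderneg}(3) gives $(\tilde\m^\#,\tilde\varepsilon^\#)=D^{\max-1}_{y_0}(\m^\#,\varepsilon^\#)$, and since $D^{\max-1}_{y_0}=S^{(1)}_{y_0}\circ D_{y_0}$ one needs the \emph{socle} commutation, Lemma~\ref{lem:ADcommSoc}, in addition to Lemma~\ref{lem:ADcommDer}; your proposal never mentions Lemma~\ref{lem:ADcommSoc}. Finally, your hesitation about whether Lemma~\ref{lem:dersum}(3) matches the situation in case (3) is unfounded: taking $x=-y_0$ there gives $b=x-1=-y_0-1=-e(\Delta_l)$, which is exactly the end of the second segment of $\m_1$, so the hypotheses align; what one does need to verify is that $\AD(\m^\#,\varepsilon^\#)$ has no segment of the form $[-e(\Delta_1),-y_0]$, and this is supplied by Proposition~\ref{prop:lengthmax}, as you correctly anticipate.
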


\begin{proof}
    Recall that $(\tilde{\m},\tilde{\varepsilon})$ denotes $D_{y_0}(\m,\varepsilon)$. By definition of $\AD$ we have $\AD(\tilde{\m},\tilde{\varepsilon})=(\tilde{\m}_1,\tilde{\varepsilon}_1) + \AD(\tilde{\m}^{\#},\tilde{\varepsilon}^{\#})$. Now we have different cases by Lemma \ref{lem:ADderneg}.
    \begin{itemize}
        \item If $e(\Delta_{l}) > y_0 + 1$. Then $(\tilde{\m}_1,\tilde{\varepsilon}_1)=(\m_{1},\varepsilon_1)$ and $(\tilde{\m}^{\#},\tilde{\varepsilon}^{\#})=D_{y_0}(\m^{\#},\varepsilon^{\#})$. Thus $\AD(\tilde{\m},\tilde{\varepsilon})=(\m_{1},\varepsilon_1) + \AD(D_{y_0}(\m^{\#},\varepsilon^{\#}))$. By Lemma \ref{lem:ADcommDer}, $\AD(D_{y_0}(\m^{\#},\varepsilon^{\#}))=D_{-y_0}(\AD(\m^{\#},\varepsilon^{\#}))$. Since  $-e(\Delta_{l})$ is different from $-y_0$ and $-y_0 - 1$ and $-e(\Delta_{1}) < y_0$, by Lemma \ref{lem:dersum},  $(\m_{1},\varepsilon_1) + D_{-y_0}(\AD(\m^{\#},\varepsilon^{\#}))=D_{-y_0}((\m_{1},\varepsilon_1) + \AD(\m^{\#},\varepsilon^{\#}))=D_{-y_0}(\AD(\m,\varepsilon))$.
        \item If $e(\Delta_{l}) = y_0$. Then $(\tilde{\m}_1,\tilde{\varepsilon}_1)=D_{-y_0}(\m_{1},\varepsilon_1)$ and $(\tilde{\m}^{\#},\tilde{\varepsilon}^{\#})=D_{y_0}(\m^{\#},\varepsilon^{\#})$. This time $\AD(\tilde{\m},\tilde{\varepsilon})=D_{-y_0}(\m_{1},\varepsilon_1) + \AD(D_{y_0}(\m^{\#},\varepsilon^{\#})) = D_{-y_0}(\m_{1},\varepsilon_1) + D_{-y_0}(\AD(\m^{\#},\varepsilon^{\#}))$ (the last equality follows from Lemma \ref{lem:ADcommDer}). Since $-e(\Delta_{1})$ is the smallest coefficient and $-e(\Delta_{1})< y_0$, so Lemma \ref{lem:dersum} tells us that $D_{-y_0}(\m_{1},\varepsilon_1) + D_{-y_0}(\AD(\m^{\#},\varepsilon^{\#}))=D_{-y_0}((\m_{1},\varepsilon_1) + \AD(\m^{\#},\varepsilon^{\#}))=D_{-y_0}(\AD(\m,\varepsilon))$.
        \item If $e(\Delta_{l}) = y_0 + 1$. Then $(\tilde{\m}_1,\tilde{\varepsilon}_1)=(\m_{1},\varepsilon_1)$ and $(\tilde{\m}^{\#},\tilde{\varepsilon}^{\#})=D^{\max - 1}_{y_0}(\m^{\#},\varepsilon^{\#})$. Thus $\AD(\tilde{\m},\tilde{\varepsilon})=(\m_{1},\varepsilon_1) + \AD(D^{\max - 1}_{y_0}(\m^{\#},\varepsilon^{\#}))$. By Lemmas \ref{lem:ADcommDer} and \ref{lem:ADcommSoc}, $\AD(\tilde{\m},\tilde{\varepsilon})=(\m_{1},\varepsilon_1) + D^{\max - 1}_{-y_0}(\AD(\m^{\#},\varepsilon^{\#}))$. Now $-e(\Delta_{1})$ is the smallest coefficient and $-e(\Delta_{1})< y_0$. To show that the hypotheses of Lemma \ref{lem:dersum} are satisfied, we are left to prove that there are no segments of the form $[-e(\Delta_{1}),-y_0]$ in $\AD(\m^{\#},\varepsilon^{\#})$, but this follows from Proposition \ref{prop:lengthmax}. Thus Lemma \ref{lem:dersum} tells us that $(\m_{1},\varepsilon_1) + \AD(D^{\max - 1}_{y_0}(\m^{\#},\varepsilon^{\#}))=D_{-y_0}((\m_{1},\varepsilon_1) + \AD(\m^{\#},\varepsilon^{\#}))=D_{-y_0}(\AD(\m,\varepsilon))$.
    \end{itemize}
\end{proof}

\subsection{The $L([-1,0])$-derivative} \label{sec:derl01}

In this section, we assume that $e_{\max} > 1$ and that for all $-e_{\max} < y < 0$, $(\m,\varepsilon)$ is $y$-reduced. We also assume that $\rho$ is of the same type as $G$ and that $(\m,\varepsilon)$ is not $L([-1,0])$-reduced. We want to prove $\AD(D_{L([-1,0])}(\m,\varepsilon))=D_{Z([0,1])}(\AD(\m,\varepsilon))$.

\bigskip

The hypotheses made on $(\m,\varepsilon)$ imply that if $y < 0$, then the only possible segment $\Delta \in \m$ such that $e(\Delta)=y$ is $[y,y]$. Moreover, $m_{\m}([y,y]) \le m_{\m}([y-1,y-1])$.

The derivative $D_{L([-1,0])}$ performs the following transformations (see \cite[Prop. 3.8.]{AM}):
\begin{itemize}
  \item If $e(\Delta)=0$, and $\Delta \neq [0,0],[-1,0]$, $\Delta$ is transformed into $\Delta^{--}$.
  \item If $b(\Delta)=0$, and $\Delta \neq [0,0],[0,1]$, $\Delta$ is transformed into ${}^{--}\Delta$.
  \item $\max\{m_{\m}([-1,0]-m_{\m}([-2,-2]) + m_{\m}([-1,-1]),0\}$ segments $[-1,0]$ are suppressed.
  \item $\max\{m_{\m}([-1,0])-m_{\m}([-2,-2]) + m_{\m}([-1,-1]),0\}$ segments $[0,1]$ are suppressed.
\end{itemize}

\begin{lem}
  \label{lem:end01}
  We have that $e(\Delta_{l}) \ge 0$.
\end{lem}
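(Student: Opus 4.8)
The statement asserts that the last segment $\Delta_l$ of the initial sequence in the algorithm for $(\m,\varepsilon)$ ends in a non-negative coefficient. The natural strategy is an argument by contradiction, mirroring the proof of Lemma~\ref{lem:endx0} in the negative-derivative section. So I would assume $e(\Delta_l) < 0$ and derive a contradiction using the structural hypotheses imposed at the start of Section~\ref{sec:derl01}: namely that $(\m,\varepsilon)$ is $y$-reduced for all $-e_{\max} < y < 0$, which forces any segment $\Delta \in \m$ with $e(\Delta) = y < 0$ to be of the form $[y,y]$, and moreover $m_{\m}([y,y]) \le m_{\m}([y-1,y-1])$.

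\textbf{Key steps.} First, observe that if $e(\Delta_l) < 0$, then $\Delta_l$ is a segment ending in some negative coefficient, hence $\Delta_l = [-y,-y]$ for some $y > 0$ (using the reducedness hypothesis; one must also rule out $e(\Delta_l) = -e_{\max}$ via Lemma~\ref{lem:ADnotcentered}, exactly as in Lemma~\ref{lem:endx0}). Second, since $\Delta_l$ is the \emph{last} segment of the initial sequence, there is no segment $\Delta \in y$ with $\Delta \preceq \Delta_l$, $e(\Delta) = e(\Delta_l) - 1 = -y-1$, and the appropriate sign-compatibility condition. But the hypothesis $m_{\m}([-y,-y]) \le m_{\m}([-y-1,-y-1])$ guarantees that a segment $[-y-1,-y-1]$ is present in $\m$ (hence in $y$, as it is not centered for $y > 0$), and since centeredness plays no role here ($-y-1 < 0$), the sign condition is vacuous and $[-y-1,-y-1] \preceq [-y,-y]$ holds by the order on $\USeg$. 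This contradicts the maximality/terminality of $\Delta_l$ in the sequence. One must be slightly careful about the boundary case $y = 0$, i.e.\ $\Delta_l = [0,0]$-type; but then $e(\Delta_l) = 0 \ge 0$ and there is nothing to prove, so the contradiction case genuinely has $y \ge 1$, and then $-y - 1 \le -2 < -e_{\max}$ cannot occur when $y \le e_{\max}$, so the segment $[-y-1,-y-1]$ is within the allowed range or else $-y = -e_{\max}$, which is the case already excluded.

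\textbf{Main obstacle.} The delicate point is the bookkeeping at the boundary: one needs to carefully trace that the reducedness hypotheses (for $-e_{\max} < y < 0$) together with Lemma~\ref{lem:ADnotcentered} genuinely cover all negative values $e(\Delta_l)$ could take, and that the inequality on multiplicities $m_{\m}([-y,-y]) \le m_{\m}([-y-1,-y-1])$ is available precisely in the range where it is needed. The argument is essentially the same as Lemma~\ref{lem:endx0}, just specialized to $y_0 = $ a negative integer and with $y_1 = 1$ forced by the stronger reducedness hypotheses of this section; I expect the proof to be only a few lines, closely paralleling that earlier lemma, with the observation that $\Delta_l = [-e_{\max},-e_{\max}]$ is impossible by Lemma~\ref{lem:ADnotcentered} doing the real work at the extreme end and the multiplicity inequality handling every intermediate negative value.
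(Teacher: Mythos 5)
Your proposal is correct and follows essentially the same approach as the paper's proof: use the $y$-reducedness hypothesis to force $\Delta_l = [-y,-y]$ for some $y > 0$, observe that the multiplicity inequality $m_{\m}([-y,-y]) \le m_{\m}([-y-1,-y-1])$ would let the sequence continue unless $-y = -e_{\max}$, and then invoke Lemma~\ref{lem:ADnotcentered} to rule out that extreme case. (The inequality chain ``$-y-1 \le -2 < -e_{\max}$'' in your final paragraph is garbled—what is actually true is that $-y-1 \ge -e_{\max}$ whenever $y < e_{\max}$—but your intended conclusion, and the argument as a whole, is sound.)
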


\begin{proof}
  Let us assume by contradiction that $e(\Delta_{l}) < 0$. Thus $\Delta_{l}=[-y,-y]$ with $y < 0$. Since $\Delta_{l}$ is the last segment in the initial sequence of the algorithm, $\Delta_{l}=[-e_{\max },-e_{\max }]$ contradicting Lemma \ref{lem:ADnotcentered}.
\end{proof}

Let $A_{[-1,0]}$ be the set of indices $i$ of segments $\Lambda_i$ in $\m$ ending in $0$ modified by $D_{L([-1,0])}$. The multisegment $(\m^{\#},\varepsilon^{\#})$ may not be $-1$-reduced. Let $A^{\#}_{-1}$ be the set of indices of segments ending in $-1$ modified by $D_{-1}$ in $(\m^{\#},\varepsilon^{\#})$. And let $A^{\#}_{[-1,0]}$ be the set of segments ending in $0$ modified by $D_{L([-1,0])}$ in $D_{-1}(\m^{\#},\varepsilon^{\#})$.

\begin{lem}
  \label{lem:dermdieze01}
  \begin{enumerate}
    \item If $e(\Delta_{l}) \ge 2$, then $A^{\#}_{-1} = \emptyset$ and $A^{\#}_{[-1,0]} = A_{[-1,0]}$.
    \item If $e(\Delta_{l}) = 0$, $p(\Delta_{l}) \neq [0,0]$ and $\Delta_{l} \neq [-1,0]$ if $m_{\m}([-2,-2]) > m_{\m}([-1,-1])$; then we can assume that $i_l \in A_{[-1,0]}$, and we get that $A^{\#}_{-1} = \{i_l\}$ and $A^{\#}_{[-1,0]} = A_{[-1,0]} \setminus \{i_l\}$.
    \item If $e(\Delta_{l}) = 0$ and $p(\Delta_{l}) = [0,0]$ or $\Delta_{l} = [-1,0]$ with $m_{\m}([-2,-2]) > m_{\m}([-1,-1])$, then $A^{\#}_{-1} = \emptyset$ and $A^{\#}_{[-1,0]} = A_{[-1,0]}$.
    \item If $e(\Delta_{l}) = 1$, then $A^{\#}_{-1} = \emptyset$ and $A^{\#}_{[-1,0]} = A_{[-1,0]} \cup \{i_l\}$.
  \end{enumerate}
\end{lem}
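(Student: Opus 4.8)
The plan is to follow closely the proof of Lemma~\ref{lem:dermdieze}, with the single negative derivative $D_{y_0}$ replaced by the composite $D_{L([-1,0])}$, which behaves like a derivative ``at $0$ and at $-1$ simultaneously''. As there, the whole argument is a bookkeeping of the segments of $\m$ and of $\m^{\#}$ ending in $0$ and in $-1$; the signs play no role here.

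First I would record the combinatorial shape of the situation. By Lemma~\ref{lem:end01} we have $e(\Delta_l)\ge 0$, and since the ends of the initial sequence drop by exactly one at each step while $e(\Delta_1)=e_{\max}>1$, only $\Delta_{l-1}$ and $\Delta_l$ can have an end in $\{0,1\}$; by the reducedness hypotheses, below $0$ the only segments of $\m$ are the singletons $[y,y]$ with $m_{\m}([y,y])\le m_{\m}([y-1,y-1])$. Passing from $\m$ to $\m^{\#}$, the first step of $\AD$ shrinks each $\Delta_j$ at its right end and each $\Delta_j^{\vee}$ at its left end; inspecting ends (and, using the symmetry of $\m^{\#}$, beginnings) one checks that the only segments ending in $0$ or $-1$ in $\m^{\#}$ that were not already in $\m$ come from $\Delta_{l-1}$ and $\Delta_l$, and one determines their precise form (for instance, when $\Delta_l$ ends in $0$ the only possible forms of $\Delta_{l-1}$ force $\Delta_{l-1}^{-}$ to be empty or to be one of $[0,0],[-1,0]$). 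This is the analogue of the opening analysis in the proof of Lemma~\ref{lem:dermdieze}, and it explains why the statement is organised by the value of $e(\Delta_l)$.

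The cases $e(\Delta_l)\ge 2$ (case (1)), $e(\Delta_l)=1$ (case (4)), and $e(\Delta_l)=0$ in the sub-cases of (3) are the ``stable'' ones: in each, $\m^{\#}$ remains $-1$-reduced, so $A^{\#}_{-1}=\emptyset$, and $A^{\#}_{[-1,0]}$ is read off by comparing the multiset of end-$0$ segments of $\m^{\#}$ with that of $\m$. In case (1) this multiset is unchanged, whence $A^{\#}_{[-1,0]}=A_{[-1,0]}$. In case (4) the new segment $p(\Delta_l)^{-}$ ends in $0$, and the maximality of $\Delta_l$ as last member of the initial sequence shows it is among the end-$0$ segments transformed by $D_{L([-1,0])}$, whence $A^{\#}_{[-1,0]}=A_{[-1,0]}\cup\{i_l\}$. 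In case (3), either $p(\Delta_l)^{-}$ is empty, or the new end-$0$ segments are among $[0,0],[-1,0]$, which $D_{L([-1,0])}$ never turns into $\Delta^{--}$; when $\Delta_l=[-1,0]$ the hypothesis $m_{\m}([-2,-2])>m_{\m}([-1,-1])$ guarantees that the extra $[-1,-1]$ created in $\m^{\#}$ is still protected by a $[-2,-2]$, so $\m^{\#}$ stays $-1$-reduced, and after accounting for the $[-1,0]$-suppressions one obtains $A^{\#}_{[-1,0]}=A_{[-1,0]}$. In all of these, one first fixes $A_{[-1,0]}$ compatibly, exactly as in Lemma~\ref{lem:dermdieze}, so that $i_l\in A_{[-1,0]}$ whenever this is needed.

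Case (2) is the delicate one and I expect it to be the main obstacle. Here $\Delta_l$ ends in $0$ and, being $\neq[0,0]$ and (when $m_{\m}([-2,-2])>m_{\m}([-1,-1])$) $\neq[-1,0]$, is genuinely modified by $D_{L([-1,0])}$ on $\m$; hence $i_l\in A_{[-1,0]}$ for the chosen $A_{[-1,0]}$. In $\m^{\#}$ the segment $\Lambda^{\#}_{i_l}=p(\Delta_l)^{-}$ ends in $-1$ and is no longer matched by a segment ending in $-2$: here one uses that $\m$ is $-1$-reduced and $-2$-reduced, so there was no ``slack'' at those levels, together with the maximality of $\Delta_l$ to exclude any other end-$(-1)$ segment of $\m^{\#}$ (necessarily a $[-1,-1]$ from $\m$ or a segment ${}^{-}\Lambda$ coming from some $\Delta_j^{\vee}$) from interfering with the $-1$-matching. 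Consequently $\m^{\#}$ fails to be $-1$-reduced precisely at $\Lambda^{\#}_{i_l}$, $D_{-1}$ removes only that end, giving $A^{\#}_{-1}=\{i_l\}$, and since $\Delta_l$ has left the end-$0$ level we get $A^{\#}_{[-1,0]}=A_{[-1,0]}\setminus\{i_l\}$. The remaining work is to carry the ``nonlocal'' suppression counts of $D_{L([-1,0])}$ (those involving $m_{\m}([-2,-2])$ and $m_{\m}([-1,-1])$) through the computation — this is precisely why these two multiplicities enter the statements of (2) and (3) — but no new idea beyond the one in Lemma~\ref{lem:dermdieze} is required.
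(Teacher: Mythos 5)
Your proposal follows essentially the same route as the paper's proof: the same organization by the value of $e(\Delta_l)$, the same identification of $\Lambda_{i_l}^{\#}$ and (when relevant) $\Lambda_{i_{l-1}}^{\#}$ as the only new segments that can reach ends $0$ or $-1$, and the same reading-off of $A^{\#}_{-1}$ and $A^{\#}_{[-1,0]}$ from $-1$-reducedness of $\m^{\#}$ together with the protection argument for $\Lambda_{i_{l-1}}^{\#}$ in case (2). You elide a few of the smaller case checks that the paper carries out explicitly (e.g.\ in case (1), a new $[0,1]$ can appear when $\Delta_l=[0,2]$, and one must verify it gets suppressed consistently; in cases (2) and (4) one must rule out $\Lambda_{i_l}^{\#}={}^{-}p(\Lambda_{i_l})^{-}$ by deriving a contradiction from the form of $\Delta_{l-1}$), but the strategy, the key observations, and the choice of what to prove in each case all agree with the paper.
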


\begin{proof}
  \begin{itemize}
      \item Suppose $e(\Delta_{l}) \ge 2$. Then all the segments of $\m^{\#}$ ending in $0$ (\resp $-1$, \resp $-2$) come from a segment of $\m$ ending in $0$ (\resp $-1$, \resp $-2$). First, let us check that $(\m^{\#},\varepsilon^{\#})$ is $-1$-reduced. We have seen that there are no ``new segments'' in $\m^{\#}$ ending in $-1$ or $-2$. The only segments of $\m$ ending in $-1$ (\resp $-2$) are $[-1,-1]$ (\resp $[-2,-2]$). If a $[-2,-2]$ is suppressed, it means that $\Delta_l=[2,2]$ and therefore $m_{\m}([1,1])=0$. Hence $(\m^{\#},\varepsilon^{\#})$ is $-1$-reduced, that is $A^{\#}_{-1} = \emptyset$. Now, let us compute $A^{\#}_{[-1,0]}$. As we have seen, if a segment $[-2,-2]$ is suppressed then there is no $[-1,-1]$ in $\m$ and no $[-1,0]$. Also, a segment $[2,2]$ or $[1,1]$ cannot be created (as $e(\Delta_{l}) \ge 2$ they cannot come from ${}^{-}\Lambda$ or ${}^{-}\Lambda^{-}$; and by hypothesis there are no segments $[2,3]$ or $[1,2]$ in $\m$). If a segment $[0,1]$ is created in $\m^{\#}$, then $\Delta_l = [0,2]$. Hence $m_{\m}([2,2])=0$ and this segment is modified by $D_{L([-1,0])}$. Thus $A^{\#}_{[-1,0]} = A_{[-1,0]}$.
      \item Suppose $e(\Delta_{l}) = 0$, $p(\Delta_{l}) \neq [0,0]$ and $\Delta_{l} \neq [-1,0]$ if $m_{\m}([-2,-2]) > m_{\m}([-1,-1])$. In $(\m^{\#},\varepsilon^{\#})$ there is a new segment ending in $-1$ which is $\Lambda_{i_l}^{\#}$. First, note that $\Lambda_{i_l}^{\#} \neq 0$. Indeed, $\Lambda_{i_l} \neq [0,0]$ and if $\Lambda_{i_l} = [-1,0]$ with $\Lambda_{i_l}^{\#} = {}^{-}\Lambda_{i_l}^{-}$, it means that $\Delta_{l-1}=[0,1]$. Thus $\Delta_{l-2}=[2,2]$ and therefore $m_{\m}([-2,-2]) \neq 0$ and $m_{\m}([-1,-1]) = 0$ contradicting the hypothesis. If $\Lambda_{i_l}^{\#} = [-1,-1]$, it means that $\Lambda_{i_l}=[-1,0]$. Thus $m_{\m}([-2,-2]) = m_{\m}([-1,-1])$ and $\Lambda_{i_l}^{\#}$ is modified by $D_{-1}$. In all cases, it is easy to see that $D_{-1}$ modifies $\Lambda_{i_l}^{\#}$, hence $A^{\#}_{-1} = \{i_l\}$. 
      
      In $D_{-1}(\m^{\#},\varepsilon^{\#})$ there is a new segment ending in $0$. It is $\Lambda_{i_{l-1}}^{\#}$. But, from the definition of the initial sequence in the algorithm, $\Lambda_{i_{l}}^{\#-}$ protects $\Lambda_{i_{l-1}}^{\#}$ and is the smallest to protect it, so $D_{L([-1,0])}$ does not modify it. The situation is similar to the previous case for the suppression and creation of $[-2,-2]$, $[-1,-1]$ and $[-1,0]$. At the end we get that $A^{\#}_{[-1,0]} = A_{[-1,0]}\setminus \{i_l\}$.
      \item Suppose $e(\Delta_{l}) = 0$ and $p(\Delta_{l}) = [0,0]$ or $\Delta_{l} = [-1,0]$ with $m_{\m}([-2,-2]) > m_{\m}([-1,-1])$. If $p(\Delta_{l}) = [0,0]$ there is no new segment ending in $-1$ in $\m^{\#}$. And if $\Delta_{l} = [-1,0]$ with $m_{\m}([-2,-2]) > m_{\m}([-1,-1])$, either $\Lambda_{i_{l}}^{\#} = 0$ or $\Lambda_{i_{l}}^{\#} = [-1,-1]$. If $\Lambda_{i_{l}}^{\#} = [-1,-1]$, then we have in $\m^{\#}$ that $m_{\m^{\#}}([-2,-2]) \ge m_{\m^{\#}}([-1,-1])$. In all cases, since the only possible segments ending in $-1$ in $\m^{\#}$ are $[-1,-1]$ with $m_{\m^{\#}}([-2,-2])\ge m_{\m^{\#}}([-1,-1])$, $(\m^{\#},\varepsilon^{\#})$ is $-1$-reduced and $A^{\#}_{-1} = \emptyset$.
      
      There could be one new segment ending in $0$, $\Lambda_{i_{l-1}}^{\#}$.  Moreover, $p(\Delta_{l}) = [0,0]$ or $\Delta_{l}=[-1,0]$, so $\Delta_{l-1}$ cannot be a negative segment and $\Delta_{l-1}=[-1,1]$ or $[0,1]$ or $[1,1]$. In all the cases, either $\Lambda_{i_{l-1}}^{\#}=0$ or $\Lambda_{i_{l-1}}^{\#}=[0,0]$, and it is not modified by $D_{L([-1,0])}$. We get that $A^{\#}_{[-1,0]} = A_{[-1,0]}$.
      \item Suppose $e(\Delta_{l}) = 1$. In $\m^{\#}$ there is a no new segment ending in $-1$ and $A^{\#}_{-1} = \emptyset$. First, let us show that $\Delta_{l} \neq [-1,1]^{=0}$, $[-1,1]^{\ge 0}$ and $[0,1]$. The case where $\Delta_{l} = [0,1]$ implies that $[-1,0] \in \m$ and contradicts the fact that $\Delta_{l}$ is the last segment in the algorithm. If $\Delta_{l} = [-1,1]^{=0}$ or $[-1,1]^{\ge 0}$, then any segment $\Delta$ such that $e(\Delta)=0$ and $c(\Delta)<0$ satisfies $\Delta \prec \Delta_l$, contradicts the fact that $\Delta_{l}$ is the last segment in the algorithm ($(\m,\varepsilon)$ is not $L([-1,0])$-reduced). Therefore, $\Lambda_{i_l}^{\#} \neq [0,0]$. The segment $\Lambda_{i_l}^{\#}$ could be $[-1,0]$ if $\Delta_l = [-1,1]^{\le 0}$ or $\Delta_l = [-2,1]$. If $\Delta_{l} = [-1,1]^{\le 0}$ or $\Delta_{l} = [-2,1]$ with $\Lambda_{i_l}^{\#}=[-1,0]$, then $\Delta_{l-1} \neq [2,2]$, and $m_{\m}([-2,-2])=0$. Hence, the new segment $[-1,0]$ is modified by $D_{L([-1,0])}$. In all cases, $\Lambda_{i_l}^{\#}$ is modified by $D_{L([-1,0])}$ and $A^{\#}_{[-1,0]} = A_{[-1,0]} \cup \{i_l\}$.
  \end{itemize}
\end{proof}

We denote by $(\tilde{\m},\tilde{\varepsilon}):=D_{L([-1,0])}(\m,\varepsilon)$ and by $\tilde{\Delta}_1, \cdots, \tilde{\Delta}_{\tilde{l}}$ the initial sequence in the algorithm for $(\tilde{\m},\tilde{\varepsilon})$.

\begin{lem}
    \label{lem:sequenceder01}
    \begin{enumerate}
        \item If $e(\Delta_{l})=0$, $p(\Delta_{l}) \neq [0,0]$ and $\Delta_{l} \neq [-1,0]$ if $m_{\m}([-2,-2]) > m_{\m}([-1,-1])$ then $\tilde{l}=l-1$;  otherwise $\tilde{l}=l$.
        \item For $1 \le i \le \tilde{l}$, if $b(\Delta_{i})=0$ and $\Delta_{i} \neq [0,1]$ and $p(\Delta_i) \neq [0,0]$ then $\tilde{\Delta}_{i}={}^{--}\Delta_{i}$; otherwise $\tilde{\Delta}_{i}=\Delta_{i}$.
        
    \end{enumerate}
\end{lem}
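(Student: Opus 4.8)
The plan is to mirror the proof of Lemma~\ref{lem:sequencederneg}, replacing the negative derivative $D_{y_0}$ by $D_{L([-1,0])}$, whose action on the segments of $\m$ was recalled at the beginning of Section~\ref{sec:derl01}: it sends each segment ending at $0$ other than $[0,0]$ and $[-1,0]$ to the corresponding segment ending at $-2$, sends each segment beginning at $0$ other than $[0,0]$ and $[0,1]$ to the corresponding segment beginning at $2$, and suppresses a controlled number of copies of $[-1,0]$ and of $[0,1]$. The first ingredient is the structural fact already used in Lemma~\ref{lem:sequencederneg}: once $\Delta_i$ (\resp $\tilde\Delta_i$) has been selected, the next term of the initial sequence depends only on the segments that are $\prec\Delta_i$ and end at $e(\Delta_i)-1$, together with the sign constraint when both are centered. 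So one argues by induction on $i$, and at each step it suffices to show that $\tilde\Delta_i\in\{\Delta_i,{}^{--}\Delta_i\}$ with the dichotomy of part~(2) and that $D_{L([-1,0])}$ has not reordered the segments relevant for the selection of $\tilde\Delta_{i+1}$. Since $e(\Delta_1)=e_{\max}>1$, the segment $\Delta_1$ is neither suppressed nor --- if $b(\Delta_1)=0$, in which case it becomes $[2,e_{\max}]$ --- pushed off the level $e_{\max}$; hence $e_{\max}$ is still the largest coefficient of $\tilde\m$ and $\tilde\Delta_1$ ends at $e_{\max}$. Here Lemma~\ref{lem:end01} plays the role Lemma~\ref{lem:endx0} played in the negative case: it gives $e(\Delta_l)\ge 0$, so every $\Delta_i$ with $i<l$ ends at a positive coefficient, and (as in the negative case) the induction will yield $\tilde l\ge l-1$.

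Next I would localize the non-trivial content of part~(2). Because the ends $e(\Delta_i)$ strictly decrease along the initial sequence while $\prec$ compares two segments with a common end by their beginnings, there is at most one index $i_\ast$ with $b(\Delta_{i_\ast})=0$ and $e(\Delta_{i_\ast})\ge 2$; for such $i_\ast$ one has $\Delta_{i_\ast}=[0,e(\Delta_{i_\ast})]$, which $D_{L([-1,0])}$ turns into ${}^{--}\Delta_{i_\ast}=[2,e(\Delta_{i_\ast})]$, whereas every other $\Delta_i$ with $e(\Delta_i)>0$ either has $b(\Delta_i)\ne 0$ (and is untouched by $D_{L([-1,0])}$) or equals $[0,1]$ or is centered. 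The reducedness hypothesis --- which, as noted just before the statement, forces every segment of $\m$ with negative end $y$ to be $[y,y]$ with $m_{\m}([y,y])\le m_{\m}([y-1,y-1])$ --- together with the transformation list above pins down the segments of $\m$ ending at $0$, $-1$ and $-2$, and with this one checks level by level that along the sequence no relevant candidate is moved past another, so that the induction closes: $\tilde\Delta_i=\Delta_i$ precisely when $b(\Delta_i)\ne 0$ or $\Delta_i=[0,1]$ or $p(\Delta_i)=[0,0]$, and $\tilde\Delta_i={}^{--}\Delta_i$ otherwise, the latter occurring at most for the single index $i_\ast$.

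Part~(1) is then the endpoint bookkeeping. Arguing as in the last paragraph of the proof of Lemma~\ref{lem:sequencederneg} and reusing the information extracted in Lemma~\ref{lem:dermdieze01} about which segments of $\m$ near the level $0$ survive $D_{L([-1,0])}$, one distinguishes: if $e(\Delta_l)>0$, or $e(\Delta_l)=0$ with $p(\Delta_l)=[0,0]$, or $e(\Delta_l)=0$ with $\Delta_l=[-1,0]$ in the regime $m_{\m}([-2,-2])>m_{\m}([-1,-1])$, then $\Delta_l$ (or one copy of it) remains the last term of the sequence for $\tilde\m$ and $\tilde l=l$; in the remaining case $e(\Delta_l)=0$ the segment $\Delta_l$ is modified or suppressed by $D_{L([-1,0])}$ and $\tilde l=l-1$. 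Combined with $\tilde l\ge l-1$ from the first paragraph, this yields the statement.

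The step I expect to be the main obstacle is the middle one. In the purely negative derivative of Section~\ref{sec:derneg} the displaced segments are singletons, so their relative order is evidently preserved and the induction is essentially mechanical; here, a segment ending at $0$ is shifted two places to the left and a segment beginning at $0$ two places to the right, so a priori the order of the candidates ending at a fixed positive level can change. The delicate point is to show that, for the unique segment $\Delta_{i_\ast}=[0,e(\Delta_{i_\ast})]$, after it has become $[2,e(\Delta_{i_\ast})]$ it is still the term chosen by the algorithm at step $i_\ast$ --- and this is exactly where the reducedness hypotheses and the constraints imposed by the already-chosen terms $\Delta_1,\dots,\Delta_{i_\ast-1}$ have to be exploited with care.
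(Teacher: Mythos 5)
Your proposal is correct and takes essentially the same route as the paper's proof: observe that $e_{\max}$ is preserved, invoke Lemma~\ref{lem:end01} in the role Lemma~\ref{lem:endx0} plays in the negative case, localize the action of $D_{L([-1,0])}$ to the unique index with beginning $0$ (the paper's $i_0$, which it observes is precisely the first non-singleton term of the initial sequence), verify level by level that the selection process is undisturbed, and finish with the bookkeeping at $\Delta_l$ using the chain $[x,0]<[0,0]^{\le 0}<[0,0]^{=0}<[0,0]^{\ge 0}$.

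One small slip worth flagging, though it does not invalidate the plan: in your last paragraph you locate the delicate step at showing that ${}^{--}\Delta_{i_\ast}=[2,e(\Delta_{i_\ast})]$ is still what the algorithm picks at step $i_\ast$. That part is in fact immediate --- the shift from $[0,e]$ to $[2,e]$ only moves $\Delta_{i_\ast}$ up in the $\prec$-order, every competitor ending at $e$ was already $\prec\Delta_{i_\ast}$ and is untouched, and $[2,e]\prec\Delta_{i_\ast-1}=[e+1,e+1]$ because $e\ge 2$. The genuinely delicate step is the next one, showing $\tilde\Delta_{i_\ast+1}=\Delta_{i_\ast+1}$: raising $\Delta_{i_\ast}$ to $[2,e]$ enlarges the set of eligible segments at level $e-1$ to include $[0,e-1]$ and $[1,e-1]$, which were not available when the predecessor was $[0,e]$, and one must rule out that these occur in $\tilde\m$ (the paper does this by reducing to $e(\Delta_{i_\ast+1})\in\{0,1\}$ and excluding each). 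Your second paragraph does allude to ``no relevant candidate moved past another,'' so the body of the argument is aimed correctly; only the closing reflection mislocates where the work actually is. A second peripheral remark: in the negative-derivative case of Lemma~\ref{lem:sequencederneg} the shifted segments are not singletons in general; the relevant difference from the present case is the size of the shift (one unit versus two), not the shape of the shifted segment.
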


\begin{proof}
    Note that by definition $e(\Delta_{1})=e_{\max}$ is the maximum of the coefficients of the segments. Since by assumption $e_{\max} > 1$, after taking the $L([-1,0])$-derivative $\Delta_{1}$ does not vanish. Hence $e_{\max}$ is still the maximum of the coefficients in $\tilde{\m}$. Moreover, if $b(\Delta_{1})\neq 0$, $\Delta_{1}$ stays the biggest segment ending in $e_{\max}$ and if $b(\Delta_{1})= 0$, the biggest segment is now ${}^{--}\Delta_{1}$.

    As in Lemma \ref{lem:sequencederneg}, if $[e_{\max},e_{\max}] \notin \m$, let $y_1 = e_{\max} + 1$; otherwise, let $y_1 \in (1/2)\NN^{*}$ be the smallest positive half-integer such that $[y_1,y_1] \in \m$. Let $i_0 = e_{\max } - y_1 + 2$. Then, for all $0<i<i_0$, $\Delta_{i}=[e_{\max }-i+1, e_{\max }-i+1]$. We get that $\tilde{l} \ge i_0$, and for all $0<i<i_0$, $\tilde{\Delta}_{i}=\Delta_{i}=[e_{\max}-i+1, e_{\max}-i+1]$.

    The only possible integer $i$ such that $b(\Delta_{i})=0$ and $p(\Delta_{i}) \neq [0,0]$ is $i_0$. If $\Delta_{i_0}=[0,1]$, then $\Delta_{i_0 - 1}=[2,2]$ and $[1,1] \notin \m$, thus $\Delta_{i_0}$ is not modified by $D_{L([-1,0])}$. Hence $\tilde{l} \ge i_0$ and $\tilde{\Delta}_{i_0}=\Delta_{i_0}$. If $\Delta_{i_0}\neq [0,1]$, then $D_{L([-1,0])}$ transforms $\Delta_{i_0}$ into ${}^{- -}\Delta_{i_0}$ and we get that $\tilde{\Delta}_{i_0}={}^{- -}\Delta_{i_0}$.

    In the case that $\tilde{\Delta}_{i_0}={}^{- -}\Delta_{i_0}$, if $l \ge i_0 + 1$  then $\tilde{\Delta}_{i_0 + 1}=\Delta_{i_0 + 1}$. Indeed, $\Delta_{i_0 + 1}$ is unchanged in $\tilde{\m}$, we still have $\Delta_{i_0 + 1} < {}^{- -}\Delta_{i_0}$, and the only segments smaller than ${}^{- -}\Delta_{i_0}$ but not than $\Delta_{i_0}$ ending in $e(\Delta_{i_0 + 1})$ are $[0,e(\Delta_{i_0 + 1})]$ and $[1,e(\Delta_{i_0 + 1})]$. Then only possibility to have such segments in $\tilde{\m}$ is to have $e(\Delta_{i_0 + 1})= 0$ or $1$. First, $e(\Delta_{i_0 + 1})= 0$ is impossible since $e(\Delta_{i_0}) > 1$. If $e(\Delta_{i_0 + 1})= 1$, then $e(\Delta_{i_0})= 2$, and by definition of $i_0$ we see that there is no $[2,2]$ in $\m$ and thus no $[1,1]$ or $[0,1]$ in $\tilde{\m}$.

    By Lemma \ref{lem:end01}, $e(\Delta_{i}) \ge 0$, thus $\tilde{l}\ge l-1$ and for all $i_0<i<l$, $\tilde{\Delta}_{i}=\Delta_{i}$.

    Now, notice that if $x<0$ then $[x,0]<[0,0]^{\le 0}<[0,0]^{=0}<[0,0]^{\ge 0}$. Thus, if $e(\Delta_{l})=0$, $p(\Delta_{l}) \neq [0,0]$ and $\Delta_{l} \neq [-1,0]$ if $m_{\m}([-2,-2]) > m_{\m}([-1,-1])$, then $\tilde{l}=l-1$ and if not $l=\tilde{l}$ and $\tilde{\Delta}_{l}=\Delta_{l}$.
\end{proof}

\begin{lem}
    \label{lem:ADder01}
    \begin{enumerate}
        \item If $e(\Delta_{l}) \ge 2$ then $(\tilde{\m}_1,\tilde{\varepsilon}_1)=(\m_{1},\varepsilon_1)$, $(\m^{\#},\varepsilon^{\#})$ is $-1$-reduced and $(\tilde{\m}^{\#},\tilde{\varepsilon}^{\#})=D_{L([-1,0])}(\m^{\#},\varepsilon^{\#})$.
        \item If $e(\Delta_{l}) = 0$ and $p(\Delta_{l}) \neq [0,0]$ and $\Delta_{l} \neq [-1,0]$ if $m_{\m}([-2,-2]) > m_{\m}([-1,-1])$, then $\tilde{\m}_1=[1,e_{\max}] + [-e_{\max},-1]$, $(\m^{\#},\varepsilon^{\#})$ is not $-1$-reduced and $(\tilde{\m}^{\#},\tilde{\varepsilon}^{\#})=D_{L([-1,0])}(D_{-1}(\m^{\#},\varepsilon^{\#}))$.
        \item If $e(\Delta_{l}) = 0$ and $p(\Delta_{l}) = [0,0]$ or $\Delta_{l} = [-1,0]$ with $m_{\m}([-2,-2]) > m_{\m}([-1,-1])$, then $(\tilde{\m}_1,\tilde{\varepsilon}_1)=(\m_{1},\varepsilon_1)$, $(\m^{\#},\varepsilon^{\#})$ is $-1$-reduced and $(\tilde{\m}^{\#},\tilde{\varepsilon}^{\#})=D_{L([-1,0])}(\m^{\#},\varepsilon^{\#})$.
        \item If $e(\Delta_{l}) = 1$ then $(\tilde{\m}_1,\tilde{\varepsilon}_1)=(\m_{1},\varepsilon_1)$, $(\m^{\#},\varepsilon^{\#})$ is $-1$-reduced and $(\tilde{\m}^{\#},\tilde{\varepsilon}^{\#})=D^{\max - 1}_{L([-1,0])}(\m^{\#},\varepsilon^{\#})$.
    \end{enumerate}
\end{lem}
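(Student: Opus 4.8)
The plan is to argue exactly as in the proof of Lemma~\ref{lem:ADderneg}, replacing the negative $\rho$-derivative $D_{y_0}$ by the $L([-1,0])$-derivative and using the three combinatorial inputs already prepared for this purpose: Lemma~\ref{lem:end01} (which ensures $e(\Delta_l)\ge 0$, so the last segment of the initial sequence of $(\m,\varepsilon)$ is never annihilated by $D_{L([-1,0])}$), Lemma~\ref{lem:sequenceder01} (which relates the initial sequence $\tilde{\Delta}_1,\dots,\tilde{\Delta}_{\tilde{l}}$ of $(\tilde{\m},\tilde{\varepsilon}):=D_{L([-1,0])}(\m,\varepsilon)$ to $\Delta_1,\dots,\Delta_l$), and Lemma~\ref{lem:dermdieze01} (which tracks the index sets $A_{[-1,0]}$, $A^{\#}_{-1}$, $A^{\#}_{[-1,0]}$ governing the derivative of $\m$, of $\m^{\#}$, and of $D_{-1}(\m^{\#},\varepsilon^{\#})$). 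As there, one first fixes a choice of $A_{[-1,0]}$ compatible with the indices $i_j$ of the initial sequence, so that Lemma~\ref{lem:dermdieze01} may be invoked.

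First I would compute $(\tilde{\m}_1,\tilde{\varepsilon}_1)$. Since $\m_1$ is determined by $e(\Delta_1)=e_{\max}$, by $e(\Delta_l)$, by $l$, and, when $\m_1$ is centered, by $\varepsilon_0$ and by the number $n_0$ of centered segments, it suffices to read off the effect of $D_{L([-1,0])}$ on these data from Lemma~\ref{lem:sequenceder01}. Because $e_{\max}>1$, the segment $\Delta_1$ survives the derivative and $e(\tilde{\Delta}_1)=e_{\max}$; by Lemma~\ref{lem:sequenceder01}(2) the transformation $\Delta_i\mapsto{}^{--}\Delta_i$ affects only beginnings, so all ends $e(\tilde{\Delta}_i)$ agree with $e(\Delta_i)$. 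Hence $\tilde{l}=l$ and $(\tilde{\m}_1,\tilde{\varepsilon}_1)=(\m_1,\varepsilon_1)$ in cases (1), (3), (4), where one checks separately (using $e(\Delta_l)\ne 0$, or $p(\Delta_l)=[0,0]$ in case (3)) that the relevant sign data are preserved; in the subcase of case (2) Lemma~\ref{lem:sequenceder01}(1) gives $\tilde{l}=l-1$ and $e(\tilde{\Delta}_{\tilde{l}})=1$, whence $\tilde{\m}_1=[1,e_{\max}]+[-e_{\max},-1]$ directly from the definition of $\m_1$.

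Next I would compute $(\tilde{\m}^{\#},\tilde{\varepsilon}^{\#})$, which, exactly as in Lemma~\ref{lem:ADderneg}, is obtained from $\m^{\#}$ by modifying the ends of the segments indexed by a set $\tilde{E}'$ and the beginnings of their symmetric counterparts; by Lemma~\ref{lem:sequenceder01}, $\tilde{E}'=A_{[-1,0]}$ when $e(\Delta_l)\ne 0$ and $\tilde{E}'=A_{[-1,0]}\setminus\{i_l\}$ in the subcase of case (2). The four identities then follow by matching $\tilde{E}'$ with the output of Lemma~\ref{lem:dermdieze01}: in cases (1) and (3) one has $A^{\#}_{-1}=\emptyset$ (so $(\m^{\#},\varepsilon^{\#})$ is $-1$-reduced) and $A^{\#}_{[-1,0]}=A_{[-1,0]}=\tilde{E}'$, giving $(\tilde{\m}^{\#},\tilde{\varepsilon}^{\#})=D_{L([-1,0])}(\m^{\#},\varepsilon^{\#})$; in case (2), $A^{\#}_{-1}=\{i_l\}\ne\emptyset$ and $A^{\#}_{[-1,0]}=A_{[-1,0]}\setminus\{i_l\}=\tilde{E}'$, so that applying $D_{-1}$ first and then $D_{L([-1,0])}$ to $\m^{\#}$ reproduces $\tilde{\m}^{\#}$; and in case (4), $A^{\#}_{-1}=\emptyset$ while $A^{\#}_{[-1,0]}=A_{[-1,0]}\cup\{i_l\}$ exceeds $\tilde{E}'=A_{[-1,0]}$ by one index, which is precisely the index un-done by $S^{(1)}_{L([-1,0])}$, yielding $(\tilde{\m}^{\#},\tilde{\varepsilon}^{\#})=D^{\max-1}_{L([-1,0])}(\m^{\#},\varepsilon^{\#})$. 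The signs of the centered segments are propagated using the formulas for $\varepsilon^{\#}$ and for the derivative, just as in the negative case.

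The main obstacle I anticipate concerns cases (2) and (4). In case (2) one must verify that $\Lambda_{i_l}^{\#}$ is nonzero and is actually the segment modified by $D_{-1}$ on $\m^{\#}$, which rests on the delicate exclusion of $p(\Delta_l)=[0,0]$ and of $\Delta_l=[-1,0]$ when $m_{\m}([-2,-2])>m_{\m}([-1,-1])$, and that $D_{-1}$ is indeed the highest $(-1)$-derivative of $(\m^{\#},\varepsilon^{\#})$; in case (4) one must rule out $\Delta_l$ being $[0,1]$ or a centered segment with label $=0$ or $\ge 0$, and confirm that the extra index matches $S^{(1)}_{L([-1,0])}$ rather than causing a discrepancy. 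Together with the bookkeeping of the new centered segments created when segments of type $[-1,0]$ are split, these are the $L([-1,0])$-analogues of the estimates in the proofs of Lemma~\ref{lem:dermdieze} and Lemma~\ref{lem:ADderneg}, made heavier by the presence of the exceptional segments $[0,0]$, $[-1,0]$, $[0,1]$ at the boundary $\{-1,0,1\}$.
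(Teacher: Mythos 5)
Your proposal is correct and follows essentially the same route as the paper: compute $(\tilde{\m}_1,\tilde{\varepsilon}_1)$ from the changes in the initial sequence (Lemma~\ref{lem:sequenceder01}), and compute $(\tilde{\m}^{\#},\tilde{\varepsilon}^{\#})$ by matching the set of ``modified'' indices against the sets $A^{\#}_{-1}$ and $A^{\#}_{[-1,0]}$ (Lemma~\ref{lem:dermdieze01}), splitting into the four cases according to $e(\Delta_l)$.

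One point where your account is a bit too coarse, however, is the single set $\tilde{E}'$. Unlike the negative-derivative case of Lemma~\ref{lem:ADderneg}, where all modified segments are shortened uniformly by one unit, here the passage from $\m^{\#}$ to $\tilde{\m}^{\#}$ mixes two kinds of modifications: segments whose end drops by $1$ (matched with $A^{\#}_{-1}$, appearing only when $(*)$ holds and concerning $\Lambda_{i_l}^{\#}$) and segments whose end drops by $2$ (matched with $A^{\#}_{[-1,0]}$). The paper keeps two disjoint sets $\tilde{E}^{\#}_1,\tilde{E}^{\#}_2$ precisely to encode this difference; your $\tilde{E}'$ collapses them. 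In case (2) in particular, the correct bookkeeping has $\tilde{E}^{\#}_1=\{i_l\}$ and $\tilde{E}^{\#}_2=A_{[-1,0]}\setminus\{i_l\}$, whereas your statement that the set of modified indices is $A_{[-1,0]}\setminus\{i_l\}$ omits the additional index $i_l$ (which, being shortened by only one unit, accounts for the $D_{-1}$ step in the factorization $D_{L([-1,0])}\circ D_{-1}$). This is easy to fix once one distinguishes the two kinds of modification, and your final matching with $A^{\#}_{-1}$ and $A^{\#}_{[-1,0]}$ shows you are aware of the phenomenon, but the way the claim is currently phrased would not reproduce $\tilde{\m}^{\#}$ from $\m^{\#}$.
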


\begin{proof}
    Let us start with $(\tilde{\m}_1,\tilde{\varepsilon}_1)$. The multisegment $\m_1$ is determined uniquely by $\Delta_{1}$ and $\Delta_{l}$. Similarly for $\tilde{\m}_1$ with $\tilde{\Delta}_{1}$ and $\tilde{\Delta}_{\tilde{l}}$. We get the result for $\m_1$ and $\tilde{\m}_1$ by Lemma \ref{lem:sequenceder01}. If $\Delta_l \neq [0,0]^{=0}$ or $[0,0]^{\ge 0}$, then neither $\m_1$ nor $\tilde{\m}_1$ are centered and we are done. If $\Delta_l = [0,0]^{=0}$ or $[0,0]^{\ge 0}$, we also need to check that $\varepsilon_1 = \tilde{\varepsilon}_1$. By definition $\varepsilon_1(\m_1):=(-1)^{n_0 + 1} \varepsilon([0,0]_\rho)$ and $\tilde{\varepsilon}_1(\tilde{\m}_1):=(-1)^{\tilde{n}_0 + 1} \tilde{\varepsilon}([0,0]_\rho)$ where $n_0$ is the number of centered segments in $\m$ and $\tilde{n}_0$ in $\tilde{\m}$. The derivative $D_{L([-1,0])}$ does not create or suppress any centered segments in $\m$, so $n_0 = \tilde{n}_0$. Also, it does not change the sign of $[0,0]$, that is $\varepsilon([0,0]_\rho)=\tilde{\varepsilon}([0,0]_\rho)$. Hence $\varepsilon_1 = \tilde{\varepsilon}_1$ and we get the result.

    Now let us study the case of $(\tilde{\m}^{\#},\tilde{\varepsilon}^{\#})$. We do a similar proof as in Lemma \ref{lem:ADderneg}. Let $A_{[-1,0]}$ be the set of indices $i$ of segments of $\m$ ending in $0$ modified by $D_{L([-1,0])}$, $A^{\#}_{-1}$ be the set of indices of segments of $\m^{\#}$ ending in $-1$ modified by $D_{-1}$, and $A^{\#}_{[-1,0]}$ be the set of segments ending in $0$ modified by $D_{L([-1,0])}$ in $D_{-1}(\m^{\#},\varepsilon^{\#})$. Let $E$ be the set of indices of segments of $\m$ such that the end is modified by $\AD$ and $\tilde{E}$ be the set of indices of segments of $\tilde{\m}$ such that the end is modified by $\AD$. We denote by $(*)$ the condition $e(\Delta_{l})=0$, $p(\Delta_{l}) \neq [0,0]$ and $\Delta_{l} \neq [-1,0]$ if $m_{\m}([-2,-2]) > m_{\m}([-1,-1])$. From Lemma \ref{lem:sequenceder01}, we get that if $(*)$ is satisfied then $\tilde{E} = E \setminus \{i_l\}$; otherwise $\tilde{E} = E$. Moreover, $\tilde{E} \cap A_{[-1,0]} = \emptyset$. More precisely, if $(*)$ is satisfied then $E \cap A_{[-1,0]} = \{i_l\}$; otherwise $E \cap A_{[-1,0]} = \emptyset$.
    
    Let $\tilde{E}^{\#}_2 = A_{[-1,0]} \setminus \{i_l\}$ if $(*)$ is satisfied; and $\tilde{E}^{\#}_2 = A_{[-1,0]}$ otherwise. Let $\tilde{E}^{\#}_1 = \{i_l\}$ if $(*)$ is satisfied; and $\tilde{E}^{\#}_1 = \emptyset$ otherwise. Note that $\tilde{E}^{\#}_1 \cap \tilde{E}^{\#}_2 = \emptyset$. Then we see that $\tilde{\m}^{\#}$ is obtained from $\m^{\#}$ by transforming the segment $\Lambda_i^{\#}$ into $\Lambda_i^{\#-}$ for $i \in \tilde{E}^{\#}_1$ (and $\Lambda_i^{\# \vee}$ into ${}^{ -}(\Lambda_i^{\# \vee})$); and then transforming the segments $\Lambda_i^{\#}$ into $\Lambda_i^{\#--}$ for $i \in \tilde{E}^{\#}_2$ (and $\Lambda_i^{\# \vee}$ into ${}^{- -}(\Lambda_i^{\# \vee})$).

    \begin{itemize}
      \item Suppose $e(\Delta_{l}) \neq 1$. From Lemma \ref{lem:dermdieze01}, $\tilde{E}^{\#}_1 = A^{\#}_{-1}$ and $\tilde{E}^{\#}_2 = A^{\#}_{[-1,0]}$ giving us the result.
      \item Suppose $e(\Delta_{l}) = 1$. From Lemma \ref{lem:dermdieze01}, $\tilde{E}^{\#}_1 = A^{\#}_{-1}$ and $\tilde{E}^{\#}_2 \cup \{i_l\} = A^{\#}_{[-1,0]}$. Similarly as in Lemma \ref{lem:ADderneg}, we see that $D_{L([-1,0])}$ modifies $\Lambda_{i_l}^{\#}$ in $(\tilde{\m}^{\#},\tilde{\varepsilon}^{\#})$. Hence $D_{L([-1,0])}(\tilde{\m}^{\#},\tilde{\varepsilon}^{\#})=D^{1}_{L([-1,0])}(\tilde{\m}^{\#},\tilde{\varepsilon}^{\#})=D_{L([-1,0])}(\m^{\#},\varepsilon^{\#})$ and we get the desired result.
  \end{itemize}
\end{proof}

To make sense of the formula $\AD(D_{L([-1,0])}(\m,\varepsilon))=D_{Z([0,1])}(\AD(\m,\varepsilon))$, we need to check that $\AD(\m,\varepsilon)$ is $1$-reduced. 

\begin{lem}
  \label{lem:ad1reduced}
    The multisegment $\AD(\m,\varepsilon)$ is $1$-reduced.   
\end{lem}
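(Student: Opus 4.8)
The goal is to show that $\AD(\m,\varepsilon)$ contains no segment $\Delta$ with $b(\Delta) = 1$ that can be "derived from the right" by $\rho_u|\cdot|^1$; equivalently, by the formula for $D_1$ recalled in Section~\ref{sec:derbad} (and the good-parity analogue in Section~\ref{sec:dergood}), that there is no segment in $\AD(\m,\varepsilon)$ ending in $1$ which is not protected by a segment ending in $2$. Since we are in the regime where $(\m,\varepsilon)$ is $y$-reduced for all $-e_{\max} < y < 0$ and is $L([-1,0])$-reduced but not $1$-reduced is \emph{not} assumed here (we are in Section~\ref{sec:derl01}, so $(\m,\varepsilon)$ \emph{is} not $L([-1,0])$-reduced, and in particular we make no assumption on $1$-reducedness of $(\m,\varepsilon)$ itself), the point is purely about the \emph{output} of the algorithm. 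The natural approach is induction on the recursive definition $\AD(\m,\varepsilon) = (\m_1,\varepsilon_1) + \AD(\m^{\#},\varepsilon^{\#})$.

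\textbf{First step.} I would analyse the segments of $\m_1$. By construction $\m_1$ is either a centered segment $[-e(\Delta_1),e(\Delta_1)]_{\rho_u}$ (when $\varepsilon_0=-1$), or the pair $[e(\Delta_l),e(\Delta_1)]_{\rho_u} + [-e(\Delta_1),-e(\Delta_l)]_{\rho_u}$ (when $\varepsilon_0=1$). In the first case the unique beginning is $-e(\Delta_1) = -e_{\max} < -1$, so $\m_1$ has no segment beginning in $1$ and in fact no segment ending in $1$ unless $e_{\max}=1$, which is excluded by the standing hypothesis $e_{\max}>1$. In the second case the beginnings are $e(\Delta_l)$ and $-e(\Delta_1) = -e_{\max}$; by Lemma~\ref{lem:end01} we have $e(\Delta_l)\ge 0$, and if $e(\Delta_l) = 1$ then the segment $[1,e_{\max}]_{\rho_u}\in\m_1$ ends in $e_{\max} > 1$, hence is harmless, while its dual $[-e_{\max},-1]_{\rho_u}$ ends in $-1<1$. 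So in all cases $\m_1$ contributes no segment that obstructs $1$-reducedness — except possibly when $e(\Delta_l)=1$ and the resulting segment $[1,1]_{\rho_u}\in\m_1$, i.e. $e_{\max}=1$; again excluded. The only genuinely delicate subcase is $e(\Delta_l)=2$: then $\m_1 \ni [2,e_{\max}]_{\rho_u}$ which \emph{ends} in $e_{\max}>1$ and is fine, but one must make sure that after removing the ends the induction applies to $\m^\#$.

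\textbf{Inductive step.} For $\AD(\m^{\#},\varepsilon^{\#})$, the plan is to check that $(\m^{\#},\varepsilon^{\#})$, viewed on its own, again falls under a case where Lemma~\ref{lem:ad1reduced} (or rather its proof hypothesis) applies, or more directly to argue that $\AD(\m^{\#},\varepsilon^{\#})$ is $1$-reduced by the induction hypothesis on the length $l(\m^{\#}) < l(\m)$, \emph{combined} with the fact that the segments of $\m^{\#}$ ending in $2$ still protect the segments ending in $1$: concretely, one uses Lemma~\ref{lem:sequenceder01} and Lemma~\ref{lem:ADder01} to track how the initial sequence and the modified multisegment change, and the key combinatorial input is that $\AD$ never produces a segment $[1,1]_{\rho_u}$ or an unprotected segment ending in $1$ because any such would have to come from the end of some $\Delta_j$ in the initial sequence with $e(\Delta_j)=2$, and then $\Delta_j$'s successor in the sequence forces a companion segment ending in $1$ and a companion ending in $2$ in the dual. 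This is the same mechanism as in the proof of Proposition~\ref{prop:lengthmax}: segments ending at a fixed value appear in the dual in a controlled, "paired" way.

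\textbf{Expected main obstacle.} The hard part will be bookkeeping the centered and near-centered segments: when $\rho$ is of the same type as $G$, the segments $[0,0]_{\rho_u}$, $[-1,0]_{\rho_u}$, $[0,1]_{\rho_u}$ and $[-1,1]_{\rho_u}$ interact with the stopping conditions of the algorithm (Definition~\ref{def:epsil}) and with the $D_{L([-1,0])}$/$D_{Z([0,1])}$-derivatives, so I expect the proof to reduce to verifying, in each of the cases $(1)$–$(4)$ of Lemma~\ref{lem:ADder01} separately, that no segment of $\AD(\m,\varepsilon)$ both ends in $1$ and fails to be protected from the right — using in case $(2)$ that $[1,e_{\max}]_{\rho_u}\in\m_1$ supplies exactly the needed protecting segment ending in $e_{\max}>1$, and using in cases $(1)$, $(3)$, $(4)$ that $\AD(\m^{\#},\varepsilon^{\#})$ is $1$-reduced by induction while $\m_1$ contributes only a segment ending in $e_{\max}$ (or a harmless centered segment). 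The subtlety to watch is that "protected" refers to the relation $\leadsto$ and the best-matching function of Section~\ref{sec:bestmatching}, so I would phrase the final verification in terms of Hall's criterion \eqref{eq: HallCriterion} rather than by exhibiting the matching explicitly.
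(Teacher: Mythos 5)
Your overall instinct — induct on the recursive decomposition $\AD(\m,\varepsilon)=(\m_1,\varepsilon_1)+\AD(\m^{\#},\varepsilon^{\#})$, treat $\m_1$ directly, and handle $\AD(\m^{\#},\varepsilon^{\#})$ by a smaller-length argument — is correct and is also the backbone of the paper's proof. However, two things go wrong in the middle.

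First, a minor but telling slip: for $D_1$ the best-matching relation runs between segments ending in $1$ and segments ending in $0$ (set $x=1$ in the description of Section~\ref{sec:dergood}), so ``protection'' is by segments ending in $0$, not in $2$. A Hall-criterion argument at the value $2$ is aimed at the wrong end and does not control $1$-reducedness at all.

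Second, and more seriously, the inductive step you propose does not close. You suggest either (a) applying Lemma~\ref{lem:ad1reduced} itself to $(\m^{\#},\varepsilon^{\#})$, or (b) a direct combinatorial verification that ``segments ending in $2$ still protect segments ending in $1$''. Option (a) fails because the standing hypotheses of Section~\ref{sec:derl01} — in particular that the input is not $L([-1,0])$-reduced — need not hold for $\m^{\#}$; so the lemma is simply not applicable to $\m^{\#}$ viewed on its own. Option (b) is exactly the kind of unbounded bookkeeping that the paper's framework is designed to avoid. The tool the paper actually uses is \emph{algebraic}, not combinatorial: by the global induction Hypothesis~\ref{hyp:recurrence} one knows $\AD$ agrees with the Aubert--Zelevinsky dual on $\m^{\#}$, hence (via Proposition~\ref{prop:derivativedual}, packaged as Lemma~\ref{lem:ADcommDer}) $D_{1}(\AD(\m^{\#},\varepsilon^{\#}))=\AD(D_{-1}(\m^{\#},\varepsilon^{\#}))$. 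Combined with Lemma~\ref{lem:ADder01}, which already records exactly when $(\m^{\#},\varepsilon^{\#})$ is $-1$-reduced, this immediately gives $D_1(\AD(\m^{\#},\varepsilon^{\#}))=\AD(\m^{\#},\varepsilon^{\#})$ in the first group of cases; and in the remaining case the same commutation together with Lemma~\ref{lem:ADcommSoc} and Proposition~\ref{prop:lengthmax} (to verify the extra hypothesis of case (3) of Lemma~\ref{lem:dersum}) handles $D_{-1}^{\max-1}$. One then reassembles $D_1(\AD(\m,\varepsilon))$ via Lemma~\ref{lem:dersum}, using that $\m_1$ never contributes a segment ending in $1$ (since $e_{\max}>1$ and $e(\Delta_l)\ge 0$). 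This step — using the already-proved commutation $\AD \circ D_{-1}=D_1\circ\AD$ on $\m^{\#}$ instead of a fresh combinatorial analysis — is precisely the idea your proposal is missing, and without it the proposed induction would stall.
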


\begin{proof}
   By Lemma \ref{lem:end01}, $e(\Delta_{l}) \ge 0$. 
    \begin{itemize}
      \item Suppose $e(\Delta_{l}) > 0$ or $p(\Delta_{l}) = [0,0]$ or $\Delta_{l} = [-1,0]$ with $m_{\m}([-2,-2]) > m_{\m}([-1,-1])$. From Lemma \ref{lem:ADder01}, $(\m^{\#},\varepsilon^{\#})$ is $-1$-reduced. And by Lemma \ref{lem:ADcommDer}, $D_1(\AD(\m^{\#},\varepsilon^{\#}))=\AD(D_{-1}(\m^{\#},\varepsilon^{\#}))$, hence $\AD(\m^{\#},\varepsilon^{\#})$ is $1$-reduced. By Lemma \ref{lem:dersum}, $D_1(\AD(\m,\varepsilon))=(\m_1,\varepsilon_1) + D_1(\AD(\m^{\#},\varepsilon^{\#}))$, and we get the desired result.
      \item Suppose $e(\Delta_{l}) = 0$, $p(\Delta_{l}) \neq [0,0]$ and $\Delta_{l} \neq [-1,0]$ if $m_{\m}([-2,-2]) > m_{\m}([-1,-1])$. This time from Lemma \ref{lem:ADder01} $(\m^{\#},\varepsilon^{\#})$ is not $-1$-reduced. Moreover, we have $D_{-1}(\m^{\#},\varepsilon^{\#})=D^{1}_{-1}(\m^{\#},\varepsilon^{\#})$. Since $\Delta_l \neq [0,0]^{=0}$ or $[0,0]^{\ge 0}$, from the definition $\m_1=[0,e_{\max}] + [-e_{\max},0]$. Then Lemma \ref{lem:dersum} and Proposition \ref{prop:lengthmax} tell us that $D_1(\AD(\m,\varepsilon))=(\m_1,\varepsilon_1) + D_1^{\max - 1}(\AD(\m^{\#},\varepsilon^{\#}))$. By Lemmas \ref{lem:ADcommDer} and \ref{lem:ADcommSoc}, $D_1^{\max - 1}(\AD(\m^{\#},\varepsilon^{\#}))=\AD(D_{-1}^{\max - 1}(\m^{\#},\varepsilon^{\#}))$. But, as $D_{-1}(\m^{\#},\varepsilon^{\#})=D^{1}_{-1}(\m^{\#},\varepsilon^{\#})$, we have $D_{-1}^{\max - 1}(\m^{\#},\varepsilon^{\#}) = (\m^{\#},\varepsilon^{\#})$ and we get the desired result.
    \end{itemize}  
\end{proof}

We can now prove the desired proposition.

\begin{prop}
  \label{prop:ADcommder01}
    We have $\AD(D_{L([-1,0])}(\m,\varepsilon))=D_{Z([0,1])}(\AD(\m,\varepsilon))$.
\end{prop}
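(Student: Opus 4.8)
The proof runs exactly parallel to that of Proposition~\ref{prop:ADcommderneg}, replacing the negative derivative $D_{y_0}$ and its partner $D_{-y_0}$ by $D_{L([-1,0])}$ and $D_{Z([0,1])}$. Write $(\tilde\m,\tilde\varepsilon):=D_{L([-1,0])}(\m,\varepsilon)$ and unfold the recursion $\AD(\tilde\m,\tilde\varepsilon)=(\tilde\m_1,\tilde\varepsilon_1)+\AD(\tilde\m^{\#},\tilde\varepsilon^{\#})$. Lemma~\ref{lem:ADder01} already identifies $(\tilde\m_1,\tilde\varepsilon_1)$ and $(\tilde\m^{\#},\tilde\varepsilon^{\#})$ in terms of $(\m_1,\varepsilon_1)$ and $(\m^{\#},\varepsilon^{\#})$, splitting into four cases according to the value of $e(\Delta_l)$; Lemma~\ref{lem:end01} guarantees $e(\Delta_l)\ge 0$, so these four cases ($e(\Delta_l)\ge 2$; $e(\Delta_l)=0$ with $p(\Delta_l)\ne[0,0]$ and the extra condition on $\Delta_l=[-1,0]$; $e(\Delta_l)=0$ in the complementary situation; $e(\Delta_l)=1$) are exhaustive. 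Finally, the right-hand side $D_{Z([0,1])}(\AD(\m,\varepsilon))$ makes sense because $\AD(\m,\varepsilon)$ is $1$-reduced by Lemma~\ref{lem:ad1reduced}.

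\textbf{Step-by-step.} In each of the four cases I would feed the identifications of Lemma~\ref{lem:ADder01} into the unfolded recursion, then invoke the induction hypothesis in the form of Lemmas~\ref{lem:ADcommDer} and~\ref{lem:ADcommSoc} (using their $L([-1,0])$/$Z([0,1])$ versions) to replace $\AD\circ D_{L([-1,0])}$ (resp.\ $\AD\circ D_{L([-1,0])}^{\max-1}$, resp.\ $\AD\circ D_{L([-1,0])}\circ D_{-1}$) by $D_{Z([0,1])}\circ\AD$ (resp.\ the $\max-1$ or composite analogues). It then remains to reabsorb the front segment $\m_1$ into the $Z([0,1])$-derivative, which is precisely what Lemma~\ref{lem:dersum01} is designed for. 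Concretely: when $e(\Delta_l)\ge 2$ one uses part~(1) of that lemma (the segments of $\m_1$ begin at $-e_{\max}<-1$ and end at $-e(\Delta_l)\le-2$); when $e(\Delta_l)=0$ and $\m_1$ is the non-centered pair $[0,e_{\max}]+[-e_{\max},0]$ (or the centered segment $[-e_{\max},e_{\max}]$) one uses part~(2) (resp.\ part~(1)), after noting that $(\m^{\#},\varepsilon^{\#})$ being $-1$-reduced forces $\AD(\m^{\#},\varepsilon^{\#})$ to be $1$-reduced via Lemma~\ref{lem:ADcommDer}; when $e(\Delta_l)=0$ but $(\m^{\#},\varepsilon^{\#})$ is \emph{not} $-1$-reduced one uses part~(3), whose correction term $[a,-1]_{\rho_u}+[1,-a]_{\rho_u}$ with $a=-e_{\max}$ must be matched against the value $\tilde\m_1=[1,e_{\max}]+[-e_{\max},-1]$ provided by Lemma~\ref{lem:ADder01}(2); and when $e(\Delta_l)=1$ (so $\varepsilon_0=1$ and $\m_1=[1,e_{\max}]+[-e_{\max},-1]$, i.e.\ $b=-1$) one uses part~(4). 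In the first case $a+b<0$, $b\notin\{0,1,-1\}$ are immediate; in all cases the hypothesis $a\le\min\{b(\Delta):\Delta\in\m\}$ holds since $a=-e_{\max}$, and the length condition "$l([a,b])\ge l(\Delta)$ for $b(\Delta)=a$" is supplied by (the symmetric form of) Proposition~\ref{prop:lengthmax} applied to $\AD(\m^{\#},\varepsilon^{\#})$. The standing hypotheses of this section (all $-e_{\max}<y<0$ give $y$-reduced, $\rho$ of the same type as $G$) are what guarantee $\varepsilon_0=1$ whenever $e(\Delta_l)=1$ and restrict the shape of $\m_1$ to exactly the configurations handled by Lemma~\ref{lem:dersum01}.

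\textbf{Main obstacle.} The routine algebra is the case analysis itself; the genuinely delicate point is verifying, in the third case, that the ``correction'' output of Lemma~\ref{lem:dersum01}(3) coincides with $\AD(\tilde\m,\tilde\varepsilon)$ — one must carefully track that $\AD(D_{L([-1,0])}(D_{-1}(\m^{\#},\varepsilon^{\#})))$, after commuting $\AD$ past both derivatives, equals $D_{Z([0,1])}(D_{\rho_u|\cdot|}(\AD(\m^{\#},\varepsilon^{\#})))$, and that the leftover segments $[-e_{\max},-1]_{\rho_u}+[1,e_{\max}]_{\rho_u}$ agree on the nose. A secondary subtlety is checking the $1$-reducedness and length hypotheses needed to apply Lemma~\ref{lem:dersum01} in each branch; these reduce, via Lemmas~\ref{lem:ADcommDer} and~\ref{prop:lengthmax}, to facts already established, but the reduction must be spelled out.

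\begin{proof}
  Write $(\tilde\m,\tilde\varepsilon):=D_{L([-1,0])}(\m,\varepsilon)$ and, by Lemma~\ref{lem:ad1reduced}, note that $\AD(\m,\varepsilon)$ is $1$-reduced, so that $D_{Z([0,1])}(\AD(\m,\varepsilon))$ is defined. By definition of $\AD$,
  \[
    \AD(\tilde\m,\tilde\varepsilon)=(\tilde\m_1,\tilde\varepsilon_1)+\AD(\tilde\m^{\#},\tilde\varepsilon^{\#}).
  \]
  By Lemma~\ref{lem:end01}, $e(\Delta_l)\ge 0$, and we use Lemma~\ref{lem:ADder01} to distinguish cases.

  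\emph{Case $e(\Delta_l)\ge 2$.} Then $(\tilde\m_1,\tilde\varepsilon_1)=(\m_1,\varepsilon_1)$ and $(\tilde\m^{\#},\tilde\varepsilon^{\#})=D_{L([-1,0])}(\m^{\#},\varepsilon^{\#})$, so that $\AD(\tilde\m,\tilde\varepsilon)=(\m_1,\varepsilon_1)+\AD(D_{L([-1,0])}(\m^{\#},\varepsilon^{\#}))$. Since $l(\m^{\#})<N$, Lemma~\ref{lem:ADcommDer} gives $\AD(D_{L([-1,0])}(\m^{\#},\varepsilon^{\#}))=D_{Z([0,1])}(\AD(\m^{\#},\varepsilon^{\#}))$. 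Here $\m_1=[e(\Delta_l),e_{\max}]+[-e_{\max},-e(\Delta_l)]$ with $-e_{\max}\le -e(\Delta_l)\le -2$, so $a:=-e_{\max}$ and $b:=-e(\Delta_l)$ satisfy $a+b<0$, $a,b\notin\{0,1,-1\}$, and $a\le\min\{b(\Delta):\Delta\in\m^{\#}\}$; moreover by Proposition~\ref{prop:lengthmax} the segment of $\m_1$ beginning at $-e_{\max}$ is the longest among such segments, so $l([a,b])\ge l(\Delta)$ for all $\Delta\in\AD(\m^{\#},\varepsilon^{\#})$ with $b(\Delta)=a$. Also $\AD(\m^{\#},\varepsilon^{\#})$ is $\rho_u|\cdot|$-reduced since $\AD(\m,\varepsilon)$ is $1$-reduced and $(\m_1,\varepsilon_1)$ contributes no segment ending at $1$. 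Thus Lemma~\ref{lem:dersum01}(1) yields
  \[
    D_{Z([0,1])}\big((\m_1,\varepsilon_1)+\AD(\m^{\#},\varepsilon^{\#})\big)=(\m_1,\varepsilon_1)+D_{Z([0,1])}(\AD(\m^{\#},\varepsilon^{\#})),
  \]
  so $\AD(\tilde\m,\tilde\varepsilon)=D_{Z([0,1])}(\AD(\m,\varepsilon))$.

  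\emph{Case $e(\Delta_l)=0$, $p(\Delta_l)\ne[0,0]$ and $\Delta_l\ne[-1,0]$ when $m_\m([-2,-2])>m_\m([-1,-1])$.} By Lemma~\ref{lem:ADder01}(2), $\tilde\m_1=[1,e_{\max}]+[-e_{\max},-1]$, $(\m^{\#},\varepsilon^{\#})$ is not $-1$-reduced with $D_{-1}(\m^{\#},\varepsilon^{\#})=D^{1}_{-1}(\m^{\#},\varepsilon^{\#})$, and $(\tilde\m^{\#},\tilde\varepsilon^{\#})=D_{L([-1,0])}(D_{-1}(\m^{\#},\varepsilon^{\#}))$. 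Also $\Delta_l\ne[0,0]^{=0},[0,0]^{\ge 0}$ forces $\varepsilon_0=1$, hence $\m_1=[0,e_{\max}]+[-e_{\max},0]$. Using $l(\m^{\#})<N$ and Lemma~\ref{lem:ADcommDer} twice,
  \[
    \AD(\tilde\m^{\#},\tilde\varepsilon^{\#})=D_{Z([0,1])}\big(\AD(D_{-1}(\m^{\#},\varepsilon^{\#}))\big)=D_{Z([0,1])}\big(D_{\rho_u|\cdot|}(\AD(\m^{\#},\varepsilon^{\#}))\big),
  \]
  and $\AD(\m^{\#},\varepsilon^{\#})$ is not $1$-reduced. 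Applying Lemma~\ref{lem:dersum01}(3) with $a=-e_{\max}$, $b=0$ to $(\m_1,\varepsilon_1)+\AD(\m^{\#},\varepsilon^{\#})$ (which is $\rho_u|\cdot|$-reduced by Lemma~\ref{lem:ad1reduced}, with the length/beginning hypotheses guaranteed by Proposition~\ref{prop:lengthmax}) gives
  \[
    D_{Z([0,1])}(\AD(\m,\varepsilon))=[-e_{\max},-1]_{\rho_u}+[1,e_{\max}]_{\rho_u}+D_{Z([0,1])}\big(D_{\rho_u|\cdot|}(\AD(\m^{\#},\varepsilon^{\#}))\big),
  \]
  which equals $\tilde\m_1+\AD(\tilde\m^{\#},\tilde\varepsilon^{\#})=\AD(\tilde\m,\tilde\varepsilon)$.

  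\emph{Case $e(\Delta_l)=0$ with $p(\Delta_l)=[0,0]$, or $\Delta_l=[-1,0]$ and $m_\m([-2,-2])>m_\m([-1,-1])$.} By Lemma~\ref{lem:ADder01}(3), $(\tilde\m_1,\tilde\varepsilon_1)=(\m_1,\varepsilon_1)$, $(\m^{\#},\varepsilon^{\#})$ is $-1$-reduced, and $(\tilde\m^{\#},\tilde\varepsilon^{\#})=D_{L([-1,0])}(\m^{\#},\varepsilon^{\#})$. By Lemma~\ref{lem:ADcommDer}, $\AD(\m^{\#},\varepsilon^{\#})$ is $1$-reduced and $\AD(\tilde\m^{\#},\tilde\varepsilon^{\#})=D_{Z([0,1])}(\AD(\m^{\#},\varepsilon^{\#}))$. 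If $\varepsilon_0=1$ then $\m_1=[0,e_{\max}]+[-e_{\max},0]$ and Lemma~\ref{lem:dersum01}(2) applies (with $b=0$, $(\m^{\#},\varepsilon^{\#})$'s dual $1$-reduced); if $\varepsilon_0=-1$ then $\m_1=[-e_{\max},e_{\max}]_{\rho_u}$ is centered with $-e_{\max}<-1<1<e_{\max}$ and Lemma~\ref{lem:dersum01}(1) applies. In either case, using Proposition~\ref{prop:lengthmax} for the remaining hypotheses,
  \[
    D_{Z([0,1])}\big((\m_1,\varepsilon_1)+\AD(\m^{\#},\varepsilon^{\#})\big)=(\m_1,\varepsilon_1)+D_{Z([0,1])}(\AD(\m^{\#},\varepsilon^{\#})),
  \]
  so $\AD(\tilde\m,\tilde\varepsilon)=D_{Z([0,1])}(\AD(\m,\varepsilon))$.

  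\emph{Case $e(\Delta_l)=1$.} Since $\rho$ is of the same type as $G$, $\varepsilon_0=1$ and $\m_1=[1,e_{\max}]+[-e_{\max},-1]$; by Lemma~\ref{lem:ADder01}(4), $(\tilde\m_1,\tilde\varepsilon_1)=(\m_1,\varepsilon_1)$, $(\m^{\#},\varepsilon^{\#})$ is $-1$-reduced, and $(\tilde\m^{\#},\tilde\varepsilon^{\#})=D^{\max-1}_{L([-1,0])}(\m^{\#},\varepsilon^{\#})$. By Lemmas~\ref{lem:ADcommDer} and~\ref{lem:ADcommSoc},
  \[
    \AD(\tilde\m^{\#},\tilde\varepsilon^{\#})=D^{\max-1}_{Z([0,1])}(\AD(\m^{\#},\varepsilon^{\#})).
  \]
  Here $a=-e_{\max}$, $b=-1$, so Lemma~\ref{lem:dersum01}(4) gives
  \[
    D_{Z([0,1])}(\AD(\m,\varepsilon))=D_{Z([0,1])}\big((\m_1,\varepsilon_1)+\AD(\m^{\#},\varepsilon^{\#})\big)=(\m_1,\varepsilon_1)+D^{\max-1}_{Z([0,1])}(\AD(\m^{\#},\varepsilon^{\#})),
  \]
  which equals $\AD(\tilde\m,\tilde\varepsilon)$. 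This completes the proof.
\end{proof}
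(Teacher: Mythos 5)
Your proof is correct and takes the same route the paper intends: the paper's own proof is the one‑line remark ``similar to Proposition~\ref{prop:ADcommderneg}, follows from Lemmas~\ref{lem:ADder01} and~\ref{lem:dersum01},'' and your write‑up is exactly the fleshed‑out case analysis that remark gestures at, unfolding the recursion for $\AD(\tilde\m,\tilde\varepsilon)$, invoking Lemma~\ref{lem:ad1reduced} for well‑definedness of the right‑hand side, and matching each branch of Lemma~\ref{lem:ADder01} against the corresponding clause of Lemma~\ref{lem:dersum01} via the induction hypotheses (Lemmas~\ref{lem:ADcommDer}, \ref{lem:ADcommSoc}) and Proposition~\ref{prop:lengthmax}. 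The only hypothesis you apply without explicitly verifying is that $\AD(\m^{\#},\varepsilon^{\#})$ is \emph{not} $1$-reduced in the second branch, but this is immediate from Lemma~\ref{lem:ADcommDer} together with Lemma~\ref{lem:ADder01}(2), so the argument is complete.
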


\begin{proof}
    The proof is similar to Proposition \ref{prop:ADcommderneg} and follows from Lemmas \ref{lem:ADder01} and \ref{lem:dersum01}.
\end{proof}

\subsection{The positive derivative with $\rho$ of same type} \label{sec:derpos}

In this section, we assume that $e_{\max} > 1$, that for all $-e_{\max} < y<0$, $(\m,\varepsilon)$ is $y$-reduced, that $(\m,\varepsilon)$ is $L([-1,0])$-reduced, and that there exists $y>0$ with $y \neq e_{\max}$ such that $(\m,\varepsilon)$ is not $y$-reduced. We also assume that $\rho$ is of the same type as $G$.

We define $y_0 \in \Z$ to be the smallest $y \in \Z^{*}$ such that $y \neq -e_{\max}$, $y \neq e_{\max}$ and $(\m,\varepsilon)$ is not $y$-reduced. With our hypotheses on $(\m,\varepsilon)$ necessarily $y_0 > 0$. Let us describe more precisely the conditions satisfied by $(\m,\varepsilon)$ and $y_0$ using the explicit formula of the derivative recalled in Section \ref{sec:expder}.

\bigskip

We have assumed that for all $-e_{\max} < y <0$, $(\m,\varepsilon)$ is $y$-reduced. Hence, if $\Delta \in \m$ is a segment such that $e(\Delta) < 0$, then $\Delta = [y,y]$ (where $y=e(\Delta)$) and $m_{\m}([y,y]) \le m_{\m}([y-1,y-1])$. Let $y_1 \in \NN^{*}$ be the smallest positive half-integer such that $[y_1,y_1] \in \m$.

Let $t_0=m_{\m}([-1,0])$. We have also assumed that $(\m,\varepsilon)$ is $L([-1,0])$-reduced.  Hence, if $\Delta \in \m$ is a segment such that $e(\Delta) = 0$, then $\Delta = [0,0]$ or $\Delta = [-1,0]$ with $t_0 \le m_{\m}([-2,-2]) - m_{\m}([-1,-1])$.

\bigskip

Now, we recall the formula for $D_{y_0}$. First let us do $D_{1}$.

We have seen that the segments $\Delta \in \m$ with $e(\Delta)=0$ are of the form: $[0,0]$ or $[- 1,0]$. And the segments $\Delta \in \m$ with $e(\Delta)=1$ are of the form: $[a,1]$, with $a<-1$, $[-1,1]$, $[1,1]$ or $[0,1]$. Let $(*)$ be the condition: $m_{\m}([-1,1])\neq 0$, $m_{\m}([0,0])\neq 0$ and $\varepsilon([0,0]) \varepsilon([-1,1])=(-1)^{t_0 + 1}$.

Then $D_{1}$ does the following transformations. All the negative segments of the form $[a,1]$, with $a<-1$, are transformed into $[a,0]$ (and their symmetric $[-1,-a]$ are transformed into $[0,-a]$).
\begin{itemize}
  \item If $(*)$ is not satisfied,  $m_{\m}([-1,1])$ is odd and $t_0 \ge 1$; then all the segments $[-1,1]$ except one are transformed into $[0 ,0 ]$ and one $[-1,0] + [0,1]$ is transformed into $[0 ,0 ] + [0 ,0 ]$.
  \item If $(*)$ is not satisfied and, $m_{\m}([-1,1])$ is even or $t_0 = 0$; then all the segments $[-1,1]$ are transformed into $[0,0 ]$.
  \item If $(*)$ is satisfied and $m_{\m}([-1,1])$ is odd; then all the segments $[-1,1]$ except one are transformed into $[0 ,0]$.
  \item If $(*)$ is satisfied and $m_{\m}([-1,1])$ is even; then two segments $[-1,1]$ are transformed into $[-1,0]+[0,1]$. The other segments $[-1,1]$ are transformed into $[0,0]$.
\end{itemize}
And finally, it suppresses $n$ segments $[1,1]$ and $[-1,-1]$, where $n=\max\{m_{\m}([1,1])-m_{\m}([0,0]),0\}$ if $(*)$ is not satisfied; and $n=\max\{m_{\m}([1,1])-m_{\m}([0,0])+1,0\}$ otherwise.

In particular, we see that if $D_1(\m,\varepsilon)=0$, then 
\begin{itemize}
  \item There is no segment $[a,1]$ with $a<-1$;
  \item $m_{\m}([-1,1]) = 0$ or $1$;
  \item if $m_{\m}([-1,1]) = 1$, then $m_{\m}([0,0]) \neq 0$ and $\varepsilon([0,0]) \varepsilon([-1,1])=(-1)^{t_0 + 1}$;
  \item $m_{\m}([1,1]) \le m_{\m}([0,0])$ if $m_{\m}([-1,1]) = 0$; and $m_{\m}([1,1]) \le m_{\m}([0,0]) - 1$ if $m_{\m}([-1,1]) = 1$.
\end{itemize}

\bigskip

Let $y > 1$ be such that for all $1 \le x < y$, $D_x(\m,\varepsilon)=0$. We prove by induction on $y$ that if $\Delta \in \m$ is a segment with $0< e(\Delta) < y$, then $c(\Delta) \ge 0$. We make explicit the formula for $D_y$.

Using the induction hypothesis, if $1 < x < y$ and $\Delta \in \m$ with $e(\Delta) = x$, then $b(\Delta) \ge -x$. We also know that if $b(\Delta) \neq x$, then $b(\Delta)<0$. But $\Delta^{\vee} \in \m$, so if $b(\Delta) \neq x$ then $b(\Delta) = -x$. Hence, the only segments ending in $x$ are $[x,x]$ and $[-x,x]$. In particular, note that there are no segments $[-y+1,y]$ in $\m$. Also the segments $\Delta \in \m$ with $e(\Delta)=y-1$ are of the form: $[y-1,y-1]$, $[1-y,y-1]$ or $[0,1]$ if $y= 2$. And the segments $\Delta \in \m$ with $e(\Delta)=y$ are of the form: $[a,y]$, with $a<-y$, $[-y,y]$, $[y,y]$. Let $(*)$ be the condition: $m_{\m}([-y,y])\neq 0$, $m_{\m}([1-y,y-1])\neq 0$ and $\varepsilon([y,y]) \varepsilon([1-y,y-1])=(-1)$.

Then, $D_{y}$ does the following transformations. All the negative segments of the form $[a,y]$, with $a < -y$, are transformed into $[a,y - 1]$ (and their symmetric $[-y,-a]$ are transformed into $[-y + 1,-a]$). For the centered segments $[-y,y]$:
\begin{itemize}
  \item If $(*)$ is not satisfied; then all the segments $[-y,y]$ are transformed into $[-y + 1,y - 1]$.
  \item If $(*)$ is satisfied and $m_{\m}([-y,y])$ is odd; then all the segments $[-y,y]$ except one are transformed into $[-y + 1,y - 1]$.
  \item If $(*)$ is satisfied and $m_{\m}([-y,y])$ is even; then, two segments $[-y,y]$ are transformed into $[-y,y - 1]+[-y + 1,y]$. The other segments $[-y,y]$ are transformed into $[-y + 1,y - 1]$.
\end{itemize}
It also suppresses $n$ segments $[y,y]$ and $[-y,-y]$, where $n=\max\{m_{\m}([y,y])-m_{\m}([y-1,y-1]) - m_{\m}([1-y,y-1]),0\}$ if $y >2$ and $(*)$ is not satisfied; $n=\max\{m_{\m}([y,y])-m_{\m}([y-1,y-1])- m_{\m}([1-y,y-1])+1,0\}$ if $y >2$ and $(*)$ is satisfied; $n=\max\{m_{\m}([y,y])-m_{\m}([y-1,y-1]) - m_{\m}([1-y,y-1]) - t_0,0\}$ if $y =2$ and $(*)$ is not satisfied; and $n=\max\{m_{\m}([y,y])-m_{\m}([y-1,y-1])- m_{\m}([1-y,y-1]) - t_0 +1,0\}$ if $y =2$ and $(*)$ is satisfied. Note that we will show that $m_{\m}([1-y,y-1]) = 0$ or $1$.

In particular, we see that if $D_y(\m,\varepsilon)=0$, then 
\begin{itemize}
  \item There is no segment $[a,y]$ with $a<-y$;
  \item $m_{\m}([-y,y]) = 0$ or $1$;
  \item if $m_{\m}([-y,y]) = 1$, then $m_{\m}([1-y,y-1]) \neq 0$ and $\varepsilon([-y,y]) \varepsilon([1-y,y-1])=-1$;
  \item $m_{\m}([y,y]) \le m_{\m}([y-1,y-1]) + m_{\m}([1-y,y-1])$ if $m_{\m}([-y,y]) = 0$; and $m_{\m}([y,y]) \le m_{\m}([y-1,y-1])$ if $m_{\m}([-y,y]) = 1$.
\end{itemize}
This proves our induction hypothesis.

\bigskip

The formulas above gave us an explicit formula for $D_{y_0}$ (depending on whether $y_0 = 1$ or not). Moreover, we see that if $\Delta \in \m$ is such that $e(\Delta) < y_0$ then either $\Delta = [y,y]$ for some $y$, $\Delta = [-1,0]$, $\Delta = [0,1]$, or $\Delta = [-y,y]$ and in this case, $m_{\m}([-y,y]) = 1$ (if $y \neq 0$), moreover, if $y > 1$, $m_{\m}([-y+1,y-1]) = 1$ and $\varepsilon([-y,y]) \varepsilon([1-y,y-1])=-1$; and if $y = 1$,  $m_{\m}([0,0]) \neq 0$ and $\varepsilon([-1,1]) \varepsilon([0,0])=(-1)^{t_0 + 1}$.

\bigskip

For the rest of the section, let $(*)$ denote the condition:
\begin{enumerate}
    \item If $y_0 = 1$: $m_{\m}([-1,1])\neq 0$, $m_{\m}([0,0])\neq 0$ and $\varepsilon([0,0]) \varepsilon([-1,1])=(-1)^{t_0 + 1}$.
    \item If $y_0 > 1$: $m_{\m}([-y_0,y_0])\neq 0$, $m_{\m}([-y_0 + 1,y_0-1])\neq 0$ and $\varepsilon([-y_0,y_0]) \varepsilon([1-y_0,y_0-1])=-1$.
\end{enumerate}

\begin{lem}
  \label{lem:endpos}
  If $e(\Delta_l)< y_0$ then $e(\Delta_l) = 0$.
\end{lem}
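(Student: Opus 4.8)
The plan is to argue by contradiction. Assume $e(\Delta_l)=x$ with $x<y_0$ and $x\neq 0$; I will produce a segment of the labelled multisegment $s(\m,\varepsilon)$ that is an admissible choice for $\Delta_{l+1}$, contradicting the fact that $l$ is the last index of the initial sequence. Since $\rho$ is of the same type as $G$ and $\Delta_l$ is neither $[0,0]_{\rho_u}^{\ge 0}$ nor $[0,0]_{\rho_u}^{=0}$ (because $e(\Delta_l)\neq 0$), Definition~\ref{def:epsil} gives $\varepsilon_0=1$, and then Lemma~\ref{lem:ADnotcentered} forces $e(\Delta_l)\neq -e_{\max}$, so $-e_{\max}<x<y_0$. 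Using the structural description of $\m$ recalled just before the lemma, $\Delta_l$ must then be one of: $[x,x]_{\rho_u}$ (for arbitrary $x$), $[0,1]_{\rho_u}$ (only when $x=1$), or a centered segment $[-x,x]_{\rho_u}^{\clubsuit}$; and since such centered segments have multiplicity one in $\m$, in the last case necessarily $\Delta_l=[-x,x]_{\rho_u}^{=0}$.

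First I would handle $x<0$. Then $\Delta_l=[x,x]_{\rho_u}$, and since $(\m,\varepsilon)$ is $y$-reduced for $-e_{\max}<y<0$ we have $m_{\m}([x,x]_{\rho_u})\le m_{\m}([x-1,x-1]_{\rho_u})$, so $[x-1,x-1]_{\rho_u}\in\m$; this segment is non-centered, ends in $x-1=e(\Delta_l)-1$, and satisfies $[x-1,x-1]_{\rho_u}\prec\Delta_l$, hence is an admissible value for $\Delta_{l+1}$ (the sign condition being vacuous), a contradiction. For $0<x<y_0$ I would use that $(\m,\varepsilon)$ is $x$-reduced, by minimality of $y_0$ among positive integers at which $(\m,\varepsilon)$ fails to be $y$-reduced (note $y_0<e_{\max}$), and split according to the shape of $\Delta_l$. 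If $\Delta_l=[x,x]_{\rho_u}$, the $x$-reducedness inequalities recalled above give $m_{\m}([x-1,x-1]_{\rho_u})+m_{\m}([-x+1,x-1]_{\rho_u})\ge 1$ (with $[0,0]_{\rho_u}$ replacing $[x-1,x-1]_{\rho_u}$ when $x=1$), and either $[x-1,x-1]_{\rho_u}$ or a copy of the centered segment $[-x+1,x-1]_{\rho_u}$ occurring in $s(\m,\varepsilon)$ with label $=0$ or $\le 0$ supplies $\Delta_{l+1}$; the key point is that a centered segment labelled $\le 0$ or $=0$ precedes $[x,x]_{\rho_u}$ (which is forced to carry the label $\ge 0$) in $\preceq$, and that the sign condition is vacuous because $c(\Delta_l)=x\neq 0$. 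If $\Delta_l=[0,1]_{\rho_u}$, its contragredient $[-1,0]_{\rho_u}\in\m$ is non-centered, ends in $0$, and precedes $\Delta_l$ since a $\le 0$-labelled segment precedes a $\ge 0$-labelled one, so it serves as $\Delta_{l+1}$.

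The remaining case $\Delta_l=[-x,x]_{\rho_u}^{=0}$ is the delicate one, since an admissible $\Delta_{l+1}$ that is again centered must satisfy $\varepsilon(\Delta_{l+1})=-\varepsilon(\Delta_l)$. For $x>1$ the structural facts give $[-x+1,x-1]_{\rho_u}\in\m$ with multiplicity one and $\varepsilon([-x+1,x-1]_{\rho_u})=-\varepsilon([-x,x]_{\rho_u})$, so $[-x+1,x-1]_{\rho_u}^{=0}$, which precedes $[-x,x]_{\rho_u}^{=0}$ because $e([-x+1,x-1]_{\rho_u})=x-1\le x$, is the required $\Delta_{l+1}$. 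For $x=1$ we have $m_{\m}([0,0]_{\rho_u})\ge 1$ and $\varepsilon([0,0]_{\rho_u})\,\varepsilon([-1,1]_{\rho_u})=(-1)^{t_0+1}$ with $t_0=m_{\m}([-1,0]_{\rho_u})$; if $t_0\ge 1$ then $[-1,0]_{\rho_u}\in\m$ is non-centered, ends in $0$, and precedes $[-1,1]_{\rho_u}^{=0}$, while if $t_0=0$ then $\varepsilon([0,0]_{\rho_u})=-\varepsilon([-1,1]_{\rho_u})$ and, since $s$ produces a $[0,0]_{\rho_u}^{=0}$ when $m_{\m}([0,0]_{\rho_u})$ is odd and a $[0,0]_{\rho_u}^{\le 0}$ when $m_{\m}([0,0]_{\rho_u})\ge 2$, one of these copies precedes $[-1,1]_{\rho_u}^{=0}$ and carries the opposite sign, again yielding $\Delta_{l+1}$. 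In every case the initial sequence could be prolonged by one more segment, contradicting the maximality of $l$; hence $e(\Delta_l)<y_0$ forces $e(\Delta_l)=0$. The main obstacle is precisely this bookkeeping: tracking the order $\preceq$ between labelled centered and unlabelled non-centered segments, and combining it with the parity/sign constraints carried by $s$ in the $x=1$ centered subcase.
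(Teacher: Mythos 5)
Your proof follows the same contradiction strategy as the paper's (which simply cites Lemma~\ref{lem:end01} for the bound $e(\Delta_l)\ge 0$ and then asserts abstractly that $y$-reducedness always yields a segment ending in $y-1$, suitable for the algorithm, preceding $\Delta_l$). Your casework is more explicit, and re-deriving $e(\Delta_l)\neq -e_{\max}$ from Definition~\ref{def:epsil} and Lemma~\ref{lem:ADnotcentered} in place of citing Lemma~\ref{lem:end01} is harmless.

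However, the sub-case $\Delta_l=[x,x]_{\rho_u}$ with $x=2$ contains a gap. You claim that $2$-reducedness forces $m_\m([1,1]_{\rho_u})+m_\m([-1,1]_{\rho_u})\ge 1$, but the formula for $D_2$ recalled at the start of Section~\ref{sec:derpos} carries an extra term $t_0=m_\m([0,1]_{\rho_u})$: one can have $m_\m([1,1]_{\rho_u})=m_\m([-1,1]_{\rho_u})=0$ with $1\le m_\m([2,2]_{\rho_u})\le t_0$ and $(\m,\varepsilon)$ still $2$-reduced. (This is compatible with the $L([-1,0]_{\rho_u})$-reducedness bound $t_0\le m_\m([-2,-2]_{\rho_u})-m_\m([-1,-1]_{\rho_u})$, which then forces $t_0=m_\m([2,2]_{\rho_u})$.) In that configuration your argument produces no candidate $\Delta_{l+1}$ at all: neither $[1,1]_{\rho_u}$ nor $[-1,1]_{\rho_u}$ is present. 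The desired contradiction nevertheless holds, because $[0,1]_{\rho_u}$ itself is admissible as $\Delta_{l+1}$: it lies in $\m$, ends in $1$, satisfies $[0,1]_{\rho_u}^{\ge 0}\prec[2,2]_{\rho_u}^{\ge 0}$ (since $b([0,1]_{\rho_u})=0<2$), and the sign condition is vacuous. But this third candidate is missing from your enumeration; the paper's phrasing, which does not name a specific segment, subsumes it. Adding $[0,1]_{\rho_u}$ to your list of candidates for $x=2$ closes the gap.
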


\begin{proof}
  We assume that $e(\Delta_l)< y_0$. From Lemma \ref{lem:end01}, we have that $e(\Delta_l) \ge 0$. Thus $\Delta_l$ is one of the following segments : $[-1,0]$, $[0,1]$, $[-y,y]$, or $[y,y]$. Since $[-1,0] \prec [0,1]$, $\Delta_l \neq [0,1]$. If $\Delta_l = [-y,y]$ or $[y,y]$ with $y \neq 0$, the fact that $(\m,\varepsilon)$ is $y$-reduced implies that there exists a segment $\Delta$ ending in $y-1$ suitable for the algorithm such that $\Delta \prec \Delta_l$ which is a contradiction.
\end{proof}

We denote by $(\tilde{\m},\tilde{\varepsilon}):=D_{y_0}(\m,\varepsilon)$ and by $\tilde{\Delta}_1, \cdots, \tilde{\Delta}_{\tilde{l}}$ the initial sequence in the algorithm for $(\tilde{\m},\tilde{\varepsilon})$.

\begin{lem}
  \label{lem:derposx1inf}
  If $y_1 < y_0$, then 
  \begin{enumerate}
    \item $e(\Delta_l)=0$;
    \item $\tilde{l}=l$;
    \item for all $1 \le i \le l$, $\tilde{\Delta}_{i}=\Delta_{i}$.
  \end{enumerate}
\end{lem}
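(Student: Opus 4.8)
The statement to be proved is Lemma~\ref{lem:derposx1inf}: under the hypothesis $y_1 < y_0$ (recall $y_1 \in \NN^{*}$ is the smallest positive half-integer with $[y_1,y_1]\in\m$), one has $e(\Delta_l)=0$, $\tilde l=l$, and $\tilde\Delta_i=\Delta_i$ for all $1\le i\le l$. The strategy is to analyze how $D_{y_0}$ modifies the multisegment near the ``top'' of the initial sequence, and to show that because $y_1 < y_0$, the segments that $D_{y_0}$ touches (those ending in $y_0$ or $y_0-1$, and the segments $[y_0,y_0]$, $[-y_0,-y_0]$ that may be suppressed) are all \emph{irrelevant} to the initial sequence of $(\m,\varepsilon)$, so the initial sequence is completely unchanged.

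\textbf{First steps.} First I would establish (1): $e(\Delta_l)=0$. We already know $e(\Delta_l)\ge 0$ by Lemma~\ref{lem:end01}. The hypothesis $y_1 < y_0$ means there is a segment $[y_1,y_1]\in\m$ with $0<y_1<y_0$, so in particular $(\m,\varepsilon)$ is not $y_0$-reduced in a ``strong'' way: there is positive material below $y_0$. I would use Lemma~\ref{lem:endpos}, which says that if $e(\Delta_l)<y_0$ then $e(\Delta_l)=0$; so it suffices to rule out $e(\Delta_l)\ge y_0$. If $e(\Delta_l) = y_0$, then using the explicit description of the initial sequence — recall $\Delta_1,\dots$ starts at $e_{\max}$ and decreases its ends by $1$ — together with the chain of singleton segments $[e_{\max}-i+1,e_{\max}-i+1]$ forced by the $y$-reducedness hypotheses for intermediate $y$, the algorithm would be forced to continue past $y_0$ because $[y_0,y_0]\prec[y_0+1,y_0+1]$ and a segment ending in $y_0-1$ exists (namely $[y_1,y_1]$ if $y_1=y_0-1$, or another singleton, or $[1-y_0,y_0-1]$); so $e(\Delta_l)\ne y_0$. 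If $e(\Delta_l) > y_0$, a similar argument: the singleton-chain structure below $e_{\max}$ forces the sequence to keep going at least down to $e(\Delta)$ for the smallest positive $e(\Delta)$ appearing among the $[y,y]$, which is $\le y_1 < y_0$; so again a contradiction. This gives $e(\Delta_l)=0$.

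\textbf{Main steps.} Next, for (2) and (3), the key observation is exactly the one recorded in the proof of Lemma~\ref{lem:sequencederneg}: once $\Delta_i$ is fixed, $\Delta_{i+1}$ depends only on the segments $\prec \Delta_i$ ending in $e(\Delta_i)-1$ (plus the sign condition). So I would argue that $D_{y_0}$ changes \emph{none} of the segments relevant to building $\Delta_1,\dots,\Delta_l$. Concretely: $D_{y_0}$ only modifies segments ending in $y_0$, segments beginning in $-y_0$ (i.e.\ ending in $\ge y_0$), and suppresses some $[y_0,y_0]$ and $[-y_0,-y_0]$. By the explicit formula recalled earlier in this subsection, the only segments of $\m$ ending in $y_0$ are $[a,y_0]$ with $a<-y_0$, $[-y_0,y_0]$, and $[y_0,y_0]$; and the only segments ending in $y_0-1$ relevant for continuing the sequence at that level are $[y_0-1,y_0-1]$, $[1-y_0,y_0-1]$. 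I would show that, since $e(\Delta_1)=e_{\max}>y_0$ (note $e_{\max}\ge y_1+$ something, and $y_1<y_0$ forces $e_{\max}\ge y_0$; in fact $e_{\max}>y_0$ as $y_0\ne e_{\max}$ by definition) and the top of the initial sequence runs through singleton segments down past $y_0$ (forced by the $y$-reducedness hypotheses), the segments $\Delta_i$ with $e(\Delta_i)\in\{y_0-1,y_0\}$ are precisely the singletons $[y_0,y_0]$, $[y_0-1,y_0-1]$ — or rather those $\Delta_i$ are the singletons $[e_{\max}-i+1,e_{\max}-i+1]$. Crucially, $m_{\m}([y_0,y_0])\ge m_{\m}([y_0-1,y_0-1])$ is not needed here; what I need is that at least one copy of $[y_0,y_0]$ is \emph{not} suppressed and not modified by $D_{y_0}$ — this follows because the number $n$ of suppressed $[y_0,y_0]$ is $\max\{m_{\m}([y_0,y_0]) - m_{\m}([y_0-1,y_0-1]) - m_{\m}([1-y_0,y_0-1]) - (\text{correction}),0\}$ and, since the initial sequence uses $[y_0,y_0]$ and $[y_0-1,y_0-1]$ consecutively, a counting argument (as in Lemma~\ref{lem:duauxmdieze}-style bookkeeping) shows one copy survives untouched; likewise $[-y_0,-y_0]$, and any $[a,y_0]$ with $a<-y_0$ is negative hence $\prec$ every singleton $[y_0,y_0]$, so it never enters the initial sequence and its modification is harmless. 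Therefore for each $i$, $\Delta_i\in\m\cap\tilde\m$ with the same sign, and inductively $\tilde\Delta_i=\Delta_i$ and $\tilde l=l$.

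\textbf{Expected obstacle.} The main obstacle is the bookkeeping in the induction step of (3): one must check carefully that when $D_{y_0}$ transforms $[a,y_0]\to[a,y_0-1]$ (for $a<-y_0$) or suppresses $[y_0,y_0]/[-y_0,-y_0]$ or transforms $[-y_0,y_0]$, none of these creates a \emph{new} segment ending in $e(\Delta_i)-1$ that is $\prec \Delta_i$ but $\succ \Delta_{i+1}$ — i.e.\ that would change the maximality choice in the algorithm at some step. This is morally the same phenomenon handled in Lemmas~\ref{lem:sequencederneg} and~\ref{lem:sequenceder01}, and I would pattern the argument on those: the only dangerous newly-created segments would end at $y_0-1$ and start at $-y_0$ or higher, and one shows such a segment would contradict the $y$-reducedness of $(\m,\varepsilon)$ for some intermediate $y$, or the definition/maximality of some $\Delta_i$. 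Once this is dispatched, the equalities $\tilde\Delta_i=\Delta_i$ and $\tilde l=l$ follow immediately, and combined with $e(\Delta_l)=0$ we get all three conclusions. I expect the proof to be short — three or four sentences invoking Lemma~\ref{lem:endpos}, the explicit derivative formula, and the ``sequence depends only on smaller segments'' principle — since the hypothesis $y_1<y_0$ is exactly what decouples the derivative from the initial sequence.
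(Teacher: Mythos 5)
Your proposal captures the essential idea and would lead to a correct proof, but the paper takes a more explicit route than you do. The paper \emph{computes the entire initial sequence} $\Delta_1,\dots,\Delta_l$ directly: using the monotonicity of the singleton multiplicities (a consequence of the negative-$y$ reducedness), it shows $\Delta_i = [e_{\max}-i+1,e_{\max}-i+1]$ for $i < i_0 := e_{\max}-y_1+2$, and then, using $y_1$-reducedness, that $\Delta_i = [-e_{\max}+i-1,e_{\max}-i+1]^{=0}$ for $i_0 \le i < l$ and $\Delta_l = [0,0]^{\clubsuit}$ or $[-1,0]$. Each of these segments is then inspected by hand and seen to be untouched by $D_{y_0}$ (the only candidate, $[y_0,y_0]$, has at least one copy surviving since $[y_0-1,y_0-1]\in\m$). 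All three claims, including $e(\Delta_l)=0$, fall out simultaneously; the paper does not separately rule out $e(\Delta_l)\ge y_0$, as you propose to do via Lemma~\ref{lem:endpos} plus a contradiction argument. Your decoupling argument (``$D_{y_0}$ only touches segments irrelevant to the initial sequence'') is morally the same observation, and your contradiction argument for item (1) works, but it requires the bookkeeping you defer to the ``expected obstacle'' paragraph, where your proposed mechanism is a little off: one does not contradict a $y$-reducedness hypothesis, but rather observes directly that every segment $D_{y_0}$ creates ending at $y_0-1$ (of the form $[a,y_0-1]$ with $a<-y_0$, $[-y_0+1,y_0-1]$, or $[-y_0,y_0-1]$) has non-positive center and is therefore $\prec [y_0-1,y_0-1]^{\ge 0}$ in the labeled order, so the maximality choice at that step is unaffected. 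Also, the invocation of Lemma~\ref{lem:duauxmdieze} is misplaced — that lemma belongs to the bad-parity analysis and is not used here; the survival of one copy of $[y_0,y_0]$ follows simply from $m_\m([y_0-1,y_0-1])\ge 1$ and the explicit formula for $n$.
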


\begin{proof}
  We are in the case where $\Delta_1 = [e_{\max},e_{\max}]$. As $y_0 \neq e_{\max}$, $D_{y_0}$ does not suppress $\Delta_1$ and $\tilde{\Delta}_1 = \Delta_1$. Let $i_0 = e_{\max } - y_1 + 2$. Then, for all $0<i<i_0$, $\Delta_{i}=[e_{\max }-i+1, e_{\max }-i+1]$. Among all these segments, the only one that can be modified by $D_{y_0}$ is $[y_0,y_0]$. But as $y_1 < y_0$, $[y_0 - 1,y_0- 1] \in \m$, thus at least one segment of the form $[y_0,y_0]$ is not suppressed by $D_{y_0}$. We see that $\tilde{l} \ge i_0 - 1$ and for all $0<i<i_0$, $\tilde{\Delta}_{i}=\Delta_{i}=[e_{\max }-i+1, e_{\max }-i+1]$.
  
  We know that $(\m,\varepsilon)$ is $y_1$-reduced. If $y_1 = 1$, it means that $m_{\m}([0,0]) \neq 0$. In particular, $\Delta_{i_0} = [0,0]^{\ge 0}$ or $[0,0]^{=0}$ and $l=i_0$. The derivative $D_{y_0}$ does not create or suppress any new segment $[0,0]$, so $\tilde{\Delta}_{i_0} = \Delta_{i_0}$ and $\tilde{l}=l$. Now, we assume that $y_1 > 1$. By definition of $y_1$, we know that $[y_1 - 1,y_1 - 1] \notin \m$, and since $(\m,\varepsilon)$ is $y_1$-reduced, we get that $m_{\m}([1-y_1,y_1-1])=1$. In this case, we also know that for all $y_1 - 1 \ge y > 1$, $m_{\m}([-y+1,y-1]) = 1$ and $\varepsilon([-y,y]) \varepsilon([1-y,y-1])=-1$; and $m_{\m}([0,0]) \neq 0$ and $\varepsilon([-1,1]) \varepsilon([0,0])=(-1)^{t_0 + 1}$. Thus for all $ i_0 \le i < l$, $\Delta_i = [-e_{\max }+i-1,e_{\max }-i+1]^{=0}$; and $\Delta_l = [0,0]^{=0}$ or $[0,0]^{\le 0}$ if $t_0$ is even, and $\Delta_l = [-1,0]$ if $t_0$ is odd. None of these segments is changed by $D_{y_0}$ thus $\tilde{l}=l$ and for all $i_0 \le i \le l$, $\tilde{\Delta}_i = \Delta_i$.
\end{proof}

\begin{lem}
  \label{lem:ADderposx1inf}
    Suppose $y_1 < y_0$. Then $(\tilde{\m}_1,\tilde{\varepsilon}_1)=(\m_{1},\varepsilon_1)$ and $(\tilde{\m}^{\#},\tilde{\varepsilon}^{\#})=D_{y_0}(\m^{\#},\varepsilon^{\#})$.
\end{lem}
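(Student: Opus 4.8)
\textbf{Proof proposal for Lemma \ref{lem:ADderposx1inf}.} The plan is to leverage the detailed control over the initial sequences obtained in Lemma \ref{lem:derposx1inf}, which tells us that under the hypothesis $y_1 < y_0$ we have $e(\Delta_l) = 0$, $\tilde l = l$, and $\tilde\Delta_i = \Delta_i$ for all $1 \le i \le l$. The first, easy step is the claim about $(\m_1,\varepsilon_1)$: since $\m_1$ (resp. $\tilde\m_1$) is determined by $\Delta_1, \Delta_l$ (resp. $\tilde\Delta_1, \tilde\Delta_{\tilde l}$), and these coincide by Lemma \ref{lem:derposx1inf}, we immediately get $\m_1 = \tilde\m_1$. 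When $\Delta_l$ is a centered segment $[0,0]^{=0}$ or $[0,0]^{\le 0}$ (the possible shapes of $\Delta_l$ when $t_0$ is even, as described in the proof of Lemma \ref{lem:derposx1inf}), $\m_1 = [-e_{\max}, e_{\max}]$ is centered, so we must also check that $\varepsilon_1 = \tilde\varepsilon_1$. This follows exactly as in Lemma \ref{lem:ADder01}: $D_{y_0}$ neither creates nor suppresses centered segments of $\m$ (here $y_0 \ge 2$ since $y_1 \ge 1$ and $y_1 < y_0$, so the derivative $D_{y_0}$ only affects segments ending in $\pm y_0$ or $\pm(y_0-1)$, none of which are centered when $y_0 \ge 2$, except that a centered segment $[-y_0,y_0]$ could be turned into the centered $[-y_0+1,y_0-1]$ — but in that regime those are produced in pairs together with the split, so the count $n_0$ of centered segments is unchanged; more precisely, when $y_0 > 2$, $[-y_0,y_0]$ and $[-y_0+1,y_0-1]$ are both centered and $D_{y_0}$ either leaves multiplicities balanced or replaces one centered segment by another, keeping $n_0$ fixed), and it does not change the sign of $[0,0]$; hence $n_0 = \tilde n_0$ and $\varepsilon([0,0]) = \tilde\varepsilon([0,0])$, giving $\varepsilon_1(\m_1) = (-1)^{n_0+1}\varepsilon([0,0]) = (-1)^{\tilde n_0 + 1}\tilde\varepsilon([0,0]) = \tilde\varepsilon_1(\tilde\m_1)$.

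For the identity $(\tilde\m^{\#},\tilde\varepsilon^{\#}) = D_{y_0}(\m^{\#},\varepsilon^{\#})$ I would argue by comparing which segments each side modifies, in the style of Lemma \ref{lem:ADderneg} and Lemma \ref{lem:ADder01}. On one side, $\tilde\m^{\#}$ is built from $\tilde\m = D_{y_0}(\m,\varepsilon)$ by suppressing the ends of $\tilde\Delta_1,\dots,\tilde\Delta_{\tilde l}$ and the beginnings of their duals; by Lemma \ref{lem:derposx1inf} these are exactly $\Delta_1,\dots,\Delta_l$, so $\tilde\m^{\#}$ is obtained from $\tilde\m$ by the same "end-suppression" recipe as $\m^{\#}$ is obtained from $\m$. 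On the other side, $D_{y_0}(\m^{\#},\varepsilon^{\#})$ is obtained from $\m^{\#}$ by applying the derivative formula. The key point is that the operations "apply $\AD$'s first step" and "apply $D_{y_0}$" commute here because they touch disjoint parts of the multisegment: the segments modified by $D_{y_0}$ end in $y_0$ or $y_0 - 1$ (and have beginnings at $-y_0$, $-y_0+1$), while the initial sequence $\Delta_1,\dots,\Delta_l$ consists, for indices $< i_0 := e_{\max} - y_1 + 2$, of singletons $[e_{\max}-i+1, e_{\max}-i+1]$ (of which only $[y_0,y_0]$ overlaps the support of $D_{y_0}$, and this overlap is harmless since $[y_0-1,y_0-1] \in \m$ guarantees at least one such singleton survives), and for indices $\ge i_0$ of centered or near-centered segments $[-e_{\max}+i-1, e_{\max}-i+1]^{=0}$ and $[-1,0]$ or $[0,0]$, all with end $\le 0 < y_0 - 1$. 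Thus none of the segments in $\{i_1,\dots,i_l\}$ or $\{i_1',\dots,i_l'\}$ is altered by $D_{y_0}$ except for the $[y_0,y_0]$ case, and there the multiplicity bookkeeping matches on both sides. Consequently the set of indices modified by $D_{y_0}$ inside $\m^{\#}$ equals the set of indices modified by $D_{y_0}$ inside $\m$ (with the same images), and the signs are transported identically, yielding the claimed equality.

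The main obstacle, and the place requiring the most care, is the bookkeeping around the segment $[y_0,y_0]$ and — when $y_0 = 2$ — the interaction with the segments $[-1,0]$, $[0,1]$ and the parameter $t_0$, since the derivative $D_{2}$ has a genuinely more complicated formula (the $t_0$-dependent suppression count) than $D_{y_0}$ for $y_0 > 2$. One must check that after removing ends of the singletons $[y_0,y_0]$ appearing in the initial sequence, the remaining count of $[y_0,y_0]$'s in $\m^{\#}$, compared against $m_{\m^{\#}}([y_0-1,y_0-1])$ and $m_{\m^{\#}}([1-y_0,y_0-1])$, still produces the same transformations as $D_{y_0}$ would on $\m$ before applying $\AD$'s step — i.e. that "which $[y_0,y_0]$ get suppressed" is independent of the order of the two operations. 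This is a finite case analysis: it splits on whether $(*)$ holds, on the parity of $m_{\m}([-y_0,y_0])$, and on whether $y_0 = 2$ or $y_0 > 2$, and in each branch one writes out $\m_1$, $\m^{\#}$, $\tilde\m$, $\tilde\m^{\#}$ explicitly and verifies agreement, exactly as in the proof of Lemma \ref{lem:ADderneg}. I expect no conceptual difficulty beyond this combinatorial verification, since the disjointness of supports has already been established by Lemma \ref{lem:derposx1inf}.
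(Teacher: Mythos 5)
Your proof follows the same route as the paper: use Lemma \ref{lem:derposx1inf} to identify the two initial sequences, deduce $(\tilde\m_1,\tilde\varepsilon_1)=(\m_1,\varepsilon_1)$, and argue that the segments touched by $D_{y_0}$ and by the first step of the algorithm interact consistently, so that $\tilde\m^{\#}=D_{y_0}(\m^{\#})$. Two remarks on the details.

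First, the sign argument asserts that $D_{y_0}$ ``keeps $n_0$ fixed''. This is not quite right: when $(*)$ is satisfied and $m_{\m}([-y_0,y_0])$ is even, $D_{y_0}$ converts two centered segments $[-y_0,y_0]$ into the non-centered pair $[-y_0,y_0-1]+[-y_0+1,y_0]$, so $n_0$ can drop by $2$. The argument survives because the formula $\varepsilon_1(\m_1)=(-1)^{n_0+1}\varepsilon([0,0])$ only depends on the \emph{parity} of $n_0$, and $D_{y_0}$ always changes $n_0$ by an even number (while $\varepsilon([0,0])$ is untouched since $y_0\ge 2$); but you should cite parity invariance rather than invariance of $n_0$ itself.

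Second, the worry you raise about $y_0=2$ and the $t_0$-dependent suppression count is not, in fact, an obstacle. Under $y_1<y_0$, the initial sequence of the algorithm is a string of singletons $[e_{\max},e_{\max}],\dots,[y_1,y_1]$ followed by a final segment ending at $0$, so none of the quantities $t_0$, $m_{\m}([-y_0,y_0])$, $m_{\m}([1-y_0,y_0-1])$, nor condition $(*)$, changes on passing to $\m^{\#}$; moreover, the one $[y_0,y_0]$ and one $[y_0-1,y_0-1]$ that the algorithm removes form a protected pair in the best matching. That single observation is what the paper's (much more compressed) proof rests on, and it already disposes of the case $y_0=2$; no separate branching on $(*)$ or on the parity of $m_{\m}([-y_0,y_0])$ is required here.
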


\begin{proof}
  By Lemma \ref{lem:derposx1inf}, we get that $(\tilde{\m}_1,\tilde{\varepsilon}_1)=(\m_{1},\varepsilon_1)$. Moreover, we know that for $1 \le i \le y_1$, $\Delta_{i}=[e_{\max }-i+1, e_{\max }-i+1]$. Hence, the segments of $\m^{\#}$ ending in $y_0$ or $y_0 - 1$ are the same as in $\m$ except that one segment $[y_0,y_0]$ and one $[y_0 - 1, y_0 - 1]$ have been suppressed. These segments were not modified by $D_{y_0}$. We deduce that $(\tilde{\m}^{\#},\tilde{\varepsilon}^{\#})=D_{y_0}(\m^{\#},\varepsilon^{\#})$.
\end{proof}

\begin{prop}
  If $y_1 < y_0$, then $\AD(D_{y_0}(\m,\varepsilon))=D_{-y_0}(\AD(\m,\varepsilon))$.
\end{prop}

\begin{proof}
  Remember that $(\tilde{\m},\tilde{\varepsilon})$ denotes $D_{y_0}(\m,\varepsilon)$. By definition of $\AD$ we have $\AD(\tilde{\m},\tilde{\varepsilon})=(\tilde{\m}_1,\tilde{\varepsilon}_1) + \AD(\tilde{\m}^{\#},\tilde{\varepsilon}^{\#})$. By Lemma \ref{lem:ADderposx1inf}, $(\tilde{\m}_1,\tilde{\varepsilon}_1)=(\m_{1},\varepsilon_1)$ and $(\tilde{\m}^{\#},\tilde{\varepsilon}^{\#})=D_{y_0}(\m^{\#},\varepsilon^{\#})$. Thus $\AD(\tilde{\m},\tilde{\varepsilon})=(\m_{1},\varepsilon_1) + \AD(D_{y_0}(\m^{\#},\varepsilon^{\#}))$. By Lemma \ref{lem:ADcommDer} $\AD(D_{y_0}(\m^{\#},\varepsilon^{\#}))=D_{-y_0}(\AD(\m^{\#},\varepsilon^{\#}))$. Note that $y_1 < y_0$ implies that $y_0 \neq 1$. Since $e(\Delta_l)=0$, we get by Lemma \ref{lem:dersum}, $(\m_{1},\varepsilon_1) + D_{-y_0}(\AD(\m^{\#},\varepsilon^{\#}))=D_{-y_0}((\m_{1},\varepsilon_1) +\AD(\m^{\#},\varepsilon^{\#}))=D_{-y_0}(\AD(\m,\varepsilon))$.
\end{proof}

\begin{lem}
  \label{lem:sequencederposx1eqx0}
  Suppose that $y_1 = y_0$. Let $j = e_{\max } - y_0 + 1$, such that $\Delta_j = [y_0,y_0]$.
  \begin{enumerate}
    \item Then, $e(\Delta_l)=y_0$ or $0$.
    \item If $e(\Delta_l)=y_0$, then $\tilde{l}=l-1$; otherwise $\tilde{l}=l$.
    \item For all $1 \le i \le \tilde{l}$, if $i \neq j,j+1$, then $\tilde{\Delta}_i=\Delta_i$.
    \item Assume $e(\Delta_l)=0$.
    \begin{enumerate}
      \item If $y_0 = 1$, $m_{\m}([0,0]) = 0$, $m_{\m}([-1,1])$ is odd and $t_0 \ge 2$, then $\Delta_{j+1} = [-1,0]$,  $\tilde{\Delta}_{j}=[0,1]$ and $\tilde{\Delta}_{j+1}=[0,0]^{\le 0}$. 
      \item If $y_0 = 1$, $m_{\m}([0,0]) = 0$, $m_{\m}([-1,1])$ is odd and $t_0 = 1$, then $\Delta_{j+1} = [-1,0]$, $\tilde{\Delta}_{j}=[-1,1]^{=0}$ and $\tilde{\Delta}_{j+1}=[0,0]^{\le 0}$. 
      \item If $y_0 = 1$, $m_{\m}([0,0]) = 0$ and $m_{\m}([-1,1])=0$, then $\Delta_{j+1} = [-1,0]$, $\tilde{\Delta}_{j}=[0,1]$ and $\tilde{\Delta}_{j+1}=[-1,0]$.
      \item If $y_0 = 1$, $m_{\m}([0,0]) = 0$, $m_{\m}([-1,1]) \neq 0$ and $m_{\m}([-1,1])$ is even, then $\Delta_{j+1} = [-1,0]$, $\tilde{\Delta}_{j}=[0,1]$ and $\tilde{\Delta}_{j+1}=[0,0]^{\le 0}$.
      \item If $y_0 = 1$, $m_{\m}([0,0]) \neq 0$ and $(*)$ is not satisfied, then $\Delta_{j+1} = [0,0]^{=0}$ or $[0,0]^{\ge 0}$, $\tilde{\Delta}_{j}=[1,1]$ and $\tilde{\Delta}_{j+1} = [0,0]^{=0}$ or $[0,0]^{\ge 0}$.
      \item If $y_0 = 1$,  $m_{\m}([0,0]) > 1$ and $(*)$ is satisfied, then $\Delta_{j+1} = [0,0]^{=0}$ or $[0,0]^{\ge 0}$,$\tilde{\Delta}_{j}=[1,1]$ and $\tilde{\Delta}_{j+1} = [0,0]^{=0}$ or $[0,0]^{\ge 0}$.
      \item If $y_0 = 1$, $m_{\m}([0,0]) = 1$, $t_0 \neq 0$ and $(*)$ is satisfied, then $\Delta_{j+1} = [0,0]^{=0}$ or $[0,0]^{\ge 0}$, $\tilde{\Delta}_{j}=[0,1]$ and $\tilde{\Delta}_{j+1} = [0,0]^{=0}$.
      \item If $y_0 = 1$, $m_{\m}([0,0]) = 1$, $t_0 = 0$, $m_{\m}([-1,1])$ is odd and $(*)$ is satisfied, then $\Delta_{j+1} = [0,0]^{=0}$ or $[0,0]^{\ge 0}$, $\tilde{\Delta}_{j}=[-1,1]$ and $\tilde{\Delta}_{j+1} = [0,0]^{=0}$.
      \item If $y_0 = 1$, $m_{\m}([0,0]) = 1$, $t_0 = 0$, $m_{\m}([-1,1])$ is even and $(*)$ is satisfied, then $\Delta_{j+1} = [0,0]^{=0}$ or $[0,0]^{\ge 0}$, $\tilde{\Delta}_{j}=[0,1]$ and $\tilde{\Delta}_{j+1} = [0,0]^{=0}$.
      \item If $y_0 = 2$, $t_0 \neq 0$, then $\Delta_{j+1} = [0,1]$, $\tilde{\Delta}_{j}=[y_0,y_0]$ and $\tilde{\Delta}_{j+1} = [0,1]$.
      \item If  $y_0 > 1$, $t_0 = 0$ if $y_0 = 2$ and $(*)$ is not satisfied, then $\Delta_{j+1} = [-y_0 + 1,y_0 - 1]^{=0}$, $\tilde{\Delta}_{j} = [y_0,y_0]$ and $\tilde{\Delta}_{j+1} = [1-y_0,y_0-1]^{\ge 0}$ or $[1-y_0,y_0-1]^{=0}$.
      \item If  $y_0 > 1$, $t_0 = 0$ if $y_0 = 2$, $(*)$ is satisfied and $m_{\m}([- y_0,y_0])$ is odd, then $\Delta_{j+1} = [-y_0 + 1,y_0 - 1]^{=0}$, $\tilde{\Delta}_{j} = [-y_0,y_0]^{=0}$ and $\tilde{\Delta}_{j + 1} = [1-y_0,y_0-1]^{=0}$.
      \item If $y_0 > 1$, $t_0 = 0$ if $y_0 = 2$, $(*)$ is satisfied and $m_{\m}([- y_0,y_0])$ is even, then $\Delta_{j+1} = [-y_0 + 1,y_0 - 1]^{=0}$, $\tilde{\Delta}_{j} = [1-y_0,y_0]$ and $\tilde{\Delta}_{j+1} = [1-y_0,y_0-1]^{=0}$ or $[1-y_0,y_0-1]^{\ge 0}$. 
    \end{enumerate}
  \end{enumerate}
\end{lem}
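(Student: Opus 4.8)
The proof follows the same template as Lemmas~\ref{lem:sequencederneg} and~\ref{lem:sequenceder01}, exploiting the ``locality'' of $D_{y_0}$. Inspecting the explicit description of $D_{y_0}$ recalled just before the statement, the only segments of $\m$ whose \emph{end} changes are those ending in $y_0$ (which are either shortened to end in $y_0-1$ or disappear, according to the best matching and the parity corrections of Definition~\ref{def:der}); segments beginning in $-y_0$ only have their beginning trimmed, and every other segment is untouched. In particular, for every height $x \le y_0-2$ the multiset of segments of $\tilde\m$ ending in $x$ agrees with that of $\m$, and the maximal coefficient of $\tilde\m$ is still $e_{\max}$ since $\Delta_1$ does not end in $y_0$ (recall $y_0 \neq e_{\max}$).

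First I would treat everything above position $j$. By the structural analysis of $\m$ carried out before the statement (using the running hypotheses of Section~\ref{sec:derpos} together with $y_1 = y_0$), the initial sequence begins with $\Delta_i = [e_{\max}-i+1,e_{\max}-i+1]$ for $1 \le i < j$ and $\Delta_j = [y_0,y_0]$, and none of $\Delta_1,\dots,\Delta_{j-1}$ is modified by $D_{y_0}$; hence $\tilde\Delta_i = \Delta_i$ for $i < j$, which is the first half of~(3). For~(1): since $\Delta_j = [y_0,y_0]$ sits at position $j$ we have $e(\Delta_l) = e_{\max}-l+1 \le e_{\max}-j+1 = y_0$, and Lemma~\ref{lem:endpos} rules out $0 < e(\Delta_l) < y_0$, so $e(\Delta_l) \in \{y_0,0\}$. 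For~(2): if $e(\Delta_l) = y_0$ then $l = j$, and as $\Delta_l = [y_0,y_0]$ is the last segment in the algorithm there is nothing ending in $y_0-1$ below it to protect it, so $D_{y_0}$ deletes it and $\tilde l = l-1$; if $e(\Delta_l) = 0$, the tail of the sequence lies at heights $\le y_0-2$, which $D_{y_0}$ leaves intact, so $\tilde l = l$.

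It remains to prove~(3) for $i > j+1$ and the case analysis~(4), which I would do together. Fix one of the listed configurations of $m_\m([-y_0,y_0])$ and $m_\m([-y_0+1,y_0-1])$ (resp.\ $m_\m([-1,1])$ and $m_\m([0,0])$ when $y_0 = 1$), of $t_0$, and of the status of $(*)$. Using the explicit form of $D_{y_0}$ one reads off which copies of the segments $[-y_0,y_0]$, $[y_0,y_0]$, $[1-y_0,y_0-1]$, $[0,1]$, $[-1,0]$, $[0,0]$ survive in $\tilde\m$ and with which labels (obtained via the section $s$ of Section~\ref{sec:labelseg}); one then identifies $\Delta_{j+1}$ as the largest segment of $\m$ ending in $y_0-1$ that is $\preceq \Delta_j$ and compatible with the sign rule, and $\tilde\Delta_j$ (resp.\ $\tilde\Delta_{j+1}$) as the largest segment of $\tilde\m$ ending in $y_0$ (resp.\ ending in $y_0-1$, $\preceq \tilde\Delta_j$, sign-compatible). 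A direct check in each of the thirteen sub-cases yields the stated values; moreover in every case $e(\tilde\Delta_{j+1}) = y_0-1 = e(\Delta_{j+1})$ and $\tilde\Delta_{j+1} \succeq \Delta_{j+2}$, and since the multiset of segments at heights $\le y_0-2$ is unchanged the sequence continues identically from position $j+2$ on (a short sign-propagation argument on the centered segments below, exactly as in Lemma~\ref{lem:sequenceder01}, finishes~(3)).

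The main obstacle is entirely the volume and delicacy of the bookkeeping in the last step: one must simultaneously track the three labels $\{\ge 0,\,=0,\,\le 0\}$ and the signs $\varepsilon$ on the centered segments, the parities governing how the best matching of $D_{y_0}$ distributes the shortenings (in particular which single copy of $[y_0,y_0]$ or $[-y_0,y_0]$ is protected and hence survives), and the precise shape of $\m$ near height $0$ forced by $L([-1,0])$-reducedness (the role of $t_0$, which is what splits off the special cases $y_0 = 1$ and $y_0 = 2$). There is no conceptual difficulty once the local picture is isolated; the challenge is to organise the thirteen configurations without error.
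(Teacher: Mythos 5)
Your proposal is correct and follows essentially the same approach as the paper: establish that the initial segments $\Delta_1,\dots,\Delta_{j-1}$ are unchanged, read off parts (1) and (2) from the position of $[y_0,y_0]$ and Lemma~\ref{lem:endpos}, and then handle positions $j$ and $j+1$ by a case analysis parallel to the one in item (4). Two small things are worth flagging. First, your global locality claim ``for every height $x \le y_0-2$ the multiset of segments of $\tilde\m$ ending in $x$ agrees with that of $\m$'' is not literally true at $x = -y_0$ (where $[-y_0,-y_0]$'s may be suppressed); it holds only on the range $0 \le x \le y_0-2$, which is fortunately all you use, since $e(\Delta_l) \ge 0$ guarantees the tail of the sequence never descends below height $0$. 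Second, the argument that $\tilde l = l$ when $e(\Delta_l)=0$ cannot be read off from the ``tail lies at heights $\le y_0-2$'' observation alone — it requires that $\tilde\Delta_j$ and $\tilde\Delta_{j+1}$ actually exist, which is only established by your subsequent case-by-case inspection; phrasing (2) and (4) as being proved jointly (rather than (2) first) would remove the apparent circularity. Beyond these cosmetic points, the framework is the right one, the continuation argument from position $j+2$ (unchanged multiset plus unchanged sign on $[-y_0+1,y_0-1]$ when it already lies in $\m$) is sound, and the thirteen sub-cases you defer to a direct check are exactly the ones the paper's proof works through explicitly.
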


\begin{proof}
  As in Lemma \ref{lem:derposx1inf}, let $i_0 = e_{\max } - y_1 + 2$. Then, for all $0<i<i_0$, $\Delta_{i}=[e_{\max }-i+1, e_{\max }-i+1]$. This time, $\Delta_{i_0 - 1} = [y_0,y_0]$ could be modified by $D_{y_0}$. We get that $\tilde{l} \ge i_0 - 2$, and for all $i \le i_0 - 2$, $\tilde{\Delta}_i=\Delta_i$.

  \begin{itemize}
    \item Suppose $y_0 = 1$ and $e(\Delta_l)=1$. In particular $[0,0] \notin \m$, $t_0 = 0$ and $(*)$ is not satisfied. The derivative $D_1$ suppresses all the segments $[1,1]$ and all the $[-1,1]$. In $\tilde{\m}$ there are no other segments ending in $1$, so $\tilde{l}=l-1$.
    \item Suppose $y_0 = 1$ and $e(\Delta_l)<1$. If $m_{\m}([0,0]) \neq 0$ then $\Delta_l = [0,0]^{=0}$ or $[0,0]^{\ge 0}$. And if $m_{\m}([0,0]) = 0$ then $\Delta_l = [-1,0]$. 
    
    \begin{itemize}
      \item Suppose $m_{\m}([0,0]) = 0$, that is $\Delta_l = [-1,0]$. Then $(*)$ is not satisfied. And $D_1$ suppresses all the $[1,1]$. If $m_{\m}([-1,1])$ is odd and $t_0 \ge 2$; then $\tilde{l}=l$, $\tilde{\Delta}_{l-1}=[0,1]$ and $\tilde{\Delta}_{l}=[0,0]^{\le 0}$. If $m_{\m}([-1,1])$ is odd and $t_0 = 1$; then $\tilde{l}=l$, $\tilde{\Delta}_{l-1}=[-1,1]^{=0}$ and $\tilde{\Delta}_{l}=[0,0]^{\le 0}$. If $m_{\m}([-1,1])$ is even then $\tilde{l}=l$, $\tilde{\Delta}_{l-1}=[0,1]$, $\tilde{\Delta}_{l}=[-1,0]$ if $m_{\m}([-1,1]) = 0$ and $\tilde{\Delta}_{l}=[0,0]^{\le 0}$ if $m_{\m}([-1,1]) \neq 0$.
      \item Suppose $m_{\m}([0,0]) \neq 0$, that is $\Delta_l = [0,0]^{=0}$ or $[0,0]^{\ge 0}$. Suppose that $(*)$ is not satisfied. Then $\tilde{l}=l$, $\tilde{\Delta}_{l-1}=[1,1]$ and $\tilde{\Delta}_l = [0,0]^{=0}$ or $[0,0]^{\ge 0}$. Now suppose that $(*)$ is satisfied. If $m_{\m}([0,0]) > 1$, then $\tilde{l}=l$, $\tilde{\Delta}_{l-1}=[1,1]$ and $\tilde{\Delta}_l = [0,0]^{=0}$ or $[0,0]^{\ge 0}$. If $m_{\m}([0,0]) = 1$ and $t_0 \neq 0$, then $\tilde{l}=l$, $\tilde{\Delta}_{l-1}=[0,1]$ and $\tilde{\Delta}_l = [0,0]^{=0}$. If $m_{\m}([0,0]) = 1$, $t_0 = 0$ and $m_{\m}([-1,1])$ is odd, then $\tilde{l}=l$, $\tilde{\Delta}_{l-1}=[-1,1]$ and $\tilde{\Delta}_l = [0,0]^{=0}$.  If $m_{\m}([0,0]) = 1$, $t_0 = 0$ and $m_{\m}([-1,1])$ is even, then $\tilde{l}=l$, $\tilde{\Delta}_{l-1}=[0,1]$ and $\tilde{\Delta}_l = [0,0]^{=0}$.
    \end{itemize}
    \item Suppose $y_0 > 1$ and $e(\Delta_l)=y_0$. Then there are no segments ending in $y_0$ in $\m$. The derivative $D_{y_0}$ suppresses all the segments $[y_0,y_0]$ and all the $[-y_0,y_0]$. In $\tilde{m}$ there are no other segments ending in $y_0$, so $\tilde{l}=l-1$.
    \item Suppose $y_0 > 1$, $e(\Delta_l)<y_0$ and $t_0 = 0$ if $y_0 = 2$. Then $e(\Delta_l) = 0$ by Lemma \ref{lem:endpos}. Then $\Delta_{i_0} = [-y_0 + 1,y_0 - 1]^{=0}$. If $(*)$ is not satisfied, then $\tilde{l}=l$, $\tilde{\Delta}_{i_0 - 1} = [y_0,y_0]$ and $\tilde{\Delta}_{i_0} = [1+y_0,y_0-1]^{\ge 0}$ or $[1+y_0,y_0-1]^{=0}$. Suppose that $(*)$ is satisfied. Then $m_{\m}([1 - y_0,y_0 - 1]) = 1$. If $m_{\m}([- y_0,y_0])$ is odd, $\tilde{l}=l$, $\tilde{\Delta}_{i_0 - 1} = [-y_0,y_0]^{=0}$ and $\tilde{\Delta}_{i_0 } = [1-y_0,y_0-1]^{=0}$. If $m_{\m}([- y_0,y_0])$ is even, $\tilde{l}=l$, $\tilde{\Delta}_{i_0 - 1} = [1-y_0,y_0]$ and $\tilde{\Delta}_{i_0} = [1-y_0,y_0-1]^{=0}$ or $[1-y_0,y_0-1]^{\ge 0}$.
  \item Suppose $y_0 = 2$, $t_0 \neq 0$ and $e(\Delta_l)<y_0$. Then $e(\Delta_l) = 0$ by Lemma \ref{lem:endpos}. Then $\Delta_{i_0 } = [0,1]$. Then $\tilde{l}=l$, $\tilde{\Delta}_{i_0 - 1} = [2,2]$ and $\tilde{\Delta}_{i_0} = [0,1]$.
  \end{itemize}
\end{proof}

\begin{lem}
  \label{lem:ADderposx1eqx0}
    Suppose that $y_1 = y_0$.
    \begin{enumerate}
        \item If $e(\Delta_{l}) = y_0$, then $(\tilde{\m}_1,\tilde{\varepsilon}_1)=D_{-y_0}(\m_{1},\varepsilon_1)$ and $(\tilde{\m}^{\#},\tilde{\varepsilon}^{\#})=D_{y_0}(\m^{\#},\varepsilon^{\#})$.
        \item If $e(\Delta_{l}) =0$, then $(\tilde{\m}_1,\tilde{\varepsilon}_1)=(\m_{1},\varepsilon_1)$ and $(\tilde{\m}^{\#},\tilde{\varepsilon}^{\#})=D_{y_0}(\m^{\#},\varepsilon^{\#})$. 
    \end{enumerate}
\end{lem}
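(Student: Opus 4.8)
The proof follows the template of Lemma~\ref{lem:ADderneg} and Lemma~\ref{lem:ADder01}, with Lemma~\ref{lem:sequencederposx1eqx0} as the combinatorial input. Recall that $(\m_1,\varepsilon_1)$ is determined by $\Delta_1$, $\Delta_l$ and $\varepsilon_0$ (and, when $\m_1$ is centered, by the number $n_0$ of centered segments of $\m$ and by $\varepsilon([0,0]_{\rho_u})$), and similarly $(\tilde\m_1,\tilde\varepsilon_1)$ is determined by $\tilde\Delta_1$, $\tilde\Delta_{\tilde l}$ and $\tilde\varepsilon_0$; moreover $(\m^{\#},\varepsilon^{\#})$ (\resp $(\tilde\m^{\#},\tilde\varepsilon^{\#})$) is obtained from $(\m,\varepsilon)$ (\resp from $(\tilde\m,\tilde\varepsilon)$) by truncating the ends of $\Delta_1,\dots,\Delta_l$ (\resp $\tilde\Delta_1,\dots,\tilde\Delta_{\tilde l}$) and the beginnings of their duals. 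So in each of the two cases the plan is to compare, using Lemma~\ref{lem:sequencederposx1eqx0}(2)--(4), the lengths and endpoints of the two initial sequences, to deduce the statement about $\m_1$, and then to compare the two multisets of truncated segments, keeping track only of the segments ending at $y_0$ and $y_0-1$, which are the ones $D_{y_0}$ itself acts on.

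For part (1), where $e(\Delta_l)=y_0$: since $e(\Delta_i)=e_{\max}-i+1$ we have $l=j$, so $\Delta_l=[y_0,y_0]_{\rho_u}$ is not centered, $\varepsilon_0=1$, and $\m_1=[y_0,e_{\max}]_{\rho_u}+[-e_{\max},-y_0]_{\rho_u}$. By Lemma~\ref{lem:sequencederposx1eqx0}(2), $\tilde l=l-1$, and by Lemma~\ref{lem:sequencederposx1eqx0}(3) the initial sequence of $\tilde\m$ agrees with that of $\m$ for indices $<j$, so $\tilde\Delta_{\tilde l}=\Delta_{j-1}=[y_0+1,y_0+1]_{\rho_u}$, $\tilde\varepsilon_0=1$, and $\tilde\m_1=[y_0+1,e_{\max}]_{\rho_u}+[-e_{\max},-y_0-1]_{\rho_u}$; this is exactly $D_{-y_0}(\m_1,\varepsilon_1)$, since in $\m_1$ the only segment ending at $-y_0$ is $[-e_{\max},-y_0]_{\rho_u}$ and it is unmatched. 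Because no segment of $\m$ ends at $y_0-1$ here (otherwise the algorithm would not stop at $\Delta_l$, as $c(\Delta_l)\neq0$), $D_{y_0}$ deletes every copy of $[y_0,y_0]_{\rho_u}$ and $[-y_0,-y_0]_{\rho_u}$ and transforms the other segments ending at $y_0$ and starting at $-y_0$; comparing this with the fact that $\AD$ removes from $\m$ one copy of $[y,y]_{\rho_u}+[-y,-y]_{\rho_u}$ for $y_0\le y\le e_{\max}$ and from $\tilde\m$ one copy of $[y,y]_{\rho_u}+[-y,-y]_{\rho_u}$ for $y_0+1\le y\le e_{\max}$, one gets $(\tilde\m^{\#},\tilde\varepsilon^{\#})=D_{y_0}(\m^{\#},\varepsilon^{\#})$; here the sign bookkeeping is immediate because $\card\{[-y_0,y_0-1]_{\rho_u}\in\m\}=0$.

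For part (2), where $e(\Delta_l)=0$: by Lemma~\ref{lem:sequencederposx1eqx0}(2), $\tilde l=l$, and by Lemma~\ref{lem:sequencederposx1eqx0}(3) one has $\tilde\Delta_i=\Delta_i$ for $i\ne j,j+1$; in particular $\tilde\Delta_1=\Delta_1$, and going through the thirteen subcases of Lemma~\ref{lem:sequencederposx1eqx0}(4) shows that $\tilde\Delta_{\tilde l}$ has label $\ge 0$ or $=0$ precisely when $\Delta_l$ does, hence $\tilde\varepsilon_0=\varepsilon_0$. When $\m_1$ is not centered this already gives $(\tilde\m_1,\tilde\varepsilon_1)=(\m_1,\varepsilon_1)$; when $\m_1$ is centered (so $\varepsilon_0=-1$, which by Lemma~\ref{lem:sequencederposx1eqx0}(4) forces $y_0=1$ and $m_{\m}([0,0]_{\rho_u})\neq0$) one checks from the explicit description of $D_{y_0}$ that it changes the number of centered segments of $\m$ by an even amount and leaves $\varepsilon([0,0]_{\rho_u})$ unchanged, so $\varepsilon_1(\m_1)=\tilde\varepsilon_1(\tilde\m_1)$. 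For $\m^{\#}$ versus $\tilde\m^{\#}$, the two initial sequences differ only at the indices $j,j+1$, and Lemma~\ref{lem:sequencederposx1eqx0}(4) records precisely how $\tilde\Delta_j,\tilde\Delta_{j+1}$ arise from $\Delta_j,\Delta_{j+1}$; so the set of segments whose ends $\AD$ truncates is affected by $D_{y_0}$ only among segments ending at $y_0$ and $y_0-1$, and one concludes $(\tilde\m^{\#},\tilde\varepsilon^{\#})=D_{y_0}(\m^{\#},\varepsilon^{\#})$ by the same comparison of the matching sets $A_{y_0}^{c}$ and $A_{y_0}^{\#,c}$ as in Lemma~\ref{lem:dermdieze} and Lemma~\ref{lem:ADderneg}.

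The step I expect to be the main obstacle is the last verification in part (2): one has to run through all thirteen subcases of Lemma~\ref{lem:sequencederposx1eqx0}(4) and check, in each, that $\AD$ and $D_{y_0}$ act compatibly on the pair $\Delta_j,\Delta_{j+1}$ — in particular that no spurious copy of $[y_0,y_0]_{\rho_u}$ or $[y_0-1,y_0-1]_{\rho_u}$ is suppressed or spared depending on the order of the two operations — and that the labels and signs of the centered segments created or moved by $D_{y_0}$ match those produced by $\AD$. This is the analogue of the case distinction in the proof of Lemma~\ref{lem:dermdieze}, and it uses the precise control of the initial sequence from Lemma~\ref{lem:sequencederposx1eqx0} together with the standing assumptions that $(\m,\varepsilon)$ is $y$-reduced for $-e_{\max}<y<0$ and $L([-1,0])$-reduced, which restrict every segment of $\m$ with end $\le0$ to one of the forms $[y,y]_{\rho_u}$, $[-1,0]_{\rho_u}$, $[0,1]_{\rho_u}$ or $[-y,y]_{\rho_u}$.
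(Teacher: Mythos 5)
Your proposal is correct and follows essentially the same route as the paper: both use Lemma~\ref{lem:sequencederposx1eqx0} as the combinatorial input, read off the $(\tilde\m_1,\tilde\varepsilon_1)$ claim from the comparison of the two initial sequences, and reduce the $(\tilde\m^{\#},\tilde\varepsilon^{\#})$ claim to a case-by-case check of how $\AD$ and $D_{y_0}$ act on segments ending at $y_0$ and $y_0-1$. One small difference worth noting: you explicitly verify, when $\m_1$ is a centered segment in case~(2), that $D_{y_0}$ preserves the parity of $n_0$ and leaves $\varepsilon([0,0]_{\rho_u})$ unchanged so that $\varepsilon_1(\m_1)=\tilde\varepsilon_1(\tilde\m_1)$; the paper only asserts that the $\m_1$ statement ``follows directly from Lemma~\ref{lem:sequencederposx1eqx0}'' without addressing the sign, so your write-up is slightly more complete there, while for the $\m^{\#}$ part in case~(2) both you and the paper defer the bulk of the thirteen-subcase verification — the paper works one subcase in full and declares the rest ``handled similarly,'' and you point to the analogous matching-set argument from Lemmas~\ref{lem:dermdieze} and~\ref{lem:ADderneg}.
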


\begin{proof}
  The result for $(\tilde{\m}_1,\tilde{\varepsilon}_1)$ follows directly from Lemma \ref{lem:sequencederposx1eqx0}. If $e(\Delta_l)=y_0$, then, for all $1 \le i \le l$, $\Delta_{i}=[e_{\max }-i+1, e_{\max }-i+1]$, and these segments are just suppressed by $\AD$. It is easy to see that $(\tilde{\m}^{\#},\tilde{\varepsilon}^{\#})=D_{y_0}(\m^{\#},\varepsilon^{\#})$.

  If $e(\Delta_l)=0$, in each case of Lemma \ref{lem:sequencederposx1eqx0} we get that $(\tilde{\m}^{\#},\tilde{\varepsilon}^{\#})=D_{y_0}(\m^{\#},\varepsilon^{\#})$ by examining the formula of the derivative. Let us treat the first case; the others are handled similarly. Suppose that $y_0 = 1$, $m_{\m}([0,0]) = 0$, $m_{\m}([-1,1])$ is odd and $t_0 \ge 2$ then $\Delta_j = [1,1]$, $\Delta_{j+1} = [-1,0]$, $\tilde{\Delta}_{j}=[0,1]$ and $\tilde{\Delta}_{j+1}=[0,0]^{\le 0}$. Let us look at the effect of $\AD$ and $D_{1}$ on the segments ending in 1 and 0. The algorithm in $(\m,\varepsilon)$ suppresses one $[-1,-1] + [1,1]$ and transforms $[-1,0] + [0,1]$ into $[-1,-1] + [1,1]$. Thus it is similar to just suppressing $[-1,0] + [0,1]$. The algorithm $\AD$ in $D_1(\m,\varepsilon)$, transforms $[-1,0] + [0,1]$ into $[0,0] + [0,0]$ and suppresses $[0,0] + [0,0]$. Therefore, it is also similar to just suppressing $[-1,0] + [0,1]$. We can conclude  by examining the formula of $D_1$. In both $(\m,\varepsilon)$ and $(\m^{\#},\varepsilon^{\#})$, the derivative suppresses all the $[1,1] + [-1,1]$, transforms all the $[a,1]$ with $a < -1$ into $[a,0]$, transforms all the $[-1,1]$ except one into $[0,0]$ and transforms $[-1,0] + [0,1]$ into $[0,0] + [0,0]$. We get that $(\tilde{\m}^{\#},\tilde{\varepsilon}^{\#})=D_{y_0}(\m^{\#},\varepsilon^{\#})$.
\end{proof}

\begin{prop}
  If $y_1 = y_0$, then $\AD(D_{y_0}(\m,\varepsilon))=D_{-y_0}(\AD(\m,\varepsilon))$.
\end{prop}

\begin{proof}
  It follows from Lemmas \ref{lem:ADderposx1eqx0}, \ref{lem:ADcommDer} and \ref{lem:dersum}.
\end{proof}

\begin{lem}
    \label{lem:sequencederpos}
    Suppose that $y_1 > y_0$.
    \begin{enumerate}
        \item If $e(\Delta_l)=y_0$ then $\tilde{l}=l - 1$; otherwise $\tilde{l}=l$.
        \item For $1 \le i \le \tilde{l}$:
        \begin{enumerate}
          \item If $e(\Delta_i)\neq y_0$
          \begin{enumerate}
            \item If $y_0 = 1$, $\Delta_i=[-1,0]$ and $m_{\m}([-1,1]) \neq 0$, then $\tilde{\Delta}_{i}=[0,0]^{\le 0}$.
          \item If $(i)$ is not satisfied, $b(\Delta_i)=-y_0$ and $\Delta_i \neq [-1,0]$, then $\tilde{\Delta}_{i}={}^{-}\Delta_{i}$
          \item Otherwise, $\tilde{\Delta}_{i}=\Delta_{i}$. 
          \end{enumerate}
          \item If $e(\Delta_i)= y_0$
          \begin{enumerate}
            \item If $\Delta_{i}=[0,1]$, $(*)$ is not satisfied, $m_{\m}([-1,1])$ is odd and $t_0=1$, then $\tilde{\Delta}_{i}=[-1,1]^{=0}$.
            \item If $\Delta_{i}=[-y_0,y_0]$, $(*)$ is satisfied, $m_{\m}([-y_0,y_0])$ is even and $\Delta_{i-1}=[y_0+1,y_0+1]$, then $\tilde{\Delta}_{i}=[-y_0+1,y_0]$.
            \item If $\Delta_{i}=[-y_0,y_0]$, $(*)$ is satisfied and $m_{\m}([-y_0,y_0])$ is odd, then  $\tilde{\Delta}_{i}=[-y_0,y_0]^{=0}$.
            
            \item Otherwise, $\tilde{\Delta}_{i}=\Delta_{i}$.
          \end{enumerate}
          
        \end{enumerate}
    \end{enumerate}
\end{lem}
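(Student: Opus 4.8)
The plan is to follow the pattern already established in Lemmas \ref{lem:sequencederneg}, \ref{lem:sequenceder01} and \ref{lem:sequencederposx1eqx0}: track how the initial sequence $\Delta_1 \succeq \cdots \succeq \Delta_l$ for $(\m,\varepsilon)$ is perturbed when one passes to $(\tilde{\m},\tilde{\varepsilon}) = D_{y_0}(\m,\varepsilon)$. The crucial structural observation, as in the earlier lemmas, is that the algorithm building the initial sequence is \emph{local in the ends}: once $\Delta_i$ is fixed, the choice of $\Delta_{i+1}$ depends only on the segments of $y$ ending in $e(\Delta_i) - 1$ (together with the sign/label constraints). Therefore, to compare the two sequences, I only need to understand at which ends $D_{y_0}$ alters the multiset of segments, and how.

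First I would record the starting segment of the sequence. Since $e_{\max} > 1$ and (by the standing hypotheses of Section \ref{sec:derpos}) $y_0 \neq \pm e_{\max}$, $\Delta_1 = [e_{\max},e_{\max}]$ and $D_{y_0}$ does not suppress it, so $\tilde{\Delta}_1 = \Delta_1 = [e_{\max},e_{\max}]$ and $e_{\max}$ is still the maximal coefficient in $\tilde{\m}$. Set $i_0 = e_{\max} - y_0 + 1$, so $\Delta_i = [e_{\max}-i+1,e_{\max}-i+1]$ for $1 \le i < i_0$; since $y_1 > y_0$, none of the segments $[e_{\max}-i+1,e_{\max}-i+1]$ with $i < i_0$ has its end equal to $y_0$, so all of them survive $D_{y_0}$ and $\tilde{\Delta}_i = \Delta_i$ for $i < i_0$. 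This handles the ``top part'' of the sequence. Next I would locate the first index $i$ with $e(\Delta_i) = y_0$; by the explicit description of $y$ in the range $y_1 > y_0$ (all segments of $\m$ ending in $y_0$ are of the form $[a,y_0]$ with $a < -y_0$, $[-y_0,y_0]$ or $[y_0,y_0]$, and for the centered ones various multiplicity/sign conditions hold) and the explicit formula for $D_{y_0}$ recalled above, I determine $\tilde{\Delta}_i$ case by case — this is exactly the content of item (2)(b), and the enumerated subcases (i)–(iv) there simply transcribe which of the four derivative behaviors (transform negative ends, collapse centered segments to $[-y_0+1,y_0-1]$, split two of them into $[-y_0,y_0-1] + [-y_0+1,y_0]$, suppress $[y_0,y_0]$) is triggered.

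Below $y_0$, I would then argue that the sequence is again unchanged except for the ``beginnings-in-$-y_0$'' phenomenon. Concretely, for $i$ with $e(\Delta_i) < y_0$: if $D_{y_0}$ touches $\Delta_i$ at all, it is because $b(\Delta_i) = -y_0$, in which case $\Delta_i$ is replaced by ${}^{-}\Delta_i$ — except for the special segment $[-1,0]$ (when $y_0 = 1$), which interacts with the collapse of the $[-1,1]$'s and becomes $[0,0]^{\le 0}$ when $m_{\m}([-1,1]) \neq 0$, and is otherwise unchanged; this is item (2)(a). One has to check, as in Lemma \ref{lem:sequencederneg}, that replacing $\Delta_i$ by ${}^{-}\Delta_i$ does not let a new, larger segment sneak in at the next step — the potential culprit would be a segment of the form $[-y_0, e(\Delta_{i+1})]$, but the hypotheses ($(\m,\varepsilon)$ is $y$-reduced for all $-e_{\max} < y < 0$, and $L([-1,0])$-reduced) rule this out, exactly as the maximality argument in the earlier lemma. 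Finally, at the very bottom: if $e(\Delta_l) = y_0$ then $\Delta_l$ is among the segments ending at $y_0$ killed by the derivative and no replacement of equal end survives, so $\tilde{l} = l - 1$; otherwise no index is lost and $\tilde{l} = l$, giving item (1).

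The main obstacle will be the bookkeeping at the end $y_0$ itself: when $y_0 = 1$ there is an extra layer of interaction between the segments $[-1,1]$, $[0,0]$, $[-1,0]$, $[0,1]$ and the parameter $t_0 = m_{\m}([-1,0])$, and the behavior of $D_1$ depends on the parity of $m_{\m}([-1,1])$ and on whether $(*)$ holds — so several of the subcases of (2)(b) require one to check that the ``corrected'' derivative formula (the $[-y_0,y_0] + [-y_0+1,y_0-1] \to [-y_0+1,y_0] + [-y_0,y_0-1]$ adjustment) is precisely what makes $\tilde{\Delta}_i$ land where claimed. I expect no genuinely new idea is needed beyond the locality principle above; the difficulty is that one must enumerate the cases exhaustively and, for each, read off both the effect of $D_{y_0}$ on $\m$ and the resulting largest admissible segment at ends $y_0$ and $y_0 - 1$, comparing with the unperturbed sequence. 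Since these are finite, mechanical verifications dictated entirely by the formulas in Section \ref{sec:expder} and the case list preceding the lemma, I would organize the proof as a single induction on $i$ (as in Lemma \ref{prop:longueurbad}), treating $i < i_0$, $i = i_0, i_0 + 1$ (the ends $y_0$ and $y_0 - 1$, where the listed subcases occur), and $i > i_0 + 1$ separately, and in each branch invoke the explicit derivative formula together with the maximality of $\Delta_i$ in the algorithm.
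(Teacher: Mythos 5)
Your overall strategy — track the perturbation of the initial sequence via the explicit derivative formulas, using locality in the ends — is the right one, but your description of the "top part" of the sequence contains a concrete error that would break the argument whenever $y_1 > y_0 + 1$. You set $i_0 = e_{\max} - y_0 + 1$ and claim $\Delta_i = [e_{\max}-i+1, e_{\max}-i+1]$ for all $1 \le i < i_0$, concluding $\tilde{\Delta}_i = \Delta_i$ there. But by definition of $y_1$, the singletons $[y,y]$ exist in $\m$ only for $y \ge y_1$; the initial sequence consists of singletons only up to index $e_{\max} - y_1 + 1$ (the paper's $i_0$ is $e_{\max} - y_1 + 2$). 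For indices between these bounds — i.e.\ when $y_0 < e(\Delta_i) < y_1$ — the segment $\Delta_i$ is \emph{not} a singleton, and may well start at $-y_0$ (this is exactly what the paper's $\Delta_{i_0}$ does). In that case $D_{y_0}$ replaces $\Delta_i$ by ${}^{-}\Delta_i$, which is precisely subcase (2)(a)(ii) of the lemma you are trying to prove, and your claim "$\tilde{\Delta}_i = \Delta_i$ for $i < i_0$" contradicts the statement. You do correctly anticipate a "beginnings-in-$-y_0$" phenomenon, but you place it at ends $< y_0$, whereas the hypotheses of the section (negative-reduced and $L([-1,0])$-reduced) force the only such negative segment to be $[-1,0]$ with $y_0 = 1$; the nontrivial occurrence is at ends $> y_0$, which your singleton ansatz forbids.

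This is not merely cosmetic: the paper's hardest technical point is exactly the analysis at this one segment $\Delta_{i_0^{\mathrm{paper}}}$ with $e(\Delta_{i_0}) > y_0$ and $b(\Delta_{i_0}) = -y_0$. After replacing it by ${}^{-}\Delta_{i_0}$ one must show that no new, larger segment of the form $[-y_0, e-1]$ (or $[-y_0,y_0]^{\ge 0}$ when $e = y_0+1$) sneaks into $\tilde{\m}$ ahead of $\Delta_{i_0+1}$, and the check splits into several subcases depending on $(*)$, the parity of $m_{\m}([-y_0,y_0])$, and whether $\Delta_{i_0+1}$ is centered or negative. Your outline does gesture at "the potential culprit would be $[-y_0, e(\Delta_{i+1})]$", which shows you have the right instinct, but with $i_0$ set as you did this index never arises in your bookkeeping, so the decisive case analysis would simply not get written. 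To repair the proof, redefine $i_0 := e_{\max} - y_1 + 2$, observe that the singleton segment form only holds for $i < i_0$, and then perform the case split on $\Delta_{i_0}$ as the first non-trivial index — which is what the paper does.
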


\begin{proof}
    Note that by definition $e(\Delta_{1})=e_{\max}$ is the maximum of the coefficients of the segments. Since by definition of $y_0$, $\Delta_{1} \neq [-y_0,-y_0]$ and $[-y_0,y_0]$, after taking the derivative, $\Delta_{1}$ does not vanish. Hence $e_{\max}$ is still the maximum of the coefficients in $\tilde{\m}$.

    Let $i_0 = e_{\max } - y_1 + 2$. Then, for all $0<i<i_0$, $\Delta_{i}=[e_{\max }-i+1, e_{\max }-i+1]$. All of these segments belong to $\tilde{\m}$, so $\tilde{l} \ge i_0 - 1$ and for all $0<i<i_0$, $\tilde{\Delta}_{i}=\Delta_{i}=[e_{\max }-i+1, e_{\max }-i+1]$.

    Since $y_0 < y_1$, there are no segments $[-y_0,-y_0]$ or $[y_0,y_0]$ in $\m$. The segments starting at $-y_0$ are of the form: $[-y_0,a]$ with $a>y_0$, $[-y_0,y_0]$ and $[-1,0]$ if $y_0 = 1$.

    Suppose that $e(\Delta_{i_0})>y_0$. By definition of $y_1$, $l(\Delta_{i_0})>1$. If $b(\Delta_{i_0})\neq -y_0$ then $\tilde{\Delta}_{i_0}=\Delta_{i_{0}}$, and if $b(\Delta_{i_0})= -y_0$  then $\tilde{\Delta}_{i_0}={}^{-}\Delta_{i_0}$. Let us study the case where $\tilde{\Delta}_{i_0}={}^{-}\Delta_{i_{0}}$ and $l\ge i_0 + 1$. By hypotheses, $\Delta_{i_{0}}$ is a segment such that $c(\Delta_{i_0})> 0$. Let $e = e(\Delta_{i_0})$. The only segment smaller than ${}^{-}\Delta_{i_{0}}$ but not than $\Delta_{i_{0}}$ ending in $e-1$ is $[-y_0,e-1]$ (if $e-1=y_0$, then it is $[-y_0,y_0]^{\ge 0}$).
    \begin{itemize}
        \item Suppose $e > y_0 +1$. Then $\Delta_{i_0 + 1}$ is unchanged in $\tilde{\m}$, we still have $\Delta_{i_0 + 1} \prec {}^{-}\Delta_{i_0}$. It remains maximal since there is no segment of the form $[-y_0,e-1]$  in $\tilde{\m}$. So $\tilde{l}\ge i_0 + 1$ and $\tilde{\Delta}_{i_0 + 1}=\Delta_{i_0 + 1}$.
        \item Suppose $e = y_0 +1$. Then $e(\Delta_{i_0 + 1})=y_0$ and $b(\Delta_{i_0 + 1})\le -y_0$. Thus $\Delta_{i_0 + 1}$ is a centered segment or a negative segment. 
        \begin{itemize}
            \item Suppose $\Delta_{i_0 + 1}$ is a negative segment. Then $l=i_0 + 1$ as there is no negative segment ending in $y_0 - 1$. Also  $[-y_0,y_0] \notin \m$ and thus $\tilde{l}=i_0$ (the only possible segment on $\tilde{\m}$ ending in $y_0$ is $[0,1]$ if $y_0 = 1$).
            \item Suppose $y_0 > 1$, $c(\Delta_{i_0 + 1})=0$ and $(*)$ is not satisfied. Then $l=i_0 + 1$ and $\tilde{l}=i_0$ (there is no segment ending in $y_0$ in $\tilde{\m}$).
            \item Suppose $y_0 = 1$, $c(\Delta_{i_0 + 1})=0$, $(*)$ is not satisfied, $m_{\m}([-1,1])$ is odd and $t_0 \ge 1$. Then $\Delta_{i_0 + 1}=[-1,1]^{=0}$. Also one and only one $[-1,1]$ is not changed by the derivative, thus $\tilde{\Delta}_{i_0 + 1}=\Delta_{i_0 + 1}$.
            \item Suppose $y_0 = 1$, $c(\Delta_{i_0 + 1})=0$, $(*)$ is not satisfied, $m_{\m}([-1,1])$ is even or $t_0 =0$. Then $l=i_0 + 1$ and $\tilde{l}=i_0$.
            \item Suppose $c(\Delta_{i_0 + 1})=0$, $(*)$ is satisfied and $m_{\m}([-y_0,y_0])$ is odd. Then $\Delta_{i_0 + 1} = [-y_0,y_0]^{=0}$. In $\tilde{\m}$ the only segment ending in $y_0$ different from $[y_0,y_0]$ or $[0,1]$ is $[-y_0,y_0]^{=0}$. Thus $\tilde{\Delta}_{i_0 + 1}=\Delta_{i_0 + 1}$.

            \item Suppose $c(\Delta_{i_0 + 1})=0$, $(*)$ is satisfied and $m_{\m}([-y_0,y_0])$ is even. Then $\Delta_{i_0 + 1} = [-y_0,y_0]^{\le 0}$. Thus $l=i_0 + 1$. In $\tilde{\m}$ the only segment ending in $y_0$ different from $[y_0,y_0]$ is $[-y_0 + 1,y_0]$. But, $[-y_0 + 1,y_0]$ is not smaller than $[-y_0 + 1,y_0 + 1] = \tilde{\Delta}_{i_0}$. Thus $\tilde{l}=i_0$.
        \end{itemize}
    \end{itemize}

    Let $i_1 \ge i_0 - 1$ be the biggest integer smaller than $l$ such that for all $i \leq i_1$, $e(\Delta_{i}) \neq y_0$.
    For all $i_0 + 1 \leq i \leq i_1$, we have that $b(\Delta_{i}) \neq -y_0$ and $e(\Delta_{i}) \neq y_0$, thus $\tilde{l}\ge i_1$ and  $\tilde{\Delta}_{i}=\Delta_{i}$. Moreover, if $i_1=l$ then $\tilde{l}=l$.

    Therefore, let us assume that $l > i_1$. Thus $e(\Delta_{i_1})=y_0 + 1$ and $e(\Delta_{i_1 + 1})=y_0$. In particular $p(\Delta_{i_1 + 1})=[-y_0,y_0]$, $\Delta_{i_1 + 1}=[0,1]$ (if $y_0 = 1$) or $c(\Delta_{i_1 + 1})<0$.
    \begin{itemize}
        \item Suppose $c(\Delta_{i_1 + 1})<0$, then necessarily $l=i_1 + 1$. In $\tilde{\m}$, there is no other negative segments ending in $y_0$ (and the signs of the centered segments are not changed), thus $\tilde{l}=i_1=l-1$.
        \item Suppose $p(\Delta_{i_1 + 1})=[-y_0,y_0]$ and $(*)$ is not satisfied. Note that in the case $y_0 = 1$, then $t_0 = 0$. Indeed, we have that $t_0 \le m_{\m}([-2,-2])-m_{\m}([-1,-1])=m_{\m}([-2,-2])$. Thus if $t_0 \neq 0$, then $m_{\m}([-2,-2]) \neq 0$ and $y_1 = 2$. Therefore, $\Delta_{i_1} = [2,2]$. But $[0,1] \prec [2,2]$, so we would have $\Delta_{i_1 + 1} = [0,1]$, which is not. Therefore, $l = i_1 + 1$ and $\tilde{l}=l-1$.
        \item Suppose $p(\Delta_{i_1 + 1})=[-y_0,y_0]$, $(*)$ is satisfied and $m_{\m}([-y_0,y_0])$ is odd. Then $l = y_0  + i_1 + 1$, if $i_1 + 2 \le i < l$ then $\Delta_i=[-e_{\max }+i-1,e_{\max }-i+1]^{=0}$, and $\Delta_{l}=[0,0]$ or $[-1,0]$.  In $\tilde{\m}$ there is still a $[-y_0,y_0]$, so  $\tilde{\Delta}_{i_1 + 1}=[-y_0,y_0]^{=0}$, $\tilde{l}=l$ and for all $i>i_1$, $\tilde{\Delta}_{i}=\Delta_{i}$.
        \item Suppose $\Delta_{i_1 + 1}=[-y_0,y_0]$, $(*)$ is satisfied, $m_{\m}([-y_0,y_0])$ is even and $\Delta_{i_1}=[y_0+1,y_0+1]$. Then $\Delta_{i_1 + 1}=[-y_0,y_0]^{\ge 0}$. Then $l = y_0  + i_1 + 1$, if $i_1 + 2 \le i < l$ then $\Delta_i=[-e_{\max }+i-1,e_{\max }-i+1]^{=0}$, and $\Delta_{l}=[0,0]$ or $[-1,0]$. In $\tilde{\m}$, $\tilde{\Delta}_{i_1 +1}=[-y_0 +1,y_0]$, $\tilde{l}=l$ and for all $i>i_1 + 1$, $\tilde{\Delta}_{i}=\Delta_{i}$. 
        \item Suppose $\Delta_{i_1 + 1}=[-y_0,y_0]$, $(*)$ is satisfied, $m_{\m}([-y_0,y_0])$ is even and $\Delta_{i_1}\neq [y_0+1,y_0+1]$. Then $b(\Delta_{i_1}) \le -y_0$, thus $\Delta_{i_1 + 1}=[-y_0,y_0]_{\le 0}$ and $l = i_1 + 1$. In $\tilde{\m}$, the only segment ending in $y_0$ different from $[y_0,y_0]$  is $[-y_0+1,y_0]$, thus $\tilde{l}=l - 1$.
        \item Suppose $y_0 = 1$ and $\Delta_{i_1 + 1}=[0,1]$. Necessarily $[-1,0] \in \m$ thus $l=i_1 + 2$. If $(*)$ is not satisfied, $m_{\m}([-1,1])$ is odd and $t_0=1$. Then $\tilde{l}=l$, $\tilde{\Delta}_{i_1 +1}=[-1,1]^{=0}$, $p(\tilde{\Delta}_{l})=[0,0]$. If $m_{\m}([0,0])\neq 0$, we get that $\tilde{\Delta}_{l}=\Delta_l$ (the parity of the multiplicity of $[0,0]$ does not change in $\tilde{\m}$). Otherwise, $\tilde{\Delta}_{i_1 +1}=[0,1]$ and $\tilde{l}=l$. And if $m_{\m}([-1,1])= 0$ or $m_{\m}([0,0])\neq 0$, then $\tilde{\Delta}_{l}=\Delta_l$; otherwise $\tilde{\Delta}_{l} = [0,0]^{\le 0}$.
    \end{itemize}
\end{proof}

\begin{lem}
  \label{lem:dermdiezepos}
  Suppose that $y_1 > y_0$.
  \begin{enumerate}
    \item If $e(\Delta_{l}) \ge y_0 + 2$, then $A_{y_0}^{\#,c} = A_{y_0}^{c}$.
    \item If $e(\Delta_{l}) = y_0 + 1$, then $A_{y_0}^{\#,c} = A_{y_0}^{c} \cup \{i_l\}$. 
    \item If $e(\Delta_{l}) = y_0$. We can assume that $i_l \in A_{y_0}^{c}$. If $\Delta_l = [-y_0,y_0]^{\le 0}$ and $(*)$ is not satisfied in $(\m,\varepsilon)$, then we can also assume that $i'_l \in A_{y_0}^{c}$ and we get that $A_{y_0}^{\#,c} = A_{y_0}^{c} \cup \{i_{l-1}\} \setminus \{i_l,i'_l\}$; otherwise $A_{y_0}^{\#,c} = A_{y_0}^{c} \setminus \{i_l\}$.     
    \item If $e(\Delta_{l}) = 0$. Let $j$ such that $e(\Delta_j)=y_0$. We can assume that $i_j \notin A_{y_0}^{c}$. Then $A_{y_0}^{\#,c} = A_{y_0}^{c}$.
  \end{enumerate}
\end{lem}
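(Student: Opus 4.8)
The proof will run in close parallel with that of Lemma~\ref{lem:dermdieze}, now for the positive derivative $D_{y_0}$ with $y_0 > 0$ in the regime $y_1 > y_0$. I would begin exactly as there: fixing the ordering $\Lambda_1\succeq\dots\succeq\Lambda_k$ and the indices $i_1,\dots,i_l$, $i'_1,\dots,i'_l$ and segments $\Lambda_i^{\#}$ attached to the initial sequence $\Delta_1,\dots,\Delta_l$, one records that $\AD$ only shortens the $\Delta_i$ at their right end and the $\Delta_i^\vee$ at their left end, so that
\[
A_{y_0}^{\#} = \{i\in A_{y_0}:\Lambda_i^{\#}\neq 0\}\cup\{i\in\{i_1,\dots,i_l\}:e(\Lambda_i)=y_0+1,\ \Lambda_i^{\#}\neq 0\}\setminus\{i\in\{i_1,\dots,i_l\}:e(\Lambda_i)=y_0\},
\]
together with the analogous identity for $A_{y_0-1}^{\#}$. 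The point of the hypothesis $y_1 > y_0$ is that $[y_0,y_0],[-y_0,-y_0]\notin\m$; combined with the reducedness hypotheses this forces every segment of $\m$ ending strictly between $0$ and $y_0$ to be centered of multiplicity one, and the only segments of $\m$ ending in $y_0$ to be $[-y_0,y_0]$, a $[a,y_0]$ with $a<-y_0$, or (when $y_0=1$) $[0,1]$ or $[-1,1]$. These structural facts, together with the explicit list of $\tilde\Delta_i$ supplied by Lemma~\ref{lem:sequencederpos}, are what is needed to identify which $\Lambda_i^{\#}$ end in $y_0$ and which of them are modified rather than protected by $D_{y_0}$.

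From there I would carry out the case analysis on $e(\Delta_l)$. By Lemma~\ref{lem:end01} one has $e(\Delta_l)\ge 0$, and by Lemma~\ref{lem:endpos} the inequality $e(\Delta_l)<y_0$ already forces $e(\Delta_l)=0$, so the four listed cases are exhaustive. When $e(\Delta_l)\ge y_0+2$, $D_{y_0}$ acts near $y_0$ identically on $\m$ and $\m^{\#}$ and one gets $A_{y_0}^{\#,c}=A_{y_0}^{c}$. When $e(\Delta_l)=y_0+1$, one checks first that $\Lambda_{i_l}^{\#}\neq 0$ (the alternatives $\Delta_l\in\{[y_0+1,y_0+1],[y_0,y_0+1]\}$, and, for $y_0=1$, $\Delta_l\in\{[-1,1]^{=0},[-1,1]^{\ge 0}\}$, are excluded because then some segment ending in $y_0$ would follow $\Delta_l$ in the algorithm), then that $\Lambda_{i_l}^{\#}$ ends in $y_0$ and is necessarily modified by $D_{y_0}$, since the segments of $\m^{\#}$ ending in $y_0-1$ are $[y_0-1,y_0-1]$'s (which protect only $[y_0,y_0]$'s, absent here) or segments already modified by $D_{y_0}$; hence $A_{y_0}^{\#,c}=A_{y_0}^{c}\cup\{i_l\}$. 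When $e(\Delta_l)=0$, the initial sequence passes through some $\Delta_j$ with $e(\Delta_j)=y_0$, and a case-by-case inspection against Lemma~\ref{lem:sequencederpos} shows that for a suitable choice of $A_{y_0}^{c}$ with $i_j\notin A_{y_0}^{c}$ the net effect of $D_{y_0}$ near $y_0$ on $\m^{\#}$ matches that on $\m$, so $A_{y_0}^{\#,c}=A_{y_0}^{c}$.

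The delicate case, which I expect to be the main obstacle, is $e(\Delta_l)=y_0$. Here $\AD$ shortens $\Delta_l$, so $\Lambda_{i_l}$ ends in $y_0-1$ in $\m^{\#}$, while $\Lambda_{i_{l-1}}^{\#}$ becomes a new segment of $\m^{\#}$ ending in $y_0$; one must show $\Lambda_{i_{l-1}}^{\#}$ is protected in $\m^{\#}$ — more precisely, that $\Lambda_{i_l}^{\#}$ is the smallest segment protecting it, which uses the maximality of $\Delta_l$ in the initial sequence — in order to conclude $A_{y_0}^{\#,c}=A_{y_0}^{c}\setminus\{i_l\}$. This argument has to be handled separately when $\Delta_l=[-y_0,y_0]^{\le 0}$ and $(*)$ fails in $(\m,\varepsilon)$: then $\Delta_l^\vee=[-y_0,y_0]^{\ge 0}$ also ends in $y_0$, $\AD$ replaces the pair of centered segments $[-y_0,y_0]$ by $[-y_0,y_0-1]+[-y_0+1,y_0]$, and the behaviour of $D_{y_0}$ on the centered segments of $\m^{\#}$ — governed by condition $(*)$, which changes once the pair is broken — must be tracked carefully; this is what produces the exceptional formula $A_{y_0}^{\#,c}=A_{y_0}^{c}\cup\{i_{l-1}\}\setminus\{i_l,i'_l\}$. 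The bookkeeping of the centered segments and of the best-matching function under the combined action of $\AD$ and $D_{y_0}$ is the part requiring the most care; everything else is a systematic verification paralleling Lemma~\ref{lem:dermdieze}.
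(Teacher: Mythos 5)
Your approach matches the paper's: you use the same decomposition of $A_{y_0}^{\#}$ and $A_{y_0-1}^{\#}$ in terms of $A_{y_0}$, $A_{y_0-1}$ and the indices $i_1,\dots,i_l$, $i'_1,\dots,i'_l$; the same structural consequences of $y_1 > y_0$ and the reducedness hypotheses (all segments of $\m$ ending in $x$ with $0<x<y_0$ are centered, $[y_0,y_0]\notin\m$); the same case analysis on $e(\Delta_l)$ with exhaustiveness justified by Lemmas~\ref{lem:end01} and~\ref{lem:endpos}; and you correctly single out $\Delta_l=[-y_0,y_0]^{\le 0}$ with $(*)$ failing as the exceptional subcase of (3).

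Two details are off and should be repaired. In the case $e(\Delta_l)=y_0+1$, the alternatives $[-1,1]^{=0}$ and $[-1,1]^{\ge 0}$ (for $y_0=1$) end in $1=y_0$, not $y_0+1=2$, so they cannot be $\Delta_l$ there; and the correct segment to rule out at the second step of that argument is $[-y_0,y_0+1]$ (not $[y_0,y_0+1]$), which the paper excludes not to show $\Lambda_{i_l}^{\#}\neq 0$ but to conclude $c(\Lambda_{i_l})\le 0$ and hence $i_l\in A_{y_0}^{\#,c}$. Relatedly, since $y_0-1<y_0<y_1$, no $[y_0-1,y_0-1]$ lies in $\m$ to begin with, so that part of your protection argument is vacuous. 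Finally, in the exceptional subcase of (3), the key observation in the paper is that $(*)$ is \emph{preserved} from $\m$ to $\m^{\#}$ (every centered sign is multiplied by the same $\varepsilon_0$, so the sign product in $(*)$ is unchanged, and $m_{\m^{\#}}([-y_0,y_0])=m_\m([-y_0,y_0])-2$ stays even); your phrase about $(*)$ ``changing once the pair is broken'' should be reversed, since the invariance of $(*)$ is precisely what yields $A_{y_0}^{\#,c}=A_{y_0}^{c}\cup\{i_{l-1}\}\setminus\{i_l,i'_l\}$. None of these change the overall shape of the argument, but they do need to be fixed for the proof to be correct as written.
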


\begin{proof}
  Let us recall that the segments ending in $y_0$ are of the form $[a,y_0]$, with $a < -y_0$, $[-y_0,y_0]$ and $[0,1]$, if $y_0 = 1$ (there are no $[y_0,y_0]$ since $y_0 < y_1$). And the segments ending in $y_0 - 1$ are of the form $[1-y_0,y_0 - 1]$ (with multiplicity 1 if $y_0 \neq 1$), $[0,1]$ if $y_0 = 2$, and $[-1,0]$ if $y_0 = 1$. We have:
\[
  A_{y_0}^{c} = \left\{
    \begin{array}{ll}
      A_{y_0}           & \text{if } y_0 \neq 1 \text{ and } (*) \text{ is not satisfied } \\
      A_{y_0} \setminus \{j\}       & \text{if } y_0 \neq 1 \text{ and } (*) \text{ is satisfied, with } \Lambda_j = [-y_0,y_0]  \\
      A_{y_0} \setminus \{i , \Lambda_i = [0,1]\}       & \text{if } y_0 = 1 \text{ and } (*) \text{ is not satisfied }  \\
      A_{y_0} \setminus (\{i , \Lambda_i = [0,1]\} \cup \{j\})      & \text{if } y_0 = 1 \text{ and } (*) \text{ is satisfied, with } \Lambda_j = [-1,1]  \\
    \end{array}
    \right.
\]

  Now, $A_{y_0}^{\#} = \{i \in A_{y_0}, \Lambda_i^{\#} \neq 0\} \cup \{i \in \{i_1,\cdots,i_l\}, e(\Lambda_i)=y_0 + 1 \text{ and } \Lambda_i^{\#} \neq 0\} \setminus \{i \in \{i_1,\cdots,i_l\}, e(\Lambda_i)=y_0\}$ and we have a similar description for $A_{y_0 - 1}^{\#}$. Since, there are no $[y_0,y_0]$ in $A_{y_0}$, if $i \notin \{i_1,\cdots,i_l\}$ and $e(\Lambda_i)=y_0$ then $\Lambda_i^{\#} \neq 0$. Let $i \notin \{i_1,\cdots,i_l\}$ such that $e(\Lambda_i)=y_0 - 1$ and $\Lambda_i^{\#} = 0$. Then necessarily $y_0 = 1$ and there exists a $j$ such that $\Delta_j = [0,0]^{\le 0}$. In this case, $\Delta_j = \Delta_l$ (in particular $e(\Delta_l)=0$) and $i = i'_l$.

  \begin{itemize}
  \item Suppose $e(\Delta_{l}) > y_0 + 1$. From the previous description we see that $A^{\#}_{y_0} = A_{y_0}$ and $A^{\#}_{y_0 - 1} = A_{y_0 - 1}$. Moreover, there are no creation or suppression of $[-y_0,y_0]$ or $[1-y_0,y_0 - 1]$, so $(*)$ is satisfied for $\m^{\#}$ if and only if it is satisfied for $\m$. Thus $A_{y_0}^{\#,c} = A_{y_0}^{c}$.
  \item Suppose $e(\Delta_{l}) = y_0 + 1$. Then $\Lambda_{i_l}^{\#} \neq 0$, indeed, $\Delta_l \neq [y_0 + 1,y_0 +1]$ because this segment would be followed in the initial sequence by any segment ending in $y_0$. Hence $A^{\#}_{y_0} = A_{y_0} \cup\{i_{l}\}$ and $A^{\#}_{y_0 - 1} = A_{y_0 - 1}$. Let us show that $i_l \in A^{c,\#}_{y_0}$. From the description of $A_{y_0 - 1}$, we see that for all $i \in A_{y_0 - 1}$, $\Lambda_i^{\#} = \Lambda_i$. We have seen that $\Lambda_{i_l} \neq [y_0 + 1,y_0 +1]$. And $\Lambda_{i_l} \neq [-y_0,y_0 +1]$ because it would be followed in the initial sequence by $[-y_0-1,y_0]$. Thus $c(\Lambda_{i_l})=0$ or $c(\Lambda_{i_l})<0$. If $c(\Lambda_{i_l})<0$, then $i_l \in A^{c,\#}_{y_0}$. If $c(\Lambda_{i_l})=0$, that is $\Lambda_{i_l}=[-y_0 - 1, y_0 + 1]$. If $\Lambda_{i_l}=[-y_0 - 1, y_0 + 1]^{\le 0}$ then $\Lambda_{i_l}^{\#} = [-y_0 - 1,y_0]$ and $i_l \in A^{c,\#}_{y_0}$. If $\Lambda_{i_l}=[-y_0 - 1, y_0 + 1]^{=0}$ or $[-y_0 - 1, y_0 + 1]^{ \ge 0}$ then $\Lambda_{i_l}^{\#} = [-y_0,y_0]$. Thus we need to investigate condition $(*)$. Since $\Lambda_{i_l}=[-y_0 - 1, y_0 + 1]^{=0}$ or $[-y_0 - 1, y_0 + 1]^{ \ge 0}$ is the last segment in the initial sequence in the algorithm, there are no negative segment ending in $y_0$. Since $A_{y_0}^{c} \neq \emptyset$,  $m_{\m}([-y_0,y_0])\neq 0$. And since $\Delta_l$ is the last segment in the initial sequence in the algorithm, $\varepsilon([-1 - y_0, y_0 + 1])\varepsilon([-y_0, y_0])=1$. Thus $(*)$ is satisfied for $(\m,\varepsilon)$ if and only if $(*)$ is still satisfied in $(\m^{\#},\varepsilon^{\#})$; and $i_l \in A^{c,\#}_{y_0}$.
  \item Suppose $e(\Delta_{l}) = y_0$. First let us examine the case $\Lambda_{i_{l-1}}^{\#}=0$, that is $\Lambda_{i_{l-1}}=[y_0 + 1,y_0 + 1]$. As $y_0 < y_1$, we get that $y_1 = y_0 + 1$, $[y_0 ,y_0] \notin \m$, and $\Lambda_{i_l}$ is the biggest segment ending in $y_0$. Then $A_{y_0}^{\#} =  A_{y_0}  \setminus \{i_l\}$ and $A_{y_0 - 1}^{\#} =  A_{y_0 - 1} \cup \{i_{l}\}$. Since $\Lambda_{i_l}$ is the biggest segment ending in $y_0$, we have that for all $i \in A_{y_0}^{\#}$, $\Lambda_{i}^{\#} \le \Lambda_{i_l}^{\#}$. Thus $A_{y_0}^{\#,c} = A_{y_0}^{c} \setminus \{i_l\}$. If $\Lambda_{i_{l-1}} \neq [y_0 + 1,y_0 + 1]$. This time, $A_{y_0}^{\#} =  A_{y_0} \cup \{i_{l-1}\} \setminus \{i_l\}$ and $A_{y_0 - 1}^{\#} =  A_{y_0 - 1} \cup \{i_{l}\}$. Since $\Delta_l \neq [0,1]$ (it is the last segment in the initial sequence in the algorithm), we get that $c(\Delta_l)
<0$ or $c(\Delta_l) = 0$.
  \begin{itemize}
    \item Suppose $c(\Delta_{l}) < 0$. The situation is similar as in Lemma \ref{lem:dermdieze}. The segment $\Lambda_{i_l}^{\#}$ is the smallest segment ending in $y_0 - 1$ such that $\Lambda_{i_l}^{\#} < \Lambda_{i_{l-1}}^{\#}$. Thus $A_{y_0}^{\#,c} = A_{y_0}^{c} \setminus \{i_l\}$.
    
    \item Suppose $c(\Delta_{l}) = 0$. Then, $\Lambda_{i_{l-1}} =[-y_0 - 1,y_0 + 1]^{\ge 0}$, $[-y_0 - 1,y_0 + 1]^{=0}$ or $[-y_0,y_0 + 1]$; and $\Lambda_{i_{l-1}}^{\#} =[-y_0,y_0]$. Note that if $y_0 = 1$, since $\Delta_l$ is the last segment in the initial sequence in the algorithm, then $t_0 = 0$.
    \begin{itemize}
      \item Suppose $\Lambda_{i_l} = [-y_0,y_0]^{=0}$ or $[-y_0,y_0]^{\ge 0}$. Since $\Lambda_{i_l}$ is the last segment in the initial sequence in the algorithm,  condition $(*)$ cannot be satisfied. We can assume that $i_l \in A_{y_0}^{c}$. Also, $\Lambda_{i_l}^{\#} = [-y_0+1,y_0-1]$. Then necessarily $(*)$ is satisfied in $(\m^{\#},\varepsilon^{\#})$, and $A_{y_0}^{\#,c} = A_{y_0}^{c}  \setminus \{i_l\}$.
      \item Suppose $\Lambda_{i_l} = [-y_0,y_0]^{\le 0}$. There is no segment $[-y_0,y_0]^{=0}$ in $\m$ and $m_{\m}([-y_0,y_0])$ is even. In particular, we can always assume that $i_l \in A_{y_0}^{c}$. Moreover, $\Lambda_{i_l}^{\#} = [-y_0,y_0 -1]$, so we see that if $(*)$ is satisfied in $(\m,\varepsilon)$ if and only if $(*)$ is satisfied in $(\m^{\#},\varepsilon^{\#})$. So if $(*)$ is not satisfied in $(\m,\varepsilon)$, then we can assume that $i'_l \in A_{y_0}^{c}$, and $A_{y_0}^{\#,c} = A_{y_0}^{c} \cup \{i_{l-1}\} \setminus \{i_l,i'_{l}\}$. And if $(*)$ is not satisfied in $(\m,\varepsilon)$ then $A_{y_0}^{\#,c} = A_{y_0}^{c}  \setminus \{i_l\}$.
    \end{itemize}
    
  \item Suppose $e(\Delta_{l}) = 0$. Let us split the proof into two cases depending on whether $y_0 = 1$ or $y_0 >1$.
  \begin{itemize}
    \item Suppose $y_0 > 1$. Let $j$ such that $e(\Delta_j)=y_0$. The segment $\Delta_{j+1}$ cannot be $[0,1]$ because this segment cannot be in the initial sequence of the algorithm after a segment of the form $[-y_0,y_0]$ or $[a,y_0]$, with $a<-y_0$. Thus $\Delta_{j+1}=[1-y_0,y_0-1]^{=0}$. Thus $\Delta_j = [-y_0,y_0]^{\ge 0}$ or $[-y_0,y_0]^{=0}$. In particular, we see that $(*)$ is satisfied in $(\m,\varepsilon)$, and we can assume that $i_j \notin A_{y_0}^{c}$. Moreover $\Lambda_{i_j}^{\#} = [1-y_0,y_0-1]$. The segment $\Delta_{j-1}$ is one of the following : $[y_0+ 1,y_0 +1]$, $[-y_0 - 1,y_0 + 1]^{\ge 0}$, $[-y_0 - 1,y_0 + 1]^{=0}$ or $[-y_0,y_0 + 1]$. Hence, $\Lambda_{i_{j-1}}^{\#}$ is either $0$ or $[-y_0,y_0]$. Also, by definition of $\Delta_j$ and $\Delta_{j-1}$ in the initial sequence of the algorithm, necessarily $(*)$ is satisfied in $(\m^{\#},\varepsilon^{\#})$, and we see that $i_{j-1} \notin A_{y_0}^{\#,c}$. We get that $A_{y_0}^{\#,c} = A_{y_0}^{c}$.
    \item Suppose $y_0 = 1$. Then $e(\Delta_{l-1})=y_0$. The segment $\Delta_{l-1}$ cannot be of the form $[a,1]$ with $a<-1$, since it is followed in the initial sequence by $\Delta_l$ which is $[0,0]$ or $[-1,0]$. Thus $\Delta_{l-1}$ is $[-1,1]$ or $[0,1]$. 
  
  Suppose $t_0 \neq 0$. Since $t_0 \le m_{\m}([-2,-2])-m_{\m}([-1,-1])$, and $m_{\m}([-1,-1]) = 0$, we get that $m_{\m}([-2,-2]) \neq 0$, $y_1 = 2$ and $\Delta_{l-2}=[2,2]$. Hence, $\Delta_{l-1}=[0,1]$. Examining the two cases $\Delta_l = [0,0]$ or $\Delta_l = [-1,0]$, we see that $(*)$ is satisfied in $(\m,\varepsilon)$ if and only if $(*)$ is satisfied in $(\m^{\#},\varepsilon^{\#})$. Therefore, $A_{y_0}^{\#,c} = A_{y_0}^{c}$.

  Suppose $t_0 = 0$. Then necessarily $\Delta_l = [0,0]$ and $\Delta_{l-1}=[-1,1]^{=0}$ or $\Delta_{l-1}=[-1,1]^{\ge 0}$. Also $m_{\m}([-2,-2]) = 0$. The situation is similar as the case $y_0 \neq 1$. The condition $(*)$ is satisfied in $(\m,\varepsilon)$ and in $(\m^{\#},\varepsilon^{\#})$; and $A_{y_0}^{\#,c} = A_{y_0}^{c}$
  \end{itemize}
  \end{itemize}
  \end{itemize}
\end{proof}

\begin{lem}
  \label{lem:ADderpos}
  Suppose that $y_1 > y_0$.
  \begin{enumerate}
      \item If $e(\Delta_{l}) \ge y_0 + 2$, then $(\tilde{\m}_1,\tilde{\varepsilon}_1)=(\m_{1},\varepsilon_1)$ and $(\tilde{\m}^{\#},\tilde{\varepsilon}^{\#})=D_{y_0}(\m^{\#},\varepsilon^{\#})$.
      \item If $e(\Delta_{l}) = y_0 + 1$, then $(\tilde{\m}_1,\tilde{\varepsilon}_1)=(\m_{1},\varepsilon_1)$ and $(\tilde{\m}^{\#},\tilde{\varepsilon}^{\#})=D^{\max - 1}_{y_0}(\m^{\#},\varepsilon^{\#})$. 
      \item If $e(\Delta_{l}) = y_0$, then $(\tilde{\m}_1,\tilde{\varepsilon}_1)=D_{-y_0}(\m_{1},\varepsilon_1)$ and $(\tilde{\m}^{\#},\tilde{\varepsilon}^{\#})=D_{y_0}(\m^{\#},\varepsilon^{\#})$.
      \item If $e(\Delta_{l}) = 0$, then $(\tilde{\m}_1,\tilde{\varepsilon}_1)=(\m_{1},\varepsilon_1)$ and $(\tilde{\m}^{\#},\tilde{\varepsilon}^{\#})=D_{y_0}(\m^{\#},\varepsilon^{\#})$.
     
  \end{enumerate}
\end{lem}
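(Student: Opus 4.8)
The statement is the positive-derivative analogue, in the case where $\rho$ is of the same type as $G$ and $y_1>y_0$, of Lemma~\ref{lem:ADderneg} (the negative case) and Lemma~\ref{lem:ADder01} (the $L([-1,0])$ case). The proof follows exactly the same pattern as those two lemmas, so I would organize it around the two objects $(\tilde{\m}_1,\tilde{\varepsilon}_1)$ and $(\tilde{\m}^{\#},\tilde{\varepsilon}^{\#})$ separately, as in the proof of Lemma~\ref{lem:ADderneg}.

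\textbf{Step 1: the pair $(\tilde{\m}_1,\tilde{\varepsilon}_1)$.} Recall $\m_1$ is determined by $\Delta_1$ and $\Delta_l$ alone (either a centered segment, or $[e(\Delta_l),e(\Delta_1)]+[-e(\Delta_1),-e(\Delta_l)]$), and likewise $\tilde{\m}_1$ by $\tilde{\Delta}_1$ and $\tilde{\Delta}_{\tilde l}$. By Lemma~\ref{lem:sequencederpos} we have $e(\tilde{\Delta}_1)=e(\Delta_1)=e_{\max}$, and $\tilde l=l$ unless $e(\Delta_l)=y_0$, in which case $\tilde l=l-1$. So in cases (1), (2), (4) of the statement ($e(\Delta_l)\neq y_0$) we get $e(\tilde{\Delta}_{\tilde l})=e(\Delta_l)$ and $\tilde l=l$, hence $\tilde{\m}_1=\m_1$; a short check (using that $D_{y_0}$ neither creates nor suppresses centered segments ending at $0$, so $n_0$ and the sign of $[0,0]$ are unchanged) gives $\tilde{\varepsilon}_1=\varepsilon_1$ when $\m_1$ happens to be centered. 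In case (3) ($e(\Delta_l)=y_0$), $\tilde l=l-1$ forces $\tilde{\m}_1=[e(\Delta_l)-1,e(\Delta_1)]+[-e(\Delta_1),-e(\Delta_l)+1]=D_{-y_0}(\m_1)$, using that $-y_0=e(\m_1$'s bottom segment$)$ exactly and $D_{-y_0}$ shortens that end.

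\textbf{Step 2: the pair $(\tilde{\m}^{\#},\tilde{\varepsilon}^{\#})$.} This is the substantive part. As in Lemma~\ref{lem:ADderneg}, let $E^{\#}$ (resp.\ $\tilde E^{\#}$) be the index set of segments of $\m$ (resp.\ $\tilde\m$) whose \emph{end} gets modified by $\AD$; Lemma~\ref{lem:sequencederpos} tells us $\tilde E^{\#}=E^{\#}$ when $e(\Delta_l)\neq y_0$ and $\tilde E^{\#}=E^{\#}\setminus\{i_l\}$ when $e(\Delta_l)=y_0$. Then $\tilde{\m}^{\#}$ is obtained from $\m^{\#}$ by shortening the ends of the segments indexed by $A_{y_0}^c$ (minus $i_l$ if $e(\Delta_l)=y_0$) together with the beginnings of their symmetrics, so the comparison of $\tilde{\m}^{\#}$ with $D_{y_0}(\m^{\#},\varepsilon^{\#})$ reduces exactly to comparing that index set with $A_{y_0}^{\#,c}$ — and that is precisely the content of Lemma~\ref{lem:dermdiezepos}: in cases (1), (3), (4) one has $A_{y_0}^{\#,c}=A_{y_0}^c$ (possibly $\setminus\{i_l\}$), giving $(\tilde{\m}^{\#},\tilde{\varepsilon}^{\#})=D_{y_0}(\m^{\#},\varepsilon^{\#})$ directly, while in case (2), $e(\Delta_l)=y_0+1$, Lemma~\ref{lem:dermdiezepos} gives $A_{y_0}^{\#,c}=A_{y_0}^c\cup\{i_l\}$, and one must show that in $\tilde{\m}^{\#}$ the segment $\tilde\Lambda_{i_l}^{\#}$ is itself modified by $D_{y_0}$ — so that applying $D_{y_0}$ once more to $\tilde{\m}^{\#}$ reaches $D_{y_0}(\m^{\#})$, i.e.\ $(\tilde{\m}^{\#},\tilde{\varepsilon}^{\#})=D^{\max-1}_{y_0}(\m^{\#},\varepsilon^{\#})$. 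This last verification is the same argument as in the $e(\Delta_l)=y_0+1$, $\varepsilon_0=1$ bullet of Lemma~\ref{lem:ADderneg} (one checks no segment ending in $y_0-1$ in $\m^{\#}$ can protect $\tilde\Lambda_{i_l}^{\#-}$, using that $\Delta_l$ is the \emph{last} segment of the initial sequence).

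\textbf{Main obstacle.} The bookkeeping in Step 2 for the $[-y_0,y_0]$ / $[1-y_0,y_0-1]$ interaction — that is, tracking whether condition $(*)$ is satisfied for $\m^{\#}$ versus $\m$ — is the delicate point, because $D_{y_0}$ on the positive side behaves qualitatively differently according to $(*)$ and the parity of $m_{\m}([-y_0,y_0])$ (the correction terms in Definition~\ref{def:der}/the explicit formulas of Section~\ref{sec:derpos}). Fortunately all of this has already been absorbed into Lemma~\ref{lem:dermdiezepos}, whose case analysis precisely records how $A_{y_0}^{\#,c}$ relates to $A_{y_0}^c$ including the $(*)$-dependence; so the proof of the present lemma is essentially the assembly of Lemma~\ref{lem:sequencederpos} (for $\tilde{\m}_1$ and for $\tilde E^{\#}$) and Lemma~\ref{lem:dermdiezepos} (for $\tilde{\m}^{\#}$), case by case, with the only genuinely new input being the $D^{\max-1}$ argument in case (2), which I would write out in full as it is short. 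I expect no hidden difficulty beyond keeping the four cases aligned with the four cases of Lemma~\ref{lem:dermdiezepos}.
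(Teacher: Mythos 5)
Your overall structure matches the paper's: reduce $(\tilde{\m}_1,\tilde{\varepsilon}_1)$ to Lemma \ref{lem:sequencederpos}, reduce $(\tilde{\m}^{\#},\tilde{\varepsilon}^{\#})$ to Lemma \ref{lem:dermdiezepos}, and isolate the $D^{\max-1}$ argument for case (2). However, there is a genuine gap in the way you dispatch case (3).

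You assert that in cases (1), (3), (4) one has $A_{y_0}^{\#,c}=A_{y_0}^c$ (possibly $\setminus\{i_l\}$), "giving $(\tilde{\m}^{\#},\tilde{\varepsilon}^{\#})=D_{y_0}(\m^{\#},\varepsilon^{\#})$ directly." But Lemma \ref{lem:dermdiezepos}(3) has a sub-case you have dropped: when $\Delta_l=[-y_0,y_0]^{\le 0}$ and $(*)$ fails for $(\m,\varepsilon)$, the conclusion is $A_{y_0}^{\#,c}=A_{y_0}^{c}\cup\{i_{l-1}\}\setminus\{i_l,i'_l\}$, which is \emph{not} of the form $A_{y_0}^c$ or $A_{y_0}^c\setminus\{i_l\}$. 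In that sub-case the modification sets do \emph{not} coincide, and the desired equality has to be rescued by the correction term of Definition~\ref{def:der}: in $(\m^{\#},\varepsilon^{\#})$ one finds $(*)$ not satisfied, $t^{\#}\ge 1$ and $c^{\#}$ odd, which triggers the replacement $[-y_0,y_0-1]+[-y_0+1,y_0]\mapsto[-y_0,y_0]+[-y_0+1,y_0-1]$; only after this adjustment do the two multisegments agree. This reconciliation is the substantive content of the paper's proof for case (3), and it cannot be absorbed into "directly."

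More broadly, matching $A_{y_0}^{\#,c}$ against the $\AD$-modification set is not by itself sufficient to conclude $\tilde{\m}^{\#}=D_{y_0}(\m^{\#})$, because $D_{y_0}$ (unlike $D_{-y_0}$) carries the extra correction clauses in Definition~\ref{def:der}, which may or may not fire for $(\m^{\#},\varepsilon^{\#})$ versus $(\m,\varepsilon)$. You need to verify, case by case, that the corrections are compatible; this also affects your treatment of case (4), where the paper still spells out a representative sub-case. Finally, the justification you offer for $\tilde{\varepsilon}_1=\varepsilon_1$ is imprecise: $D_{y_0}$ \emph{does} alter centered segments (e.g.\ $[-y_0,y_0]\mapsto[-y_0+1,y_0-1]$, or two of them into a pair of non-centered segments when $(*)$ holds and $m_{\m}([-y_0,y_0])$ is even), so what must be argued is that the \emph{parity} of $n_0$ (total count of centered segments in $\m$, not only $[0,0]$) is preserved, together with $\varepsilon([0,0])$ unchanged because $[0,0]\in\m$ in the relevant case.
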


\begin{proof}
  The proof is similar to Lemma \ref{lem:ADderneg}. Lemma \ref{lem:sequencederpos} tells us that $\tilde{l}=l-1$, if $e(\Delta_l)=y_0$, and $\tilde{l}=l$ otherwise. It also tells us that $\varepsilon_0 = \tilde{\varepsilon}_0$. This gave us the result for $(\m_1,\varepsilon_1)$ in all the cases, apart when $\varepsilon_0 = -1$, where we are left to prove that $\varepsilon_1 = \tilde{\varepsilon}_1$. Let $n_0 =  \card\{\Delta \in \m, c(\Delta)=0\}$ and  $\tilde{n}_0 =  \card\{\Delta \in \tilde{\m}, c(\Delta)=0\}$. From the formula of $D_{y_0}$ the parity of the number of $[-y_0,y_0]$ is not changed (in the case $y_0 \neq 1$, necessarily there is an $i$ such that $\Delta_i = [-y_0,y_0]$, and thus $(*)$ is satisfied). Hence $n_0 \equiv \tilde{n}_0 \pmod{2}$ and $\varepsilon_1 = \tilde{\varepsilon}_1$.

  Now, let us examine $\tilde{\m}^{\#}$. This is mostly similar to Lemma \ref{lem:ADderneg}.
  \begin{itemize}
    \item Suppose $e(\Delta_l)>y_0 + 1$. By Lemma \ref{lem:dermdiezepos}, $A_{y_0}^{\#,c}=A_{y_0}^{c}$. Combined with Lemma \ref{lem:sequencederpos}, we get that $(\tilde{\m}^{\#},\tilde{\varepsilon}^{\#})=D_{y_0}(\m^{\#},\varepsilon^{\#})$.
    \item Suppose $e(\Delta_l)=y_0 + 1$. This time we have $A_{y_0}^{\#,c}=A_{y_0}^{c} \cup \{i_l\}$. Similarly as in Lemma \ref{lem:ADderneg}, we can show that in $\tilde{\m}^{\#}$, $\tilde{\Lambda}^{\#}_{i_l}$ is modified by $D_{y_0}$. Thus we get that $D_{y_0}(\m^{\#},\varepsilon^{\#}) = D^{1}_{y_0}(\tilde{\m}^{\#},\tilde{\varepsilon}^{\#})$, or that $(\tilde{\m}^{\#},\tilde{\varepsilon}^{\#})=D^{\max - 1}_{y_0}(\m^{\#},\varepsilon^{\#})$.
    \item Suppose  $e(\Delta_l)=y_0$.
    \begin{itemize}
      \item If $\Delta_l =[-y_0,y_0]^{\le 0}$ and $(*)$ is not satisfied in $(\m,\varepsilon)$, then by Lemma \ref{lem:dermdiezepos}, $A_{y_0}^{\#,c}=A_{y_0}^{c} \cup \{i_{l-1}\} \setminus \{i_l,i'_l\}$.  Thus, we almost have the same modifications, apart from $i_{l-1}$, $i_l$ and $i'_l$. Since $(*)$ is not satisfied in $(\m,\varepsilon)$, the derivative $D_{y_0}(\m,\varepsilon)$ transforms $\Lambda_{i_l}=[-y_0,y_0]$ and $\Lambda_{i'_l}=[-y_0,y_0]$ into $2 [-y_0 + 1, y_0 -1]$. And $\tilde{\Lambda}_{i_{l-1}}=\Delta_{l-1}$, thus $\tilde{\Lambda}_{i_{l-1}}^{\#}=[-y_0,y_0]$. In $\m^{\#}$, we add $\Lambda_{i_{l-1}}^{\#}=[-y_0,y_0]$ and we transform $\Lambda_{i_{l}}$ into $\Lambda_{i_{l}}^{\#}=[-y_0,y_0-1]$ and $\Lambda_{i'_{l}}$ into $\Lambda_{i'_{l}}^{\#}=[-y_0+1,y_0]$. Applying the modifications of the segments indexed by $A_{y_0}^{\#,c}=A_{y_0}^{c} \cup \{i_{l-1}\} \setminus \{i_l,i'_l\}$ transforms the $[-y_0,y_0]$ into $[-y_0 + 1, y_0 -1]$. But in $(\m^{\#},\varepsilon^{\#})$, we get that $(*)$ is not satisfied, $t^{\#} \ge 1$ and $c^{\#}$ is odd. So, according to Definition \ref{def:der}, these modifications are not sufficient to compute the derivative. We need to transform $[-y_0,y_0-1] + [-y_0+1,y_0]$ into $[-y_0,y_0] + [-y_0 + 1, y_0 -1]$. Thus we get the same thing, and $(\tilde{\m}^{\#},\tilde{\varepsilon}^{\#})=D_{y_0}(\m^{\#},\varepsilon^{\#})$.
      \item Otherwise,  $A_{y_0}^{\#,c}=A_{y_0}^{c} \setminus \{i_l\}$, and as in Lemma \ref{lem:ADderneg}, $(\tilde{\m}^{\#},\tilde{\varepsilon}^{\#})=D_{y_0}(\m^{\#},\varepsilon^{\#})$.
    \end{itemize}
    
    \item Suppose  $e(\Delta_l)=0$. Then $A_{y_0}^{\#,c}=A_{y_0}^{c}$. Then in all the cases of Lemma \ref{lem:sequencederpos}, we see that $(\tilde{\m}^{\#},\tilde{\varepsilon}^{\#})=D_{y_0}(\m^{\#},\varepsilon^{\#})$. Let us do one for illustration. Suppose that $y_0 = 1$, $\Delta_{l-1}=[0,1]$, $(*)$ is not satisfied, $m_{\m}([-1,1])$ is odd, $m_{\m}([0,0])\neq 0$ and $t_0 = 1$. Since $\Delta_{l-1}=[0,1]$, applying the algorithm transforms $[0,1] + [-1,0]$ into $2 [0,0]$. Then, we modify the segments in $A_{y_0}^{\#,c}=A_{y_0}^{c}$ to get $D_{y_0}(\m^{\#},\varepsilon^{\#})$. To compute $(\tilde{\m},\tilde{\varepsilon})$, we modify the segments indexed by $A_{y_0}^{c}$. But since $(*)$ is not satisfied, $c$ is odd and $t_0 \ge 1$, we also need to transform  $[0,1] + [-1,0]$ into $[-1,1] + [0,0]$. But then $\tilde{\Delta}_{l-1}=[-1,1]^{=0}$ by Lemma \ref{lem:sequencederpos}, so one $[-1,1]$ is transformed into $[0,0]$ and we get that $(\tilde{\m}^{\#},\tilde{\varepsilon}^{\#})=D_{y_0}(\m^{\#},\varepsilon^{\#})$. The other cases are treated similarly.
  \end{itemize}
\end{proof}

\begin{prop}
  If $y_1 > y_0$, then $\AD(D_{y_0}(\m,\varepsilon))=D_{-y_0}(\AD(\m,\varepsilon))$.
\end{prop}

\begin{proof}
  It follows from Lemmas \ref{lem:ADderpos}, \ref{lem:ADcommDer} and \ref{lem:dersum}.
\end{proof}

\subsection{The positive derivative with $\rho$ not of same type} 
\label{sec:derposnottype}

In this section, we assume that $\rho$ is not of the same type as $G$. We also assume that $e_{\max} > 1$, that for all $-e_{\max} < y < 0$, $(\m,\varepsilon)$ is $y$-reduced, and that there exists $y > 0$ with $y \neq e_{\max}$ such that $(\m,\varepsilon)$ is not $y$-reduced.

We define $y_0 \in (1/2)\Z \setminus \Z$ to be the smallest $y \in (1/2)\Z^{*}\setminus \Z$ such that $y \neq -e_{\max}$, $y \neq e_{\max}$, and $(\m,\varepsilon)$ is not $y$-reduced. With our hypotheses on $(\m,\varepsilon)$, necessarily $y_0 > 0$. Let us describe more precisely the conditions satisfied by $(\m,\varepsilon)$ and $y_0$ using the explicit formula of the derivative recalled in Section \ref{sec:expder}.

\bigskip

We have assumed that for all $-e_{\max} < y<0$, $(\m,\varepsilon)$ is $y$-reduced. Hence, if $\Delta \in \m$ is a segment such that $e(\Delta) < 0$, then $\Delta = [y,y]$ (where $y=e(\Delta)$) and $m_{\m}([y,y]) \le m_{\m}([y-1,y-1])$. Let $y_1 \in (1/2)\NN^{*} \setminus \NN$ be the smallest positive half-integer such that $[y_1,y_1] \in \m$.

\bigskip

Now, we recall the formula for $D_{y_0}$. First let us do $D_{1/2}$.

By convention, when $y=1/2$, we set $[-y+1,y-1]=0$, $m_{\m}([-y+1,y-1])=1$ and $\varepsilon([-y+1,y-1])=1$. Let $t_{1/2}$ be the number of $[1/2,1/2]$ in $\m$.

The segments $\Delta \in \m$ with $e(\Delta)=-1/2$ are of the form: $[-1/2,-1/2]$. And the segments $\Delta \in \m$ with $e(\Delta)=1/2$ are of the form: $[a,1/2]$, with $a<-1/2$, $[-1/2,1/2]$ or $[1/2,1/2]$. Let $(*)$ be the condition: $m_{\m}([-1/2,1/2])\neq 0$ and $\varepsilon([-1/2,1/2])=(-1)^{t_{1/2} + 1}$.

Then $D_{1/2}$ does the following transformations. All the negative segments of the form $[a,1/2]$, with $a<-1/2$, are transformed into $[a,-1/2]$ (and their symmetric $[-1/2,-a]$ are transformed into $[1/2,-a]$).
\begin{itemize}
  \item If $(*)$ is not satisfied,  $m_{\m}([-1/2,1/2])$ is odd and $t_{1/2} \ge 1$; then all the segments $[-1/2,1/2]$ except one are suppressed and one $[-1/2,-1/2] + [1/2,1/2]$ is also suppressed.
  \item If $(*)$ is not satisfied and, $m_{\m}([-1/2,1/2])$ is even or $t_{1/2} = 0$; then all the segments $[-1/2,1/2]$ are suppressed.
  \item If $(*)$ is satisfied and $m_{\m}([-1/2,1/2])$ is odd; then all the segments $[-1/2,1/2]$ except one are suppressed.
  \item If $(*)$ is satisfied and $m_{\m}([-1/2,1/2])$ is even; then all the $[-1/2,1/2]$ are suppressed and two $[-1/2,-1/2]+[1/2,1/2]$ are added.
\end{itemize}

In particular, we see that if $D_{1/2}(\m,\varepsilon)=0$, then 
\begin{itemize}
  \item There is no segment $[a,1/2]$ with $a<-1/2$;
  \item $m_{\m}([-1/2,1/2]) = 0$ or $1$;
  \item if $m_{\m}([-1/2,1/2]) = 1$ then $\varepsilon([-1/2,1/2]) =(-1)^{t_{1/2} + 1}$.
\end{itemize}

\bigskip

Let $y > 1/2$ such that for all $1/2 \le x < y$, $D_x(\m,\varepsilon)=0$. We prove by induction on $y$, that if $\Delta \in \m$ is a segment with $0< e(\Delta) < y$ then $c(\Delta) \ge 0$. We make explicit the formula for $D_y$.

Using the induction hypothesis, if $1/2 < x < y$, and $\Delta \in \m$ with $e(\Delta) = x$ then $b(\Delta) \ge -x$. We also know that if $b(\Delta) \neq y$ then $b(\Delta)<0$. But $\Delta^{\vee} \in \m$, so if $b(\Delta) \neq y$ then $b(\Delta) = -x$. Hence, the only segments ending in $y$ are $[x,x]$ and $[-x,x]$. In particular, note that there are no segments $[-y+1,y]$ in $\m$. Also the segments $\Delta \in \m$ with $e(\Delta)=y-1$ are of the form: $[y-1,y-1]$ or $[1-y,y-1]$. And the segments $\Delta \in \m$ with $e(\Delta)=y$ are of the form: $[a,y]$, with $a<-y$, $[-y,y]$, $[y,y]$. Let $(*)$ be the condition: $m_{\m}([-y,y])\neq 0$, $m_{\m}([1-y,y-1])\neq 0$ and $\varepsilon([y,y]) \varepsilon([1-y,y-1])=-1$.

Then $D_{y}$ does the following transformations. All the negative segments of the form $[a,y]$, with $a < -y$, are transformed into $[a,y - 1]$ (and their symmetric $[-y,-a]$ are transformed into $[-y + 1,-a]$). For the centered segments $[-y,y]$:
\begin{itemize}
  \item If $(*)$ is not satisfied; then all the segments $[-y,y]$ are transformed into $[-y + 1,y - 1]$.
  \item If $(*)$ is satisfied and $m_{\m}([-y,y])$ is odd; then all the segments $[-y,y]$ except one are transformed into $[-y + 1,y - 1]$.
  \item If $(*)$ is satisfied and $m_{\m}([-y,y])$ is even; then two segments $[-y,y]$ are transformed into $[-y,y - 1]+[-y + 1,y]$. The other segments $[-y,y]$ are transformed into $[-y + 1,y - 1]$.
\end{itemize}
It also suppresses $n$ segments $[y,y]$ and $[-y,-y]$, where $n=\max\{m_{\m}([y,y])-m_{\m}([y-1,y-1]) - m_{\m}([1-y,y-1]),0\}$ if $(*)$ is not satisfied; and $n=\max\{m_{\m}([y,y])-m_{\m}([y-1,y-1])- m_{\m}([1-y,y-1])+1,0\}$ if $(*)$ is satisfied. Note that we will show that $m_{\m}([1-y,y-1]) = 0$ or $1$.

In particular, we see that if $D_y(\m,\varepsilon)=0$, then 
\begin{itemize}
  \item There is no segment $[a,y]$ with $a<-y$;
  \item $m_{\m}([-y,y]) = 0$ or $1$;
  \item if $m_{\m}([-y,y]) = 1$ then $m_{\m}([1-y,y-1]) \neq 0$ and $\varepsilon([-y,y]) \varepsilon([1-y,y-1])=-1$;
  \item $m_{\m}([y,y]) \le m_{\m}([y-1,y-1]) + m_{\m}([1-y,y-1])$ if $m_{\m}([-y,y]) = 0$; and $m_{\m}([y,y]) \le m_{\m}([y-1,y-1])$ if $m_{\m}([-y,y]) = 1$.
\end{itemize}
This proves our induction hypothesis.

\bigskip

The formulas above give us an explicit formula for $D_{y_0}$, depending on whether $y_0 = 1/2$ or not. Moreover, we see that if $\Delta \in \m$ is such that $e(\Delta) < y_0$, then either $\Delta = [y,y]$ for some $y$, or $\Delta = [-y,y]$, and in this case $m_{\m}([-y,y]) = 1$. Furthermore, if $y > 1/2$, then $\varepsilon([-y,y]) \varepsilon([1-y,y-1]) = -1$; and if $y = 1/2$, then $\varepsilon([-1/2,1/2]) = (-1)^{t_{1/2} + 1}$.

\bigskip

For the rest of the section, let $(*)$ denote the condition:
\begin{enumerate}
    \item If $y_0 = 1/2$: $m_{\m}([-1/2,1/2])\neq 0$ and $\varepsilon([-1/2,1/2]) =(-1)^{t_{1/2} + 1}$.
    \item If $y_0 > 1/2$: $m_{\m}([-y_0,y_0])\neq 0$, $m_{\m}([-y_0 + 1,y_0-1])\neq 0$ and $\varepsilon([-y_0,y_0]) \varepsilon([1-y_0,y_0-1])=-1$.
\end{enumerate}

\begin{lem}
  \label{lem:endposnottype}
  If $e(\Delta_l)< y_0$ then $e(\Delta_l) = 1/2$.
\end{lem}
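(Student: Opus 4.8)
Lemma~\ref{lem:endposnottype} is the exact analogue, in the ``$\rho$ not of the same type as $G$'' setting, of Lemma~\ref{lem:endpos}, and the plan is to mimic that proof with the adjustments dictated by the fact that here centered segments are half-integral only in the sense that their \emph{ends} lie in $(1/2)\Z\setminus\Z$ when they correspond to $[-y,y]_{\rho_u}$ with $y\in(1/2)\Z\setminus\Z$, while the relevant ``stopping'' segment for the algorithm is $[1/2,1/2]_{\rho_u}$ rather than $[0,0]_{\rho_u}$.

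First I would invoke Lemma~\ref{lem:end01}, which gives $e(\Delta_l)\ge 0$ with no hypothesis on the type of $\rho$; hence under the standing assumption $e(\Delta_l)<y_0$ we already know $0\le e(\Delta_l)<y_0$. Next I would use the explicit description of the segments of $\m$ with small end that was established just above the statement: under the running hypotheses of Section~\ref{sec:derposnottype} (namely $(\m,\varepsilon)$ is $y$-reduced for all $-e_{\max}<y<0$, and $(\m,\varepsilon)$ is $x$-reduced for all half-integers $1/2\le x<y_0$), any segment $\Delta\in\m$ with $e(\Delta)<y_0$ is either $[y,y]_{\rho_u}$ for some $y$, or a centered segment $[-y,y]_{\rho_u}$ with $m_{\m}([-y,y]_{\rho_u})=1$. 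Since $\Delta_l$ is a segment of $y=s(\m)$, its projection $p(\Delta_l)$ must be one of these, so $\Delta_l$ is $[y,y]_{\rho_u}$ (for some $y$ with $|y|<y_0$, which forces $y\in(1/2)\Z\setminus\Z$ because $\rho$ is not of the same type as $G$, hence no integral coefficients occur), or a labelled centered segment with underlying multisegment $[-y,y]_{\rho_u}$.

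The heart of the argument is then to rule out $e(\Delta_l)=y$ with $1/2<y<y_0$ (for a singleton $[y,y]_{\rho_u}$) and to rule out any centered $\Delta_l=[-y,y]^{\clubsuit}_{\rho_u}$ with $y>1/2$, leaving $e(\Delta_l)=1/2$ as the only possibility. For the centered case: if $\Delta_l=[-y,y]^{\clubsuit}_{\rho_u}$ with $y>1/2$, then since $(\m,\varepsilon)$ is $y$-reduced, the explicit conditions displayed above the statement give $m_{\m}([1-y,y-1]_{\rho_u})\ne 0$ and $\varepsilon([-y,y]_{\rho_u})\varepsilon([1-y,y-1]_{\rho_u})=-1$; this means there is a segment $\Delta'$ (some labelled copy of $[1-y,y-1]_{\rho_u}$, or of $[1-y,y-1]^{=0}_{\rho_u}$ when $1-y=-(y-1)$) with $e(\Delta')=y-1=e(\Delta_l)-1$, $\Delta'\prec\Delta_l$, and — crucially — opposite sign to $\Delta_l$, so $\Delta'$ is eligible to continue the initial sequence after $\Delta_l$, contradicting $\Delta_l$ being the last term. (One should double-check that the sign-condition in the definition of the initial sequence, $\varepsilon(\Delta_{j+1})=-\varepsilon(\Delta_j)$, is matched by the parity hypothesis here; this is exactly where the $(-1)$ in $\varepsilon([-y,y]_{\rho_u})\varepsilon([1-y,y-1]_{\rho_u})=-1$ is used.) Similarly, if $\Delta_l=[y,y]_{\rho_u}$ with $1/2<y<y_0$, $y$-reducedness and the classification of small-end segments supply a segment ending in $y-1$ that can extend the sequence, again a contradiction. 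The remaining case $e(\Delta_l)=1/2$ with $p(\Delta_l)$ a centered $[-1/2,1/2]_{\rho_u}$ is consistent with the algorithm stopping (this is precisely one of the stopping conditions in Section~\ref{sec:algogood}), and $p(\Delta_l)=[1/2,1/2]_{\rho_u}$ is the other; both have $e(\Delta_l)=1/2$, so no contradiction is needed there.

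The main obstacle I anticipate is bookkeeping of the \emph{labels} and \emph{signs}: in the good-parity algorithm the initial sequence lives in $\USymm^\varepsilon$, so I must be careful that when I produce a candidate continuation segment $\Delta'$ with $e(\Delta')=e(\Delta_l)-1$ and $\Delta'\prec\Delta_l$, I pick the label $\clubsuit$ for which $\Delta'^{\clubsuit}$ actually lies in $y=s(\m)$ and respects the order $\preceq$ (for instance, a centered $[1-y,y-1]_{\rho_u}$ of odd multiplicity gives a $=0$-labelled copy, of even multiplicity a $\le 0$-labelled copy, and one must know $[1-y,y-1]^{=0}_{\rho_u}\prec[-y,y]^{\clubsuit}_{\rho_u}$ and $[1-y,y-1]^{\le 0}_{\rho_u}\prec[-y,y]^{\clubsuit}_{\rho_u}$ from the definition of $\preceq$), and that its sign is opposite to $\varepsilon(\Delta_l)$. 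This is routine but fiddly; it parallels precisely the reasoning in the proof of Lemma~\ref{lem:endpos}, and once the label/sign matching is set up, the contradiction with maximality of $l$ (equivalently, with $\Delta_l$ being the final segment of the initial sequence) is immediate.
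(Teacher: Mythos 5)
Your proof is correct and takes essentially the same route as the paper: reduce to $e(\Delta_l)\ge 0$ via Lemma~\ref{lem:end01}, observe that the reducedness hypotheses force $\Delta_l$ to be a singleton $[y,y]_{\rho_u}$ or a centered segment $[-y,y]^{\clubsuit}_{\rho_u}$, and then use the explicit consequences of $y$-reducedness (namely $m_{\m}([1-y,y-1]_{\rho_u})\ne 0$ together with the sign condition) to produce a segment ending in $y-1$ that would extend the initial sequence, contradicting that $\Delta_l$ is terminal unless $y=1/2$. The paper's proof is a two-line version of precisely this argument; your elaboration of the label/sign bookkeeping is accurate and simply fills in details the paper leaves implicit.
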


\begin{proof}
  We assume that $e(\Delta_l)< y_0$. From Lemma \ref{lem:end01}, we have that $e(\Delta_l) \ge 0$. Thus $\Delta_l$ is one of the following segments : $[-y,y]$ or $[y,y]$. If $\Delta_l = [-y,y]$ or $[y,y]$ with $y \neq 1/2$, the fact that $D_y(\m,\varepsilon)$ implies that there exists a segment $\Delta$ ending in $y-1$ suitable for the algorithm such that $\Delta \prec \Delta_l$ which is a contradiction.
\end{proof}

We denote by $(\tilde{\m},\tilde{\varepsilon}):=D_{y_0}(\m,\varepsilon)$ and by $\tilde{\Delta}_1, \cdots, \tilde{\Delta}_{\tilde{l}}$ the initial sequence in the algorithm for $(\tilde{\m},\tilde{\varepsilon})$.

\begin{lem}
  \label{lem:derposx1infnottype}
  If $y_1 < y_0$, then 
  \begin{enumerate}
    \item $e(\Delta_l)=1/2$;
    \item $\tilde{l}=l$;
    \item for all $1 \le i \le l$, $\tilde{\Delta}_{i}=\Delta_{i}$.
  \end{enumerate}
\end{lem}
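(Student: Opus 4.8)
\textbf{Plan of proof for Lemma~\ref{lem:derposx1infnottype}.}
The statement is the exact analogue, for $\rho$ not of the same type as $G$, of Lemma~\ref{lem:derposx1inf}, so the plan is to mimic that proof almost verbatim, replacing the integer thresholds by half-integer ones and the ``stopping'' segments $[0,0]_{\rho_u}^{\clubsuit}$ by $[1/2,1/2]_{\rho_u}$ (and $[-1/2,1/2]_{\rho_u}^{\clubsuit}$). First I would record the standing description of $(\m,\varepsilon)$ available from the hypotheses of Section~\ref{sec:derposnottype}: since $e_{\max}>1$, $(\m,\varepsilon)$ is $y$-reduced for all $-e_{\max}<y<0$, and $y_1<y_0$, the segments of $\m$ ending below $y_0$ are very constrained — namely $[y,y]$ for various $y$ with $m_{\m}([y,y])\le m_{\m}([y-1,y-1])$, and centered segments $[-y,y]$ occurring with multiplicity one and satisfying the sign condition $\varepsilon([-y,y])\varepsilon([1-y,y-1])=-1$ (with the $y=1/2$ convention). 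I would then, exactly as in Lemma~\ref{lem:sequencederneg}/\ref{lem:derposx1inf}, set $i_0 = e_{\max}-y_1+2$ and note $\Delta_i=[e_{\max}-i+1,e_{\max}-i+1]$ for $0<i<i_0$; since $y_0\neq e_{\max}$ the segment $\Delta_1$ is not killed by $D_{y_0}$, so $e_{\max}$ remains the largest coefficient in $\tilde\m$ and $\tilde\Delta_i=\Delta_i$ for $0<i<i_0$.

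The core of the argument is then to show that $D_{y_0}$ does not disturb the initial sequence beyond $i_0$. Because $y_1<y_0$, the segment $[y_0-1,y_0-1]$ belongs to $\m$ and by the multiplicity inequality $m_{\m}([y_0,y_0])\le m_{\m}([y_0-1,y_0-1])$, at least one copy of $[y_0,y_0]$ is not suppressed by $D_{y_0}$; more importantly, because $(\m,\varepsilon)$ is $y_1$-reduced, the segments $[1-y_1,y_1-1]$ has multiplicity $1$, and for $y_1-1\ge y>1/2$ one has $m_{\m}([-y+1,y-1])=1$ with $\varepsilon([-y,y])\varepsilon([1-y,y-1])=-1$, while $[-1/2,1/2]$ has multiplicity $\le 1$ with $\varepsilon([-1/2,1/2])=(-1)^{t_{1/2}+1}$. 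This forces $\Delta_i=[-e_{\max}+i-1,e_{\max}-i+1]^{=0}$ for $i_0\le i<l$ and $\Delta_l=[1/2,1/2]$, and all of these segments are untouched by $D_{y_0}$ (the derivative only modifies segments ending in $y_0$ or $y_0-1$, all of which lie before $i_0$ in the sequence). Hence $e(\Delta_l)=1/2$, $\tilde l=l$, and $\tilde\Delta_i=\Delta_i$ for all $i$. I would also check here that the $=0$/$\le 0$ labels on the $\tilde\Delta_i$ coincide with those of $\Delta_i$, since $D_{y_0}$ creates no new centered segments ending at those positions and does not change the parity of multiplicities of the relevant centered segments.

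The main obstacle — as always in this series of lemmas — is the careful bookkeeping of \emph{which} copies of $[y_0,y_0]$, $[y_0-1,y_0-1]$ and $[1-y_0,y_0-1]$ are consumed by the derivative versus by the algorithm, and confirming that none of the derivative's modifications propagate into the tail $\Delta_{i_0},\dots,\Delta_l$ of the initial sequence. Concretely I expect the delicate point to be ruling out, in the case $y_0=3/2$ (so $y_0-1=1/2$), that the segment $[-1/2,1/2]$ or $[1/2,1/2]$ gets modified by $D_{3/2}$ in a way that changes $\Delta_l$: here one uses that $y_0-1=1/2$ is still $<y_1$ only if $y_1=1/2$, which contradicts $y_1<y_0$ forcing $y_1=1/2<3/2$, so one must track the interaction of $D_{3/2}$ with the abundant $[1/2,1/2]$'s — and this is precisely where the hypothesis $y_1<y_0$, giving $m_{\m}([y_0-1,y_0-1])\ge m_{\m}([y_0,y_0])$, saves the day. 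Once this case analysis is dispatched, conclusions (1)--(3) follow immediately, and the subsequent lemma (the analogue of Lemma~\ref{lem:ADderposx1inf}) will use it just as in Section~\ref{sec:derpos}.
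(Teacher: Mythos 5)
Your overall plan matches the paper's: set $i_0 = e_{\max}-y_1+2$, observe the head $\Delta_i=[e_{\max}-i+1,e_{\max}-i+1]$ for $i<i_0$, note $D_{y_0}$ preserves it because $[y_0-1,y_0-1]\in\m$ so at least one $[y_0,y_0]$ survives, and argue that the tail past $i_0$ is untouched by $D_{y_0}$. But there is a genuine error in your description of the tail when $y_1>1/2$. You assert $\Delta_l=[1/2,1/2]$, yet by definition of $y_1$ one has $[1/2,1/2]\notin\m$ when $y_1>1/2$, so $\Delta_l$ cannot be $[1/2,1/2]$. (Your claim is also self-contradictory as a sequence: $[1/2,1/2]$ has label $\ge 0$, hence it cannot follow a segment with label $=0$ in the order $\prec$.) The paper's proof splits cases: if $y_1=1/2$ then indeed $\Delta_l=[1/2,1/2]$ and the algorithm halts automatically; if $y_1>1/2$ then the tail is $\Delta_i=[-e_{\max}+i-1,e_{\max}-i+1]^{=0}$ all the way down to $\Delta_l=[-1/2,1/2]^{=0}$, and the halting requires checking the sign: since $y_1>1/2$ forces $t_{1/2}=0$, the reducedness conditions give $\varepsilon([-1/2,1/2])=(-1)^{t_{1/2}+1}=-1$, which is precisely the condition for the algorithm to stop at $[-1/2,1/2]^{=0}$ when $\rho$ is not of the same type as $G$. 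Your plan as written does not establish this, so it does not prove (1) in the nontrivial case.

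Your ``delicate point'' about $y_0=3/2$ and the abundant $[1/2,1/2]$'s is a red herring: if $y_0=3/2$ then $y_1<y_0$ forces $y_1=1/2$ and then the tail is the single segment $[1/2,1/2]$, which is not modified by $D_{3/2}$ because $D_{y_0}$ only alters segments ending in $y_0$ or starting in $-y_0$ (and the centered ones), all of which lie strictly above the tail. The actual content you need is the sign computation above and the observation — which you do make — that $D_{y_0}$ never reaches below $y_0-1$, so it cannot disturb any $\Delta_i$ with $i\ge i_0$.
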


\begin{proof}
  We are in the case where $\Delta_1 = [e_{\max},e_{\max}]$. As $y_0 \neq e_{\max}$, $D_{y_0}$ does not suppress $\Delta_1$ and $\tilde{\Delta}_1 = \Delta_1$. Let $i_0 = e_{\max } - y_1 + 2$. Then, for all $0<i<i_0$, $\Delta_{i}=[e_{\max }-i+1, e_{\max }-i+1]$. In all these segments, the only that can be modified by $D_{y_0}$ is $[y_0,y_0]$. But as $y_1 < y_0$, $[y_0 - 1,y_0- 1] \in \m$, thus at least one $[y_0,y_0]$ is not suppressed by $D_{y_0}$. We see that $\tilde{l} \ge i_0 - 1$ and for all $0<i<i_0$, $\tilde{\Delta}_{i}=\Delta_{i}=[e_{\max }-i+1, e_{\max }-i+1]$.

  If $y_1=1/2$, then $\Delta_l = [1/2,1/2]$. Now, we assume that $y_1 > 1/2$. By definition of $y_1$, we know that $[y_1 - 1,y_1 - 1] \notin \m$, and since $D_{y_1}(\m,\varepsilon)=0$, we get $m_{\m}([1-y_1,y_1-1])=1$. In this case, we also know that for all $y_1 - 1 \ge y > 1/2$, $m_{\m}([-y+1,y-1]) = 1$ and $\varepsilon([-y,y]) \varepsilon([1-y,y-1])=-1$; and $m_{\m}([-1/2,1/2])= 1$ and $\varepsilon([-1/2,1/2])=(-1)^{t_{1/2} + 1}$. But $t_{1/2} = 0$ (as $y_1 > 1/2$), thus $\varepsilon([-1/2,1/2])=-1$. Thus for all $i_0 \le i \le l$, $\Delta_i = [-e_{\max }+i-1,e_{\max }-i+1]^{=0}$; and $\varepsilon([-1/2,1/2])=-1$ (hence $\varepsilon_0 = -1$). None of these segments are changed by $D_{y_0}$ thus $\tilde{l}=l$ and for all $i_0 \le i \le l$, $\tilde{\Delta}_i = \Delta_i$.
\end{proof}

\begin{lem}
  \label{lem:ADderposx1infnottype}
    Suppose $y_1 < y_0$. Then $(\tilde{\m}_1,\tilde{\varepsilon}_1)=(\m_{1},\varepsilon_1)$ and $(\tilde{\m}^{\#},\tilde{\varepsilon}^{\#})=D_{y_0}(\m^{\#},\varepsilon^{\#})$.
\end{lem}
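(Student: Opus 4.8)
Looking at Lemma~\ref{lem:ADderposx1infnottype}, the goal is to show that taking the $y_0$-derivative commutes with the first step of the $\AD$ algorithm in the regime $y_1 < y_0$ (with $\rho$ not of the same type as $G$). The plan is to mirror closely the structure of the analogous results in the preceding subsections, most notably Lemma~\ref{lem:ADderposx1inf} in the case $\rho$ of same type. The proof will have two halves: first the claim $(\tilde{\m}_1,\tilde{\varepsilon}_1)=(\m_1,\varepsilon_1)$, then the claim $(\tilde{\m}^{\#},\tilde{\varepsilon}^{\#})=D_{y_0}(\m^{\#},\varepsilon^{\#})$.

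For the first half, I would invoke Lemma~\ref{lem:derposx1infnottype}, which has already established that $e(\Delta_l)=1/2$, that $\tilde{l}=l$, and that $\tilde{\Delta}_i=\Delta_i$ for all $i$. Since $\m_1$ is completely determined by $\Delta_1$ and $\Delta_l$ (it is either $[-e(\Delta_1),e(\Delta_1)]_{\rho_u}$ or $[e(\Delta_l),e(\Delta_1)]_{\rho_u}+[-e(\Delta_1),-e(\Delta_l)]_{\rho_u}$), and likewise $\tilde{\m}_1$ is determined by $\tilde{\Delta}_1$ and $\tilde{\Delta}_{\tilde{l}}$, the equality $\m_1=\tilde{\m}_1$ follows immediately from $e(\Delta_1)=e(\tilde{\Delta}_1)$ and $l=\tilde{l}$. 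For the signs, one observes that here $\Delta_l=[1/2,1/2]_{\rho_u}$ is never a centered segment, so $\varepsilon_0=1$ and $\m_1$ is not centered by Lemma~\ref{lem:ADnotcentered}; hence there is nothing to check, and $\varepsilon_1=\tilde{\varepsilon}_1$ trivially. (If one prefers the parallel with the same-type case, one can also note $D_{y_0}$ creates or destroys no centered segments among those of relevant size, so the count $n_0$ is unchanged.)

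For the second half, the key point is that because $y_1 < y_0$ and the first $y_1$ segments in the initial sequence are the singleton segments $\Delta_i=[e_{\max}-i+1,e_{\max}-i+1]_{\rho_u}$ (as recorded in the proof of Lemma~\ref{lem:derposx1infnottype}), the segments of $\m$ that $\AD$ suppresses or modifies in this first step — namely the ends of $\Delta_1,\dots,\Delta_l$ and the beginnings of their duals — involve, among the segments ending at $y_0$ or $y_0-1$, only the singletons $[y_0,y_0]_{\rho_u}$ and $[y_0-1,y_0-1]_{\rho_u}$, each suppressed once. Crucially these two segments are \emph{not} among those modified by $D_{y_0}$: the segment $[y_0-1,y_0-1]_{\rho_u}$ survives because $y_1<y_0$ guarantees $[y_0-1,y_0-1]_{\rho_u}\in\m$ with enough multiplicity, so $D_{y_0}$ does not touch every copy of $[y_0,y_0]_{\rho_u}$, and the particular copy suppressed by $\AD$ can be chosen to be one left untouched by $D_{y_0}$. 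Consequently the sets $A_{y_0}^c$, $A_{y_0}^{\#,c}$ defining $D_{y_0}$ for $\m$ and $\m^{\#}$ coincide (one should phrase this via the index-bookkeeping used in Lemma~\ref{lem:dermdieze}), and the condition $(*)$ for $D_{y_0}$ is unaffected by the suppression since no segment of the form $[-y_0,y_0]_{\rho_u}$ or $[\pm(y_0-1),y_0-1]_{\rho_u}$ is altered. Therefore the operations $D_{y_0}$ and ``first step of $\AD$'' act on disjoint parts of the multisegment and commute: $\tilde{\m}^{\#}=D_{y_0}(\m^{\#})$, with the signs matching since $\varepsilon^{\#}$ is computed from $\varepsilon$ by the same sign rules on both sides.

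The main obstacle I anticipate is the careful verification that the copy of $[y_0,y_0]_{\rho_u}$ removed by $\AD$ can always be taken outside the set of copies modified by $D_{y_0}$, and more generally a clean bookkeeping of which index $i$ (among possibly many $\Lambda_i$ equal to a given segment) is affected by each operation — this is exactly the kind of multiplicity subtlety that Lemma~\ref{lem:dermdieze} and Lemma~\ref{lem:dermdiezepos} are set up to handle, so I would lean on the conventions established there (writing $A^{\#,c}_{y_0}=A$ when the \emph{multiset} of modified segments matches) rather than tracking individual indices. Once that is in place, the sign compatibility and the equality of multisegments are routine, following the template of the proof of Lemma~\ref{lem:ADderposx1inf}.
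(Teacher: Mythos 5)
Your two-part structure (first $(\tilde{\m}_1,\tilde\varepsilon_1)=(\m_1,\varepsilon_1)$, then $(\tilde{\m}^{\#},\tilde\varepsilon^{\#})=D_{y_0}(\m^{\#},\varepsilon^{\#})$) and the argument for the second half match the paper's proof, which invokes Lemma \ref{lem:derposx1infnottype} and then notes that the $[y_0,y_0]_{\rho_u}$ and $[y_0-1,y_0-1]_{\rho_u}$ removed by the algorithm are precisely copies left untouched by $D_{y_0}$.

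However, your sign reasoning in the first half is backwards. You assert that ``here $\Delta_l=[1/2,1/2]_{\rho_u}$ is never a centered segment, so $\varepsilon_0=1$.'' But Definition \ref{def:epsil} states that $\varepsilon_0=-1$ \emph{precisely} when $\rho_u$ is not of the same type as $G$ and $\Delta_l=[1/2,1/2]_{\rho_u}$ (or $\Delta_l=[-1/2,1/2]_{\rho_u}^{\ge 0}$ or $[-1/2,1/2]_{\rho_u}^{=0}$ with $\varepsilon([-1/2,1/2]_{\rho_u})=-1$). In the regime $y_1<y_0$ the proof of Lemma \ref{lem:derposx1infnottype} establishes exactly one of these two situations (with $\Delta_l=[1/2,1/2]_{\rho_u}$ if $y_1=1/2$, and $\Delta_l=[-1/2,1/2]_{\rho_u}^{=0}$ with $\varepsilon([-1/2,1/2]_{\rho_u})=-1$ if $y_1>1/2$), so in fact $\varepsilon_0=-1$ in every case. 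Consequently $\m_1=[-e_{\max},e_{\max}]_{\rho_u}$ \emph{is} a centered segment, and checking $\varepsilon_1(\m_1)=\tilde\varepsilon_1(\tilde{\m}_1)$ is not vacuous: by the formula $\varepsilon_1(\m_1)=(-1)^{n_0}$ (with $n_0=\card\{\Delta\in\m: c(\Delta)=0\}$), one must verify that $D_{y_0}$ preserves the parity of $n_0$. Your parenthetical remark gestures at this, but it is stated imprecisely --- $D_{y_0}$ can destroy centered segments (two copies of $[-y_0,y_0]_{\rho_u}$ may be turned into $[-y_0,y_0-1]_{\rho_u}+[1-y_0,y_0]_{\rho_u}$), so ``the count $n_0$ is unchanged'' is false in general; what holds, and what is needed, is that the parity of $n_0$ is preserved, since centered segments are altered in pairs (each $[-y_0,y_0]_{\rho_u}$ becomes the centered $[1-y_0,y_0-1]_{\rho_u}$, or two of them become two non-centered segments).
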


\begin{proof}
  By Lemma \ref{lem:derposx1infnottype}, we get that $(\tilde{\m}_1,\tilde{\varepsilon}_1)=(\m_{1},\varepsilon_1)$. Moreover, we know that for $1 \le i \le y_1$, $\Delta_{i}=[e_{\max }-i+1, e_{\max }-i+1]$. Hence, the segments of $\m^{\#}$ ending in $y_0$ or $y_0 - 1$ are the same as in $\m$ except that one segment $[y_0,y_0]$ and one $[y_0 - 1, y_0 - 1]$ have been suppressed. These segments were not modified by $D_{y_0}$. We deduce that $(\tilde{\m}^{\#},\tilde{\varepsilon}^{\#})=D_{y_0}(\m^{\#},\varepsilon^{\#})$.
\end{proof}

\begin{prop}
  If $y_1 < y_0$, then $\AD(D_{y_0}(\m,\varepsilon))=D_{-y_0}(\AD(\m,\varepsilon))$.
\end{prop}

\begin{proof}
  It follows from Lemmas \ref{lem:ADderposx1infnottype}, \ref{lem:ADcommDer} and \ref{lem:dersum}.
\end{proof}

\begin{lem}
  \label{lem:sequencederposx1eqx0nottype}
  Suppose that $y_1 = y_0$. Let $j = e_{\max } - y_0 + 1$, such that $\Delta_j = [y_0,y_0]$.
  \begin{enumerate}
    \item Then $e(\Delta_l)=y_0$ or $1/2$.
    \item If $e(\Delta_l)=y_0$ and $y_0 \neq 1/2$, then $\tilde{l}=l-1$; otherwise $\tilde{l}=l$.
    \item For all $1 \le i \le \tilde{l}$, if $i \neq j,j+1$ then $\tilde{\Delta}_i=\Delta_i$.
    \item If $y_0 = 1/2$.  If $m_{\m}([-1/2,1/2])$ is odd, $\varepsilon([-1/2,1/2])=-1$ and $t_{1/2} = 1$, then $\tilde{\Delta}_l = [-1/2,1/2]^{=0}$ and $\tilde{\varepsilon}([-1/2,1/2])=-1$; otherwise $\tilde{\Delta}_l = \Delta_l$.
    \item If $y_0 > 1/2$ and $e(\Delta_l)= 1/2$. Then $\Delta_j = [y_0,y_0]$ and $\Delta_{j+1}=[1-y_0,y_0-1]^{=0}$. And $\tilde{\Delta}_{j}=\Delta_{j}$, $\tilde{\Delta}_{j+1}=\Delta_{j+1}$, unless
    \begin{enumerate}
      \item  If $(*)$ is not satisfied and $m_{\m}([-y_0,y_0]) \neq 0$, then $\tilde{\Delta}_{j+1}=[1-y_0,y_0-1]^{\ge 0}$.
      \item If $(*)$ is satisfied and $m_{\m}([-y_0,y_0])$ is odd, then $\tilde{\Delta}_{j}=[-y_0,y_0]^{=0}$. 
      \item If $(*)$ is satisfied and $m_{\m}([-y_0,y_0])$ is even, then $\tilde{\Delta}_{j}=[1-y_0,y_0]$.
    \end{enumerate}
  \end{enumerate}
\end{lem}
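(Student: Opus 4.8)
The statement to prove, Lemma~\ref{lem:sequencederposx1eqx0nottype}, is a detailed structural description of how the initial sequence $\Delta_1,\dots,\Delta_l$ of the algorithm for $(\m,\varepsilon)$ relates to the initial sequence $\tilde\Delta_1,\dots,\tilde\Delta_{\tilde l}$ for $(\tilde\m,\tilde\varepsilon)=D_{y_0}(\m,\varepsilon)$, in the case where $\rho$ is not of the same type as $G$, $y_1=y_0$ (so $[y_0,y_0]\in\m$ is the smallest positive half-integer diagonal segment), and $y_0$ is the smallest positive half-integer for which $(\m,\varepsilon)$ fails to be $y_0$-reduced, with $e_{\max}\neq y_0$.

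\textbf{Approach.} The plan is to follow verbatim the template already established in the analogous results for the same-type case, namely Lemmas~\ref{lem:sequencederposx1eqx0} and~\ref{lem:sequencederpos}, adapting only the (simpler) arithmetic peculiar to the not-same-type situation: here there are no segments $[-1,0],[0,1],[0,0]$ to worry about, and centered segments have integer endpoints while the extremes are half-integers. First I would set $i_0=e_{\max}-y_1+2$ exactly as in the earlier proofs, and observe that by minimality of $y_0$ and the hypothesis $y_1=y_0$ we have $\Delta_i=[e_{\max}-i+1,e_{\max}-i+1]$ for $1\le i<i_0$, with $\Delta_{i_0-1}=[y_0,y_0]$; since $y_0\neq e_{\max}$ the derivative $D_{y_0}$ does not kill $\Delta_1$, so $e_{\max}$ remains the maximal coefficient and $\tilde\Delta_i=\Delta_i$ for $i\le i_0-2$. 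This gives part~(3) away from the indices $j,j+1$ once we also check that the tail $\Delta_i$ for $i_0<i<l$ (which, using the characterization of what $\m$ looks like when all smaller derivatives vanish — recalled just above the lemma — consists of centered segments $[-e_{\max}+i-1,e_{\max}-i+1]^{=0}$ plus possibly $[-1/2,1/2]$ at the end) is untouched by $D_{y_0}$; the point is that $D_{y_0}$ only modifies segments ending in $y_0$ or $y_0-1$, or the diagonal $[\pm y_0,\pm y_0]$'s, none of which appear after index $i_0$.

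\textbf{Key steps in order.} (i) Establish part~(1): by Lemma~\ref{lem:endposnottype}, if $e(\Delta_l)<y_0$ then $e(\Delta_l)=1/2$; combined with the structure of $\m$ this forces $e(\Delta_l)\in\{y_0,1/2\}$. (ii) Establish the length count in part~(2): if $y_0>1/2$ and $e(\Delta_l)=y_0$, then the only segments of $\m$ ending in $y_0$ are $[y_0,y_0]$ and $[-y_0,y_0]$ (using the induction on lower derivatives recalled before the lemma), all of which are either suppressed or moved by $D_{y_0}$, hence $\tilde l=l-1$; in the remaining cases one checks $\tilde l=l$ by exhibiting the surviving replacement segment. (iii) Handle $y_0=1/2$ (part~(4)): here $D_{1/2}$ only suppresses diagonal and $[-1/2,1/2]$ segments, so the only subtle point is the sign and label of the surviving $[-1/2,1/2]$ when $m_\m([-1/2,1/2])$ is odd with $\varepsilon([-1/2,1/2])=-1$ and $t_{1/2}=1$ — this is a direct inspection of the $D_{1/2}$ formula and the definition of the label $=0$ in $\USymm$. (iv) Handle $y_0>1/2$, $e(\Delta_l)=1/2$ (part~(5)): determine $\Delta_j=[y_0,y_0]$, $\Delta_{j+1}=[1-y_0,y_0-1]^{=0}$, and in each of the three $(*)$-sub-cases read off from the explicit $D_{y_0}$-transformation rules (stated above the lemma) what $[y_0,y_0]$ and $[-y_0,y_0]$ become, then re-run one step of the algorithm on $\tilde\m$ to identify $\tilde\Delta_j,\tilde\Delta_{j+1}$, using the order $\prec$ on $\USeg$ to confirm maximality.

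\textbf{Main obstacle.} The genuinely delicate part is step~(iv), specifically sub-cases (b) and (c): when $(*)$ holds and $m_\m([-y_0,y_0])$ is even, $D_{y_0}$ turns two copies of $[-y_0,y_0]$ into $[-y_0,y_0-1]+[-y_0+1,y_0]$, and one must argue that in $\tilde\m$ the segment $[-y_0+1,y_0]$ (respectively $[-y_0,y_0]^{=0}$ in the odd case) is genuinely the $\preceq$-maximal segment ending in $y_0$ after $\tilde\Delta_{j-1}$ — this requires carefully comparing it against the transformed images of the negative segments $[a,y_0]$ with $a<-y_0$ (which $D_{y_0}$ sends to $[a,y_0-1]$) and invoking the maximality of $\Delta_j,\Delta_{j+1}$ in the original sequence, much as in the corresponding step of Lemma~\ref{lem:sequencederpos}. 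I expect this to be purely a matter of bookkeeping with the order and the derivative formula, with no new conceptual ingredient beyond what was used in the same-type case, so the proof will end (as its analogues do) with ``the other cases are treated similarly'' after giving one or two representative computations in full.
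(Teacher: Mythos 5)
Your proposal is correct and follows essentially the same route as the paper's proof: the same setup with $i_0 = e_{\max}-y_1+2$ (so that $\Delta_{i_0-1}=[y_0,y_0]$ is the only segment among $\Delta_1,\dots,\Delta_{i_0-1}$ that $D_{y_0}$ can touch), then a case split on $y_0 = 1/2$ versus $y_0 > 1/2$, and within the latter on $e(\Delta_l)=y_0$ versus $e(\Delta_l)=1/2$, with the $(*)$- and parity-sub-cases read off directly from the explicit $D_{y_0}$-formula recalled just above the lemma. Two small remarks: your summary in step~(iii) that ``$D_{1/2}$ only suppresses diagonal and $[-1/2,1/2]$ segments'' omits that $D_{1/2}$ also shrinks segments $[a,1/2]$ with $a<-1/2$, but this is harmless here since none appear in the initial sequence when $y_1=1/2$; and you somewhat overstate the delicacy of step~(iv): because in the case $e(\Delta_l)=1/2$ the tail from index $i_0=j+1$ onward consists entirely of centered segments $[-y,y]^{=0}$, there is no competing transformed negative segment $[a,y_0-1]$ to compare against, and the identification of $\tilde\Delta_j,\tilde\Delta_{j+1}$ reduces to a direct multiplicity/label count as in the paper.
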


\begin{proof}
  Let $i_0 = e_{\max } - y_1 + 2$, then, for all $0<i<i_0$, $\Delta_{i}=[e_{\max }-i+1, e_{\max }-i+1]$. The segment $\Delta_{i_0 - 1} = [y_0,y_0]$ could be modified by $D_{y_0}$. We get that $\tilde{l} \ge i_0 - 2$, and for all $i \le i_0 - 2$, $\tilde{\Delta}_i=\Delta_i$.
  \begin{itemize}
    \item Suppose $y_0 = 1/2$. Then $\Delta_l = \Delta_{i_0} = [1/2,1/2]$ and $\varepsilon_0 = 1$. If $(*)$ is satisfied or $m_{\m}([-1/2,1/2])$ is even or $t_{1/2} \neq 1$, then in $\tilde{\m}$ there is still a $[1/2,1/2]$. Thus $\tilde{l}=l$ and $\tilde{\Delta}_{l}=\Delta_l$. If $(*)$ is not satisfied, $m_{\m}([-1/2,1/2])$ is odd and $t_{1/2} = 1$. The only segment ending in $1/2$ in $\tilde{\m}$ is $[-1/2,1/2]^{=0}$ with $\tilde{\varepsilon}([-1/2,1/2])=-1$. Thus $\tilde{l}=l$ and $\tilde{\Delta}_{l}=[-1/2,1/2]^{=0}$.
    \item Suppose $y_0 > 1/2$ and $e(\Delta_l)=y_0$. Then $m_{\m}([1-y_0,y_0-1])=0$ and in particular $(*)$ is not satisfied. Therefore, $D_{y_0}$ transforms all the segments $[-y_0,y_0]$ into $[1-y_0,y_0-1]$ and suppresses all the $[y_0,y_0]$. Thus $\tilde{l}=l-1$.
    \item Suppose $y_0 > 1/2$ and $e(\Delta_l)=1/2$. As $y_1=y_0 > 1/2$, then $t_{1/2}=0$. Also, as $e(\Delta_l)=1/2$, we get that $m_{\m}([1-y_0,y_0-1])=1$.
    \begin{itemize}
      \item Suppose $(*)$ is not satisfied. Then $D_{y_0}$ does not suppress all the $[y_0,y_0]$ thus $\tilde{\Delta}_{i_0 - 1}=[y_0,y_0]=\Delta_{i_0 - 1}$. If $m_{\m}([-y_0,y_0]) \neq 0$ then $\tilde{\Delta}_{i_0}=[1-y_0,y_0-1]^{\ge 0}$; otherwise $\tilde{\Delta}_{i_0}=[1-y_0,y_0-1]^{=0}=\Delta_{i_0}$. In both cases, $\tilde{l}=l$ and for all $j>i_0$, $\tilde{\Delta}_{j}=\Delta_{j}$.
      \item Suppose $(*)$ is satisfied. This time all the $[y_0,y_0]$ are suppressed by $D_{y_0}$. If $m_{\m}([-y_0,y_0])$ is odd, then $\tilde{\Delta}_{i_0-1}=[-y_0,y_0]^{=0}$, $\tilde{\Delta}_{i_0}=[1-y_0,y_0-1]^{=0}=\Delta_{i_0}$, $\tilde{l}=l$ and for all $j>i_0$, $\tilde{\Delta}_{j}=\Delta_{j}$. If $m_{\m}([-y_0,y_0])$ is even, then $\tilde{\Delta}_{i_0-1}=[1-y_0,y_0]$, $\tilde{\Delta}_{i_0}=[1-y_0,y_0-1]^{=0}=\Delta_{i_0}$, $\tilde{l}=l$ and for all $j>i_0$, $\tilde{\Delta}_{j}=\Delta_{j}$.
    \end{itemize}
  \end{itemize}
\end{proof}

\begin{lem}
  \label{lem:ADderposx1eqx0nottype}
    Suppose that $y_1 = y_0$.
    \begin{enumerate}
        \item If $e(\Delta_{l}) = y_0$ and $y_0 \neq 1/2$, then $(\tilde{\m}_1,\tilde{\varepsilon}_1)=D_{-y_0}(\m_{1},\varepsilon_1)$ and $(\tilde{\m}^{\#},\tilde{\varepsilon}^{\#})=D_{y_0}(\m^{\#},\varepsilon^{\#})$.
        \item Otherwise, $(\tilde{\m}_1,\tilde{\varepsilon}_1)=(\m_{1},\varepsilon_1)$ and $(\tilde{\m}^{\#},\tilde{\varepsilon}^{\#})=D_{y_0}(\m^{\#},\varepsilon^{\#})$. 
    \end{enumerate}
\end{lem}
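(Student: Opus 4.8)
The plan is to reduce this lemma to a routine verification, exactly parallel to what was done in Lemma~\ref{lem:ADderposx1eqx0} in the case where $\rho$ is of the same type as $G$, but using the combinatorial data established in Lemma~\ref{lem:sequencederposx1eqx0nottype}.

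First I would recall that, by definition of $\AD_\rho$, both $(\m_1,\varepsilon_1)$ and $(\tilde\m_1,\tilde\varepsilon_1)$ are determined entirely by the first and last terms of the respective initial sequences: $(\m_1,\varepsilon_1)$ by $\Delta_1$ and $\Delta_l$ (together with $n_0$ and $\varepsilon([0,0])$ when the result is a centered segment, which here does not occur since $\rho$ is not of the same type as $G$ and the relevant stopping segments have been excluded), and $(\tilde\m_1,\tilde\varepsilon_1)$ by $\tilde\Delta_1$ and $\tilde\Delta_{\tilde l}$. By part (3) of Lemma~\ref{lem:sequencederposx1eqx0nottype}, $\tilde\Delta_i = \Delta_i$ for all $i \neq j, j+1$, so in particular $\tilde\Delta_1 = \Delta_1$; and in case (1), $e(\Delta_l) = y_0 \neq 1/2$ forces $\tilde l = l - 1$ with $e(\tilde\Delta_{\tilde l}) = y_0 - 1 = e(\Delta_l) - 1$, from which $(\tilde\m_1,\tilde\varepsilon_1) = D_{-y_0}(\m_1,\varepsilon_1)$ follows directly (the derivative $D_{-y_0}$ simply shortens the ``negative'' companion segment $[-e(\Delta_1), -e(\Delta_l)]$ at its beginning, i.e.\ replaces $[-e(\Delta_1),-y_0]$ by $[-e(\Delta_1),-y_0+1]$, and likewise shortens $[e(\Delta_l),e(\Delta_1)] = [y_0,e(\Delta_1)]$ to $[y_0-1,\ldots]$; wait—more precisely, since $\tilde l = l-1$ one has $\tilde\m_1 = [y_0+1, e(\Delta_1)] + [-e(\Delta_1), -y_0-1]$, which is exactly $D_{-y_0}(\m_1,\varepsilon_1)$). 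In case (2), either $y_0 = 1/2$ or $e(\Delta_l) = 1/2$; in both subcases part (4) or part (5) of Lemma~\ref{lem:sequencederposx1eqx0nottype} gives $\tilde l = l$ and $e(\tilde\Delta_{\tilde l}) = e(\Delta_l)$ (the label of $\tilde\Delta_l$ may change from $\Delta_l$, but the underlying segment, and hence $\m_1 = \tilde\m_1$, does not), so $(\tilde\m_1,\tilde\varepsilon_1) = (\m_1,\varepsilon_1)$.

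The bulk of the work is the identity $(\tilde\m^{\#},\tilde\varepsilon^{\#}) = D_{y_0}(\m^{\#},\varepsilon^{\#})$, which I would handle exactly as in Lemma~\ref{lem:ADderposx1eqx0}: one checks, case by case along the list in Lemma~\ref{lem:sequencederposx1eqx0nottype}(4)--(5), that the segments of $\m$ affected by $\AD_\rho$ (i.e.\ indexed by $i_1,\dots,i_l$ and their duals) and those affected by $D_{y_0}$ intersect only in $\Delta_j$ and $\Delta_{j+1}$, and that on these two segments the two operators ``commute'' in the sense that applying $\AD_\rho$ then $D_{y_0}$ yields the same multiset (with the same signs) as applying $D_{y_0}$ then the modified $\AD_\rho$. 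Since when $y_1 = y_0$ all the segments $\Delta_1, \dots, \Delta_{j-1}$ equal $[e_{\max}-i+1, e_{\max}-i+1]$ and are simply removed by $\AD_\rho$, while $D_{y_0}$ only touches segments ending in $y_0$ or $y_0-1$, the interaction is genuinely confined near the index $j$; one then works through each of the subcases (a)--(c) of part (5) (and the $y_0 = 1/2$ subcase of part (4)) by writing out explicitly which segments get shortened, created or suppressed by each order of composition and checking they agree, using the explicit formula for $D_{y_0}$ recalled at the start of Section~\ref{sec:derposnottype} and Definition~\ref{def:derbap}/\ref{def:der} for the parity corrections. The signs are tracked using the fact, noted in Lemma~\ref{lem:sequencederposx1eqx0nottype}, that $\varepsilon_0 = \tilde\varepsilon_0 = 1$ here, so no global sign flip intervenes.

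The main obstacle, as in the same-type case, is the bookkeeping around the parity corrections: the ``$(*)$'' condition governing whether a pair $[-y_0,y_0-1] + [-y_0+1,y_0]$ must be traded back for $[-y_0,y_0] + [-y_0+1,y_0-1]$ in $D_{y_0}$ can behave differently for $\m^{\#}$ than for $\m$, and one must verify — exactly as Lemma~\ref{lem:dermdiezepos} does for the $y_1 > y_0$ case — that the validity of $(*)$, the parities of $c$ and of $t$, and the sizes of the relevant sets $A_{y_0}^{c}$, $A_{y_0}^{\#,c}$ are all adjusted consistently so that the two compositions land on the same element of $\Symm^\varepsilon_\rho(G)$. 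Because here $\rho$ is not of the same type as $G$ there is no $[0,0]$ segment to worry about and the stopping segment is $[1/2,1/2]$ (or $[-1/2,1/2]$ with negative sign), which somewhat simplifies the sign analysis compared to Section~\ref{sec:derpos}; I would expect the argument to be shorter than its same-type counterpart, with the subcases $y_0 = 1/2$ and $e(\Delta_l) = 1/2$ being the only genuinely new computations.
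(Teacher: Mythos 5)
There is a genuine error in your handling of signs that would derail the proof of case (2).

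You assert twice that the centered/sign issue does not arise: first in the parenthetical ``which here does not occur since $\rho$ is not of the same type as $G$ and the relevant stopping segments have been excluded'', and again at the end with ``$\varepsilon_0 = \tilde\varepsilon_0 = 1$ here, so no global sign flip intervenes.'' Both claims are false in case (2). When $y_0 = 1/2$ (so $y_1 = 1/2$ and $[1/2,1/2] \in \m$), the initial sequence terminates at $\Delta_l = [1/2,1/2]_{\rho_u}$; this is precisely a stopping segment in the ``not of the same type'' branch of Definition \ref{def:epsil}, so $\varepsilon_0 = -1$ and $\m_1 = [-e_{\max}, e_{\max}]$ is centered, carrying the sign $\varepsilon_1(\m_1) = (-1)^{n_0}$. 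Similarly, in the other subcase of case (2) — $y_0 > 1/2$, $e(\Delta_l) = 1/2$ — one has $\Delta_l = [-1/2,1/2]^{=0}_{\rho_u}$ with $\varepsilon([-1/2,1/2]) = -1$ (forced because $t_{1/2} = 0$ here), again a stopping segment, so again $\varepsilon_0 = -1$. The paper's own proof explicitly flags this: it inserts the remark that when $e(\Delta_l) = y_0$ and $y_0 = 1/2$ one has $\varepsilon_0 = -1$ and, by Lemma \ref{lem:sequencederposx1eqx0nottype}, $\tilde{\varepsilon}_0 = -1$ as well. (You may have been thrown off by what appears to be a typo inside the proof of Lemma \ref{lem:sequencederposx1eqx0nottype}, which writes $\varepsilon_0 = 1$ in the $y_0 = 1/2$ branch; Definition \ref{def:epsil} is unambiguous that it should be $-1$.)

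Because of this, the ``no global sign flip'' shortcut does not apply: in case (2) all of the centered-segment signs in $\varepsilon^{\#}$ are multiplied by $\varepsilon_0 = -1$, so the verification $(\tilde\m^{\#}, \tilde\varepsilon^{\#}) = D_{y_0}(\m^{\#}, \varepsilon^{\#})$ must track those flips on both sides. Moreover, since $\m_1$ is centered, establishing $(\tilde\m_1,\tilde\varepsilon_1) = (\m_1,\varepsilon_1)$ also requires checking that $\varepsilon_1(\m_1) = \tilde\varepsilon_1(\tilde\m_1)$, i.e.\ that $n_0 \equiv \tilde n_0 \pmod 2$. This parity check is straightforward from the explicit formula for $D_{y_0}$ (the derivative changes the number of centered segments by an even amount in each of the cases), but it is a piece of work your proof declares unnecessary and so omits. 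The rest of your outline — identifying $(\m_1,\varepsilon_1)$ from $\Delta_1$ and $\Delta_l$, using Lemma \ref{lem:sequencederposx1eqx0nottype}(3) to locate the only indices where $\tilde\Delta_i \neq \Delta_i$, and reducing $(\tilde\m^{\#},\tilde\varepsilon^{\#}) = D_{y_0}(\m^{\#},\varepsilon^{\#})$ to a local case check near index $j$ — is consistent with the paper's approach, so once the sign bookkeeping is corrected the argument goes through.
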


\begin{proof}
  The result for $(\tilde{\m}_1,\tilde{\varepsilon}_1)$ follows directly form Lemma \ref{lem:sequencederposx1eqx0nottype} (note that in the case $e(\Delta_{l}) = y_0$ and $y_0 = 1/2$ then $\varepsilon_0=-1$ and by Lemma \ref{lem:sequencederposx1eqx0nottype} we also have $\tilde{\varepsilon}_0=-1$). If $e(\Delta_l)=y_0$ and $y_0 \neq 1/2$, then for all $1 \le i \le l$, $\Delta_{i}=[e_{\max }-i+1, e_{\max }-i+1]$, and these segments are just suppressed by $\AD$. It is easy to see that $(\tilde{\m}^{\#},\tilde{\varepsilon}^{\#})=D_{y_0}(\m^{\#},\varepsilon^{\#})$.

  In the other cases, it is easy to prove that $(\tilde{\m}^{\#},\tilde{\varepsilon}^{\#})=D_{y_0}(\m^{\#},\varepsilon^{\#})$, checking all the cases of  Lemma \ref{lem:sequencederposx1eqx0nottype} and looking at the formula of $D_{y_0}$.

  Let us do one to show an example. Suppose that $y_0 = 1/2$, $m_{\m}([-1/2,1/2])$ is odd, $\varepsilon([-1/2,1/2])=-1$ and $t_{1/2} = 1$. Then $\tilde{\Delta}_l = [-1/2,1/2]^{=0}$ and $\tilde{\varepsilon}([-1/2,1/2])=-1$. Note that $\m^{\#}$ is just $\m$ where we have suppressed $[e_{\max }, e_{\max }]+\cdots+[1/2,1/2]+[-1/2,-1/2]+\cdots+[-e_{\max }, -e_{\max }]$. Let us focus on the segments ending in $1/2$. So $\AD$ suppresses one $[1/2,1/2]$. Then in $\m^{\#}$, $t^{\#}_{1/2}=0$ and $(*)$ is not satisfied. So $D_{1/2}$ suppresses all the segments $[-1/2,1/2]$. At the end, the are no segments ending in $1/2$ in $D_{y_0}(\m^{\#},\varepsilon^{\#})$. Now in $\m$, $D_{y_0}$ suppresses all the $[-1/2,1/2]$ except one and one $[1/2,1/2]$. As $\tilde{\Delta}_l = [-1/2,1/2]^{=0}$, $\AD(\tilde{\m},\tilde{\varepsilon})$ suppresses $\tilde{\Delta}_l$. So we see that $(\tilde{\m}^{\#},\tilde{\varepsilon}^{\#})=D_{y_0}(\m^{\#},\varepsilon^{\#})$. The other cases are treated similarly.
\end{proof}

\begin{prop}
  If $y_1 = y_0$, then $\AD(D_{y_0}(\m,\varepsilon))=D_{-y_0}(\AD(\m,\varepsilon))$.
\end{prop}

\begin{proof}
  It follows from Lemmas \ref{lem:ADderposx1eqx0nottype}, \ref{lem:ADcommDer} and \ref{lem:dersum}.
\end{proof}

\begin{lem}
  \label{lem:sequencederposnottype}
  Suppose that $y_1 > y_0$.
  \begin{enumerate}
      \item If $e(\Delta_l)=y_0$ and $\varepsilon_0 = 1$ then $\tilde{l}=l - 1$; otherwise $\tilde{l}=l$.
      \item For $1 \le i \le \tilde{l}$:
      \begin{enumerate}
        \item If $e(\Delta_i)\neq y_0$
        \begin{enumerate}
        \item If $b(\Delta_i)=-y_0$, then $\tilde{\Delta}_{i}={}^{-}\Delta_{i}$
        \item Otherwise, $\tilde{\Delta}_{i}=\Delta_{i}$. 
        \end{enumerate}
        \item If $e(\Delta_i)= y_0$
        \begin{enumerate}
          \item If $\Delta_{i}=[-y_0,y_0]$, $(*)$ is satisfied, $m_{\m}([-y_0,y_0])$ is even and $\Delta_{i-1}=[y_0+1,y_0+1]$, then $\tilde{\Delta}_{i}=[-y_0+1,y_0]$.
          \item If $\Delta_{i}=[-y_0,y_0]$, $(*)$ is satisfied and $m_{\m}([-y_0,y_0])$ is odd, then  $\tilde{\Delta}_{i}=[-y_0,y_0]^{=0}$.
          
          \item Otherwise, $\tilde{\Delta}_{i}=\Delta_{i}$.
        \end{enumerate}
        
      \end{enumerate}
  \end{enumerate}
\end{lem}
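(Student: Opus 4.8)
The plan is to mirror, in the ``$\rho$ not of the same type'' setting, the argument already carried out for $\rho$ of the same type in Lemma~\ref{lem:sequencederpos}, using in the same way the explicit description of $D_{y_0}$ recalled in this section together with the structural consequences of the standing hypotheses and of $y_1 > y_0$.

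First I would record the relevant structure of $\m$. Since $(\m,\varepsilon)$ is $y$-reduced for all $-e_{\max} < y < 0$ and $y_1 > y_0$, there is no segment $[y_0,y_0]$ nor $[-y_0,-y_0]$ in $\m$; the only segments ending in $y_0$ are $[a,y_0]$ with $a < -y_0$ together with the (labelled) centered segment $[-y_0,y_0]$, and the only segments ending in $y_0-1$ are $[1-y_0,y_0-1]$ and singletons $[y_0-1,y_0-1]$. Then $D_{y_0}$ turns each $[a,y_0]$ with $a<-y_0$ into $[a,y_0-1]$ and its symmetric $[-y_0,-a]$ into $[-y_0+1,-a] = {}^{-}[-y_0,-a]$, acts on the centered segments $[-y_0,y_0]$ exactly according to whether $(*)$ holds and to the parity of $m_{\m}([-y_0,y_0])$, and leaves every other segment untouched. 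In particular $\Delta_1$ is not modified (as $y_0 \neq e_{\max}$), and setting $i_0 = e_{\max} - y_1 + 2$, for $i < i_0$ the segment $\Delta_i = [e_{\max}-i+1, e_{\max}-i+1]$ is a singleton distinct from $[y_0,y_0]$, hence also untouched; so $\tilde{l} \ge i_0 - 1$ and $\tilde{\Delta}_i = \Delta_i$ for $i < i_0$.

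The heart of the proof is to run the recursive construction of the initial sequence for $(\tilde{\m},\tilde{\varepsilon})$ downward from index $i_0$, comparing it step by step with $\Delta_1 \succeq \cdots \succeq \Delta_l$. When $e(\Delta_i) > y_0$ and $b(\Delta_i) \neq -y_0$, the segment $\Delta_i$ is unchanged and, the two algorithms having had the same pool of admissible segments so far, $\tilde{\Delta}_i = \Delta_i$; when $b(\Delta_i) = -y_0$ (which forces $e(\Delta_i) > y_0$, the centered case being treated apart) $D_{y_0}$ shortens $\Delta_i$ on the left, whence $\tilde{\Delta}_i = {}^{-}\Delta_i$, and one checks — as in the proofs of Lemmas~\ref{lem:sequencederneg} and \ref{lem:sequencederpos} — that the shortened segment does not make available in $\tilde{\m}$ a larger segment ending one step below, so that $\tilde{\Delta}_{i+1} = \Delta_{i+1}$; the only possible obstruction, a new segment $[-y_0, e(\Delta_{i+1})]$, is excluded by the maximality of $\Delta_i$. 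The genuinely delicate point is the treatment of the centered segment $[-y_0,y_0]$: depending on whether $(*)$ holds, on the parity of $m_{\m}([-y_0,y_0])$, and — when $(*)$ holds with even multiplicity — on whether the preceding term of the sequence is $[y_0+1,y_0+1]$, the surviving copy is read off by $\preceq$ as $[-y_0,y_0]^{=0}$, as $[-y_0+1,y_0]$, or as $\Delta_i$ itself; reconciling this with the order on $\USeg$ (the interplay of the labels $\ge 0$, $=0$, $\le 0$) produces the three exceptional cases (2)(b)(i)--(iii).

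Finally, part~(1) follows by inspecting $\Delta_l$: if $e(\Delta_l) = y_0$ and $\varepsilon_0 = 1$, then $\Delta_l$ is a non-stopping terminal segment, $D_{y_0}$ removes the relevant copies of segments ending in $y_0$ and creates none, so no segment of $\tilde{\m}$ ends in $y_0$ and $\tilde{l} = l-1$; if $e(\Delta_l) < y_0$, then $\Delta_l$ lies in the untouched staircase tail and $\tilde{l} = l$; and if $e(\Delta_l) = y_0$ with $\varepsilon_0 = -1$, then necessarily $y_0 = 1/2$ and $\Delta_l = [-1/2,1/2]^{\ge 0}$ or $[-1/2,1/2]^{=0}$ with sign $-1$, and since $y_1 > 1/2$ forces $t_{1/2} = 0$ the single copy survives (exactly as in Lemma~\ref{lem:sequencederposx1eqx0nottype}(4)) and still terminates the sequence, so $\tilde{l} = l$. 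The main obstacle I expect is precisely this centered-segment bookkeeping together with the repeated verification that $\tilde{\m}$ contains no unexpectedly large segment ending in $y_0-1$; it is routine but heavily case-based, which is why the statement has so many sub-cases.
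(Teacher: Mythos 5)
Your proof is correct and takes essentially the same approach as the paper, whose proof simply observes that $y_1>y_0$ forces $t_{1/2}=0$ and then refers to Lemma~\ref{lem:sequencederpos}. Your explicit treatment of the $e(\Delta_l)=y_0$, $\varepsilon_0=-1$ sub-case in part (1) -- which does not arise in Lemma~\ref{lem:sequencederpos} since there $\varepsilon_0=-1$ forces $e(\Delta_l)=0<y_0$, but does arise here with $y_0=1/2$ -- is a helpful supplement to the paper's terse reference (though note that when $m_{\m}([-1/2,1/2])$ is even, what survives is not the centered copy but the pair $[1/2,1/2]+[-1/2,-1/2]$, with $\tilde{\Delta}_l=[1/2,1/2]$ still a stopping configuration, so the conclusion $\tilde{l}=l$ holds as you say).
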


\begin{proof}

  Note that since $y_0 < y_1$, then $t_{1/2}=0$. Then the proof is the same as Lemma \ref{lem:sequencederpos}.
\end{proof}

\begin{lem}
  \label{lem:dermdiezeposnottype}
  Suppose that $y_1 > y_0$.
  \begin{enumerate}
    \item If $e(\Delta_{l}) \ge y_0 + 2$, then $A_{y_0}^{\#,c} = A_{y_0}^{c}$.
    \item If $e(\Delta_{l}) = y_0 + 1$, then $A_{y_0}^{\#,c} = A_{y_0}^{c} \cup \{i_l\}$. 
    \item If $e(\Delta_{l}) = y_0$. 
    \begin{enumerate}
      \item If $\Delta_l = [-y_0,y_0]^{\le 0}$ and $(*)$ is not satisfied in $(\m,\varepsilon)$, then we can also assume that $i_l,i'_l \in A_{y_0}^{c}$ and we get that $A_{y_0}^{\#,c} = A_{y_0}^{c} \cup \{i_{l-1}\} \setminus \{i_l,i'_l\}$.
      \item If $\Delta_l = [-1/2,1/2]^{\ge 0}$ or $[-1/2,1/2]^{=0}$, and $(*)$ is satisfied. We can assume that $i_l \notin A_{y_0}^{c}$, and $A_{y_0}^{\#,c} = A_{y_0}^{c}$.
      \item If $\Delta_l = [-1/2,1/2]^{\le 0}$ and $(*)$ is satisfied. We can assume that $i_l \notin A_{y_0}^{c}$, and $A_{y_0}^{\#,c} = A_{y_0}^{c} \setminus \{i'_l\}$. 
      \item Otherwise, we can assume that $i_l \in A_{y_0}^{c}$ and $A_{y_0}^{\#,c} = A_{y_0}^{c} \setminus \{i_l\}$.  
    \end{enumerate}

    \item If $e(\Delta_{l}) = 1/2$ and $y_0 \neq 1/2$. Let $j$ such that $e(\Delta_j)=y_0$. We can assume that $i_j \notin A_{y_0}^{c}$. Then $A_{y_0}^{\#,c} = A_{y_0}^{c}$.
  \end{enumerate}

\end{lem}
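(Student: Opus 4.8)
The statement to be proved is the ``bad/ugly parity analogue'' of Lemma~\ref{lem:dermdieze}: a case-by-case computation of how the set $A_{y_0}^{\#,c}$ of segments modified by the highest positive half-integer derivative $D_{y_0}$ in $\m^\#$ compares to the corresponding set $A_{y_0}^c$ for $\m$, when $y_1>y_0$ and $\rho$ is not of the same type as $G$. The proof is entirely combinatorial and proceeds by tracking, step by step through the algorithm, which segments end in $y_0$ and $y_0-1$ in $\m^\#$ versus in $\m$, exactly as in the proof of Lemma~\ref{lem:dermdieze} (the integral positive-parity case, Lemma~\ref{lem:dermdiezepos}). The first thing I would do is record, using the explicit description of $D_{y_0}$ from Section~\ref{sec:derposnottype}, the baseline formula for $A_{y_0}^c$ in terms of whether $(*)$ holds and whether $y_0=1/2$: the segments ending in $y_0$ are exactly $[a,y_0]$ with $a<-y_0$, $[-y_0,y_0]$ (and $[y_0,y_0]$, but these are absent since $y_0<y_1$), while the segments ending in $y_0-1$ are $[1-y_0,y_0-1]$ (with multiplicity $1$ unless $y_0=1/2$) and $[y_0-1,y_0-1]$ (absent since $y_0<y_1$). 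This is the analogue of the displayed formula for $A_{y_0}^c$ in the proof of Lemma~\ref{lem:dermdiezepos}, but simpler because there are no $[-1,0]$ or $[0,1]$ exceptional segments in the non-integral case.

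\textbf{Key steps.} I would then split into the four cases according to $e(\Delta_l)$, using Lemma~\ref{lem:sequencederposnottype} to know precisely how the initial sequence $\tilde\Delta_1,\dots,\tilde\Delta_{\tilde l}$ of $\tilde\m=D_{y_0}(\m,\varepsilon)$ relates to that of $(\m,\varepsilon)$, and using the identity $A_{y_0}^{\#}=\{i\in A_{y_0}:\Lambda_i^\#\neq 0\}\cup\{i\in\{i_1,\dots,i_l\}: e(\Lambda_i)=y_0+1,\ \Lambda_i^\#\neq 0\}\setminus\{i\in\{i_1,\dots,i_l\}: e(\Lambda_i)=y_0\}$ (and similarly for $A_{y_0-1}^\#$), which is the same book-keeping used throughout Lemmas~\ref{lem:dermdieze} and~\ref{lem:dermdiezepos}. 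In the case $e(\Delta_l)\ge y_0+2$ the segments of $\m^\#$ ending in $y_0$, $y_0-1$ come from segments of $\m$ ending at the same value, nothing exceptional happens, and $(*)$ is preserved between $\m$ and $\m^\#$ since no $[-y_0,y_0]$ or $[1-y_0,y_0-1]$ is created or destroyed; hence $A_{y_0}^{\#,c}=A_{y_0}^c$. In the case $e(\Delta_l)=y_0+1$, I would show $\Lambda_{i_l}^\#\neq 0$ (the segment $[y_0+1,y_0+1]$ and $[-y_0,y_0+1]$ are impossible as last terms of the initial sequence) and then argue $i_l\in A_{y_0}^{\#,c}$ by examining whether $\Lambda_{i_l}$ is negative (then $\Lambda_{i_l}^\#$ is also, so it is modified) or centered $[-y_0-1,y_0+1]$ with one of the three labels (checking $(*)$ is preserved using that $\varepsilon([-y_0-1,y_0+1])\varepsilon([-y_0,y_0])=1$ forced by $\Delta_l$ being last) — this is word-for-word parallel to the corresponding sub-case of Lemma~\ref{lem:dermdiezepos}.

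The case $e(\Delta_l)=y_0$ is the most involved and carries the four sub-cases (3a)--(3d) displayed in the statement; here I would follow the structure of the $c(\Delta_l)=0$ analysis in Lemma~\ref{lem:dermdiezepos}, distinguishing $\Delta_l=[-y_0,y_0]^{\ge 0}$ or $^{=0}$ (then $(*)$ fails for $\m$, $\Lambda_{i_l}^\#=[-y_0+1,y_0-1]$ creates the condition $(*)$ in $\m^\#$, giving $A_{y_0}^{\#,c}=A_{y_0}^c\setminus\{i_l\}$), $\Delta_l=[-y_0,y_0]^{\le 0}$ with $(*)$ failing for $\m$ (then $m_\m([-y_0,y_0])$ is even, both $i_l,i_l'$ can be taken in $A_{y_0}^c$, $\Lambda_{i_{l-1}}^\#=[-y_0,y_0]$ is added, and one gets the $A_{y_0}^c\cup\{i_{l-1}\}\setminus\{i_l,i_l'\}$ formula), and the special $y_0=1/2$ sub-cases (3b), (3c) involving $[-1/2,1/2]$ with various labels and $(*)$ holding, where one must recheck that $(*)$ passes to $\m^\#$ using $t_{1/2}$ and the relevant sign condition. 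Finally the case $e(\Delta_l)=1/2$ with $y_0\neq 1/2$: by Lemma~\ref{lem:endposnottype} this is the only option below $y_0$, one locates $j$ with $e(\Delta_j)=y_0$, shows $\Delta_{j+1}=[1-y_0,y_0-1]^{=0}$ forces $\Delta_j=[-y_0,y_0]^{\ge 0}$ or $^{=0}$ so $(*)$ holds for $\m$ and $i_j\notin A_{y_0}^c$, then checks $\Lambda_{i_j}^\#=[1-y_0,y_0-1]$ and $\Lambda_{i_{j-1}}^\#\in\{0,[-y_0,y_0]\}$ keep $(*)$ valid in $\m^\#$ with $i_{j-1}\notin A_{y_0}^{\#,c}$, yielding $A_{y_0}^{\#,c}=A_{y_0}^c$. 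In each sub-case the verification that the parity condition $(*)$ (and, when $y_0=1/2$, the value of $t_{1/2}$) is preserved, or exactly flipped, under passing from $\m$ to $\m^\#$ is the crux; this is the step I expect to be the main obstacle, since it requires carefully reading off the signs and multiplicities of the centered segments $[-y_0,y_0]$ and $[1-y_0,y_0-1]$ that the algorithm touches, and the bookkeeping has several branches. Everything else is a direct transcription of the arguments already carried out in Lemma~\ref{lem:dermdieze} and Lemma~\ref{lem:dermdiezepos}, with the simplification that no $[-1,0]$, $[0,1]$ exceptional segments occur.
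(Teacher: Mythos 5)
Your proposal is correct and follows essentially the same route as the paper. The paper's own proof is terse: it states that the argument is a transcription of Lemma~\ref{lem:dermdiezepos} (and, implicitly, Lemma~\ref{lem:dermdieze}), and only writes out the four sub-cases of case (3) where $y_0 = 1/2$ and $\Delta_l$ is the centered segment $[-1/2,1/2]$ with one of the three labels. You plan exactly that: set up the baseline description of $A_{y_0}^c$ from the explicit formula for $D_{y_0}$, use Lemma~\ref{lem:sequencederposnottype} and Lemma~\ref{lem:endposnottype} to control the initial sequence of $\tilde\m$, run the same $A_{y_0}^\#$/$A_{y_0-1}^\#$ book-keeping, and verify preservation (or controlled flipping) of the condition $(*)$ case by case. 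You correctly identify the $(*)$-preservation check as the crux and the $y_0 = 1/2$ centered cases as where it is subtlest.

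One small inaccuracy worth noting: in your description of the $\Delta_l = [-y_0,y_0]^{\le 0}$ and $\Delta_l = [-y_0,y_0]^{=0}$ sub-cases you rely on $\Lambda_{i_l}^{\#} = [-y_0+1,y_0-1]$ creating or destroying the condition $(*)$ in $\m^{\#}$. This is the correct mechanism for $y_0 > 1/2$, but for $y_0 = 1/2$ the segment $[-y_0+1,y_0-1]$ is empty by the paper's convention, and the argument instead runs through $t_{1/2}^{\#}$, $\varepsilon^{\#}([-1/2,1/2])$, and (in the $(*)$-failing branch) possibly $\Lambda_{i_{l-1}}^{\#}$, exactly as the paper's four bullet points show — including the two $(*)$-failing sub-cases (3a) and (3d), not only the $(*)$-holding ones (3b), (3c) that you single out. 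The endpoint formulas for $A_{y_0}^{\#,c}$ are unchanged, so this is a matter of detail in the write-up rather than a gap in the approach.
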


\begin{proof}
  The proof is similar to the proof of Lemma \ref{lem:dermdiezepos}. The only main difference is when $e(\Delta_l)=1/2$, $y_0 = 1/2$ and $c(\Delta_l)=0$. We give details only in this case.

  \begin{itemize}
    \item Suppose $\Delta_l = [-1/2,1/2]^{\ge 0}$ or $[-1/2,1/2]^{=0}$, and $(*)$ is satisfied. Then $\varepsilon_0 = -1$. In $\m^{\#}$, we get that $\varepsilon^{\#}([-1/2,1/2])=1$, so $(*)$ is not satisfied. We can assume that $i_l \notin A_{y_0}^{c}$, and $A_{y_0}^{\#,c} = A_{y_0}^{c}$.
    \item Suppose $\Delta_l = [-1/2,1/2]^{\le 0}$ and $(*)$ is satisfied. Now $\varepsilon_0 = 1$. After the algorithm, $\Lambda_{i_l}^{\#}=[-1/2,-1/2]$. Thus, in $\m^{\#}$, $t^{\#}_{1/2}=1$, and $\varepsilon^{\#}([-1/2,1/2])=-1$, so $(*)$ is not satisfied. We assume that $i_l \notin A_{y_0}^{c}$, and $A_{y_0}^{\#,c} = A_{y_0}^{c} \setminus \{i'_l\}$.
    \item Suppose $\Delta_l = [-1/2,1/2]^{\ge 0}$ or $[-1/2,1/2]^{=0}$, and $(*)$ is not satisfied. Then $\varepsilon_0 = 1$. If $\Lambda_{i_{l-1}}^{\#}=-1/2,1/2]$, then $\varepsilon^{\#}([-1/2,1/2])=-1$ and $(*)$ is satisfied in $(\m^{\#},\varepsilon^{\#})$, so $i_{l-1} \notin A_{y_0}^{\#,c}$. Otherwise, $(*)$ is not satisfied in $(\m^{\#},\varepsilon^{\#})$. And we get that $A_{y_0}^{\#,c} = A_{y_0}^{c} \setminus \{i_l\}$. 
    \item Suppose $\Delta_l = [-1/2,1/2]^{\le 0}$ and $(*)$ is not satisfied. Then $\Delta_{l-1}=[-1/2,3/2]$ and $(*)$ is not satisfied in $(\m^{\#},\varepsilon^{\#})$. We get that $A_{y_0}^{\#,c} = A_{y_0}^{c} \cup \{i_{l-1}\} \setminus \{i_l,i'_l\}$.
  \end{itemize}
\end{proof}

\begin{lem}
  \label{lem:ADderposnottype}
  Suppose that $y_1 > y_0$.
  \begin{enumerate}
      \item If $e(\Delta_{l}) \ge y_0 + 2$, then $(\tilde{\m}_1,\tilde{\varepsilon}_1)=(\m_{1},\varepsilon_1)$ and $(\tilde{\m}^{\#},\tilde{\varepsilon}^{\#})=D_{y_0}(\m^{\#},\varepsilon^{\#})$.
      \item If $e(\Delta_{l}) = y_0 + 1$, then $(\tilde{\m}_1,\tilde{\varepsilon}_1)=(\m_{1},\varepsilon_1)$ and $(\tilde{\m}^{\#},\tilde{\varepsilon}^{\#})=D^{\max - 1}_{y_0}(\m^{\#},\varepsilon^{\#})$. 
      \item If $e(\Delta_{l}) = y_0$ and $y_0 \neq 1/2$ or $\varepsilon_0=1$, then $(\tilde{\m}_1,\tilde{\varepsilon}_1)=D_{-y_0}(\m_{1},\varepsilon_1)$ and $(\tilde{\m}^{\#},\tilde{\varepsilon}^{\#})=D_{y_0}(\m^{\#},\varepsilon^{\#})$.
      \item If $e(\Delta_{l}) = y_0$, $y_0 = 1/2$ and $\varepsilon_0=-1$, then $(\tilde{\m}_1,\tilde{\varepsilon}_1)=(\m_{1},\varepsilon_1)$ and $(\tilde{\m}^{\#},\tilde{\varepsilon}^{\#})=D_{y_0}(\m^{\#},\varepsilon^{\#})$.
      \item If $e(\Delta_{l}) = 1/2$ and $y_0 \neq 1/2$, then $(\tilde{\m}_1,\tilde{\varepsilon}_1)=(\m_{1},\varepsilon_1)$ and $(\tilde{\m}^{\#},\tilde{\varepsilon}^{\#})=D_{y_0}(\m^{\#},\varepsilon^{\#})$.
     
  \end{enumerate}
\end{lem}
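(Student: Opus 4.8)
\textbf{Plan of proof of Lemma~\ref{lem:ADderposnottype}.}
The strategy mirrors exactly that of Lemma~\ref{lem:ADderneg} (and of Lemma~\ref{lem:ADderpos} in the same-type case): we compare the two outputs produced by the first step of the recursive algorithm, namely $(\m_1,\varepsilon_1)$, $(\m^{\#},\varepsilon^{\#})$ for $(\m,\varepsilon)$, against $(\tilde{\m}_1,\tilde{\varepsilon}_1)$, $(\tilde{\m}^{\#},\tilde{\varepsilon}^{\#})$ for $(\tilde{\m},\tilde{\varepsilon}) = D_{y_0}(\m,\varepsilon)$. The first part of the claim, the description of $(\tilde{\m}_1,\tilde{\varepsilon}_1)$, follows directly from Lemma~\ref{lem:sequencederposnottype}: that lemma identifies $\tilde{\Delta}_1,\cdots,\tilde{\Delta}_{\tilde{l}}$ almost segment by segment with $\Delta_1,\cdots,\Delta_l$, so $e(\tilde{\Delta}_1)=e(\Delta_1)=e_{\max}$ is unchanged, and $\tilde{l}$ equals $l$ except precisely when $e(\Delta_l)=y_0$ and $\varepsilon_0=1$, in which case $\tilde{l}=l-1$. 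Since $\m_1$ and $\m_1^{\sim}$ are determined by $e(\Delta_1)$, $e(\Delta_l)$ (or $e(\tilde\Delta_{\tilde l})$), this yields $(\tilde{\m}_1,\tilde{\varepsilon}_1)=(\m_1,\varepsilon_1)$ in cases (1), (2), (4), (5), and $(\tilde{\m}_1,\tilde{\varepsilon}_1)=D_{-y_0}(\m_1,\varepsilon_1)$ in case (3); in the $\varepsilon_0=-1$ situations one must additionally check that the sign $\tilde\varepsilon_1(\m_1)$ agrees, which reduces to verifying that the parity of the number of centered segments is preserved by $D_{y_0}$ — and by Lemma~\ref{lem:sequencederposnottype} the only centered segments touched by $D_{y_0}$ are the $[-y_0,y_0]$, whose multiplicity mod $2$ is unchanged by the formula for the derivative recalled in Section~\ref{sec:derposnottype}.

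For the second part, the description of $(\tilde{\m}^{\#},\tilde{\varepsilon}^{\#})$, the plan is to track the set of indices of segments whose end is modified by $\AD$, comparing it against the set $A_{y_0}^{\#,c}$ of indices modified by $D_{y_0}$ inside $\m^{\#}$. Concretely: let $E$ (resp.\ $\tilde E$) be the indices of segments of $\m$ (resp.\ $\tilde{\m}$) modified by the first step of $\AD$; by Lemma~\ref{lem:sequencederposnottype}, $\tilde E = E$, or $\tilde E = E\setminus\{i_l\}$ when $e(\Delta_l)=y_0$ and $\varepsilon_0=1$, and moreover $\tilde E\cap A_{y_0}^c$ is explicitly controlled. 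One then compares $\tilde E' := $ (the indices coming from $A_{y_0}^c$ after removing overlaps with $\tilde E$) against $A_{y_0}^{\#,c}$ as computed in Lemma~\ref{lem:dermdiezeposnottype}. Going through the four regimes for $e(\Delta_l)$: when $e(\Delta_l)\ge y_0+2$, Lemma~\ref{lem:dermdiezeposnottype}(1) gives $A_{y_0}^{\#,c}=A_{y_0}^c$ and the two computations coincide, yielding (1); when $e(\Delta_l)=y_0+1$, Lemma~\ref{lem:dermdiezeposnottype}(2) gives $A_{y_0}^{\#,c}=A_{y_0}^c\cup\{i_l\}$, and, exactly as in the last bullet of Lemma~\ref{lem:ADderneg}, one shows the extra index $\tilde\Lambda^{\#}_{i_l}$ is the one additional segment modified by $D_{y_0}$ in $\tilde{\m}^{\#}$, so $D_{y_0}(\m^{\#},\varepsilon^{\#})=D^{1}_{y_0}(\tilde{\m}^{\#},\tilde{\varepsilon}^{\#})$, i.e.\ $(\tilde{\m}^{\#},\tilde{\varepsilon}^{\#})=D^{\max-1}_{y_0}(\m^{\#},\varepsilon^{\#})$, giving (2); the two $e(\Delta_l)=y_0$ subcases match Lemma~\ref{lem:dermdiezeposnottype}(3), and the $e(\Delta_l)=1/2$, $y_0\neq 1/2$ subcase matches Lemma~\ref{lem:dermdiezeposnottype}(4), each after examining the relevant case of Lemma~\ref{lem:sequencederposnottype}, yielding (3)--(5).

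The main obstacle — and the reason this lemma needs its own statement rather than being subsumed by Lemma~\ref{lem:ADderpos} — is the half-integer case $y_0=1/2$, where the behaviour of the condition $(*)$ under the derivative and under $\AD$ is delicate: the sign $\varepsilon([-1/2,1/2])$ and the count $t_{1/2}$ of copies of $[1/2,1/2]$ interact, and the derivative $D_{1/2}$ can suppress rather than merely shorten centered segments. Here one must verify carefully that $(*)$ holds in $(\m,\varepsilon)$ if and only if it holds in $(\m^{\#},\varepsilon^{\#})$ (or track precisely how it flips), using that $\Delta_l$ being the last segment of the initial sequence forces a sign relation $\varepsilon(\Delta_{l-1})\varepsilon(\Delta_l)=1$ between consecutive non-opposite centered segments. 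These bookkeeping verifications are exactly the content of Lemma~\ref{lem:dermdiezeposnottype}(3)(a)--(d), so once that lemma is in hand the present statement follows by the same case analysis as Lemma~\ref{lem:ADderneg}; I would present the argument by reduction to Lemmas~\ref{lem:sequencederposnottype} and~\ref{lem:dermdiezeposnottype}, spelling out only the $y_0=1/2$, $\varepsilon_0=\pm1$ cases in detail and leaving the integer-end cases to the reader as being identical to the same-type situation.
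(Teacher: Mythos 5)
Your proposal is correct and follows the paper's approach exactly: the paper's own proof of Lemma~\ref{lem:ADderposnottype} consists of a single sentence referring to Lemma~\ref{lem:ADderpos} and stating that the result ``follows from Lemma~\ref{lem:dermdiezeposnottype} and Lemma~\ref{lem:sequencederposnottype}''; your text is precisely the expansion of that reference. Your handling of $(\tilde{\m}_1,\tilde{\varepsilon}_1)$ via the preservation of $e(\Delta_1)$, the comparison of $l$ and $\tilde{l}$, and the parity of the number of centered segments (needed when $\varepsilon_0=-1$) is what the proof of Lemma~\ref{lem:ADderpos} does in the same-type case, and your treatment of $(\tilde{\m}^{\#},\tilde{\varepsilon}^{\#})$ by matching the index sets $\tilde{E}'$ against $A_{y_0}^{\#,c}$ from Lemma~\ref{lem:dermdiezeposnottype} is likewise the same bookkeeping as in Lemma~\ref{lem:ADderneg}.

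One small imprecision: when you write that $\Delta_l$ terminating the initial sequence ``forces a sign relation $\varepsilon(\Delta_{l-1})\varepsilon(\Delta_l)=1$ between consecutive non-opposite centered segments,'' the relevant relation is not between $\Delta_{l-1}$ and $\Delta_l$ but between $\Delta_l$ and the \emph{would-be} next segment $\Delta'$ with $e(\Delta')=e(\Delta_l)-1$: if such a centered $\Delta'$ exists in $\m$, then $\varepsilon(\Delta_l)\varepsilon(\Delta')=1$ (same sign), since opposite signs would have allowed the sequence to continue. This is the fact actually used in the proof of Lemma~\ref{lem:dermdiezeposnottype} (see the remark ``since $\Delta_l$ is the last segment in the initial sequence, $\varepsilon([-1-y_0,y_0+1])\varepsilon([-y_0,y_0])=1$'' in the same-type analogue). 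The slip does not affect the structure of your argument, since you defer the sign bookkeeping to Lemma~\ref{lem:dermdiezeposnottype} in any case, but the formulation should be corrected if written out.
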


\begin{proof}
  The proof is similar to the proof of Lemma \ref{lem:ADderpos} and follows from Lemma \ref{lem:dermdiezeposnottype} and Lemma \ref{lem:sequencederposnottype}.
\end{proof}

\begin{prop}
  If $y_1 > y_0$, then $\AD(D_{y_0}(\m,\varepsilon))=D_{-y_0}(\AD(\m,\varepsilon))$.
\end{prop}

\begin{proof}
  It follows from Lemmas \ref{lem:ADderposnottype}, \ref{lem:ADcommDer} and \ref{lem:dersum}.
\end{proof}

\subsection{The derivatives $D_{e_{\max}}$} \label{sec:deremax}

In this section, we assume that $e_{\max} > 1$, that for all $-e_{\max} < y<e_{\max}$ with $y\neq 0$, $(\m,\varepsilon)$ is $y$-reduced, that $(\m,\varepsilon)$ is $L([-1,0])$-reduced, and that $(\m,\varepsilon)$ is not $e_{\max}$-reduced. Let $y_0 = e_{\max}$. We have recalled the formula to compute $D_{y_0}$ in Section \ref{sec:derpos}.

\bigskip

We start by assuming that $\rho$ is of the same type as $G$. Then $\m$ is of the following form 
\[
    \m = \sum_{y=1}^{e_{\max}} n_y ([y,y] + [-y,-y]) + n_0 [0,0] + t_0 ([-1,0] + [0,1]) + \sum_{y=1}^{z} [-y,y] + m [-e_{\max},e_{\max}].
\]

\begin{prop}
  \label{prop:emaxtype}
  We have that $\AD(D_{y_0}(\m,\varepsilon))=D_{-y_0}(\AD(\m,\varepsilon))$.
\end{prop}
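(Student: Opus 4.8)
The statement $\AD(D_{y_0}(\m,\varepsilon)) = D_{-y_0}(\AD(\m,\varepsilon))$ for $y_0 = e_{\max}$ is the last of the seven cases enumerated at the end of Section~\ref{sec:startegyproof}, and the strategy follows the same template as Propositions~\ref{prop:ADcommderneg}, \ref{prop:ADcommder01}, and the positive-derivative propositions: analyse how the initial sequence $\Delta_1,\cdots,\Delta_l$ of the algorithm for $(\m,\varepsilon)$ interacts with the derivative $D_{y_0}$, compute $(\tilde\m_1,\tilde\varepsilon_1)$ and $(\tilde\m^\#,\tilde\varepsilon^\#)$ for $(\tilde\m,\tilde\varepsilon):=D_{y_0}(\m,\varepsilon)$ in terms of $(\m_1,\varepsilon_1)$ and $(\m^\#,\varepsilon^\#)$, and then conclude by an induction on $l(\m)$ using Lemma~\ref{lem:ADcommDer} (and Lemma~\ref{lem:ADcommSoc} where a socle appears) together with the derivative-of-a-sum formulas in Lemma~\ref{lem:dersum}. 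The present case is however somewhat special, and in a sense simpler: since $y_0 = e_{\max}$, the segment $\Delta_1$ is precisely the largest segment ending in $e_{\max}$, so $\Delta_1$ is among the segments that the derivative $D_{e_{\max}}$ acts upon at the top. This forces $\tilde\Delta_1$ to have $e(\tilde\Delta_1) = e_{\max} - 1$, which shifts the whole picture down by one step rather than leaving $\Delta_1$ untouched as in the earlier sections. One should first nail down the explicit shape of $\m$ — the display already given, writing $\m$ as a sum of $[y,y]+[-y,-y]$'s, a $[0,0]$-part, a $([-1,0]+[0,1])$-part, centered segments $[-y,y]$ for $1\le y\le z$, and $m$ copies of $[-e_{\max},e_{\max}]$ — and then run $D_{e_{\max}}$ explicitly using the formulas of Section~\ref{sec:derpos} (the case $y_0>1$, since $e_{\max}>1$).

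\textbf{Key steps.} First I would describe the initial sequence $\Delta_1,\cdots,\Delta_l$ for $(\m,\varepsilon)$: because all segments with end strictly between $-e_{\max}$ and $e_{\max}$ (other than $0$) are $y$-reduced and $(\m,\varepsilon)$ is $L([-1,0])$-reduced, the only segments present are highly constrained, so the sequence is forced to be of a very specific form (it starts at $[-e_{\max},e_{\max}]^{\clubsuit}$ or $[e_{\max},e_{\max}]$ and descends through centered segments $[-y,y]$ and terminal pieces $[0,0]$ or $[-1,0]$), much as in Lemma~\ref{lem:sequencederposx1eqx0}. Second, I would apply $D_{e_{\max}}$: it transforms the negative segments $[a,e_{\max}]$ with $a<-e_{\max}$ — of which there are none here, since $-e_{\max}$ is the minimal coefficient — and acts on the $m$ copies of $[-e_{\max},e_{\max}]$ according to whether condition $(*)$ (from the start of Section~\ref{sec:derpos}) holds and the parity of $m$, and suppresses $n$ copies of $[e_{\max},e_{\max}]+[-e_{\max},-e_{\max}]$ where $n$ is governed by $m_{\m}([e_{\max},e_{\max}])$ versus $m_{\m}([e_{\max}-1,e_{\max}-1])$. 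Third, I would compare the initial sequence $\tilde\Delta_1,\cdots,\tilde\Delta_{\tilde l}$ of $(\tilde\m,\tilde\varepsilon)$ with that of $(\m,\varepsilon)$, establishing the analogue of Lemmas~\ref{lem:sequencederneg}/\ref{lem:sequencederposx1eqx0}: essentially $\tilde l = l - 1$ (one step is removed at the top because $\Delta_1$ shrank by one) or $\tilde l = l$ in the degenerate subcases, and for the remaining indices the segments agree, modulo the modifications $\Delta\mapsto {}^{-}\Delta$ forced by the derivative on beginnings equal to $-e_{\max}$. Fourth, I would deduce the analogue of Lemma~\ref{lem:ADderneg} or \ref{lem:ADderpos}: in the main subcase $(\tilde\m_1,\tilde\varepsilon_1) = D_{-e_{\max}}(\m_1,\varepsilon_1)$ and $(\tilde\m^\#,\tilde\varepsilon^\#) = D_{e_{\max}}(\m^\#,\varepsilon^\#)$, with possible corrections $D^{\max-1}$ or $S^{(1)}$ in the boundary subcases. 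Finally, I would close by induction on $l(\m)$: $\AD(\tilde\m,\tilde\varepsilon) = (\tilde\m_1,\tilde\varepsilon_1) + \AD(\tilde\m^\#,\tilde\varepsilon^\#) = D_{-e_{\max}}(\m_1,\varepsilon_1) + D_{-e_{\max}}(\AD(\m^\#,\varepsilon^\#))$ by Lemma~\ref{lem:ADcommDer}, and then Lemma~\ref{lem:dersum} (whose hypotheses on $\m_1$ — e.g. that $-e_{\max}$ is the minimal coefficient and $\m_1$ is longest among segments with that beginning — are satisfied by construction and by Proposition~\ref{prop:lengthmax}) gives this equals $D_{-e_{\max}}((\m_1,\varepsilon_1) + \AD(\m^\#,\varepsilon^\#)) = D_{-e_{\max}}(\AD(\m,\varepsilon))$.

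\textbf{The case $\rho$ not of the same type.} An entirely parallel argument handles the case where $\rho$ is not of the same type as $G$, with the terminal piece $[0,0]$ replaced by $[1/2,1/2]$ or $[-1/2,1/2]$, and one simply invokes the analogues of Lemmas~\ref{lem:sequencederposnottype} and \ref{lem:dermdiezeposnottype}; no new idea is required, so I would state it as a short variant at the end.

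\textbf{Main obstacle.} The hard part will be the bookkeeping around the interaction of condition $(*)$ — which controls whether $D_{e_{\max}}$ suppresses a copy of $[e_{\max},e_{\max}]$ versus transforming a centered segment into two non-centered pieces — with the label $\clubsuit \in \{\ge 0, =0, \le 0\}$ carried by the multiple copies of $[-e_{\max},e_{\max}]$ and with the sign $\varepsilon_0$ produced by $\AD$; in particular one must verify that $(*)$ holds for $(\m,\varepsilon)$ if and only if it holds for $(\m^\#,\varepsilon^\#)$, and track exactly which indices of the chosen best-matching set $A_{y_0}^c$ get modified, so that $A_{y_0}^{\#,c}$ comes out correctly (the analogue of Lemma~\ref{lem:dermdiezepos}). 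These are precisely the case distinctions that make Lemma~\ref{lem:sequencederposx1eqx0} long, and the same care will be needed here; once the sequence comparison is pinned down, the conclusion is formal.
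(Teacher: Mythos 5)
Your opening premise contains a concrete error that would derail the argument. You assert that since $y_0 = e_{\max}$, the segment $\Delta_1$ is acted on by $D_{e_{\max}}$ and "This forces $\tilde\Delta_1$ to have $e(\tilde\Delta_1) = e_{\max}-1$, which shifts the whole picture down by one step rather than leaving $\Delta_1$ untouched." This is false in the generic situation. The derivative $D_{e_{\max}}$ uses a best matching to decide which segments ending in $e_{\max}$ to modify; the segments $[e_{\max},e_{\max}]$ are exactly the ones that can be \emph{protected} by segments $[e_{\max}-1,e_{\max}-1]$ or $[1-e_{\max},e_{\max}-1]$ (or $[0,1]$ when $e_{\max}=2$). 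In particular, when $m_\m([e_{\max}-1,e_{\max}-1]) \neq 0$, at least one $[e_{\max},e_{\max}]$ survives the derivative, so $\tilde\Delta_1 = \Delta_1 = [e_{\max},e_{\max}]$ — the picture does \emph{not} shift; in fact $\tilde l = l$ and $\tilde\Delta_j = \Delta_j$ for all $j$, exactly the "untouched" behaviour you say cannot occur here.

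The paper's proof rests on the dichotomy that your plan misses: either $m_\m([e_{\max}-1,e_{\max}-1])\neq 0$ — and then the initial sequence is literally unchanged by the derivative, $(\tilde\m_1,\tilde\varepsilon_1)=(\m_1,\varepsilon_1)$, $(\tilde\m^\#,\tilde\varepsilon^\#)=D_{y_0}(\m^\#,\varepsilon^\#)$, and one concludes by Lemmas~\ref{lem:ADcommDer} and \ref{lem:dersum} essentially immediately — or $m_\m([e_{\max}-1,e_{\max}-1]) = 0$, which combined with the reducedness hypotheses forces $\m$ into the very constrained shape $n_0[0,0] + \sum_{y\le z}[-y,y] + m_{y_0}[-y_0,y_0] + n_{y_0}([y_0,y_0]+[-y_0,-y_0]) + t_0([-1,0]+[0,1])$; in that case the paper enumerates $(\m_1,\varepsilon_1)$, $(\m^\#,\varepsilon^\#)$ and $D_{y_0}(\m,\varepsilon)$ explicitly and verifies case by case that $\tilde\m_1 = \m_1$ and $(\tilde\m^\#,\tilde\varepsilon^\#) = D_{y_0}(\m^\#,\varepsilon^\#)$ — except when $\m_1 = [y_0,y_0]+[-y_0,-y_0]$, where one instead verifies the identity $D_{y_0}(\m,\varepsilon) = D_{y_0}(\m^\#,\varepsilon^\#)$ (which is what makes $D_{-y_0}(\m_1) = 0$ harmless). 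Your stated formula "$(\tilde\m_1,\tilde\varepsilon_1) = D_{-e_{\max}}(\m_1,\varepsilon_1)$" reads literally false in that last subcase ($\tilde\m_1$ is a nonzero segment while $D_{-e_{\max}}(\m_1) = 0$); the correct statement is the unconditional $\tilde\m_1 = \m_1$ together with the separate identity $D_{y_0}(\m) = D_{y_0}(\m^\#)$ in the degenerate subcase. Without recognizing the dichotomy on $m_\m([e_{\max}-1,e_{\max}-1])$, and with the "shift down" picture guiding you, you would not find the right reduction in either branch.

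The template you invoke (compare initial sequences, reduce to Lemmas~\ref{lem:ADcommDer}, \ref{lem:ADcommSoc}, \ref{lem:dersum}, finish by induction) is the right high-level strategy and is indeed what the paper does in the first branch; the conclusion about the opposite-type case being a "short variant" is also consistent with the paper. But the core of the problem — understanding exactly when the derivative $D_{e_{\max}}$ does or does not modify the top of the picture, and doing the explicit case enumeration in the degenerate branch — is absent from your plan and your guiding intuition about it is backwards.
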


\begin{proof}
First, let us consider the case where $n_{e_{\max}-1} \neq 0$. In this case, we also get that $n_{e_{\max}} \neq 0$. Thus, we have $l \ge 2$, $\Delta_1=[y_0,y_0]$, and $\Delta_2 = [y_0 - 1, y_0 - 1]$. Moreover, since $n_{e_{\max}-1} \neq 0$, after applying the derivative $D_{e_{\max}}$, none of the $[e_{\max},e_{\max}]$ are suppressed. Hence, $\tilde{\Delta}_1=[y_0,y_0]=\Delta_1$ and $\tilde{\Delta}_2=[y_0 - 1,y_0-1]=\Delta_2$. We deduce from this that  $\tilde{l}=l$ and for all $1 \le j \le l$, $\tilde{\Delta}_j = \Delta_j$. This gives us that $(\tilde{\m}_1,\tilde{\varepsilon}_1)=(\m_{1},\varepsilon_1)$ and $(\tilde{\m}^{\#},\tilde{\varepsilon}^{\#})=D_{y_0}(\m^{\#},\varepsilon^{\#})$. Also $\m_1 \neq [y_0,y_0] + [-y_0,-y_0]$. Finally we get that $\AD(D_{y_0}(\m,\varepsilon))=D_{-y_0}(\AD(\m,\varepsilon))$.

  Now, let us assume that $n_{e_{\max}-1}=0$. In particular, for all $1 \le i \le e_{\max}-1$, $n_i=0$. Note that if $t_0 \neq 0$, then $y_0 = 2$ and $n_2 \neq 0$. Thus $\m$ is of the form

\[
    \m= n_0 [0,0] + \sum_{y=1}^{z} [-y,y] + m_{y_0}[-y_0,y_0] + n_{y_0}([y_0,y_0] + [-y_0,-y_0]) + t_0([-1,0]+ [0,1]),
\]

    First, let us notice that for any $(\m',\varepsilon') \in \Symm_\rho^\varepsilon(G)$, if $y_0$ is the maximum of the coefficients of $\m'$ then $D_{-y_0}(\m',\varepsilon')$ just removes all the segments $[y_0,y_0]$ and $[-y_0,-y_0]$. In particular $D_{-y_0}(\AD(\m,\varepsilon))=D_{-y_0}(\m_1,\varepsilon_1) + D_{-y_0}(\AD(\m^{\#},\varepsilon^{\#}))=D_{-y_0}(\m_1,\varepsilon_1) + \AD(D_{y_0}(\m^{\#},\varepsilon^{\#}))$. We will compute $\AD(\m,\varepsilon)$ (that is $(\m_1,\varepsilon_1)$ and $(\m^{\#},\varepsilon^{\#})$), $D_{y_0}(\m,\varepsilon)$ and $\AD(D_{y_0}(\m,\varepsilon))$ (that is $(\tilde{\m}_1,\tilde{\varepsilon}_1)$ and $(\tilde{\m}^{\#},\tilde{\varepsilon}^{\#})$) to see that $\AD(D_{y_0}(\m,\varepsilon))=D_{-y_0}(\AD(\m,\varepsilon))$.

    Let us start by computing $\AD(\m,\varepsilon)$. We get that
    \begin{itemize}
        \item Suppose $t_0 \neq 0$ and $n_0$ is odd. Then $\m_1=[-2,2]$, $\varepsilon_1([-2,2])=(- 1) ^ {m_{2} + z}\varepsilon([0,0])$, and $\m^{\#}=(n_0  +1) [0,0] + \sum_{y=1}^{z} [-y,y] + n_{y_0}[-y_0,y_0] + (n_{y_0}-1)([y_0,y_0] + [-y_0,-y_0]) + (t_0 - 1)([-1,0]+ [0,1])$ and $\varepsilon^{\#}([-y_0,y_0])=-\varepsilon([-y_0,y_0])$ and for $y<y_0$, $\varepsilon^{\#}([-y,y])=\varepsilon([-y,y])$. 
        \item Suppose $t_0 \neq 0$, $n_0 \neq 0$ and $n_0$ is even. Then $\m_1=[-y_0,0]+[0,y_0]$, $\m^{\#}=n_0 [0,0] + \sum_{y=1}^{z - 1} [-y,y] + m_{y_0}[-y_0,y_0]  + (n_{y_0} - 1) ([y_0,y_0] + [-y_0,-y_0])+ (t_0 - 1)([-1,0]+ [0,1])$, $\varepsilon^{\#}([0,0])=-\varepsilon([0,0])$ and $\varepsilon^{\#}([-y,y])=\varepsilon([-y,y])$ for $y \neq 0$.
        \item Suppose $t_0 \neq 0$ and $n_0 = 0$. Then $\m_1=[-y_0,0]+[0,y_0]$, $\m^{\#}= m_{y_0}[-y_0,y_0]  + (n_{y_0} - 1) ([y_0,y_0] + [-y_0,-y_0])+ (t_0 - 1)([-1,0]+ [0,1])$ and $\varepsilon^{\#}([-y_0,y_0])=\varepsilon([-y_0,y_0])$. 
        \item Suppose  $t_0 = 0$, $n_{y_0} \neq 0$ and $z < y_0 -1$. Then $\m_1 = [-y_0,-y_0] + [y_0,y_0]$ and $\m^{\#}=n_0 [0,0] + \sum_{y=1}^{z} [-y,y]  + m_{y_0}[-y_0,y_0] + (n_{y_0} - 1) ([y_0,y_0] + [-y_0,-y_0])$.
        \item Suppose $t_0 = 0$, $n_{y_0} \neq 0$, $z = y_0 -1$ and $n_0$ is even. Then $\m_1=[-y_0,0]+[0,y_0]$, $\m^{\#}=(n_0 - 1) [0,0] + \sum_{y=1}^{z - 1} [-y,y] + m_{y_0}[-y_0,y_0]  + (n_{y_0} - 1) ([y_0,y_0] + [-y_0,-y_0])$, $\varepsilon^{\#}([-y_0,y_0])=\varepsilon([-y_0,y_0])$ and $\varepsilon^{\#}([-y,y])=-\varepsilon([-y,y])$ for $0 \le y < z$. 
        \item Suppose $t_0 = 0$, $n_{y_0} \neq 0$, $z = y_0 -1$ and $n_0$ is odd. Then $\m_1=[-y_0,y_0]$, $\varepsilon_1(y_0)=(- 1) ^ {m_{y_0} + z}\varepsilon([0,0])$, $\m^{\#}=n_0 [0,0] + \sum_{y=1}^{z-1} [-y,y] + m_{y_0}[-y_0,y_0] + (n_{y_0} - 1) ([y_0,y_0] + [-y_0,-y_0])$, $\varepsilon^{\#}([-y_0,y_0])=-\varepsilon([-y_0,y_0])$ and $\varepsilon^{\#}([-y,y])=-\varepsilon([-y,y])$ for $0 \le y < z$. 
        \item Suppose $(*)$ is not satisfied and $n_{y_0}=0$, then $\m_1 = [-y_0,-y_0] + [y_0,y_0]$ and $\m^{\#}=n_0 [0,0] + \sum_{y=1}^{z} [-y,y] + [-y_0 + 1,y_0 - 1] + (m_{y_0}- 1)[-y_0,y_0]$. And $\varepsilon^{\#}([1-y_0,y_0-1])=\varepsilon([-y_0,y_0])$, if $n_0 \ge 2$, $\varepsilon^{\#}([-y_0,y_0])=\varepsilon([-y_0,y_0])$, and for $y \le z$, $\varepsilon^{\#}([-y,y])=\varepsilon([-y,y])$.
        \item Suppose $(*)$ is satisfied, $n_{y_0} = 0$ and $n_0$ is odd. Then $\m_1=[-y_0,y_0]$, $\varepsilon_1([-y_0,y_0])=-\varepsilon([-y_0,y_0])$, and $\m^{\#}=n_0 [0,0] + \sum_{y=1}^{z} [-y,y] + (m_{y_0}-1)[-y_0,y_0]$ and $\varepsilon^{\#}([-y_0,y_0])=-\varepsilon([-y_0,y_0])$ and for $y<y_0$, $\varepsilon^{\#}([-y,y])=\varepsilon([-y,y])$. 
        \item Suppose $(*)$ is satisfied, $n_{y_0} = 0$ and $n_0$ is even. Then $\m_1=[-y_0,0]+[0,y_0]$, $\m^{\#}=(n_0 - 1) [0,0] + \sum_{y=1}^{z} [-y,y] + (m_{y_0}-1)[-y_0,y_0]$, $\varepsilon^{\#}([-y_0,y_0])=\varepsilon([-y_0,y_0])$ and $\varepsilon^{\#}([-y,y])=-\varepsilon([-y,y])$ for $0 \le y \le z$. 
    \end{itemize}

    Now, we will compute $D_{y_0}(\m,\varepsilon)$.
    \begin{itemize}
        \item  Suppose $y_0 = 2$, $(*)$ is not satisfied, $n_{y_0}\neq 0$ and $z=y_0 -1$. Then $D_{y_0}(\m,\varepsilon)= n_0 [0,0] + \sum_{y=1}^{z} [-y,y] + m_{y_0}[-y_0+1,y_0 - 1] + \min\{t_0+1,n_2\}([y_0,y_0] + [-y_0,-y_0])$.
        \item  Suppose $y_0 = 2$, $(*)$ is not satisfied and $n_{y_0}=0$ or $z<y_0 -1$. Then $D_{y_0}(\m,\varepsilon)= n_0 [0,0] + \sum_{y=1}^{z} [-y,y] + m_{y_0}[-y_0+1,y_0 - 1] + \min\{t_0,n_2\}([y_0,y_0] + [-y_0,-y_0])$. 
        \item Suppose $y_0 = 2$, $(*)$ is satisfied and $m_{y_0}$ is odd. Then $D_{y_0}(\m,\varepsilon)=n_0 [0,0] + \sum_{y=1}^{z} [-y,y] + (m_{y_0}-1)[-y_0+1,y_0 - 1] + [-y_0,y_0] + \min\{t_0,n_2\}([y_0,y_0] + [-y_0,-y_0])$.
        \item Suppose $y_0 = 2$, $(*)$ is satisfied and $m_{y_0}$ is even. Then $D_{y_0}(\m,\varepsilon)=[-y_0,y_0 - 1]+[-y_0 + 1,y_0] + n_0 [0,0] + \sum_{y=1}^{z} [-y,y] + (m_{y_0}-2)[-y_0+1,y_0 - 1] + \min\{t_0,n_2\}([y_0,y_0] + [-y_0,-y_0])$.
        \item  Suppose $y_0 \neq 2$, $(*)$ is not satisfied, $n_{y_0}\neq 0$ and $z=y_0 -1$. Then $D_{y_0}(\m,\varepsilon)= n_0 [0,0] + \sum_{y=1}^{z} [-y,y] + m_{y_0}[-y_0+1,y_0 - 1] + ([y_0,y_0] + [-y_0,-y_0])$.
        \item  Suppose $y_0 \neq 2$,$(*)$ is not satisfied and $n_{y_0}=0$ or $z<y_0 -1$. Then $D_{y_0}(\m,\varepsilon)= n_0 [0,0] + \sum_{y=1}^{z} [-y,y] + m_{y_0}[-y_0+1,y_0 - 1]$. 
        \item Suppose $y_0 \neq 2$,$(*)$ is satisfied and $m_{y_0}$ is odd. Then $D_{y_0}(\m,\varepsilon)=n_0 [0,0] + \sum_{y=1}^{z} [-y,y] + (m_{y_0}-1)[-y_0+1,y_0 - 1] + [-y_0,y_0]$.
        \item Suppose $y_0 \neq 2$, $(*)$ is satisfied and $m_{y_0}$ is even. Then $D_{y_0}(\m,\varepsilon)=[-y_0,y_0 - 1]+[-y_0 + 1,y_0] + n_0 [0,0] + \sum_{y=1}^{z} [-y,y] + (m_{y_0}-2)[-y_0+1,y_0 - 1]$.
    \end{itemize}

    Examining all the cases we find that, when $\m_1 = [y_0,y_0] + [-y_0,-y_0]$ then $D_{y_0}(\m,\varepsilon) = D_{y_0}(\m^{\#},\varepsilon^{\#})$ and in all the other cases $\tilde{\m}_1 = \m_1$ and $(\tilde{\m}^{\#},\tilde{\varepsilon}^{\#})=D_{y_0}(\m^{\#},\varepsilon^{\#})$. Hence we get that $\AD(D_{y_0}(\m,\varepsilon))=D_{-y_0}(\AD(\m,\varepsilon))$.
\end{proof}

Now, we suppose that $\rho$ is not of the same type as $G$. Then $\m$ is of the following form 

\[
    \m = \sum_{y=1/2}^{e_{\max}} n_y ([y,y] + [-y,-y]) + \sum_{y=1/2}^{z} [-y,y] + m [-e_{\max},e_{\max}].
\]

\begin{prop}
  We have that $\AD(D_{y_0}(\m,\varepsilon))=D_{-y_0}(\AD(\m,\varepsilon))$.
\end{prop}

\begin{proof}
The proof is similar to the proof of Proposition \ref{prop:emaxtype}. When
$n_{e_{\max}-1} \neq 0$, it is exactly the same. Now suppose that $n_{e_{\max}-1}=0$. Thus $\m$ is of the form

\[
    \m= \sum_{y=1/2}^{z} [-y,y] + m_{y_0}[-y_0,y_0] + n_{y_0}([y_0,y_0] + [-y_0,-y_0]),
\]
Let us start by computing $\AD(\m,\varepsilon)$. We get that
    \begin{itemize}
      \item Suppose $n_{y_0} \neq 0$ and $z < y_0 -1$. Then $\m_1 = [-y_0,-y_0] + [y_0,y_0]$ and $\m^{\#}=\sum_{y=1/2}^{z} [-y,y]  + m_{y_0}[-y_0,y_0] + (n_{y_0} - 1) ([y_0,y_0] + [-y_0,-y_0])$.
      \item Suppose $n_{y_0} \neq 0$ and $z = y_0 -1$. Then $\m_1=[-y_0,y_0]$, $\varepsilon_1(y_0)=(-1)^{z+m_{y_0}+1}$, $\m^{\#}= \sum_{y=1/2}^{z-1} [-y,y] + m_{y_0}[-y_0,y_0] + (n_{y_0} - 1) ([y_0,y_0] + [-y_0,-y_0])$, $\varepsilon^{\#}([-y_0,y_0])=-\varepsilon([-y_0,y_0])$ and $\varepsilon^{\#}([-y,y])=-\varepsilon([-y,y])$ for $1/2 \le y < z$.
      \item Suppose $(*)$ is not satisfied and $n_{y_0}=0$. Then $\m_1 = [-y_0,-y_0] + [y_0,y_0]$, $\m^{\#}= \sum_{y=1/2}^{z} [-y,y] + [-y_0 + 1,y_0 - 1] + (m_{y_0}- 1)[-y_0,y_0]$, and $\varepsilon^{\#}([1-y_0,y_0-1])=\varepsilon([-y_0,y_0])$, if $n_0 \ge 2$, $\varepsilon^{\#}([-y_0,y_0])=\varepsilon([-y_0,y_0])$, and for $y \le z$, $\varepsilon^{\#}([-y,y])=\varepsilon([-y,y])$.
      \item Suppose $(*)$ is satisfied and $n_{y_0} = 0$. Then $\m_1=[-y_0,y_0]$, $\varepsilon_1([-y_0,y_0])=(-1)^{z+m_{y_0}+1}$, $\m^{\#}= \sum_{y=1/2}^{z} [-y,y] + (m_{y_0}-1)[-y_0,y_0]$, $\varepsilon^{\#}([-y_0,y_0])=-\varepsilon([-y_0,y_0])$ and for $y<y_0$, $\varepsilon^{\#}([-y,y])=\varepsilon([-y,y])$. 
    \end{itemize}

    Now, we compute $D_{y_0}(\m,\varepsilon)$.
    \begin{itemize}
        \item  Suppose  $(*)$ is not satisfied, $n_{y_0}\neq 0$ and $z=y_0 -1$. Then $D_{y_0}(\m,\varepsilon)=  \sum_{y=1/2}^{z} [-y,y] + m_{y_0}[-y_0+1,y_0 - 1] + ([y_0,y_0] + [-y_0,-y_0])$.
        \item  Suppose $(*)$ is not satisfied and $n_{y_0}=0$ or $z<y_0 -1$. Then $D_{y_0}(\m,\varepsilon)=  \sum_{y=1/2}^{z} [-y,y] + m_{y_0}[-y_0+1,y_0 - 1]$. 
        \item Suppose $(*)$ is satisfied and $m_{y_0}$ is odd. Then $D_{y_0}(\m,\varepsilon)= \sum_{y=1/2}^{z} [-y,y] + (m_{y_0}-1)[-y_0+1,y_0 - 1] + [-y_0,y_0]$.
        \item Suppose $(*)$ is satisfied and $m_{y_0}$ is even. $D_{y_0}(\m,\varepsilon)=[-y_0,y_0 - 1]+[-y_0 + 1,y_0] + \sum_{y=1/2}^{z} [-y,y] + (m_{y_0}-2)[-y_0+1,y_0 - 1]$.
    \end{itemize}

    We finish as in the proof of Proposition \ref{prop:emaxtype}.
\end{proof}

\subsection{The derivatives $D_{-e_{\max}}$ } \label{sec:deremaxm}

In this section, we assume that $e_{\max} > 1$, that for all $-e_{\max} < y \le e_{\max}$ with $y\neq 0$, $(\m,\varepsilon)$ is $y$-reduced, that $(\m,\varepsilon)$ is $L([-1,0])$-reduced, and that $(\m,\varepsilon)$ is not $-e_{\max}$-reduced. Let $y_0 = -e_{\max}$. The derivative $D_{y_0}(\m,\varepsilon)$ just suppresses all the segments $[y_0,y_0]+[-y_0,-y_0]$ from $\m$ and doesn't change $\varepsilon$.

\begin{prop}
    If $m_{\m}([-e_{\max},e_{\max}])=0$, then $D_{-y_0}(\AD(\m,\varepsilon))= \AD(D_{y_0}(\m,\varepsilon))$.
\end{prop}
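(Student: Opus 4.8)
The plan is to follow the same pattern used throughout Section~\ref{sec:proofgood}: compare the initial sequence $\Delta_1,\dots,\Delta_l$ of $(\m,\varepsilon)$ with the initial sequence $\tilde\Delta_1,\dots,\tilde\Delta_{\tilde l}$ of $(\tilde\m,\tilde\varepsilon):=D_{y_0}(\m,\varepsilon)$, where $y_0=-e_{\max}$, and to show that $\AD$ and $D_{y_0}$ interact only through the tail of the sequence. Since here $D_{y_0}$ merely deletes all segments $[y_0,y_0]$ together with their duals $[-y_0,-y_0]$, and since by hypothesis $m_{\m}([-e_{\max},e_{\max}])=0$, the segment $[-y_0,-y_0]=[-e_{\max},-e_{\max}]$ is never the largest segment ending in $e_{\max}$ (that role is played by $[e_{\max},e_{\max}]$, which exists because otherwise $(\m,\varepsilon)$ would be $-e_{\max}$-reduced), so $\Delta_1=[e_{\max},e_{\max}]$ and $\tilde\Delta_1=[e_{\max},e_{\max}]$ as well. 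I would first argue, exactly as in the proof of Lemma~\ref{lem:sequencederposx1eqx0}, that the prefix $\Delta_1,\dots,\Delta_{l-1}$ (or some suitable prefix) of the initial sequence is unaffected by the deletion, because none of those segments equals $[y_0,y_0]$ or $[-y_0,-y_0]$: indeed $[-y_0,-y_0]$ has end $-e_{\max}$, which can only be reached at the very end of the sequence, and by Lemma~\ref{lem:ADnotcentered} combined with the hypothesis $\varepsilon_0=1$ (when applicable) $\Delta_l\neq[-e_{\max},-e_{\max}]$; while a segment $[y_0,y_0]=[e_{\max},e_{\max}]$ can only appear as $\Delta_1$, and if it has multiplicity $\geq 2$ then deleting one copy still leaves $\tilde\Delta_1=[e_{\max},e_{\max}]$, whereas if it has multiplicity $1$ then after deletion $e_{\max}$ is no longer attained, contradicting $\Delta_1=[e_{\max},e_{\max}]$ unless there are other segments ending in $e_{\max}$ — but $[-e_{\max},e_{\max}]\notin\m$ and all other segments ending in $e_{\max}$ have center $<0$ by the reducedness hypotheses, forcing $\Delta_1=[e_{\max},e_{\max}]$ to have multiplicity $\geq 2$.

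The key computation is then to show that $(\tilde\m_1,\tilde\varepsilon_1)=(\m_1,\varepsilon_1)$ and $(\tilde\m^{\#},\tilde\varepsilon^{\#})=D_{y_0}(\m^{\#},\varepsilon^{\#})$, at which point the proposition follows by the standard induction: $\AD(D_{y_0}(\m,\varepsilon))=(\m_1,\varepsilon_1)+\AD(D_{y_0}(\m^{\#},\varepsilon^{\#}))=(\m_1,\varepsilon_1)+D_{-y_0}(\AD(\m^{\#},\varepsilon^{\#}))$ by Lemma~\ref{lem:ADcommDer} (noting $l(\m^{\#})<N$), and this equals $D_{-y_0}\big((\m_1,\varepsilon_1)+\AD(\m^{\#},\varepsilon^{\#})\big)=D_{-y_0}(\AD(\m,\varepsilon))$ by Lemma~\ref{lem:dersum}. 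To apply Lemma~\ref{lem:dersum} one checks its hypotheses: since $\m_1=[e(\Delta_l),e_{\max}]+[-e_{\max},-e(\Delta_l)]$ (or a centered segment), the relevant endpoints of $\m_1$ are $\pm e_{\max}$ and $\pm e(\Delta_l)$, with $-e_{\max}=y_0$ being the unique minimal coefficient, so we are in case (2) of Lemma~\ref{lem:dersum} (or case (1) if $\m_1$ is centered, which cannot happen here since $e(\Delta_1)+e(\Delta_l)=0$ would force $\Delta_l=[-e_{\max},-e_{\max}]$, excluded above), and the condition ``no segment $\Delta\in\m^\#$ with $e(\Delta)=y_0-1$ and $\Delta\le[y_0,e_{\max}]$'' holds vacuously once we know no segment ends below $y_0$.

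The main obstacle I anticipate is the bookkeeping for the passage $(\tilde\m^{\#},\tilde\varepsilon^{\#})=D_{y_0}(\m^{\#},\varepsilon^{\#})$, specifically controlling which copies of $[y_0,y_0]$ and $[-y_0,-y_0]$ are ``used up'' by $\AD$ versus by $D_{y_0}$. Because $\Delta_1=[e_{\max},e_{\max}]=[-y_0,-y_0]^\vee$ and its dual both appear, the algorithm $\AD$ on $(\m,\varepsilon)$ removes one $[e_{\max},e_{\max}]$ (as the end of $\Delta_1$) and one $[-e_{\max},-e_{\max}]$ (as the beginning of $\Delta_1^\vee$) at the first step, so in $\m^\#$ the count of $[e_{\max},e_{\max}]$ is $m_{\m}([e_{\max},e_{\max}])-1$ and likewise for the dual; meanwhile $D_{y_0}$ deletes \emph{all} of them. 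One must verify that the order of operations does not matter — i.e. deleting the $[y_0,y_0]$-pair and then running $\AD^\#$ gives the same multiset with signs as running $\AD$ and then deleting — which is plausible because these short segments do not participate in any later step of the initial sequence (every $\Delta_j$ for $j\geq 2$ has end $<e_{\max}$, hence is distinct from $[e_{\max},e_{\max}]$, and $[-e_{\max},-e_{\max}]$ is never suppressed or created since $\Delta_l\neq[-e_{\max},-e_{\max}]$). Making this commutation precise, including the effect on the sign function (the deletion of $[e_{\max},e_{\max}]+[-e_{\max},-e_{\max}]$ affects no centered segment since $e_{\max}>1$), is where the care is needed; it should reduce to a direct inspection of the formula for $D_{y_0}$ exactly as in Lemma~\ref{lem:ADderneg}.
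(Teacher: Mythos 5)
There is a genuine gap, and it stems from a misreading of what the highest derivative $D_{y_0}=D_{-e_{\max}}$ does to $\m$. Recall that $y_0=-e_{\max}$, so $[-y_0,-y_0]=[e_{\max},e_{\max}]$ (not $[-e_{\max},-e_{\max}]$ as you wrote), and $D_{y_0}$ is the \emph{highest} $\rho_u|\cdot|^{-e_{\max}}$-derivative: it deletes \emph{every} copy of $[e_{\max},e_{\max}]$ and $[-e_{\max},-e_{\max}]$ at once, not just one. Since you assume $m_{\m}([-e_{\max},e_{\max}])=0$ and the only other segments ending in $e_{\max}$ are $[e_{\max},e_{\max}]$, after applying $D_{y_0}$ no segment of $\tilde\m$ ends in $e_{\max}$ at all. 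Therefore $\tilde\Delta_1\neq[e_{\max},e_{\max}]$ and your central claim $(\tilde\m_1,\tilde\varepsilon_1)=(\m_1,\varepsilon_1)$ is false. The multiplicity discussion (``forcing $\Delta_1=[e_{\max},e_{\max}]$ to have multiplicity $\geq 2$'') does not rescue this: $D_{y_0}$ removes all copies regardless of multiplicity, and multiplicity one is perfectly allowed by the hypotheses.

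What actually happens is that the initial sequence shifts down by one: $\tilde\Delta_1=\Delta_2$, $\tilde l=l-1$, and $\tilde\Delta_j=\Delta_{j+1}$ for $1\le j<l$ (that $l\ge 2$ follows because $(\m,\varepsilon)$ is $e_{\max}$-reduced, so the presence of $[e_{\max},e_{\max}]$ forces a segment ending in $e_{\max}-1$ to protect it). Consequently $(\tilde\m_1,\tilde\varepsilon_1)=D_{-y_0}(\m_1,\varepsilon_1)$, not $(\m_1,\varepsilon_1)$, while $(\tilde\m^{\#},\tilde\varepsilon^{\#})=D_{y_0}(\m^{\#},\varepsilon^{\#})$ as you expected. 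Your final chain of equalities also misapplies Lemma~\ref{lem:dersum}: in case (2), which is the one that applies here since $b=e_{\max}=-y_0$, the conclusion is $D_{-y_0}\bigl((\m_1,\varepsilon_1)+\AD(\m^{\#},\varepsilon^{\#})\bigr)=D_{-y_0}(\m_1,\varepsilon_1)+D_{-y_0}(\AD(\m^{\#},\varepsilon^{\#}))$, with the derivative acting on $\m_1$ as well, which is exactly consistent with $\tilde\m_1=D_{-y_0}(\m_1)$. Putting these together with Lemma~\ref{lem:ADcommDer} gives $D_{-y_0}(\AD(\m,\varepsilon))=D_{-y_0}(\m_1,\varepsilon_1)+D_{-y_0}(\AD(\m^{\#},\varepsilon^{\#}))=(\tilde\m_1,\tilde\varepsilon_1)+\AD(\tilde\m^{\#},\tilde\varepsilon^{\#})=\AD(D_{y_0}(\m,\varepsilon))$, which is the correct and quite short argument. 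So the overall strategy is right, but the bookkeeping of the initial sequence and of $\m_1$ under the derivative is wrong and needs the shift $\tilde\Delta_j=\Delta_{j+1}$ rather than a stable prefix.
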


\begin{proof}    
    Let $(\tilde{\m},\tilde{\varepsilon})=D_{y_0}(\m,\varepsilon)$. With the hypotheses made, $\Delta_1= [-y_0,-y_0]$, $l \ge 2$ (because $D_{-y_0}(\pi) = 0$) and $\Delta_2$ is the biggest segment ending in $-y_0-1$. Hence, $\tilde{\Delta}_1=\Delta_2$, $\tilde{l}=l-1$ and for all $1 \le j < l$, $\tilde{\Delta}_j=\Delta_{j+1}$. Thus $(\tilde{\m}_1,\tilde{\varepsilon}_1)=D_{-y_0}(\m_1,\varepsilon_1)$ and $(\tilde{\m}^{\#},\tilde{\varepsilon}^{\#})=D_{y_0}(\m^{\#},\varepsilon^{\#})$. From Lemma \ref{lem:dersum} and Lemma \ref{lem:ADcommDer}, $D_{-y_0}(\AD(\m,\varepsilon))=D_{-y_0}(\m_1,\varepsilon_1) + D_{-y_0}(\AD(\m^{\#},\varepsilon^{\#}))=(\tilde{\m}_1,\tilde{\varepsilon}_1) + \AD(\tilde{\m}^{\#},\tilde{\varepsilon}^{\#})=\AD(D_{y_0}(\m,\varepsilon))$.
\end{proof}

Now we assume that $m_{\m}([-e_{\max},e_{\max}]) \neq 0$. First, we assume that $\rho$ is of the same type as $G$. From the hypotheses made on the derivative (recalled at the beginning of Section \ref{sec:deremax}), we get that for all $2 \le y \le e_{\max}$, $m_{\m}([-y,y])=1$, $\varepsilon([-y,y])=-\varepsilon([1-y,y-1])$ and $\varepsilon([0,0])\varepsilon([-1,1])=(-1)^{t_0 + 1}$. Let $n_{e_{\max}}=m_{\m}([e_{\max},e_{\max}])$, $n_{2}=m_{\m}([2,2])$, $n_{1}=m_{\m}([1,1])$, $n_{0}=m_{\m}([0,0])$ and $t_0=m_{\m}([0,1])$. Then, $n_{e_{\max}}=m_{\m}([e_{\max},e_{\max}])=\cdots=m_{\m}([2,2])$, $t_0 = n_{e_{\max}} - n_1$ and $n_0 \ge n_1 + 1$.

\begin{lem}
    \label{lem:ADDx}
    Suppose that $m_{\m}([-e_{\max},e_{\max}]) \neq 0$. Then $\AD(\m,\varepsilon)=(\m',\varepsilon')$ with :
    \begin{enumerate}
        \item If $n_0 - n_1$ is odd,
        \[
          \m'= (n_1+1) [-e_{\max},e_{\max}] + (n_{e_{\max}}-n_1) ([-e_{\max},0] + [0,e_{\max}]) + (n_0-n_1) [0,0]+ \sum_{y=1}^{e_{\max} - 1} [-y,y],
        \]
         $\varepsilon'([-e_{\max},e_{\max}])=(-1)^{n_0 + e_{\max} + 1}\varepsilon([0,0])$, $\varepsilon'([0,0])=(-1)^{n_{e_{\max}}}\varepsilon([0,0])$ and  $\varepsilon'([-y,y])=(-1)^{n_1}\varepsilon([-y,y])$ for $y \neq 0,e_{\max}$.
        \item If $n_0 - n_1$ is even,
        \[
          \m'=n_1 [-e_{\max},e_{\max}] + (n_{e_{\max}}-n_1 + 1) ([-e_{\max},0] + [0,e_{\max}]) + (n_0-n_1 - 1) [0,0]+ \sum_{y=1}^{e_{\max} - 1} [-y,y],
        \]
        $\varepsilon'([-e_{\max},e_{\max}])=(-1)^{n_0 + e_{\max} + 1}\varepsilon([0,0])$, $\varepsilon'([0,0])=(-1)^{n_{e_{\max}+1}}\varepsilon([0,0])$ and  $\varepsilon'([-y,y])=(-1)^{n_1+1}\varepsilon([-y,y])$ for $y \neq 0,e_{\max}$.
    \end{enumerate}
\end{lem}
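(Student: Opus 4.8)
The plan is to prove Lemma~\ref{lem:ADDx} by induction on $l(\m)$, peeling off one application of the recursion $\AD_\rho(\m,\varepsilon)=(\m_1,\varepsilon_1)+\AD_\rho(\m^{\#},\varepsilon^{\#})$ of Section~\ref{sec:algogood} at each step. The base case of the induction is when one recursion step already lands on a reduced multisegment of the type treated in Section~\ref{sec:reducedseg}; concretely, if $n_{e_{\max}}=1$ then after one step all segments $[y,y]$ with $y\ge 1$ and all pairs $[-1,0]+[0,1]$ have disappeared, so $\m^{\#}$ has the shape of Lemma~\ref{lem:ADmetemp}, whose formula is then combined with $(\m_1,\varepsilon_1)$ and checked against the claimed answer (with a short verification that the sign relations of $\m$ transfer to $\m^{\#}$, so that Lemma~\ref{lem:ADmetemp} applies). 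For the inductive step one takes $n_{e_{\max}}\ge 2$.

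The first task is to pin down the initial sequence $\Delta_1\succeq\cdots\succeq\Delta_l$ of $(\m,\varepsilon)$. Since $n_{e_{\max}}=\cdots=n_2\ge 1$ and every centered segment $[-y,y]$ has multiplicity one with the alternation $\varepsilon([-y,y])=-\varepsilon([1-y,y-1])$, the order $\preceq$ forces $\Delta_j=[e_{\max}-j+1,e_{\max}-j+1]$ for $1\le j\le e_{\max}-1$, after which the sequence takes $[1,1]$ if $n_1\ge 1$ and $[0,1]$ if $n_1=0$, and terminates on a copy of $[0,0]$ bearing label ${\ge 0}$ (if $n_1\ge 1$, using $n_0\ge n_1+1\ge 2$), label ${=0}$ (if $n_1=0$ and $n_0$ is odd), or label ${\le 0}$ (if $n_1=0$ and $n_0$ is even); here the relation $\varepsilon([0,0])\varepsilon([-1,1])=(-1)^{t_0+1}$ together with $t_0=n_{e_{\max}}-n_1$ is used to see that no sign obstruction stops the process earlier. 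Hence $\varepsilon_0=-1$ in the first two subcases and $\varepsilon_0=1$ in the third (Definition~\ref{def:epsil}), so that, since $\rho_u$ is of the same type as $G$, $\m_1=[-e_{\max},e_{\max}]$ with $\varepsilon_1(\m_1)=(-1)^{n_0^{\mathrm{cent}}+1}\varepsilon([0,0]_{\rho_u})$ — where $n_0^{\mathrm{cent}}=\card\{\Delta\in\m:c(\Delta)=0\}=n_0+e_{\max}$ accounts for the $[0,0]$'s and the $[-y,y]$'s with $1\le y\le e_{\max}$ — in the first two subcases, and $\m_1=[-e_{\max},0]+[0,e_{\max}]$ in the third.

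The second task is to describe $(\m^{\#},\varepsilon^{\#})$. Removing the ends of the $\Delta_j$ and the beginnings of the $\Delta_j^{\vee}$ deletes one copy of each length-one segment $[y,y]$ and $[-y,-y]$ with $1\le y\le e_{\max}$ and one copy of the terminal $[0,0]$ (with its contragredient), and, when $n_1=0$, turns the chosen $[0,1]$ and $[-1,0]$ each into a $[0,0]$. One checks that $\m^{\#}$ is again of the admissible form of the lemma, with parameters $(n_{e_{\max}}-1,\,n_1-1,\,n_0-1,\,t_0)$ when $n_1\ge 1$ and $(n_{e_{\max}}-1,\,0,\,n_0,\,t_0-1)$ when $n_1=0$, and that the sign relations persist. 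The delicate point is $\varepsilon^{\#}$ on centered segments: when $\varepsilon_0=-1$ the terminal $[0,0]$ simply disappears and no centered segment of $\m^{\#}$ is ``created'', so $\varepsilon^{\#}=\varepsilon_0\cdot\varepsilon=-\varepsilon$ throughout; when $n_1=0$ (so $\varepsilon_0=1$), the clause of Section~\ref{sec:algogood} attached to $c(\Delta_{l-1})=1/2$ applies to \emph{every} copy of $[0,0]$ in $\m^{\#}$ — because $[0,0]$ equals $\Lambda_{i_{l-1}}^{\#}$ — and forces $\varepsilon^{\#}([0,0])=-\varepsilon([0,0])$, while $\varepsilon^{\#}$ agrees with $\varepsilon$ on the remaining centered segments.

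Finally one applies the inductive hypothesis (or Lemma~\ref{lem:ADmetemp}) to $\AD_\rho(\m^{\#},\varepsilon^{\#})$, observes that the parity of $n_0-n_1$ is preserved by the parameter change in every subcase (so $\m^{\#}$ stays in the same case~(1) or~(2) as $\m$), and adds $(\m_1,\varepsilon_1)$; matching multiplicities is then a one-line count, and matching signs — in particular checking that the $[-e_{\max},e_{\max}]$ coming from $\m_1$ carries the same sign as those coming from $\AD_\rho(\m^{\#},\varepsilon^{\#})$, a compatibility also guaranteed abstractly by Proposition~\ref{prop:ADdef} — is a short parity computation using the explicit $\varepsilon_1$ and $\varepsilon^{\#}$ above. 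I expect the sign accounting to be the only real obstacle: it is precisely the interplay between $\varepsilon_0$, the terminal-label rule fixing $\varepsilon_1$, and the $c(\Delta_j)=1/2$ clause fixing $\varepsilon^{\#}$ that produces the two cases of the lemma and the exponents $(-1)^{n_0+e_{\max}+1}$, $(-1)^{n_{e_{\max}}}$ (resp. $(-1)^{n_{e_{\max}}+1}$), and $(-1)^{n_1}$ (resp. $(-1)^{n_1+1}$).
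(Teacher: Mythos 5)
Your overall strategy — single‑step induction on the recursion $\AD_\rho(\m,\varepsilon)=(\m_1,\varepsilon_1)+\AD_\rho(\m^{\#},\varepsilon^{\#})$, with the base case handled by Lemma~\ref{lem:ADmetemp} — is sound and is essentially what the paper does; the paper merely batches the first $n_1$ applications (where $\Delta_l=[0,0]^{\ge 0}$) and then the next $n_{e_{\max}}-n_1$ applications, which makes the bookkeeping uniform within each batch. Your identification of the initial sequence, the value of $\varepsilon_0$ in the three subcases, and the sign $\varepsilon_1(\m_1)=(-1)^{n_0+e_{\max}+1}\varepsilon([0,0])$ are all correct.

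There is, however, a concrete gap in the subcase $n_1=0$ and $n_0$ odd, which you fold into ``$\varepsilon_0=-1$, hence nothing created.'' That is not what happens there. The sequence is $[e_{\max},e_{\max}],\ldots,[2,2],[0,1],[0,0]^{=0}$; the segment $\Delta_{e_{\max}}=[0,1]$ has $c=\tfrac12$, so $\Lambda^{\#}_{i_{e_{\max}}}=[0,0]$ and $\Lambda^{\#}_{i'_{e_{\max}}}=[0,0]$ are both created, while the terminal $[0,0]^{=0}$ (equal to its own contragredient) is removed only once. Thus $n_0^{\#}=n_0+1$, not $n_0$ as your parameter list $(n_{e_{\max}}-1,\,0,\,n_0,\,t_0-1)$ asserts. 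Moreover, the $c(\Delta_j)=\tfrac12$ clause with $\Lambda^{\#}_{i_j}=[0,0]\in\m$ gives $\varepsilon^{\#}([0,0])=\varepsilon_0\cdot(-1)\cdot\varepsilon([0,0])=+\varepsilon([0,0])$, whereas you claim $\varepsilon^{\#}=-\varepsilon$ throughout whenever $\varepsilon_0=-1$. Consequently $n_0^{\#}-n_1^{\#}=n_0+1$ is even: the parity of $n_0-n_1$ flips, and $\m^{\#}$ lands in case~(2) while $\m$ is in case~(1), contradicting your assertion that ``the parity of $n_0-n_1$ is preserved by the parameter change in every subcase.'' With the corrected parameters, the inductive hypothesis (applied in case~(2)) does give $n_{e_{\max}}([-e_{\max},0]+[0,e_{\max}])+n_0[0,0]+\sum_{y=1}^{e_{\max}-1}[-y,y]$, and adding $\m_1=[-e_{\max},e_{\max}]$ (with its sign $(-1)^{n_0+e_{\max}+1}\varepsilon([0,0])$) recovers exactly case~(1) of the lemma — but this case‑crossing is essential and your proof must be corrected to account for it rather than claiming parity preservation.
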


\begin{proof}
First, we apply the algorithm $n_1$ times to $(\m,\varepsilon)$. Each time we get $l=e_{\max}+1$, $\Delta_1 = [e_{\max},e_{\max}]$, $\Delta_2 = [e_{\max}-1,e_{\max}-1]$, ...,$\Delta_{l-1} = [1,1]$, $\Delta_{l} = [0,0]^{\ge 0}$ or $[0,0]^{=0}$. Thus we get that $\m_1=[-e_{\max},e_{\max}]$, with $\varepsilon_1(\m_1)=(-1)^{n_0 + e_{\max} + 1}\varepsilon([0,0])$, and $\m^{\#}$ is $\m$ where we have removed the segments $[y,y]$ and $[-y,-y]$ for $1 \le y \le e_{\max}$ and $[0,0]$, and $\varepsilon^{\#}([-y,y])=-\varepsilon([-y,y])$. Hence after applying $n_1$ times the algorithm we get that 
\[
    \AD(\m,\varepsilon)=n_1 [-e_{\max},e_{\max}] + \AD (\m',\varepsilon')
\]
with $\varepsilon_1([-e_{\max},e_{\max}])=(-1)^{n_0 + e_{\max} + 1}\varepsilon([0,0])$, $\m'=n \sum_{y=2}^{e_{\max}} ([y,y] + [-y,-y]) + (n_0-n_1) [0,0] + n ([-1,0] + [0,1]) + \sum_{y=1}^{e_{\max}} [-y,y]$, with $n=n_{e_{\max}}-n_1$, and $\varepsilon'([-y,y])=(-1)^{n_1}\varepsilon([-y,y])$.

Now apply again the algorithm to $(\m',\varepsilon')$. We get $l=e_{\max}+1$, $\Delta_1 = [e_{\max},e_{\max}]$, $\Delta_2 = [e_{\max}-1,e_{\max}-1]$, ...,$\Delta_{l-2} = [2,2]$, $\Delta_{l-1} = [0,1]$ and $\Delta_l=[0,0]$. If $n_0 - n_1$ is odd then $\m_1=[-e_{\max},e_{\max}]$ and $\m^{\#}$ is $\m'$ where we have removed the segments $[y,y]$ and $[-y,-y]$ for $2 \le y \le e_{\max}$, we have removed $[-1,0] + [0,1]$ and we have added a $[0,0]$ and $\varepsilon^{\#}([0,0])=\varepsilon'([0,0])$ and for $y \neq 0$, $\varepsilon^{\#}([-y,y])=-\varepsilon'([-y,y])$. If $n_0 - n_1$ is even then $\m_1=[-e_{\max},0] + [0,e_{\max}]$ and $\m^{\#}$ is $\m'$ where we have removed the segments $[y,y]$ and $[-y,-y]$ for $2 \le y \le e_{\max}$ and $[-1,0] + [0,1]$; if $y\neq 0$, $\varepsilon^{\#}([-y,y])=\varepsilon'([-y,y])$ and $\varepsilon^{\#}([0,0])=-\varepsilon'([0,0])$. Hence after applying $n$ times the algorithm to $(\m',\varepsilon')$ we get that:

\begin{itemize}
  \item If $n_0 - n_1$ is even:
  \[
    \AD(\m,\varepsilon) = n_1 [-e_{\max},e_{\max}] + (n_{y_0}-n_1) ([-e_{\max},0] + [0,e_{\max}]) + \AD (\m^{\prime\prime},\varepsilon'')
  \]
  with $\m^{\prime\prime}= n' [0,0]+ \sum_{y=1}^{e_{\max}} [-y,y]$, where $n'=n_0-n_1$, $\varepsilon''([0,0])=(-1)^{n_{e_{\max}}}\varepsilon([0,0])$ and for $y \neq 0$, $\varepsilon'([-y,y])=(-1)^{n_1}\varepsilon([-y,y])$.
  \item If $n_0 - n_1$ is odd:
  \[
    \AD(\m,\varepsilon) = (n_1+1) [-e_{\max},e_{\max}] + (n_{y_0}-n_1-1) ([-e_{\max},0] + [0,e_{\max}]) + \AD (\m^{\prime\prime},\varepsilon'')
  \]
  with $\m^{\prime\prime}= n' [0,0]+ \sum_{y=1}^{e_{\max}} [-y,y]$, where $n'=n_0-n_1 + 1$, $\varepsilon''([0,0])=(-1)^{n_{e_{\max}}+1}\varepsilon([0,0])$ and for $y \neq 0$, $\varepsilon''([-y,y])=(-1)^{n_1+1}\varepsilon([-y,y])$.
\end{itemize}

Now, $\varepsilon''([0,0])\varepsilon''([-1,1])=(-1)^{n_{e_{\max}}+n_1}\varepsilon([0,0])\varepsilon([-1,1])=(-1)^{n_{e_{\max}}+n_1+t_0+1}=-1$. We have computed $\AD(\m^{\prime\prime},\varepsilon'')$ in Lemma \ref{lem:ADmetemp}, as $n'$ is even we have $\AD(\m^{\prime\prime},\varepsilon'') = (n' - 1) [0,0]+ \sum_{y=1}^{e_{\max} - 1} [-y,y] + [-e_{\max},0] + [0,e_{\max}]$ and all the signs of the segments $[-y,y]$ for $y \neq e_{\max}$ change, which gives the result.
\end{proof}

We can prove the wanted result.

\begin{prop}
  If $m_{\m}([-e_{\max},e_{\max}])\neq 0$, then $D_{-y_0}(\AD(\m,\varepsilon))= \AD(D_{y_0}(\m,\varepsilon))$.
\end{prop}

\begin{proof}
  Lemma \ref{lem:ADDx} computes explicitly $\AD(\m,\varepsilon)$. Using the formulas for the derivative recalled in Section \ref{sec:expder}, we get $D_{-y_0}(\AD(\m,\varepsilon))$. Thus let us compute $\AD(D_{y_0}(\m,\varepsilon))$ and check that we get the same formula.

Now, $D_{y_0}(\m,\varepsilon)$ is just $(\m,\varepsilon)$ where we have removed all the segments $[y_0,y_0]$. We apply the algorithm to $D_{y_0}(\m,\varepsilon)$. 
\begin{itemize}
    \item If $t_0$ is even (hence $\varepsilon(0)\varepsilon(1)=-1$), then $\Delta_1=[-e_{\max},e_{\max}]^{=0}$, ..., $\Delta_{e_{\max}}=[-1,1]^{=0}$ and $\Delta_{e_{\max}+1}=[0,0]^{=0}$ or $[0,0]^{\le 0}$ depending on the parity of $n_0$.
    \begin{itemize}
        \item If $n_0$ is odd, then $\Delta_{e_{\max}+1}=[0,0]^{=0}$ and $l=e_{\max}+1$. Thus $\m_1 = [-e_{\max},e_{\max}]$ and $\m^{\#}=n_{e_{\max}} \sum_{y=2}^{e_{\max} - 1}  ([y,y] + [-y,-y]) + n_1 ([1,1] + [-1,-1])+ n_0 [0,0] + t_0 ([-1,0] + [0,1]) + \sum_{y=1}^{e_{\max}-1} [-y,y]$.
        \item If $n_0$ is even and $n_1 =0$, then $l=e_{\max}+1$ and $\Delta_l= [0,0]^{\le 0}$. Thus $\m_1 = [-e_{\max},0] + [0,e_{\max}]$ and $\m^{\#}=n_{e_{\max}} \sum_{y=2}^{e_{\max} - 1}  ([y,y] + [-y,-y]) + n_1  ([1,1] + [-1,-1]) + (n_0 - 1) [0,0] + t_0 ([-1,0] + [0,1]) + \sum_{y=1}^{e_{\max}-1} [-y,y]$.
        \item If $n_0$ is even and $n_1 \neq 0$, then $l=2e_{\max}$, $\Delta_{e_{\max}+2}=[-1,-1]$,...,$\Delta_{l}=[-e_{\max}+1,-e_{\max}+1]$. Thus $\m_1 = [-e_{\max},e_{\max}-1] + [-e_{\max}+1,e_{\max}]$ and $\m^{\#}=(n_{e_{\max}}-1) \sum_{y=2}^{e_{\max} - 1}  ([y,y] + [-y,-y]) + (n_1-1)  ([1,1] + [-1,-1]) + (n_0 - 1) [0,0] + t_0 ([-1,0] + [0,1]) + \sum_{y=1}^{e_{\max}-1} [-y,y]$.
    \end{itemize}
    \item If $t_0$ is odd (hence $\varepsilon(0)\varepsilon(1)=1$), then $l=e_{\max}+1$, $\Delta_1=[-e_{\max},e_{\max}]^{=0}$, ..., $\Delta_{l-1}=[-1,1]^{=0}$ and $\Delta_l=[-1,0]$. We get that $\m_1 = [-e_{\max},0] + [0,e_{\max}]$ and $\m^{\#}=n_{e_{\max}} \sum_{y=2}^{e_{\max} - 1}  ([y,y] + [-y,-y]) + (n_1 + 1) ([1,1] + [-1,-1]) + (n_0 + 1) [0,0] + (t_0 - 1) ([-1,0] + [0,1]) + \sum_{y=1}^{e_{\max}-1} [-y,y]$.
\end{itemize}
In all those cases, $\AD(\m^{\#},\varepsilon^{\#})$ has been computed in Lemma \ref{lem:ADDx}. We get (with the signs as in Lemma \ref{lem:ADDx}):
\begin{itemize}
  \item Suppose that $t_0$ is even and $n_0$ is odd.
  \begin{itemize}
      \item If $n_0 - n_1$ is odd, then $\AD(D_{y_0}(\m,\varepsilon))=[-e_{\max},e_{\max}]+(n_1+1) [-e_{\max}+1,e_{\max}-1] + (n_{e_{\max}}-n_1) ([-e_{\max}+1,0] + [0,e_{\max}-1]) + (n_0-n_1) [0,0]+ \sum_{y=1}^{e_{\max} - 2} [-y,y]$.
      \item If $n_0 - n_1$ is even, then $\AD(D_{y_0}(\m,\varepsilon))=[-e_{\max},e_{\max}]+ n_1 [-e_{\max}+1,e_{\max}-1] + (n_{e_{\max}}-n_1 + 1) ([-e_{\max}+1,0] + [0,e_{\max}-1]) + (n_0-n_1 - 1) [0,0]+ \sum_{y=1}^{e_{\max} - 2} [-y,y]$.
  \end{itemize}
  \item Suppose that $t_0$ is even, $n_0$ is even and $n_1 = 0$. Then $n_0 - n_1$ is even. Thus $\AD(D_{y_0}(\m,\varepsilon))=[-e_{\max},0] + [0,e_{\max}]+ [-e_{\max}+1,e_{\max}-1] + n_{e_{\max}} ([-e_{\max}+1,0] + [0,e_{\max}-1]) + (n_0-1) [0,0]+ \sum_{y=1}^{e_{\max} - 2} [-y,y]$.
  \item Suppose that $t_0$ is even, $n_0$ is even and $n_1 \neq 0$.
  \begin{itemize}
      \item If $n_0 - n_1$ is odd, then $\AD(D_{y_0}(\m,\varepsilon))=[-e_{\max},e_{\max}-1] + [-e_{\max}+1,e_{\max}]+n_1 [-e_{\max}+1,e_{\max}-1] + (n_{e_{\max}}-n_1) ([-e_{\max}+1,0] + [0,e_{\max}-1]) + (n_0-n_1) [0,0]+ \sum_{y=1}^{e_{\max} - 2} [-y,y]$.
      \item If $n_0 - n_1$ is even, then $\AD(D_{y_0}(\m,\varepsilon))=[-e_{\max},e_{\max}-1] + [-e_{\max}+1,e_{\max}]+(n_1-1) [-e_{\max}+1,e_{\max}-1] + (n_{e_{\max}}-n_1 + 1) ([-e_{\max}+1,0] + [0,e_{\max}-1]) + (n_0-n_1 - 1) [0,0]+ \sum_{y=1}^{e_{\max} - 2} [-y,y]$.
\end{itemize}
  \item Suppose that $t_0$ is odd.
  \begin{itemize}
      \item If $n_0 - n_1$ is odd, then $\AD(D_{y_0}(\m,\varepsilon))=[-e_{\max},0] + [0,e_{\max}]+(n_1+2) [-e_{\max}+1,e_{\max}-1] + (n_{e_{\max}}-n_1-1) ([-e_{\max}+1,0] + [0,e_{\max}-1]) + (n_0-n_1) [0,0]+ \sum_{y=1}^{e_{\max} - 2} [-y,y]$.
      \item If $n_0 - n_1$ is even, then $\AD(D_{y_0}(\m,\varepsilon))=[-e_{\max},0] + [0,e_{\max}]+ (n_1+1) [-e_{\max}+1,e_{\max}-1] + (n_{e_{\max}}-n_1) ([-e_{\max}+1,0] + [0,e_{\max}-1]) + (n_0-n_1 - 1) [0,0]+ \sum_{y=1}^{e_{\max} - 2} [-y,y]$.
\end{itemize}
\end{itemize}

Looking at the formulas of the derivative recalled in Section \ref{sec:expder},
we see that $D_{-y_0}(\AD(\m,\varepsilon))= \AD(D_{y_0}(\m,\varepsilon))$.
\end{proof}

We now treat the case where $\rho$ is not of the same type as $G$. We still assume that $m_{\m}([-e_{\max},e_{\max}]) \neq 0$. Let $n_{e_{\max}}=m_{\m}([e_{\max},e_{\max}])$. We get that for all $1/2 \le y \le e_{\max}$, $m_{\m}([y,y])=n_{e_{\max}}$ and $m_{\m}([-y,y])=1$. Also, $\varepsilon([-1/2,1/2])=(-1)^{n_{e_{\max}} + 1}$, and for $y > 1/2$, $\varepsilon([-y,y])\varepsilon([1-y,y-1])=-1$.

\begin{lem}
  \label{lem:ADDxnottype}
  Suppose that $m_{\m}([-e_{\max},e_{\max}]) \neq 0$. Then $\AD(\m,\varepsilon)=(\m',\varepsilon')$ with :
  \[
    \m'=(n_{e_{\max}}+1) [-e_{\max},e_{\max}] + \sum_{y=1}^{e_{\max}-1/2} [-y+1/2,y-1/2]
  \]
  $\varepsilon'([-e_{\max},e_{\max}])=(-1)^{e_{\max}+1/2 }$ and $\varepsilon'([-y,y])=(-1)^{n_{e_{\max}}}\varepsilon([-y,y])$, for $y<e_{\max}$.
\end{lem}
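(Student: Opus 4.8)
The argument runs parallel to the proof of Lemma~\ref{lem:ADDx}, but is considerably simpler: I would compute $\AD(\m,\varepsilon)$ directly by iterating the recursion $\AD(\m,\varepsilon)=(\m_1,\varepsilon_1)+\AD(\m^{\#},\varepsilon^{\#})$. First I would record the shape of $\m$ coming from the discussion preceding the statement: since $m_{\m}([y,y])=m_{\m}([-y,-y])=n_{e_{\max}}$ and $m_{\m}([-y,y])=1$ for all $1/2\le y\le e_{\max}$, one has
$\m=n_{e_{\max}}\sum_{y=1/2}^{e_{\max}}\big([y,y]+[-y,-y]\big)+\sum_{y=1/2}^{e_{\max}}[-y,y]$, with $\varepsilon([-1/2,1/2])=(-1)^{n_{e_{\max}}+1}$ and $\varepsilon([-y,y])\varepsilon([1-y,y-1])=-1$ for $1/2<y\le e_{\max}$; moreover $n_{e_{\max}}\ge 1$ because $(\m,\varepsilon)$ is not $-e_{\max}$-reduced.

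Second, I would carry out one step of the algorithm. As $[e_{\max},e_{\max}]\in\m$, the initial sequence is $\Delta_1=[e_{\max},e_{\max}]\succ\Delta_2=[e_{\max}-1,e_{\max}-1]\succ\cdots\succ\Delta_l=[1/2,1/2]$, so $l=e_{\max}+1/2$ and, since $\rho$ is not of the same type as $G$, $\varepsilon_0=-1$. Hence $\m_1=[-e_{\max},e_{\max}]$; the number $n_0$ of centered segments of $\m$ equals $e_{\max}+1/2$, so $\varepsilon_1(\m_1)=(-1)^{n_0}=(-1)^{e_{\max}+1/2}$. None of $\Delta_1,\dots,\Delta_l$ nor of their duals is centered, so in $\m^{\#}$ no centered segment is modified; thus $\m^{\#}$ is obtained from $\m$ by deleting one copy of each of $[y,y]$ and $[-y,-y]$ ($1/2\le y\le e_{\max}$) and flipping every centered sign (the ``otherwise'' branch with $\varepsilon_0=-1$). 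So $\m^{\#}=(n_{e_{\max}}-1)\sum_{y=1/2}^{e_{\max}}\big([y,y]+[-y,-y]\big)+\sum_{y=1/2}^{e_{\max}}[-y,y]$ with $\varepsilon^{\#}([-y,y])=-\varepsilon([-y,y])$; this is of the same shape with $n_{e_{\max}}$ replaced by $n_{e_{\max}}-1$, and the number of centered segments is unchanged.

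Third, I would iterate this $n_{e_{\max}}$ times. At each of the first $n_{e_{\max}}$ steps the hypotheses of the computation above persist (a segment $[e_{\max},e_{\max}]$ is still present and $n_0=e_{\max}+1/2$ throughout), so each step contributes one copy of $[-e_{\max},e_{\max}]$ with sign $(-1)^{e_{\max}+1/2}$. After $n_{e_{\max}}$ steps one is left with $\AD(\m,\varepsilon)=n_{e_{\max}}[-e_{\max},e_{\max}]+\AD(\m',\varepsilon')$, where $\m'=\sum_{y=1/2}^{e_{\max}}[-y,y]$ and $\varepsilon'([-y,y])=(-1)^{n_{e_{\max}}}\varepsilon([-y,y])$. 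A direct check gives $\varepsilon'([-1/2,1/2])=(-1)^{n_{e_{\max}}}(-1)^{n_{e_{\max}}+1}=-1$ and $\varepsilon'([-y,y])\varepsilon'([1-y,y-1])=\varepsilon([-y,y])\varepsilon([1-y,y-1])=-1$, so $(\m',\varepsilon')$ meets the hypotheses of Lemma~\ref{lem:ADmetempnottype} and $\AD(\m',\varepsilon')=(\m',\varepsilon')$. Splitting off the single $[-e_{\max},e_{\max}]$ inside $\m'$ yields $\AD(\m,\varepsilon)=(n_{e_{\max}}+1)[-e_{\max},e_{\max}]+\sum_{y=1}^{e_{\max}-1/2}[-y+1/2,y-1/2]$, which is the asserted $\m'$.

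Finally, the only delicate point is the sign bookkeeping, which is also where I expect the main (purely computational) obstacle to lie. By Proposition~\ref{prop:ADdef} all centered segments of $\AD(\m,\varepsilon)$ with end $e_{\max}$ carry the same sign, so I must check that the copies produced by the first $n_{e_{\max}}$ steps (sign $(-1)^{e_{\max}+1/2}$) agree with the one inside $\m'$, whose sign is $(-1)^{n_{e_{\max}}}\varepsilon([-e_{\max},e_{\max}])$; iterating $\varepsilon([-y,y])\varepsilon([1-y,y-1])=-1$ from $y=e_{\max}$ down to $y=1/2$ gives $\varepsilon([-e_{\max},e_{\max}])=(-1)^{e_{\max}-1/2}(-1)^{n_{e_{\max}}+1}$, whence $(-1)^{n_{e_{\max}}}\varepsilon([-e_{\max},e_{\max}])=(-1)^{e_{\max}+1/2}$, as needed. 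For $y<e_{\max}$ the centered segment $[-y,y]$ occurs only inside $\m'$, with sign $(-1)^{n_{e_{\max}}}\varepsilon([-y,y])$, matching the statement. Everything else is a routine unwinding of the recursive definition of $\AD$.
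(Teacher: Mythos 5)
Your proof is correct and follows essentially the same strategy as the paper's: iterate the recursion $n_{e_{\max}}$ times (at each step the initial sequence is $[e_{\max},e_{\max}] \succ \cdots \succ [1/2,1/2]$, so $\varepsilon_0 = -1$, one peels off a copy of $[-e_{\max},e_{\max}]$ with sign $(-1)^{n_0}=(-1)^{e_{\max}+1/2}$, and deletes one $[y,y]+[-y,-y]$ for each $y$ while flipping all centered signs), reduce to $\m' = \sum_{y=1/2}^{e_{\max}}[-y,y]$ with twisted sign $(-1)^{n_{e_{\max}}}\varepsilon$, and invoke Lemma~\ref{lem:ADmetempnottype} to see $\AD(\m',\varepsilon')=(\m',\varepsilon')$. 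The paper ends there with ``we get the result,'' whereas you usefully spell out the remaining check that the sign $(-1)^{e_{\max}+1/2}$ produced at each iteration agrees, via the alternation $\varepsilon([-y,y])\varepsilon([1-y,y-1])=-1$ iterated down to $\varepsilon([-1/2,1/2])=(-1)^{n_{e_{\max}}+1}$, with the sign $(-1)^{n_{e_{\max}}}\varepsilon([-e_{\max},e_{\max}])$ carried by the copy of $[-e_{\max},e_{\max}]$ inside $\m'$; this is a welcome addition to what the paper leaves implicit.
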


\begin{proof}
First, we apply the algorithm $n_{e_{\max}}$ times to $(\m,\varepsilon)$. Each time we get $l=e_{\max}+1/2$, $\Delta_1 = [e_{\max},e_{\max}]$, $\Delta_2 = [e_{\max}-1,e_{\max}-1]$, ..., $\Delta_{l} = [1/2,1/2]$. Thus we get that $\m_1=[-e_{\max},e_{\max}]$, with $\varepsilon_1(\m_1)=(-1)^{e_{\max} + 1/2 }$, and $\m^{\#}$ is $\m$ where we have removed the segments $[y,y]$ and $[-y,-y]$ for $1/2 \le y \le e_{\max}$ and $\varepsilon^{\#}([-y,y])=-\varepsilon([-y,y])$. Hence after applying $n_{e_{\max}}$ times the algorithm we get that 
\[
  \AD(\m,\varepsilon)=n_{e_{\max}} [-e_{\max},e_{\max}] + \AD (\m',\varepsilon')
\]
with $\varepsilon_1([-e_{\max},e_{\max}])=(-1)^{e_{\max}+1/2 }$, $\m'= \sum_{y=1/2}^{e_{\max}} [-y,y]$ and $\varepsilon'([-y,y])=(-1)^{n_{e_{\max}}}\varepsilon([-y,y])$.

By Lemma \ref{lem:ADmetempnottype}, $\AD (\m',\varepsilon')=(\m',\varepsilon')$ and we get the result.
\end{proof}

\begin{prop}
  If $m_{\m}([-e_{\max},e_{\max}])\neq 0$, then $D_{-y_0}(\AD(\m,\varepsilon))= \AD(D_{y_0}(\m,\varepsilon))$.
\end{prop}

\begin{proof}
The derivative $D_{y_0}(\m,\varepsilon)$ is just $(\m,\varepsilon)$ where we have removed all the segments $[y_0,y_0]$. We apply the algorithm to $D_{y_0}(\m,\varepsilon)$. We get $\Delta_1=[-e_{\max},e_{\max}]^{=0}$, ..., $\Delta_{e_{\max}}=[-1,1]^{=0}$ and $\Delta_{l}=[0,0]^{=0}$. The sign of $\Delta_l$, which depends on the parity of $n_{e_{\max}}$, determines if $\varepsilon_0$ is $1$ or $-1$. 

\begin{itemize}
  \item If $n_{e_{\max}}$ is even, then $\varepsilon_0=-1$, $\m_1 = [-e_{\max},e_{\max}]$ and $\m^{\#}=n_{e_{\max}} \sum_{y=0}^{e_{\max} - 3/2}  ([y+1/2,y+1/2] + [-y-1/2,-y-1/2]) + \sum_{y=1}^{e_{\max}-3/2} [-y+1/2,y-1/2]$.
  \item If $n_{e_{\max}}$ is odd, then $\varepsilon_0=1$, $\m_1 = [-e_{\max},e_{\max}-1] + [-e_{\max}+1,e_{\max}]$ and $\m^{\#}=(n_{e_{\max}}-1) \sum_{y=0}^{e_{\max} - 3/2}  ([y+1/2,y+1/2] + [-y-1/2,-y-1/2]) + \sum_{y=1}^{e_{\max}-3/2} [-y+1/2,y-1/2]$.
\end{itemize}

We can then compute $\AD(\m^{\#},\varepsilon^{\#})$ with Lemma \ref{lem:ADDxnottype}. We get (with the signs as in Lemma \ref{lem:ADDxnottype}), 
\begin{itemize}
  \item If $n_{e_{\max}}$ is even, then $\AD(D_{y_0}(\m,\varepsilon))=[-e_{\max},e_{\max}] + (n_{e_{\max}}+1) [-e_{\max}+1,e_{\max}-1] + \sum_{y=1}^{e_{\max}-3/2} [-y+1/2,y-1/2]$.
  \item If $n_{e_{\max}}$ is odd, then $\AD(D_{y_0}(\m,\varepsilon))=[-e_{\max},e_{\max}-1] + [-e_{\max}+1,e_{\max}] + n_{e_{\max}} [-e_{\max}+1,e_{\max}-1] + \sum_{y=1}^{e_{\max}-3/2} [-y+1/2,y-1/2]$.
\end{itemize}

Looking at the formulas of the derivative recalled in Section \ref{sec:expder},
we see that $D_{-y_0}(\AD(\m,\varepsilon))= \AD(D_{y_0}(\m,\varepsilon))$.
\end{proof}

\section{Proof in the bad parity case}
\label{sec:proofbad}

Let $\rho \in \Cusp^\GL$ be of bad parity, and denote by $\rho_u$ its unitarization. The goal of this section is to prove Theorem~\ref{thm:MAIN} in the case of an irreducible representation of $\rho$-bad parity. To do so, we will work with symmetrical Langlands data throughout this section. More precisely, we will establish the following equivalent formulation of the theorem.

\begin{thm}
  \label{thm:ADAubertDualbad}
  Let $\pi \in \Irr^{G}$ be $\rho$-bad with symmetrical Langlands data $(\m,\varepsilon)$. Then we have
  \[
    \hat{\pi} \simeq L(\AD(\m,\varepsilon)).
  \]
\end{thm}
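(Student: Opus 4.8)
The structure of the argument is parallel to the good parity case (Section~\ref{sec:proofgood}): one proves the statement by induction on the length of $\m$, using that the Aubert--Zelevinsky dual commutes with derivatives (Proposition~\ref{prop:derivativedual}), that derivatives reduce the length and are injective, and that $\AD_\rho$ commutes with the appropriate derivative operators. Since $\rho$ is of bad parity, all the signs in $\Symm_\rho^\varepsilon(G)$ are trivial, so one may work with plain multisegments in $\Symm_\rho^\varepsilon(G)$ throughout, and the recursion $\AD_\rho(\m) = \m_1 + \AD_\rho(\m^\#)$ is always valid without the subtlety about $\Symm^{\varepsilon,+}(G)$ that complicated the good parity argument. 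In particular there is no need for the auxiliary statement Theorem~\ref{thm:ADdualinduction} with its extra line $\rho_0$; one can induct directly.

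First I would reduce, via the Jantzen decomposition (Theorem~\ref{thm:Jantzen}) and the diagram relating $\trans$ and $\trans_\rho$, to the case where $\m \in \Symm_\rho^\varepsilon(G)$ is supported on a single bad line $\Z_\rho = \Z_{\rho^\vee}$. For the base case of the induction one takes $\m$ to be $\rho|\cdot|^x$-reduced for every $x$; such $\m$ are (roughly) sums of segments of the form $[-y,y]_{\rho_u}$ with the evenness condition on multiplicities (condition~(3) of $\Symm_\rho^\varepsilon(G)$) forcing $\m$ to be built from centered segments appearing in pairs, and one checks by a direct computation — analogous to Lemmas~\ref{lem:ADmetemp} and~\ref{lem:ADmetempnottype} — that $\AD_\rho(\m) = \m$ on the nose, matching the formula for $\widehat\pi$ when $\pi$ is a tempered representation on a bad line (here $\widehat\pi = \pi$ since bad-parity tempered packets are singletons and the dual of an irreducible tempered representation supported on a bad line is again itself, because no reducibility occurs). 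For the inductive step, given a non-reduced $\m$ with $l(\m) = N$, one chooses a suitable value $y_0$ (the smallest $y \in \frac12\Z^\ast$ with $y \neq -e_{\max}$ for which $\m$ is not $y$-reduced), computes $D_{\rho_u|\cdot|^{y_0}}(\m)$ using the explicit formula of Definition~\ref{def:derbap} and Proposition~\ref{prop:derpibad}, and proves the commutation $\AD_\rho(D_{\rho_u|\cdot|^{y_0}}(\m)) = D_{\rho_u|\cdot|^{-y_0}}(\AD_\rho(\m))$. This is carried out by comparing the initial sequence $\Delta_1,\dots,\Delta_l$ of the algorithm for $\m$ with that for $D_{y_0}(\m)$, exactly in the spirit of the lemmas of Section~\ref{sec:derneg} (Lemmas~\ref{lem:dermdieze}, \ref{lem:sequencederneg}, \ref{lem:ADderneg}, and Proposition~\ref{prop:ADcommderneg}), and invoking Lemma~\ref{lem:dersum} to handle the derivative of the sum $\m_1 + \AD_\rho(\m^\#)$; once the commutation is established the standard diagram chase (as in the proof of Lemma~\ref{lem:commDerthm}) together with injectivity of the derivative finishes the induction.

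The main technical obstacle — and the reason the bad case needs its own section rather than being absorbed into Section~\ref{sec:proofgood} — is that Lemma~\ref{lem:sizem1}, which controls how the initial sequence changes under the derivative, fails in the bad parity setting with respect to the order $\le$ (this is exactly the phenomenon illustrated by Example~\ref{ex:badpar0101}, where $\Delta_1' \nleq \Delta_1$). The replacement is the more delicate analysis of Section~\ref{sec:impprop}: the indices $i_0, j_0$ bounding the "self-dual block" $\Delta_{i_0}^\vee = \Delta_{j_0}$ of the initial sequence (Lemma~\ref{lem:duauxalgo}), together with the key inequality ${}^{+}\Delta_i' \le \Delta_i$ and the behaviour described in Proposition~\ref{prop:longueurbad} and Lemma~\ref{lem:duauxmdieze}. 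Thus the heart of the section will be to re-run the case analysis of Section~\ref{sec:derneg} for bad $\rho$, tracking the extra condition~(3) on the initial sequence (the requirement $m_\m(\Delta_j) \ge 2$ when $\Delta_j^\vee$ already appeared) and the correction terms in Definition~\ref{def:derbap} case~(1). The positive-derivative and $D_{\pm e_{\max}}$ cases are handled by the same scheme, each reducing after a suitable choice of $y_0$ to either a direct computation (when the multisegment has been stripped down to a small explicit shape, as in Sections~\ref{sec:emax1} and~\ref{sec:deremax}) or to the commutation lemma combined with Lemma~\ref{lem:dersum}. Because there are no $L([-1,0]_\rho)$- or $Z([0,1]_\rho)$-derivatives to worry about in the bad case — the bad line behaves more like a $\GL$-line, with genuine $\rho|\cdot|^x$-derivatives available for all $x \neq 0$ — the bad parity proof is in fact somewhat shorter than the good one, and I would organize it as: Jantzen reduction, base case, the negative derivative, the positive derivative (with $\rho$ of same type and of opposite type treated as in Sections~\ref{sec:derpos}--\ref{sec:derposnottype}), and finally $D_{\pm e_{\max}}$, assembling everything into Theorem~\ref{thm:ADAubertDualbad} by the induction.
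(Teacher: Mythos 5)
Your overall strategy matches the paper's: induct on the length of $\m$, reduce to commutation $\AD_\rho \circ D_{\rho_u|\cdot|^{y_0}} = D_{\rho_u|\cdot|^{-y_0}} \circ \AD_\rho$ for a suitable $y_0$, using the injectivity of derivatives and Proposition~\ref{prop:derivativedual}, and you are right that the bad case avoids the $\Symm^{\varepsilon,+}(G)$ subtlety (so the paper's Theorem~\ref{thm:ADdualinductionbad} is stated without the auxiliary line $\rho_0$, and the induction is direct). You also correctly identify the failure of Lemma~\ref{lem:sizem1} as the reason the bad case needs its own treatment, and the role of Lemma~\ref{lem:duauxalgo}, Lemma~\ref{lem:duauxmdieze}, and Proposition~\ref{prop:longueurbad} as its replacement.

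However, there is a genuine gap. Your claim that ``there are no $L([-1,0]_\rho)$- or $Z([0,1]_\rho)$-derivatives to worry about in the bad case, \ldots with genuine $\rho|\cdot|^x$-derivatives available for all $x \neq 0$'' is incorrect when $\rho_u$ is of the \emph{opposite} type to $G$, since then $\rho$ bad means the exponents lie in $\Z$ and $x=0$ occurs on the line; the representation $\rho_u$ itself is self-dual and has no well-behaved $\rho_u$-derivative. Concretely, $\m = 2[-1,0]_{\rho_u} + 2[0,1]_{\rho_u}$ (this is Example~\ref{ex:badpar0101}) is $\rho_u|\cdot|^x$-reduced for every $x \neq 0$ but is \emph{not} $L([-1,0]_{\rho_u})$-reduced, so your induction cannot strip it down; likewise your description of the base case as ``sums of centered segments $[-y,y]_{\rho_u}$'' is off — the truly reduced multisegments are only $n[0,0]_{\rho_u}$ with $n$ even (which is zero when $\rho_u$ has the same type as $G$). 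The paper handles exactly this with Lemma~\ref{lem:commDerthmbad}(2) and a dedicated Subsection~\ref{sec:derl01bp} on the $L([-1,0])$-derivative, and already in the $e_{\max}\le 1$ analysis one of the three cases requires $\AD(D_{L([-1,0])}(\m))=D_{Z([0,1])}(\AD(\m))$. Without this case the bad-parity proof does not close, so the $L([-1,0])$-derivative step needs to be reinstated in your plan.
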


\subsection{The strategy of the proof}

An element of $\Symm_\rho^\varepsilon(G)$ has all its signs trivial since $\rho$ is bad, hence we will just write $\m \in \Symm_\rho^\varepsilon(G)$. Unlike in Section \ref{sec:proofgood}, here we do not have any issues with the signs or the determinant. Therefore, we can directly prove by induction on $N \in \NN$ the following theorem.

\begin{thm}
  \label{thm:ADdualinductionbad}
  Let $N \in \NN$. Let $\m \in \Symm^{\varepsilon}_\rho(G)$ and $\pi=L(\m) \in \Irr^{G}$. If $l(\m) \le N$; then $\hat{\pi} \simeq L(\AD(\m))$.
\end{thm}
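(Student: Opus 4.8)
The plan is to prove Theorem~\ref{thm:ADdualinductionbad} by induction on $N$, following the strategy of Section~\ref{sec:proofgood} but in a simpler form. Since $\rho$ is bad, every element of $\Symm_\rho^\varepsilon(G)$ carries only trivial signs, so the sign--product condition defining $\Symm^{\varepsilon,+}(G)$ is automatic and the recursion $\AD(\m)=\m_1+\AD(\m^{\#})$ never leaves the space of genuine Langlands data; in particular there is no need for an auxiliary statement like Theorem~\ref{thm:ADdualinduction}, and one may induct directly on $l(\m)$. The base case $N=0$ is immediate, since $l(\m)=0$ forces $\m=\emptyset$, $\AD(\emptyset)=\emptyset$, and the corresponding (essentially supercuspidal) representation is fixed by the Aubert--Zelevinsky duality. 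So I would assume the theorem for all $N'<N$ and take $\m\in\Symm_\rho^\varepsilon(G)$ with $l(\m)=N$ and $\pi=L(\m)$.

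First I would dispose of the \emph{reduced} case, namely when $\m$ is $\rho_u|\cdot|^{y}$-reduced for every $y\ne 0$ on the line $\Z_\rho$ and, if $\rho_u$ is of the opposite type to $G$ (so that $\Z_\rho$ is integral), also $L([-1,0]_{\rho_u})$-reduced. In that situation $\AD(\m)$ can be read off directly from the algorithm of Subsection~\ref{sec:defbad} and compared with the explicit description of $\widehat\pi$ in \cite[Prop.~5.4]{AM}; this is the bad--parity counterpart of Lemmas~\ref{lem:ADmetemp} and~\ref{lem:ADdualtemp}. If $\pi$ is not reduced, I would single out a derivative to work with: either $y_0$, the smallest $y\ne\pm e_{\max}$ on $\Z_\rho$ with $\m$ not $\rho_u|\cdot|^{y}$-reduced (the derivatives $D_{\pm e_{\max}}$ being treated separately when no such $y_0$ exists), in which case $D^{(k)}_{\rho_u|\cdot|^{y_0}}$ is given by Definition~\ref{def:derbap}; or, if $\pi$ is reduced with respect to all of these but not $L([-1,0]_{\rho_u})$-reduced, the derivative $D_{L([-1,0]_{\rho_u})}$, whose commutation with $\AD$ produces $D_{Z([0,1]_{\rho_u})}$. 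The crux in the first case is the identity
\[
  \AD\bigl(D_{\rho_u|\cdot|^{y_0}}(\m)\bigr)=D_{\rho_u|\cdot|^{-y_0}}\bigl(\AD(\m)\bigr),
\]
and granting it the conclusion follows from the chain
\begin{align*}
  D_{\rho_u|\cdot|^{-y_0}}\bigl(L(\AD(\m))\bigr)
  &= L\bigl(D_{\rho_u|\cdot|^{-y_0}}(\AD(\m))\bigr)
  && \text{by Proposition~\ref{prop:derpibad},}\\
  &= L\bigl(\AD(D_{\rho_u|\cdot|^{y_0}}(\m))\bigr)
  && \text{by the commutation,}\\
  &= \widehat{L\bigl(D_{\rho_u|\cdot|^{y_0}}(\m)\bigr)}
  && \text{by the induction hypothesis,}\\
  &= \widehat{D_{\rho_u|\cdot|^{y_0}}\bigl(L(\m)\bigr)}
  && \text{by Proposition~\ref{prop:derpibad},}\\
  &= D_{\rho_u|\cdot|^{-y_0}}(\widehat{\pi})
  && \text{by Proposition~\ref{prop:derivativedual},}
\end{align*}
together with injectivity of highest derivatives on irreducibles, the induction applying because $l(D_{\rho_u|\cdot|^{y_0}}(\m))<N$; the $L([-1,0]_{\rho_u})$ case is identical with $D_{Z([0,1]_{\rho_u})}$ on the left. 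The auxiliary identities $\AD\circ D=D\circ\AD$ and $\AD\circ S^{(1)}=S^{(1)}\circ\AD$ on multisegments of length $<N$, used on the way, follow directly from the induction hypothesis and Proposition~\ref{prop:derivativedual}, without the detour needed in the good case (Lemmas~\ref{lem:ADcommDer}, \ref{lem:ADcommSoc}).

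The commutation identities above will be the main obstacle. As in Section~\ref{sec:proofgood}, I would establish them by a case analysis organised by the position of $e(\Delta_l)$---the end of the last segment of the initial sequence---relative to $y_0$ and $-y_0$, splitting further according to whether $\rho_u$ is of the same type as $G$ (so $\Z_\rho$ carries half--integers) or not (so it carries integers), and isolating the top cases $D_{\pm e_{\max}}$, exactly mirroring Subsections~\ref{sec:derneg}--\ref{sec:deremaxm}. Two features distinguish the bad case. First, a segment $\Delta$ and its dual $\Delta^\vee$ lie on the same line, so $D_{\rho_u|\cdot|^{y_0}}$---which modifies ends near $y_0$ and beginnings near $-y_0$ simultaneously---interacts with condition~(3) of the bad--case algorithm (which governs when both $\Delta$ and $\Delta^\vee$ enter the initial sequence) in a two--sided way that has no good--parity analogue. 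Second, one must propagate the parity of the multiplicities of centered segments through both operations in place of the sign bookkeeping of Section~\ref{sec:proofgood}. The required comparison between the initial sequence of $\m$ and that of $D_{\rho_u|\cdot|^{y_0}}(\m)$---the analogue of Lemmas~\ref{lem:sizem1} and~\ref{lem:sequencederneg}---is subtler than the order $\le$ alone would suggest (cf.\ Example~\ref{ex:badpar0101}) and will rest on Proposition~\ref{prop:longueurbad}, already established for exactly this purpose. I expect this case analysis to be the bulk of the section.
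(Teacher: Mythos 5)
Your proposal is correct and follows essentially the same route as the paper: induction on $N$, reduction to commutation of $\AD$ with a well-chosen highest derivative (Lemmas~\ref{lem:commDerthmbad}, \ref{lem:ADcommDerbad}, \ref{lem:ADcommSocbad}), a case analysis on $e(\Delta_l)$ relative to $y_0$ split by the type of $\rho_u$ (Sections~\ref{sec:dernegbp}--\ref{sec:proofbad}), and the bad-parity-specific role of Proposition~\ref{prop:longueurbad}. The one point where the paper is lighter than you anticipate is the reduced case: there is no need for a bad-parity analogue of Lemmas~\ref{lem:ADmetemp}--\ref{lem:ADdualtemp}, since a reduced $\m \in \Symm_\rho^\varepsilon(G)$ is simply $n[0,0]_{\rho_u}$ with $n$ even (or $\m=0$ in the same-type case), for which $\AD(\m)=\m$ and $\widehat{\pi}=\pi$ are immediate.
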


We prove Theorem \ref{thm:ADdualinductionbad} by induction of $N$. The case $N=0$ is trivial. Let $N \in \NN^{*}$. 

\begin{hypo}
  \label{hyp:recurrencebad}
We assume that Theorem \ref{thm:ADdualinductionbad} is true for all $N' < N$
\end{hypo}
Until the end of the section, we will assume that Hypothesis \ref{hyp:recurrencebad} is true. We want to prove now that Theorem \ref{thm:ADdualinductionbad} is true for $N$. To do that, we will prove that the algorithm $\AD$ commutes with the derivatives.

\begin{lem}
  \label{lem:commDerthmbad}
  We assume that for all non-reduced $\m \in \Symm_{\rho}^{\varepsilon}(G)$ with $l(\m) = N$, we have  either 
  \begin{enumerate}
    \item there exists $x \neq 0$, such that $\m$ is not $\rho_u|\cdot|^{x}$-reduced and $\AD(D_{\rho_u|\cdot|^{x}}(\m))=D_{\rho_u|\cdot|^{-x}}(\AD(\m))$
    \item or, if it is defined, $\m$ is not $L([-1,0]_{\rho_u})$-reduced and $\AD(D_{L([-1,0]_{\rho_u})}(\m))=D_{Z([0,1]_{\rho_u})}(\AD(\m))$.
  \end{enumerate}
  Then Theorem \ref{thm:ADdualinductionbad} is true for $N$.
\end{lem}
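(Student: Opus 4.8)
The plan is to follow the pattern of the good‑parity Lemma~\ref{lem:commDerthm}, which is in fact lighter here: every element of $\Symm_\rho^\varepsilon(G)$ carries only trivial signs, so there is no need to pass through a "$+$"-version of the space nor to track a sign parameter. Take $\pi=L(\m)$ with $\m\in\Symm_\rho^\varepsilon(G)$ and $l(\m)=N$ as in Theorem~\ref{thm:ADdualinductionbad}, and split into the reduced and the non‑reduced case. If $\m$ is reduced, I would verify $\widehat\pi\simeq L(\AD(\m))$ by a direct computation, the bad‑parity analogue of Lemmas~\ref{lem:ADmetemp} and~\ref{lem:ADtemprho}: since $\rho$ is bad, a reduced $\m$ supported on $\Z_\rho$ is forced to consist only of the centered segment $[0,0]_{\rho_u}$ (with even multiplicity), the bad‑parity algorithm fixes such an $\m$ by an immediate induction (at each step $\m_1=[0,0]_{\rho_u}+[0,0]_{\rho_u}$ and $\m^{\#}$ has two fewer copies of $[0,0]_{\rho_u}$), while $L(\m)$ is an iterated parabolic induction of the self‑dual supercuspidal $\rho_u$ from a supercuspidal representation of $G$ and is therefore fixed by the Aubert--Zelevinsky involution; the two descriptions coincide.

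Now suppose $\m$ is not reduced. By the hypothesis of the lemma there is either some $x\neq0$ with $\m$ not $\rho_u|\cdot|^{x}$-reduced and $\AD(D_{\rho_u|\cdot|^{x}}(\m))=D_{\rho_u|\cdot|^{-x}}(\AD(\m))$, or $\m$ is not $L([-1,0]_{\rho_u})$-reduced (hence in particular $\rho_u|\cdot|^{-1}$-reduced) and $\AD(D_{L([-1,0]_{\rho_u})}(\m))=D_{Z([0,1]_{\rho_u})}(\AD(\m))$. I treat the first case; the second is identical after replacing $(D_{\rho_u|\cdot|^{x}},D_{\rho_u|\cdot|^{-x}})$ by $(D_{L([-1,0]_{\rho_u})},D_{Z([0,1]_{\rho_u})})$ and invoking Proposition~\ref{prop:derivativedual}(2) in place of (1). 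Since $\m$ is not $\rho_u|\cdot|^{x}$-reduced, the highest $\rho_u|\cdot|^{x}$-derivative removes at least one box, so $l(D_{\rho_u|\cdot|^{x}}(\m))<N$ and Hypothesis~\ref{hyp:recurrencebad} applies to it. Then one chains
\[
\begin{aligned}
D_{\rho_u|\cdot|^{-x}}\bigl(L(\AD(\m))\bigr)
 &= L\bigl(D_{\rho_u|\cdot|^{-x}}(\AD(\m))\bigr) && \text{by Proposition~\ref{prop:derpibad}}\\
 &= L\bigl(\AD(D_{\rho_u|\cdot|^{x}}(\m))\bigr) && \text{by the hypothesis}\\
 &= \widehat{L(D_{\rho_u|\cdot|^{x}}(\m))} && \text{by Hypothesis~\ref{hyp:recurrencebad}}\\
 &= \widehat{D_{\rho_u|\cdot|^{x}}(L(\m))} && \text{by Proposition~\ref{prop:derpibad}}\\
 &= D_{\rho_u|\cdot|^{-x}}(\widehat{\pi}) && \text{by Proposition~\ref{prop:derivativedual}(1).}
\end{aligned}
\]
Because $x\neq0$ the representation $\rho_u|\cdot|^{x}$ is not self‑dual, so its highest derivative is irreducible and admits $S_{\rho_u^\vee|\cdot|^{-x}}$ as a one‑sided inverse on representations with nonzero derivative; cancelling $D_{\rho_u|\cdot|^{-x}}$ yields $\widehat\pi\simeq L(\AD(\m))$, which is Theorem~\ref{thm:ADdualinductionbad} for $N$.

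I do not expect any serious obstacle inside this lemma: the substantive content is the \emph{hypothesis}, namely that $\AD$ commutes with the relevant derivatives in the bad parity case, and that is established in the remainder of Section~\ref{sec:proofbad}; that verification also has to show, as part of making the statement $D_{Z([0,1]_{\rho_u})}(\AD(\m))$ well‑posed, that $\AD(\m)$ is sufficiently reduced in the appropriate direction — the analogue of Lemma~\ref{lem:ad1reduced} — which will rely on Proposition~\ref{prop:longueurbad} to control the segments of $\AD(\m)$ at the extreme positions. Within the present lemma the only genuine points, beyond the formal chain above, are the reduced base case and the remark that the highest derivative strictly decreases the length, so that the inductive Hypothesis~\ref{hyp:recurrencebad} is applicable.
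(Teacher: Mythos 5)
Your proof is correct and takes essentially the same approach as the paper: you handle the reduced case $\m=n[0,0]_{\rho_u}$ ($n$ even) directly, then chain the five displayed equalities through Proposition~\ref{prop:derpibad}, the hypothesis of the lemma, Hypothesis~\ref{hyp:recurrencebad}, Proposition~\ref{prop:derpibad} again, and Proposition~\ref{prop:derivativedual}, and conclude by injectivity of $D_{\rho_u|\cdot|^{-x}}$. The only (minor) additions you make beyond the paper's proof are explicit justification that the highest derivative is injective because $\rho_u|\cdot|^x$ is non‑self‑dual, and the observation that well‑posedness of the $D_{Z([0,1]_{\rho_u})}$ case is deferred to the verification of the hypothesis; both are accurate but peripheral to this lemma.
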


\begin{proof}
  Let $\pi$ and $\m$ be as in Theorem \ref{thm:ADdualinductionbad}. If $\m$ is reduced, then $\m=n [0,0]_{\rho_u}$ for some even integer $n$. Then $\AD(\m)=\m$ and $\hat{\pi}=\pi$, which proves the theorem. Hence, we can assume that $\m$ is not reduced. Let us suppose that there exists $x \neq 0$ such that $\m$ is not $\rho_u|\cdot|^{x}$-reduced (the case $L([-1,0]_{\rho_u})$-reduced is treated similarly). By hypothesis, $\AD(D_{\rho_u|\cdot|^{x}}(\m))=D_{\rho_u|\cdot|^{-x}}(\AD(\m))$. Then we get
  \begin{align*}
    D_{\rho|\cdot|^{-x}}(L(\AD(\m))) & = L(D_{\rho|\cdot|^{-x}}(\AD(\m))) \text{ by Lemma \ref{prop:derpibad}}\\
 & = L(\AD(D_{\rho|\cdot|^{x}}(\m))) \\
 & = L(D_{\rho|\cdot|^{x}}(\m))^{\widehat{}} \text{ by Hypothesis \ref{hyp:recurrencebad}} \\
 & = D_{\rho|\cdot|^{x}}(L(\m))^{\widehat{}} \text{ by Lemma \ref{prop:derpibad}} \\
 & = D_{\rho|\cdot|^{-x}} (\hat{\pi}) \text{ by \cite[Prop. 3.9.]{AM}} \\
  \end{align*}
  
By the injectivity of $D_{\rho_u|\cdot|^{-x}}$, we get that $\hat{\pi}=L(\AD(\m))$.
\end{proof}

As in Section \ref{sec:proofgood}, the rest of this section is devoted to prove that the conditions $(1)$ or $(2)$ of the above Lemma are satisfied. We will need the following lemmas.

\begin{lem}
  \label{lem:ADcommDerbad}
  For all $\m \in \Symm_{\rho}^{\varepsilon}(G)$ such that $l(\m) < N$ and for all $x \neq 0$,
  \[
    \AD(D_{\rho_u|\cdot|^{x}}(\m))=D_{\rho_u|\cdot|^{-x}}(\AD(\m))
  \]
  and, if it is well-defined,
  \[
    \AD(D_{L([-1,0]_{\rho_u})}(\m))=D_{Z([0,1]_{\rho_u})}(\AD(\m)).
  \]
\end{lem}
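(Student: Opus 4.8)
The statement to prove is Lemma~\ref{lem:ADcommDerbad}: for every $\m \in \Symm_{\rho}^{\varepsilon}(G)$ of $\rho$-bad parity with $l(\m) < N$ and every $x \neq 0$, one has $\AD(D_{\rho_u|\cdot|^{x}}(\m)) = D_{\rho_u|\cdot|^{-x}}(\AD(\m))$, and similarly for the $L([-1,0]_{\rho_u})$--derivative. The plan is to reduce this to the already-established Aubert--Zelevinsky compatibility of derivatives (Proposition~\ref{prop:derivativedual}) together with the inductive hypothesis \ref{hyp:recurrencebad}. Since $l(\m) < N$, Hypothesis~\ref{hyp:recurrencebad} applies to $\m$, so $\widehat{L(\m)} \simeq L(\AD(\m))$. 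Writing $\pi = L(\m)$ and denoting by $k$ the integer with $D_{\rho_u|\cdot|^x}(\pi) = D_{\rho_u|\cdot|^x}^{(k)}(\pi)$ the highest $\rho_u|\cdot|^x$--derivative, Proposition~\ref{prop:derivativedual}(1) gives $\widehat{D_{\rho_u|\cdot|^x}^{(k)}(\pi)} = D_{(\rho_u|\cdot|^x)^\vee}^{(k)}(\widehat\pi) = D_{\rho_u|\cdot|^{-x}}^{(k)}(\widehat\pi)$, using that $\rho_u$ is self-dual so $(\rho_u|\cdot|^x)^\vee = \rho_u|\cdot|^{-x}$.

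Next I would translate this chain of equalities back to the level of symmetrical Langlands data via the dictionary $\pi = L(\m,\varepsilon) \leftrightarrow D_\rho^{(k)}(\pi) = L(D_\rho(\m,\varepsilon))$ provided by Proposition~\ref{prop:derpibad} (the analog of Proposition~\ref{prop:derpi} in the bad parity case). Concretely: $L(\AD(D_{\rho_u|\cdot|^x}(\m))) = L(D_{\rho_u|\cdot|^x}(\m))^{\widehat{}}$ by the inductive hypothesis applied to the multisegment $D_{\rho_u|\cdot|^x}(\m)$, which has strictly smaller length; this equals $D_{\rho_u|\cdot|^x}(L(\m))^{\widehat{}} = \widehat{D_{\rho_u|\cdot|^x}^{(k)}(\pi)}$ by Proposition~\ref{prop:derpibad}; which equals $D_{\rho_u|\cdot|^{-x}}^{(k)}(\widehat\pi)$ by Proposition~\ref{prop:derivativedual}(1); and finally $\widehat\pi = L(\AD(\m))$ by Hypothesis~\ref{hyp:recurrencebad}, so applying Proposition~\ref{prop:derpibad} once more on the right-hand side gives $D_{\rho_u|\cdot|^{-x}}^{(k)}(\widehat\pi) = L(D_{\rho_u|\cdot|^{-x}}(\AD(\m)))$. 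Comparing the two ends and invoking the injectivity of the classification map $\m' \mapsto L(\m')$ (or equivalently that $L$ is a bijection onto $\Irr^G$) yields $\AD(D_{\rho_u|\cdot|^x}(\m)) = D_{\rho_u|\cdot|^{-x}}(\AD(\m))$. For the $L([-1,0]_{\rho_u})$ case I would run the identical argument but with Proposition~\ref{prop:derivativedual}(2), which requires $\m$ to be $\rho_u|\cdot|^{-1}$--reduced for the statement to make sense — this is exactly the hypothesis encoded in the phrase ``if it is well-defined'' — and one uses that $\widehat{D_{L([-1,0]_{\rho_u})}^{(k)}(\pi)} = D_{Z([0,1]_{\rho_u})}^{(k)}(\widehat\pi)$.

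I expect this lemma to be routine given the machinery already in place; the argument is essentially a diagram chase identical in shape to the computation displayed in the proof of Lemma~\ref{lem:commDerthmbad}. The one point requiring a small amount of care is the bookkeeping of which integer $k$ is meant on each side: Proposition~\ref{prop:derivativedual} is stated for the \emph{highest} derivative, and one must check that the highest $\rho_u|\cdot|^x$--derivative of $\pi$ and the highest $\rho_u|\cdot|^{-x}$--derivative of $\widehat\pi$ occur in the same degree $k$, which is immediate from the equality $\widehat{D_{\rho_u|\cdot|^x}^{(k)}(\pi)} = D_{\rho_u|\cdot|^{-x}}^{(k)}(\widehat\pi)$ itself (nonvanishing is preserved by $\widehat{\phantom{x}}$, and maximality of $k$ transfers because $\widehat{D_{\rho_u|\cdot|^x}^{(k+1)}(\pi)} = D_{\rho_u|\cdot|^{-x}}^{(k+1)}(\widehat\pi)$ as well). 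A second minor subtlety is that the inductive hypothesis \ref{hyp:recurrencebad} must be applicable to $D_{\rho_u|\cdot|^x}(\m)$: this holds because taking a highest derivative strictly decreases the length whenever the representation is not $\rho_u|\cdot|^x$--reduced, and when it \emph{is} reduced the statement is trivial since then $D_{\rho_u|\cdot|^x}(\m) = \m$ forces $x$ not to be a relevant exponent. No genuine obstacle is anticipated; the substance of the bad-parity proof lies in the later sections establishing the commutation with derivatives for multisegments of length exactly $N$, not in this reduction step.
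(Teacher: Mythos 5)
Your argument is correct and is in essence the same one the paper uses: the published proof of this lemma is the one-line statement ``It follows from Proposition~\ref{prop:derivativedual},'' and your chain of equalities via Propositions~\ref{prop:derpibad} and~\ref{prop:derivativedual} together with Hypothesis~\ref{hyp:recurrencebad} (applied to both $\m$ and $D_{\rho_u|\cdot|^x}(\m)$) and injectivity of the Langlands classification map is precisely the content left implicit in that line.
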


\begin{proof}
  It follows from Proposition \ref{prop:derivativedual}.
\end{proof}

\begin{lem}
  \label{lem:ADcommSocbad}
  For all $\m \in \Symm_{\rho}^{\varepsilon}(G)$ such that $l(\m) < N - 2$ and for all $x \neq 0$,
  \[
    \AD(S_{\rho_u|\cdot|^{x}}^{(1)}(\m))=S_{\rho_u|\cdot|^{-x}}^{(1)}(\AD(\m))
  \]
  and, if it is well-defined,
  \[
    \AD(S_{L([-1,0]_{\rho_u})}^{(1)}(\m))=S_{Z([0,1]_{\rho_u})}^{(1)}(\AD(\m)).
  \]
\end{lem}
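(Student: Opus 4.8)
The plan is to follow the proof of Lemma~\ref{lem:ADcommDerbad} essentially verbatim, replacing the input Proposition~\ref{prop:derivativedual} (compatibility of the \emph{highest derivative} with Aubert duality) by its \emph{socle} counterpart \cite[Thm. 31 (4)]{Bern}, exactly as was done to pass from Lemma~\ref{lem:ADcommDer} to Lemma~\ref{lem:ADcommSoc} in the good-parity case. What makes this run smoothly is that, since $\rho_u$ is self-dual unitary and $x\neq 0$, the representations $\rho_u|\cdot|^{x}$, $L([-1,0]_{\rho_u})$ and $Z([0,1]_{\rho_u})$ are all non-self-dual (for instance $(\rho_u|\cdot|^{x})^{\vee}=\rho_u|\cdot|^{-x}$ and $L([-1,0]_{\rho_u})^{\vee}=L([0,1]_{\rho_u})$); hence the operators $S^{(1)}_{\rho_u|\cdot|^{x}}$ and, when $\m$ is $\rho_u|\cdot|^{-1}$-reduced, $S^{(1)}_{L([-1,0]_{\rho_u})}$ produce irreducible representations, are one-sided inverses of the corresponding highest derivatives (Sections~\ref{other-derivatives} and \ref{sec:AM}), and therefore increase the length of a multisegment by exactly one. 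Moreover \cite[Thm. 31 (4)]{Bern} gives $\widehat{S^{(1)}_{\rho}(\pi)}=S^{(1)}_{\rho^{\vee}}(\hat{\pi})$ for every $\pi\in\Irr^{G}$ and every non-self-dual $\rho$ (and the analogous identity with $L([-1,0]_{\rho_u})$, $Z([0,1]_{\rho_u})$ in place of $\rho$, $\rho^{\vee}$, whenever the left-hand side is defined).

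Concretely, fix $\m\in\Symm_\rho^{\varepsilon}(G)$ with $l(\m)<N-2$ and put $\pi=L(\m)$, $\n=S^{(1)}_{\rho_u|\cdot|^{x}}(\m)$, so that $l(\n)=l(\m)+1\le N-2<N$ and $l(\m)<N$. Applying Hypothesis~\ref{hyp:recurrencebad} to $L(\n)$ and to $\pi$ yields $\widehat{L(\n)}=L(\AD(\n))$ and $\hat{\pi}=L(\AD(\m))$. By construction the operator $S^{(1)}_{\rho_u|\cdot|^{x}}$ on $\Symm^{\varepsilon}(G)$ (defined, via \cite[Prop. 6.1.]{AM}, \cite[Thm. 7.1.]{AM}, \cite[Thm. 7.4.]{AM}, so as to match the representation-theoretic one) satisfies $L(\n)=S^{(1)}_{\rho_u|\cdot|^{x}}(\pi)$ and $L(S^{(1)}_{\rho_u|\cdot|^{-x}}(\AD(\m)))=S^{(1)}_{\rho_u|\cdot|^{-x}}(\hat{\pi})$. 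Chaining these with the socle--duality identity above,
\[
  L(\AD(S^{(1)}_{\rho_u|\cdot|^{x}}(\m)))=\widehat{S^{(1)}_{\rho_u|\cdot|^{x}}(\pi)}=S^{(1)}_{\rho_u|\cdot|^{-x}}(\hat{\pi})=L(S^{(1)}_{\rho_u|\cdot|^{-x}}(\AD(\m))),
\]
and the injectivity of $\m\mapsto L(\m)$ on $\Symm^{\varepsilon}(G)$ gives the first identity of the lemma. The $L([-1,0]_{\rho_u})$/$Z([0,1]_{\rho_u})$ statement is obtained in the same way, the only extra observation being that the $\rho_u|\cdot|^{-1}$-reducedness needed for the left-hand side to be meaningful is precisely the hypothesis under which the identity is asserted.

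The argument is essentially formal once the cited inputs are in place; in contrast to the good-parity Lemma~\ref{lem:ADcommDer}, no sign-correction device is needed, because in the bad-parity setting every element of $\Symm_\rho^{\varepsilon}(G)$ carries only trivial signs, so its sign product is automatically $1$ and Hypothesis~\ref{hyp:recurrencebad} applies directly. Accordingly, the only points requiring genuine care are the bookkeeping ones: checking that all multisegments fed into Hypothesis~\ref{hyp:recurrencebad} have length $<N$ (immediate from $l(\m)<N-2$ and the fact that $S^{(1)}$ adds one segment), matching up the various reducedness conditions in the $L([-1,0]_{\rho_u})$ case so that every socle operator invoked is defined, and verifying that \cite[Thm. 31 (4)]{Bern} indeed yields the socle--duality identity in the exact form $\widehat{S^{(1)}_{\rho}(\pi)}=S^{(1)}_{\rho^{\vee}}(\hat{\pi})$ used above — this last being the step I expect to be the main (though still minor) obstacle.
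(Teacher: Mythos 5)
Your argument follows exactly the paper's (one-line) proof: cite \cite[Thm.~31(4)]{Bern} for the compatibility of socles with the Aubert--Zelevinsky involution, apply Hypothesis~\ref{hyp:recurrencebad} to both $\m$ and $S^{(1)}(\m)$ using that both lengths are below $N$, translate via Proposition~\ref{prop:derpibad} (and its socle analogue) between multisegments and representations, and conclude by injectivity of $\m\mapsto L(\m)$. The structure of your chain of equalities is exactly right and matches the spirit of Lemma~\ref{lem:ADcommSoc} in the good-parity case.

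One small bookkeeping slip worth flagging: you write that $S^{(1)}_{\rho_u|\cdot|^{x}}$ ``increases the length of a multisegment by exactly one'' and compute $l(\n)=l(\m)+1$. Recall that $l(\m)$ denotes the \emph{total} length $\sum_i l(\Delta_i)$ of the symmetric multisegment, and that $S^{(1)}_{\rho_u|\cdot|^{x}}$ adjusts both a segment and its symmetric counterpart; so in fact $l(\n)=l(\m)+2$. This is harmless here, since $l(\m)<N-2$ still gives $l(\n)<N$. But $S^{(1)}_{L([-1,0]_{\rho_u})}$ increases $l$ by $4$ (it attaches $L([-1,0]_{\rho_u})$ of degree $2d$ and, dually, a copy on the other side), so under the stated hypothesis $l(\m)<N-2$ one only gets $l(S^{(1)}_{L([-1,0]_{\rho_u})}(\m))\le N+1$, which is not enough to invoke the induction hypothesis on that element as your proof does. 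This imprecision is present in the paper's statement as well (the bound there should read $l(\m)<N-4$ for the second identity, or one can observe that at the sole point of use in Section~\ref{sec:derl01bp} the argument is applied to $D_{L([-1,0])}(\m^{\#})$ which already satisfies the stronger bound), so it is not a gap you introduced; but a careful write-up should note the $+2$ versus $+4$ distinction rather than claiming ``increase by one''.
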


\begin{proof}
  The proof follows from \cite[Thm. 31 (4)]{Bern}.
\end{proof}

The remainder of this section follows the structure of the proof in Section \ref{sec:proofgood}. The algorithm is simpler in the bad parity case, as already observed in this subsection. Since all the proofs are very similar, we will only provide a sketch. To simplify the notations, until the end of Section \ref{sec:proofbad}, we will write all the segments with respect to $\rho_u$ and we will omit $\rho$ and $\rho_u$ in the notations. That is $\AD := \AD_\rho$, $[x,y]:=[x,y]_{\rho_u}$, $D_x := D_{\rho_u|\cdot|^{x}}$, $D_{Z([0,1])}:=D_{Z([0,1]_{\rho_u})}$ and $D_{L([-1,0])}:=D_{L([-1,0]_{\rho_u})}$. We will also say that $\m$ is $x$-reduced if it is $\rho_u|\cdot|^{x}$-reduced, and similarly for $L([-1,0])$-reduced and $Z([-1,0])$-reduced.

\subsection{The case $e_{\max} \le 1$}

In this section we assume that $e_{\max} \le 1$. The goal is to compute explicitly $\AD(\m)$.

\bigskip

We start with the easiest case which is when $\rho$ is of the same type as $G$. Then $\m$ is of the form
\[
  \m = c [-1/2,1/2] + n ([-1/2,-1/2] + [1/2,1/2])
\]
with $c,n \in \NN$ and $c$ even.

Let $\m'=\AD(\m)$. The maximum of the coefficients of $\m'$ is also smaller than $1$, so $\m'$ is of the form as above. We denote by $c',n' \in \NN$ the constants relative to $\m'$. A direct computation shows that:
\begin{prop}
  The dual $\AD(\m)=\m'$ is given by the following formula.
  \begin{enumerate}
      \item If $n$ is even, then $c'=n$ and $n'=c$.
      \item If $n$ is odd, then $c'=n-1$ and $n'=c+1$.
 \end{enumerate}
\end{prop}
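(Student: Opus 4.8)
The plan is to prove this by induction on $c+n$, using the recursive definition $\AD(\m) = \m_1 + \AD(\m^{\#})$. First I would carry out the base-case and bookkeeping observations: since all segments in $\m$ have end $\leq 1$ and all produced segments $\m_1$ have end $\leq 1$ as well, the dual stays within the class of multisegments of the form $c'[-1/2,1/2] + n'([-1/2,-1/2]+[1/2,1/2])$, so only the two counters $c',n'$ need be tracked. The key is then to compute, in each case, the initial sequence $\Delta_1 \succeq \cdots \succeq \Delta_l$ of the algorithm, the segment $\m_1$, and the residual multisegment $\m^{\#}$, and to check the claimed formula is consistent with one recursion step.

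The main computation I would organize as a short case analysis on $n$ and $c$, exactly as in the analogous good-parity Proposition \ref{prop:ADeunnt} and in Examples \ref{ex:badpar01} and \ref{ex:badpar0101}. When $n \geq 1$: the biggest segment with maximal end is $\Delta_1 = [1/2,1/2]$, and since $e_{\max} = 1/2$ the algorithm stops immediately (in the bad case there is no sign condition allowing a continuation from $[1/2,1/2]$), so $l=1$, $\m_1 = [-1/2,1/2]$, and $\m^{\#} = c[-1/2,1/2] + (n-1)([-1/2,-1/2]+[1/2,1/2])$. Thus one step turns $(c,n) \mapsto \m_1 + \AD(c, n-1)$, incrementing the ``$c$-count'' of the output by one while recursing on $(c,n-1)$. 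When $n = 0$: then $\Delta_1 = [-1/2,1/2]$ and, because condition (3) of Section~\ref{sec:defbad} forces $m_{\m}([-1/2,1/2]) \geq 2$ for a segment to re-enter the sequence paired with its dual, the algorithm again stops with $l=1$; here $\m_1 = [-1/2,-1/2]+[1/2,1/2]$ and $\m^{\#} = (c-1)[-1/2,1/2]$, so one step turns $(c,0) \mapsto ([-1/2,-1/2]+[1/2,1/2]) + \AD(c-1, 0)$. Iterating the $n=0$ branch produces $c$ copies of $[-1/2,-1/2]+[1/2,1/2]$ and terminates, giving $\AD(c,0) = 0\cdot[-1/2,1/2] + c([-1/2,-1/2]+[1/2,1/2])$; iterating the $n\geq 1$ branch $n$ times peels off $n$ copies of $[-1/2,1/2]$ and reduces to $\AD(c,0)$.

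Assembling: $\AD(c,n) = n[-1/2,1/2] + c([-1/2,-1/2]+[1/2,1/2])$, i.e.\ $(c',n') = (n,c)$. This matches the claimed formula when $n$ is even. For $n$ odd I would note that the hypothesis $c$ even is preserved only in a parity-twisted sense: after peeling off segments one must re-examine whether an odd leftover centered segment $[-1/2,1/2]$ can appear, and the correct accounting (as in Example \ref{ex:badpar01}) shows that one of the would-be $[-1/2,1/2]$'s is instead forced into the tempered block as $[1/2,1/2]+[-1/2,-1/2]$; this is exactly the $+1$ correction in $n'$ and the $-1$ in $c'$. I expect the main obstacle to be precisely this last parity subtlety — verifying carefully that when $n$ is odd the algorithm produces one extra $[1/2,1/2]+[-1/2,-1/2]$ pair and one fewer $[-1/2,1/2]$, which amounts to a bespoke check of the stopping condition and of the multiplicity-parity constraint (3) in the definition of $\AD_\rho$ for bad $\rho$, rather than any deep difficulty.
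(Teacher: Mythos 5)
Your overall strategy (induction via the recursive step $\AD(\m) = \m_1 + \AD(\m^{\#})$, tracking only the two counters) is the right one, but the core case analysis of the algorithm is wrong, and the errors are not cosmetic. First, when $n \ge 2$ the initial sequence does \emph{not} stop at $l=1$: the candidate $\Delta_2 = [-1/2,-1/2]$ satisfies $[-1/2,-1/2] \le [1/2,1/2]$ and $e(\Delta_2)=e(\Delta_1)-1$, and condition (3) of Section~\ref{sec:defbad} \emph{permits} it precisely because $\Delta_2^\vee = [1/2,1/2]=\Delta_1$ and $m_{\m}([-1/2,-1/2]) = n \ge 2$; condition (3) is a multiplicity constraint, not a stopping rule (you read it as the opposite of what it says). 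So for $n\ge 2$ one has $l=2$, $\m_1 = 2[-1/2,1/2]$ and $\m^{\#}$ is $(c,n-2)$, not $(c,n-1)$. Second, even when $l=1$ with $\Delta_1=[1/2,1/2]$, the bad-parity formula gives $\m_1 = [e(\Delta_l),e(\Delta_1)]_{\rho_u} + [-e(\Delta_1),-e(\Delta_l)]_{\rho_u} = [1/2,1/2]+[-1/2,-1/2]$, never the single centered segment $[-1/2,1/2]$; the centered output $[-e(\Delta_1),e(\Delta_1)]$ only occurs in the good case when $\varepsilon_0 = -1$, a mechanism that does not exist in the bad case. Third, for $n=0$ (so $\Delta_1=[-1/2,1/2]$ with $\Delta_1 = \Delta_1^\vee$), the formula removes \emph{two} copies of $[-1/2,1/2]$ and creates $\Delta_1^- = [-1/2,-1/2]$ together with ${}^-\Delta_1^\vee = [1/2,1/2]$, so $\m^{\#}$ is $(c-2,1)$, not $(c-1,0)$.

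These mistakes are fatal to the argument as written: with your (incorrect) transitions the recursion does not produce the claimed result, and you fall back on an unsubstantiated ``the correct accounting shows'' to absorb the odd-$n$ case. With the correct transitions — namely $(c,n)\mapsto 2[-1/2,1/2] + \AD(c,n-2)$ for $n\ge 2$, $(c,1)\mapsto [1/2,1/2]+[-1/2,-1/2] + \AD(c,0)$, and $(c,0)\mapsto [1/2,1/2]+[-1/2,-1/2] + \AD(c-2,1)$ — the parity of $n$ determines whether the peeling process terminates at $(c,0)$ or passes through $(c,1)$, and this is exactly what produces $(c',n')=(n,c)$ for $n$ even and $(c',n')=(n-1,c+1)$ for $n$ odd; no hand-waving is then needed. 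You should redo the step computation carefully; the structure of the induction is fine, but every branch of the one-step analysis needs to be replaced.
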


Looking at the formulas for the derivatives we see that

  \begin{prop}
    \begin{enumerate}
      \item If $n \neq 0$, then $\AD(D_{-1/2}(\m))=D_{1/2}(\AD(\m))$.
      \item If $n = 0$, then $\AD(D_{1/2}(\m))=D_{-1/2}(\AD(\m))$.
    \end{enumerate}   
  \end{prop}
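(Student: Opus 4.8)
The plan is to prove both identities by a completely explicit finite computation in the two non-negative integer parameters $c$ (even) and $n$ attached to $\m$, using the preceding Proposition — which gives $\AD(\m)$ explicitly in terms of $(c,n)$ — together with the explicit formula for the highest $\rho_u|\cdot|^{\pm 1/2}$-derivative from Section~\ref{sec:expder} (Definition~\ref{def:derbap}). Since $e_{\max}\le 1$, every multisegment that appears along the way is of the same shape $c''[-1/2,1/2]_{\rho_u} + n''([-1/2,-1/2]_{\rho_u}+[1/2,1/2]_{\rho_u})$ with $c''$ even, so the whole argument reduces to tracking how the pair $(c'',n'')$ transforms under the four maps $D_{\pm 1/2}$ and $\AD$.

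For part (1), where $n\neq 0$, I would first compute $D_{-1/2}(\m)$: the only type of segment of $\m$ ending at $-1/2$ is $[-1/2,-1/2]_{\rho_u}$ and $\m$ has no segment ending at $-3/2$, so the matching relation of Definition~\ref{def:derbap} for $x=-1/2$ is empty and the formula is immediate, yielding a multisegment again of the standard shape. One then applies the preceding Proposition to $D_{-1/2}(\m)$ to obtain $\AD(D_{-1/2}(\m))$. On the other side, the preceding Proposition gives $\AD(\m)$ (splitting into the cases $n$ even and $n$ odd), and one computes $D_{1/2}(\AD(\m))$ from Definition~\ref{def:derbap} with $x=1/2$; here the auxiliary count $t$ of that definition, and the ``$t$ odd'' correction, come into play. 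Matching the two outcomes in each parity case finishes part (1).

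Part (2), where $n=0$ and hence $\m=c[-1/2,1/2]_{\rho_u}$, is handled the same way with the roles of $D_{1/2}$ and $D_{-1/2}$ exchanged: $\m$ is already $\rho_u|\cdot|^{-1/2}$-reduced, one computes $D_{1/2}(\m)$ directly from Definition~\ref{def:derbap} and applies the preceding Proposition to it, and one compares the result with $D_{-1/2}(\AD(\m))$, where $\AD(\m)=c([-1/2,-1/2]_{\rho_u}+[1/2,1/2]_{\rho_u})$ by the preceding Proposition.

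The only genuinely delicate point is the interplay of the parity corrections on the two sides. In the bad-parity derivative formula (Definition~\ref{def:derbap}), when $t$ is odd one forbids a segment from ``protecting'' its own contragredient and then applies the correction of case~(1) of that definition; on the $\AD$ side the shift from $(c',n')=(n,c)$ to $(c',n')=(n-1,c+1)$ occurs exactly when $n$ is odd, subject to the constraint that centered segments on a bad line always carry even multiplicity. I expect the main obstacle to be precisely this bookkeeping: one must verify case by case — including the degenerate cases $c=0$ and $n=0$ — that these parity effects cancel so that the two sides end up parametrized by the same pair, and this is where any sign- or parity-type slip would creep in. Everything else is mechanical substitution into the formulas recalled in Section~\ref{sec:expder}.
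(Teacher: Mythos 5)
Your plan is exactly the paper's own treatment: the proposition is stated right after the sentence ``Looking at the formulas for the derivatives we see that'', i.e.\ it is a finite case analysis in $(c,n)$ that combines the explicit formula for $\AD(\m)$ from the preceding proposition with the bad-parity derivative formula of Definition~\ref{def:derbap}. The parity bookkeeping you single out is where all the content lies, and it does close: the obstruction to a protecting matching coming from ``a segment cannot protect its own symmetric'' (no fixed-point-free involution on an odd set of dual pairs) is precisely what strips off the extra pair $[-1/2,-1/2]_{\rho_u}+[1/2,1/2]_{\rho_u}$ in the odd cases, making the two sides agree.
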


Now, let us assume that $\rho$ is of the opposite type as $G$. Then $\m$ has the following form
\[
  \m = c_0 [0,0] + c_1 [-1,1] + t ([-1,0] + [0,1]) + n ([-1,-1] + [1,1])
\]
with $c_0,c_1,t,n \in \NN$, and $c_0,c_1$ even.

Let $\m'=\AD(\m)$. The maximum of the coefficients of $\m'$ is also smaller than $1$, so $\m'$ is of the form as above. We denote by $c'_0,c'_1,t',n' \in \NN$ the constants relative to $\m'$. A direct computation shows that:

\begin{prop}
  The dual $\AD(\m)=\m'$ is given by the following formula.
  \begin{enumerate}
      \item If $n > c_0$; then $c'_0=c_1$, $c'_1=c_0$, $t'=t$ and $n'=n-c_0+c_1$.
      \item If $n \le c_0$, $n$ is even and $t$ is even; then $c'_0=c_0 - n +c_1$, $c'_1=n$, $t'=t$ and $n'=c_1$.
      \item If $n \le c_0$, $n$ is even and $t$ is odd; then $c'_0=c_0 - n + c_1 + 2$, $c'_1=n$, $t'=t-1$ and $n'=c_1 + 1$.
      \item If $n \le c_0$, $n$ is odd and $t$ is even; then $c'_0=c_0 - n -1 + c_1 $, $c'_1=n-1$, $t'=t+1$ and $n'=c_1$.
      \item If $n \le c_0$, $n$ is odd and $t$ is odd; then $c'_0=c_0 - n + c_1 + 1$, $c'_1=n-1$, $t'=t$ and $n'=c_1+1$.
 \end{enumerate}
\end{prop}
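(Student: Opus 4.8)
The statement to be proved is the explicit formula for $\AD(\m)=\m'$ in the bad-parity case with $e_{\max}\le 1$ and $\rho$ of the opposite type as $G$, where $\m = c_0 [0,0] + c_1 [-1,1] + t ([-1,0] + [0,1]) + n ([-1,-1] + [1,1])$ with $c_0,c_1$ even, together with the accompanying commutativity statement $\AD\circ D_{x}=D_{-x}\circ\AD$ for the appropriate $x$. The proof is a direct finite computation following exactly the recursion $\AD(\m)=\m_1+\AD(\m^{\#})$ from Subsection~\ref{sec:defbad}, in the spirit of the good-parity analogue Proposition~\ref{prop:ADeun}. The plan is to run one step of the algorithm in each of the finitely many configurations of $(c_0,c_1,t,n)$ (modulo parities), record $\m_1$ and $\m^{\#}$, observe that $\m^{\#}$ is again of the same shape but with strictly smaller length, and then close by induction on $l(\m)$; the base case $\m=0$ is trivial.

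\textbf{Key steps, in order.} First I would identify the initial segment $\Delta_1$ in each configuration: since $e_{\max}\le 1$, the only segments are among $[-1,-1]$, $[1,1]$, $[-1,0]$, $[0,1]$, $[-1,1]$, $[0,0]$, and the order $\le$ forces $\Delta_1=[1,1]$ whenever $n\neq 0$; otherwise $\Delta_1=[0,1]$ if $t\neq 0$; otherwise $\Delta_1$ is a centered segment $[-1,1]$ or $[0,0]$. Second, for each choice of $\Delta_1$ I would determine the full initial sequence $\Delta_1\succeq\cdots\succeq\Delta_l$ using conditions (1)--(3) of Subsection~\ref{sec:defbad}, in particular keeping track of condition (3) (the ``$m_{\m}(\Delta_j)\ge 2$ if $\Delta_j^\vee=\Delta_i$ for some earlier $i$'') — this is where the centered-segment parities enter and where the multisegment $\m^{\#}$ may acquire the correction term of Definition~\ref{def:derbap}-type flavor, although here the algorithm is $\AD$, not a derivative, so the relevant bookkeeping is just which of $[-1,-1]+[1,1]$, $[-1,0]+[0,1]$, $[-1,1]$, $[0,0]$ are created or suppressed. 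Third, I would read off $\m_1$ and $\m^{\#}$ explicitly: in the generic case $\m_1=[-1,1]$, while when the two ``green paths'' meet in the middle $\m_1$ is a centered segment — but with $e_{\max}\le 1$ one checks $\m_1$ is never a centered segment of the form producing a sign issue because $\rho$ is bad and all signs are trivial, so $\m_1\in\{[-1,-1]+[1,1],[-1,0]+[0,1],[-1,1]\}$ depending on whether $\Delta_l$ is $[-1,-1]$, $[-1,0]$, or $[0,0]/[-1,1]$. Fourth, apply the induction hypothesis to $\m^{\#}$ and assemble $\m'=\m_1+\AD(\m^{\#})$; the five cases in the statement correspond exactly to the combinations $n>c_0$ versus $n\le c_0$, and the parities of $n$ and $t$, which govern how many rounds of the recursion one runs before exhausting the $[1,1]$'s and then the $[0,1]$'s. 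Fifth and finally, having the closed formula for $\AD(\m)$, compare it with $D_{-x}$ applied to it using the explicit derivative formulas recalled in Section~\ref{sec:derbad} (Definition~\ref{def:derbap}) and with $\AD$ applied to $D_{x}(\m)$, to verify the commutation; for $\rho$ opposite type, $x=1$ when $n\neq 0$, $x$ corresponds to the $L([-1,0])$/$Z([0,1])$ derivative when $n=0$, $t\neq 0$, and $x=-1$ when $n=t=0$, exactly as in the good-parity proof.

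\textbf{Main obstacle.} The genuinely delicate point is the careful handling of condition (3) of Subsection~\ref{sec:defbad} together with the parity of the multiplicities of the centered segments $[0,0]$ and $[-1,1]$: these determine whether, in a given round, the algorithm produces $[-1,1]$ (generic) or instead $[-1,0]+[0,1]$ (paths meeting in the middle), and getting the parity bookkeeping wrong flips $t'$ by one and $c_1'$ by one, which is precisely the difference between cases (2)--(3) and (4)--(5) of the statement. So the bulk of the verification is an honest but routine case analysis; the ``hard part'' is organizing it so that the five stated cases emerge cleanly rather than as an unstructured enumeration, and then matching the resulting $\AD(\m)$ against Atobe--Mínguez's derivative formulas, which requires translating between the conventions of \cite{AM} and the symmetric-multisegment notation of Section~\ref{sec:expder}. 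Since $e_{\max}\le 1$ bounds the number of rounds by roughly $n+t$, there is no subtlety about termination, and the induction on $l(\m)$ goes through without difficulty.
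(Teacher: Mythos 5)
Your overall strategy is the paper's own: the paper proves this by ``a direct computation'' and you propose exactly that -- run the recursion $\AD(\m)=\m_1+\AD(\m^{\#})$ one step at a time, observe that $\m^{\#}$ has the same shape and smaller length, and close by induction. The organization into five cases via the parities of $n$ and $t$ (governed by condition (3) of the bad-parity algorithm) is also the right organizing principle.

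However, there is a concrete error in your description of $\m_1$ that would propagate into wrong numbers if the computation were carried out. In the \emph{bad} case, $\m_1$ is \emph{always} defined by $\m_1=[e(\Delta_l),e(\Delta_1)]_{\rho_u}+[-e(\Delta_1),-e(\Delta_l)]_{\rho_u}$, a sum of two segments; there is no single-centered-segment variant as in the good case. So when the ``paths meet in the middle'', i.e.\ when $e(\Delta_l)=-e(\Delta_1)$, you obtain $\m_1=2[-e(\Delta_1),e(\Delta_1)]$ with multiplicity two, not one. For $e_{\max}=1$ the correct correspondence is: $\Delta_l$ ending at $1$ (so $l=1$) gives $\m_1=[1,1]+[-1,-1]$; $\Delta_l$ ending at $0$ (so $l=2$) gives $\m_1=[0,1]+[-1,0]$; and $\Delta_l$ ending at $-1$ (so $l=3$, forced $\Delta_l=[-1,-1]$) gives $\m_1=2[-1,1]$. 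Your set $\{[-1,-1]+[1,1],\ [-1,0]+[0,1],\ [-1,1]\}$ is wrong in the last entry, and the labelling ``$\Delta_l=[-1,-1]\Rightarrow\m_1=[-1,-1]+[1,1]$'' is transposed: $\Delta_l=[-1,-1]$ yields $2[-1,1]$, while $\Delta_l$ a segment ending at $1$ (hence $l=1$, the only possibility for $\Delta_l=[-1,1]$, since consecutive ends strictly decrease) yields $[1,1]+[-1,-1]$. Writing a single $[-1,1]$ would also violate the constraint that centered segments carry even multiplicity in $\Symm_\rho^\varepsilon(G)$ for bad $\rho$, so the algorithm cannot possibly produce it. With this corrected, the induction does close and the five cases of the proposition emerge as you anticipate; the commutativity with derivatives is a separate proposition and not needed here.
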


We also check the commutativity with the derivative by an explicit computation.
\begin{prop}
  \begin{enumerate}
    \item If $n \neq 0$, then $\AD(D_{-1}(\m))=D_{1}(\AD(\m))$.
    \item If $n = 0$ and $t \neq 0$, then $\AD(D_{L([-1,0])}(\m))=D_{Z([0,1])}(\AD(\m))$.
    \item If $n,t = 0$, then $\AD(D_{1}(\m))=D_{-1}(\AD(\m))$.
  \end{enumerate}
\end{prop}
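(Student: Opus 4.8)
The plan is to verify each of the three identities by a direct computation, in the same spirit as the corresponding good‑parity statement. On one side we use the explicit closed form for $\AD(\m)$ given in the preceding proposition; on the other we use the explicit formulas for the highest derivatives recalled in Section~\ref{sec:expder}, namely Definition~\ref{def:derbap} for $D_{\rho_u|\cdot|^{\pm 1}}$, the transformation rules for $D_{L([-1,0]_{\rho_u})}$ recalled at the beginning of Section~\ref{sec:derl01}, and \cite[Prop. A.2]{AtobeSocle} for $D_{Z([0,1]_{\rho_u})}$. Because $e_{\max}\le 1$, every multisegment appearing on either side is again of the shape $c_0[0,0]+c_1[-1,1]+t([-1,0]+[0,1])+n([-1,-1]+[1,1])$ with $c_0,c_1$ even, so the identities amount to checking that the four integer parameters $(c_0,c_1,t,n)$ produced by the two sides coincide. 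Since all signs are trivial here, the sign‑dependent condition $(*)$ of Definition~\ref{def:der} never comes into play, and the correction clauses in the derivative formulas are governed purely by parities — this is precisely why the bad‑parity computation is lighter than the good‑parity one, and the whole verification runs parallel to the template in Section~\ref{sec:emax1}.

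For part~(1) I would first observe that $D_{-1}(\m)$ affects only the segments $[-1,-1]$ (those ending in $-1$) and their duals $[1,1]$; as there is no segment ending in $-2$, the best matching of Definition~\ref{def:derbap} is empty, no correction occurs, and one gets $D_{-1}(\m)=c_0[0,0]+c_1[-1,1]+t([-1,0]+[0,1])$, i.e.\ $n\mapsto 0$. Applying the $\AD$‑formula with $n=0$ then yields a short expression depending only on the parity of $t$. On the other side one computes $\AD(\m)$ from the preceding proposition — five cases according to whether $n>c_0$ and to the parities of $n$ and $t$ — and then applies $D_1$, which acts on the segments ending in $1$ according to Definition~\ref{def:derbap}, with the matching controlled by the order $\le$ and the exclusion of the pair $([0,1],[-1,0])$ when $t$ is odd. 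Running through the five branches and tracking how $D_1$ changes the parameters and their parities, one checks that the four outputs agree in each branch.

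For part~(2), with $n=0$ and $t\neq 0$, the derivative $D_{L([-1,0]_{\rho_u})}$ simply deletes all $t$ pairs $[-1,0]+[0,1]$ — the clauses $\Delta\mapsto\Delta^{--}$ and $\Delta\mapsto{}^{--}\Delta$ are vacuous since the only segments ending in, or beginning at, $0$ are $[0,0]$, $[-1,0]$, $[0,1]$ — so $D_{L([-1,0]_{\rho_u})}(\m)=c_0[0,0]+c_1[-1,1]$, whose $\AD$‑image is given by the formula with $n=t=0$. On the other side one computes $\AD(\m)$ and then applies $D_{Z([0,1]_{\rho_u})}$ via the case analysis of \cite[Prop. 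A.2]{AtobeSocle}, after matching its parameters $(s,t,m,\delta)$ with those of $\AD(\m)$ exactly as in the good‑parity computation of Section~\ref{sec:emax1}; going through the parity cases then gives the equality. Part~(3), with $n=t=0$, is the same kind of bookkeeping: from $\m=c_0[0,0]+c_1[-1,1]$ one finds, using the ${}^{-}\Delta^{-}$ clause of Definition~\ref{def:derbap} and the fact that the correction count $c=c_1$ is even, that $D_1(\m)=(c_0+c_1)[0,0]$; applying $\AD$ to it gives $(c_0+c_1)[0,0]$, while $\AD(\m)=(c_0+c_1)[0,0]+c_1([-1,-1]+[1,1])$ and $D_{-1}$ of that gives the same thing.

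The only real obstacle is organizational: one must enumerate the cross‑product of the five cases of the $\AD$‑formula with the several cases of the $Z([0,1]_{\rho_u})$‑derivative formula, and keep careful track of the parities of $t$ and $n$ and of whether the ``correction'' terms in the $\rho_u|\cdot|^{\pm 1}$‑derivatives fire, so that both sides are manipulated consistently in every branch. No ingredient beyond the explicit formulas already recalled is needed.
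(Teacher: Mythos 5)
Your proposal is correct and follows exactly the route the paper intends: the paper dispatches this proposition with the single line ``by an explicit computation,'' and that computation is precisely the one you describe — compute $D_{-1}(\m)$, $D_{L([-1,0])}(\m)$ or $D_{1}(\m)$ directly from the derivative formulas, apply the closed formula for $\AD$ from the preceding proposition, and compare with the corresponding derivative of $\AD(\m)$ branch by branch, exactly as in the good-parity template of Section~\ref{sec:emax1}. (One cosmetic point: the relevant definition here is Definition~\ref{def:derbap}, which has no sign condition $(*)$ at all — its correction clause is governed solely by the parity of $c$ — so your remark about the $(*)$ of Definition~\ref{def:der} never firing is moot rather than needed.)
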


\subsection{The negative derivative} \label{sec:dernegbp}

In this section, we assume that $e_{\max} > 1$ and there exists $y<0$, $y \neq -e_{\max}$ such that $\m$ is not $y$-reduced. 

\bigskip

We define $y_0 \in (1/2)\Z$ to be the smallest $y \in (1/2)\Z$ such that $y \neq -e_{\max}$ and $\m$ is not $y$-reduced. With our hypotheses on $\m$ necessarily $y_0 < 0$. 

\begin{prop}
  \label{prop:ADcommdernegbp}
   We have $\AD(D_{y_0}(\m))=D_{-y_0}(\AD(\m))$.
\end{prop}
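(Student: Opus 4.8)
The plan is to mirror, in the bad parity setting, the argument carried out in Section~\ref{sec:derneg} for the good parity case. The overall strategy is to compare the initial sequence of the algorithm for $(\m,D_{y_0}(\m))$ with that for $\m$ itself. Writing $(\tilde{\m}):=D_{y_0}(\m)$, and denoting by $\Delta_1,\cdots,\Delta_l$ the initial sequence for $\m$ and by $\tilde{\Delta}_1,\cdots,\tilde{\Delta}_{\tilde l}$ the one for $\tilde{\m}$, the key is to establish bad-parity analogues of Lemmas~\ref{lem:dermdieze}, \ref{lem:sequencederneg}, and \ref{lem:ADderneg}. The simplification here is that there are no signs to track and no labels $\clubsuit$ to keep account of; the only subtlety particular to the bad case is condition (3) in the definition of the initial sequence (Section~\ref{sec:defbad}), namely that a segment $\Delta$ and its symmetric $\Delta^\vee$ can both enter the sequence only when the multiplicity $m_{\m}(\Delta)\ge 2$.

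First I would give the explicit description of $D_{y_0}$ exactly as in Section~\ref{sec:derneg}: define $y_1$ (either $e_{\max}+1$ if $[e_{\max},e_{\max}]\notin\m$, or the smallest positive $y$ with $[y,y]\in\m$ and the multiplicity condition up the diagonal), so that $D_{y_0}$ removes the ends of all segments ending in $y_0$ and the beginnings of all segments starting in $-y_0$, with the usual exception of $m_{\m}([y_0-1,y_0-1])$ copies of $[y_0,y_0]$ and $[-y_0,-y_0]$ when $y_1\le -y_0+1$. I would then prove $e(\Delta_l)\ge y_0$ (the analogue of Lemma~\ref{lem:endx0}, using Lemma~\ref{lem:ADnotcentered} or rather its bad-parity replacement via Proposition~\ref{prop:longueurbad} — but note that in the bad case $\Delta_l=[-e_{\max},-e_{\max}]$ is ruled out exactly as before since otherwise the algorithm would have continued). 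Next come the four cases according to whether $e(\Delta_l)\ge y_0+2$, $=y_0$, $=y_0+1$ with $\varepsilon_0=1$, or $=y_0+1$ with $\varepsilon_0=-1$; in each I would show that $\tilde{\Delta}_i=\Delta_i$ except possibly where $b(\Delta_i)=-y_0$ (where $\tilde{\Delta}_i={}^{-}\Delta_i$) or when $e(\Delta_l)=y_0$ (where $\tilde l=l-1$), and correspondingly that $(\tilde{\m}_1)$ equals either $(\m_1)$ or $D_{-y_0}(\m_1)$, and $(\tilde{\m}^{\#})$ equals $D_{y_0}(\m^{\#})$ or $D^{\max-1}_{y_0}(\m^{\#})$.

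Once these structural lemmas are in place, the proposition follows by the same bookkeeping as in Proposition~\ref{prop:ADcommderneg}: unwind the recursion $\AD(\m)=(\m_1)+\AD(\m^{\#})$, apply Lemma~\ref{lem:ADcommDerbad} (and Lemma~\ref{lem:ADcommSocbad} in the $e(\Delta_l)=y_0+1$, $\varepsilon_0=1$ case) to move the derivative inside, and conclude with the bad-parity analogue of Lemma~\ref{lem:dersum} applied to $(\m_1)+D_{-y_0}(\AD(\m^{\#}))$, using that $-e(\Delta_1)$ is the minimal coefficient so that $\m_1$ plays the role of the ``outer'' piece on which the derivative does nothing. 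The one place that needs genuine care — and which I expect to be the main obstacle — is the tracking of condition (3) of the bad-parity initial sequence through the derivative operation, i.e.\ ensuring that when $\Delta$ and $\Delta^\vee$ both appear in the sequence for $\m$, the multiplicities in $\m^{\#}$ and in $\tilde{\m}$ remain large enough for the sequences to stay aligned; this is the analogue of the argument around $A_{y_0}^c$ and $i_j,i_j'$ in Lemma~\ref{lem:dermdieze}, but the symmetric-pair condition replaces the label discussion and must be verified directly from Lemma~\ref{lem:duauxalgo} and the structure of $\m^{\#}$. Apart from that, every step is a routine transcription of Section~\ref{sec:derneg} with the signs and labels erased.
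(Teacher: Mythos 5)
Your claim that $e(\Delta_l)\ge y_0$ holds in the bad parity case is false, and this is a genuine gap. In the good parity setting, Lemma~\ref{lem:endx0} rules out $\Delta_l=[-e_{\max},-e_{\max}]$ by invoking Lemma~\ref{lem:ADnotcentered}, which is a statement about the sign $\varepsilon_0$; there is no $\varepsilon_0$ in the bad parity algorithm, and the initial sequence can genuinely descend all the way to $[-e_{\max},-e_{\max}]$. For instance, if $\m = 2[e_{\max},e_{\max}]_\rho + \cdots + 2[-e_{\max},-e_{\max}]_\rho$ with all the appropriate intermediate segments, condition (3) of Section~\ref{sec:defbad} is satisfied ($m_\m(\Delta)\ge 2$ for the symmetric pair) and the sequence terminates at $\Delta_l=[-e_{\max},-e_{\max}]$, so $e(\Delta_l)=-e_{\max}<y_0$. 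Your stated reason for ruling this out --- ``the algorithm would have continued'' --- is incorrect: nothing in $\m$ has end below $-e_{\max}$. The paper's proof therefore has a fourth case $e(\Delta_l)<y_0$ (which forces $e(\Delta_l)=-e_{\max}$), with the conclusion $\tilde{\m}_1=\m_1$ and $\tilde{\m}^{\#}=D_{y_0}(\m^{\#})$; your case analysis misses it entirely.

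A second, related slip: your four cases are ``$e(\Delta_l)\ge y_0+2$, $=y_0$, $=y_0+1$ with $\varepsilon_0=1$, $=y_0+1$ with $\varepsilon_0=-1$'', but the bad parity algorithm carries no sign $\varepsilon_0$, so the two $\varepsilon_0$-subcases collapse to the single case $e(\Delta_l)=y_0+1$ (yielding $\tilde{\m}^{\#}=D^{\max-1}_{y_0}(\m^{\#})$). You have transcribed the good parity case breakdown verbatim rather than adapting it, and in doing so both imported a distinction that does not exist and dropped the one that is new. The rest of your outline --- the descriptions of $y_0$ and $D_{y_0}$, the comparison of $\tilde\Delta_i$ with $\Delta_i$, and the conclusion via Lemmas~\ref{lem:ADcommDerbad}, \ref{lem:ADcommSocbad} and \ref{lem:dersum} --- matches the paper.
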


\begin{proof}
  The proof is similar to what is done in Section \ref{sec:derneg} with some slight modifications.
  First, the formula for the negative derivative in the bad parity case is identical to the formula in the good parity case. So we have a similar description of $y_0$. 
  
  Let $\Delta_1, \cdots, \Delta_l$ be the initial sequence in the algorithm for $\m$.  This time we get that if $e(\Delta_{l}) < y_0$, then $e(\Delta_{l}) = -e_{\max}$. We denote by $\tilde{\m}:=D_{y_0}(\m)$ and by $\tilde{\Delta}_1, \cdots, \tilde{\Delta}_{\tilde{l}}$ the initial sequence in the algorithm for $\tilde{\m}$.
  \begin{enumerate}
    \item If $e(\Delta_{l})=y_0$, then $\tilde{l}=l-1$ and if not $\tilde{l}=l$.
    \item For $1 \le i \le \tilde{l}$, if $b(\Delta_i)=-y_0$ and $\Delta_i \neq [-y_0,-y_0]$ then $\tilde{\Delta}_i={}^{-}\Delta_i$, otherwise $\tilde{\Delta}_i=\Delta_i$.
\end{enumerate}
This gives us that
\begin{enumerate}
  \item If $e(\Delta_{l}) \ge y_0 + 2$, then $\tilde{\m}_1=\m_{1}$ and $\tilde{\m}^{\#}=D_{y_0}(\m^{\#})$.
  \item If $e(\Delta_{l}) = y_0 + 1$, then $\tilde{\m}_1=\m_{1}$ and $\tilde{\m}^{\#}=D^{\max - 1}_{y_0}(\m^{\#})$.  
  \item If $e(\Delta_{l}) = y_0$, then $\tilde{\m}_1=D_{-y_0}(\m_{1})$ and $\tilde{\m}^{\#}=D_{y_0}(\m^{\#})$.
  \item If $e(\Delta_{l}) < y_0$, then $\tilde{\m}_1=\m_{1}$ and $\tilde{\m}^{\#}=D_{y_0}(\m^{\#})$.
\end{enumerate}

We conclude with Lemmas \ref{lem:ADcommDerbad} and \ref{lem:dersum}.
\end{proof}

\subsection{The $L([-1,0])$-derivative} \label{sec:derl01bp}

In this section, we assume that $e_{\max} > 1$ and that for all $-e_{\max} < y < 0$, $\m$ is $y$-reduced. We also assume that $\m$ is not $L([-1,0])$-reduced. We want to prove $\AD(D_{L([-1,0])}(\m))=D_{Z([0,1])}(\AD(\m,))$.

Similarly to Lemma \ref{lem:ad1reduced}, the multisegment $\AD(\m)$ is $1$-reduced. 
\begin{prop}
    We have $\AD(D_{L([-1,0])}(\m))=D_{Z([0,1])}(\AD(\m))$.
\end{prop}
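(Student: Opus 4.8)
The plan is to follow exactly the template of Proposition~\ref{prop:ADcommder01} from the good parity case (Section~\ref{sec:derl01}), which in turn mirrors the proof of Proposition~\ref{prop:ADcommdernegbp} above. That is, I would analyze how the operators $D_{L([-1,0])}$ and $\AD$ interact on the level of the recursive decomposition $\AD(\m)=\m_1+\AD(\m^{\#})$, distinguishing cases according to the position of $e(\Delta_l)$, where $\Delta_1,\cdots,\Delta_l$ is the initial sequence in the algorithm for $\m$.

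First I would record, as in the good parity case, the explicit effect of $D_{L([-1,0])}$ on a bad-parity symmetric multisegment (using \cite[Prop. 3.8.]{AM}): segments $\Delta$ with $e(\Delta)=0$ and $\Delta\neq[0,0],[-1,0]$ become $\Delta^{--}$, segments with $b(\Delta)=0$ and $\Delta\neq[0,0],[0,1]$ become ${}^{--}\Delta$, and a controlled number of $[-1,0]$'s and $[0,1]$'s are removed. The hypotheses ($\m$ is $y$-reduced for $-e_{\max}<y<0$, not $L([-1,0])$-reduced) force the only negative-ending segments to be of the form $[y,y]$ with $m_{\m}([y,y])\le m_{\m}([y-1,y-1])$, exactly as in Section~\ref{sec:derl01}. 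Then, as in Lemma~\ref{lem:end01}, I would show $e(\Delta_l)\ge 0$ (using Lemma~\ref{lem:ADnotcentered}, or rather its bad-parity analogue, which is covered by the maximality statement cited right after Proposition~\ref{prop:longueurbad}). Next I would establish the bad-parity analogues of Lemma~\ref{lem:sequenceder01} (comparing the initial sequence of $\m$ with that of $\tilde\m:=D_{L([-1,0])}(\m)$) and Lemma~\ref{lem:dermdieze01} (tracking the index set $A_{[-1,0]}$ of segments modified by the derivative, and whether $\m^{\#}$ is $-1$-reduced). These combine to give the bad-parity version of Lemma~\ref{lem:ADder01}: in each of the cases $e(\Delta_l)\ge 2$, $e(\Delta_l)=0$ with $p(\Delta_l)=[0,0]$ (or the $[-1,0]$ exception), $e(\Delta_l)=0$ otherwise, and $e(\Delta_l)=1$, one identifies $(\tilde\m_1,\tilde\m^{\#})$ with an expression in $(\m_1,\m^{\#})$ and the derivative/socle operators.

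With that in hand, the final step is the formal manipulation: apply $\AD$ to $\tilde\m=D_{L([-1,0])}(\m)$ using $\AD(\tilde\m)=\tilde\m_1+\AD(\tilde\m^{\#})$, substitute the case-by-case identities for $\tilde\m_1$ and $\tilde\m^{\#}$, invoke the induction hypothesis via Lemma~\ref{lem:ADcommDerbad} (and Lemma~\ref{lem:ADcommSocbad} for the $e(\Delta_l)=1$ case, where a $S^{(1)}$ appears), and then recombine using Lemma~\ref{lem:dersum01} — precisely the way Proposition~\ref{prop:ADcommderneg} is assembled from its lemmas. One must also check, before the formula $\AD(D_{L([-1,0])}(\m))=D_{Z([0,1])}(\AD(\m))$ even makes sense, that $\AD(\m)$ is $1$-reduced, which is the bad-parity analogue of Lemma~\ref{lem:ad1reduced} and follows by the same argument (distinguishing whether $(\m^{\#})$ is $-1$-reduced). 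Since the proposition statement in the excerpt already asserts the $1$-reducedness of $\AD(\m)$ implicitly ("Similarly to Lemma~\ref{lem:ad1reduced}, the multisegment $\AD(\m)$ is $1$-reduced"), I would state and use that first.

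The main obstacle is the bookkeeping in the analogue of Lemma~\ref{lem:dermdieze01}, i.e. checking in the $e(\Delta_l)=1$ boundary case that the segment $\Lambda_{i_l}^{\#}$ is genuinely modified by $D_{L([-1,0])}$ in $\tilde\m^{\#}$ (so that $\tilde\m^{\#}=D^{\max-1}_{L([-1,0])}(\m^{\#})$), and verifying that no spurious segments ending in $-1$ or $-2$ are created in $\m^{\#}$ that would spoil the identification of which segments the second derivative touches. This is exactly where the good-parity proof spends most of its effort, and the bad-parity labels (all signs trivial, but now everything lives on a single line $\Z_\rho=\Z_{\rho^\vee}$) change some of the details of which segments can coincide with their own duals; however the condition~(3) in the definition of the initial sequence in the bad case (Subsection~\ref{sec:defbad}) is designed precisely to make these checks go through, so I expect the argument to be a faithful, if somewhat tedious, transcription of Section~\ref{sec:derl01} with simplifications coming from the absence of signs.
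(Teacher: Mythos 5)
Your plan tracks the paper's proof in outline: establish bad-parity analogues of the good-parity lemmas of Section~\ref{sec:derl01} (behaviour of the initial sequence under $D_{L([-1,0])}$, identification of $(\tilde{\m}_1,\tilde{\m}^{\#})$, $1$-reducedness of $\AD(\m)$), then conclude via Lemma~\ref{lem:dersum01} and Lemmas~\ref{lem:ADcommDerbad}/\ref{lem:ADcommSocbad}. That is indeed what the paper does.

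There is, however, a genuine gap in one specific step. You claim to prove $e(\Delta_l)\ge 0$, asserting that the bad-parity analogue of Lemma~\ref{lem:ADnotcentered} delivers this. But Lemma~\ref{lem:ADnotcentered} is a \emph{good-parity} statement — its proof hinges on the labels $\{\ge 0,=0,\le 0\}$ and on the stopping rule at $[0,0]_{\rho_u}^{\ge 0}$, $[0,0]_{\rho_u}^{=0}$, $[1/2,1/2]_{\rho_u}$, none of which exist in the bad-parity algorithm. The maximality proposition following Proposition~\ref{prop:longueurbad} (which you cite as a substitute) only says $\eta_1\in\m_1$ is longest among segments of $\AD_\rho(\m)$ ending in $e_{\max}$; it says nothing about a lower bound on $e(\Delta_l)$. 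And indeed in bad parity the claim is \emph{false}: with the reducedness hypotheses in force, the initial sequence can descend all the way to $[-e_{\max},-e_{\max}]$, e.g.\ whenever $m_{\m}([y,y])\ge 2$ for all $1\le |y|\le e_{\max}$, since condition~(3) of the bad-parity algorithm then never halts the descent. The paper therefore proves only the weaker statement ``if $e(\Delta_l)<0$ then $e(\Delta_l)=-e_{\max}$'' and treats $e(\Delta_l)<0$ as a fifth case (in which, fortunately, $\tilde{\m}_1=\m_1$, $\m^{\#}$ is $-1$-reduced and $\tilde{\m}^{\#}=D_{L([-1,0])}(\m^{\#})$, so the recombination is routine). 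Without this extra case your argument does not cover all of $\Symm_\rho^\varepsilon(G)$. A secondary, more cosmetic, divergence: the exceptional subcase at $e(\Delta_l)=0$ acquires an extra branch in bad parity ($m_{\m}([-2,-2])>1$ with $m_{\m}([-1,-1])=0$, in addition to $m_{\m}([-2,-2])>m_{\m}([-1,-1])>0$), because the condition ``a segment cannot protect its own dual'' can trigger even when no $[-1,-1]$ is present; this is the kind of bookkeeping where a blind transcription of Section~\ref{sec:derl01} would give the wrong condition.
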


\begin{proof}
  This proof is similar to the proof of Section \ref{sec:derl01}. We get that if $e(\Delta_{l}) < 0$ then $e(\Delta_{l}) =-e_{\max}$. We denote by $(\tilde{\m},\tilde{\varepsilon}):=D_{L([-1,0])}(\m,\varepsilon)$ and by $\tilde{\Delta}_1, \cdots, \tilde{\Delta}_{\tilde{l}}$ the initial sequence in the algorithm for $(\tilde{\m},\tilde{\varepsilon})$.

  \begin{enumerate}
    \item If $e(\Delta_{l})=0$, $\Delta_{l} \neq [0,0]$ and $\Delta_{l} \neq [-1,0]$ if $m_{\m}([-2,-2]) > m_{\m}([-1,-1])>0$  or $m_{\m}([-2,-2]) > 1 $ with $m_{\m}([-1,-1])=0$ then $\tilde{l}=l-1$;  otherwise $\tilde{l}=l$.
    \item For $1 \le i \le \tilde{l}$, if $b(\Delta_{i})=0$ and $\Delta_{i} \neq [0,1]$ and $p(\Delta_i) \neq [0,0]$ then $\tilde{\Delta}_{i}={}^{--}\Delta_{i}$; otherwise $\tilde{\Delta}_{i}=\Delta_{i}$.
\end{enumerate}

We also get
\begin{enumerate}
  \item If $e(\Delta_{l}) \ge 2$ then $\tilde{\m}_1=\m_{1}$, $\m^{\#}$ is $-1$-reduced and $\tilde{\m}^{\#}=D_{L([-1,0])}(\m^{\#})$.
  \item If $e(\Delta_{l}) = 0$ and $\Delta_{l} \neq [0,0]$ and $\Delta_{l} \neq [-1,0]$ if $m_{\m}([-2,-2]) > m_{\m}([-1,-1])>0$  or $m_{\m}([-2,-2]) > 1 $ with $m_{\m}([-1,-1])=0$, then $\tilde{\m}_1=[1,e_{\max}] + [-e_{\max},-1]$, $\m^{\#}$ is not $-1$-reduced and $\tilde{\m}^{\#}=D_{L([-1,0])}(D_{-1}(\m^{\#}))$.
  \item Else, if $e(\Delta_{l}) = 0$, then $\tilde{\m}_1=\m_{1}$, $\m^{\#}$ is $-1$-reduced and $\tilde{\m}^{\#}=D_{L([-1,0])}(\m^{\#})$.
  \item If $e(\Delta_{l}) = 1$ then $\tilde{\m}_1=\m_{1}$, $\m^{\#}$ is $-1$-reduced and $\tilde{\m}^{\#}=D^{\max - 1}_{L([-1,0])}(\m^{\#})$.
  \item If $e(\Delta_{l}) < 0$ then $\tilde{\m}_1=\m_{1}$, $\m^{\#}$ is $-1$-reduced and $\tilde{\m}^{\#}=D_{L([-1,0])}(\m^{\#})$.
\end{enumerate}

Similarly to Lemma \ref{lem:ad1reduced} this implies that the multisegment $\AD(\m)$ is $1$-reduced. And we conclude with Lemma \ref{lem:dersum01}.
\end{proof}

\subsection{The positive derivative}

In this section, we assume that $e_{\max} > 1$, that for all $-e_{\max} < y<0$, $\m$ is $y$-reduced, that $\m$ is $L([-1,0])$-reduced, and that there exists $y>0$ with $y \neq e_{\max}$ such that $\m$ is not $y$-reduced.

We define $y_0 \in (1/2)\Z$ to be the smallest $y \in (1/2)\Z^{*}$ such that $y \neq -e_{\max}$, $y \neq e_{\max}$ and $\m$ is not $y$-reduced. With our hypotheses on $\m$ necessarily $y_0 > 0$. 

Let $y_1 \in (1/2)\NN^{*}$ be the smallest positive half-integer such that $[y_1,y_1] \in \m$.

We denote by $\tilde{\m}:=D_{y_0}(\m)$ and by $\tilde{\Delta}_1, \cdots, \tilde{\Delta}_{\tilde{l}}$ the initial sequence in the algorithm for $\tilde{\m}$.

\begin{prop}
    We have $\AD(D_{y_0}(\m))=D_{-y_0}(\AD(\m))$.
\end{prop}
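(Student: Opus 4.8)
The plan is to run the same argument as in the good parity case (Sections \ref{sec:derpos} and \ref{sec:derposnottype}), with the simplification that, since $\rho$ is bad, every element of $\Symm_\rho^\varepsilon(G)$ has trivial signs, so all the $\varepsilon$-bookkeeping collapses and only the combinatorics of the underlying multisegments survives. Concretely, the first step is to make the choice of $y_0$ explicit using the formula for the highest $\rho$-derivative recalled in Section \ref{sec:derbad} (i.e.\ \cite[Thm. 7.4.]{AM} and Definition \ref{def:derbap}). Exactly as in Section \ref{sec:derpos}, one proves by induction on $y$ that if $\m$ is $y'$-reduced for all $0<y'<y$ then every $\Delta\in\m$ with $0<e(\Delta)<y$ has $c(\Delta)\ge 0$, and that the only segments ending at a positive coordinate $y'<y_0$ are $[y',y']$ and $[-y',y']$, the latter with multiplicity at most $1$. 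This yields an explicit description of $D_{y_0}$: it shortens the negative segments $[a,y_0]$ with $a<-y_0$, transforms the centered segments $[-y_0,y_0]$ according to whether condition $(*)$ holds and to the parity of $m_{\m}([-y_0,y_0])$, and suppresses a controlled number of copies of $[y_0,y_0]$ and $[-y_0,-y_0]$. (Whether $y_0$ is an integer or a half-integer is governed by whether $\rho$ is bad of opposite or of same type as $G$; the two situations are handled uniformly here, the only mild variant being the value at which the initial run of singletons can stop.)

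Next I would split into the three cases $y_1<y_0$, $y_1=y_0$, $y_1>y_0$, just as in Section \ref{sec:derpos}. In each case the engine is the comparison of the initial sequence $\Delta_1\succeq\cdots\succeq\Delta_l$ of the algorithm for $\m$ with the initial sequence $\tilde\Delta_1\succeq\cdots\succeq\tilde\Delta_{\tilde l}$ for $\tilde\m:=D_{y_0}(\m)$. One first proves a lemma constraining $e(\Delta_l)$ (it is forced to be $e_{\max}$, $0$, $1/2$, or a value tied to $y_0$), then a lemma showing $\tilde\Delta_i=\Delta_i$ away from the few indices where $b(\Delta_i)=-y_0$ or $e(\Delta_i)=y_0$ (analogues of Lemmas \ref{lem:sequencederpos} and \ref{lem:sequencederposnottype}), then a lemma comparing the index sets $A_{y_0}^{c}$ and $A_{y_0}^{\#,c}$ of segments modified by $D_{y_0}$ in $\m$ and in $\m^{\#}$ (analogues of Lemmas \ref{lem:dermdiezepos} and \ref{lem:dermdiezeposnottype}; here the absence of signs removes several subcases). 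Assembling these gives, in each subcase, one of $\tilde\m_1=\m_1$ or $\tilde\m_1=D_{-y_0}(\m_1)$, together with $\tilde\m^{\#}$ equal to $D_{y_0}(\m^{\#})$ or to $D_{y_0}^{\max-1}(\m^{\#})$ — the bad parity counterparts of Lemmas \ref{lem:ADderpos} and \ref{lem:ADderposnottype}.

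Finally one concludes by induction, exactly as in Proposition \ref{prop:ADcommdernegbp}: write $\AD(\tilde\m)=\tilde\m_1+\AD(\tilde\m^{\#})$, replace $\AD(\tilde\m^{\#})$ by $D_{-y_0}(\AD(\m^{\#}))$ (resp.\ $D_{-y_0}^{\max-1}(\AD(\m^{\#}))$) using Lemma \ref{lem:ADcommDerbad} (resp.\ Lemmas \ref{lem:ADcommDerbad} and \ref{lem:ADcommSocbad}), and then reassemble with Lemma \ref{lem:dersum}, whose hypotheses hold because $-e(\Delta_1)$ is the minimal coordinate and $\AD(\m^{\#})$ contains no segment of the obstructing form by Proposition \ref{prop:longueurbad}.

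The main obstacle will be the case $y_1=y_0$, where the segment $[y_0,y_0]$ — which heads a run of singletons at the top of the initial sequence, and is the very first segment when $e_{\max}=y_0$ — may be entirely consumed by $D_{y_0}$, so the initial sequences for $\m$ and for $\tilde\m$ diverge from the start; one must argue, as in Lemma \ref{lem:sequencederposx1eqx0}, that after the singleton run $D_{y_0}$ only perturbs $\Delta_j$ and $\Delta_{j+1}$ where $\Delta_j=[y_0,y_0]$, and that this is compensated by $\m^{\#}$ having lost one copy each of $[y_0,y_0]$ and $[y_0-1,y_0-1]$. A secondary subtlety, also inherited from the good parity analysis, is the subcase $e(\Delta_l)=y_0$ with $\Delta_l=[-y_0,y_0]^{\le 0}$ and $(*)$ failing, where the correction clause of Definition \ref{def:derbap} (turning $[-y_0,y_0-1]+[-y_0+1,y_0]$ back into $[-y_0,y_0]+[-y_0+1,y_0-1]$) is triggered in $\m^{\#}$ and must be matched against what $\AD$ produces directly; here the bad parity version is strictly easier since there is no sign to reconcile.
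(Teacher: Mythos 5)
Your proposal matches the paper's proof in strategy and structure: both argue by mirroring Sections~\ref{sec:derpos} and~\ref{sec:derposnottype}, splitting into the cases $y_1<y_0$, $y_1=y_0$, $y_1>y_0$, proving bad-parity analogues of the sequence-comparison lemmas (Lemmas~\ref{lem:sequencederpos}/\ref{lem:sequencederposnottype}), the $A_{y_0}^c$ vs.\ $A_{y_0}^{\#,c}$ comparison (Lemmas~\ref{lem:dermdiezepos}/\ref{lem:dermdiezeposnottype}), and the assembly lemmas (Lemmas~\ref{lem:ADderpos}/\ref{lem:ADderposnottype}), and then concluding with Lemmas~\ref{lem:ADcommDerbad}, \ref{lem:ADcommSocbad} and \ref{lem:dersum}, with Proposition~\ref{prop:longueurbad} supplying the length constraint needed to invoke Lemma~\ref{lem:dersum}. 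The paper's version is a compressed statement of exactly this, listing the outcomes of each subcase and then concluding.

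One caveat on vocabulary: in describing the ``secondary subtlety'' you invoke the label $[-y_0,y_0]^{\le 0}$ and the condition $(*)$ of Definition~\ref{def:der}, but both of these belong to the good-parity machinery (the set $\USymm^{\varepsilon}$ of labeled segments and Definition~\ref{def:der} itself). In the bad-parity algorithm of Subsection~\ref{sec:defbad} there are no labels: the analogous bookkeeping is the multiplicity condition ``if $\Delta_j^{\vee}=\Delta_i$ for some $i<j$ then $m_{\m}(\Delta_j)\ge 2$.'' Likewise Definition~\ref{def:derbap} has no condition $(*)$; the correction clause is triggered purely by the parity of $c$, and the set $A_{y_0}^c$ is cut out by the modified relation $\leadsto$ that forbids a segment from protecting its own dual (via the pair $(i_0,j_0)$). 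The obstruction you identify is real and must be handled, but it must be phrased in terms of $m_{\m}([-y_0,y_0])$ and $m_{\m}([1-y_0,y_0-1])$ being odd rather than in terms of labels and $(*)$. With that substitution your outline is the paper's proof.
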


\begin{proof}
  This proof is similar to the proof of Sections \ref{sec:derpos} and \ref{sec:derposnottype}. 
  
  Indeed, let $y_1 \in (1/2)\NN^{*}$ be the smallest positive half-integer such that $[y_1,y_1] \in \m$. We get that if $e(\Delta_{l}) < 0$ then $e(\Delta_l) = 0$ or $e(\Delta_{l}) =-e_{\max}$ (and necessarily $y_1 = 1$ or $1/2$). We denote by $\tilde{\m}:=D_{y_0}(\m)$ and by $\tilde{\Delta}_1, \cdots, \tilde{\Delta}_{\tilde{l}}$ the initial sequence in the algorithm for $\tilde{\m}$. The proof is divided into three parts, depending on whether $y_1 < y_0$, $y_1 = y_0$ or $y_1 > y_0$.

  \begin{itemize}
    \item Suppose that $y_1 < y_0$. Then $\tilde{l}=l$; and for all $1 \le i \le l$, $\tilde{\Delta}_{i}=\Delta_{i}$. From there, we get that $\tilde{\m}_1=\m_{1}$ and $\tilde{\m}^{\#}=D_{y_0}(\m^{\#})$. This easily gives us the result.
    \item Suppose that $y_1 = y_0$. Let $t_0=m_{\m}([0,1])$ and $j = e_{\max } - y_0 + 1$, such that $\Delta_j = [y_0,y_0]$. We get that
      \begin{enumerate}
        \item If $e(\Delta_l)=y_0$, then $\tilde{l}=l-1$; otherwise $\tilde{l}=l$.
        \item Let $1 \le i \le \tilde{l}$. 
        \begin{enumerate}
          \item If $y_0 = 1$, $m_{\m}([0,0])=0$, $t_0 \neq 0$, $t_0$ is even and $i=j$; then $\tilde{\Delta}_i = [0,1]$.
          \item If $y_0 = 1$, $m_{\m}([0,0])=0$, $t_0$ is odd and $i=j+1$; then $\tilde{\Delta}_i = [0,0]$.
          \item Otherwise, $\tilde{\Delta}_i=\Delta_i$.
        \end{enumerate}
      \end{enumerate}
      This gives us that
      \begin{enumerate}
        \item If $e(\Delta_{l}) = y_0$, then $\tilde{\m}_1=D_{-y_0}(\m_{1})$ and $\tilde{\m}^{\#}=D_{y_0}(\m^{\#})$.
        \item Otherwise, $\tilde{\m}_1=\m_{1}$ and $\tilde{\m}^{\#}=D_{y_0}(\m^{\#})$. 
      \end{enumerate}
      We conclude with Lemmas \ref{lem:ADcommDerbad} and \ref{lem:dersum}.

  \item Suppose that $y_0 < y_1$. Then
  \begin{enumerate}
        \item If $e(\Delta_l)=y_0$ then $\tilde{l}=l - 1$; otherwise $\tilde{l}=l$.
        \item For $1 \le i \le \tilde{l}$:
        \begin{enumerate}
          \item If $e(\Delta_i)\neq y_0$, $b(\Delta_i)=-y_0$ and $\Delta_i \neq [-1,0]$, then $\tilde{\Delta}_{i}={}^{-}\Delta_{i}$.
          \item Otherwise, $\tilde{\Delta}_{i}=\Delta_{i}$.   
        \end{enumerate}
    \end{enumerate}
    This leads to
    \begin{enumerate}
      \item If $e(\Delta_{l}) \ge y_0 + 2$, then $\tilde{\m}_1=\m_{1}$ and $\tilde{\m}^{\#}=D_{y_0}(\m^{\#})$.
      \item If $e(\Delta_{l}) = y_0 + 1$, then $\tilde{\m}_1=\m_{1}$ and $\tilde{\m}^{\#}=D^{\max - 1}_{y_0}(\m^{\#})$. 
      \item If $e(\Delta_{l}) = y_0$, then $\tilde{\m}_1=D_{-y_0}(\m_{1})$ and $\tilde{\m}^{\#}=D_{y_0}(\m^{\#})$.
      \item If $e(\Delta_{l}) = 0$, then $\tilde{\m}_1=\m_{1}$ and $\tilde{\m}^{\#}=D_{y_0}(\m^{\#})$.
  \end{enumerate}
        We conclude with Lemmas \ref{lem:ADcommDerbad} and \ref{lem:dersum}.
  \end{itemize}
\end{proof}

\subsection{The derivatives $D_{e_{\max}}$}

In this section, we assume that $e_{\max} > 1$, that for all $-e_{\max} < y<e_{\max}$ with $y\neq 0$, $\m$ is $y$-reduced, that $\m$ is $L([-1,0])$-reduced, and that $\m$ is not $e_{\max}$-reduced. Let $y_0 = e_{\max}$.

\begin{prop}
  We have that $\AD(D_{y_0}(\m,\varepsilon))=D_{-y_0}(\AD(\m,\varepsilon))$.
\end{prop}

\begin{proof}
  We follow the same proof as in Section \ref{sec:deremax}. We denote by $\tilde{\m}:=D_{y_0}(\m)$ and by $\tilde{\Delta}_1, \cdots, \tilde{\Delta}_{\tilde{l}}$ the initial sequence in the algorithm for $(\tilde{\m},\tilde{\varepsilon})$.
  
  First, let us assume that $m_{\m}([e_{\max}-1,e_{\max}-1]) \neq 0$. Then also $m_{\m}([e_{\max},e_{\max}]) \neq 0$ and $l \ge 2$, $\Delta_1=[y_0,y_0]$ and $\Delta_2 = [y_0 - 1, y_0 - 1]$. We also have that $m_{\tilde{\m}}([e_{\max},e_{\max}]) \neq 0$, so $\tilde{\Delta}_1=[y_0,y_0]=\Delta_1$ and $\tilde{\Delta}_2=[y_0 - 1,y_0-1]=\Delta_2$. We deduce from this that $\tilde{l}=l$ and for all $1 \le j \le l$, $\tilde{\Delta}_j = \Delta_j$. This gives us that $\tilde{\m}_1=\m_{1}$ and $\tilde{\m}^{\#}=D_{y_0}(\m^{\#})$. Finally, we get $\AD(D_{y_0}(\m,\varepsilon))=D_{-y_0}(\AD(\m,\varepsilon))$.

  Now we assume that $m_{\m}([e_{\max}-1,e_{\max}-1]) = 0$. Notice that since $y_0$ is the maximum of the coefficients of $\m'$ then $D_{-y_0}(\m')$ just removes all the segments $[y_0,y_0]$ and $[-y_0,-y_0]$. In particular $D_{-y_0}(\AD(\m))=D_{-y_0}(\m_1) + D_{-y_0}(\AD(\m^{\#}))=D_{-y_0}(\m_1) + \AD(D_{y_0}(\m^{\#}))$. We will compute $\AD(\m)$, $D_{y_0}(\m)$ and $\AD(D_{y_0}(\m))$ to see that $\AD(D_{y_0}(\m))=D_{-y_0}(\AD(\m))$.

  If $\rho$ is not of the same type as $G$, then $\m$ has the following form
  \[
      \m= n_0 [0,0] + m_{y_0}[-y_0,y_0] + n_{y_0}([y_0,y_0] + [-y_0,-y_0]) + t_0([-1,0]+ [0,1]),
  \]
  with $t_0$ even. Necessarily, if $t_0 \neq 0$ then $y_0 = 2$. We start by computing $\AD(\m)$. We get that
    \begin{itemize}
        \item Suppose that $n_{y_0}=0$. Then $\m_1=[-y_0,-y_0] + [y_0,y_0]$ and $\m^{\#}=n_0 [0,0] + (m_{y_0}-2)[-y_0,y_0] + t_0([-1,0]+ [0,1]) + ([-y_0,y_0-1]+[-y_0+1,-y_0])$.
        \item Suppose that $n_{y_0}\neq 0$ and $t_0 =0$. Then $\m_1=[-y_0,-y_0] + [y_0,y_0]$ and $\m^{\#}=n_0 [0,0] + m_{y_0}[-y_0,y_0] + (n_{y_0}-1)([y_0,y_0] + [-y_0,-y_0]) + t_0([-1,0]+ [0,1])$.
        \item Suppose that $n_{y_0}\neq 0$ and $t_0  \neq 0$. Then $\m_1=[-y_0,0] + [0,y_0]$ and $\m^{\#}= n_0 [0,0] + m_{y_0}[-y_0,y_0] + (n_{y_0}-1)([y_0,y_0] + [-y_0,-y_0]) + (t_0-2)([-1,0]+ [0,1]) + ([-1,-1]+[1,1]) + ([0,0] + [0,0])$.
    \end{itemize}

    Now, we get that $D_{y_0}(\m)= n_0 [0,0] + m_{y_0}[-y_0+1,y_0-1] + (n_{y_0}-t_0)([y_0,y_0] + [-y_0,-y_0]) + t_0([-1,0]+ [0,1])$. This leads to $\AD(D_{y_0}(\m))=D_{-y_0}(\AD(\m))$.

    If $\rho$ is of the same type as $G$, then $\m$ has the following form
  \[
      \m= m_{y_0}[-y_0,y_0] + n_{y_0}([y_0,y_0] + [-y_0,-y_0]).
  \]

  We get that $\AD(\m)$ is given by the following formula.
    \begin{itemize}
        \item Suppose that $n_{y_0}=0$. Then $\m_1=[-y_0,-y_0] + [y_0,y_0]$ and $\m^{\#}= (m_{y_0}-2)[-y_0,y_0] + ([-y_0,y_0-1]+[-y_0+1,-y_0])$.
        \item Suppose that $n_{y_0}\neq 0$. Then $\m_1=[-y_0,-y_0] + [y_0,y_0]$ and $\m^{\#}= m_{y_0}[-y_0,y_0] + (n_{y_0}-1)([y_0,y_0] + [-y_0,-y_0])$.
    \end{itemize}
  As for the derivative, $D_{y_0}(\m)= m_{y_0}[-y_0+1,y_0-1] + n_{y_0}([y_0,y_0] + [-y_0,-y_0])$. This leads to $\AD(D_{y_0}(\m))=D_{-y_0}(\AD(\m))$.
\end{proof}

\subsection{The derivatives $D_{-e_{\max}}$ } 

In this section, we assume that $e_{\max} > 1$, that for all $-e_{\max} < y \le e_{\max}$ with $y\neq 0$, $\m$ is $y$-reduced, that $\m$ is $L([-1,0])$-reduced, and that $\m$ is not $-e_{\max}$-reduced. Let $y_0 = -e_{\max}$.

\begin{prop}
    We have that $D_{-y_0}(\AD(\m))= \AD(D_{y_0}(\m))$.
\end{prop}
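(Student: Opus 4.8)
The plan is to mirror the good-parity argument of Section~\ref{sec:deremaxm}, splitting into the two cases $m_{\m}([-e_{\max},e_{\max}])=0$ and $m_{\m}([-e_{\max},e_{\max}])\neq 0$. Throughout one uses that $D_{y_0}=D_{-e_{\max}}$ acts on $\m$ simply by deleting every copy of the pair $[e_{\max},e_{\max}]+[-e_{\max},-e_{\max}]$, and that the standing hypotheses of this subsection ($\m$ is $y$-reduced for every $0\neq y\in(-e_{\max},e_{\max}]$, is $L([-1,0])$-reduced, and is not $(-e_{\max})$-reduced), together with the explicit derivative formulas of Section~\ref{sec:derbad}, force $\m$ into a rigid shape in each branch. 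Since $\rho$ is of bad parity, all signs are trivial, so none of the sign bookkeeping of Section~\ref{sec:deremaxm} is needed.

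First I would treat the case $m_{\m}([-e_{\max},e_{\max}])=0$. Here the biggest segment of $\m$ ending in $e_{\max}$ is $[e_{\max},e_{\max}]=[-y_0,-y_0]$, so $\Delta_1=[-y_0,-y_0]$; since $\m$ is $e_{\max}$-reduced but not $(-e_{\max})$-reduced one gets $l\ge 2$, with $\Delta_2$ the biggest segment of $\m$ ending in $e_{\max}-1$. Because $D_{y_0}$ only removes the atoms $[\pm e_{\max},\pm e_{\max}]$, the initial sequence of $\tilde\m:=D_{y_0}(\m)$ is the shift $\tilde\Delta_j=\Delta_{j+1}$ with $\tilde l=l-1$; hence $\tilde\m_1=D_{-y_0}(\m_1)$ and $\tilde\m^{\#}=D_{y_0}(\m^{\#})$. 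Since $l(\m^{\#})\le N-2$, the identity then follows from Lemma~\ref{lem:dersum} together with Lemma~\ref{lem:ADcommDerbad} applied to $\m^{\#}$, exactly as in the first proposition of Section~\ref{sec:deremaxm}.

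The substantive case is $m_{\m}([-e_{\max},e_{\max}])\neq 0$, which I would split according to whether $\rho$ is of the same type as $G$ or not. In both subcases one first uses the reducedness hypotheses, together with the explicit formulas of Section~\ref{sec:derbad}, to show that $\m$ has the rigid form $\sum_{y}n_y([y,y]+[-y,-y])+(\text{a possible term }t_0([-1,0]+[0,1]))+\sum_{y}[-y,y]+m[-e_{\max},e_{\max}]$, with the multiplicities $n_y$ constant over the relevant range, the inequalities between them dictated by reducedness, and each centered segment of multiplicity $1$. If $m_{\m}([e_{\max}-1,e_{\max}-1])\neq 0$ the initial sequences for $\m$ and $D_{y_0}(\m)$ agree at their first terms, so $\tilde\m_1=\m_1$ and $\tilde\m^{\#}=D_{y_0}(\m^{\#})$, and the identity is immediate. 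Otherwise one computes $\AD(\m)$ explicitly by iterating the recursion $\AD(\m)=\m_1+\AD(\m^{\#})$ — the bad-parity analogue of Lemmas~\ref{lem:ADDx} and~\ref{lem:ADDxnottype}, bottoming out at a reduced multisegment whose $\AD$-image is given by the explicit formulas already established for $e_{\max}\le 1$ — then computes $D_{y_0}(\m)$ by deleting the $m_{\m}([e_{\max},e_{\max}])$ copies of $[\pm e_{\max},\pm e_{\max}]$, runs $\AD$ on the result, and finally compares it term by term with $D_{-y_0}(\AD(\m))$ obtained from Definition~\ref{def:derbap}; the two coincide. This is carried out separately for $\rho$ of the same type and for $\rho$ of the opposite type, mirroring the two final propositions of Section~\ref{sec:deremaxm}.

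The main obstacle is precisely this last, purely combinatorial verification in the case $m_{\m}([-e_{\max},e_{\max}])\neq 0$: one must carefully track whether condition $(*)$ of Definition~\ref{def:derbap} holds at each stage of the two parallel computations, and how the ``$c$ odd'' correction in that definition interacts with the centered segments that survive (or are freshly created) in $\m^{\#}$ and in $D_{y_0}(\m)$. Since the bad-parity setting carries no signs and no labelled centered segments, each of these branches is strictly simpler than its good-parity counterpart in Section~\ref{sec:deremaxm}, so the verification — though lengthy — is routine. Establishing this commutation identity for all such $\m$ of length $N$ completes, via Lemma~\ref{lem:commDerthmbad}, the induction step for Theorem~\ref{thm:ADdualinductionbad}, which is the purpose of the present proposition.
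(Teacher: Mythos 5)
Your proposal takes a different route from the paper and overcomplicates the argument considerably. The paper proves this proposition with a single short, unified argument, with no case split on $m_{\m}([-e_{\max},e_{\max}])$: it observes that $\Delta_1=[e_{\max},e_{\max}]_{\rho_u}$, that $D_{y_0}$ simply removes every pair $[e_{\max},e_{\max}]_{\rho_u}+[-e_{\max},-e_{\max}]_{\rho_u}$, and that therefore the initial sequence of $\tilde{\m}$ is the shift $\tilde{\Delta}_j=\Delta_{j+1}$ (with $\tilde{l}$ equal to $l-1$ or $l-2$ according as $\Delta_l\neq[-e_{\max},-e_{\max}]_{\rho_u}$ or $\Delta_l=[-e_{\max},-e_{\max}]_{\rho_u}$). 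This immediately gives $\tilde{\m}_1=D_{-y_0}(\m_1)$ and $\tilde{\m}^{\#}=D_{y_0}(\m^{\#})$, and one concludes via Lemma~\ref{lem:dersum} and the inductive commutation lemma. Crucially, the case $m_{\m}([-e_{\max},e_{\max}])\neq 0$, to which you devote the bulk of your argument, is \emph{vacuous} under the standing hypotheses in the bad-parity setting: since $\rho$ is bad, centered segments in $\Symm_\rho^\varepsilon(G)$ have even multiplicity, so $m_{\m}([-e_{\max},e_{\max}])>0$ would force $m_{\m}([-e_{\max},e_{\max}])\geq 2$; but no segment ending in $e_{\max}-1$ can protect $[-e_{\max},e_{\max}]_{\rho_u}$ in the best matching for $D_{e_{\max}}$ (that would require a beginning strictly less than $-e_{\max}$, which contradicts $e_{\max}$ being the largest coefficient), so every copy of $[-e_{\max},e_{\max}]_{\rho_u}$ lies in $A_{\rho_u|\cdot|^{e_{\max}}}^c$ and $\deg D_{e_{\max}}(\m)<\deg\m$, contradicting $e_{\max}$-reducedness. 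This is precisely the structural reason the bad-parity case admits a uniform proof while the good-parity Sections~\ref{sec:deremax}--\ref{sec:deremaxm}, which you are mimicking, need the explicit computations: there the $(*)$-exclusion of $i_0,j_0$ from the matching allows one copy of $[-e_{\max},e_{\max}]_{\rho_u}$ to survive.

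There is also a small gap in your first case: you assert $\tilde{l}=l-1$, but when the initial sequence descends all the way to $\Delta_l=[-e_{\max},-e_{\max}]_{\rho_u}$ (which occurs precisely when $m_{\m}([1/2,1/2]_{\rho_u})>1$, resp.\ $m_{\m}([1,1]_{\rho_u})>1$, depending on the type of $\rho_u$) the derivative removes $\Delta_l$ as well as $\Delta_1$, so $\tilde{l}=l-2$. The conclusion $\tilde{\m}_1=D_{-y_0}(\m_1)$ still holds in that subcase (both copies of $[-e_{\max},e_{\max}]_{\rho_u}$ in $\m_1$ are shortened to $[-e_{\max}+1,e_{\max}-1]_{\rho_u}$ by $D_{e_{\max}}$), but your written argument does not cover it, whereas the paper's proof addresses it explicitly.
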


\begin{proof}    
    Let $\Delta_1,\cdots,\Delta_l$ be the initial sequence for $\m$. Let $\tilde{\m}=D_{y_0}(\m)$ and $\tilde{\Delta}_1, \cdots, \tilde{\Delta}_{\tilde{l}}$ the initial sequence in the algorithm for $\tilde{\m}$. The derivative $D_{y_0}(\m)$ just suppresses all the segments $[y_0,y_0]+[-y_0,-y_0]$ from $\m$.

    With the hypotheses made, we have $\Delta_1= [-y_0,-y_0]$, $l \ge 2$ (because $\m$ is $-y_0$-reduced) and $\Delta_2$ is the biggest segment ending in $-y_0-1$. Hence, $\tilde{\Delta}_1=\Delta_2$. We get that if $m_{\m}([1/2,1/2])>1$ or $m_{\m}([1,1])>1$ (depending on the type of $\rho$) then $\tilde{l}=l-2$. Otherwise, $\tilde{l}=l-1$. In both cases, for all $1 \le j < l$, $\tilde{\Delta}_j=\Delta_{j+1}$. Thus $\tilde{\m}_1=D_{-y_0}(\m_1)$ and $\tilde{\m}^{\#}=D_{y_0}(\m^{\#})$. We get the result from Lemma \ref{lem:dersum} and Lemma \ref{lem:ADcommDer}.
\end{proof}

\section{Proof in the ugly case}
\label{sec:proofugly}

Let $\sigma \in \Cusp^G$ be a cuspidal representation with Langlands data $(\phi_\sigma, \eta_\sigma)$. Let $\rho \in \Cusp^\GL$ be ugly, and let $\pi \in \Irr_\sigma$ be a $\rho-\ugly$ representation. Then there exists $\m \in \Mult_\rho$ such that $\pi \simeq L(\m) \rtimes \sigma$ (see \cite[Prop.\ 2.6]{AM}).

We deduce that
\[
\widehat{\pi} = \widehat{L(\m) \rtimes \sigma} \simeq \widehat{L(\m)} \rtimes \sigma \simeq L(\m^t) \rtimes \sigma,
\]
where $\m^t$ is the M{\oe}glin--Waldspurger dual of the multisegment $\m$ (see Paragraph~\ref{sub:MW}).

Let $y$ and $y'$ denote the Langlands data of $\pi$ and $\widehat{\pi}$, respectively. Since $\rho$ is ugly, all the signs in $\trans_\rho(y)$ and $\trans_\rho(y')$ are trivial. Therefore, we identify $\trans_\rho(y)$ and $\trans_\rho(y')$ with their underlying multisegments, which we denote by $\mathfrak{s}$ and $\mathfrak{s}'$, respectively.

By Theorem~\ref{thm:Jantzen}(3), the Langlands data $y$ of $\pi$ can be written as
\[
(\n_\rho + \n_{\rho^\vee};\ \phi_\rho + \phi_{\rho^\vee} + \phi_\sigma,\ \eta_\sigma).
\]
From \cite[Remark~2.7]{AM}, we deduce that
\[
\m = \n_\rho + \n_{\rho^\vee}^\vee + \m_{\phi_\rho}.
\]
On the other hand, by the definition of $\trans_\rho$, it follows that
\[
\mathfrak{s} = \n_\rho + \n_{\rho^\vee}^\vee + \m_{\phi_\rho} + (\n_\rho + \n_{\rho^\vee}^\vee + \m_{\phi_\rho})^\vee = \m + \m^\vee.
\]
Similarly, we have $\mathfrak{s}' = \m^t + \m^{t \vee}$.

Now, Remark~\ref{rem:MWGL} implies that $\AD(\mathfrak{s}) = \mathfrak{s}'$.

\appendix
\section{On machine learning}\label{ann:com}

When we began working on this article, we initially used machine learning to develop some intuition about the formulas presented here. We began this process without making any mathematical assumptions or conjectures, in order to explore what structures the machine learning model might reveal on its own. For interested readers, we summarize this exploratory process in this appendix.

\bigskip

Machine learning excels at detecting patterns in large datasets. As mathematicians, we often focus on small, concrete examples, which can sometimes make us overlook broader structures. In this work, we used machine learning, specifically supervised learning, to develop intuitions and formulate conjectures related to the Aubert--Zelevinsky duality. Our approach follows the strategy outlined in \cite{Wil}.

The idea is as follows: we begin by formulating a hypothesis about a potential relationship between two mathematical objects, $X(z)$ and $Y(z)$. We then generate a dataset of pairs $(X(z),Y(z))$, which serves as input for a supervised learning model. By analyzing the resulting model, we refine either our dataset or our initial hypothesis, and repeat the process until meaningful conjectures emerge. 

This process is summarized in the following diagram (see \cite[Fig. 1]{Wil} for further details). Grey boxes indicate mathematical steps, while blue boxes correspond to computational procedures.

\begin{figure}[ht]
\centering
\begin{tikzpicture}[
  every node/.style={font=\small},
  proc/.style={draw, rectangle, rounded corners, minimum width=3.2cm, minimum height=1.2cm, text centered},
  comp/.style={proc, fill=blue!10},
  math/.style={proc, fill=gray!20},
  arrow/.style={-Latex},
  dashedarrow/.style={-Latex, dashed},
]

\node[math] (hypothesize) {Formulate Hypothesis};
\node[comp, below=of hypothesize] (generate) {Generate data};
\node[comp, right=of generate] (train) {Train supervised model};
\node[comp, right=of train] (interrogate) {Interrogate the model};
\node[math, below=of train] (conjecture) {Conjectures};
\node[math, below=of conjecture] (prove) {Prove theorem};

\draw[arrow] (hypothesize) -- (generate);
\draw[arrow] (generate) -- (train);
\draw[arrow] (train) -- (interrogate);
\draw[arrow] (interrogate) -- (conjecture);
\draw[arrow] (train) -- (conjecture);
\draw[arrow] (conjecture) -- (prove);
\draw[dashedarrow]  (conjecture.west) .. controls +(-6,0) and +(-1,-1) ..(hypothesize.west);
\draw[dashedarrow] (conjecture) -- (generate.south);

\end{tikzpicture}
\end{figure}

Let us go back to the Aubert--Zelevinsky involution. We fix $\rho \in \Cusp^{\GL}$ of good parity of the same type as $G$, and we want to understand $\AD_\rho$. We begin by generating representations using three parameters $N,k_{\m}, k_{\phi} \in \NN$. We consider Langlands data $\Data_\rho(G)$ of the form $(\m;\phi,\eta)$ with $\m = \sum_{i=1}^{k_{\m}} [a_i,b_i]$, $a_i,b_i \le N$, $\phi = \oplus_{j=1}^{k_{\phi}} \rho \boxtimes S_{c_j}$ and $c_j \le 2N+1$. Using \cite{AM}, one can compute the dual of all these representations.

The numerical results below are based on a dataset of 100,000 representations with parameters $N=5$, $k_{\m}=5$ and $k_{\phi}=3$.

The first natural question is:

\begin{ques}
Is there a ``simple'' formula to calculate $\AD(\m,\phi)$ directly from $(\m,\phi)$?
\end{ques}

To attempt to answer this, we trained a model to predict the dual of the elements of our dataset. This approach did not perform well. Even a simpler question, such as predicting the number of segments in the dual, yielded an accuracy of $37\%$. We concluded that predicting directly the dual from $(\m,\phi)$ was too much to ask for. Inspired by the M{\oe}glin--Waldspurger algorithm, we then assumed a recursive structure of the form $\AD(\m,\phi)$ was given by $\AD(\m,\phi) = (\m_1,\phi_1) + \AD(\m^{\#},\phi^{\#})$. 

\begin{ques}
Is it possible to predict a specific segment in the dual? If so, which one?\end{ques}

\begin{rem}
  If we identify $(\m_1,\phi_1)$ in the dual that can be produced from $(\m,\phi)$ then $(\m^{\#},\phi^{\#})$ is uniquely determined by $(\m^{\#},\phi^{\#}) = \AD(\AD(\m,\phi)-(\m_1,\phi_1))$.
\end{rem}

We used a simple dense neural network to produce one segment of the dual. The specific architecture or optimization of the model was not our focus—better results could likely be obtained with more training or more refined models. Our goal was not optimal prediction, but rather mathematical intuition.

We tried to predict the biggest or smallest segment according to the lexicographical order. The following results were obtained:

\begin{table}[ht]
    \centering
    \begin{tabular}{cccc}
         \textbf{Segment predicted} & \textbf{Accuracy} & \textbf{Accuracy $b(\m_1)$} & \textbf{Accuracy $e(\m_1)$}  \\
        Biggest & 64.7\%  & 93.2\%& 67.4\% \\
        Smallest  & 92.5\%  & 100\%& 92.5\%\\
    \end{tabular}
\end{table} 

We observe that predicting the smallest segment yields significantly better results, and that the beginning of the segment is predicted almost perfectly.

\bigskip

Next, we aimed to interrogate the model to extract the elements of a formula for predicting $b(\m_1)$ and $e(\m_1)$. To understand how each input coefficient influences the output, we computed the average gradient of the model. We began with the model predicting $b(\m_1)$. The following diagram represents the absolute values of the average gradient:

\begin{center}

\begin{tikzpicture}
\begin{axis}[
    width=12cm,
    height=8cm,
    ybar,
    ymin=0,
    symbolic x coords={$b_1$,$e_1$,$b_2$,$e_2$,$b_3$,$e_3$,$b_4$,$e_4$,$b_5$,$e_5$,$c_1$,$\varepsilon_1$,$c_2$,$\varepsilon_2$,$c_3$,$\varepsilon_3$},
    xtick=data,
    ylabel={Gradient},
    xlabel={Coefficients}]
\addplot coordinates { ($b_1$,11.52705192565918) ($e_1$,0.531229555606842) ($b_2$,1.6557625532150269) ($e_2$,0.2726781368255615) ($b_3$,0.5666752457618713) ($e_3$,0.14519019424915314) ($b_4$,0.3019377887248993) ($e_4$,0.21264006197452545) ($b_5$,0.23795634508132935) ($e_5$,0.35328006744384766) ($c_1$,0.6274611353874207) ($\varepsilon_1$,0.4010339081287384) ($c_2$,0.4306737184524536) ($\varepsilon_2$,0.4378998875617981) ($c_3$,3.9217405319213867) ($\varepsilon_3$,0.2756589651107788) };
\end{axis}
\end{tikzpicture}
\end{center}
In this setup, $b_1$ is the smallest beginning of any segment in $\m$, and $c_3$ is the largest $c_i$ from $\phi$. Computational experiments suggest that:
\[
  b(\m_1)=\min \{ \min\{b(\Delta), \Delta \in \m\}, -\max \{ c_i \in \phi\} \}.
\]

\bigskip

Now, we examine the model that predicts $e(\m_1)$.

\begin{center}
\begin{tikzpicture}
\begin{axis}[
    width=12cm,
    height=8cm,
    ybar,
    ymin=0,
    symbolic x coords={$b_1$,$e_1$,$b_2$,$e_2$,$b_3$,$e_3$,$b_4$,$e_4$,$b_5$,$e_5$,$c_1$,$\varepsilon_1$,$c_2$,$\varepsilon_2$,$c_3$,$\varepsilon_3$},
    xtick=data,
    ylabel={Gradient},
    xlabel={Coefficients}]
\addplot coordinates { ($b_1$,9.748825073242188) ($e_1$,1.8679217100143433) ($b_2$,13.26185131072998) ($e_2$,1.5888727903366089) ($b_3$,7.227505683898926) ($e_3$,1.422167181968689) ($b_4$,2.5877389907836914) ($e_4$,0.7735776305198669) ($b_5$,1.0617212057113647) ($e_5$,0.6249563097953796) ($c_1$,1.1093631982803345) ($\varepsilon_1$,0.4535370171070099) ($c_2$,3.9007070064544678) ($\varepsilon_2$,2.5788989067077637) ($c_3$,14.37994384765625) ($\varepsilon_3$,2.308455228805542) 
 };
\end{axis}
\end{tikzpicture}
\end{center}

Its interpretation is more subtle. To help with this, we applied the same gradient technique to $\GL_n$, where the M{\oe}glin--Waldspurger algorithm (as described in \ref{sub:MW}) is well understood, and compared the results.

The gradients for predicting the end in $\GL_n$ are as follows:

\begin{center}
\begin{tikzpicture}
\begin{axis}[
    ybar,
    ymin=0,
    symbolic x coords={$b_1$,$e_1$,$b_2$,$e_2$,$b_3$,$e_3$,$b_4$,$e_4$,$b_5$,$e_5$},
    xtick=data,
    ylabel={Gradient},
    xlabel={Coefficients}]
\addplot coordinates { 
($b_1$,0.07335598766803741) ($e_1$,0.15479587018489838) ($b_2$,0.13607771694660187) ($e_2$,0.09356939047574997) ($b_3$,0.08250169456005096) ($e_3$,0.23762960731983185) ($b_4$,0.12709257006645203) ($e_4$,0.36408329010009766) ($b_5$,0.0861314982175827) ($e_5$,21.110612869262695) 
 };
\end{axis}
\end{tikzpicture}
\end{center}

And for the beginning, we get:

\begin{center}
\begin{tikzpicture}
\begin{axis}[
    ybar,
    ymin=0,
    symbolic x coords={$b_1$,$e_1$,$b_2$,$e_2$,$b_3$,$e_3$,$b_4$,$e_4$,$b_5$,$e_5$},
    xtick=data,
    ylabel={Gradient},
    xlabel={Coefficients}]
\addplot coordinates { ($b_1$,1.807509422302246) ($e_1$,2.217463970184326) ($b_2$,1.8858791589736938) ($e_2$,4.1070427894592285) ($b_3$,2.0191140174865723) ($e_3$,7.817839622497559) ($b_4$,4.4822564125061035) ($e_4$,20.527515411376953) ($b_5$,3.8981823921203613) ($e_5$,11.730829238891602) };
\end{axis}
\end{tikzpicture}
\end{center}

Comparing with the diagram for $G$, it appears that the part concerning $\m$ is the mirror image of the one in $\GL_n$. Combined with the fact that the quantity $\min \left\{ \min\{b(\Delta) \mid \Delta \in \m\},\ -\max \{ c_i \in \phi \} \right\}$ is preserved under duality, this observation led us to define symmetrical Langlands data and to conjecture that the Aubert--Zelevinsky dual can be obtained via an analogue of the M{\oe}glin--Waldspurger algorithm applied to such data.

The conjecture holds almost entirely in the case of bad parity, provided we impose the additional constraint that ``a segment and its own dual cannot be used simultaneously''. Further experimentation and analysis of examples then guided us toward the correct formula in the good parity case.

This behavior is far from obvious when examining examples, as it is often obscured by various interfering phenomena, such as the presence of a tempered part, sign alternations, and parity conditions. In practice, it was only through AI-assisted exploration that this underlying symmetry became apparent, as it was difficult for us to discern from examples alone.

\bibliographystyle{amsalpha}
\bibliography{biblio}

\end{document}